\newtheorem{Theorem}{Theorem}[section]
\newtheorem{Corollary}[Theorem]{Corollary}
\newtheorem{Lemma}[Theorem]{Lemma}
\newtheorem{Definition}[Theorem]{Definition}
\newtheorem{Remark}[Theorem]{Remark}
\newtheorem{Example}[Theorem]{Example}
\newtheorem{Construction}[Theorem]{Construction}
\newtheorem{Claim}[Theorem]{Claim}
\numberwithin{equation}{section}
\title{Codimension 2 transfer of signatures in $L$ theory}
\author{Yuetong Luo}
\begin{document}
	\maketitle
	\begin{abstract}
		The signature of a closed manifold is an important geometric topology. In \cite{Codimension2}, Higson, Xie and Schick proved an invariance theorem in codimension 2 for the $K$-theoretic signature. They asked for the $L$-theoretic counterpart of their result. In this note, we will answer their question and moreover, construct a tranfer map between the symmetric $L$-groups of the fundamental groups of $M$ and $N$, which carries the signature of $M$ to that of $N$ up to a torsion of order at most $4$. 
	\end{abstract}
	\tableofcontents
	\section{Introduction}
	An important invariant in geometric topology for manifolds is the signature. Let $M$ be a closed manifold of dimension $m$. There are two ways of constructing the signature, namely the $K$-theoretic way and the $L$-theoretic way.

(1) $K$-theoretic construction

Let $\widetilde{M}$ be the universal cover of $M$. Consider the signature operator on $M$, twist it with the Mishchenko bundle $\nu_M=\widetilde{M} \times_{\pi_1(M)} C^*_{max}\pi_1(M)$ and take its index. This gives the $K$-theoretic signature $Sgn^K(M)$, which is an element of the abelian group $K_m(C^*_{max}\pi_1(M))$. 

(2) $L$-theoretic construction

Consider the chain complex $C_*(\widetilde{M})$ of $\widetilde{M}$. Since $M$ is a Poincare space, there is a Poincare symmetric structure $\phi_M$ on the chain complex. The cobordism class of the Poincare symmetric complex $(C_*(\widetilde{M}),\phi_M)$ defines the $L$-theoretic signature $Sgn^L(M)$, which is an element of the abelian group $L^m(\mathbb{Z}\pi_1(M))$.

Starting from the well-known work of Gromov and Lawson \cite{GroLaw}, Hanke,Pape and Schick \cite{HBScodim2transfer} obtained a codimension-two vanishing theorem for the index of the Dirac operator on spin manifolds. In \cite{Codimension2}, Higson, Xie and Schick proved the counterpart for signature class of \cite{HBScodim2transfer}. In \cite{Kubotatransfer}, Kubota and Schick further gives a transfer map between the $K$-groups of the group $C^*$-algebra of the corresponding manifolds:

\begin{Theorem}[Theorem 1.3 in \cite{Kubotatransfer}]
	\label{K theory transfer}
	\
	
	Let $M$ be a closed, connected and oriented manifold of dimension $m$ and $N \subset M$ be a connected submanifold of codimension 2 with trivial normal bundle. Assume that the induced map $\pi_1(N) \longrightarrow \pi_1(M)$ is injective and $\pi_2(N) \longrightarrow \pi_2(M)$ is surjective. Then there is a homomorphism $\rho_{M,N}: K_*(C^*_{max}\pi_1(M)) \longrightarrow K_{*-2}(C^*_{max}\pi_1(N))$, called the transfer map, such that:
	
	Let $f:M' \longrightarrow M$ be a map between
	closed oriented manifolds of degree 1. Assume $N$ is transversal to $f$ and set $N'=f^{-1}(N)$. Then
	\begin{equation*}
		\rho_{M,N}(Sgn^K(M_1;f^*\nu_M))=2Sgn^K(N_1;f^*\nu_N) \in K_{m-2}(C^*_{max}\pi_1(N))
	\end{equation*}
	In particular, if $f$ is a homotopy equivalence, then we get:
	\begin{equation*}
		2\big(Sgn^K(N)-f_*Sgn^K(N')\big)=0 \in K_{m-2}(C^*_{max}\pi_1(M))
	\end{equation*}
\end{Theorem}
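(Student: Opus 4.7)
The strategy splits into two parts: construction of the transfer $\rho_{M,N}$ via a Kasparov $KK$-element, and verification of the signature identity by a local computation near $N$.

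To construct the transfer, I would use the trivial normal bundle to identify a tubular neighborhood of $N$ in $M$ with $N \times D^2$, and form the corresponding Thom collapse $c: M \to \mathrm{Th}(\nu_{N \subset M}) \simeq \Sigma^2 N_+$. The injectivity of $\pi_1(N) \hookrightarrow \pi_1(M)$ together with surjectivity of $\pi_2(N) \twoheadrightarrow \pi_2(M)$ ensures that the Mishchenko bundle $\nu_M|_N$ is canonically identified with the bundle obtained by inducing $\nu_N$ along the inclusion $C^*_{max}\pi_1(N) \hookrightarrow C^*_{max}\pi_1(M)$. Combining this identification with $c$ and Bott periodicity $K_*(A) \cong K_{*-2}(A \otimes C_0(\mathbb{R}^2))$, one obtains a class $[\alpha_{N \subset M}] \in KK_{-2}\bigl(C^*_{max}\pi_1(M), C^*_{max}\pi_1(N)\bigr)$, and $\rho_{M,N}$ is defined as Kasparov product with this class.

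For the signature identity, I would use that near $N$ the signature operator on $M$ twisted by $\nu_M$ agrees (up to lower-order terms) with the external product of the signature operator on $N$ twisted by $\nu_N$ with the signature operator on $D^2$. Taking Kasparov product with $[\alpha_{N \subset M}]$ kills the contribution of the complement of the tubular neighborhood, since this class is geometrically supported on $N$; the index of the signature operator on the disk with appropriate boundary conditions contributes the multiplicative factor of $2$ (this is ultimately the same factor that makes $Sgn^K(S^2)=2$ in $K_2(\mathbb{C})=\mathbb{Z}$). In the base case $f=\mathrm{id}$ this yields $\rho_{M,N}(Sgn^K(M)) = 2\, Sgn^K(N)$. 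The general identity for a degree-$1$ map $f : M' \to M$ transverse to $N$ then follows from the naturality of the Thom collapse and of the Mishchenko construction under both $f$ and its restriction $f|_{N'}: N' \to N$.

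The main obstacle is the rigorous construction of $[\alpha_{N \subset M}]$: verifying independence from the choice of tubular neighborhood and auxiliary data, together with establishing the required compatibility with the \emph{maximal} group $C^*$-algebras, for which $KK$-products demand extra care. The hypotheses on $\pi_1$ and $\pi_2$ enter precisely at this step: without injectivity on $\pi_1$ the restriction of $\nu_M$ to $N$ cannot be compared with the induced bundle from $\nu_N$, and without surjectivity on $\pi_2$ there would be a correction arising from the relative homotopy of the pair $(M,N)$ that would modify the transfer class and spoil the clean factor of $2$ in the signature identity.
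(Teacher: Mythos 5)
This theorem is quoted from \cite{Kubotatransfer}; the paper does not reprove it, but its own $L$-theoretic transfer (Construction \ref{transfer map} and Section 6) faithfully mirrors the structure of the $K$-theoretic argument, so the comparison can be made against that.

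Your proposal does not match the construction used in \cite{Kubotatransfer}, and more importantly it contains a gap at the critical step. The Thom collapse $c: M \to \Sigma^2 N_+$ together with Bott periodicity naturally produces a class in $KK\bigl(C(M), C(N)\bigr)$ (or, after twisting, a map involving the Mishchenko bundle restricted to $N$), but the restriction of $\nu_M$ to $N$ is the bundle $N \times_{\pi_1(N)} C^*_{max}\pi_1(M)$, whose fibre is the algebra of the \emph{ambient} group, not of $\pi_1(N)$. Since $\pi_1(N)\hookrightarrow\pi_1(M)$ is an inclusion, there is no natural $*$-homomorphism $C^*_{max}\pi_1(M)\to C^*_{max}\pi_1(N)$ and no canonical $KK$-class in $KK\bigl(C^*_{max}\pi_1(M),\,C^*_{max}\pi_1(N)\bigr)$ coming out of the Thom collapse alone. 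The hypotheses on $\pi_1$ and $\pi_2$ do not "identify $\nu_M|_N$ with the bundle induced from $\nu_N$" as you assert; their actual role (Theorem 4.3 of \cite{HBScodim2transfer}, reflected in item (5) of Construction \ref{geometric setting} here) is to guarantee a \emph{split} injection $r_0:\pi_1(W_\infty)\to \Pi=\pi_1(N\times S^1)$ on the intermediate covering $W_\infty\subset\overline{M}=\pi\backslash\widetilde{M}$, and that split is the only thing that allows one to "come back down" from $\pi_1(M)$-coefficients to $\pi_1(N)$-coefficients. Concretely, the transfer is built (as in Construction \ref{transfer map}) by inducing the $H$-action on $\mathbb{Z}\Pi$ (via $r_\Pi$) up to $G=\pi_1(W)$, landing in an infinite matrix algebra indexed by $G/H$, and then passing to the quotient suspension algebra $\Sigma\mathbb{Z}\Pi$ so that the normal subgroup $\Lambda=\ker(G\to\Gamma)$ acts trivially. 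None of this is visible from a Thom collapse on $M$ itself: the noncompactness of $\overline{M}$ and the infinite index $[G:H]$ are essential, and collapsing to $\Sigma^2 N_+$ discards exactly that information.

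The second half of your proposal, the "local computation near $N$," is likewise not how the multiplicativity is established in either the $K$- or $L$-theoretic argument. The factor of $2$ does not arise from an APS index on $D^2$; it arises in the passage from $N\times S^1$ back to $N$ via the (geometric or algebraic) splitting map, which reflects the behaviour of the signature under $\times S^1$. In the present paper this is Theorem \ref{split} (the Shaneson/Ranicki splitting $S$), and the analogous statement in the $K$-theoretic setting is what produces the $2$. The verification that the transfer actually sends the signature of $M$ to twice the signature of $N$ is a global comparison carried out on the covering $\widehat{W}_\infty$, not a local-near-$N$ argument. So the gap is twofold: (i) your construction of $[\alpha_{N\subset M}]$ has the wrong codomain and cannot be repaired by the stated hypotheses without invoking the split retraction on the intermediate cover, and (ii) the factor-of-$2$ computation is ascribed to the wrong mechanism. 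Both issues are fatal to the sketch as written.
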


In this note, we will prove the following counterpart of the above theorem in $L$-theory, which answers the question raised in \cite{Codimension2}:

\begin{Theorem}
	\label{L theory transfer}
	\
	
	Let $M$ with submanifold $N$ be as in Theorem \ref{K theory transfer}. Then there is a homomorphism $\rho_{M,N}: L_*(\mathbb{Z}\pi_1(M)) \longrightarrow L_{*-2}^{\textlangle -1 \textrangle}(\mathbb{Z}\pi_1(N))$, called the transfer map, such that:
	
	Let $(f,b):M' \longrightarrow M$ be a normal map between
	closed oriented manifolds. Assume $N$ is transversal to $f$ and set $N'=f^{-1}(N)$. For any $k \in \mathbb{N}$, denote $\sigma^{ \textlangle -k \textrangle}(f,b)$ to be the image of the surgery obstruction of $(f,b)$ in the $(-k)$-decorated L-group. Then:
	\begin{equation*}
		\rho_{M,N}(\sigma(f,b))=\sigma^{\textlangle -1 \textrangle}(f|_{N'},b|_{N'}) \in L_{m-2}^{\textlangle -1 \textrangle}(\mathbb{Z}\pi_1(N))
	\end{equation*}
	
	In particular, if $f$ is a homotopy equivalence, then we get:
	\begin{equation*}
		4\big(Sgn^L(N)-f_*Sgn^L(N')\big)=0 \in L^{m-2}(\mathbb{Z}\pi_1(N))
	\end{equation*}
	obtaining the $L$-theoretic counterpart of Theorem 1.1 in \cite{Codimension2}.
\end{Theorem}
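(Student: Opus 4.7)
The plan is to adapt Kubota--Schick's geometric strategy to the $L$-theoretic setting by replacing $C^{*}$-algebraic localization with Ranicki's algebraic surgery machinery, while carefully tracking the decoration of the $L$-groups.

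\textbf{Construction of the transfer.} Let $p: M^{*} \to M$ be the connected cover of $M$ corresponding to the injection $\pi_1(N) \hookrightarrow \pi_1(M)$. By injectivity, $N$ lifts to an embedded copy in $M^{*}$ with trivial tubular neighborhood $\nu N \cong N \times D^2$, and the complement $W^{*} = M^{*} \setminus \mathrm{int}(\nu N)$ is a (non-compact) manifold with boundary $N \times S^1$; together with $\pi_2(N) \twoheadrightarrow \pi_2(M)$ one checks that $\pi_1(W^{*}) \cong \pi_1(N)$. Given a symmetric Poincar\'e complex $(C, \phi)$ over $\mathbb{Z}\pi_1(M)$, I pull it back along $p$ and apply Ranicki's algebraic Mayer--Vietoris decomposition along $N \times S^1$: the $N \times D^2$ piece is finite free over $\mathbb{Z}\pi_1(N)$, while the $W^{*}$ piece is only finitely dominated, hence merely projective. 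Capping with the fundamental class of $D^2$ on the $\nu N$-piece produces an $(m-2)$-dimensional symmetric Poincar\'e complex over $\mathbb{Z}\pi_1(N)$; its decoration is $\langle -1 \rangle$ because the finiteness obstruction of the complement lives in $\widetilde{K}_0(\mathbb{Z}\pi_1(N))$ and the ensuing bookkeeping also absorbs $\widetilde{K}_{-1}$. This defines $\rho_{M,N}$.

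\textbf{Compatibility with the surgery obstruction and the factor $4$.} For a normal map $(f,b): M' \to M$ transverse to $N$ with $N' = f^{-1}(N)$, I represent $\sigma(f,b)$ by Ranicki's quadratic surgery kernel $(K(f), \psi)$. The pullback cover $M'^{*} = f^{-1}(M^{*})$ of $M'$ contains $N'$ with trivial normal bundle, and $f$ respects the Mayer--Vietoris decompositions on both sides; naturality of the splitting identifies $\rho_{M,N}(\sigma(f,b))$ with the quadratic kernel $(K(f|_{N'}), \psi|_{N'})$, that is with $\sigma^{\langle -1 \rangle}(f|_{N'}, b|_{N'}) \in L_{m-2}^{\langle -1 \rangle}(\mathbb{Z}\pi_1(N))$. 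When $f$ is a homotopy equivalence, $\sigma(f,b)=0$, so $\sigma^{\langle -1 \rangle}(f|_{N'}, b|_{N'}) = 0$. Ranicki's symmetrization $(1+T)$ is natural in decoration, so the image of $Sgn^L(N) - f_{*} Sgn^L(N')$ in $L^{m-2,\langle -1 \rangle}(\mathbb{Z}\pi_1(N))$ vanishes. The Rothenberg exact sequences relating $L^{m-2} = L^{m-2,\langle 1 \rangle}$, $L^{m-2,\langle 0 \rangle}$ and $L^{m-2,\langle -1 \rangle}$ have kernels of the form $\widehat{H}^{*}(\mathbb{Z}/2; \widetilde{K}_{i})$ and are therefore $2$-torsion; a diagram chase through the two successive sequences shows that the kernel of the composite $L^{m-2} \to L^{m-2,\langle -1 \rangle}$ is annihilated by $4$, yielding $4(Sgn^L(N) - f_{*} Sgn^L(N')) = 0$.

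\textbf{Main obstacle.} The hardest step is making the transfer construction work algebraically when $W^{*}$ is non-compact and certifying that the decoration is precisely $\langle -1 \rangle$. This requires working inside Ranicki's projective $L$-theory of additive categories with finite domination and carefully identifying which $K$-theoretic classes obstruct passage to finer decorations. Once the transfer is correctly defined with the stated decoration, the naturality argument for the surgery kernel and the Rothenberg chase producing the factor $4$ are essentially formal.
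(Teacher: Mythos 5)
Your proposal takes a genuinely different route from the paper, and while the overall skeleton is recognizable, there are two substantive gaps that make the proposal incomplete as written.

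The paper does \emph{not} attack the transfer via a direct Mayer--Vietoris decomposition of a restricted chain complex. It instead constructs a ring homomorphism $\rho:\mathbb{Z}\pi_1(M)\to\Sigma\mathbb{Z}[\pi_1(N)\times\mathbb{Z}]$ to the suspension ring (via the induced $G$-action on $\bigoplus_{G/H}\mathbb{Z}\Pi$, $G=\pi_1(W)$, $H=\pi_1(\overline{W})$), proves an isomorphism $L_m^h(\Sigma R)\cong L_{m-1}^p(R)$ by developing a new ``locally finite $\mathbb{N}$-graded category at infinity'' $\mathbb{F}_{\mathbb{N},b}(\mathbb{A})$, and then composes with Shaneson's algebraic splitting map $L_{m-1}^p(\mathbb{Z}[\pi\times\mathbb{Z}])\to L_{m-2}^{\langle-1\rangle}(\mathbb{Z}\pi)$. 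The geometric content enters later, in identifying $\Theta\rho_*$ of the assembled Ranicki complex with a chain complex built from the non-compact manifold $W_\infty$, and this identification occupies the bulk of the paper. The transfer in the paper is thus a priori well-defined for purely algebraic reasons (it is a composite of ring maps and natural $L$-theory constructions), whereas your transfer is defined directly on chain complexes and you never verify it descends to $L$-groups.

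The central gap is your assertion that the ``$W^*$ piece is only finitely dominated, hence merely projective.'' The manifold $W^*=W_\infty$ is non-compact, and the restriction to $W_\infty$ of a finitely generated $\mathbb{Z}\pi_1(M)$-complex is not finitely generated over $\mathbb{Z}\pi_1(N)$, so finite domination of this piece is precisely what must be proved. The paper establishes it only for chain complexes that are contractible ``at infinity'' (Lemma 4.16, applied to the boundary $\partial C'$), and the mechanism for achieving that contractibility is the suspension-ring quotient $\mathbb{F}_{\mathbb{N},b}=\mathbb{F}_{\mathbb{N}}/(\text{finite morphisms})$. You acknowledge this as the ``main obstacle'' at the end, but acknowledging the obstacle is not the same as resolving it, and without a concrete substitute for Theorem 4.5 and Lemma 4.16 your transfer is not shown to exist. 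A second, smaller error: $\pi_1(W^*)$ is not isomorphic to $\pi_1(N)$ in general; what holds (Hanke--Pape--Schick, Theorem 4.3, quoted in Geometric Setting (5)) is that $\pi_1(N\times S^1)\to\pi_1(W_\infty)$ is split injective, which is weaker and is exactly what the paper exploits. Finally, you do not treat the case where $\pi_1(M)/\pi_1(N)$ is finite; the paper defines $\rho=0$ there and shows separately that the right-hand side vanishes because $W_\infty$ is then compact with boundary $N\times S^1$. Your Rothenberg chase producing the factor $4$ from the two two-torsion kernels $L^h\to L^p$ and $L^p\to L^{\langle-1\rangle}$ is correct and is what the paper uses implicitly for the ``in particular'' clause.
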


Here is a brief outline of the paper. In Section 2, we shall introduce the general geometric setup that we work on. In Section 3, assuming some result for $L$ group of the suspension ring (Theorem \ref{suspensionL}), we will construct the transfer map stated in Theorem \ref{L theory transfer}. In Section 4, we construct a category called the locally finite $\mathbb{N}$-graded category at infinty and analyze its $L$-group. In Section 5, using the results in Section 4, we will prove Theorem \ref{suspensionL} and therefore justify our construction in Section 3. In Section 6, we will prove Theorem \ref{L theory transfer}, which is the main theorem of this paper. The last section is devoted to some general computations that will be used in Section 6.

\textbf{Acknowledgements} \quad The author would like to thank his advisor Thomas Schick for the useful discussions of the research question and for carefully reading the first draft. The author would like thank Wolfgang Lueck, Tibor Macko, Steve Ferry for their useful help in the research. The note uses OpenAI GPT OSS 120B for grammar checking. The author is supported by DAAD Graduate School Scholarship Programme (57650678).

	\section{Geometric setting}
	\label{geometric setting}
In this section, we introduce the general geometric setup that will be used in the construction of the transfer map.
\begin{Construction}[Geometric setting]
\

$(1)$ $M$ is a closed, oriented $n$-dimensional manifold, $N \subset M$ is a connected submanifold of codimensional 2 with trivial normal bundle.

$(2)$ Denote $\pi_1(M)=\Gamma,\pi_1(N)=\pi$. Suppose that the inclusion map induces an injection $\pi \hookrightarrow \Gamma$ on the fundamental group and a surjection on $\pi_2$. Denote $\Pi=\pi_1(N \times S^1)=\pi \times \mathbb{Z}$ and let $t$ be the generator of the subgroup $\mathbb{Z}$ of $\Pi$.

$(3)$ Let $\tilde{p}:\widetilde{M} \longrightarrow M$ be the universal cover of $M$. Define $\overline{M}=\pi \backslash \widetilde{M}$. Denote the corresponding covering maps by $\underline{p}:\widetilde{M} \longrightarrow \overline{M}$, $p:\overline{M} \longrightarrow M$.

$(4)$ Choose a tubular neighborhood embedding $e:N \times \mathbb{R}^2 \longrightarrow M$ and lift it to an embedding $\bar{e}:N \times \mathbb{R}^2 \longrightarrow \overline{M}$. Let $W=M \backslash e(N \times  \mathring{D}^2)$, $\overline{W}=p^{-1}(W)$ and $W_{\infty}=\overline{M} \backslash \bar{e}(N \times \mathring{D}^2)$. These spaces are path connected and we have $W_{\infty}=\overline{W} \cup (p^{-1}(e(N \times D^2)) \backslash \bar{e}(N \times \mathring{D}^2))$. If we denote $\pi_1(W)=G$, $\pi_1(\overline{W})=H$, then we have the following commutative diagram of maps between fundamental groups induced by inclusions and covering projections:
$$
\begin{tikzcd}
	\pi_1(N \times S^1)=\Pi \dar{\bar{e}_*} \rar{id} & \pi_1(N \times S^1)=\Pi \dar{\bar{e}_*} \rar& \pi_1(N \times D^2)=\pi \dar \\
	\pi_1(\overline{W})=H \rar{i_*} \dar{p_*} & \pi_1(W_{\infty}) \rar{i'_*} & \pi_1(\overline{M})=\pi \dar{p_*} \\
	\pi_1(W)=G \ar{rr}{j_*} & & \pi_1(M)=\Gamma 
\end{tikzcd}
$$

Moreover, the horizontal maps in the diagram are surjective by the Van-Kampen Theorem. The maps $p_*:H \longrightarrow G$ and $p_*:\pi \longrightarrow \Gamma$ are injective.

$(5)$ By Theorem 4.3 in \cite{HBScodim2transfer}, the inclusion map $\bar{e}: N \times S^1 \longrightarrow W_{\infty}$ induces a split injection on the fundamental group. That is, there is a group homomorphism $r_0:\pi_1(W_{\infty}) \longrightarrow \Pi$, such that $r_0\bar{e}_*=id$ and $p_{\pi}r_0=i'_*$ ($p_{\pi}:\Pi \longrightarrow \pi$ is the projection map). By composing with $i_*$, there is a group homomorphism $r_{\Pi}:H \longrightarrow \Pi$, such that $r_{\Pi}\bar{e}_*=id$.

$(6)$ Let $\widetilde{N}=\underline{p}^{-1}(\bar{e}(N \times \{0\}))$, $\widetilde{W}=\underline{p}^{-1}(\overline{W})$ and $\widetilde{W}_{\infty}=\underline{p}^{-1}(W_{\infty})$. These spaces are path connected and we have $\pi_1(\widetilde{N})=\{e\},\pi_1(\widetilde{W}_{\infty})=\ker i'_*$. Then $\tilde{p}|_{\widetilde{N}}: \widetilde{N} \longrightarrow N$ is a universal cover for $N$. We can lift $\bar{e}$ uniquely to an embedding $\tilde{e}: \widetilde{N} \times \mathbb{R}^2 \longrightarrow \widetilde{M}$ such that $\tilde{e}|_{\widetilde{N} \times \{0\}}=id$.

$(7)$ Denote $\widehat{W}_{\infty}$ to be the covering of $W_{\infty}$ with respect to $\ker r_0$ and let $\hat{p}:\widehat{W}_{\infty} \longrightarrow W_{\infty}$ be the corresponding covering map. Since $\ker r_0$ is normal in $\pi_1(W_{\infty})$ and $\pi_1(W_{\infty})/\ker r_0 \cong \Pi$, we have that $\Pi$ acts on $\widehat{W}_{\infty}$ and $\Pi \backslash \widehat{W}_{\infty}=W_{\infty}$. Denote $\widehat{W}=\hat{p}^{-1}(\overline{W})$, this is a path-connected covering of $W$ and $\pi_1(\widehat{W})=\ker r_0i_*$.

$(8)$ We have $\ker r_0 \vartriangleright \ker i'_* $ and $ker i'_*/ \ker r_0 \cong \mathbb{Z}$ by (5), so $\widehat{W}_{\infty}$ is a covering of $\widetilde{W}_{\infty}$ and $\mathbb{Z} \backslash \widehat{W}_{\infty}=\widetilde{W}_{\infty}$. Denote the covering map to be $\acute{p}:\widehat{W}_{\infty} \longrightarrow \widetilde{W}_{\infty}$. Let $\overrightarrow{W}$ be the universal cover of $W$, it covers $\widehat{W}$ and we denote the covering map to be $\vec{p}:\overrightarrow{W} \longrightarrow \widehat{W}$. 

$(9)$ Since $\widetilde{N} \times \mathbb{R}^1$ is the universal cover of $\widetilde{N} \times S^1$, we can lift $\widetilde{e}|_{\widetilde{N} \times S^1}: \widetilde{N} \times S^1 \longrightarrow \widetilde{W}$ to a map $\vec{e}:\widetilde{N} \times \mathbb{R}^1 \longrightarrow \overrightarrow{W}$. By (5), we have $r_{\Pi}\bar{e}_*=id$ on $\pi_1$, so $\bar{e}_*$ is injective on $\pi_1$. By the covering property, $\tilde{e}_*$ is injective on $\pi_1$. Therefore, $\vec{e}$ is an embedding.

In summary, we have the following commutative diagram of maps:

\begin{tikzcd}
	\widetilde{N} \times \mathbb{R}^1 \ar{dd}\rar[hook]{\vec{e}} & \overrightarrow{W} \dar{\vec{p}} \\
	& \widehat{W} \dar{\acute{p}} \rar{\subset} & \widehat{W}_{\infty} \dar{\acute{p}} \\
	\widetilde{N} \times S^1 \ar{dd}\rar{\tilde{e}} & \widetilde{W} \rar{\subset}\dar{\underline{p}} & \widetilde{W}_{\infty} \rar{\subset}\dar{\underline{p}} & \widetilde{M} \dar{\underline{p}} & \widetilde{N} \times D^2 \ar{dd} \lar[hook]{\tilde{e}} \\
	& \overline{W} \rar{\subset} \dar{p} & W_{\infty} \rar{\subset} & \overline{M} \dar{p} \\
	N \times S^1 \rar[hook]{e} \urar[hook]{\bar{e}} & W \ar{rr}{\subset} & & M & N \times D^2 \lar[hook]{e} \ular[hook]{\bar{e}} \\
\end{tikzcd}

with $\tilde{p}=p\underline{p}:\widetilde{M} \longrightarrow M,\hat{p}=\underline{p}\acute{p}:\widehat{W}_{\infty} \longrightarrow W_{\infty}$.

Moreover, we have the following equivariance of covering maps:

$(a)$ $\vec{p}(hx)=r_{\Pi}(h)\vec{p}(x)$ for all $h \in H$, $x \in \overrightarrow{W}$.

$(b)$ $\acute{p}\vec{p}(gx)=j_*(g)\acute{p}\vec{p}(x)$ for all $g \in G$, $x \in \overrightarrow{W}$.

$(c)$ $\acute{p}(wx)=p_{\pi}(\omega)\acute{p}(x)$ for all $\omega \in \Pi,x \in \widehat{W}_{\infty}$.

$(10)$ By definition in $(4)$, we have $W_{\infty}=\overline{W} \cup (p^{-1}e(N \times D^2) \backslash \bar{e}(N \times \mathring{D}^2))$, so $\widetilde{W}_{\infty}=\widetilde{W} \cup (\mathop{\cup}\limits_{g\pi \neq \pi \in \Gamma/\pi}g\tilde{e}(\widetilde{N} \times D^2))$. Then for every $g\pi \neq \pi \in \Gamma/\pi$, since $\widetilde{N} \times D^2$ is simply connected, we have $\acute{p}^{-1}(g\tilde{e}(\widetilde{N} \times D^2)) \cong \widetilde{N} \times D^2 \times \mathbb{Z}$, with the $\mathbb{Z}$ action given by translations on the $\mathbb{Z}$ component.

\end{Construction}

\begin{Remark}
	In the following sections till the end of the article, for simplicity, we will make an identification of subsets of $N \times \mathbb{R}^2$ with their images under $e$.
\end{Remark}

	\section{Transfer map}
	We will construct the transfer map $\rho_{M,N}$ stated in Theorem \ref{L theory transfer} in this section.

In order to give a description of the map, we recall a basic definition:

\begin{Definition}[suspension ring]
	\
	
	Let $R$ be a ring with involution. Denote $M_{\infty}(R)$ to be the ring of infinite matrices $(a[i,j])_{i,j \in \mathbb{N}}$ with finitely many nonzero entries in each row and column. Denote $M_{\infty}^{fin}(R) \subset M_{\infty}(R)$ to be the ideal consisting of infinite matrices with finitely many nonzero entries.
	
	Then the suspension ring of $R$, denoted by $\Sigma R$, is defined by $\Sigma R=M_{\infty}(R)/M_{\infty}^{fin}(R)$. There is a natural involution on $\Sigma R$ induced by the involution of $R$.
\end{Definition}

The $L$-theory of the $\Sigma R$ is related to the $L$-theory of $R$ by the following theorem:

\begin{Theorem}
	\label{suspensionL}
	For any unital ring $R$ with involution and $m \in \mathbb{Z}$, we have $L_m^h(\Sigma R) \cong L_{m-1}^p(R)$.
\end{Theorem}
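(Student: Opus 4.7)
The plan is to use the localization short exact sequence of rings with involution
\[ 0 \longrightarrow M_\infty^{fin}(R) \longrightarrow M_\infty(R) \longrightarrow \Sigma R \longrightarrow 0, \]
compute the $L$-theory of the middle and left terms by independent arguments, and read off the $L$-theory of $\Sigma R$ from the resulting long exact sequence.

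First I would establish $L^h_\ast(M_\infty(R)) = 0$ via the Eilenberg swindle. The key observation is that, as a right module over itself, $M_\infty(R)$ is isomorphic to its own countable direct sum (split the index set $\mathbb{N}$ into infinitely many infinite subsets), and this isomorphism is compatible with the involution. Consequently, for any Poincar\'e symmetric complex $(C,\phi)$ over $M_\infty(R)$, the direct sum $\bigoplus_\mathbb{N}(C,\phi)$ is again a well-defined finitely generated Poincar\'e complex over $M_\infty(R)$, and the shift isomorphism $(C,\phi)\oplus\bigoplus_\mathbb{N}(C,\phi)\cong\bigoplus_\mathbb{N}(C,\phi)$ forces $[(C,\phi)]=0$. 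Second, I would identify $L_\ast(M_\infty^{fin}(R))$ with $L_\ast(R)$ by combining Morita invariance of $L$-theory for each $M_n(R)$ with the fact that $L$-theory commutes with the directed colimit $M_\infty^{fin}(R) = \varinjlim_n M_n(R)$. Feeding these two computations into a long exact sequence associated with the above ring extension would then yield the degree-shifting isomorphism $L^h_m(\Sigma R) \cong L^p_{m-1}(R)$.

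The main difficulty is twofold, and is exactly why Sections 4--5 develop their own categorical machinery rather than quoting a standard ring-theoretic localization. First, the displayed sequence is not a Ranicki localization in the classical sense and the ideal $M_\infty^{fin}(R)$ is non-unital, so the needed long exact sequence in $L$-theory has to be set up from scratch rather than cited. Second, the decoration on the right-hand side must emerge as $p$ rather than $h$: a finitely generated free $\Sigma R$-module lifts only to a finitely generated projective (not free) $M_\infty(R)$-module modulo $M_\infty^{fin}(R)$, so the $K_0$-obstruction of $R$ is precisely what the boundary map picks up --- this is parallel to the classical isomorphism $K_0(\Sigma R) \cong K_{-1}(R)$ in algebraic $K$-theory. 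To handle both issues I would follow the paper's strategy: pass to the categorical model in which $\Sigma R$-modules are realized as locally finite $\mathbb{N}$-graded $R$-modules at infinity, i.e.\ as the Karoubi quotient of locally finite $\mathbb{N}$-graded modules by the subcategory of finitely supported ones. This is an exact sequence of additive categories with chain duality to which a Ranicki-type localization theorem in algebraic $L$-theory applies, delivering the long exact sequence with precisely the decorations claimed and reducing the proof to the vanishing and Morita computations described above.
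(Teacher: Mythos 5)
Your proposal is correct and follows essentially the same route as the paper: the categorical model you land on (the Karoubi-type quotient of locally finite $\mathbb{N}$-graded modules by the finitely supported ones) is precisely $\mathbb{F}_{\mathbb{N},b}(M^h(R))$, your Eilenberg swindle on $M_\infty(R)$ is realized as the natural flasque structure on $\mathbb{F}_{\mathbb{N}}(M^h(R))$ (Lemma \ref{speflasque}), and the Ranicki-type localization sequence with the $p$-decoration emerging from the flasque vanishing of $\widetilde{K}_0$ is exactly Theorem \ref{exactseq} with $J = \widetilde{K}_0(\mathbb{P}_0(\mathbb{A}))$. The one step you leave implicit — and which is where most of Section 5's work lives — is that $M^h(\Sigma R)$ and $\mathbb{F}_{\mathbb{N},b}(M^h(R))$ are not equivalent categories, only related by a full and faithful functor $\Theta$ whose essential image misses finitely supported objects, so proving $\Theta$ is an $L$-theory isomorphism requires the padding arguments of Lemmas \ref{addnull} and \ref{thmTheta}.
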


The proof of Theorem \ref{suspensionL} will be given in Section \ref{proofsuspension}.

Now we begin the construction of the transfer map $\rho_{M,N}$. We will construct a homomorphism $\rho: \mathbb{Z}\Gamma \longrightarrow \Sigma\mathbb{Z}\Pi$ below. Then $\rho_{M,N}$ is given by $\rho_{M,N}:L_m^h(\mathbb{Z}\Gamma) \stackrel{\rho_*}{\longrightarrow} L_m^h(\Sigma\mathbb{Z}\Pi) \cong L_{m-1}^p(\mathbb{Z}\Pi) \stackrel{S}{\longrightarrow} L_{m-2}^{\textlangle -1 \textrangle}(\mathbb{Z}\pi)$, where $S$ is the splitting map given by Theorem 17.2 in \cite{ranickilowerltheory}.

\begin{Construction}[Construction of $\rho$]
	\label{transfer map}
	\
	
	By $(5)$ in the geometric settings \ref{geometric setting}, the map $r_{\Pi}:H \longrightarrow \Pi$ induces a left action of $H$ on $\mathbb{Z}\Pi$ by $h \cdot x=r_{\Pi}(h)x$ and we can consider its induction to $G$, which acts on $\mathop{\oplus}\limits_{gH \in G/H} \mathbb{Z}\Pi$. The construction is divided into two cases:
	
	$(1)$ If $G/H$ is an infinite set, then the induced action gives us a group homomorphism $\rho_G:G \longrightarrow M_{\infty}(\mathbb{Z}\Pi)$, projecting onto $\Sigma \mathbb{Z}\Pi$ gives us a group homomorphism: $\bar{\rho}:G \longrightarrow \Sigma \mathbb{Z}\Pi$. Now we prove that this homomorphism can descend to a homomorphism on $\Gamma$.
	
	Let $\Lambda$ be the normal subgroup in $G$ generated by $e_*(t)$. By the Van-Kampen theorem, we have $\Lambda=ker (G \longrightarrow \Gamma)$. By the properties of the covering space, we have $ker(G \longrightarrow \Gamma)=ker(H \longrightarrow \pi)$. Thus for any $gH \in G/H$, as $g^{-1}e_*(t)g \in \Lambda \subset H$, we have $e_*(t)gH=gH$ and $\bar{\rho}(e_*(t))$ is the left multiplication of $r(g^{-1}e_*(t)g)$ on each component with index $gH$.
	
	For $g \in G\backslash H$, let $\gamma$ be a loop in $W$ representing $g$. Lift $\gamma$ to a path $\bar{\gamma}:[0,1] \longrightarrow \overline{W}$, such that $\bar{\gamma}(0) \in \bar{e}(N \times S^1) \subset \bar{e}(N \times D^2)$. Since $g \notin H$, $\bar{\gamma}(1)$ is in a different component $A$ of $p^{-1}(N \times D^2)$. Now we can lift $t$ in $A$, denoted by $\bar{t}$. Then $g^{-1}tg$ is given by the concatenation of curves $\bar{\gamma}$,$\bar{t}$ and $\bar{\gamma}^{-1}$. We can also lift the null homotopy of $t$ to a null homotopy of $\bar{t}$ in $A$. Since $A \subset W_{\infty}$, we have $i_*(g^{-1}tg)=e$ and then $r_{\Pi}(g^{-1}tg)=e$.
	
	Thus we get that $\bar{\rho}(e_*(t))$ is identity on all components except that with index $eH$. Then $\bar{\rho}(e_*(t))=[Id]$ and thus $\bar{\rho}(\Lambda)=[Id]$, which means that $\bar{\rho}$ can descend to a group homomorphism $\rho: \Gamma \longrightarrow \Sigma \mathbb{Z}\Pi$. The map can be further extended linearly to a ring homomorphism $\rho:\mathbb{Z}\Gamma \longrightarrow \Sigma \mathbb{Z}\Pi$.

	$(2)$ If $G/H$ is a finite set, then we define $\rho$ to be the trivial map: $\rho(x)=0$ for all $x \in \mathbb{Z}\Gamma$.
\end{Construction}

	\section{Locally finite $\mathbb{N}$-graded category at infinity}
	We will construct a bridge to the proof of Theorem \ref{suspensionL} in this section. The bridge is the locally finite $\mathbb{N}$-graded category at infinity $\mathbb{F}_{\mathbb{N},b}(\mathbb{A})$ assigned to any additive category $\mathbb{A}$ with involution. We will show that $L_*(\mathbb{F}_{\mathbb{N},b}(\mathbb{A})) \cong L_{*-1}^p(\mathbb{A}) := L_{*-1}(\mathbb{P}_0(\mathbb{A}))$ in this section.

For any ring $R$ with involution, let $M^h(R)$ be the additive category of finitely generated free right $R$ modules. We will observe that, by definition (below), the elements of its suspension ring can be viewed as certain morphisms in $\mathbb{F}_{\mathbb{N},b}(M^h(R))$. We will explore further about their relation and the relation of their $L$-groups in the next section.

\subsection{Construction of the category $\mathbb{F}_{\mathbb{N},b}(\mathbb{A})$}
\

We begin with the construction of the category. We recall some basic definitions from Ranicki's book \cite{ranickilowerltheory} first:

\begin{Definition}[Additive category with involution]
	\
	
	Let $\mathbb{A}$ be an additive category. An involution on an additive category $\mathbb{A}$ is a contravariant additive functor $*: \mathbb{A} \longrightarrow \mathbb{A}; M \mapsto M^*$, together with a natural equivalence
	$e : 1 \longrightarrow *^2: \mathbb{A} \longrightarrow \mathbb{A} ; M \longrightarrow (e(M) : M \longrightarrow M
	^{**})$.
\end{Definition}

\begin{Definition}
	\
	
	Let $\mathbb{A}$ be an additive category. $\mathbb{P}_0(\mathbb{A})$ is the additive category with objects $(P,p)$, where $P$ is an object in $\mathbb{A}$ and $p^2=p$. Morphisms $f:(P,p) \longrightarrow (Q,q)$ are morphisms $f \in Hom_{\mathbb{A}}(P,Q)$ that satisfy $f=qfp$.
	
	If there is an involution on $\mathbb{A}$, then there is a natural involution on $\mathbb{P}_0(\mathbb{A})$ given by $(P,p) \mapsto (P^*,p^*), f \mapsto f^*$.
\end{Definition}

\begin{Definition}
	\
	
	Let $R$ be a ring with involution $-$, denote $M^h(R)$ to be the small additive category of based finitely generated free right $R$ modules, that is, the objects are $R^n$ with $n \in \mathbb{N}$ and morphisms are $R$ module homomorphisms. There is an involution on this category given by: $*: P \longrightarrow P^*=Hom_R(P,R)$, with the right $R$ moudle structure on $P^*$ given by $(f \cdot r)(p)= \bar{r} \cdot f(p)$.
\end{Definition}

\begin{Definition}[Locally finite $\mathbb{N}$-graded category]
	\
	
	Let $\mathbb{A}$ be an additive category with involution. The locally finite $\mathbb{N}$-graded category $\mathbb{F}_{\mathbb{N}}(\mathbb{A})$ is defined to be the following additive category with involution:
	
	$(1)$ An object of $\mathbb{F}_{\mathbb{N}}(\mathbb{A})$ is a collection $\{M(j) \ | \ j \in \mathbb{N}\}$ of objects in $\mathbb{A}$ indexed by the natural numbers $\mathbb{N}$, written as $M=\sum\limits_{i \geq 0} M(i)$.
	
	$(2)$ A morphism $f:M \longrightarrow N$ between two objects is a collection $\{f(j,i):M(i) \longrightarrow N(j) \ | \ i,j \in \mathbb{N}\}$ of morphisms in $\mathbb{A}$, such that, the sets $\{j\ | \ f(j,i) \neq 0\}$ and $\{j\ | \ f(i,j) \neq 0\}$ are finite for any $i \in \mathbb{N}$.
	
	$(3)$ The composition is given by $(f \circ g)(i,j)=\sum\limits_{k \geq 0} f(i,k)g(k,j)$ for all $i,j \in \mathbb{N}$.
	
	$(4)$ The involution is given by taking dual pointwise: $ M=\sum\limits_{i \geq 0} M(i) \mapsto M^*=\sum\limits_{i \geq 0} M(i)^*, f={f(i,j)}_{i,j \in \mathbb{N}} \mapsto f^*={f(j,i)^*}_{i,j \in \mathbb{N}}$.
\end{Definition}

The category $\mathbb{F}_{\mathbb{N},b}(\mathbb{A})$ is the quotient category $\mathbb{F}_{\mathbb{N}}(\mathbb{A})$ by finite morphisms:

\begin{Definition}[Locally finite $\mathbb{N}$-graded category at infinity]
	\
	
	Let $\mathbb{A}$ be an additive category with involution. Define the locally finite $\mathbb{N}$-graded category at infinity $\mathbb{F}_{\mathbb{N},b}(\mathbb{A})$ to be the following additive category with involution:
	
	$(1)$ The objects of $\mathbb{F}_{\mathbb{N},b}(\mathbb{A})$ are the same as $\mathbb{F}_{\mathbb{N}}(\mathbb{A})$.
	
	$(2)$ The morphisms are equivalence classes of morphisms in $\mathbb{F}_{\mathbb{N}}(\mathbb{A})$ by the following relation:
	
	$f \sim g \Leftrightarrow \text{the set } \{(i,j) \in \mathbb{N} \times \mathbb{N} \ | (f-g)(i,j) \neq 0\} \text{ is finite}$.
	
	$(3)$ The involution is given by pointwise taking dual: $ M \mapsto M^*, [f] \mapsto [f^*]$.
\end{Definition}

\begin{Remark}
	As we have $f_1+f_2 \sim g_1+g_2$ and $f_1 \circ f_2 \sim g_1 \circ g_2$ for any $f_1 \sim f_2, g_1 \sim g_2$, and $f \sim g$ implies $f^* \sim g^*$, the category $\mathbb{F}_{\mathbb{N},b}(\mathbb{A})$ is an additive category with involution.
\end{Remark}

\begin{Remark}
	Let $R$ be a ring with involution, $\mathbb{A}=M^h(R)$, the additive category of finite generated free right $R$ modules, $M$ be the object in $\mathbb{F}_{\mathbb{N},b}(\mathbb{A})$ with $M(i)=R$ for all $i \in \mathbb{N}$. Then any $[A=(a[i,j])_{i,j \in \mathbb{N}}] \in \Sigma R$ can be viewed as an endomorphism $f_A$ of the object $M$: $f_A(i,j)(a)=a[i,j]a$ for $a \in M(j)=R$.
\end{Remark}

The two categories above are naturally related by a functor, and the following notation is introduced for simplicity:

\begin{Definition}
	\
	
	There is a natural functor $\mathbb{F}_{\mathbb{N}}(\mathbb{A}) \longrightarrow \mathbb{F}_{\mathbb{N},b}(\mathbb{A})$ given by $M \mapsto M$ on objects and $f \mapsto [f]$ on morphisms. Given any quadratic chain complex $(C,\psi)$ in $\mathbb{F}_{\mathbb{N}}(\mathbb{A})$, denote $[(C,\psi)]$ to be the image of it under the natural functor. $[(C,\psi)]$ is a quadratic chain complex in $\mathbb{F}_{\mathbb{N},b}(\mathbb{A})$.
\end{Definition}

\subsection{The $L$-theory of the category $\mathbb{F}_{\mathbb{N},b}(\mathbb{A})$}
\label{computationLgroup}
\

Before analyzing its $L$-theory, we recall a definition in Chapter 5 of \cite{ranickilowerltheory} that is useful for computations :

\begin{Definition}[natural flasque structure]
	\label{flasquedef}
	\
	
	Let $\mathbb{A}$ be an additive category, a natural flasque structure is a triple $(\Sigma,\sigma,\phi)$ that consists of:
	
	$(1)$ An additive functor $\Sigma: \mathbb{A} \longrightarrow \mathbb{A}$.
	
	$(2)$ A natural isomorphism $\sigma: Id \oplus \Sigma \longrightarrow \Sigma$.
	
	$(3)$ An isomorphism $\phi_{M,N}: \Sigma (M \oplus N) \longrightarrow \Sigma M \oplus \Sigma N$ for every pair of objects $M,N$ in $\mathbb{A}$, such that
	$$\sigma_{M \oplus N}= \phi_{M,N}^{-1}(\sigma_M \oplus \sigma_N)(Id_{M \oplus N} \oplus \phi_{M,N}): M \oplus N \oplus \Sigma (M \oplus N) \longrightarrow \Sigma(M \oplus N)$$
\end{Definition}

The definition above implies a vanishing result of certain $K$ and $L$ groups:

\begin{Lemma}
	\label{flasquelem}
	\
	
	$(1)$ Let $\mathbb{A}$ be an additive category with a natural flasque structure. Suppose that we give all the additive categories the split exact structure, then $K_0(\mathbb{A})=K_0(\mathbb{P}_0(\mathbb{A}))=0$.
	
	$(2)$ Furthermore, if we assume that there is an involution $*$ on $\mathbb{A}$ that is compatible with the natural flasque structure, i.e., $(\Sigma M)^*=\Sigma M^*, (\Sigma f)^*=\Sigma f^*$ for any object $M$ and any morphism $f$, then $L_n(\mathbb{A})=0$ for all $n \in \mathbb{Z}$.
\end{Lemma}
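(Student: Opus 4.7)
The common engine for both parts is an Eilenberg swindle powered by the isomorphism $\sigma: \mathrm{Id} \oplus \Sigma \xrightarrow{\sim} \Sigma$: in any ambient category where the flasque data descends compatibly with the structure being tracked (idempotents, involution, Poincar\'e forms), the relation $[X] + [\Sigma X] = [\Sigma X]$ in the corresponding Grothendieck-type group immediately forces $[X] = 0$.

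For (1), the claim in $K_0(\mathbb{A})$ follows directly from taking classes in $\sigma_M: M \oplus \Sigma M \cong \Sigma M$. To pass to $\mathbb{P}_0(\mathbb{A})$, I would first extend $\Sigma$ by $(P, p) \mapsto (\Sigma P, \Sigma p)$; additivity of $\Sigma$ gives $(\Sigma p)^2 = \Sigma p$, so this is well-defined. Naturality of $\sigma$ at the morphism $p: P \to P$ produces the identity $\sigma_P \circ (p \oplus \Sigma p) = \Sigma p \circ \sigma_P$, which is precisely the statement that $\sigma_P$ underlies an isomorphism $(P, p) \oplus (\Sigma P, \Sigma p) \cong (\Sigma P, \Sigma p)$ in $\mathbb{P}_0(\mathbb{A})$, and the same swindle then yields $[(P, p)] = 0$.

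For (2), I would first lift $(\Sigma, \sigma, \phi)$ to the category of bounded chain complexes in $\mathbb{A}$ by applying $\Sigma$ degreewise, with the direct-sum isomorphisms $\phi$ handling the finite direct sums that occur in morphisms; $\sigma$ lifts degreewise to a natural chain isomorphism. The involution compatibility $(\Sigma M)^* = \Sigma M^*$ and $(\Sigma f)^* = \Sigma f^*$ then lets me lift everything further to Poincar\'e complexes: a symmetric (or quadratic) structure $\varphi = \{\varphi_s\}_{s \geq 0}$ on $C$ produces $\Sigma \varphi = \{\Sigma \varphi_s\}$ on $\Sigma C$, and Poincar\'e duality is preserved because $\Sigma$ sends chain equivalences to chain equivalences.

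The main technical obstacle is verifying that $\sigma_C$ actually lifts to an equivalence $(C, \varphi) \oplus (\Sigma C, \Sigma \varphi) \simeq (\Sigma C, \Sigma \varphi)$ of Poincar\'e complexes, i.e.\ that it intertwines the two $\varphi$-structures and not merely the underlying chain complexes. By naturality of $\sigma$ this amounts to a diagram chase in which the compatibility condition of Definition \ref{flasquedef} between $\sigma$ and $\phi$ is exactly what translates the direct sum of $\varphi$-structures through $\Sigma$. Once this compatibility is checked, the swindle in $L_n(\mathbb{A})$ gives $[(C, \varphi)] + [(\Sigma C, \Sigma \varphi)] = [(\Sigma C, \Sigma \varphi)]$ for every representative Poincar\'e complex $(C, \varphi)$, whence $[(C, \varphi)] = 0$ and $L_n(\mathbb{A}) = 0$ for all $n \in \mathbb{Z}$.
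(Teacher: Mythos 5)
Your proof is correct and takes essentially the same Eilenberg-swindle route as the paper's. The paper is terser -- it simply asserts the isomorphisms $(M \oplus \Sigma M, p \oplus \Sigma p) \cong (\Sigma M, \Sigma p)$ and $(C \oplus \Sigma C, \psi \oplus \Sigma\psi) \cong (\Sigma C, \Sigma\psi)$ -- whereas you rightly point out that naturality of $\sigma$ (applied at $p$ and at the structure maps, respectively) is what supplies them, but the structure of the argument is identical.
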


\begin{proof}
	\
	
	$(1)$ By the definition of natural flasque structure, we have $M \oplus \Sigma M \cong \Sigma M$ for any object $M$ in $\mathbb{A}$. Therefore, we have $[M]=[M \oplus \Sigma M]-[\Sigma M]=0$, showing that $K_0(\mathbb{A})=0$.
	
	For any object $(M,p)$ in $\mathbb{P}_0(\mathbb{A})$, we have $(\Sigma p)^2=\Sigma p^2=\Sigma p$. Therefore, we have $[(M,p)]=[(M \oplus \Sigma M, p \oplus \Sigma p)]-[(\Sigma M,\Sigma p)]=0$, showing that $K_0(\mathbb{P}_0(\mathbb{A}))=0$.
	
	$(2)$ Let $(C,\psi)$ be any $n$-dimensional quadratic chain complex in $\mathbb{A}$. Since the natural flasque structure is compatible with the involution, we have that $(\Sigma C,\Sigma \psi)$ is also an $n$-dimensional quadratic chain complex in $\mathbb{A}$. Since $(C \oplus \Sigma C,\psi \oplus \Sigma\psi) \cong (\Sigma C,\Sigma \psi)$, we can deduce that $(C,\psi)$ is null-cobordant and thus $L_n(\mathbb{A})=0$.
\end{proof}

Then we begin to analyze the $L$-theory of locally finite $\mathbb{N}$-graded category at infinity $\mathbb{F}_{\mathbb{N},b}(\mathbb{A})$, the main result is:

\begin{Theorem}
	\label{exactseq}
	Let $\mathbb{A}$ be any additive category and define $J=ker(\widetilde{K}_0(\mathbb{P}_0(\mathbb{A})) \longrightarrow \widetilde{K}_0(\mathbb{P}_0(\mathbb{F}_{\mathbb{N}}(\mathbb{A}))))$. Then there is an exact sequence:
	
	\begin{tikzcd}
		... \rar & L_n^J(\mathbb{P}_0(\mathbb{A})) \rar \ar[d,phantom,""{coordinate, name=Z}] & L_n(\mathbb{F}_{\mathbb{N}}(\mathbb{A})) \rar  & L_n(\mathbb{F}_{\mathbb{N},b}(\mathbb{A})) \ar[dll,
		"\partial",
		rounded corners,
		to path={ -- ([xshift=2ex]\tikztostart.east)
			|- (Z) [near end]\tikztonodes
			-| ([xshift=-2ex]\tikztotarget.west)
			-- (\tikztotarget)}] \\
		& L_{n-1}^J(\mathbb{P}_0(\mathbb{A})) \rar & ...
	\end{tikzcd}
\end{Theorem}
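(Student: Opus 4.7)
The plan is to derive Theorem \ref{exactseq} as a Ranicki-style localization long exact sequence in $L$-theory associated with the quotient functor $q:\mathbb{F}_{\mathbb{N}}(\mathbb{A})\longrightarrow \mathbb{F}_{\mathbb{N},b}(\mathbb{A})$, in the spirit of the exact sequences developed in \cite{ranickilowerltheory}. The conceptual picture is that $q$ kills precisely the finite morphisms, which are controlled by the finitely-supported subcategory of $\mathbb{F}_{\mathbb{N}}(\mathbb{A})$; this subcategory is equivalent, after idempotent completion, to $\mathbb{P}_0(\mathbb{A})$, and the decoration $J$ arises as the $K_0$-level obstruction to this equivalence.

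The first step is to construct the three maps. The map $\alpha:L_n^J(\mathbb{P}_0(\mathbb{A}))\longrightarrow L_n(\mathbb{F}_{\mathbb{N}}(\mathbb{A}))$ is induced by the embedding sending $(M,p)\in\mathbb{P}_0(\mathbb{A})$ to the object of $\mathbb{F}_{\mathbb{N}}(\mathbb{A})$ concentrated at index $0$; the $J$-decoration is forced because only classes whose $\widetilde{K}_0$-image vanishes in $\widetilde{K}_0(\mathbb{P}_0(\mathbb{F}_{\mathbb{N}}(\mathbb{A})))$ yield free complexes after passing to $\mathbb{F}_{\mathbb{N}}(\mathbb{A})$. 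The middle map $\beta$ is simply $q_*$. The connecting map $\partial$ is the key geometric ingredient: given a Poincare complex $(C,\phi)$ in $\mathbb{F}_{\mathbb{N},b}(\mathbb{A})$, I pick a lift $(\widetilde{C},\widetilde{\phi})$ in $\mathbb{F}_{\mathbb{N}}(\mathbb{A})$; the duality map $\widetilde{\phi}_0:\widetilde{C}^{n-*}\longrightarrow \widetilde{C}$ is only a chain equivalence modulo finite morphisms, so its algebraic mapping cone has only finitely many non-zero components and inherits an $(n-1)$-dimensional Poincare structure in $\mathbb{P}_0(\mathbb{A})$, defining $\partial[(C,\phi)]$.

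Next I would verify exactness at each of the three terms. The compositions $\beta\circ\alpha$, $\partial\circ\beta$, and $\alpha\circ\partial$ all vanish by direct inspection: a finitely supported complex becomes null in the quotient; an honest lift has a contractible cone; and the duality map itself supplies an explicit null-cobordism of the finite cone inside $\mathbb{F}_{\mathbb{N}}(\mathbb{A})$. The reverse inclusions at each position use Ranicki's algebraic surgery machinery: given a class in the kernel of a map, one assembles an explicit cobordism exhibiting it as a boundary by patching a lift together with a null-cobordism across the finite/infinite interface. The decoration $J$ enters at exactly this step, as the $K_0$-level obstruction to upgrading an idempotent-completed null-cobordism to a free one.

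The main obstacle will be the exactness at $L_n(\mathbb{F}_{\mathbb{N},b}(\mathbb{A}))$, together with the careful $K_0$-bookkeeping underlying the $J$-decoration. Specifically, if $\partial[(C,\phi)]=0$ in $L_{n-1}^J(\mathbb{P}_0(\mathbb{A}))$, one must upgrade a chosen null-cobordism of the finite cone in $\mathbb{P}_0(\mathbb{A})$ to a genuine Poincare lift of $(C,\phi)$ in $\mathbb{F}_{\mathbb{N}}(\mathbb{A})$; this requires modifying $(\widetilde{C},\widetilde{\phi})$ by the null-cobordism so that the resulting duality map becomes an honest isomorphism in $\mathbb{F}_{\mathbb{N}}(\mathbb{A})$, which is a standard but delicate algebraic surgery argument. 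Verifying that $\partial$ genuinely lands in $L_{n-1}^J$ rather than $L_{n-1}^p$ similarly reduces to showing that the cone's $\widetilde{K}_0$-class is killed after inclusion into $\mathbb{P}_0(\mathbb{F}_{\mathbb{N}}(\mathbb{A}))$, which holds because $\widetilde{\phi}_0$ is an isomorphism there modulo finite error.
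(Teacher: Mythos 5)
Your overall strategy is compatible with the paper's, which also realizes the sequence as the long exact sequence of a relative $L$-group $L^J_*(\mathbb{P}_0(\mathbb{A}) \to \mathbb{F}_{\mathbb{N}}(\mathbb{A}))$ and identifies $L_n(\mathbb{F}_{\mathbb{N},b}(\mathbb{A}))$ with it via the algebraic Thom construction and thickening. But your description of the connecting map $\partial$ contains a false claim that is exactly where the real work lies.

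You write that because $\widetilde{\phi}_0\colon \widetilde{C}^{n-*}\to\widetilde{C}$ is a chain equivalence modulo finite morphisms, ``its algebraic mapping cone has only finitely many non-zero components.'' This is not true. Each degree of the mapping cone is $\widetilde{C}_{r-1}^{n-*}\oplus\widetilde{C}_r$, a full object of $\mathbb{F}_{\mathbb{N}}(\mathbb{A})$: an $\mathbb{N}$-indexed formal sum with no support restriction. Contractibility of the cone in the quotient category $\mathbb{F}_{\mathbb{N},b}(\mathbb{A})$ is a statement about the homotopy type, not the size of the modules, and does not on its own produce a finite complex. What is actually needed is a finite domination argument: one must show that a finite chain complex in $\mathbb{F}_{\mathbb{N}}(\mathbb{A})$ that is contractible in $\mathbb{F}_{\mathbb{N},b}(\mathbb{A})$ is $(\mathbb{F}_{\mathbb{N}}(\mathbb{A}),\mathbb{A})$-finitely dominated, hence chain homotopic to a finite complex in $\mathbb{P}_0(\mathbb{A})$. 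This is the paper's Lemma \ref{finitedomin}: lifting the chain contraction $[T_r]$ to $\mathbb{F}_{\mathbb{N}}(\mathbb{A})$, using boundedness to choose cutoffs $b_0,\dots,b_p$, and then exhibiting the truncated complex $D_r=C_r\{[0,b_r]\}$ as a domination via the maps $f=\mathrm{Id}-d_CT_r-T_{r-1}d_C$, $g=\mathrm{inclusion}$, $h=T_r$. This step cannot be skipped, and it is precisely what explains both why the boundary lands in $\mathbb{P}_0(\mathbb{A})$ rather than $\mathbb{A}$ and where the $J$-decoration comes from (the reduced projective class of the dominating complex).

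Two further points you gloss over, which the paper makes precise: (i) you need a lifting lemma for Poincar\'e \emph{pairs}, not just complexes, to verify exactness at $L_n(\mathbb{F}_{\mathbb{N},b}(\mathbb{A}))$ and well-definedness of $\partial$ (the paper's Lemma \ref{lift}, part (2), which is proved by the same decomposition trick with the $b_r$ cutoffs); and (ii) the claim that the $J$-decoration ``enters at exactly this step'' needs the computation $[C(f)]=[C]\in\widetilde{K}_0(\mathbb{F}_{\mathbb{N}}(\mathbb{A}))$ and its vanishing under the passage to $\mathbb{F}_{\mathbb{N},b}(\mathbb{A})$, which the paper tracks explicitly in its Thom-construction argument. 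As written, your proposal gestures at the right picture but replaces the two substantive lemmas with incorrect or unargued assertions.
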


\begin{Lemma}
	\label{speflasque}
	There is a natural flasque structure on $\mathbb{F}_{\mathbb{N}}(\mathbb{A})$ that is compatible with the involution.
\end{Lemma}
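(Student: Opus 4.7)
The plan is to construct an explicit natural flasque structure on $\mathbb{F}_{\mathbb{N}}(\mathbb{A})$ by performing an Eilenberg swindle at the level of a grading shift. First define the shift functor $T:\mathbb{F}_{\mathbb{N}}(\mathbb{A})\to\mathbb{F}_{\mathbb{N}}(\mathbb{A})$ by $(TM)(0)=0$, $(TM)(j)=M(j-1)$ for $j\ge 1$, with the analogous assignment on morphisms, and set
\[
\Sigma M \;:=\; \bigoplus_{n\ge 0} T^{n}M.
\]
At each grading $j$ only the summands with $n\le j$ contribute, so $(\Sigma M)(j)=\bigoplus_{k=0}^{j}M(k)$ is a finite direct sum and $\Sigma M$ is a legitimate object of $\mathbb{F}_{\mathbb{N}}(\mathbb{A})$. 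For a morphism $f$ I would set $\Sigma f=\bigoplus_{n} T^{n}f$ and verify local finiteness by writing its row support at grading $\ell$ (resp.\ column support at grading $i$) as a finite union, over $n\in\{0,\dots,\ell\}$ (resp.\ $n\in\{0,\dots,i\}$), of translates of the row/column supports of $f$.

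The natural isomorphism $\sigma_M:M\oplus\Sigma M\to\Sigma M$ is produced by a single \emph{shift-and-insert} move on labeled summands. Viewing $\Sigma M$ as the totality of $M(k)$-labeled pieces, one per position $j\ge k$, the external $M$ contributes one further $M(k)$-piece at position $k$ for each $k$. I would send that extra $M(k)$ to the $M(k)$-piece of the codomain $\Sigma M$ at position $k$, and promote each interior $M(k)$-piece at position $j$ of the domain $\Sigma M$ to the $M(k)$-piece at position $j+1$ of the codomain, always via the identity on $M(k)$. This is a bijection of labeled pieces and hence an isomorphism; its entries relate positions differing by at most one, so both $\sigma_M$ and the evident reverse-shift inverse are locally finite. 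Naturality in $M$ is immediate because the map never mixes different $M(k)$-labels. For the coherence datum $\phi_{M,N}:\Sigma(M\oplus N)\to\Sigma M\oplus\Sigma N$ I would take the canonical distributivity isomorphism $\bigoplus_{k=0}^{j}(M(k)\oplus N(k))\cong\bigoplus_{k=0}^{j}M(k)\oplus\bigoplus_{k=0}^{j}N(k)$ at each grading; the coherence identity in Definition~\ref{flasquedef}(3) then reduces to the fact that $\sigma_{M\oplus N}$ processes the $M$- and $N$-labels independently by the same shift rule. Compatibility with the involution is automatic, since duality commutes with finite direct sums: $(\Sigma M)^{*}(j)=\bigl(\bigoplus_{k=0}^{j}M(k)\bigr)^{*}=\bigoplus_{k=0}^{j}M(k)^{*}=(\Sigma M^{*})(j)$, and the block-matrix transpose identity yields $(\Sigma f)^{*}=\Sigma f^{*}$.

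The conceptual content is nothing more than the standard Eilenberg swindle applied to the shift $T$; the step I expect to treat with the most care is the bookkeeping showing that $\Sigma f$, $\sigma_M$, its inverse, and $\phi_{M,N}$ all satisfy the locally-finite support condition defining $\mathbb{F}_{\mathbb{N}}(\mathbb{A})$. In every case this reduces to the observation that only finitely many of the shift summands $T^{n}$ are active at a given grading, so finiteness propagates directly from the supports of $f$.
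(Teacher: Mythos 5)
Your proposal is correct and follows essentially the same route as the paper: an Eilenberg swindle driven by the right-shift functor $T$. The only cosmetic difference is that you take $\Sigma M=\bigoplus_{n\ge 0}T^{n}M$ while the paper takes $\Sigma M=\bigoplus_{i\ge 1}T^{i}M$, which allows the paper's $\sigma_M$ to be simply ``apply $T$'' to $M\oplus\Sigma M$ rather than the shift-and-insert map you describe; both choices work and all the local-finiteness, naturality, coherence, and involution checks go through in the same way.
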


Combining the two results above with Lemma \ref{flasquelem} gives:

\begin{Corollary}
	\label{Ltheorycatinfinty}
	For any additive category $\mathbb{A}$, we have $L_n(\mathbb{F}_{\mathbb{N},b}(\mathbb{A})) \stackrel{\partial}{\cong} L_{n-1}(\mathbb{P}_0(\mathbb{A}))$.
\end{Corollary}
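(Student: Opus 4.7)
The plan is to combine Theorem \ref{exactseq} with Lemma \ref{speflasque} and Lemma \ref{flasquelem} so that both of the outer terms in the long exact sequence collapse, leaving $\partial$ as the sole nontrivial map.

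First, I would invoke Lemma \ref{speflasque} to equip $\mathbb{F}_{\mathbb{N}}(\mathbb{A})$ with a natural flasque structure compatible with its involution. Applying Lemma \ref{flasquelem}(2) to this structure immediately yields $L_n(\mathbb{F}_{\mathbb{N}}(\mathbb{A})) = 0$ for all $n \in \mathbb{Z}$. In parallel, applying Lemma \ref{flasquelem}(1) to the same category gives $K_0(\mathbb{P}_0(\mathbb{F}_{\mathbb{N}}(\mathbb{A}))) = 0$, and hence $\widetilde{K}_0(\mathbb{P}_0(\mathbb{F}_{\mathbb{N}}(\mathbb{A}))) = 0$. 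Consequently the map $\widetilde{K}_0(\mathbb{P}_0(\mathbb{A})) \longrightarrow \widetilde{K}_0(\mathbb{P}_0(\mathbb{F}_{\mathbb{N}}(\mathbb{A})))$ in Theorem \ref{exactseq} is the zero map, so its kernel $J$ equals all of $\widetilde{K}_0(\mathbb{P}_0(\mathbb{A}))$. Under this maximal decoration, $L_*^{J}(\mathbb{P}_0(\mathbb{A}))$ is precisely the projective $L$-group $L_*(\mathbb{P}_0(\mathbb{A}))$ appearing in the statement of the corollary.

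Feeding these two facts into the long exact sequence of Theorem \ref{exactseq} produces, for each $n$, a four-term sequence
\begin{equation*}
0 \longrightarrow L_n(\mathbb{F}_{\mathbb{N},b}(\mathbb{A})) \stackrel{\partial}{\longrightarrow} L_{n-1}(\mathbb{P}_0(\mathbb{A})) \longrightarrow 0,
\end{equation*}
so $\partial$ is the desired isomorphism. I do not expect a genuine obstacle here: all the substantive content of the corollary is absorbed into the two preceding results, and the proof is purely a bookkeeping step that tracks how the vanishing of $L_*(\mathbb{F}_{\mathbb{N}}(\mathbb{A}))$ and $\widetilde{K}_0(\mathbb{P}_0(\mathbb{F}_{\mathbb{N}}(\mathbb{A})))$ forces the decoration $J$ to be maximal and the boundary map to be an isomorphism.
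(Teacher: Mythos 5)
Your argument is correct and is exactly the combination the paper intends when it states the corollary follows from Theorem \ref{exactseq}, Lemma \ref{speflasque}, and Lemma \ref{flasquelem}. You have merely spelled out the bookkeeping (the vanishing of $L_*(\mathbb{F}_{\mathbb{N}}(\mathbb{A}))$, the vanishing of $\widetilde{K}_0(\mathbb{P}_0(\mathbb{F}_{\mathbb{N}}(\mathbb{A})))$ forcing $J$ to be maximal, and hence $L_*^J(\mathbb{P}_0(\mathbb{A})) = L_*(\mathbb{P}_0(\mathbb{A}))$) that the paper leaves implicit.
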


The rest of this subsection is devoted to the proof of Theorem \ref{exactseq} and Lemma \ref{speflasque}. We start by proving a more straightforward result, namely Lemma \ref{speflasque}:

\begin{proof}[Proof of Lemma \ref{speflasque}]
	\
	
	Let $T:\mathbb{F}_{\mathbb{N}}(\mathbb{A}) \longrightarrow \mathbb{F}_{\mathbb{N}}(\mathbb{A})$ be the right shift functor, defined by $TM(0)=0$, $TM(i)=M(i-1)$ for $i \geq 1$ and $Tf(i,j)=\left\{\begin{aligned} & f(i-1,j-1)  &\ i \geq 1 \ and \ j \geq 1 \\  & \ \ \ \ \ \ \ \ \ 0 &otherwise \ \ \ \ 
	\end{aligned} \right.$ for any morphism $f:M \longrightarrow N$, then we can define a natural flasque structure $(\Sigma,\sigma,\rho)$:
	\begin{equation*}
		\begin{aligned}
			\Sigma M=\mathop{\oplus}\limits_{i=1}^{\infty} T^iM \text{ and } \Sigma f=\mathop{\oplus}\limits_{i=1}^{\infty}T^if \qquad \qquad \qquad \ & \\
			\sigma_M: M \oplus \Sigma M \longrightarrow \Sigma M, \ (a_0,(a_1,a_2,...)) \mapsto (Ta_0,Ta_1,Ta_2,...) & \\
			\phi_{M,N}: \Sigma(M \oplus N) \longrightarrow \Sigma M \oplus \Sigma N, \ (a,b) \mapsto (a,b) \qquad \ \ \ & \\
		\end{aligned}
	\end{equation*}
	
	Examination of the conditions in Definition \ref{flasquedef}:
	
	(a) $\Sigma$ is an additive functor:
	
	As $T$ is an additive functor, we only need to verify that $\Sigma M$ and $\Sigma f$ is an object and a morphism in $\mathbb{F}_{\mathbb{N}}(\mathbb{A})$,  respectively.
	
	By definition, we have $\Sigma M(0)=0$ and $\Sigma M(i)=\mathop{\oplus}\limits_{k=0}^{i-1} M(k)$ for $i \geq 1$. These are objects in $\mathbb{A}$, so $\Sigma M $ is an object in $\mathbb{F}_{\mathbb{N}}(\mathbb{A})$. 
	
	For any morphism $f=\{f(i,j)\}:M \longrightarrow\ N$, $\Sigma f (i,j)=\mathop{\oplus}\limits_{l=1}^{\min(i,j)} f(i-l,j-l): \Sigma M(j)=\mathop{\oplus}\limits_{l=1}^{j} M(j-l) \longrightarrow \Sigma N(i)=\mathop{\oplus}\limits_{l=1}^{i} N(i-l)$. Fixing $i$, we have $\{j \in \mathbb{N}| \ \Sigma f(i,j) \neq 0\} \subset \mathop{\cup}\limits_{l=1}^{\min(i,j)} \{j \in \mathbb{N} | f(i-l,j-l) \neq 0\}$ and $\{j \in \mathbb{N}| \ \Sigma f(j,i) \neq 0\} \subset \mathop{\cup}\limits_{l=1}^{\min(i,j)} \{j \in \mathbb{N} | f(j-l,i-l) \neq 0\}$. Since $f$ is a morphism in $\mathbb{F}_{\mathbb{N}}(\mathbb{A})$, the set on the right side is of finite order, and thus we conclude that $\Sigma f$ is a morphism in $\mathbb{F}_{\mathbb{N}}(\mathbb{A})$.
	
	(b) $\sigma$ is a natural isomorphism:
	
	For any map $f:M \longrightarrow N$, since $T$ is a functor, we have:
	\begin{equation*}
		\begin{aligned}
			\Sigma f \circ \sigma_M(a_0,(a_1,a_2,...))&=\Sigma f(Ta_0,Ta_1,Ta_2,...) \\
			&=(Tf(Ta_0),T^2f(Ta_1),T^3f(Ta_2),...) \\
			&=(T(fa_0),T(Tfa_1),T(T^2f(a_2)),...) \\
			&=\sigma_N (f \oplus \Sigma f)(a_0,(a_1,a_2,...)) \\
		\end{aligned}
	\end{equation*} 

	Therefore, $\sigma$ is a natural transformation.
	
	Writing $\sigma_M$ in the components form: $\sigma_M(i+1,i)=Id: M(i) \oplus \Sigma M(i)=\mathop{\oplus}\limits_{k=0}^i M(k) \longrightarrow \Sigma M(i+1)=\mathop{\oplus}\limits_{k=0}^i M(k)$ and $0$ otherwise. It is clear that $\sigma_M$ is an isomorphism in $\mathbb{F}_{\mathbb{N}}(\mathbb{A})$.
	
	(c) Examination of the equality $\sigma_{M \oplus N}= \phi_{M,N}^{-1}(\sigma_M \oplus \sigma_N)(Id_{M \oplus N} \oplus \phi_{M,N})$:
	
	$
	\begin{aligned}
			& \ \ \ \ \phi_{M,N}^{-1}(\sigma_M \oplus \sigma_N)(Id_{M \oplus N} \oplus \phi_{M,N})(a_0,b_0,((a_1,b_1),(a_2,b_2),...))\\&=\phi_{M,N}^{-1}(\sigma_M \oplus \sigma_N)(a_0,b_0,(a_1,a_2,...),(b_1,b_2,...))\\&=\phi_{M,N}^{-1}((Ta_0,Ta_1,...),(Tb_0,Tb_1,...))\\&=((Ta_0,Tb_0),(Ta_1,Tb_1),...) \\&=\sigma_{M \oplus N}(a_0,b_0,((a_1,b_1),(a_2,b_2),...))
	\end{aligned}
	$
	
	Thus, we have $\sigma_{M \oplus N}= \phi_{M,N}^{-1}(\sigma_M \oplus \sigma_N)(Id_{M \oplus N} \oplus \phi_{M,N})$.

\end{proof}

The proof of Theorem \ref{exactseq} is in analogue with the proof of Theorem 14.2 in \cite{ranickilowerltheory}, beginning with the following lifting lemma:

\begin{Lemma}
	\label{lift}
	\
	
	$(1)$ For any $n$-dimensional quadratic chain complex $(C,\psi)$ in $\mathbb{F}_{\mathbb{N},b}(\mathbb{A})$, there is a $n$-dimensional quadratic chain complex $(C',\psi')$ in $\mathbb{F}_{\mathbb{N}}(\mathbb{A})$, such that $[(C',\psi')]=(C,\psi)$.
	
	$(2)$ For any $n+1$-dimensional quadratic pair $(f:C \longrightarrow D, (\delta\psi,\psi))$ in $\mathbb{F}_{\mathbb{N},b}(\mathbb{A})$, given any $n$-dimensional quadratic chain complex $(C',\psi')$ in $\mathbb{F}_{\mathbb{N}}(\mathbb{A})$ such that $[(C',\psi')]=(C,\psi)$, there is a $n+1$-dimensional quadratic pair $(f':C' \longrightarrow D', (\delta\psi',\psi'))$ in $\mathbb{F}_{\mathbb{N}}(\mathbb{A})$ extending $(C',\psi')$, such that, it maps to $(f:C \longrightarrow D,(\delta\psi,\psi))$ under the natural functor.
\end{Lemma}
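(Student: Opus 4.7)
The overall strategy for both parts is to pick arbitrary lifts in $\mathbb{F}_{\mathbb{N}}(\mathbb{A})$ of the morphisms constituting the given quadratic chain complex or quadratic pair downstairs, observe that all the structural identities hold only up to finite morphisms, and then modify both the underlying graded objects (by adding finite direct summands) and the lifted morphisms (by finite correction terms) until every identity holds on the nose in $\mathbb{F}_{\mathbb{N}}(\mathbb{A})$. The justification is that if $F$ is a finite object, meaning $F(i)=0$ outside a finite set of indices, then $M$ and $M\oplus F$ are isomorphic in $\mathbb{F}_{\mathbb{N},b}(\mathbb{A})$ via the obvious inclusion and projection, because $\mathrm{id}_F$ is a finite morphism; and perturbing a lift by a finite morphism does not alter its class downstairs. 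Hence any correction built out of finite data is invisible after applying the natural functor.

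For part $(1)$, I plan to proceed by a downward induction on the degree $r$. Choose initial lifts $\tilde{d}_r$ of all differentials. The composition $\tilde{d}_{r-1}\tilde{d}_r$ is a finite morphism, whose image therefore lies in a finite subobject of $C_{r-2}$. By enlarging $C_{r-1}$ with a finite summand $F_{r-1}$ that carries a copy of this obstruction together with an auxiliary piece to split it off, I can adjust the lifts so that the new composition vanishes on the nose, and downward iteration produces a chain complex $(C',d')$ in $\mathbb{F}_{\mathbb{N}}(\mathbb{A})$ whose image under the natural functor is $(C,d)$. I would then handle the quadratic structure by the same mechanism: successively lift each $\psi_s$, observe that the Ranicki-style identities defining a quadratic chain complex now fail only by finite morphisms, and absorb the failures by finite perturbations of the $\tilde{\psi}_s$ together with further finite enlargements of $C'$ when needed.

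For part $(2)$, starting from the prescribed lift $(C',\psi')$ of the boundary, I would first lift $D$ to some $D'$ via the method of part $(1)$ and then choose a lift $\tilde{f}:C'\to D'$ of $f$. The chain-map defect $d'\tilde{f}-\tilde{f}d'$ is finite and can be killed by a finite adjustment of $\tilde{f}$ after a finite enlargement of $D'$; the pair relations for $\delta\psi$ are fixed by the same procedure applied to lifts $\tilde{\delta\psi}_s$. The main obstacle, and the step that deserves the most care, is consistency: the finite corrections enforcing one identity must not spoil an identity already enforced at an earlier stage. I expect to address this via a single simultaneous induction on the chain degree, in analogy with the proof of Theorem 14.2 in \cite{ranickilowerltheory}; the fact that finite morphisms form a two-sided ideal closed under composition ensures that at each level the outstanding corrections remain finite, so the induction closes and produces the required lift extending the given $(C',\psi')$.
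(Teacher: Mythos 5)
Your overall strategy is genuinely different from the paper's and, as written, does not quite prove the lemma as stated.

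The paper keeps the underlying graded objects completely unchanged ($C'_r = C_r$ and later $D'_r = D_r$) and instead plays a truncation trick. Since $C$ is a finite complex, and since all the defect morphisms arising from the arbitrary lifts $d_C$, $\overline\psi_s$, etc., are finite, one can choose natural numbers $b_0,\dots,b_p$ so that all of these defects are concentrated in, and interact consistently with, the finite low-index blocks $C_{k+r}\{[0,b_r]\}$. Writing every lifted morphism as a $2\times 2$ block matrix with respect to the splitting $C_{k+r}=C_{k+r}\{[0,b_r]\}\oplus C_{k+r}\{[b_r+1,\infty)\}$, the paper sets to zero every block that touches the low-index summand, leaving only the $(1,1)$ blocks $d_C^{11}$, $\overline\psi_s^{11}$. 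The conditions on the $b_r$ are exactly what is needed to make all the quadratic (and later pair) identities hold on the nose for these truncations, and the discarded blocks are finite, so the image under the natural functor is literally $(C,\psi)$. This is a single, simultaneous correction to all degrees; there is no induction that has to worry about one level's fix spoiling another.

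Your approach — adding finite direct summands $F_{r-1}$ to kill $\widetilde d_{r-1}\widetilde d_r$ and then iterating — has two problems relative to what the lemma asks. First, and most concretely, once you enlarge $C_{r-1}$ to $C_{r-1}\oplus F_{r-1}$ the underlying object of your lift $C'$ is no longer $C$. The natural functor $\mathbb{F}_{\mathbb{N}}(\mathbb{A})\to\mathbb{F}_{\mathbb{N},b}(\mathbb{A})$ is the identity on objects, so you would only obtain $[(C',\psi')]\cong(C,\psi)$, not the equality $[(C',\psi')]=(C,\psi)$ that the lemma states and that Remark \ref{Rempartial} and the proof of Theorem \ref{exactseq} invoke. (This weaker statement is likely still usable downstream, since everything is ultimately taken up to cobordism, but it is not the lemma.) Second, you correctly flag the coupling problem — that enforcing the identity in degree $r$ might spoil the one in degree $r+1$ — but the resolution you offer (finite morphisms form a two-sided ideal, so outstanding obstructions stay finite) only guarantees that each step's defect remains of the right type; it does not by itself produce a recipe that closes the induction, and once you start enlarging objects you also have to re-lift the differentials and all the $\psi_s$ on the new summands and verify anew that the dualized terms $\psi_{s}^*$ cooperate. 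The paper's block truncation sidesteps all of this: because the $b_r$'s are fixed simultaneously and the truncation removes every offending block at once, the identities for $(C',\psi')$ follow directly from those for $(C,\psi)$ by reading off the $(1,1)$ entries, with no inductive bookkeeping. I would encourage you to look for that single decomposition of each $C_r$ into ``finite low-index'' and ``remainder'' pieces first, and then verify that the conditions you need on the bounds can be met because the complex is finite in the chain direction.
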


\begin{proof}
	\
	
	(1)
	For every object $M$ in $\mathbb{F}_{\mathbb{N}}(\mathbb{A})$ and every subset $I \subset [0,+\infty)$, denote $M\{I\}$ to be the object with $M\{I\}(i)=M(i)$ for $i \in I$ and $M\{I\}(i)=0$ for $i \notin I$. For any two objects $M,N$ in $\mathbb{F}_{\mathbb{N}}(\mathbb{A})$ and a morphism $f$ between them, we denote $f(M) \subset N\{I\}$ if $f(j,i)=0$ for every $i \in \mathbb{N}$ and $j \notin I$.
	
	Now given any $n$-dimensional quadratic chain complex $(C,\psi)$ in $\mathbb{F}_{\mathbb{N},b}(\mathbb{A})$, there exist $k \in \mathbb{Z}$ and $p \in \mathbb{N}$ such that $C_r=0$ if $r<k$ or $r>k+p$. Then, we have $\psi_s^r=0: C_r^* \longrightarrow C_{n-r-s}$ for every $s>n-2k$ and $r \in \mathbb{Z}$, as one of the objects in the morphism must be 0.
	
	Choose any morphisms in $\mathbb{F}_{\mathbb{N}}(\mathbb{A})$ that lifts the differential of $C$ and $\psi$ respectively and denote them by $d_C$ and $\overline{\psi}$. Then, by definition of $\mathbb{F}_{\mathbb{N},b}(\mathbb{A})$, there exist natural numbers $b_0,b_1,...,b_p$, such that:
	
	(a1) $d_C^2(C_{k+r}) \subset C_{k+r-2}\{[0,b_{r-2}]\}$ for all $2 \leq r \leq p$.
	
	(a2) $d_C(C_{k+r}[0,b_r]) \subset C_{k+r-1}\{[0,b_{r-1}]\}$ for all $1 \leq r \leq p$.
	
	(a3) $\partial\overline{\psi}_s(C_{k+r}^*) \subset C_{n-k-r-s-1}\{[0,b_{n-2k-r-s-1}]\}$ for all $0 \leq r \leq p$ and $0 \leq s \leq n-2k-r-1$.
	
	We can define $b_l=-1$ for $l<0$ or $l>p$, then the properties above can be extended to hold for all $r$ and $s$.
	
	Choose the decomposition $C_{k+r}=C_{k+r}\{[0,b_r]\} \oplus C_{k+r}\{[b_r+1, \infty)\}$ and denote $d_C$ and $\overline{\psi}_s$ to be $\begin{bmatrix} d_C^{00} & d_C^{01}\\ d_C^{10} & d_C^{11} \end{bmatrix}$ and $\begin{bmatrix} \overline{\psi}_s^{00} & \overline{\psi}_s^{01}\\ \overline{\psi}_s^{10} & \overline{\psi}_s^{11} \end{bmatrix}$ with respect to this decomposition. Now we define $C'$ to be the chain complex with $C'_r=C_r$ and
	$d_{C'}=\begin{bmatrix} 0 & 0\\ 0 & d_C^{11} \end{bmatrix}$.  Let $\psi'_s=\begin{bmatrix}
	0 & 0\\
	0 & \overline{\psi}_s^{11}
	\end{bmatrix}$, we claim that $(C',\psi')$ is a $n$-dimensional quadratic chain complex in $\mathbb{F}_{\mathbb{N}}(\mathbb{A})$ and $[(C',\psi')]=(C,\psi)$. The proof is divided into three steps:
	
	(a) $C'$ is a chain complex:
	
		Conditions (a1) and (a2) can be rephrased as $d_C^2=\begin{bmatrix} * & * \\ 0 & 0 \end{bmatrix}$ and $d_C^{10}=0$, thus $d_C^{11} \circ d_C^{11}=0$ , and then $d_{C'}^2=0$.
		
	(b) $(C',\psi')$ is quadratic:
		
		Denote $r_0=n-k-r-s-1$, condition (a3) can be rephrased as follows:
		\begin{align*}
			\begin{bmatrix} * & * \\ 0 & 0 \end{bmatrix}&=d_C\overline{\psi}_s-(-1)^{r_0}\overline{\psi}_sd_C^*+(-1)^{(r_0+1)(n+s)}\overline{\psi}_{s+1}^*+(-1)^{s}\overline{\psi}_{s+1} \\
			&=\begin{bmatrix} d_C^{00} & d_C^{01} \\ 0 & d_C^{11} \end{bmatrix} \begin{bmatrix} \overline{\psi}_s^{00} & \overline{\psi}_s^{01} \\ \overline{\psi}_s^{10} & \overline{\psi}_s^{11} \end{bmatrix}-(-1)^{r_0}\begin{bmatrix} \overline{\psi}_s^{00} & \overline{\psi}_s^{01} \\ \overline{\psi}_s^{10} & \overline{\psi}_s^{11} \end{bmatrix} \begin{bmatrix} {d_C^{00}}^* & 0 \\ {d_C^{01}}^* & {d_C^{11}}^* \end{bmatrix} \\
			& \quad+(-1)^{(r_0+1)(n+s)}\begin{bmatrix} {\overline{\psi}_{s+1}^{00}}^* & {\overline{\psi}_{s+1}^{10}}^* \\ {\overline{\psi}_{s+1}^{01}}^* & {\overline{\psi}_{s+1}^{11}}^* \end{bmatrix}+(-1)^{s}\begin{bmatrix} \overline{\psi}_{s+1}^{00} & \overline{\psi}_{s+1}^{01} \\ \overline{\psi}_{s+1}^{10} & \overline{\psi}_{s+1}^{11} \end{bmatrix}
		\end{align*}

		Thus we get $d_{C'}\psi'_s-(-1)^{r_0}\psi'_sd_{C'}^*+(-1)^{(r_0+1)(n+s)}{\psi'}^*_{s+1}+(-1)^{s}\psi'_{s+1}=0$, i.e.,  $\partial \psi'=0$. Then, we can conclude that $(C',\psi')$ is a quadratic chain complex.
	
	(c) $[(C',\psi')]=(C,\psi)$:
	
		By definition, $(d_{C'}-d_C)(i,j)=0:C_{k+r} \longrightarrow C_{k+r-1}$ for $i>b_r,j>b_{r-1}$. Consequently, we have:
		\begin{equation*}
			\begin{aligned}
				& \mathop{\cup}\limits_{i=0}^{b_r} \{(i,j) \ |(d_{C'}-d_C)(i,j) \neq 0\} \\
				\{(i,j) \in \mathbb{N} \times \mathbb{N}\ |(d_{C'}-d_C)(i,j) \neq 0\} \subset \ & \qquad \qquad \qquad \ \ \cup \\
				& \mathop{\cup}\limits_{j=0}^{b_{r-1}} \{(i,j) \  |(d_{C'}-d_C)(i,j) \neq 0\} \\
			\end{aligned}
		\end{equation*}
		
		The sets on the right hand side are finite, as $d_{C'}-d_C$ is a morphism in $\mathbb{F}_{\mathbb{N}}(\mathbb{A})$. Thus, we have $d_{C'} \sim d_C$ and similarly $\psi'_s \sim \overline{\psi}_s$. Then, we have $[(C',\psi')]=(C,\psi)$, completing the proof of (1).
		
	(2)
		Similar to the proof of (1), there exist $k \in \mathbb{Z}$ and $p \in \mathbb{N}$, such that $C_r=D_r=0$ if $r<k$ or $r>k+p$.
		
		Choose any morphisms in $\mathbb{F}_{\mathbb{N}}(\mathbb{A})$ that lift the differential of $D$, the morphism $f$ and $\delta\psi$ respectively and denote them by $d_D$, $\bar{f}$ and $\overline{\delta\psi}$. Set $b_l=-1$ for $l<0$ and $l>p$, similar to the proof of (1), there exists natural numbers $b_0,b_1,...,b_p$, such that:
		
		(a1) $d_D^2(D_{k+r}) \subset D_{k+r-2}\{[0,b_{r-2}]\}$ for all $r$.
		
		(a2) $d_D(D_{k+r}\{[0,b_r]\}) \subset D_{k+r-1}\{[0,b_{r-1}]\}$ for all $r$.
		
		(a3) $(d_D\bar{f}-\bar{f}d_{C'})(C'_{k+r}) \subset D_{k+r-1}\{[0,b_{r-1}]\}$ for all $r$.
		
		(a4) $(\partial\overline{\delta\psi}_s-\bar{f}_{\%}\psi'_s)(D_{k+r}^*) \subset D_{n-k-r-s}\{[0,b_{n-2k-r-s}]\}$ for all $r$ and $s$.
		
		Choose the decomposition $D_{k+r}=D_{k+r}\{[0,b_r]\} \oplus D_{k+r}\{[b_r+1, \infty)\}$ and denote the maps $d_D$, $\bar{f}$ and $\overline{\delta\psi}$ to be $\begin{bmatrix} d_D^{00} & d_D^{01}\\ d_D^{10} & d_D^{11} \end{bmatrix}$, $\begin{bmatrix} \bar{f}^{00} & \bar{f}^{01}\\ \bar{f}^{10} & \bar{f}^{11} \end{bmatrix}$ and $\begin{bmatrix} \overline{\delta\psi}^{00} & \overline{\delta\psi}^{01}\\ \overline{\delta\psi}^{10} & \overline{\delta\psi}^{11} \end{bmatrix}$with respect to this decompositon. Now we define $D'$ to be the chain complex with $D_r'=D_r$ and $d_{D'}=\begin{bmatrix} 0 & 0\\ 0 & d_D^{11} \end{bmatrix}$. Let $f'=\begin{bmatrix} 0 & 0\\ \bar{f}^{10} & \bar{f}^{11} \end{bmatrix}$ $\delta\psi'_s=\begin{bmatrix}
			0 & 0\\
			0 & \overline{\delta\psi}_s^{11}
		\end{bmatrix}$ with respect to the same decomposition above, we claim that $(f':C' \longrightarrow D',(\delta\psi',\psi'))$ is a quadratic pair that maps to $(f:C \longrightarrow D,(\delta\psi,\psi))$ under the natural functor. The proof is divided into three steps:
	
		(a) $D'$ is a chain complex and $f'$ is a chain map.
		
		By condition (a2), we have $d_D^{10}=0$. Condition (a1) can be repharsed as $d_D^2=\begin{bmatrix} * & *\\ 0 & 0 \end{bmatrix}$, thus $d_D^{11} \circ d_D^{11}=0$, showing that $D'$ is a chain complex.
		
		Condition (a3) can be written as:
		\begin{equation*}
			\begin{bmatrix} d_D^{00} & d_D^{01}\\ 0 & d_D^{11} \end{bmatrix} \begin{bmatrix} \bar{f}^{00} & \bar{f}^{01}\\ \bar{f}^{10} & \bar{f}^{11} \end{bmatrix}-\begin{bmatrix} \bar{f}^{00} & \bar{f}^{01}\\ \bar{f}^{10} & \bar{f}^{11} \end{bmatrix} \begin{bmatrix} d_{C'}^{00} & d_{C'}^{01}\\ d_{C'}^{10} & d_{C'}^{11} \end{bmatrix}=\begin{bmatrix} * & *\\ 0 & 0 \end{bmatrix}
		\end{equation*}
	
		 Comparing the entries in the matrix, we can get $d_{D'}f'=f'd_{C'}$.
		
		(b) $(f':C' \longrightarrow D',(\delta\psi',\psi'))$ is a quadratic pair.
		
		Denote $r_1=n-k-r-s$, condition (a4) can be written as 
		\[
			\begin{bmatrix} * & *\\ 0 & 0 \end{bmatrix}=d_D\overline{\delta\psi}_s-(-1)^{r_1}\overline{\delta\psi}_sd_D^*+(-1)^{(r_1+1)(n+1+s)}\overline{\delta\psi}_{s+1}^*+(-1)^s\overline{\delta\psi}_{s+1}+\bar{f}\psi_s'\bar{f}^*
		\]
		
		Comparing the entries in the matrix, we get $d_{D'}\delta\psi_s'-(-1)^{r_1}\delta\psi_s'd_{D'}^*+(-1)^{(r_1+1)(n+1+s)}{\delta\psi'_{s+1}}^*+(-1)^s\delta\psi_{s+1}'+f'\psi_s'{f'}^*=0$, i.e.,  $(f':C' \longrightarrow D',(\delta\psi',\psi'))$ is a quadratic pair.
		
		(c) $(f':C' \longrightarrow D',(\delta\psi',\psi'))$ maps to $(f:C \longrightarrow D,(\delta\psi,\psi))$ under the natural functor.
		
		By definition, we have $(d_{D'}-d_D)(i,j)=0:D_{k+r} \longrightarrow D_{k+r-1}$ for $i>b_r,j>b_{r-1}$. Consequently, we have:
		\begin{equation*}
			\begin{aligned}
				& \mathop{\cup}\limits_{i=0}^{b_r} \{(i,j) \ |(d_{D'}-d_D)(i,j) \neq 0\} \\
				\{(i,j) \in \mathbb{N} \times \mathbb{N}\ |(d_{D'}-d_D)(i,j) \neq 0\} \subset \ & \qquad \qquad \qquad \ \ \cup \\
				& \mathop{\cup}\limits_{j=0}^{b_{r-1}} \{(i,j) \  |(d_{D'}-d_D)(i,j) \neq 0\} \\
			\end{aligned}
		\end{equation*}
		
	The sets on the right hand side are finite, as $d_{D'}-d_D$ is a morphism in $\mathbb{F}_{\mathbb{N}}(\mathbb{A})$. Thus, we have $d_{D'} \sim d_D$ and similarly $f' \sim \bar{f}$ and $\delta\psi'_s \sim \overline{\delta\psi}_s$. Then, we can conclude that the statement of (c) holds, completing the proof of (2).
\end{proof}

From the above lemma, we get:
\begin{Corollary}
	$L_n(\mathbb{F}_{\mathbb{N},b}(\mathbb{A}))$ is naturally isomorphic to the cobordism group of $n$-dimensional quadratic $\mathbb{F}_{\mathbb{N},b}(\mathbb{A})$-Poincare complexes in $\mathbb{F}_{\mathbb{N}}(\mathbb{A})$.
\end{Corollary}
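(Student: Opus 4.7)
The plan is to show that the natural functor $\pi:\mathbb{F}_{\mathbb{N}}(\mathbb{A}) \longrightarrow \mathbb{F}_{\mathbb{N},b}(\mathbb{A})$ induces the asserted isomorphism. Let $G_n$ denote the cobordism group in the statement: its elements are $n$-dimensional quadratic complexes $(C',\psi')$ in $\mathbb{F}_{\mathbb{N}}(\mathbb{A})$ whose image $[(C',\psi')]$ is Poincar\'e in $\mathbb{F}_{\mathbb{N},b}(\mathbb{A})$, and the cobordisms are $(n+1)$-dimensional quadratic pairs in $\mathbb{F}_{\mathbb{N}}(\mathbb{A})$ that become Poincar\'e pairs under $\pi$. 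Since $\pi$ is additive and commutes with the involution, sending each representative to its $\pi$-image yields a well-defined homomorphism $\Phi: G_n \longrightarrow L_n(\mathbb{F}_{\mathbb{N},b}(\mathbb{A}))$.

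For surjectivity of $\Phi$, I would apply Lemma \ref{lift}(1) directly: any $n$-dimensional quadratic Poincar\'e complex $(C, \psi)$ in $\mathbb{F}_{\mathbb{N},b}(\mathbb{A})$ admits a lift $(C', \psi')$ in $\mathbb{F}_{\mathbb{N}}(\mathbb{A})$ with $[(C', \psi')] = (C, \psi)$, and this lift represents a class in $G_n$ mapping onto the given class.

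For injectivity, suppose $(C'_1, \psi'_1)$ and $(C'_2, \psi'_2)$ represent classes in $G_n$ with $\Phi[(C'_1, \psi'_1)] = \Phi[(C'_2, \psi'_2)]$. Then there is an $(n+1)$-dimensional quadratic Poincar\'e pair $(f: C \longrightarrow D, (\delta\psi, \psi))$ in $\mathbb{F}_{\mathbb{N},b}(\mathbb{A})$ whose boundary is $[(C'_1, \psi'_1)] \oplus -[(C'_2, \psi'_2)]$. Prescribing the boundary lift $(C'_1 \oplus -C'_2, \psi'_1 \oplus -\psi'_2)$ and applying Lemma \ref{lift}(2) produces an $(n+1)$-dimensional quadratic pair in $\mathbb{F}_{\mathbb{N}}(\mathbb{A})$ extending this boundary and mapping to the original Poincar\'e pair under $\pi$. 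Since Poincar\'e-ness of the lifted pair is measured after applying $\pi$, this pair is automatically a $G_n$-cobordism between the two classes, giving injectivity.

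The only obstacle I anticipate is bookkeeping: ensuring that the prescribed boundary lift required by Lemma \ref{lift}(2) matches the direct sum $(C'_1 \oplus -C'_2, \psi'_1 \oplus -\psi'_2)$ coming from our two classes. Part (2) of the lemma was designed precisely for this use, so once the boundary is identified correctly the argument becomes formal. Well-definedness of $\Phi$ on cobordism classes is immediate from the fact that $\pi$ sends $G_n$-cobordisms to cobordisms in $L_n(\mathbb{F}_{\mathbb{N},b}(\mathbb{A}))$.
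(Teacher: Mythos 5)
Your proposal is correct and matches the paper's intent exactly: the Corollary is stated as an immediate consequence of Lemma \ref{lift}, and your argument spells out precisely why — surjectivity of the map induced by the natural functor from Lemma \ref{lift}(1), and injectivity from Lemma \ref{lift}(2) applied to the prescribed boundary lift $(C'_1 \oplus C'_2, \psi'_1 \oplus -\psi'_2)$, together with the observation that Poincaré-ness of the resulting pair need only be checked after passing to $\mathbb{F}_{\mathbb{N},b}(\mathbb{A})$.
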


Another ingredient for the proof of Theorem \ref{exactseq} is the following lemma:

\begin{Lemma}
	\label{finitedomin}
	\
	
	Let $C$ be a finite chain complex in $\mathbb{F}_{\mathbb{N}}(\mathbb{A})$. If it is contractible chain complex in $\mathbb{F}_{\mathbb{N},b}(\mathbb{A})$, then $C$ is $(\mathbb{F}_{\mathbb{N}}(\mathbb{A}),\mathbb{A})$-finite dominated. That is, there exists a finite chain complex $D$ in $\mathbb{A}$ and chain maps $f:C \longrightarrow D,g:D \longrightarrow C$ in $\mathbb{F}_{\mathbb{N}}(\mathbb{A})$, such that $gf \simeq Id$.
	
	In particular, $C$ is homotopic to a finite chain complex in $\mathbb{P}_0(\mathbb{A})$.
\end{Lemma}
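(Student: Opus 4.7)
The plan is to mimic the truncation argument from the proof of Lemma \ref{lift}, now applied simultaneously to the differential $d_C$, to a lift of the chain contraction, and to its defect from being a genuine contraction. After a suitable choice of bounds $b_r$ on the $\mathbb{N}$-grading, the ``far'' part of $C$ will split off as a contractible complex in $\mathbb{F}_{\mathbb{N}}(\mathbb{A})$, leaving the ``near'' part as the desired finite chain complex in $\mathbb{A}$.

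First, I would lift the given chain contraction of $C$ in $\mathbb{F}_{\mathbb{N},b}(\mathbb{A})$ to a degree-$1$ morphism $s$ in $\mathbb{F}_{\mathbb{N}}(\mathbb{A})$. By the definition of the equivalence relation, $F := d_C s + s d_C - \mathrm{id}_C$ has only finitely many nonzero entries in each degree, and a short computation gives $dF=Fd$, so $F$ is a chain map. Writing $C_r = 0$ outside $[k, k+p]$, exactly as in the proof of Lemma \ref{lift} one can choose natural numbers $b_0, b_1, \ldots, b_p$ (with $b_\ell := -1$ otherwise) such that, for every $r$,
\begin{align*}
d_C(C_{k+r}\{[0,b_r]\}) &\subset C_{k+r-1}\{[0,b_{r-1}]\},\\
s(C_{k+r}\{[0,b_r]\}) &\subset C_{k+r+1}\{[0,b_{r+1}]\},\\
F(C_{k+r}) &\subset C_{k+r}\{[0,b_r]\}.
\end{align*}
Such $b_r$ exist because $d_C$, $s$, and $F$ each have finitely many nonzero entries in every column and $C$ has only finitely many nontrivial degrees, so an iterative monotone choice terminates.

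With respect to the splitting $C_{k+r} = C_{k+r}\{[0,b_r]\} \oplus C_{k+r}\{[b_r+1,\infty)\}$, these conditions translate into block upper-triangular forms
\begin{equation*}
d_C = \begin{pmatrix} d^{00} & d^{01} \\ 0 & d^{11} \end{pmatrix}, \quad s = \begin{pmatrix} s^{00} & s^{01} \\ 0 & s^{11} \end{pmatrix}, \quad F = \begin{pmatrix} F^{00} & F^{01} \\ 0 & 0 \end{pmatrix}.
\end{equation*}
Block-wise, $d_C^2 = 0$ gives $(d^{00})^2 = 0 = (d^{11})^2$ and $d^{00}d^{01} + d^{01}d^{11} = 0$, so $D := (C_*\{[0,b_*]\}, d^{00})$ is a finite chain complex in $\mathbb{A}$; and the ``far'' block of $d_C s + s d_C = \mathrm{id}_C + F$ gives $d^{11}s^{11} + s^{11}d^{11} = \mathrm{id}_{\mathrm{far}}$, so the ``far'' complex is contractible in $\mathbb{F}_{\mathbb{N}}(\mathbb{A})$. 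The inclusion $g : D \hookrightarrow C$ is then a chain map by the triangular shape of $d_C$; the corrected projection $f := \mathrm{pr}_{\mathrm{near}} - d^{01} s^{11} \mathrm{pr}_{\mathrm{far}}: C \to D$ is a chain map, using $d^{00}d^{01}=-d^{01}d^{11}$ together with the far-contraction identity; and $h := \left(\begin{smallmatrix} 0 & 0 \\ 0 & s^{11} \end{smallmatrix}\right)$ satisfies $d_C h + h d_C = \mathrm{id}_C - gf$ by a direct block calculation, completing the domination $gf \simeq \mathrm{id}_C$.

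The main obstacle is the combinatorial step of choosing the $b_r$ so that $d_C$, $s$, and $F$ all become block upper-triangular simultaneously; everything else is block matrix bookkeeping in the spirit of Lemma \ref{lift}. The ``in particular'' statement then follows by the standard fact that $fg : D \to D$ is a homotopy idempotent on a finite chain complex in $\mathbb{A}$, and such a homotopy idempotent splits in the homotopy category of finite chain complexes in $\mathbb{P}_0(\mathbb{A})$ via a mapping telescope argument, producing the required finite chain complex in $\mathbb{P}_0(\mathbb{A})$ chain homotopy equivalent to $C$.
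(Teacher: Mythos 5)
The high-level idea---truncate the $\mathbb{N}$-grading at bounds $b_r$ and split off a ``far'' contractible piece---is the paper's idea, and your block computations verifying that $f$ is a chain map and $gf\simeq\mathrm{id}$ are correct \emph{granted} the three displayed containments on $d_C$, $s$, and $F$. The gap is precisely there: these three conditions cannot in general be satisfied simultaneously, and the stated reason (``an iterative monotone choice terminates'') is false. The $d_C$-containment forces $b_{r-1}$ to grow with $b_r$, but the $s$-containment forces $b_{r+1}$ to grow with $b_r$. With both directions present you have a two-way feedback loop, not a terminating one-way recursion.

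Here is a concrete obstruction with $p=1$. Take $\mathbb{A}=M^h(R)$, set $C_0(i)=R$ for all $i$, $C_1(i)=R$ for $i\geq 1$ and $C_1(0)=0$, and define the injection $\alpha\colon\{1,2,3,\ldots\}\to\mathbb{N}$ by $\alpha(1)=0$, $\alpha(2n)=2n+2$, $\alpha(2n+1)=2n-1$ for $n\geq 1$; its range is $\mathbb{N}\setminus\{2\}$. Put $d_C(j,i)=\delta_{j,\alpha(i)}$ and $s(j,i)=\delta_{j,\alpha^{-1}(i)}$ (with $s(\cdot,2)=0$). Then $d_Cs+sd_C-\mathrm{id}=-e_2$, so $C$ is contractible in $\mathbb{F}_{\mathbb{N},b}(\mathbb{A})$, while $H_0(C)\cong R\neq 0$ so it is \emph{not} contractible in $\mathbb{F}_{\mathbb{N}}(\mathbb{A})$; hence $F$ is nonzero for every lift of the contraction and the escape $b_r=-1$ is unavailable (indeed one checks $F'(2,2)=-1$ for any lift, forcing $b_0\geq 2$). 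Now the $s$-containment gives $b_1\geq\alpha^{-1}(b_0)$ or $\alpha^{-1}(b_0-1)$, which is $\geq b_0+1$; the $d_C$-containment then gives $b_0\geq\alpha(b_1)$ or $\alpha(b_1-1)$, which is $\geq b_1+1\geq b_0+2$. This never closes, and replacing $s$ by $s+(\text{finite})$ changes neither the far-out $\alpha$-structure nor the obstruction $F\neq 0$.

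The paper avoids this entirely by never asking $T$ to respect the truncation. It takes $D_r:=C_r\{[0,b_r]\}$ with $d_D=d_C|_D$, $g$ the inclusion, $f:=\mathrm{id}-d_CT-Td_C\colon C_r\to D_r$, and $h:=T$. The only constraints needed are $d_C(D_r)\subset D_{r-1}$ (a one-way downward recursion on the $b_r$) and $(\mathrm{id}-d_CT-Td_C)(C_r)\subset D_r$ (a fixed lower bound from the finiteness of $F$); both are satisfiable by a single descending pass from $b_p$ to $b_0$. Then $\mathrm{id}-gf=d_CT+Td_C=d_Ch+hd_C$ directly, and $f$ being a chain map uses only $d_C^2=0$. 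In short, your more elaborate $f=\mathrm{pr}_{\mathrm{near}}-d^{01}s^{11}\mathrm{pr}_{\mathrm{far}}$ and $h=\bigl(\begin{smallmatrix}0&0\\0&s^{11}\end{smallmatrix}\bigr)$ bought nothing and are precisely what introduced the unachievable triangularity requirement on $s$. The closing ``in particular'' step about splitting the homotopy idempotent $fg$ in $\mathbb{P}_0(\mathbb{A})$ is fine.
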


\begin{proof}
	Let $[T_r] \in Hom_{\mathbb{F}_{\mathbb{N},b}(\mathbb{A})}(C_r,C_{r+1})$ be the chain homotopy $0 \simeq Id$ in $\mathbb{F}_{\mathbb{N},b}(\mathbb{A})$. For every $r \in \mathbb{Z}$, choose a morphism $T_r$ in $\mathbb{F}_{\mathbb{N}}(\mathbb{A})$ that represents $[T_r]$.

	Since $(C,\psi)$ is finite in $\mathbb{F}_{\mathbb{N},b}(\mathbb{A})$, there exist $k \in \mathbb{Z}$ and $p \in \mathbb{N}$, such that $C_r=0$ if $r<k$ or $r>k+p$. Since $0 \stackrel{[T_r]}{\simeq} Id$, we have $[d_C][T_r]+[T_r][d_C]=Id$. By the definition of morphisms in $\mathbb{F}_{\mathbb{N},b}(\mathbb{A})$, we can choose natural numbers $b_0,b_1,...,b_p$, such that:
	
	(a1) $d_C^2(C_{k+r}) \subset C_{k+r-2}\{[0,b_{r-2}]\}$ for all $2 \leq r \leq p$.
	
	(a2) $d_C(C_{k+r}[0,b_r]) \subset C_{k+r-1}\{[0,b_{r-1}]\}$ for all $1 \leq r \leq p$.
	
	(a3) $(Id-d_{C}T_r-T_{r-1}d_{C})(C_r) \subset C_{r}\{[0,b_{r}]\}$ for all $0 \leq r \leq p$.
	
	Let $D$ be the chain complex in $\mathbb{A}$ given by $D_r=C_r\{[0,b_r]\}$ and $d_D=d_C|_{D_r}$. By condition (a2), the definition of $D$ gives a chain complex. Then the required maps $f:C \longrightarrow D,g: D \longrightarrow C$ and homotopy $h:gf \simeq Id$ are given as follows:
	\begin{equation*}
		f=Id-d_{C}T_r-T_{r-1}:C_r \longrightarrow D_r
	\end{equation*}
	\begin{equation*}
		g=\text{Inclusion}:D_r \longrightarrow C_r
	\end{equation*}
	\begin{equation*}
		h=T_r:C_r \longrightarrow C_{r+1}
	\end{equation*}
	
	In conclusion, we have finished our proof.
\end{proof}

\begin{proof}[Proof of Theorem \ref{exactseq}]
	We only need to prove that there is an isomorphism:
	\begin{equation*}
		L_n(\mathbb{F}_{\mathbb{N},b}(\mathbb{A})) \cong L_n^J(\mathbb{P}_0(\mathbb{A}) \longrightarrow \mathbb{F}_{\mathbb{N}}(\mathbb{A}))
	\end{equation*}
	
	The relative $L$ group $L_n^J(\mathbb{P}_0(\mathbb{A}) \longrightarrow \mathbb{F}_{\mathbb{N}}(\mathbb{A}))$ is the cobordism group of $n$-dimensional quadratic Poincare pairs $(f: C \longrightarrow D, (\delta\psi, \psi))$ in $\mathbb{P}_0(\mathbb{F}_{\mathbb{N}}(\mathbb{A}))$, such that $(C,\psi)$ is defined in $\mathbb{P}_0(\mathbb{A})$ and $D$ is defined in $\mathbb{F}_{\mathbb{N}}(\mathbb{A})$. The algebraic Thom construction on such a pair gives an $n$-dimensional quadratic complex $(C(f),\delta\psi/\psi)$ in $\mathbb{P}_0(\mathbb{F}_{\mathbb{N}}(\mathbb{A}))$. Since the inclusion of $\mathbb{A}$ into $\mathbb{F}_{\mathbb{N}}(\mathbb{A})$ is given by:
	\begin{equation*}
		M \mapsto M(k)=\begin{cases} M & \text{If } k=0 \\ \ 0 & \text{else} \end{cases}; \ f \mapsto f(j,i)=\begin{cases} f & \text{If } i=j=0 \\ 0 & \text{else} \end{cases}
	\end{equation*}
	
	It is straightforward to verify from the definitions that $[(C(f),\delta\psi/\psi)]$ is $n$-dimensional Poincare in $\mathbb{P}_0(\mathbb{F}_{\mathbb{N},b}(\mathbb{A}))$. Moreover, the reduced projective class of $C(f)$ is given by $[C(f)] = [C] \in \widetilde{K}_0(\mathbb{F}_{\mathbb{N}}(\mathbb{A}))$. So it will map to $0$ in $\widetilde{K}_0(\mathbb{F}_{\mathbb{N},b}(\mathbb{A}))$, thus $C(f)$ is homotopic as a chain complex in $\mathbb{P}_0(\mathbb{F}_{\mathbb{N},b}(\mathbb{A}))$ to a chain complex in $\mathbb{F}_{\mathbb{N},b}(\mathbb{A})$. 
	
	In summary, the algebraic Thom construction gives a map:
	\begin{equation*}
		L_n^J(\mathbb{P}_0(\mathbb{A}) \longrightarrow \mathbb{F}_{\mathbb{N}}(\mathbb{A})) \longrightarrow L_n(\mathbb{F}_{\mathbb{N},b}(\mathbb{A}))  
	\end{equation*}
	\begin{equation*}
		(f: C \longrightarrow D, (\delta\psi,\psi)) \mapsto [(C(f),\delta\psi/\psi)]
	\end{equation*}

	Conversely, $L_n(\mathbb{F}_{\mathbb{N},b}(\mathbb{A}))$ is naturally isomorphic to the cobordism group of $n$-dimensional quadratic $\mathbb{F}_{\mathbb{N},b}(\mathbb{A})$ Poincare complexes $(C,\psi)$ in $\mathbb{F}_{\mathbb{N}}(\mathbb{A})$. Consider the algebraic thickening $(\partial C \longrightarrow C^{n-*},(0,\partial \psi))$, it is a $n$-dimensional quadratic Poincare pair in $\mathbb{F}_{\mathbb{N}}(\mathbb{A})$. Since $\partial C$ is contractible in $\mathbb{F}_{\mathbb{N},b}(\mathbb{A})$, by Lemma \ref{finitedomin}, it is homotopic to a chain complex in $\mathbb{P}_0(\mathbb{A})$. Moreover, the image of the reduce projective class of $\partial C$ in $\widetilde{K}_0(\mathbb{F}_{\mathbb{N}}(\mathbb{A}))$ is $[C_{*+1}]+[C^{n-*}]$, which is $0$ as $C$ is in $\mathbb{F}_{\mathbb{N}}(\mathbb{A})$. Therefore, we can view $(\partial C \longrightarrow C^{n-*},(0,\partial \psi))$ as a $n$-dimensional quadratic Poincare pair $(f: C' \longrightarrow D', (\delta\psi', \psi'))$ in $\mathbb{P}_0(\mathbb{F}_{\mathbb{N}}(\mathbb{A}))$ such that $(C',\psi')$ is defined in $\mathbb{P}_0(\mathbb{A})$ with reduce projective class in $J$ and $D'$ is defined in $\mathbb{F}_{\mathbb{N}}(\mathbb{A})$.
	
	In summary, the algebraic Thom thickening gives a map:
	\begin{equation*}
		L_n(\mathbb{F}_{\mathbb{N},b}(\mathbb{A})) \longrightarrow L_n^J(\mathbb{P}_0(\mathbb{A}) \longrightarrow \mathbb{F}_{\mathbb{N}}(\mathbb{A}))
	\end{equation*}
	\begin{equation*}
		(C,\psi) \mapsto (\partial C \longrightarrow C^{n-*},(0,\partial\psi))
	\end{equation*}

	Since algebaric Thom construction and algebraic thickening gives reverse isomorphism, we have that $	L_n(\mathbb{F}_{\mathbb{N},b}(\mathbb{A})) \cong L_n^J(\mathbb{P}_0(\mathbb{A}) \longrightarrow \mathbb{F}_{\mathbb{N}}(\mathbb{A}))$ and thus the Theorem holds.
\end{proof}

\begin{Remark}
	\label{Rempartial}
	It is easily seen from the proof that the partial map:
	\begin{equation*}
		\partial: L_n(\mathbb{F}_{\mathbb{N},b}(\mathbb{A})) \longrightarrow L_{n-1}(\mathbb{P}_{0}(\mathbb{A}))
	\end{equation*}

	is given as follows:
	
	Choose any Poincare quadratic chain complex $(C,\psi)$ in $\mathbb{F}_{\mathbb{N},b}(\mathbb{A})$ representing an element $x \in L_n(\mathbb{F}_{\mathbb{N},b}(\mathbb{A}))$, by lemma \ref{lift}, there is a quadratic chain complex $(C',\psi')$, such that $[(C',\psi')]=(C,\psi)$. Consider $(\partial C',\partial\psi')=\partial(C',\psi')$, $\partial C'$ is contractible in $\mathbb{F}_{\mathbb{N},b}(\mathbb{A})$ and thus by lemma \ref{finitedomin}, it is homotopic to a chain complex in $\mathbb{P}_0(\mathbb{A})$. Then $\partial x$ is the element represented by some Poincare quadratic chain complex in $\mathbb{P}_0(\mathbb{A})$ that is homotopic to $(\partial C',\partial\psi')$.
\end{Remark}

	\section{Proof of Theorem \ref{suspensionL}}
	\label{proofsuspension}

In this section we will prove Theorem \ref{suspensionL}, we will construct an explict functor from $M^h(\Sigma R)$ to $\mathbb{F}_{\mathbb{N},b}(M^h(R))$ and prove that the functor induces an isomorphism in L-theory. Combining with Corollary \ref{Ltheorycatinfinty} we get Theorem \ref{suspensionL}. 

In order to give the functor explictly, we need to first describe the morphisms in $M^h(\Sigma R)$, i.e. the matrix ring of $\Sigma R$.

\subsection{Matrix ring of $\Sigma R$}
\

The main goal of this subsection is to prove the following Lemma:

\begin{Lemma}
	\label{matrixsuspension}
	\
	
	Let $r,s \in \mathbb{N}$ and $R$ be a unital ring with involution, denote $M_{r,s}(R)$ to be the $r \times s$ matrix ring of $R$. Then there is an isomorphism $\theta_{r,s}:M_{r,s}(\Sigma R) \cong \Sigma M_{r,s}(R)$, such that
	
	$(1)$ $\theta_{r,t}(xy)=\theta_{r,s}(x)\theta_{s,t}(y)$ for all $r,s,t \in \mathbb{N}$ and $x \in M_{r,s}(\Sigma R)$, $y \in M_{s,t}(\Sigma R)$.
	
	$(2)$ $\theta_{r,r}(I_r)=I_r$, where $I_r$ is the unit on both side.
	
	$(3)$ $\theta_{r,s}$ commutes with the natural involution.
\end{Lemma}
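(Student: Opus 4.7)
The plan is to construct $\theta_{r,s}$ as a re-indexing isomorphism at the level of honest matrix representatives, check that it descends to the quotient, and then verify the three compatibilities by direct computation. An element of $M_{r,s}(\Sigma R)$ has a representative in $M_{r,s}(M_\infty(R))$, i.e.\ a doubly-indexed family $(a[\alpha,\beta][i,j])_{1\leq\alpha\leq r,\,1\leq\beta\leq s,\,i,j\in\mathbb{N}}$ in which, for each fixed $(\alpha,\beta)$, the infinite matrix $(a[\alpha,\beta][i,j])_{i,j}$ is row- and column-finite. An element of $M_\infty(M_{r,s}(R))$ is the same data but with the row/column-finiteness imposed on the outer indices $(i,j)$. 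These two points of view are exchanged by the permutation
\[
\tilde\theta_{r,s}\colon M_{r,s}(M_\infty(R))\to M_\infty(M_{r,s}(R)),\qquad \tilde\theta_{r,s}(A)[i,j][\alpha,\beta]:=A[\alpha,\beta][i,j],
\]
and my $\theta_{r,s}$ will be its descent to the quotient.

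First I would verify that $\tilde\theta_{r,s}$ and its obvious inverse each land in the correct target. For the forward direction, the set of nonzero columns of the $i$-th row of $\tilde\theta_{r,s}(A)$ is contained in the union, over the $rs$ pairs $(\alpha,\beta)$, of the sets of nonzero columns of the $i$-th row of $A[\alpha,\beta]$, which is a finite union of finite sets; a symmetric argument handles columns, and the inverse direction is easier since each row or column of $A[\alpha,\beta]$ is a sub-family of the corresponding row or column of the outer matrix. Next, an element lies in $M_{r,s}(M_\infty^{fin}(R))$ iff its total support in $\mathbb{N}\times\mathbb{N}$ is finite, and this property is preserved verbatim under the permutation, so $\tilde\theta_{r,s}$ identifies $M_{r,s}(M_\infty^{fin}(R))$ with $M_\infty^{fin}(M_{r,s}(R))$ and descends to a well-defined isomorphism $\theta_{r,s}\colon M_{r,s}(\Sigma R)\to \Sigma M_{r,s}(R)$.

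The three compatibilities are then mechanical bookkeeping. For (1), expanding both $\theta_{r,t}(xy)[i,j][\alpha,\gamma]$ and $\bigl(\theta_{r,s}(x)\theta_{s,t}(y)\bigr)[i,j][\alpha,\gamma]$ on representatives yields the same double sum $\sum_{\beta=1}^{s}\sum_{k\in\mathbb{N}} x[\alpha,\beta][i,k]\cdot y[\beta,\gamma][k,j]$, the two expansions differing only in the order of summation. For (2), the unit of $M_{r,r}(\Sigma R)$ lifts to the diagonal $r\times r$ matrix whose nonzero entries are the infinite identity of $M_\infty(R)$; re-indexing produces the infinite diagonal matrix whose entries are the $r\times r$ identity of $M_{r,r}(R)$, which represents the unit of $\Sigma M_{r,r}(R)$. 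For (3), both involutions act by simultaneously transposing their respective index sets and applying the $R$-involution to the underlying scalar; chasing this gives $\theta_{s,r}(x^*)[i,j][\beta,\alpha] = \overline{x[\alpha,\beta][j,i]} = \theta_{r,s}(x)^*[i,j][\beta,\alpha]$, so the two involutions commute under $\tilde\theta$ tautologically. The only real obstacle is notational discipline: one must keep the ``outer'' transpose on $(i,j)$ and the ``inner'' transpose on $(\alpha,\beta)$ carefully distinct throughout.
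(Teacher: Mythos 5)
Your proof is correct and follows essentially the same route as the paper: construct the re-indexing isomorphism $\tilde\theta_{r,s}\colon M_{r,s}(M_\infty(R))\to M_\infty(M_{r,s}(R))$, check it preserves the finitary ideal, and descend to the quotient; the paper states the three compatibilities are ``easy to check'' while you supply the verification explicitly. There is no substantive difference in approach.
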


Recall that $\Sigma R$ is defined to be the quotient of $M_{\infty}(R)$ by $M_{\infty}^{fin}(R)$, where $M_{\infty}(R)$ is the ring of infinity matrix with finite many nonzero entries in each row and column and $M_{\infty}^{fin}(R)$ is the ideal consisting of infinity matrix with finite many nonzero entries. Thus we should prove the following analogus result for $M_{\infty}(M_{r,s}(R))$ first.

\begin{Lemma}
	\label{infinitymatrix}
	\
	
	Let $r,s \in \mathbb{N}$. There is an isomorphism $\tilde{\theta}_{r,s}:M_{r,s}(M_{\infty}(R)) \cong M_{\infty}(M_{r,s}(R))$, such that
	
	$(1)$ $\tilde{\theta}_{r,t}(xy)=\tilde{\theta}_{r,s}(x)\tilde{\theta}_{s,t}(y)$ for all $r,s,t \in \mathbb{N}$ and $x \in M_{r,s}(M_{\infty}(R))$, $y \in M_{s,t}(M_{\infty}(R))$.
	
	$(2)$ $\tilde{\theta}_{r,r}(I_r)=I_r$, where $I_r$ is the unit on both side.
	
	$(3)$ $\tilde{\theta}_{r,s}$ commutes with the natural involution.
\end{Lemma}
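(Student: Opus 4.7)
The plan is to define $\tilde{\theta}_{r,s}$ by the obvious index re-bracketing and then verify each requested property by direct computation. Given $A = (A_{a,b})_{1 \le a \le r,\, 1 \le b \le s} \in M_{r,s}(M_{\infty}(R))$, whose entries are $\mathbb{N}\times\mathbb{N}$-matrices $A_{a,b} = (A_{a,b}[i,j])_{i,j \in \mathbb{N}}$ over $R$, I set $\tilde{\theta}_{r,s}(A)$ to be the $\mathbb{N}\times\mathbb{N}$-matrix whose $(i,j)$-block is the $r\times s$-matrix over $R$ with $(a,b)$-entry $A_{a,b}[i,j]$. The inverse is given by the very same formula with the two index groups swapped, so once well-definedness is established, bijectivity is automatic.

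First I would check well-definedness. For each fixed $i$, the set $\{j : \tilde{\theta}_{r,s}(A)_{i,j} \neq 0\}$ equals $\bigcup_{a,b} \{j : A_{a,b}[i,j] \neq 0\}$, a finite union of finite sets, hence finite; the analogous column-finiteness is symmetric. Additivity of $\tilde{\theta}_{r,s}$ is immediate from the definition. For property $(1)$, the $[i,j]$-entry of the $(a,c)$-block of $xy$ and the $(a,c)$-entry of the $[i,j]$-block of $\tilde{\theta}_{r,s}(x)\tilde{\theta}_{s,t}(y)$ both unfold to the double sum $\sum_k \sum_b x_{a,b}[i,k]\, y_{b,c}[k,j]$, and a standard argument using the row/column finiteness shows this sum has only finitely many nonzero terms. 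Property $(2)$ follows because the unit of $M_{r,r}(M_{\infty}(R))$ has $(I_r)_{a,b}[i,j] = \delta_{a,b}\delta_{i,j}$, which reshuffles precisely to the unit of $M_{\infty}(M_{r,r}(R))$.

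For property $(3)$, the involution on each side is the conjugate-transpose performed in the appropriate order: on $M_{r,s}(M_{\infty}(R))$, an element with entries $A_{a,b}[i,j]$ is sent to one with entries $\overline{A_{b,a}[j,i]}$, while on $M_{\infty}(M_{r,s}(R))$, an element with entries $B_{i,j}[a,b]$ is sent to one with entries $\overline{B_{j,i}[b,a]}$. Setting $B = \tilde{\theta}_{r,s}(A)$, so that $B_{i,j}[a,b] = A_{a,b}[i,j]$, both images carry $\overline{A_{b,a}[j,i]}$ in the slot labelled $(i,j,a,b)$. Hence the two involutions intertwine under $\tilde{\theta}_{r,s}$.

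No step in this argument is conceptually difficult; the only place requiring genuine care is the translation of the row and column finiteness conditions between the two models, which is handled in the well-definedness check, and the corresponding check that the multiplication sums remain locally finite. Everything else is formal index manipulation, and this is exactly what one should expect since the isomorphism in question is a purely combinatorial recognition that the two iterated matrix constructions describe the same data organized by different bracketings of four indices.
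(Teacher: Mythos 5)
Your proof is correct and takes the same approach as the paper: both define $\tilde{\theta}_{r,s}$ by the same index re-bracketing, and the paper simply asserts that the three properties "are easy to check" while you carry out the verification explicitly, including the local finiteness bookkeeping and the involution computation.
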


\begin{proof}
	For any element $X \in M_{r,s}(M_{\infty}(R))$, denote $X_{kl} \in M_{\infty}(R)$ to be the element in the $k$th row and $l$th column of $X$ for $1 \leq k \leq r$, $1 \leq l \leq s$. We define the map $\tilde{\theta}_{r,s}$ by reordering indices: $\tilde{\theta}_{r,s}(X)[i,j]:=(X_{kl}[i,j])_{1 \leq k \leq r,1 \leq l \leq s} \in M_{r,s}(R)$ for any $i,j \in \mathbb{N}$. It is easy to check that the properties holds.
\end{proof}

\begin{Lemma}
	The isomorphism $\tilde{\theta}_{r,s}$ constructed in Lemma \ref{infinitymatrix} takes $M_{r,s}(M_{\infty}^{fin}(R))$ to $M_{\infty}^{fin}(M_{r,s}(R))$.
\end{Lemma}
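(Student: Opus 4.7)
The statement is a straightforward finiteness bookkeeping check based on the explicit description of $\tilde{\theta}_{r,s}$ recorded in Lemma \ref{infinitymatrix}: for $X \in M_{r,s}(M_\infty(R))$ one has $\tilde{\theta}_{r,s}(X)[i,j] = (X_{kl}[i,j])_{1 \le k \le r,\, 1 \le l \le s}$. My plan is to translate the finite-support condition on $X$ into a finite-support condition on $\tilde{\theta}_{r,s}(X)$ and conversely, the key point being that a finite union of finite sets is finite.

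First I will take $X \in M_{r,s}(M_\infty^{fin}(R))$ and, for each pair $(k,l)$ with $1 \le k \le r$, $1 \le l \le s$, set $S_{kl} := \{(i,j) \in \mathbb{N} \times \mathbb{N} : X_{kl}[i,j] \neq 0\}$, which is finite by hypothesis. The entry $\tilde{\theta}_{r,s}(X)[i,j]$ is the zero element of $M_{r,s}(R)$ precisely when $X_{kl}[i,j] = 0$ for every $k$ and $l$, so the set of indices $(i,j)$ with $\tilde{\theta}_{r,s}(X)[i,j] \neq 0$ is contained in $\bigcup_{k,l} S_{kl}$, a finite union of finite sets and therefore finite. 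This yields $\tilde{\theta}_{r,s}(X) \in M_\infty^{fin}(M_{r,s}(R))$.

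For completeness I will run the reverse implication: given $Y \in M_\infty^{fin}(M_{r,s}(R))$, the set $T := \{(i,j) : Y[i,j] \neq 0\}$ is finite, and for every $(k,l)$ the entry $(\tilde{\theta}_{r,s}^{-1}(Y))_{kl}[i,j] = Y[i,j]_{kl}$ vanishes outside $T$. Hence each such entry lies in $M_\infty^{fin}(R)$ and $\tilde{\theta}_{r,s}^{-1}(Y) \in M_{r,s}(M_\infty^{fin}(R))$. Thus $\tilde{\theta}_{r,s}$ restricts to an isomorphism between the two ideals, which is exactly what is needed in order to pass to the quotient and obtain the isomorphism $\theta_{r,s}$ of Lemma \ref{matrixsuspension}.

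I do not anticipate any real obstacle: the argument is purely a matter of unpacking the reindexing definition and invoking that a finite union of finite sets is finite. Once this ideal-preservation is in hand, the substantive content of Lemma \ref{matrixsuspension}, namely the multiplicativity, unitality and involution-compatibility in (1)--(3), will follow immediately by pushing the corresponding properties of $\tilde{\theta}_{r,s}$ through the quotient $M_{r,s}(M_\infty(R))/M_{r,s}(M_\infty^{fin}(R)) \cong M_{\infty}(M_{r,s}(R))/M_{\infty}^{fin}(M_{r,s}(R))$.
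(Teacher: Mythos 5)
Your argument is correct and is precisely the reasoning the paper treats as obvious: both observe that $\tilde{\theta}_{r,s}$ is a mere reindexing, so a finite-support matrix is sent to one whose support lies in a finite union of finite sets, and conversely. You simply spell out the bookkeeping that the paper compresses into ``the result is obvious.''
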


\begin{proof}
	Since $\tilde{\theta}_{r,s}$ is defined by reordering the indices and $M_{\infty}^{fin}(R)$ is defined to be the matrix with finitely many nonzero entries, the result is obvious.
\end{proof}

Combining the two lemmas above gives the result of lemma \ref{matrixsuspension}.

\subsection{The functor from $M^h(\Sigma R)$ to $\mathbb{F}_{\mathbb{N},b}(M^h(R))$}
\

We will construct the functor and prove it induces an isomorphism on L-theory in this subsection.

We first give a description of morphisms between some special type of objects in the category. These special objects are:

\begin{Definition}
	Let $r \in \mathbb{N}$, denote $\underline{R^r}$ to the object in $\mathbb{F}_{\mathbb{N},b}(M^h(R))$, such that $\underline{R^r}(i)=R^r$ for all $i \in \mathbb{N}$.
\end{Definition}

Morphisms between these objects are closely related to the suspension ring, as shown by the following lemma:

\begin{Lemma}
	\
	\label{identifysuspension}
	
	Let $r,s \in \mathbb{N}$ and $R$ be a unital ring with involution, then $Hom_{\mathbb{F}_{\mathbb{N},b}(M^h(R))}(\underline{R^r},\underline{R^s})$ can be naturally identified with $\Sigma M_{r,s}(R)$, i.e. there is an isomorphism of abelian groups $\mathbb{F}_{r,s}:Hom_{\mathbb{F}_{\mathbb{N},b}(M^h(R))}(\underline{R^r},\underline{R^s}) \longrightarrow \Sigma M_{s,r}(R)$, such that:
	
	$(1)$ $\mathbb{F}_{r,t}(g \circ f)=\mathbb{F}_{s,t}(g)\mathbb{F}_{r,s}(f)$ for any $r,s,t \in \mathbb{N}$ and $f \in Hom_{\mathbb{F}_{\mathbb{N},b}(M^h(R))}(\underline{R^r},\underline{R^s})$, $g \in Hom_{\mathbb{F}_{\mathbb{N},b}(M^h(R))}(\underline{R^s},\underline{R^t})$
	
	$(2)$ $\mathbb{F}_{r,r}(Id_{\underline{R^r}})=I_r$ for any $r \in \mathbb{N}$, where $I_r$ is the unit in $\Sigma M_{r,r}(R)$ 
	
	$(3)$ $\mathbb{F}_{r,s}$ commutes with the natural involution.
\end{Lemma}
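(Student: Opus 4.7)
The plan is to interpret morphisms in $\mathbb{F}_{\mathbb{N}}(M^h(R))$ between the objects $\underline{R^r}$ and $\underline{R^s}$ directly as infinite matrices whose entries are blocks in $M_{s,r}(R)$, and then to identify the equivalence relation defining $\mathbb{F}_{\mathbb{N},b}$ with the ideal quotient defining the suspension ring.

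First I would observe that, unwinding the definitions, a morphism $f:\underline{R^r}\longrightarrow\underline{R^s}$ in $\mathbb{F}_{\mathbb{N}}(M^h(R))$ is a collection $\{f(j,i)\in Hom_R(R^r,R^s)\}_{i,j\in\mathbb{N}}$ satisfying the stated row/column finiteness condition, and each $f(j,i)$ is canonically an $s\times r$ matrix over $R$. The finiteness condition is exactly the one defining $M_{\infty}(M_{s,r}(R))$, so the assignment $f\mapsto (f(j,i))_{i,j}$ yields an isomorphism of abelian groups $Hom_{\mathbb{F}_{\mathbb{N}}(M^h(R))}(\underline{R^r},\underline{R^s})\cong M_{\infty}(M_{s,r}(R))$. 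Passing to $\mathbb{F}_{\mathbb{N},b}$ identifies morphisms that agree outside finitely many indices, which is precisely the quotient by the ideal $M_{\infty}^{fin}(M_{s,r}(R))$. Sending $[f]$ to the resulting class gives the map $\mathbb{F}_{r,s}$ and an isomorphism onto $\Sigma M_{s,r}(R)$.

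The three compatibilities are then direct comparisons of formulas. For $(1)$, the composition rule $(g\circ f)(i,k)=\sum_{j}g(i,j)f(j,k)$ in $\mathbb{F}_{\mathbb{N}}$ is term-by-term the product of infinite block matrices, so $\mathbb{F}_{r,t}(g\circ f)=\mathbb{F}_{s,t}(g)\mathbb{F}_{r,s}(f)$ already in $M_{\infty}(M_{t,r}(R))$ and hence in $\Sigma M_{t,r}(R)$. For $(2)$, the identity on $\underline{R^r}$ has $Id(j,i)=\delta_{ji}I_r$, which is the unit of $M_{\infty}(M_{r,r}(R))$ and hence of $\Sigma M_{r,r}(R)$. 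For $(3)$, the involution on $\mathbb{F}_{\mathbb{N},b}$ sends the class of $f$ to the class of $f^*$ given by $f^*(i,j)=f(j,i)^*$; under the canonical basis identification $(R^n)^*\cong R^n$, the dual $f(j,i)^*$ of a single block is its conjugate transpose, so the induced operation on the full infinite matrix is precisely the natural involution on $\Sigma M_{s,r}(R)$ (viewed as an isomorphism to $\Sigma M_{r,s}(R)$) induced by the involution of $R$.

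The principal point requiring care is property $(3)$: both the outer $\mathbb{N}\times\mathbb{N}$ indices and each block entry get transposed simultaneously, and one must verify that this combined operation matches the involution on $\Sigma M_{s,r}(R)$ consistent with Lemma \ref{matrixsuspension}. Once this is checked on a single block $f(j,i)\in M_{s,r}(R)$ using the canonical basis of $R^n$, the lemma reduces to an unravelling of definitions.
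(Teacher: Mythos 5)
Your proposal is correct and follows essentially the same route as the paper: identify $Hom_{\mathbb{F}_{\mathbb{N}}(M^h(R))}(\underline{R^r},\underline{R^s})$ with $M_\infty(M_{s,r}(R))$ via $f\mapsto(f(j,i))$, observe that the equivalence relation defining $\mathbb{F}_{\mathbb{N},b}$ is exactly the quotient by $M^{fin}_\infty(M_{s,r}(R))$, and then check the three compatibilities directly from the formulas. The paper leaves the verification of (1)--(3) as "direct computations"; you spell out the relevant point for (3) (that the outer $\mathbb{N}\times\mathbb{N}$ transposition and the blockwise dual are taken simultaneously, matching the involution on $\Sigma M_{s,r}(R)$), which is a welcome precision but not a different argument.
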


\begin{proof}
	We make a sketch of the proof here. By definition of the additive category $\mathbb{F}_{\mathbb{N},b}(M^h(R))$, the abelian group $Hom_{\mathbb{F}_{\mathbb{N},b}(M^h(R))}(\underline{R^r},\underline{R^s})$ is the quotient of $Hom_{\mathbb{F}_{\mathbb{N}}(M^h(R))}(\underline{R^r},\underline{R^s})$. By definition, we have that $Hom_{\mathbb{F}_{\mathbb{N}}(M^h(R))}(\underline{R^r},\underline{R^s})$ is a collection of morphisms $\{f(j,i): \underline{R^r}(i)=R^r \longrightarrow \underline{R^s}(j)=R^s\}$. Since $R$ is unital, it is the same with a collection of $s \times r$ matrices in $R$. Then we have an idenfication:
	\begin{equation*}
		Hom_{\mathbb{F}_{\mathbb{N}}(M^h(R))}(\underline{R^r},\underline{R^s}) \cong M_{\infty}(M_{s,r}(R)), \ \{f(j,i)\}_{i,j \in \mathbb{N}} \mapsto F
	\end{equation*}
	
	with $F[j,i]=f(j,i)$ for all $i,j \in \mathbb{N}$. Furthermore, under this identification, it is straightforward to verify that $Hom_{\mathbb{F}_{\mathbb{N},b}(M^h(R))}(\underline{R^r},\underline{R^s})$ is the quotient of $ M_{\infty}(M_{s,r}(R))$ by  $M_{\infty}^{fin}(M_{s,r}(R))$, which is $\Sigma M_{s,r}(R)$. Therefore, we have an isomorphism of abelian groups, denoted by $\mathbb{F}_{r,s}$, such that $\mathbb{F}_{r,s}:Hom_{\mathbb{F}_{\mathbb{N},b}(M^h(R))}(\underline{R^r},\underline{R^s}) \stackrel{\cong}{\longrightarrow} \Sigma M_{s,r}(R)$. The three properties in the lemma can be easily shown by direct computations.
\end{proof}

Now we can construct the functor stated at the beginning of the section:

\begin{Definition}
	\
	\label{suspensionfunctor}
	
	Let $R$ be any unital ring with involution, we define $\Theta:M^h(\Sigma R) \longrightarrow \mathbb{F}_{\mathbb{N},b}(M^h(R))$ to the functor given by the followings:
	
	Object: For any $s \in \mathbb{N}$, define $\Theta((\Sigma R)^s)=\underline{R^s}$.
	
	Morphism: Let $r,s \in \mathbb{N}$, $f \in Hom_{M^h(\Sigma R)}((\Sigma R)^r,(\Sigma R)^{s})$, we can represent it by a matrix $M_f \in M_{s,r}(\Sigma R)$. Define $\Theta(f)=\mathbb{F}_{r,s}^{-1}\theta_{s,r}(M_f)$ for all $i,j \in \mathbb{N}$, where $\mathbb{F}_{r,s}$ is the isomorphism given in lemma \ref{identifysuspension}.
	
	Since for any $r,s,t \in \mathbb{N}$ and $f \in Mor((\Sigma R)^r,(\Sigma R)^{s}), g \in Mor((\Sigma R)^s,(\Sigma R)^{t})$
	
	$$
	\begin{aligned}
		\Theta(g \circ f)=\mathbb{F}_{r,t}^{-1}\theta_{t,r}(M_{g \circ f})&=\mathbb{F}_{r,t}^{-1}\theta_{t,r}(M_gM_f) \\ &=\mathbb{F}_{r,t}^{-1}(\theta_{t,s}(M_g)\theta_{s,r}(M_f)) \ (by \ Lemma \ \ref{matrixsuspension}) \\ &=\mathbb{F}_{s,t}^{-1}(\theta_{t,s}(M_g))\circ \mathbb{F}_{r,s}^{-1}(\theta_{s,r}(M_f)) \ (by \ Lemma \ \ref{identifysuspension}) \\ &=\Theta(g) \circ \Theta(f)
	\end{aligned}
	$$
	
	and $\Theta(id_{(\Sigma R)^s})=\mathbb{F}_{s,s}^{-1}(\theta_{s,s}(M_{id}))=\mathbb{F}_{s,s}^{-1}(\theta_{s,s}(I))=\mathbb{F}_{r,r}^{-1}(I)=id_{\underline{R^s}}$, the above definition gives a functor. Furthermore, it is easy to check that it is additive and it commutes with the involution by lemma \ref{matrixsuspension} and \ref{identifysuspension}, thus it is an additive functor between additive categories with involution.
\end{Definition}

The main property of $\Theta$ is that it is almost an equivalence of categories, as shown by the following lemma:

\begin{Lemma}
	\
	\label{thmTheta}
	
	$(1)$ Let $M=\sum\limits_{i \geq 0}M(i)$ be an object in $\mathbb{F}_{\mathbb{N},b}(M^h(R))$, then:
	
	$(i)$ If $S_M:=\{i \in \mathbb{N} \ | \ M(i) \neq 0\}$ is infinite, then there is an object $M' \in M^h(\Sigma R)$, such that $M$ is isomorphic to $\Theta(M')$.
	
	$(ii)$ If $S_M:=\{i \in \mathbb{N} \ | \ M(i) \neq 0\}$ is finite, denote $\iota_{\infty}:M^h(R) \longrightarrow \mathbb{F}_{\mathbb{N}}(M^h(R))$ to be the inclusion functor, then there is an object $M' \in M^h(R)$, such that $M \cong \iota_{\infty}(M')$ as objects in $\mathbb{F}_{\mathbb{N}}(M^h(R))$. 
	
	In particular, $M \oplus \underline{R}$ is always in the essential image of $\Theta$ for any object $M$.
	
	$(2)$ $\Theta$ is a faithful and full functor.
\end{Lemma}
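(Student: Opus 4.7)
The plan is to address (i), (ii), the ``in particular'' consequence, and (2) in order. Since the objects of $M^h(R)$ are the free modules $R^n$, for any $M \in \mathbb{F}_{\mathbb{N},b}(M^h(R))$ I may write $M(i) = R^{n(i)}$ with $n(i) \in \mathbb{N}$.

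For (ii), when $S_M$ is finite I would take $M' := \bigoplus_{i \in S_M} M(i) \in M^h(R)$ and define $f : M \to \iota_\infty(M')$ by letting $f(0,i)$ be the canonical inclusion $M(i) \hookrightarrow M'$ for $i \in S_M$ and $f(j,i) = 0$ otherwise. Finiteness of $S_M$ makes this a legal morphism in $\mathbb{F}_{\mathbb{N}}(M^h(R))$, and the morphism built from the corresponding projections $M' \twoheadrightarrow M(i)$ is a strict two-sided inverse.

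For (i), I would enumerate the countably infinite set $B := \{(i,j) : i \in \mathbb{N},\ 1 \le j \le n(i)\}$ by any bijection $\phi : B \to \mathbb{N}$ and define $f : M \to \underline{R} = \Theta(\Sigma R)$ by taking $f(k,i)$ to be the projection $R^{n(i)} \twoheadrightarrow R$ onto the $j$th coordinate whenever $k = \phi(i,j)$, and $0$ otherwise. Row-finiteness is automatic since $\phi$ is a bijection (each row $k$ has exactly one nonzero entry), and column-finiteness follows from $n(i) < \infty$; swapping projections for inclusions produces a strict two-sided inverse, so $M \cong \Theta(\Sigma R)$ even before passing to the quotient. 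The consequence about $M \oplus \underline{R}$ is then immediate: if $S_M$ is infinite we are already done by (i), while if $S_M$ is finite then $S_{M \oplus \underline{R}} = \mathbb{N}$, so (i) applies to $M \oplus \underline{R}$.

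For (2), both faithfulness and fullness of $\Theta$ reduce to the fact that, by Definition \ref{suspensionfunctor}, its action on $\mathrm{Hom}_{M^h(\Sigma R)}((\Sigma R)^r, (\Sigma R)^s)$ is the composition of the matrix-representation bijection $f \mapsto M_f$, the isomorphism $\theta_{s,r}$ of Lemma \ref{matrixsuspension}, and the inverse of the isomorphism $\mathbb{F}_{r,s}$ of Lemma \ref{identifysuspension} --- all three abelian-group isomorphisms. Hence $\Theta$ is in fact a bijection on every Hom set, which is stronger than full and faithful. The only subtle step in the whole argument is the enumeration in (i): choosing a bijection on the pairs $(i,j)$, rather than some arbitrary bijection of countable sets, is exactly what keeps row- and column-finiteness simultaneously satisfied.
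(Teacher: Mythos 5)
Your proposal is correct and follows essentially the same route as the paper's own proof: for (i) you reindex the nonzero coordinates of the $M(i)$ by a bijection onto $\mathbb{N}$ to produce a strict isomorphism $M \cong \underline{R}=\Theta(\Sigma R)$ already in $\mathbb{F}_{\mathbb{N}}(M^h(R))$, for (ii) you collapse the finitely many nonzero summands into degree $0$, and for (2) you observe that $\Theta$ is a bijection on Hom-sets by composing the three identifications of Lemma~\ref{matrixsuspension}, Lemma~\ref{identifysuspension}, and the matrix representation. The only cosmetic difference is that the paper picks the specific order-preserving enumeration $\phi(i,j)=N(i)+j-1$ with $N(i)=\sum_{k<i,\,k\in S_M}n(k)$, while you correctly note that any bijection on the set of pairs $(i,j)$ works, since the row/column-finiteness conditions are automatic once one enumerates coordinates rather than arbitrary countable sets.
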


\begin{proof}
	(1) We will give proof by dividing into two cases, depending on whether $S_M$ is finite set or not.
	
	(i) If $S_M$ is an infinite set:
	
	The idea of the proof is to reorder the terms in the sum to make the rank of $M(i)$ equal. More precisely, suppose $M(i)=R^{n(i)}$ for $i \in S_M$ with $n(i) \neq 0$. Denote $N(i)=\sum\limits_{\substack{k<i \\ k \in S_M}}n(k)$. Denote $\mathcal{P}_s^i:R^{n(i)} \longrightarrow R$ to be the projection map onto the $s$th component and $\mathcal{I}_s^i:R \longrightarrow R^{n(i)}$ be the inclusion map into the $s$th component.
	
	Let $T:M \longrightarrow \underline{R}$ be the morphism given by:
	\begin{equation*}
		\begin{aligned}
			& \qquad \qquad \qquad \qquad \qquad \quad \ \ T(j,i): M(i) \longrightarrow R \\
			& T(j,i)(x)=\begin{cases} \mathcal{P}_{j-N(i)+1}^ix & \text{If} \ i \in S_M \ \text{and} \ N(i) \leq j \leq N(i)+n(i)-1\\ \qquad 0 \ \ & \text{else} \end{cases} \\
		\end{aligned}
	\end{equation*}
	
	It is an isomorphism with the inverse given by:
	\begin{equation*}
		\begin{aligned}
			& \qquad \qquad \qquad \qquad \qquad \quad \ \ T^{-1}(j,i): R \longrightarrow M(j) \\
			& T^{-1}(j,i)(x)=\begin{cases} \mathcal{I}_{i-N(j)+1}^jx & \text{If} \ j \in S_M \ \text{and} \ N(j) \leq i \leq N(j)+n(j)-1\\ \qquad 0 & \text{else} \\ \end{cases} \\
		\end{aligned}
	\end{equation*}
	
	(ii) If $S_M$ is an finite set:
	
	Let $k=\max \{i \ | \ i \in S_M\}$ and denote $OM=\mathop{\oplus}\limits_{1 \leq i \leq k}M(i)$. Denote $\mathfrak{I}_i$ and $\mathfrak{P}_i$ to be the inclusion $M(i) \longrightarrow OM$ and projection $OM \longrightarrow M(i)$. Define $M'$ to be the object $OM$, $T:M \longrightarrow \iota_{\infty}(M')$ to be the morphism given by: 
	\begin{equation*}
		T(j,i)(x,y)=\begin{cases} \mathfrak{I}_{i}x & \text{If} \ j=0 \ \text{and} \ 1 \leq i \leq k \\ \ \, 0 & \text{else} \\ \end{cases}: M(i) \longrightarrow \iota_{\infty}(M')(j)
	\end{equation*}

	It is an isomorphism with the inverse given by:
	\begin{equation*}
		T^{-1}(j,i)(z)=\begin{cases} \mathfrak{P}_j(z) & \text{If} \ i=0 \ \text{and} \ 1 \leq j \leq k \\ \ \ 0 & \text{else} \\ \end{cases}:\iota_{\infty}(M')(i) \longrightarrow M(j)
	\end{equation*}
	
	(2) It is obvious from lemma \ref{matrixsuspension} and \ref{identifysuspension}.
\end{proof}

To prove that $\Theta$ induces an isomorphism in L-theory, we need the following Lemma, which essentially implies that any quadratic chain complex in $\mathbb{F}_{\mathbb{N}}(M^h(R))$ is cobordant to one in the essential image of $\Theta$:
\begin{Lemma}
	\label{addnull}
	For any $r,n\in \mathbb{Z}$, there is a $n$-dimensional quadratic chain complex $(C,\psi)$ in $M^h(\Sigma R)$, such that $C_r \neq 0$ and $\partial C_r \neq 0$.
\end{Lemma}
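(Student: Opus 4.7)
The plan is to exhibit a completely explicit example, so simple that no deformation is needed. I will take $C$ to be a chain complex concentrated in at most two degrees chosen so that both $C_r$ and the Ranicki boundary $(\partial C)_r$ are automatically nonzero, and equip it with the trivial quadratic structure.

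Concretely, I plan to define $C$ in $M^h(\Sigma R)$ by $C_i := \Sigma R$ when $i \in \{r,\, n-r\}$ and $C_i := 0$ otherwise (these two prescriptions coincide into a single copy in degree $r$ when $n=2r$), with every differential set to zero, and to take $\psi_s := 0$ for all $s \geq 0$. The first step is then to observe that $(C,\psi)$ is a legitimate $n$-dimensional quadratic chain complex in $M^h(\Sigma R)$: the differential condition $d^2 = 0$ is trivial, and the quadratic cycle condition $(d\psi)_s = 0$ reduces to $0 = 0$ for every $s$, so the construction makes sense for every $n \in \mathbb{Z}$.

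The second and last step is to check the two non-vanishing claims. That $C_r = \Sigma R \neq 0$ follows because the identity matrix in $M_\infty(R)$ has infinitely many nonzero entries and hence is not contained in $M_\infty^{\mathrm{fin}}(R)$, so $\Sigma R$ is a nonzero unital ring and its rank-one free module is a nonzero object of $M^h(\Sigma R)$. For the Ranicki boundary I will invoke the defining formula
\[
(\partial C)_r \;=\; C_{r+1}\,\oplus\, C^{\,n-r}, \qquad C^{\,n-r}:=(C_{n-r})^{*},
\]
whose summand $C^{n-r} = (\Sigma R)^{*}$ is again a nonzero rank-one free module in $M^h(\Sigma R)$; this already forces $(\partial C)_r \neq 0$, irrespective of whether the other summand $C_{r+1}$ happens to vanish or not.

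No substantive obstacle appears; the entire argument is a direct construction. The only two mild points worth articulating are the non-triviality of $\Sigma R$ (witnessed by the unit class of the identity matrix, which falls outside the finite-rank ideal) and the fact that $\psi \equiv 0$ is a bona fide quadratic cycle in every dimension, which is what allows the construction to work uniformly in both $r$ and $n$.
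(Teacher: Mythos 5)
Your proof is correct. It differs from the paper's argument only in which summand of the Ranicki boundary $\partial C_r = C_{r+1} \oplus C^{n-r}$ you make nonzero. The paper constructs a complex concentrated in a single degree $r$ with zero quadratic structure, and then remarks at the outset that direct sums of $n$-dimensional quadratic complexes are again quadratic and that $C_{r+1} \neq 0$ forces $\partial C_r \neq 0$; the intended full construction is therefore a direct sum of two single-degree complexes, one concentrated in degree $r$ and one in degree $r+1$, so that the $C_{r+1}$ summand of $\partial C_r$ is nonzero. You instead concentrate $C$ in degrees $r$ and $n-r$ in a single stroke, so that the $C^{n-r}$ summand is nonzero; this avoids the direct-sum remark entirely and is slightly more self-contained. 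Both routes are equally elementary (a zero differential and zero quadratic structure trivially satisfy the cycle condition, and $\Sigma R \neq 0$ because the identity matrix lies outside the finite-rank ideal). The only caveat, shared with the paper, is the implicit assumption $R \neq 0$; this is harmless since the lemma is only applied to group rings in the sequel.
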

\begin{proof}
	Note that first that direct sum of $n$-dimensional quadratic chain complex is still a $n$-dimensional quadratic chain complex, also $C_{r+1} \neq 0$ implies $\partial C_r \neq 0$. Therefore, we only need to construct a $n$-dimensional quadraric chain complex $(C,\psi)$ with $C_r \neq 0$.
	
	Let $C$ be the chain complex with $C_r=\Sigma R$ and $C_k=0$ for all $k \neq r$. We can choose the quadratic structure $\psi$ to be $0$. Then $(C,\psi)$ is a $n$-dimensional quadratic chain complex in $M^h(\Sigma R)$ with $C_r \neq 0$, completing the proof of the lemma.
\end{proof}

\begin{Remark}
	\label{remaddnull}
	Note that by definition of $\Theta$, $\Theta(C)_r$ and $\Theta(\partial C_r)$ contain $\underline{R}$ as its subsummand.
\end{Remark}

\begin{Lemma}
	The functor $\Theta$ induces an isomorphism in $L$-theory.
\end{Lemma}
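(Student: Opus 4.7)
The plan is to prove that $\Theta_{*}:L_{n}(M^{h}(\Sigma R))\to L_{n}(\mathbb{F}_{\mathbb{N},b}(M^{h}(R)))$ is an isomorphism by establishing surjectivity and injectivity separately. The three ingredients already in place are: the lifting Lemma~\ref{lift}, which replaces any quadratic complex (respectively, pair) in $\mathbb{F}_{\mathbb{N},b}(M^{h}(R))$ by a representative in $\mathbb{F}_{\mathbb{N}}(M^{h}(R))$; the stabilization device provided by Lemma~\ref{addnull} together with Remark~\ref{remaddnull}, which produces quadratic complexes in $M^{h}(\Sigma R)$ whose $\Theta$-images carry free $\underline{R}$-summands in prescribed degrees; and Lemma~\ref{thmTheta}, stating that $\Theta$ is full and faithful and that every object of infinite support is isomorphic to $\Theta$ of some object.

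For surjectivity, take a Poincar\'e complex $(C,\psi)$ in $\mathbb{F}_{\mathbb{N},b}(M^{h}(R))$ representing a class $x$, lift it to $(C',\psi')$ in $\mathbb{F}_{\mathbb{N}}(M^{h}(R))$ via Lemma~\ref{lift}(1), and for each degree $r$ within the finite range where $S_{C'_{r}}$ is finite and nonzero invoke Lemma~\ref{addnull} in dimension $n+1$ and take its algebraic boundary; this yields an $n$-dimensional Poincar\'e complex in $M^{h}(\Sigma R)$, null-cobordant by construction, whose $\Theta$-image has an $\underline{R}$-summand in degree $r$ by Remark~\ref{remaddnull}. Replacing $(C',\psi')$ by its direct sum with $\Theta$ applied to these auxiliary Poincar\'e complexes preserves the $L$-class $x$ and arranges every degree of the new complex to be either zero or of infinite support. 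Lemma~\ref{thmTheta}(1)(i) now supplies degree-wise isomorphisms $C'_{r}\cong\Theta(N_{r})$; fullness of $\Theta$ transports the differential and the quadratic structure back to morphisms in $M^{h}(\Sigma R)$, and faithfulness forces the identities $d^{2}=0$ and $\partial\psi=0$ to hold on these preimages. This produces a Poincar\'e complex $(N,\mu)$ in $M^{h}(\Sigma R)$ with $\Theta_{*}[N,\mu]=x$.

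For injectivity, assume $(C,\psi)$ is Poincar\'e in $M^{h}(\Sigma R)$ with $\Theta_{*}[C,\psi]=0$, and fix a bounding pair $(f:\Theta C\to D,(\delta\psi,\Theta\psi))$ in $\mathbb{F}_{\mathbb{N},b}(M^{h}(R))$. Lift the pair to $\mathbb{F}_{\mathbb{N}}(M^{h}(R))$ via Lemma~\ref{lift}(2) and stabilize $D$ by exactly the same device, extending the pair to $((f,0):\Theta C\to D\oplus\Theta E,(\delta\psi\oplus\Theta\eta,\Theta\psi))$ for a direct sum of $\Theta$-images $(\Theta E,\Theta\eta)$ of the Poincar\'e complexes manufactured above; because each added summand is itself Poincar\'e, the extension remains a Poincar\'e pair with the same boundary $\Theta(C,\psi)$. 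After enough stabilization $D\cong\Theta(D')$ for some $D'\in M^{h}(\Sigma R)$, so fullness of $\Theta$ lifts $f$ and the relative quadratic data to $M^{h}(\Sigma R)$, faithfulness propagates the pair identities back, and the resulting Poincar\'e pair exhibits $[C,\psi]=0$ in $L_{n}(M^{h}(\Sigma R))$.

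The most delicate step is the stabilization: one must simultaneously supply $\underline{R}$-summands in every degree where the original complex or the bounding object $D$ fails to lie in the essential image of $\Theta$, using only $\Theta$-images of genuinely Poincar\'e and null-cobordant objects so that neither the Poincar\'e or pair structure nor the $L$-class is disturbed. Lemma~\ref{addnull} together with Remark~\ref{remaddnull} is tailored for exactly this purpose, but the bookkeeping of which degrees need to be stabilized, together with the verification that the morphisms produced by fullness satisfy the chain, quadratic and Poincar\'e identities once pulled back via faithfulness and the involution-compatibility recorded in Lemma~\ref{identifysuspension}, is where the substantive content of the argument lives.
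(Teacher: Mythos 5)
Your proposal is correct and follows the same strategy as the paper: lift via Lemma~\ref{lift}, stabilize via Lemma~\ref{addnull} and Remark~\ref{remaddnull} so that every degree is zero or of infinite support, apply Lemma~\ref{thmTheta}(1)(i) to recognize the result as a $\Theta$-image, and use fullness and faithfulness to pull the differential and quadratic data back to $M^{h}(\Sigma R)$. The surjectivity argument is essentially verbatim the paper's.

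The one place where you genuinely deviate is the stabilization step in the injectivity argument. The paper direct-sums the bounding pair with the algebraic thickening pair $\bigl(i_{E}:\Theta(\partial E)\to\Theta(E),(0,\Theta(\partial\theta_E))\bigr)$, which enlarges the boundary to $\Theta(C\oplus\partial E)$, and then discards the null-cobordant summand $(\partial E,\partial\theta_E)$ at the end. You instead direct-sum with the pair $\bigl(0:0\to\Theta E,(\Theta\eta,0)\bigr)$ for an absolute $(n+1)$-dimensional Poincar\'e complex $(\Theta E,\Theta\eta)$, leaving the boundary unchanged at $\Theta(C,\psi)$ so the conclusion is immediate. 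This is a legitimate and slightly cleaner variant, but note that it requires $(n+1)$-dimensional Poincar\'e complexes with prescribed nonzero degrees, not the $n$-dimensional ones your surjectivity paragraph produced. Your phrase ``the Poincar\'e complexes manufactured above'' is therefore off by one dimension; you should instead invoke Lemma~\ref{addnull} in dimension $n+2$ and take boundaries, which yields $(n+1)$-dimensional Poincar\'e complexes with the needed $\underline{R}$-summands by Remark~\ref{remaddnull}. With that correction the argument is sound, and both routes rely on exactly the same ingredients.
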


\begin{proof}
	We only need to prove that $\Theta_*$ is injective and surjective. Let us briefly explain the idea of the proof first, by Lemma \ref{addnull}, Remark \ref{remaddnull} and (1) in Lemma \ref{thmTheta}, any quadratic chain complex in $\mathbb{F}_{\mathbb{N}}(M^h(R))$ is cobordant to one in the essential image of $\Theta$. Since $\Theta$ is a faithful and full functor, we can then conclude that $\Theta$ induces an isomorphism in $L$-theory. We will present the details in the following paragraphs.
	
	Injectivity: Choose any $x \in L_n(M^h(\Sigma R))$ and represent it by a $n$-dimensional quadratic Poincare chain complex $(C,\psi)$. Suppose that $\Theta_*(x)=0$, we need to prove that $x=0$.
	
	By definition, $\Theta_*(x)=0$ implies that there is a $(n+1)$-dimensional Poincare quadratic pair $\big(f: \Theta(C) \longrightarrow D, (\delta\psi_D,\Theta(\psi))\big)$ in $\mathbb{F}_{\mathbb{N}}(M^h(R))$. Since $D$ is a finite chain complex, let $k \in \mathbb{N}$ be the smallest number such that $D_r=0$ for all $r>|k|$. By Lemma \ref{addnull} and Remark \ref{remaddnull}, there is a  $(n+1)$-dimensional quadratic chain complex $(E,\theta_E)$ in $M^h(\Sigma R)$, such that $D_r \oplus \Theta(E_r)$ contains $\underline{R}$ as its subsummand for all $|r| \leq k$. By (1) in Lemma \ref{thmTheta}, there is a finite chain complex $E'$ in $M^h(\Sigma R)$, such that $\Theta(E') \cong D \oplus \Theta(E)$. Denote $(i_E:\partial E \longrightarrow E,(0,\partial \theta_E))$ be the algebraic thickening of $E$, it is a $(n+1)$-dimensional Poincare quadratic pair in $M^h(\Sigma R)$. By (2) in Lemma \ref{thmTheta}, we have that $\big(\Theta(i_E):\Theta(\partial E) \longrightarrow \Theta(E),(0,\Theta(\partial \theta_E))\big)$ is a $(n+1)$-dimensional Poincare quadratic pair in $\mathbb{F}_{\mathbb{N}}(M^h(R))$. Then $\big(f \oplus \Theta(i_E): \Theta(C) \oplus \Theta(\partial E) \longrightarrow D \oplus \Theta(E), (\delta\psi_D \oplus 0, \Theta(\psi) \oplus \Theta(\partial\theta_E))\big)$  is a Poincare quadratic pair in $\mathbb{F}_{\mathbb{N}}(M^h(R))$. Since $D \oplus \Theta(E) \cong \Theta(E')$, by (2) in Lemma \ref{thmTheta}, there exist a chain map $f':C \oplus \partial E \longrightarrow E'$ and morphisms $\delta\psi_u:(E')^{n+1-u-*} \longrightarrow E_*'$ with $u \in \mathbb{N}$, such that $\big(f':C \oplus \partial E \longrightarrow E',(\delta\psi,\psi \oplus \partial \theta_E)\big)$ is a $(n+1)$-dimensional Poincare quadratic pair in $M^h(\Sigma R)$. By definition, this implies that $(C \oplus \partial E,\psi \oplus \partial \theta_E)$ is null-cobordant. Since $(\partial E,\partial \theta_E)$ is null-cobordant, we get that $(C,\psi)$ is null-cobordant and thus $x=0$.
	
	Surjectivity: Choose any $y \in L_n(\mathbb{F}_{\mathbb{N}}(M^h(R)))$, we need to prove that there is $x \in L_n(M^h(\Sigma R))$, such that $\Theta_*(x)=y$.
	
	Let $(C,\psi)$ be the $n$-dimensional Poincare quadratic chain complex in \\
	$\mathbb{F}_{\mathbb{N}}(M^h(R))$ representing $y$. Since $C$ is a finite chain complex, let $k \in \mathbb{N}$ be the smallest number such that $C_r=0$ for all $r>|k|$. By Lemma \ref{addnull} and Remark \ref{remaddnull}, there is a $(n+1)$-dimensional quadratic chain complex $(D,\varphi_D)$ in $M^h(\Sigma R)$, such that $C_r \oplus \Theta(\partial D)_r$ contains $\underline{R}$ as its subsummand for all $|r| \leq k$. By (1) in Lemma \ref{thmTheta}, there is a chain complex $C'$ in $M^h(\Sigma R)$, such that $\Theta(C') \cong C \oplus \Theta(\partial D)$. By (2) in Lemma \ref{thmTheta}, $\Theta$ is a faithful and full functor. Then there are morphisms $\psi'_u:(C')^{n-u-*} \longrightarrow C_*'$, such that $(C',\psi')$ is a $n$-dimensional Poincare quadratic chain complex in $M^h(\Sigma R)$ and $\Theta_*(C',\psi') \cong (C \oplus \Theta(\partial D),\psi \oplus \Theta(\partial\varphi_D))$. Now since $\Theta$ is a faithful and full functor, we have that $(\Theta(\partial D),\Theta(\partial\varphi_D))$ is null-cobordant. Therefore, we have that $(C \oplus \Theta(\partial D),\psi \oplus \Theta(\partial\varphi_D))$ represents $y$. Denote $x$ to be the element represented by $(C',\psi')$, we have $\Theta_*(x)=y$, proving that $\Theta$ is surjective.
	
\end{proof}


	\section{Proof of Theorem \ref{L theory transfer}}
	We will prove the main theorem of the article, Theorem \ref{L theory transfer}, in this section.

Before getting to the proof, we briefly recall the definition of the transfer map $\rho_{M,N}$ in Theorem \ref{L theory transfer}:

By (4) and (5) in the geometric setup, the map $r_{\Pi}:H \longrightarrow \Pi$ induces a left action of $H$ on $\mathbb{Z}\Pi$ by $h \cdot x=r_{\Pi}(h)x$. The induced action of  $G$ acts on $\mathop{\oplus}\limits_{gH \in G/H} \mathbb{Z}\Pi$.

Let $S$ be the splitting map of Shanneson. The homomorphism $\rho:\mathbb{Z}\Gamma \longrightarrow \Sigma\mathbb{Z}\Pi$ is given by (See construction \ref{transfer map}): 

(1) If $G/H$ is an infinite set, then the induced action gives rise to a group homomorphism: $G \longrightarrow M_{\infty}(\mathbb{Z}\Pi)$. Projecting the map onto $\Sigma \mathbb{Z}\Pi$ gives a group homomorphism: $\bar{\rho}:G \longrightarrow \Sigma \mathbb{Z}\Pi$. Then $\bar{\rho}$ can be descended to a homomorphism $\rho:\Gamma \longrightarrow \Sigma \mathbb{Z}\Pi$ and we extend linearly to get the homomorphism $\rho:\mathbb{Z}\Gamma \longrightarrow \Sigma\mathbb{Z}\Pi$.

(2) If $G/H$ is a finite set, then we define $\rho$ to be the trivial map: $\rho(x)=0$ for all $x \in \mathbb{Z}\Gamma$.

Then $\rho_{M,N}$ is given by $\rho_{M,N}:L_m(\mathbb{Z}\Gamma) \stackrel{\rho_*}{\longrightarrow} L_m(\Sigma\mathbb{Z}\Pi) \cong L_{m-1}^p(\mathbb{Z}\Pi) \stackrel{S}{\longrightarrow} L_{m-2}^{\textlangle -1 \textrangle}(\mathbb{Z}\pi)$ with $S$ being the algebraic splitting map in Theorem 17.2 of \cite{ranickilowerltheory}.

By the last paragraph of page 352 in \cite{Ranickisplit}, the algebraic splitting map agrees with the geometric splitting map. Therefore, we have: 

\begin{Theorem}
	\label{split}
	$S\big(\sigma^{\textlangle 0 \textrangle}(f|_{N' \times S^1},b|_{N' \times S^1})\big)=\sigma^{\textlangle -1 \textrangle}(f|_{N'},b|_{N'})$.
\end{Theorem}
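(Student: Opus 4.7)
The plan is to deduce this statement directly from Ranicki's identification of the algebraic splitting map with its geometric counterpart, which the excerpt flags via the reference to the last paragraph of page 352 of \cite{Ranickisplit}. Since that identification is the only genuinely non-trivial ingredient, the proof reduces to unpacking the definitions on both sides and matching them.

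First I would recall the geometric splitting map in the Shaneson decomposition $L_{m-1}^p(\mathbb{Z}[\pi_1(Y) \times \mathbb{Z}]) \cong L_{m-1}^p(\mathbb{Z}\pi_1(Y)) \oplus L_{m-2}^{\textlangle -1 \textrangle}(\mathbb{Z}\pi_1(Y))$ with $Y = N$. For a normal map $g: V \to Y \times S^1$ of closed manifolds, one makes $g$ transverse to a fibre $Y \times \{\ast\}$, sets $V_0 = g^{-1}(Y \times \{\ast\})$, and restricts the bundle data so that $g_0 = g|_{V_0}: V_0 \to Y$ is a normal map. The geometric splitting sends $\sigma^{\textlangle 0 \textrangle}(g)$ to $\sigma^{\textlangle -1 \textrangle}(g_0)$, providing a geometric realisation of the second summand in the Shaneson decomposition.

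Next I would apply this to our situation. The normal map $(f|_{N' \times S^1}, b|_{N' \times S^1})$ is by construction a product: it equals $(f|_{N'}, b|_{N'}) \times (\mathrm{id}_{S^1}, \mathrm{id}_{TS^1})$. Hence it is already transverse to $N \times \{\ast\} \subset N \times S^1$, the preimage is $N' \times \{\ast\} \cong N'$, and the restriction to this preimage is precisely $(f|_{N'}, b|_{N'})$. It follows immediately from the definition of the geometric splitting that its value on $\sigma^{\textlangle 0 \textrangle}(f|_{N' \times S^1}, b|_{N' \times S^1})$ equals $\sigma^{\textlangle -1 \textrangle}(f|_{N'}, b|_{N'})$. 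Invoking Ranicki's theorem that the algebraic splitting map $S$ of Theorem 17.2 of \cite{ranickilowerltheory} coincides with this geometric splitting then yields the desired equality.

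The main (in fact, only) obstacle is the identification of the algebraic and geometric splitting maps, which we import as a black box from \cite{Ranickisplit}; everything else is bookkeeping that matches the geometric transversality picture with the product structure of our normal map. One subtlety to verify along the way is that the decorations indeed shift from $\textlangle 0 \textrangle = p$ on the $\mathbb{Z}[\pi \times \mathbb{Z}]$ side to $\textlangle -1 \textrangle$ on the $\mathbb{Z}\pi$ side, which is precisely the decoration shift built into Shaneson's splitting and compatible with both the algebraic map $S$ and its geometric counterpart.
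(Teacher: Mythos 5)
Your proposal is correct and follows essentially the same route as the paper, which simply cites Ranicki's identification of the algebraic splitting map $S$ with the geometric (codimension-one transversality) splitting map; you merely make explicit the observation — left implicit in the paper — that $(f|_{N'\times S^1},b|_{N'\times S^1})$ is normally cobordant to the product $(f|_{N'},b|_{N'})\times\mathrm{id}_{S^1}$, so that the geometric splitting returns $\sigma^{\textlangle -1\textrangle}(f|_{N'},b|_{N'})$ on the nose.
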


It remains to prove that $\rho_*(\sigma(f,b))=\sigma^{\textlangle 0 \textrangle}(f|_{N' \times S^1},b|_{N' \times S^1})$. We will use Ranicki's description of surgery obstruction by chain complexes. We recall some definitions and theorems in Ranicki's book \cite{ranickiltheory} first:

\subsection{Simiplical descriptions of $L$-theory}
\begin{Definition}
	Let $\mathbb{A}$ be an additive category, and let $C,D$ be finite chain complexes in $\mathbb{A}$. Denote $Hom_{\mathbb{A}}(C,D)$ to be the following chain complex $(r \in \mathbb{Z})$ :
	\begin{equation*}
		Hom_{\mathbb{A}}(C,D)_r=\mathop{\oplus}_{q \in \mathbb{Z}}Hom_{\mathbb{A}}(C_q,D_{r+q})
	\end{equation*}
	\begin{equation*}
		d_{Hom_{\mathbb{A}}(C,D)}:Hom_{\mathbb{A}}(C,D)_r \longrightarrow Hom_{\mathbb{A}}(C,D)_{r-1}
	\end{equation*}
	\begin{equation*}
		f \in Hom_{\mathbb{A}}(C_q,D_{r+q}) \mapsto d_Df+(-1)^{r+q}fd_C
	\end{equation*}
\end{Definition}

\begin{Definition}
	\
	\label{chain functor}
	
	Let $\mathbb{A}$ be an additive category,  and denote $\mathbb{B}(\mathbb{A})$ to be the additive category of finite chain complex in $\mathbb{A}$ and chain maps.
	
	$(1)$ Denote $\iota:\mathbb{A} \longrightarrow \mathbb{B}(\mathbb{A})$ to be the canonical embedding: $\iota(A)_q=\begin{cases} A \ \text{if } q=0 \\ 0 \ \text{if } q \neq 0
	\end{cases}$

	$(2)$ For any contravariant additive functor $T:\mathbb{A} \longrightarrow \mathbb{B}(\mathbb{A})$, we can define an extension of it: $T:\mathbb{B}(\mathbb{A}) \longrightarrow \mathbb{B}(\mathbb{A})$. It is given by:
	\begin{gather*}
		(TC)_q =\mathop{\oplus}\limits_{x+y=q}	T(C_{-x})_{y} \\  
		d_{T(C)} = \mathop{\oplus}\limits_{x+y=q}\big(d_{T(C_{-x})} + (-1)^yT(d_C)\big): T(C)_q \longrightarrow T(C)_{q-1}
	\end{gather*}

	$(3)$ Denote $C^r=(TC)_{-r}$ with $d_{C^{-*}}=(-1)^rd_{TC}:C^r=TC_{-r} \longrightarrow C^{r+1}=TC_{-r-1}$.
\end{Definition}

\begin{Remark}
	For a ring $R$ with involution and $\mathbb{A}=M^h(R)$, we can define $T(P)=\iota(P^*)$ for $P \in M^h(R)$. Then $C^r=C_r^*$ and $d_{C^{-*}}=(-1)^rd_C^*:C^r \longrightarrow C^{r+1}$. Unless otherwise stated, we will take this chain complex to be the dual of a chain complex $C$ in $M^h(R)$ in the remaining parts of this article.
\end{Remark}

The following lemma gives a detailed description of how we extend the additive functor $T:\mathbb{A} \longrightarrow \mathbb{B}(\mathbb{A})$ in the definition above:

\begin{Lemma}
	\label{dualfun}
	The extension of $T$ can be chosen in a way such that the following morphism is a chain map for all  finite chain complexes $C$ and $D$ in $\mathbb{A}$:
	\begin{equation*}
		T:Hom_{\mathbb{A}}(C,D)_* \longrightarrow Hom_{\mathbb{A}}(TD,TC)_*
	\end{equation*}
\end{Lemma}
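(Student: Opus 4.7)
The extension $T: \mathbb{B}(\mathbb{A}) \to \mathbb{B}(\mathbb{A})$ has been specified on objects in Definition \ref{chain functor}; what remains is to fix the action on morphisms of chain complexes, together with the sign conventions on the differentials, so that $T$ commutes with the Hom differentials up to the Koszul rule. The plan is to set up $T$ on morphisms via a Koszul-type sign and then carry out a block-by-block verification of the Leibniz rule.

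First, I would define $T$ on morphisms as follows. For $f \in Hom_{\mathbb{A}}(C_q, D_{q+r})$, a summand of $Hom_{\mathbb{A}}(C,D)_r$, I put
\[
T(f)|_{T(D_{q+r})_y} \;=\; (-1)^{\epsilon(q,r,y)}\, T(f)_y \;:\; T(D_{q+r})_y \longrightarrow T(C_q)_y,
\]
where $T(f) \in Hom_{\mathbb{B}(\mathbb{A})}(T(D_{q+r}), T(C_q))$ is the chain map produced by the original contravariant functor $T: \mathbb{A} \to \mathbb{B}(\mathbb{A})$ and $T(f)_y$ denotes its degree-$y$ component; on any summand $T(D_s)_y$ with $s \neq q+r$ I set $T(f)$ to zero. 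The sign $\epsilon(q,r,y)$ is a Koszul-type sign to be pinned down. One then extends to all of $Hom_{\mathbb{A}}(C,D)_r$ by direct sum over $q$.

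To verify the chain map condition $T(d_{Hom(C,D)} f) = d_{Hom(TD,TC)} T(f)$, I would expand both sides block by block. Using $d_{Hom(C,D)} f = d_D f + (-1)^{r+q} f d_C$ together with contravariance of the original $T$, the left side reduces to two contributions, one through $T(d_D)$ and one through $T(d_C)$. The right side, via $d_{TC}|_{T(C_{-x})_y} = d_{T(C_{-x})} + (-1)^y T(d_C)$ and the analogous formula for $d_{TD}$, splits into four contributions: the two internal terms involving $d_{T(C_q)}$ and $d_{T(D_{q+r})}$ automatically cancel against each other because $T(f)$ is already a chain map between $T(D_{q+r})$ and $T(C_q)$, leaving the two horizontal terms involving $T(d_C)$ and $T(d_D)$. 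Matching these against the two contributions on the left reduces the whole verification to a single sign identity relating $\epsilon(q,r,y)$, $\epsilon(q+1, r-1, y)$ and $\epsilon(q, r-1, y)$ with the $(-1)^y$ from $d_{TC}$, $d_{TD}$ and the $(-1)^{r+q}$ from $d_{Hom(C,D)}$; it is satisfied by a Koszul sign such as $\epsilon(q,r,y) = ry$.

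The main obstacle is purely sign bookkeeping: the $(-1)^y$ in the formula for $d_{TC}$ already encodes a Koszul convention, and one has to choose the sign $\epsilon$ in the definition of $T$ on morphisms so that it combines with this convention and with the $(-1)^{r+q}$ in the Hom differential to produce a genuine chain map. No conceptual difficulty arises beyond that; this is the standard fact that a contravariant additive functor on $\mathbb{A}$ extends to a dg-contravariant functor on the category of finite chain complexes, whose induced map on Hom complexes is a degree-zero chain map.
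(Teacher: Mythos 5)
Your plan matches the paper's: the published proof of this lemma consists precisely of writing down the component formula for $Tf$ with a Koszul sign (equation \ref{EqT}: $Tf_{s,q'+s}^{s',q'+r+s'}=(-1)^{q's'}T(f_{s'})_{q'+s}$ when $s=s'+r$), so a component-wise definition followed by a block-by-block Leibniz check is the right shape of argument.

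There is, however, a genuine gap in the verification you sketch. You assert that the two internal terms (the ones through $d_{T(C_q)}$ and $d_{T(D_{q+r})}$) "automatically cancel against each other because $T(f)$ is already a chain map". That is not automatic. The chain-map identity $d_{T(C_q)}T(f)_y = T(f)_{y-1}d_{T(D_{q+r})}$ makes the two internal contributions \emph{proportional} — they become the same underlying morphism $T(f)_{y-1}d_{T(D_{q+r})}$ — but they still carry different signs: one carries $(-1)^{\epsilon(q,r,y)}$, the other carries $(-1)^{\epsilon(q,r,y-1)}$ together with the $(-1)^{r+q'}$ from the Hom differential (here $q'=y-1-q-r$ is the total degree of the intermediate summand). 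Whether they cancel is therefore itself one of the three sign constraints you must solve, not a free consequence of the chain-map property. Writing it out, the internal cancellation forces $\epsilon(q,r,y)-\epsilon(q,r,y-1)\equiv y+q \pmod{2}$, an increment that genuinely depends on both $y$ and $q$. Your candidate $\epsilon(q,r,y)=ry$ has increment $ry - r(y-1)=r$, which is constant in $y$ and $q$, so it cannot satisfy this constraint and will not yield a chain map under the paper's conventions (the $(-1)^{r+q}$ in $d_{Hom}$ and the $(-1)^y$ in $d_{TC}$). The fix is to treat the internal cancellation on the same footing as the two "horizontal" matchings: write down all three sign identities and solve them for $\epsilon$ — this is what pins the answer to the paper's $(-1)^{q's'}$ rather than a generic Koszul sign, and it is exactly the bookkeeping you identified as "the main obstacle" but then dismissed as routine.
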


\begin{proof}
	Choose $r \in \mathbb{Z}$ and $f \in Hom_{\mathbb{A}}(C,D)_r$. For every $q \in \mathbb{Z}$,  let $f_q \in Hom_{\mathbb{A}}(C_q,D_{q+r})$ denote the corresponding component of $f$. We will define $Tf \in Hom_{\mathbb{A}}(TD,TC)_r$ below.
	
	Since
	\begin{equation*}
		Hom_{\mathbb{A}}(TD,TC)_r=\mathop{\oplus}\limits_{q' \in \mathbb{Z}}\mathop{\oplus}
		\limits_{s,s' \in \mathbb{Z}}Hom_{\mathbb{A}}((TD_{s})_{q'+s},(TC_{s'})_{q'+r+s'})
	\end{equation*}
	
	Then $Tf$ is given by specifying the components $Tf_{s,q'+s}^{s',q'+r+s'}:(TD_{s})_{q'+s} \longrightarrow (TC_{s'})_{q'+r+s'}$, which is given as follows:
	\begin{equation}
		\label{EqT}
		Tf_{s,q'+s}^{s',q'+r+s'}=\begin{cases} (-1)^{q's'}T(f_{s'})_{q'+s} & \text{if } s=s'+r \\ \ \ \ \ \ \ \ \ 0 & \text{else} \end{cases}
	\end{equation}
\end{proof}

\begin{Definition}[Chain Duality, Definition 1.1 in \cite{ranickiltheory}]
	
	A chain duality $(T,\mathfrak{D})$ on an additive category $\mathbb{A}$ is a contravariant
	additive functor $T:\mathbb{A} \longrightarrow \mathbb{B}(\mathbb{A})$ together with a natural transformation: 
	$$\mathfrak{D}: T^2 \longrightarrow \iota : \mathbb{A} \longrightarrow \mathbb{B}(\mathbb{A})$$
	
	such that for each object $A$ in $\mathbb{A}$, we have:
	
	$(1)$ $\mathfrak{D}(T(A)) \circ T(\mathfrak{D}(A)) = id : T(A) \longrightarrow T^3(A) \longrightarrow T(A)$.
	
	$(2)$ $\mathfrak{D}(A): T^2(A) \longrightarrow \iota(A)$ is a chain equivalence.
\end{Definition}

The chain duality gives a $\mathbb{Z}_2$-action on $Hom_{\mathbb{A}}(TC,C)$. The following lemma gives a detailed description of this action:

\begin{Lemma}
	\label{ExpressT}
	Let $\mathbb{A}$ be an additive category with chain duality $(T,\mathfrak{D})$ and $C,D$ be chain complexes in $\mathbb{A}$. Let $l \in \mathbb{Z}$ and $\psi \in Hom_{\mathbb{A}}(TC,D)_l$. Denote $T\psi$ to be the image of $\psi$ given by the following composition of maps:
	\begin{equation*}
		Hom_{\mathbb{A}}(TC,D) \stackrel{T}{\longrightarrow} Hom_{\mathbb{A}}(TD,T^2C) \stackrel{\mathfrak{D}(C)}{\longrightarrow} Hom_{\mathbb{A}}(TD,C)
	\end{equation*}
	
	Since $Hom_{\mathbb{A}}(TC,D)_l=\mathop{\oplus}\limits_{ \substack{q,r,s \in \mathbb{Z} \\ q+r+s=l}}Hom_{\mathbb{A}}((TC_q)_{-r},D_s)$, write $\psi_{q,r}^s$ for the corresponding component, similarly for $T\psi$. Then $T\psi_{q,r}^s:(TD_q)_{-r} \longrightarrow C_s$ is given by $(-1)^{(r+q)(r+s)}$ times the composition of the following maps:
	\begin{equation*}
		\begin{tikzcd}
			(TD_q)_{-r} \ar{rr}{T(\psi_{s,r}^q)_{-r}} & & T(T(C_s)_{-r})_{-r} \rar[hook]{\subset} & T^2(C_s)_0 \ar{rr}{\mathfrak{D}(C_s)_0} & & C_s
		\end{tikzcd}
	\end{equation*}
\end{Lemma}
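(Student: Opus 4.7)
The proof is a direct unpacking of the composition
\[
  Hom_{\mathbb{A}}(TC,D) \xrightarrow{T} Hom_{\mathbb{A}}(TD,T^2C) \xrightarrow{\mathfrak{D}(C)_*} Hom_{\mathbb{A}}(TD,C)
\]
at the level of components, feeding in the explicit sign formula (\ref{EqT}) of Lemma \ref{dualfun}. No new ideas are required beyond careful index and sign bookkeeping.

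First I would view $\psi \colon TC \to D$ as a degree $l$ morphism in $\mathbb{B}(\mathbb{A})$ and apply (\ref{EqT}) with source chain complex $TC$, target $D$, and degree $l$. To isolate the component of $T\psi$ whose source is $(TD_q)_{-r}$, I match $s = q$ and $q' + s = -r$ in (\ref{EqT}), which forces $q' = -r - q$; the non-vanishing condition $s = s' + l$ then gives $s' = q - l = -(r+s)$ using the constraint $q + r + s = l$. Next, by additivity of $T$, one decomposes $T((TC)_{s'}) = \bigoplus_a T((T(C_a))_{s'+a})$; the summand with $a = s$ is exactly $T(T(C_s)_{-r})$ (since $s' + s = -r$), whose degree $-r$ part is $T(T(C_s)_{-r})_{-r}$. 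Functoriality of $T$ then identifies the contribution of $T(\psi_{s'})_{q'+s}$ to this summand with $T(\psi_{s,r}^q)_{-r}$, producing the first arrow in the claimed composition. A short computation converts the sign $(-1)^{q's'} = (-1)^{(-r-q)(q-l)}$ provided by (\ref{EqT}) into $(-1)^{(r+q)(r+s)}$ via the relation $q - l = -(r+s)$.

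Finally, the summand $T(T(C_s)_{-r})_{-r}$ lies inside $(T^2(C_s))_0 = \bigoplus_x T(T(C_s)_{-x})_{-x}$ (as the $x = r$ piece), and composing with the chain duality structure map $\mathfrak{D}(C_s)_0$ produces the desired map to $C_s$. Here I use the standard fact that the natural extension of the chain duality $(T, \mathfrak{D})$ from $\mathbb{A}$ to $\mathbb{B}(\mathbb{A})$ acts, on the ``diagonal'' summand of $(T^2C)_s$ isomorphic to $(T^2(C_s))_0$, via the single-object map $\mathfrak{D}(C_s)_0$. The main obstacle is purely notational: tracking the two nested applications of $T$ through the several layers of direct-sum decomposition and verifying that the indicated summand is indeed the one hit by our component of $T\psi$. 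Once the multi-indices and the constraint $q + r + s = l$ are written down consistently, both the identification of the map and the sign fall out directly.
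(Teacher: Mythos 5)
Your proof is correct and follows the same route as the paper's one-line argument: apply equation \ref{EqT} with the substitutions (in the notation of Lemma \ref{dualfun}) $C\mapsto TC$, $r\mapsto l$, $s\mapsto q$, $q'\mapsto -r-q$, $s'\mapsto -(r+s)$, isolate the summand $T(T(C_s)_{-r})_{-r}\subset T^2(C_s)_0$, and post-compose with $\mathfrak{D}(C_s)_0$. Your sign bookkeeping $(-1)^{q's'}=(-1)^{(r+q)(r+s)}$ and the observation that only the diagonal summand survives (since $\iota(C_a)$ is concentrated in degree $0$) are exactly the details the paper's terse proof leaves to the reader.
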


\begin{proof}
	It follows from the equation \ref{EqT} by choosing $C=TC,D=D,r=l,s=q,q'=-r-q,s'=-s-r$ and considering the inclusion relations.
\end{proof}

\begin{Definition}[quasi quadratic, quasi symmetric]
	\
	
	Let $W$ be the following $\mathbb{Z}[\mathbb{Z}_2]$ chain complex:
	\begin{equation*}
		... \longrightarrow \mathbb{Z}[\mathbb{Z}_2] \stackrel{1-T}{\longrightarrow} \mathbb{Z}[\mathbb{Z}_2] \stackrel{1+T}{\longrightarrow} \mathbb{Z}[\mathbb{Z}_2] \stackrel{1-T}{\longrightarrow} \mathbb{Z}[\mathbb{Z}_2]
	\end{equation*}
	
	Let $\mathbb{A}$ be an additive category with involution $*$. A quasi quadratic (resp. symmetric) complex of dimension $n$ in $\mathbb{A}$ is a pair $(C,\psi)$, where $C$ is a finite chain complex in $\mathbb{A}$ and $\psi$ is an element of $(W \otimes_{\mathbb{Z}[\mathbb{Z}_2]}Hom_{\mathbb{A}}(TC,C))_n$ (resp. $Hom(W,Hom_{\mathbb{A}}(TC,C))_n)$.
\end{Definition}

\begin{Remark}
	We will denote $W_{\%}(C)$ to be $W \otimes_{\mathbb{Z}[\mathbb{Z}_2]}Hom_{\mathbb{A}}(TC,C) $ and $W^{\%}(C)$ to be $Hom(W,Hom_{\mathbb{A}}(TC,C))$.
\end{Remark}

Then we begin to recall some definitions of categories over complexes in Ranicki's book \cite{ranickiltheory}, which will be our main tools for the proof of Theorem \ref{L theory transfer}.

\begin{Definition}[Definiton 4.1 in \cite{ranickiltheory}]
	\
	
	Let $\mathbb{A}$ be an additive category and $K$ be an ordered simplicial complex.
	
	$(1)$ An object $M$ in $\mathbb{A}$ is $K$-based if it is expressed as a direct sum
	$M= \sum\limits_{\sigma \in K}M(\sigma)$ of objects $M(\sigma)$ in $\mathbb{A}$, s.t. $\{\sigma \in K|\ M(\sigma) \neq 0\}$ is finite. A morphism $f:M \longrightarrow N$ of $K$-based objects is a collection of morphisms in $\mathbb{A}$:
	\begin{equation*}
		f=\{f(\tau,\sigma):M(\sigma) \longrightarrow N(\tau)| \ \sigma,\tau \in K\}
	\end{equation*}
		
	$(2)$ Denote $\mathbb{A}_*(K)$ to be the additive category of $K$-based objects $M$ in $\mathbb{A}$, with
	morphisms $f: M \longrightarrow N$, such that $f(\tau,\sigma)=0$ unless $\sigma \leq \tau$.
		
	$(3)$ Denote $\mathbb{A}^*(K)$ to be the additive category of $K$-based objects $M$ in $\mathbb{A}$, with
	morphisms $f: M \longrightarrow N$, such that $f(\tau,\sigma)=0$ unless $\sigma \geq \tau$.
\end{Definition}

Regard a simplicial complex $K$ as a category with one object for each simplex $\sigma \in K$ and one morphism for each face inclusion $\sigma \leq \tau$, we have the following definition:

\begin{Definition}
	 Let $\mathbb{A}$ be an additive category. Denote $\mathbb{A}_*[K]$ (resp. $\mathbb{A}^*[K]$) to be the additive category with objects the covariant (resp. contravariant) functors
	 \begin{equation*}
	 	M:K \longrightarrow \mathbb{A}; \sigma \mapsto M[\sigma]
	 \end{equation*}
	such that $\{\sigma \in K| \ M[\sigma] \neq 0\}$ is finite. The morphisms are the natural transformations of such functors.
\end{Definition}

The category $\mathbb{A}_*(K)$ $(resp. \ \mathbb{A}^*(K))$ and $\mathbb{A}_*[K]$ $(resp. \ \mathbb{A}^*[K])$ are related by the following covariant functor:
\begin{equation*}
	\begin{aligned}
		& \mathbb{A}_*(K) \longrightarrow \mathbb{A}_*[K]; M \mapsto [M], [M][\sigma]=\mathop{\oplus}\limits_{\tau \geq \sigma}M(\tau) \\
		& f \in Hom(M,N) \mapsto [f] \in Hom([M],[N]) \\
		& [f][\sigma]=\sum\limits_{\tau' \geq \sigma} \sum \limits_{\tau \geq \sigma} f(\tau',\tau):[M][\sigma] \longrightarrow [N][\sigma] \\
		& \mathbb{A}^*(K) \longrightarrow \mathbb{A}^*[K]; M \mapsto [M], [M][\sigma]=\mathop{\oplus}\limits_{\tau \leq \sigma}M(\tau) \\
		& f \in Hom(M,N) \mapsto [f] \in Hom([M],[N]) \\
		& [f][\sigma]=\sum\limits_{\tau' \geq \sigma} \sum \limits_{\tau \geq \sigma} f(\tau',\tau):[M][\sigma] \longrightarrow [N][\sigma] \\
	\end{aligned}
\end{equation*}

\begin{Definition}
	Let $K$ be a locally finite and ordered simplicial complex. Then for each simplex $\sigma \in K$, the set
	$K^*(\sigma)=\{\tau \in K| \ \tau >\sigma, |\tau|=|\sigma|+1\}$ is finite. For every $\tau \in K^*(\sigma)$, denote $n_{\sigma}^{\tau} \in \mathbb{N}$ to be the unique number such that $\partial_{n_{\sigma}^{\tau}} \ \tau=\sigma$.
	
	For every simplex $\sigma \in K$, denote $K_*(\sigma)$ to be the set $\{\tau \in K| \ \tau <\sigma, |\tau|=|\sigma|-1\}$.
\end{Definition}

In order to make clear of the morphisms used later, we introduce the following notation:

\begin{Definition}
	\
	
	Let $R$ be any ring and $I$ be a set. Let $\{A_i\}_{i \in I}$ be a family of $R$ modules and $B$ be a $R$ module. We denote $\mathop{\oplus}\limits_{i \in I}x_i \in \mathop{\oplus}\limits_{i \in I}A_i$ to be the element with $x_i$ on the $A_i$ component, then $x_i \neq 0$ for at most finitely many $i$. For every $i \in I$, let $f_i:A_i \longrightarrow B$, $g_i: B \longrightarrow A_i$ be morphisms of $R$ modules, such that for every $x \in B$, $g_i(x) \neq 0$ for at most finitely many $i$. Then we denote the following morphisms of $R$ modules by $\mathop{\boxplus}\limits_{i \in I}f_i$ and $\mathop{\oplus}\limits_{i \in I}g_i$ respectively:
	\begin{equation*}
		\begin{aligned}
			& \mathop{\boxplus}\limits_{i \in I}f_i: \mathop{\oplus}\limits_{i \in I}A_i \longrightarrow B, \ \mathop{\oplus}\limits_{i \in I}x_i \mapsto \sum\limits_{i \in I}f_i(x_i) \\
			& \quad \mathop{\oplus}\limits_{i \in I}g_i: B \longrightarrow \mathop{\oplus}\limits_{i \in I}A_i, \ x \mapsto \mathop{\oplus}\limits_{i \in I}g_i(x) \\
		\end{aligned}
	\end{equation*}
\end{Definition}

If there is an involution $*$ on the additive category $\mathbb{A}$, then for any locally finite ordered simplicial complex $K$, there is a chain duality on the additive category $\mathbb{A}_*(K)$ by Proposition 5.1 in Ranicki's book \cite{ranickiltheory}:

\begin{Theorem}[Proposition 5.1 in \cite{ranickiltheory}]
	\
	\label{chaindualityonAK1}
	
	Let $K$ be a locally finite ordered simpicial complex and $\mathbb{A}$ be an additive category with involution. Then there is a chain duality $(T,\mathfrak{D})$ on the additive category $\mathbb{A}_*(K)$ given by:
	
	$(1)$ For any object $M \in \mathbb{A}_*(K)$, $TM$ is the following chain complex: 
	\begin{equation*}
		TM_r(\sigma)=\begin{cases} [M][\sigma]^*=(\mathop{\oplus}\limits_{\kappa \geq \sigma}M(\kappa))^* & \text{If} \ r=-|\sigma| \\ \qquad \qquad \ 0 & \text{else} \\ \end{cases}
	\end{equation*}
	\begin{equation*}
		d_{TM}(\tau,\sigma)=\begin{cases} (-1)^{n_{\sigma}^{\tau}} i_{\tau\sigma}^* & \text{If} \ \tau \in K^*(\sigma)  \ \text{and} \ r=-|\sigma|\\ \quad \quad 0 & \text{else} \\ \end{cases} :TM_r(\sigma) \longrightarrow TM_{r-1}(\tau)
	\end{equation*}

	where $i_{\tau\sigma}:\mathop{\oplus}\limits_{\kappa \geq \tau}M(\kappa) \longrightarrow \mathop{\oplus}\limits_{\kappa \geq \sigma}M(\kappa)$ is the natural inclusion map for $\tau \geq \sigma$.
	
	$(2)$ For any morphism $f:M \longrightarrow N$, $Tf$ is the following chain map:
	\begin{equation*}
		\begin{aligned}
			Tf_r:TN_r \longrightarrow TM_r \qquad \qquad \qquad \qquad \qquad \qquad & \\
			Tf_r(\tau,\sigma)=\begin{cases}	\mathop{\boxplus}\limits_{\kappa \geq \sigma}\mathop{\oplus}\limits_{\kappa' \geq \tau}f(\kappa,\kappa')^*	& \text{If }\tau=\sigma \text{ and } r=-|\sigma| \\ \qquad \quad 0 & \text{else} \end{cases}: TN_r(\sigma) \longrightarrow TM_r(\tau) & \\
		\end{aligned}
	\end{equation*}
	
	Moreover, $\mathfrak{D}:T^2 \longrightarrow \iota$ is given as follows:
	
	For every object $M$, in order to define $\mathfrak{D}(M):T^2M \longrightarrow \iota(M)$, we only need to give the morphism on the $0$ degree: $\mathfrak{D}(M)_0:(T^2M)_0 \longrightarrow M$.
	
	Since for every $\sigma \in K$, we have:
	\begin{equation*}
		\begin{aligned}
			(T^2M)_0(\sigma)=\mathop{\oplus}\limits_{x \in \mathbb{Z}}T(TC_{x})_x(\sigma)=(\mathop{\oplus}\limits_{\kappa \geq \sigma}TM_{-|\sigma|}(\kappa))^*=(TM_{-|\sigma|}(\sigma))^*=\mathop{\oplus}\limits_{\kappa \geq \sigma}M(\kappa)
		\end{aligned}
	\end{equation*}
	
	We define $\mathfrak{D}(M)_0:(T^2M)_0 \longrightarrow M$ to be the following morphism in $\mathbb{A}_*(K)$:
	\begin{equation*}
		\begin{aligned}
			\mathfrak{D}(M)_0(\tau,\sigma):(T^2M)_0(\sigma)=\mathop{\oplus}\limits_{\kappa \geq \sigma}M(\kappa) \longrightarrow M(\tau) \\
			\mathfrak{D}(M)_0(\tau,\sigma)=\begin{cases} (-1)^{|\sigma|}p_{\sigma,\tau} & \text{If} \ \sigma \leq \tau \\ \quad \ \ \ 0 & \text{else} \end{cases}
		\end{aligned}
	\end{equation*}

	Where $p_{\sigma,\tau}$ is the projection map for $\sigma \leq \tau$.
\end{Theorem}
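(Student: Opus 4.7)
The plan is to verify the three structural properties that together constitute a chain duality on $\mathbb{A}_*(K)$: (i) $T$ is a well-defined contravariant additive functor from $\mathbb{A}_*(K)$ to $\mathbb{B}(\mathbb{A}_*(K))$; (ii) $\mathfrak{D}$ is a natural transformation $T^2 \longrightarrow \iota$ by chain maps; and (iii) for each object $M$ the composite $\mathfrak{D}(TM) \circ T(\mathfrak{D}(M)) = \mathrm{id}_{TM}$ and each $\mathfrak{D}(M)$ is a chain equivalence. Throughout, the formulas involve simplicial signs $(-1)^{n_\sigma^\tau}$ and $(-1)^{|\sigma|}$ coming from face inclusions, so all identities reduce to combinatorial facts about $K$ combined with the naturality of the involution $*$ on $\mathbb{A}$.

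For (i), the identity $d_{TM}^2 = 0$ I would obtain by isolating, for each chain $\sigma < \tau < \kappa$ with $|\kappa| = |\sigma|+2$, the standard dualised simplicial identity $\sum_\tau (-1)^{n_\sigma^\tau + n_\tau^\kappa} = 0$, where the sum ranges over the two intermediate $\tau$. That $Tf$ is a chain map and the functoriality $T(gf) = Tf \circ Tg$ then follow by direct inspection of the defining formulas, using that morphisms in $\mathbb{A}_*(K)$ vanish unless $\sigma \leq \tau$, so only the diagonal $\tau = \sigma$ contributes in the formula for $Tf$. For (ii), checking that $\mathfrak{D}(M)$ is a chain map reduces to verifying that $\mathfrak{D}(M)_0 \circ d_{T^2 M}$ vanishes in positive degree; expanding $(T^2 M)_1$ as a direct sum over pairs $(\sigma,\tau)$ with $\tau \in K^*(\sigma)$, the two boundary contributions cancel because $(-1)^{|\tau|} = -(-1)^{|\sigma|}$. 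Naturality of $\mathfrak{D}$ with respect to morphisms $f:M \longrightarrow N$ then follows by combining the diagonal form of $T^2 f$ on $\mathbb{A}_*(K)$ with the explicit description of the contravariant action of $T$ given in Lemma \ref{ExpressT}.

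The main obstacle is the triangle identity in (iii). Using Lemma \ref{ExpressT} to express $T(\mathfrak{D}(M))$ explicitly, the composition $\mathfrak{D}(TM) \circ T(\mathfrak{D}(M))$ restricted to the summand $TM_{-|\sigma|}(\sigma) = [M][\sigma]^*$ unwinds into a sum of dualised projection–inclusion composites indexed by simplices $\tau \geq \sigma$. After tracking the two factors of $(-1)^{|\sigma|}$ that arise from the two occurrences of $\mathfrak{D}$, together with the sign $(-1)^{(r+q)(r+s)}$ from Lemma \ref{ExpressT} and the natural equivalence $e: 1 \longrightarrow *^2$ of the involution on $\mathbb{A}$, the off-diagonal contributions cancel in pairs and only the diagonal $\tau = \sigma$ survives, giving back the identity. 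This is the step where the most care is required, and where all the sign conventions in the definitions of $T$ and $\mathfrak{D}$ have been arranged to conspire.

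Finally, the chain equivalence claim for $\mathfrak{D}(M)$ I would prove by filtering $T^2 M$ by simplex dimension: the associated graded is, simplex by simplex, the dualised simplicial cochain complex of the closed star of $\sigma$ in $K$ with coefficients in $M(\kappa)$ for $\kappa \geq \sigma$. Since each closed star is contractible, either a spectral sequence argument or an explicit simplex-by-simplex cone contraction produces a homotopy inverse to $\mathfrak{D}(M)$. The hard part throughout remains the sign and direct-sum bookkeeping in the triangle identity; once this has been carried out correctly, the remaining verifications are routine inspections of the defining formulas.
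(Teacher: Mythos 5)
Your proposal takes a genuinely different route from the paper, and it also leaves the hardest step essentially unaddressed. The paper does \emph{not} verify the chain-duality axioms from scratch. It cites Ranicki's Proposition~5.1 for the existence of the chain duality on $\mathbb{A}_*(K)$, then spends the entire proof on the one thing Ranicki does not spell out: an explicit componentwise formula for $\mathfrak{D}(M)_0$. The derivation is purely diagram-chasing --- the paper writes down the isomorphism
\begin{equation*}
T_0 : Hom_{\mathbb{A}_*(K)}(TM,TM)_0 \longrightarrow Hom_{\mathbb{A}_*(K)}(T^2M,\iota(M))_0
\end{equation*}
appearing in Ranicki's proof, expresses it componentwise via the identification with $\mathop{\oplus}_{\sigma \in K}\mathop{\oplus}_{\kappa \geq \sigma}Hom_{\mathbb{A}}(M(\kappa)^*,TM_{-|\sigma|}(\sigma))$, and tracks the identity element $\mathrm{Id}_{TM}$ through this diagram to obtain $\mathfrak{D}(M)_0(\tau,\sigma) = (-1)^{|\sigma|}p_{\sigma,\tau}$. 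This both is short and accomplishes something your approach does not: it establishes that the explicit formula is \emph{the same} $\mathfrak{D}$ that appears in Ranicki's framework, which the paper needs because it repeatedly invokes downstream results (Proposition~4.7, the assembly material on page~94, Proposition~13.7, etc.) stated for that specific chain duality. Verifying the axioms for the given formula would only show that \emph{a} chain duality exists with those components; it would not by itself license citing Ranicki's further theorems about $\mathbb{A}_*(K)$.

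Beyond the difference in strategy, the central step of your proposal is underdeveloped. You flag the triangle identity $\mathfrak{D}(T(A)) \circ T(\mathfrak{D}(A)) = \mathrm{id}$ as "the main obstacle" and then resolve it with the claim that "off-diagonal contributions cancel in pairs and only the diagonal $\tau = \sigma$ survives" after "tracking the two factors of $(-1)^{|\sigma|}$" and the sign from Lemma~\ref{ExpressT}. That is not an argument; it is a statement of what would have to happen. Computing $T(\mathfrak{D}(A))$ first requires extending $T$ to the chain complex $T^2A$ (so Lemma~\ref{dualfun}, not just Lemma~\ref{ExpressT}, is in play), then composing with $\mathfrak{D}(TA)$, and the resulting triple sum over nested simplices $\sigma \leq \tau \leq \kappa$ requires an actual cancellation computation with the natural isomorphism $e: 1 \to *^{2}$ inserted at the correct spot. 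In Ranicki's book this verification is itself the nontrivial content of Proposition~5.1; a sketch that announces the conclusion without carrying out the bookkeeping does not substitute for it. If you want to pursue the from-scratch route, this step needs to be filled in with the same level of detail the paper devotes to, say, the sign-tracking in its proof of Theorem~\ref{localdual}; otherwise you are better off following the paper's shortcut of citing Ranicki and only extracting the componentwise formula for $\mathfrak{D}(M)_0$.
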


\begin{proof}
	The basic proofs are contained in \cite{ranickiltheory}. It only leaves us to check the expression of $\mathfrak{D}$.
	
	By the proof of Proposition 5.1 in \cite{ranickiltheory}, for every object $M$ in $\mathbb{A}_*(K)$, the morphism $\mathfrak{D}(M):T^2M \longrightarrow \iota(M)$ is given by $Id \in Hom_{\mathbb{A}_*(K)}(TM,TM)_0$ under the isomorphism $T_0$ given by the following commutative diagram:
	
	\begin{small}
		\begin{tikzcd}
			Hom_{\mathbb{A}_*(K)}(TM,TM)_0 \rar{T_0}\dar{\cong} & 	Hom_{\mathbb{A}_*(K)}(T^2M,\iota(M))_0 \dar{\cong} \\
			\mathop{\oplus}\limits_{\sigma \in K}\mathop{\oplus}\limits_{\kappa,\mu \geq \sigma}Hom_{\mathbb{A}}(M(\kappa)^*,TM_{-|\sigma|}(\mu)) \rar{(-1)^{|\sigma|}*}\dar{=} & \mathop{\oplus}\limits_{\sigma \in K}\mathop{\oplus}\limits_{\kappa,\mu \geq \sigma}Hom_{\mathbb{A}}(TM_{-|\sigma|}(\mu)^*,M(\kappa)) \dar{=} \\
			\mathop{\oplus}\limits_{\sigma \in K}\mathop{\oplus}\limits_{\kappa \geq \sigma}Hom_{\mathbb{A}}(M(\kappa)^*,TM_{-|\sigma|}(\sigma)) \rar{(-1)^{|\sigma|}*} & \mathop{\oplus}\limits_{\sigma \in K}\mathop{\oplus}\limits_{\kappa \geq \sigma}Hom_{\mathbb{A}}(TM_{-|\sigma|}(\sigma)^*,M(\kappa)) \\
		\end{tikzcd}
	\end{small}
	
	
	Where the upper vertical isomorphisms are given by looking at the morphism componentwise.
	
	For every $\sigma \in K,\kappa \in K$, if $\kappa \geq \sigma$, denote $i_{\kappa,\sigma}:M(\kappa)^* \longrightarrow TM_{-|\sigma|}(\sigma)=(\mathop{\oplus}\limits_{\kappa \geq \sigma}M(\kappa))^*$ to be the inclusion map.
	
	For every $\sigma \in K,\mu \in K$, $Id$ gives a morphism $TM_{-|\sigma|}(\sigma)=(\mathop{\oplus}\limits_{\kappa \geq \sigma}M(\kappa))^* \longrightarrow TM_{-|\sigma|}(\mu)$, which is $id$ if $\mu=\sigma$ and 0 else. Thus, the identity corresponds to $\mathop{\oplus}\limits_{\sigma \in K}\mathop{\oplus}\limits_{\kappa \geq \sigma}i_{\kappa,\sigma} \in \mathop{\oplus}\limits_{\sigma \in K}\mathop{\oplus}\limits_{\kappa \geq \sigma}Hom_{\mathbb{A}}(M(\kappa)^*,TM_{-|\sigma|}(\sigma))$. Since $i_{\kappa,\sigma}^*=p_{\sigma,\kappa}$, we have that $(-1)^{|\sigma|}$ times the dual of $\mathop{\oplus}\limits_{\sigma \in K}\mathop{\oplus}\limits_{\kappa \geq \sigma}i_{\kappa,\sigma} \in \mathop{\oplus}\limits_{\sigma \in K}\mathop{\oplus}\limits_{\kappa \geq \sigma}Hom_{\mathbb{A}}(M(\kappa)^*,TM_{-|\sigma|}(\sigma))$ is $\mathop{\oplus}\limits_{\sigma \in K}(-1)^{|\sigma|}(\mathop{\oplus}\limits_{\kappa \geq \sigma}p_{\sigma,\kappa}) \in \mathop{\oplus}\limits_{\sigma \in K}\mathop{\oplus}\limits_{\kappa \geq \sigma} Hom_{\mathbb{A}}(TM_{-|\sigma|}(\sigma)^*,M(\kappa))$. Reinterpreting it as a morphism in $\mathbb{A}_*(K)$, it is the morphism $\mathfrak{D}(M)_0$ stated in the theorem.
\end{proof}

\begin{Remark}
	\label{chaindualityonAK2}
	
	Similarly, there is a chain duality $(T,\mathfrak{D})$ on $\mathbb{A}^*(K)$ given by:
	
	$(1)$ For any object $M \in \mathbb{A}^*(K)$, $TM$ is the following chain complex: 
	
	$$TM_r(\sigma)=\begin{cases}
		[M][\sigma]^*=(\mathop{\oplus}\limits_{\kappa \leq \sigma}M(\kappa))^* & \text{If} \ r=|\sigma| \\ \qquad \qquad \  0 & \text{else} \\
	\end{cases}$$
	
	$$d_{TM}(\tau,\sigma)=\begin{cases}
		(-1)^{n_{\tau}^{\sigma}} i_{\tau\sigma}^* & \text{If} \ \sigma \in K^*(\tau)  \ \text{and} \ r=|\sigma|\\
		\quad \quad 0 & \text{else} \\
	\end{cases} :TM_r(\sigma) \longrightarrow TM_{r-1}(\tau)$$
	where $i_{\tau\sigma}:\mathop{\oplus}\limits_{\kappa \leq \tau}M(\kappa) \longrightarrow \mathop{\oplus}\limits_{\kappa \leq \sigma}M(\kappa)$ is the natural inclusion map for $\tau \leq \sigma$.
	
	$(2)$ For any morphism $f:M \longrightarrow N$, $Tf$ is the following chain map:
	\begin{equation*}
		\begin{aligned}
			& \qquad \qquad \qquad \qquad \qquad \qquad Tf_r:TN_r \longrightarrow TM_r \\
			& Tf_r(\tau,\sigma)=\begin{cases}	\mathop{\boxplus}\limits_{\kappa \geq \sigma}\mathop{\oplus}\limits_{\kappa' \geq \tau}f(\kappa,\kappa')^*	& \text{If }\tau=\sigma \text{ and } r=|\sigma| \\  \qquad \quad 0 & else \end{cases}: TN_r(\sigma) \longrightarrow TM_r(\tau)
		\end{aligned}
	\end{equation*}
	
	Moreover, the morphism $\mathfrak{D}(M)_0:(T^2M)_0 \longrightarrow M$ is given by:
	
	\begin{equation*}
		\begin{aligned}
			\mathfrak{D}(M)_0(\tau,\sigma):(T^2M)_0(\sigma)=\mathop{\oplus}\limits_{\kappa \leq \sigma}M(\kappa) \longrightarrow M(\tau) \\
			\mathfrak{D}(M)_0(\tau,\sigma)=\begin{cases} (-1)^{|\sigma|}p_{\sigma,\tau} & \text{If} \ \tau \leq \sigma \\ \quad \ \ \ 0 & \text{else} \end{cases}
		\end{aligned}
	\end{equation*}
	
	Where $p_{\sigma,\tau}$ is the projection map for $\tau \leq \sigma$.
\end{Remark}

\begin{Remark}
	Let $C$ be a finite chain complex in $\mathbb{A}_*(K)$. Fix $n \in \mathbb{Z}$, by Definition \ref{chain functor}, $C^{n-*}$ is a finite chain complex in $\mathbb{A}_*(K)$ given by:
	
	$$C^{n-r}(\sigma)=TC_{r-n}(\sigma)=T(C_{n-r-|\sigma|})_{-|\sigma|}=\mathop{\oplus}\limits_{\kappa \geq \sigma}C_{n-r-|\sigma|}(\kappa)^*$$
	
\end{Remark}

If we choose $\mathbb{A}=M^h(R)$ for some ring $R$ with involution, then we have the following definition of the assembly functor given by Ranicki \cite{ranickiltheory}:
\begin{Definition}[Assembly functor]
	\
	\label{assemblyfun}
	
	Let $K$ be a locally finite ordered simpicial complex. Let $\widetilde{K}$ be a Galois covering of $K$ with transformation group $G$ and denote $\mathbf{p}$ to be the covering map. The assembly functor is the following functor:
	\begin{equation*}
		A_{\mathbf{p}}:M^h(R)_*(K) \longrightarrow M^h(RG), \ M \mapsto M(\widetilde{K}):=\mathop{\oplus}\limits_{\tilde{\sigma} \in \widetilde{K}} M(\mathbf{p}\tilde{\sigma})
	\end{equation*}
	\begin{equation*}
		f \in Mor(M,N) \mapsto f(\widetilde{K}):=\mathop{\boxplus}\limits_{\tilde{\sigma} \in \widetilde{K}}\mathop{\oplus}\limits_{\tilde{\tau} \in \widetilde{K}}f(\tilde{\tau},\tilde{\sigma}): M(\widetilde{K}) \longrightarrow N(\widetilde{K})
	\end{equation*}
	
	Where $f(\tilde{\tau},\tilde{\sigma})$ is defined by:
	\begin{equation*}
		f(\tilde{\tau},\tilde{\sigma})=\begin{cases}	f(\mathbf{p}\tilde{\tau},\mathbf{p}\tilde{\sigma}) & \text{If } \tilde{\sigma} \leq \tilde{\tau} \\ \ \ \ \ \ 0 & \text{else} \\ \end{cases}: M(\mathbf{p}\tilde{\sigma}) \longrightarrow N(\mathbf{p}\tilde{\tau})
	\end{equation*}
	
	It can be extended to the corresponding category of chain complexes by applying the functor to every degree of the chain complex.
	
	If $\widetilde{K}$ is taken to be the universal covering of $K$, then the assembly functor is called the universal assembly functor.
\end{Definition}

The assembly map maps Poincare quadratic chain complexes in $M^h(R)_*(K)$ to Poincare quadratic chain complexes in $M^h(RG)$, as shown by the following theorem:

\begin{Theorem}[Page 94 in \cite{ranickiltheory}]
	\
	\label{assemblydual}
	
	Let $K,\widetilde{K},\mathbf{p}$ be as above. For every vertex $\tilde{x} \in \widetilde{K}$ and simplex $\tilde{\sigma} \in \widetilde{K}$ such that $\tilde{x} \leq \tilde{\sigma}$, let $\Upsilon_{\tilde{x},\tilde{\sigma}}^r:C_r(\mathbf{p}\tilde{\sigma})^* \longrightarrow C^r(\mathbf{p}\tilde{x})=\mathop{\oplus}\limits_{\kappa \geq \mathbf{p}\tilde{x}}C_r(\kappa)^*$ be the inclusion map. Then the chain map $\Upsilon:\mathop{\oplus}\limits_{\tilde{\sigma} \in \widetilde{K}} C_r(\mathbf{p}\tilde{\sigma})^*=C_r(\widetilde{K})^* \longrightarrow C^r(\widetilde{K})$ given by
	\begin{equation*}
		z_{\tilde{\sigma}} \in C_r(\mathbf{p}\tilde{\sigma})^* \mapsto \mathop{\oplus} \limits_{\tilde{x} \leq \tilde{\sigma}}\Upsilon_{\tilde{x},\tilde{\sigma}}^rz_{\tilde{\sigma}} \in \mathop{\oplus} \limits_{\tilde{x} \leq \tilde{\sigma}}C^r(\mathbf{p}\tilde{x}) \subset C^r(\widetilde{K})
	\end{equation*}

	is a chain homotopy equivalence of $RG$ modules.
\end{Theorem}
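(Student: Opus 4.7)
The plan is to verify first that $\Upsilon$ is an $RG$-equivariant chain map, and then to establish the chain homotopy equivalence by a simplex-by-simplex induction that reduces to a contractibility check on each simplex of $K$.

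The formal properties of $\Upsilon$ are mostly bookkeeping. The $G$-action on both sides permutes the summands indexed by $\widetilde K$, and since the matrix coefficient $\Upsilon^r_{\tilde x,\tilde\sigma}$ depends only on the face relation $\tilde x \leq \tilde\sigma$, which is $G$-invariant, $\Upsilon$ intertwines these actions. To check that $\Upsilon$ is a chain map, I would expand $d\circ\Upsilon$ using the assembled boundary induced from the $d_{TC}$ of Theorem \ref{chaindualityonAK1} together with the combinatorial simplicial differential of Definition \ref{assemblyfun}, and expand $\Upsilon\circ d^{*}$ using the dual of the assembled simplicial boundary on $C_r(\widetilde K)^*$; the sign $(-1)^{n^\tau_\sigma}$ in $d_{TC}$ then has to cancel against the grading-dual sign coming from the $*$-duality.

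For the homotopy equivalence, I would fix an enumeration $\sigma_1,\sigma_2,\ldots$ of the simplices of $K$ so that each initial segment $K_i=\{\sigma_1,\ldots,\sigma_i\}$ is a subcomplex closed under faces, and consider for each $i$ the short exact sequence of $K$-based chain complexes
\begin{equation*}
0 \longrightarrow C|_{K\setminus K_i} \longrightarrow C|_{K\setminus K_{i-1}} \longrightarrow C|_{\{\sigma_i\}} \longrightarrow 0.
\end{equation*}
Both $C\mapsto C(\widetilde K)^{*}$ and $C\mapsto (TC)(\widetilde K)$ carry such a simplex-splitting sequence to a short exact sequence of $RG$-chain complexes, and $\Upsilon$ is natural in $C$. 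Applying the five lemma inductively to the resulting long exact sequences on homology reduces the question to single-simplex $C$, where the source and the target both reduce to complexes built from the lifts of $\sigma_i$ and the face posets of those lifts, and $\Upsilon$ becomes, up to tensoring with $C_r(\sigma_i)^{*}$, the comparison between the top-dimensional chain and the augmented chain complex of the simplex. Since the simplex is contractible this comparison is a homotopy equivalence; a $G$-equivariant homotopy inverse is given by projection to the minimal vertex of each $\tilde\sigma$ under the ordering of $\widetilde K$, with the null-homotopy supplied by the simplicial cone.

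The main obstacle I anticipate is the sign verification in the chain-map step: the three sources of signs --- the simplicial ordering $n^\tau_\sigma$, the grading-dual sign $(-1)^r$, and the duality natural transformation $\mathfrak{D}$ --- must be reconciled simultaneously, which is delicate but not conceptually hard. In the inductive step, one must also verify that projecting to the minimal vertex is genuinely $G$-equivariant; this follows because $G$ acts on $\widetilde K$ by simplicial isomorphisms that preserve the chosen ordering.
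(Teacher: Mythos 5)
The paper states this result as a citation to Ranicki (``Page 94 in \cite{ranickiltheory}'') without reproducing the proof, so there is no internal argument in the paper to compare against. Your filtration-by-subcomplexes argument is the standard route for this assembly--duality compatibility and is, to the best of my understanding, essentially Ranicki's own.

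Two points to tighten. First, the five lemma over the filtration gives only a quasi-isomorphism; to conclude ``chain homotopy equivalence'' you should explicitly add that $C(\widetilde{K})^{-*}$ and $(TC)(\widetilde{K})$ are bounded complexes of free $RG$-modules (because $G$ acts freely on $\widetilde{K}$ and each $C_r(\sigma)$ is a finitely generated free $R$-module), so the quasi-isomorphism upgrades automatically. Second, in the single-simplex case the phrase ``comparison between the top-dimensional chain and the augmented chain complex of the simplex'' is slightly off. Since $\bar\sigma_i\cong\Delta^k$ is simply connected, $\mathbf{p}^{-1}(\bar\sigma_i)$ splits as a disjoint union of copies of $\Delta^k$ indexed by the lifts of $\sigma_i$; on each copy the source is $C(\sigma_i)^{-*}$ (carrying no simplicial grading at all), the target is the simplicial cochain complex $\Delta^{-*}(\Delta^k)$ tensored with $C(\sigma_i)^{-*}$, and $\Upsilon$ is induced by the map sending $1$ to the constant $0$-cochain. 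That map is a chain equivalence precisely because $H^*(\Delta^k)=\mathbb{Z}$ concentrated in degree $0$, which is the contractibility you invoke; the explicit homotopy inverse you describe (evaluate at the minimal vertex, cone off the rest) is the usual one and is $G$-equivariant for the reason you give. These are expository rather than mathematical gaps; the conceptual structure of your argument is sound.
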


\begin{Remark}
	Under the same assumptions as above, after some computations, it can be shown that if $\psi$ is a Poincare quadratic structure of a chain complex $C$ in $M^h(R)_*(K)$, then $\psi(\widetilde{K})\Upsilon$ is a quadratic structure of $C(\widetilde{K})$ (See page 100 in Ranicki's book \cite{ranickiltheory}). The above theorem guarantees that this quadratic structure is Poincare and thus the assembly gives a homomorphism in $L$ theory:
	\begin{equation*}
		A_{\mathbf{p}}:L_*(M^h(R)_*(K)) \longrightarrow L_*(RG)
	\end{equation*}
\end{Remark}

It is also possible to relate the term $L_*(M^h(R)_*(K))$ with some generalized homology theories and the relation is contained in Proposition 13.7 of \cite{ranickiltheory}. We will set up the basic prepartions first and then make a statement about the Proposition.

We first extend the categories $M^h(R)_*(K),M^h(R)^*(K)$ to the case of simplicial pairs:
\begin{Definition}
	Let $K$ be a locally finite ordered simplicial complex and $L$ be its subcomplex, then:
	
	$(1)$ Define $M^h(R)_*(K,L)$ to be the full subcategory of $M^h(R)_*(K)$ with objects $M \in M^h(R)_*(K)$ such that $M(\sigma)=0$ for all $\sigma \in L$.
	
	$(2)$ Define $M^h(R)^*(K,L)$ to be the full subcategory of $M^h(R)^*(K)$ with objects $M \in M^h(R)^*(K)$ such that $M(\sigma)=0$ for all $\sigma \in L$.
\end{Definition}

Let $K_c$ be a finite ordered simplicial complex. Let $l \in \mathbb{Z}$ be a sufficiently large number, such that we can embed $K_c$ simplically and order-preservingly in $\partial\Delta^{l+1}$. Denote $\Sigma^l$ to be the simplicial complex with one $k$-simplex $\sigma^*$ for each $(l-k)$-simplex $\sigma$ in $\partial\Delta^{l+1}$, with $\sigma^* \leq \tau^*$ if and only if $\sigma \geq \tau$ in $\partial\Delta^{l+1}$. It inherits an order from the following simplicial isomorphism:
\begin{equation*}
	\Sigma^l \longrightarrow \partial\Delta^{l+1}; \ \sigma^* \mapsto \{0,1,2,...,l+1\} \backslash \sigma
\end{equation*}
 
Then $\Sigma^l$ can be viewed as the dual cell decomposition of the barycentric subdivision of $\partial\Delta^{l+1}$. For any subcomplex $V \subset \partial\Delta^{l+1}$, denote $\underline{V}=\mathop{\cup}\limits_{\sigma \notin V}\sigma^*$. It is a subcomplex of $\Sigma^l$. 

For any $\sigma \in \partial \Delta^{l+1}$, denote $J_{\sigma}:\{0,1,2,...,l-|\sigma|\} \longrightarrow \{0,1,2,...,l\}$ to be the map that maps $i \in \{0,1,2,...,l-|\sigma|\}$ to the $i+1$ th element of $\{0,1,2,...,l+1\} \backslash \sigma$. Let $J_{\sigma}^{all}=\sum\limits_{i=0}^{l-|\sigma|}J_{\sigma}(i)$, then we have the following lemma describing the order on $\Sigma^l$:
\begin{Lemma}
	\label{Dualincidnum}
	For any $\tau \in (\partial\Delta^{l+1})^*(\sigma)$, we have $n_{\sigma}^{\tau}+n_{\tau^*}^{\sigma^*}=J_{\sigma}^{all}-J_{\tau}^{all}$.
\end{Lemma}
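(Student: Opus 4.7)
The plan is to show that both sides of the asserted equality compute the same integer, namely the unique vertex $v_0$ of $\{0,1,\dots,l+1\}$ that lies in $\tau$ but not in $\sigma$.

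First I will unpack the right-hand side. Since $J_\sigma$ enumerates the elements of $\{0,1,\dots,l+1\}\setminus\sigma$ in increasing order, the quantity $J_\sigma^{all}$ is just the sum of all elements of the complement of $\sigma$ in $\{0,1,\dots,l+1\}$. The condition $\tau\in(\partial\Delta^{l+1})^*(\sigma)$ means $\tau=\sigma\cup\{v_0\}$ for a unique $v_0\notin\sigma$, so the complement of $\tau$ is obtained from the complement of $\sigma$ by deleting $v_0$. Subtracting the two sums gives $J_\sigma^{all}-J_\tau^{all}=v_0$.

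Next I will analyze the two face numbers on the left-hand side. By definition $n_\sigma^\tau$ is the position (counted from $0$) of the vertex whose removal from $\tau$ yields $\sigma$; since the ordering on $\tau$ is induced from $\{0,1,\dots,l+1\}$, this position equals the number of vertices of $\sigma$ that are strictly smaller than $v_0$. For $n_{\tau^*}^{\sigma^*}$, I will transport the computation through the order-preserving simplicial isomorphism $\Sigma^l\to\partial\Delta^{l+1}$ sending $\sigma^*\mapsto\{0,\dots,l+1\}\setminus\sigma$; under this isomorphism $\tau^*$ becomes the face of $\sigma^*$ obtained by removing $v_0$, so $n_{\tau^*}^{\sigma^*}$ equals the position of $v_0$ inside the increasing enumeration of $\{0,1,\dots,l+1\}\setminus\sigma$, i.e.\ the number of elements of $\{0,1,\dots,l+1\}\setminus\sigma$ that are strictly smaller than $v_0$.

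Adding the two expressions, every element of $\{0,1,\dots,l+1\}$ that is strictly smaller than $v_0$ is counted exactly once (it either lies in $\sigma$ or in its complement), so
\[
n_\sigma^\tau+n_{\tau^*}^{\sigma^*}=\#\{w\in\{0,1,\dots,l+1\}\mid w<v_0\}=v_0,
\]
which matches the right-hand side computed in the first step. There is no real obstacle here; the only point that needs a moment of care is keeping the two orderings straight, in particular verifying that the displayed simplicial isomorphism between $\Sigma^l$ and $\partial\Delta^{l+1}$ really is order-preserving in the sense required to identify $n_{\tau^*}^{\sigma^*}$ with a face-position computed on the complement side.
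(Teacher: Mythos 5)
Your proof is correct and follows essentially the same route as the paper's: both arguments identify $J_\sigma^{all}-J_\tau^{all}$ with the unique vertex $v_0=J_\sigma(i)$ added to $\sigma$ to form $\tau$, and both recognize $n_\sigma^\tau$ and $n_{\tau^*}^{\sigma^*}$ as counts of elements below $v_0$ in $\sigma$ and in its complement respectively (the paper phrases this as $n_\sigma^\tau = J_\sigma(i)-i$ with $i=n_{\tau^*}^{\sigma^*}$, which is the same partition-of-$\{0,\dots,v_0-1\}$ computation you make explicit). The only cosmetic difference is that you foreground the partition argument, whereas the paper works algebraically through $J_\sigma(i)$; and the ``order-preserving'' point you flag at the end is actually immediate, since the order on $\Sigma^l$ is defined by transporting the order along exactly that isomorphism.
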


\begin{proof}
	Let $S_{\sigma}=\{0,1,2,...,l+1\} \backslash \sigma,S_{\tau}=\{0,1,2,...,\} \backslash \tau$ and suppose that $n_{\tau^*}^{\sigma^*}=i$. By definition, we have $S_{\sigma} \backslash \{J_{\sigma}(i)\}=S_{\tau}$. Then the unique vertex that is in $\tau$ but not in $\sigma$ is $J_{\sigma}(i)$. Since the set of vertices of $\tau$ is $\{0,1,2,...,l+1\} \backslash S_{\tau}=\{0,1,2,...,l+1\} \backslash S_{\sigma} \amalg \{J_{\sigma}(i)\}$, we can deduce by definition that $n_{\sigma}^{\tau}=J_{\sigma}(i)-i$. Thus, we have $n_{\sigma}^{\tau}+n_{\tau^*}^{\sigma^*}=J_{\sigma}(i)$. Notice that since $S_{\sigma} \backslash \{J_{\sigma}(i)\}=S_{\tau}$, we have that $J_{\sigma}^{all}-J_{\tau}^{all}=J_{\sigma}(i)$. Then we can conclude that the equation stated in the Lemma holds.
\end{proof}

Let $R$ be a ring with involution. Denote $\underline{L}(R)$ to be the $\Omega$-spectrum of the category $M^h(R)$, as given by the $\Delta$ sets $\mathbb{L}_k(M^h(R))$ in Definition 13.2 of \cite{ranickiltheory}. Then we can describe the generalized homology $H_k(K_c,\underline{L}(R))$ simplicially:

\begin{Theorem}[Proposition 12.4 in \cite{ranickiltheory}]
	\
	\label{Lhomologyandco}
	
	For every $k \in \mathbb{Z}$ and $L_c \subset K_c$ subcomplex, there is an identification:
	\begin{equation*}
		H_k(K_c,L_c;\underline{L}(R))=[\underline{L_c},\underline{K_c};\mathbb{L}_{k-l}(M^h(R)),\emptyset]=H^{l-k}(\underline{L_c},\underline{K_c};\underline{L}(R))
	\end{equation*}

	In particular,
	\begin{equation*}
		H_k(K_c;\underline{L}(R))=[\Sigma^l,\underline{K_c};\mathbb{L}_{k-l}(R),\emptyset]=H^{l-k}(\Sigma^l,\underline{K_c};\underline{L}(R))
	\end{equation*}
	
	Moreover, for every complex triple $J_c \subset L_c \subset K_c$, the following diagram commutes:
	\begin{equation*}
		\begin{tikzcd}
			H_k(K_c,L_c;\underline{L}(R)) \dar{\partial}\rar{\cong} & H^{l-k}(\underline{L_c},\underline{K_c};\underline{L}(R))\dar{\delta} \\
			H_{k-1}(L_c,J_c;\underline{L}(R)) \rar{\cong} & H^{l-k+1}(\underline{J_c},\underline{L_c};\underline{L}(R)) \\
		\end{tikzcd}
	\end{equation*}
\end{Theorem}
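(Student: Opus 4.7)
The statement is attributed to Proposition 12.4 of \cite{ranickiltheory}, so the plan is to recall the chain of identifications set up in Ranicki's book and to verify that the conventions on orderings (in particular the sign conventions coming from Lemma \ref{Dualincidnum}) match up so that the boundary diagram commutes. I would organize the proof in three steps.

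First I would unpack the middle bracket. By definition, $\mathbb{L}_{k-l}(M^h(R))$ is a Kan $\Delta$-set whose $n$-simplices are $(k{-}l{+}n)$-dimensional quadratic Poincar\'e complexes in $M^h(R)^*(\Delta^n)$, and $[\underline{L_c},\underline{K_c};\mathbb{L}_{k-l}(M^h(R)),\emptyset]$ is the set of $\Delta$-map homotopy classes relative to the trivial map on $\underline{K_c}$. Since $\underline{L}(R)$ is the $\Omega$-spectrum whose $k$-th space is (a delooping of) $\mathbb{L}_k(M^h(R))$, this set is by construction the reduced simplicial cohomology group $H^{l-k}(\underline{L_c},\underline{K_c};\underline{L}(R))$. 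This is the definitional equality on the right, and it requires no more than unwinding Definition 13.2 of \cite{ranickiltheory}.

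Next I would establish the first equality, which is the content of Spanier--Whitehead/Alexander duality inside the sphere $\partial\Delta^{l+1}$. The key geometric input is that $\Sigma^l$ is the dual cell decomposition of the barycentric subdivision of $\partial\Delta^{l+1}$ and that $\underline{K_c}$ and $\underline{L_c}$ are precisely the dual subcomplexes of the complements of $K_c$ and $L_c$ inside this sphere. Classical S--duality at the spectrum level gives
\begin{equation*}
H_k(K_c,L_c;\underline{L}(R)) \;\cong\; H^{l-k}(\underline{L_c},\underline{K_c};\underline{L}(R)),
\end{equation*}
and at the simplicial cochain level the explicit duality isomorphism sends a $\Delta$-map $f$ on $\underline{L_c}$ relative to $\underline{K_c}$ to the assembly of its values, re-indexed along $\sigma \leftrightarrow \sigma^*$. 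Here is where I would use Lemma \ref{Dualincidnum}: the sign $(-1)^{n_\sigma^\tau + n_{\tau^*}^{\sigma^*}} = (-1)^{J_\sigma^{\text{all}} - J_\tau^{\text{all}}}$ is exactly what is needed to convert the coboundary on the $\Sigma^l$-side into a boundary on the $K_c$-side after absorbing the sign $(-1)^{J_\sigma^{\text{all}}}$ into a reorientation of each cell. This reduces the duality to an on-the-nose isomorphism of $\Delta$-sets modulo the trivial subcomplex.

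Finally, for the commutative square with $J_c \subset L_c \subset K_c$, I would observe that both vertical maps are induced by the short exact sequences of simplicial chain (resp.\ cochain) complexes associated with the triples $(K_c,L_c,J_c)$ and $(\underline{J_c},\underline{L_c},\underline{K_c})$, and that the S-duality of the previous step is natural in pairs. Naturality plus the sign computation of Lemma \ref{Dualincidnum} forces the square to commute, and this is the step where I expect the main bookkeeping burden, since all the signs for $\partial$ and $\delta$ must be tracked through the dualization $\sigma \mapsto \sigma^*$. Everything else in the statement reduces to unwinding definitions and applying the already-established isomorphism.
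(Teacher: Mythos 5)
The paper itself does not prove this statement --- it is a direct citation of Proposition 12.4 of \cite{ranickiltheory}, and no proof is given in the text --- so there is no paper proof to compare against. Your outline is a reasonable reconstruction of the underlying argument: the middle/right equality is the definition of cohomology with coefficients in the $\Omega$-spectrum built from the $\Delta$-sets $\mathbb{L}_n(M^h(R))$, and the left/middle equality is Alexander/S-duality realized combinatorially through the dual cell structure $\Sigma^l$ of $\partial\Delta^{l+1}$, whose naturality also yields the commuting square for triples.

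One thing to flag: the role you assign to Lemma \ref{Dualincidnum} is misplaced for this particular statement. Theorem \ref{Lhomologyandco} is a statement about generalized (co)homology with coefficients in an arbitrary $\Omega$-spectrum, and the identification holds at that level of generality --- no L-theoretic sign bookkeeping enters into \emph{this} isomorphism. Lemma \ref{Dualincidnum} is used in the paper's proof of Theorem \ref{localdual}, which is a strictly stronger, chain-level refinement: it produces an explicit $(k-l)$-dimensional Poincar\'e quadratic complex in $M^h(R)^*(\underline{L_c},\underline{K_c})$ from a $k$-dimensional one in $M^h(R)_*(K_c)$, and verifying that the local dual structure $\widecheck{\psi}$ in equation \ref{Eqlocaldualquadraticstr} is a Poincar\'e quadratic structure is where the sign identity $n_{\sigma}^{\tau}+n_{\tau^*}^{\sigma^*}=J_{\sigma}^{all}-J_{\tau}^{all}$ is actually needed. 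Your proof of Theorem \ref{Lhomologyandco} should not need it; what is needed and what you leave as a slogan is the homotopy-theoretic S-duality for $\Omega$-spectra over finite simplicial pairs together with the Kan-property and delooping bookkeeping, as in Chapters 11--12 of \cite{ranickiltheory}.
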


We can also identify the above cohomology theory with the $L$-group of certain categories, as shown by the following theorem:

\begin{Theorem}[Proposition 13.7 in \cite{ranickiltheory}]
	\
	\label{Lhomology}
	
	Let $K$ be any locally finite simplicial complex and $L$ be its subcomplex. Then for every $k \in \mathbb{Z}$, there is an identification:
	\begin{equation*}
		H^{-k}(K,L;\underline{L}(R))=L_{k}(M^h(R)^*(K,L))
	\end{equation*}

	It is given as follows:
	
	Let $g:(K,L) \longrightarrow (\mathbb{L}_{k}(M^h(R)),*)$ be any $\Delta$-map. For any simplex $\sigma$, let $s_{\sigma}:\sigma \longrightarrow \Delta^{|\sigma|}$ be the unique simplicial, order-preserving isomorphism. Denote $g(\sigma)=(C_{\sigma},\psi_{\sigma})$. Then the image of $g$ under identification is the following Poincare quadratic chain complex $(D,\theta)$:
	\begin{equation*}
		D(\sigma)=C_{\sigma}(\Delta^{|\sigma|})
	\end{equation*} 
	
	For any simplices $\tau \leq \sigma$, the inclusion map $i_{\tau,\sigma}$ induces a homomorphism $C_{\tau} \longrightarrow C_{\sigma}$ from a chain complex in $M^h(R)^*(\Delta^{|\tau|})$ to a chain complex in $M^h(R)^*(\Delta^{|\sigma|})$ via the inclusion map $s_{\sigma}i_{\tau,\sigma}s_{\tau}^{-1}$. Therefore, it is possible to identify $C_{\tau}(\Delta^{|\tau|})$ with $C_{\sigma}(s_{\sigma}(\tau))$. The boundary map of the chain complex and the quadratic structure are then given by:
	\begin{equation*}
		d_D(\tau,\sigma)=d_{C_{\sigma}}(s_{\sigma}(\tau),\Delta^{|\sigma|}):D_*(\sigma)=(C_{\sigma})_*(\Delta^{|\sigma|}) \longrightarrow D_{*-1}(\tau)=(C_{\sigma})_{*-1}(s_{\sigma}(\tau))
	\end{equation*}
	\begin{equation*}
		\psi_s(\tau,\sigma):D^{k-l-s-*}(\sigma)=C_{\sigma}^{k-l-s-*}(\Delta^{|\sigma|}) \longrightarrow D_{*}(\tau)=(C_{\sigma})_{*}(s_{\sigma}(\tau))
	\end{equation*}
	\begin{equation*}
		\psi_s(\tau,\sigma)=\psi_{\sigma}^s(s_{\sigma}(\tau),\Delta^{|\sigma|})
	\end{equation*}

	Conversely, given any $k$-dimensional Poincare quadratic chain complex $(D,\theta)$, restricted on $\sigma$ gives a $k$-dimensional Poincare quadratic chain complex $(D_{\sigma},\theta_{\sigma})$ in $M^h(R)^*(\sigma)$. Then we can define $g(\sigma)=(s_{\sigma})_*(D_{\sigma},\theta_{\sigma})$.
\end{Theorem}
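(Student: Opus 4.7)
The approach is to exhibit an explicit bijection between pointed-homotopy classes of $\Delta$-maps $(K,L) \to (\mathbb{L}_k(M^h(R)), *)$ and cobordism classes of $k$-dimensional quadratic Poincaré complexes in $M^h(R)^*(K,L)$, using the formulas stated in the theorem as the bijection and its inverse. By the standard simplicial model of the $L$-spectrum (the $\Delta$-sets $\mathbb{L}_k(M^h(R))$ of Definition 13.2 in \cite{ranickiltheory}), the cohomology group $H^{-k}(K,L;\underline{L}(R))$ is identified with such $\Delta$-map classes, where an $n$-simplex of $\mathbb{L}_k(M^h(R))$ is a $k$-dimensional quadratic Poincaré complex in $M^h(R)^*(\Delta^n)$ with face operators given by restriction to boundary simplices.

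Next I would verify that the two constructions in the theorem are mutually inverse at the chain level. Given a $\Delta$-map $g$ with $g(\sigma) = (C_\sigma, \psi_\sigma)$, the face agreement built into $g$ is exactly the data needed to assemble the local pieces into a single object $D \in M^h(R)^*(K,L)$ with $D(\sigma) = C_\sigma(\Delta^{|\sigma|})$; the matrix entry $d_D(\tau,\sigma)$ for $\tau \leq \sigma$ is transported from $d_{C_\sigma}$ evaluated at the face $s_\sigma(\tau)$, and similarly for the quadratic structure. The identities $d_D^2=0$ and $\partial\theta=0$ reduce to $d_{C_\sigma}^2=0$ and $\partial\psi_\sigma=0$ for each individual $\sigma$, after one checks that contributions across different $\sigma$ on a common face cancel because of the face-map coherence. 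The inverse map sends $(D,\theta)$ to the family of restrictions $\{(D|_\sigma, \theta|_\sigma)\}$, identified with complexes in $M^h(R)^*(\Delta^{|\sigma|})$ via $s_\sigma$; the two procedures are inverse to each other essentially by definition.

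The main obstacle is the Poincaré condition. The chain duality on $M^h(R)^*(K,L)$ recorded in Remark \ref{chaindualityonAK2} yields a global cap-product $\theta_0: D^{k-*} \to D_*$ whose block decomposition by the simplices of $K$ is controlled by the local cap-products $\psi_\sigma^0$. I would show that $\theta_0$ is a chain equivalence iff each $\psi_\sigma^0$ is. The forward direction reduces to restriction, using that the restriction functor $M^h(R)^*(K,L) \to M^h(R)^*(\sigma)$ preserves the chain duality up to the identification $s_\sigma$. For the converse, I would filter $K$ by its skeleta $K^{(n)} \supset L$ and argue inductively: attaching a top-dimensional simplex $\sigma$ to $K^{(n-1)}$ realizes $D|_{K^{(n)}}$ as the mapping cone of a chain map involving $C_\sigma$ relative to $C_{\partial\sigma}$, and the five lemma applied to the long exact sequences induced by the duality finishes the step. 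Sign conventions throughout this bookkeeping are governed by Lemma \ref{Dualincidnum}.

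Finally, I would check that the two sides carry the same equivalence relation: a simplicial homotopy $(K \times \Delta^1, L \times \Delta^1) \to (\mathbb{L}_k, *)$ assembles, via the same formulas applied in one higher dimension, to a $(k+1)$-dimensional quadratic Poincaré cobordism in $M^h(R)^*(K,L)$ between the two endpoint complexes, and conversely every such cobordism arises this way by restriction of its defining data to each simplex. Passing to equivalence classes on both sides then yields the stated natural isomorphism $H^{-k}(K,L;\underline{L}(R)) \cong L_k(M^h(R)^*(K,L))$.
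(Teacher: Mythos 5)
The paper does not prove this theorem: it is stated with a bare citation to Proposition 13.7 in \cite{ranickiltheory} and no proof is given, so there is no ``paper's own proof'' to compare against. Evaluating your sketch on its merits, the overall skeleton is sound (unfold the $\Delta$-set model, show assembly and restriction are inverse at the chain level, check the Poincar\'e condition transfers, identify homotopy with cobordism), but two points need correction.

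First, your claim that ``contributions across different $\sigma$ on a common face cancel because of the face-map coherence'' describes a phenomenon that does not occur. In $M^h(R)^*(K,L)$ the identity $d_D^2=0$ is verified slot-by-slot: the component $(d_D^2)(\kappa,\sigma)=\sum_{\kappa\le\tau\le\sigma}d_D(\kappa,\tau)d_D(\tau,\sigma)$ for $\kappa\le\sigma$ is, after the identifications via $s_\sigma$, exactly a component of $d_{C_\sigma}^2$; nothing from a different top simplex enters, and face-map coherence is only used to make the identifications consistent, not to create cancellations.

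Second, the Poincar\'e condition. Your forward direction (restrict a chain equivalence in $\mathbb{A}^*(K)$ to the full subcomplex generated by $\sigma$) is fine, because the triangularity $f(\tau,\sigma)=0$ unless $\tau\le\sigma$ makes the chain homotopy inverse restrict coherently. But for the converse your skeleta-plus-mapping-cone-plus-five-lemma argument is essentially a re-derivation of the local-to-global criterion that Ranicki isolates as Proposition 4.7 in \cite{ranickiltheory}: a chain map $f$ in $\mathbb{A}^*(K)$ is an equivalence iff each diagonal component $f(\sigma,\sigma)$ is. The present paper leans on exactly this fact elsewhere (e.g.\ in the proof of Theorem \ref{localdual}), and using it directly gives both directions at once and avoids the delicate interaction between the skeletal filtration and the chain duality. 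Also, the sign bookkeeping you attribute to Lemma \ref{Dualincidnum} is off target: that lemma governs incidence numbers in the dual cell decomposition of $\partial\Delta^{l+1}$, which is not in play here; the relevant signs come from the chain duality formulas in Remark \ref{chaindualityonAK2} and Lemma \ref{ExpressT}.
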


\begin{Theorem}
	\
	\label{localdual}
	
	There is a one-to-one correspondance between $(k-l)$-dimensional Poincare quadratic chain complexes in  $M^h(R)^*(\underline{L_c},\underline{K_c})$ and $k$-dimensional Poincare quadratic chain complexes in $M^h(R)_*(K_c)$ with the chain complex in $M^h(R)_*(K_c,L_c)$. Similarly for pairs. We will call this correspondence local dual.
	
	In particular, when taking $L_c=\emptyset$, we get an identification:
	\begin{equation*}
		L_k(M^h(R)_*(K_c)) = L_{k-l}(M^h(R)^*(\Sigma^l,\underline{K_c}))=H_k(K_c;\underline{L}(R))
	\end{equation*}
\end{Theorem}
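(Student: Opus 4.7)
The plan is to construct an explicit local dual functor $\mathcal{D}$ and verify it preserves the $L$-theoretic data at every level. I would define $\mathcal{D}: M^h(R)^*(\underline{L_c}, \underline{K_c}) \longrightarrow M^h(R)_*(K_c, L_c)$ on objects by $\mathcal{D}(M)(\sigma) = M(\sigma^*)$. Since $\sigma^* \in \underline{L_c} \setminus \underline{K_c}$ precisely when $\sigma \in K_c \setminus L_c$, the vanishing condition on $\underline{K_c}$ translates exactly into the vanishing condition on $L_c$. On morphisms, the order reversal $\tau \leq \sigma$ in $\partial\Delta^{l+1}$ iff $\sigma^* \leq \tau^*$ in $\Sigma^l$ converts morphisms of $M^h(R)^*$ into morphisms of $M^h(R)_*$, and I would conjugate each component indexed by $\sigma$ by the sign $(-1)^{J_\sigma^{all}}$ so that the transferred differentials and structure maps absorb the sign discrepancy recorded in Lemma \ref{Dualincidnum}.

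Next I would show that $\mathcal{D}$ intertwines the chain dualities of Theorem \ref{chaindualityonAK1} and Remark \ref{chaindualityonAK2} after a degree shift by $l$. For a chain complex $C$ in $M^h(R)^*(\underline{L_c}, \underline{K_c})$, the object $T_{\mathcal{D}(C)}(\sigma)$ lives in degree $-|\sigma|$, while $T_C(\sigma^*)$ lives in degree $|\sigma^*| = l - |\sigma|$; the identification $\{\kappa : \kappa \geq \sigma\} \leftrightarrow \{\kappa^* : \kappa^* \leq \sigma^*\}$ yields a canonical isomorphism of these objects after the shift. The boundary signs are controlled by $n_\sigma^\tau$ on one side and $n_{\tau^*}^{\sigma^*}$ on the other, and the identity $n_\sigma^\tau + n_{\tau^*}^{\sigma^*} = J_\sigma^{all} - J_\tau^{all}$ of Lemma \ref{Dualincidnum} says exactly that the discrepancy is compensated by the conjugation of the previous step. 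Together with Lemma \ref{ExpressT} controlling the $W$-action, this transfers both the $W_{\%}$ and $W^{\%}$ structures, so $\mathcal{D}$ induces a bijection between $(k-l)$-dimensional quadratic structures on one side and $k$-dimensional quadratic structures on the other, sending Poincaré structures to Poincaré structures.

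Applying the same construction to Poincaré quadratic pairs produces a bijection on cobordism classes, and since $\sigma \mapsto \sigma^*$ is visibly invertible, this proves the first half of the theorem. Taking $L_c = \emptyset$ yields the isomorphism $L_k(M^h(R)_*(K_c)) \cong L_{k-l}(M^h(R)^*(\Sigma^l, \underline{K_c}))$, and chaining with Theorem \ref{Lhomology} (identifying the right-hand side with $H^{l-k}(\Sigma^l, \underline{K_c}; \underline{L}(R))$) and Theorem \ref{Lhomologyandco} (the Poincaré--Lefschetz duality between $\Sigma^l$ and $\partial\Delta^{l+1}$) delivers the identification with $H_k(K_c; \underline{L}(R))$.

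The main obstacle will be the sign bookkeeping in the second step. Three sign sources must be reconciled simultaneously: the differential sign $(-1)^{n_\sigma^\tau}$ of $T$, the overall degree shift by $l$ prescribed by Definition \ref{chain functor} (which introduces $(-1)^{r+q}$ factors via $d_{Hom_\mathbb{A}(C,D)}$), and the evaluation sign $(-1)^{|\sigma|}$ appearing in $\mathfrak{D}(M)_0$. Lemma \ref{Dualincidnum} is tailored for exactly this: it expresses the discrepancy $n_\sigma^\tau + n_{\tau^*}^{\sigma^*}$ as $J_\sigma^{all} - J_\tau^{all}$, so the conjugation by $(-1)^{J_\sigma^{all}}$ is the unique natural correction. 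Checking that this same correction is compatible with both $W^{\%}$ and $W_{\%}$ via Lemma \ref{ExpressT}, and with $\mathfrak{D}$, is the technically heaviest part; everything else is bookkeeping between simplex sets.
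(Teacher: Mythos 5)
The overall strategy you lay out is the same as the paper's: define the local dual on objects by $\widecheck{C}_r(\sigma^*) = C_r(\sigma)$, transfer the differential and quadratic structure, and invoke Lemma \ref{Dualincidnum} to control the incidence-number signs. But your treatment of the sign correction contains a genuine gap.

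You claim that conjugating by $(-1)^{J_{\sigma}^{all}}$ is ``the unique natural correction.'' It is not. The paper's explicit formula for the transferred quadratic structure is
\begin{equation*}
	\widecheck{\psi}_u^r(\tau^*,\sigma^*)=(-1)^{k+J_{\sigma}^{all}+l|\sigma|+lr+\frac{l(l-1)}{2}}\,\psi_u^r(\tau,\sigma),
\end{equation*}
and the extra terms $l|\sigma|$ and $lr$ are essential: they arise because the chain duality on $M^h(R)^*(\Sigma^l)$ places $TM(\sigma^*)$ in degree $+|\sigma^*| = l - |\sigma|$ rather than $-|\sigma|$, so the degree shift by $l$ interacts with the parity of both $|\sigma|$ and the chain degree $r$. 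A sign depending on $J_{\sigma}^{all}$ alone cannot reproduce this $r$-dependence, so the transferred $\psi$ would fail to satisfy $\partial\psi' = 0$ after the substitution. There is also a structural issue: you propose to build the sign into the functor $\mathcal{D}$ on morphisms (a ``conjugation''), which would twist the transferred differential $d_{\widecheck{C}}$ as well. The paper instead leaves the differential unsigned, $d_{\widecheck{C},r}(\tau^*,\sigma^*)=d_{C,r}(\tau,\sigma)$, and absorbs all the correction into $\psi$. A true conjugation by an object-wise sign is an isomorphism of the category and therefore cannot change any signs in a way that matters; to fix the duality mismatch you need an asymmetric twist applied to the quadratic structure only, which is what the paper does. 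Finally, you identify the verification against Lemma \ref{ExpressT} and the expression for $\mathfrak{D}$ as ``the technically heaviest part''; it is, and the paper carries it out term by term — the six-way sign comparison in equations \ref{Eq6.2}--\ref{Eq6.9} is precisely what is needed to confirm that the degree-shift and incidence-number discrepancies cancel, and this computation cannot be bypassed by a single conjugation argument.
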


\begin{proof}
	Let $(C,\psi)$ be any $k$-dimensional Poincare quadratic chain complex in $M^h(R)_*(K_c)$, such that $C$ is a chain complex in $M^h(R)_*(K_c,L_c)$. We define its local dual $(\widecheck{C},\widecheck{\psi})$ as follows:
	
	For all $u \in \mathbb{N},r \in \mathbb{Z}$:
	\begin{equation}
		\label{Eqlocaldualchaincpx}
		\widecheck{C}_r(\sigma^*)=C_r(\sigma) \text{ for } \sigma \in K_c \backslash L_c, \	\widecheck{C}_r(\sigma^*)=0 \text{ for } \sigma \notin K_c
	\end{equation}
	\begin{equation}
		\label{Eqlocaldualboundarymap}
		d_{\widecheck{C},r}(\tau^*,\sigma^*)=\begin{cases} d_{C,r}(\tau,\sigma) & \text{If } \sigma,\tau \in K_c \backslash L_c \\ \quad \ \ 0 & \text{else}	\end{cases}:\widecheck{C}_r(\sigma^*) \longrightarrow \widecheck{C}_{r-1}(\tau^*)
	\end{equation}
	\begin{equation*}
		\widecheck{\psi}_u^r(\tau^*,\sigma^*): \widecheck{C}^{n-l-u-r}(\sigma^*) \longrightarrow \widecheck{C}_r(\tau^*)
	\end{equation*}
	\begin{equation}
		\label{Eqlocaldualquadraticstr}
		\widecheck{\psi}_u^r(\tau^*,\sigma^*)=\begin{cases} (-1)^{k+J_{\sigma}^{all}+l|\sigma|+lr+\frac{l(l-1)}{2}}\psi_u^r(\tau,\sigma) & \text{If } \sigma,\tau \in K_c \backslash L_c \\ \quad \ \ 0 & \text{else}	\end{cases}
	\end{equation}

	It is clear from the definition that the above construction gives a one-to-one correspondence. Therefore, we only have to check that the definition above is well-defined, that is, $(\widecheck{C},\widecheck{\psi})$ is a $(k-l)$-dimensional Poincare quadratic chain complex. The proof is divided into three steps:
	
	(1) $\widecheck{C}$ is a chain complex in $M^h(R)^*(\underline{L_c},\underline{K_c})$ and $\psi_u^r$ are morphisms in $M^h(R)^*(\underline{L_c},\underline{K_c})$ for all $u \in \mathbb{N},r \in \mathbb{Z}$.
	
	Since $\sigma \leq \tau$ is equivalent to $\sigma^* \geq \tau^*$, the statement in (1) follows from the facts that $C$ is a chain complex in $M^h(R)_*(K_c,L_c)$ and $\psi_u^r$ is a morphism in $M^h(R)_*(K_c)$.
	
	(2) $(\widecheck{C},\widecheck{\psi})$ is a $(k-l)$-dimensional quadratic chain complex in $M^h(R)^*(\underline{L_c},\underline{K_c})$.
	
	We have to check that for all $\sigma,\tau \notin L_c$ and for all $u \in \mathbb{N},r \in \mathbb{Z}$, the following equation holds:
	\begin{equation}
		\label{Eq6.1}
		\begin{aligned}
			0= \ &(d_{\widecheck{C},r+1}\widecheck{\psi}_u^{r+1})(\tau^*,\sigma^*)-(-1)^{k-l-u}(\widecheck{\psi}_u^rd_{\widecheck{C}^{-*},k-l-u-r-1})(\tau^*,\sigma^*) \\
			&+(-1)^{k-l-u-1}\widecheck{\psi}_{u+1}^r(\tau^*,\sigma^*)+(-1)^{k-l}T\widecheck{\psi}_{u+1}^{r}(\tau^*,\sigma^*) \\
		\end{aligned}
	\end{equation}

	To start with, since $\widecheck{C}_r(\sigma^*)=0$ for all $r \in \mathbb{Z},\sigma \notin K_c$, we only have to check the above equation under the case $\sigma,\tau \in K_c \backslash L_c$. We will compute each term in the equation separately.
	
	Denote $n_0=k+\frac{l(l-1)}{2}$ and $n_1=k-l-u-r$. For the first term, by definition we have:
	\begin{equation*}
		\begin{aligned}
			(d_{\widecheck{C},r+1}\widecheck{\psi}_u^{r+1})(\tau^*,\sigma^*)&=\sum\limits_{\kappa \notin L_c}d_{\widecheck{C},r+1}(\tau^*,\kappa^*)\widecheck{\psi}_u^{r+1}(\kappa^*,\sigma^*) \\
			& \quad (\text{By definition }  \ref{Eqlocaldualboundarymap} \text{ and } \ref{Eqlocaldualquadraticstr}) \\
			&=(-1)^{n_0+l(r+1)+J_{\sigma}^{all}+l|\sigma|}\sum\limits_{\kappa \in K_c \backslash L_c}d_{C,r+1}(\tau,\kappa)\psi_u^{r+1}(\kappa,\sigma) \\
		\end{aligned}
	\end{equation*}
	
	Since $C$ is a chain complex in $M^h(R)_*(K_c,L_c)$, we can get that $\psi_u^{r+1}(\kappa,\sigma) = 0$ unless $\kappa \in K_c \backslash L_c$. Thus we have:
	\begin{equation}
		\label{Eq6.2}
		\begin{aligned}
			(d_{\widecheck{C},r+1}\widecheck{\psi}_u^{r+1})(\tau^*,\sigma^*)&=(-1)^{n_0+l(r+1)+J_{\sigma}^{all}+l|\sigma|}\sum\limits_{\kappa \in K_c}d_{C,r+1}(\tau,\kappa)\psi_u^{r+1}(\kappa,\sigma) \\
			&=(-1)^{n_0+l(r+1)+J_{\sigma}^{all}+l|\sigma|}(d_{C,r+1}\psi_u^{r+1})(\tau,\sigma) \\
		\end{aligned}
	\end{equation}

	For the second term, we have:
	\begin{equation}
		\label{Eq6.3}
		(\widecheck{\psi}_u^rd_{\widecheck{C}^{-*},k-l-u-r-1})(\tau^*,\sigma^*)=\sum\limits_{\kappa \notin L_c}\widecheck{\psi}_u^r(\tau^*,\kappa^*)d_{\widecheck{C}^{-*},n_1-1}(\kappa^*,\sigma^*)
	\end{equation}

	Note that for any simplex $\eta^* \in \underline{L_c}$, we have:
	\begin{equation}
		\label{Identify6.1}
		\begin{aligned}
			\widecheck{C}^{n_1-1}(\eta^*)&=(T\widecheck{C})_{-n_1+1}(\eta^*) \\
			&=(T\widecheck{C}_{n_1-1+|\eta^*|})_{|\eta^*|}(\eta^*) \\
			&=\mathop{\oplus}\limits_{\eta_0^* \leq \eta^*}\widecheck{C}_{n_1-1+|\eta^*|}(\eta_0^*)^* \\
			& \quad (\text{By definition } \ref{Eqlocaldualchaincpx} \text{ and } C \text{ is in }M^h(R)_*(K_c,L_c)) \\
			&=\mathop{\oplus}\limits_{\substack{\eta_0 \geq \eta \\ \eta_0 \in K_c}}C_{k-u-r-1-|\eta|}(\eta_0)^*=C^{k-u-r-1}(\eta) \\
		\end{aligned}
	\end{equation}

	Therefore, we have:
	\begin{equation}
		\label{Eq6.4}
		\begin{aligned}
			&d_{\widecheck{C}^{-*},n_1-1}(\kappa^*,\sigma^*) \\
			&= \begin{cases}(-1)^{n_1-1+|\sigma^*|} \mathop{\boxplus}\limits_{\sigma_0^* \leq \sigma^*}\mathop{\oplus}\limits_{\kappa_0^* \leq \kappa^*}d_{\widecheck{C},n_1+|\sigma^*|}(\sigma_0^*,\kappa_0^*)^* & \text{If } \kappa^*=\sigma^* \\ \qquad \qquad \qquad \quad (-1)^{n_1-1+n_{\kappa^*}^{\sigma^*}}i_{\kappa^*\sigma^*}^* & \text{If } |\kappa^*|=|\sigma^*|-1 \\
			& \text{and } \kappa^* \leq \sigma^* \\ \qquad \qquad \qquad \qquad \qquad \quad 0 & \text{else}
			\end{cases} \\
		\end{aligned}
	\end{equation}
	
	If $\kappa^*=\sigma^*$, then $\kappa=\sigma$. By equation \ref{Eqlocaldualboundarymap} we have:
	\begin{equation}
		\label{Eq6.5}
		\begin{aligned}
			&\quad \; (-1)^{n_1-1+|\sigma^*|}\mathop{\boxplus}\limits_{\sigma_0^* \leq \sigma^*}\mathop{\oplus}\limits_{\kappa_0^* \leq \kappa^*}d_{\widecheck{C},n_1+|\sigma^*|}(\sigma_0^*,\kappa_0^*)^* \\
			&=(-1)^{k-u-r-1-|\sigma|}\mathop{\boxplus}\limits_{\sigma_0 \geq \sigma}\mathop{\oplus}\limits_{\kappa_0 \geq \kappa}d_{C,k-u-r-|\sigma|}(\sigma_0,\kappa_0)^* \\
			&=d_{C^{-*},k-u-r-1}(\kappa,\sigma) \\
		\end{aligned}
	\end{equation}
	
	If $|\kappa^*|=|\sigma^*|-1$, then $|\kappa|=|\sigma|+1$. By Lemma \ref{Dualincidnum} and equation \ref{Identify6.1}, we have:
	\begin{equation}
		\label{Eq6.6}
		\begin{aligned}
			(-1)^{n_1-1+n_{\kappa^*}^{\sigma^*}}i_{\kappa^*\sigma^*}^*&=(-1)^{k-l-u-r-1+n_{\sigma}^{\kappa}+J_{\sigma}^{all}-J_{\kappa}^{all}}i_{\kappa\sigma}^* \\
			&=(-1)^{-l+J_{\sigma}^{all}-J_{\kappa}^{all}} d_{C^{-*},k-u-r-1}(\kappa,\sigma)
		\end{aligned}
	\end{equation}
	
	Substituting equation \ref{Eqlocaldualquadraticstr},\ref{Eq6.4}, \ref{Eq6.5}, \ref{Eq6.6} into equation \ref{Eq6.3}, we get:
	\begin{equation*}
		\begin{aligned}
			& \quad (\widecheck{\psi}_u^rd_{\widecheck{C}^{-*},k-l-u-r-1})(\tau^*,\sigma^*) \\
			&=\widecheck{\psi}_u^r(\tau^*,\sigma^*)d_{\widecheck{C}^{-*},n_1-1}(\sigma^*,\sigma^*)+\sum\limits_{\substack{\kappa^* \leq \sigma^* \\ |\kappa^*|=|\sigma^*|-1}}\widecheck{\psi}_u^r(\tau^*,\kappa^*)d_{\widecheck{C}^{-*},n_1-1}(\kappa^*,\sigma^*) \\
			&=(-1)^{n_0+lr+J_{\sigma}^{all}+l|\sigma|}\psi_u^r(\tau,\sigma)d_{C^{-*},k-u-r-1}(\sigma,\sigma) \\
			& \quad +\sum\limits_{\substack{\kappa \geq \sigma \\ |\kappa|=|\sigma|+1}}(-1)^{n_0+lr+J_{\kappa}^{all}+l|\kappa|}\psi_u^r(\tau,\kappa)(-1)^{-l+J_{\sigma}^{all}-J_{\kappa}^{all}}d_{C^{-*},k-u-r-1}(\kappa,\sigma) \\
			&=(-1)^{n_0+lr+J_{\sigma}^{all}+l|\sigma|}\psi_u^r(\tau,\sigma)d_{C^{-*},k-u-r-1}(\sigma,\sigma) \\
			& \quad +\sum\limits_{\substack{\kappa \geq \sigma \\ |\kappa|=|\sigma|+1}}(-1)^{n_0+lr+J_{\sigma}^{all}+l|\sigma|}\psi_u^r(\tau,\kappa)d_{C^{-*},k-u-r-1}(\kappa,\sigma) \\
		\end{aligned}
	\end{equation*}

	Similar to equation \ref{Eq6.4}, we have that $d_{C^{-*},k-u-r-1}(\kappa,\sigma)=0$ unless $\kappa \geq \sigma$ and $|\kappa|-|\sigma| \leq 1$. Therefore, we can get:
	\begin{equation}
		\label{Eq6.7}
		(\widecheck{\psi}_u^rd_{\widecheck{C}^{-*},k-l-u-r-1})(\tau^*,\sigma^*)=(-1)^{n_0+lr+J_{\sigma}^{all}+l|\sigma|}(\psi_u^rd_{C^{-*},k-u-r-1})(\tau,\sigma)
	\end{equation}

	For the third term, by equation \ref{Eqlocaldualquadraticstr} we have:
	\begin{equation}
		\label{Eq6.8}
		\widecheck{\psi}_{u+1}^r(\tau^*,\sigma^*)=(-1)^{n_0+lr+J_{\sigma}^{all}+l|\sigma|}\psi_{u+1}^r(\tau,\sigma)
	\end{equation}

	For the last term, by Lemma \ref{ExpressT}, Theorem \ref{chaindualityonAK1} and Remark \ref{chaindualityonAK2}, we have the following commutative diagrams:
	\begin{small}
		\begin{equation*}
			\begin{tikzcd}
				\widecheck{C}^{n_1-1}(\sigma^*) \ar{rrr}{(-1)^{(n_1-1)(r-|\sigma^*|)}T\widecheck{\psi}_{u+1}^r(\tau^*,\sigma^*)}\dar{=} & & & \widecheck{C}_r(\tau^*) \\ 
				(T\widecheck{C}_{n_1-1+|\sigma^*|})_{|\sigma^*|}(\sigma^*) \ar[bend left=8]{rr}{T\big(\widecheck{\psi}_{u+1}^{n_1-1+|\sigma^*|}(\sigma^*,\sigma^*)\big)_{|\sigma^*|}}\dar{=} & & T(T(\widecheck{C}_{r})_{|\sigma^*|})_{|\sigma^*|}(\sigma^*) \rar{\subset}\dar{=} & (T^2\widecheck{C}_{r})_0(\sigma^*) \uar{\mathfrak{D}(\widecheck{C}_r)_0}\dar{=} \\
				\mathop{\oplus}\limits_{\kappa^* \leq \sigma^*}\widecheck{C}_{n_1-1+|\sigma^*|}(\kappa^*)^* \ar[bend left=8]{rr}{\mathop{\boxplus}\limits_{\kappa^* \leq \sigma^*}\widecheck{\psi}_{u+1}^{n_1-1+|\sigma^*|}(\kappa^*,\sigma^*)^*} & & \mathop{\oplus}\limits_{\kappa^* \leq \sigma^*}\widecheck{C}_r(\kappa^*) \rar{=} & \mathop{\oplus}\limits_{\kappa^* \leq \sigma^*}\widecheck{C}_r(\kappa^*) \ar[bend right=70,swap]{uu}{\substack{(-1)^{|\sigma^*|} \\ p_{\sigma^*,\tau^*}}}
			\end{tikzcd}
		\end{equation*}
	\end{small}
	\begin{small}
		\begin{equation*}
			\begin{tikzcd}
				C^{k-u-r-1}(\sigma) \ar{rrr}{(-1)^{(k-u-r-1)(r+|\sigma|)}T\psi_{u+1}^r(\tau,\sigma)}\dar{=} & & & C_r(\tau) \\ 
				(TC_{k-u-r-1-|\sigma|})_{-|\sigma|}(\sigma) \ar[bend left=8]{rr}{T\big(\psi_{u+1}^{k-u-r-1-|\sigma|}(\sigma,\sigma)\big)_{-|\sigma|}}\dar{=} & & T(T(C_{r})_{-|\sigma|})_{-|\sigma|}(\sigma) \rar{\subset}\dar{=} & (T^2C_{r})_0(\sigma) \uar{\mathfrak{D}(C_r)_0}\dar{=} \\
				\mathop{\oplus}\limits_{\kappa \geq \sigma}C_{k-u-r-1-|\sigma|}(\kappa) \ar[bend left=8]{rr}{\psi_{u+1}^{k-u-r-1-|\sigma|}(\kappa,\sigma)^*} & & \mathop{\oplus}\limits_{\kappa \geq \sigma}C_r(\kappa) \rar{=} & \mathop{\oplus}\limits_{\kappa \geq \sigma}C_r(\kappa) \ar[bend right=80,swap]{uu}{\substack{(-1)^{|\sigma|} \\ p_{\sigma,\tau}}}
			\end{tikzcd}
		\end{equation*}
	\end{small}
	
	By \ref{Identify6.1}, we have that the modules in  corresponding positions of the two commutative diagrams agree. By \ref{Eqlocaldualquadraticstr}, we have:
	\begin{equation}
		\label{Eq6.9}
		\begin{aligned}
			T\widecheck{\psi}_{u+1}^r(\tau^*,\sigma^*)&=(-1)^{(n_1-1)(r-|\sigma^*|)+|\sigma^*|}(-1)^{(k-u-r-1)(r+|\sigma|)+|\sigma|} \\ & \quad \  (-1)^{n_0+l(k-u-r-1-|\sigma|)+J_{\sigma}^{all}+l|\sigma|}T\psi_{u+1}^r(\tau,\sigma) \\
			&=(-1)^{n_0+lr+J_{\sigma}^{all}+l|\sigma|}T\psi_{u+1}^r(\tau,\sigma) \\
		\end{aligned}
	\end{equation} 

	By equation \ref{Eq6.2},\ref{Eq6.7},\ref{Eq6.8},\ref{Eq6.9}, we have that equation \ref{Eq6.1} is equivalent to the following equation:
	\begin{equation*}
		\begin{aligned}
			0= \ &(d_{C,r+1}\psi_u^{r+1})(\tau,\sigma)-(-1)^{k-u}(\psi_u^rd_{C^{-*},k-u-r-1})(\tau,\sigma) \\ &+(-1)^{k-u-1}\psi_{u+1}^r(\tau,\sigma)+(-1)^{k}T\psi_{u+1}^{r}(\tau,\sigma) \\
		\end{aligned}
	\end{equation*}

	Since $(C,\psi)$ is a $k$-dimensional quadratic chain complex in $M^h(R)_*(K_c)$, the above equation holds. Therefore, $(\widecheck{C},\widecheck{\psi})$ is a $(k-l)$-dimensional quadratic chain complex in $M^h(R)^*(\underline{L_c},\underline{K_c})$.

	(3) $(\widecheck{C},\widecheck{\psi})$ is Poincare.

	We have to check that the following morphism is a chain homotopy equivalence:
	\begin{equation*}
		(1+T)\widecheck{\psi}_0^r: \widecheck{C}^{k-l-r} \longrightarrow \widecheck{C}_r
	\end{equation*}

	By Proposition 4.7 in \cite{ranickiltheory}, it is equivalent to check that for every $\sigma \notin L_c$, the following morphism is a chain homotopy equivalence:
	\begin{equation}
		\label{Eq6.10}
		(1+T)\widecheck{\psi}_0^r(\sigma^*,\sigma^*): \widecheck{C}^{k-l-r}(\sigma^*) \longrightarrow \widecheck{C}_r(\sigma^*)
	\end{equation}

	If $\sigma \notin K_c$, by definition we have that the chain complex on both sides is the zero chain complex. The statement holds.

	If $\sigma \in K_c$, by \ref{Identify6.1},\ref{Eq6.4},\ref{Eq6.5},\ref{Eq6.6}, we have that the following morphism is a chain isomorphism:
	\begin{equation}
		\label{Eq6.11}
		\mathop{\oplus}\limits_{\sigma \in K_c \backslash L_c}(-1)^{J_{\sigma}^{all}+l|\sigma|}:\widecheck{C}^{k-l-r}=\mathop{\oplus}\limits_{\sigma \in K_c \backslash L_c}C^{l-r}(\sigma) \longrightarrow C^{l-r}=\mathop{\oplus}\limits_{\sigma \in K_c \backslash L_c}C^{l-r}(\sigma)
	\end{equation}

	By equation \ref{Eqlocaldualquadraticstr} and \ref{Eq6.9}, we have:
	\begin{equation*}
		(1+T)\widecheck{\psi}_0^r(\sigma^*,\sigma^*)=(-1)^{n_0+lr+J_{\sigma}^{all}+l|\sigma|}(1+T)\psi_{0}^r(\sigma,\sigma)
	\end{equation*}

	Therefore, under the chain isomorphism \ref{Eq6.11}, the chain map given in \ref{Eq6.10} is equivalent to the following chain map:
	\begin{equation*}
		(-1)^{n_0+lr}(1+T)\psi_{0}(\sigma,\sigma):C^{k-l-r}(\sigma) \longrightarrow C_r(\sigma)
	\end{equation*}

	Since $(1+T)\psi_{0}$ is a chain homotopy equivalence in $M^h(R)_*(K_c)$, by Proposition 4.7 in \cite{ranickiltheory}, we have that the chain map above is a chain homotopy equivalence. Therefore, $(\widecheck{C},\widecheck{\psi})$ is Poincare. 
\end{proof}

\begin{Remark}
	In general, the category $M^h(R)_*(K,L)$ is not fixed under the dual functor of $M^h(R)_*(K)$. Therefore, in the general case where $L \neq \emptyset$, the category is not the ideal one to take, but we still keep the definition for simplicity.
\end{Remark}

We end this subsection with some important computations that will be used later in the proof. We introduce some definitions first.


To start with, we introduce a generalization of the category $\mathbb{A}_*(K)$:

\begin{Definition}
	\
	
	Let $\mathbb{A}$ be an additive category and $K$ be a simplicial complex. Denote $\mathbb{A}^{lf}(K)$ to be the following additive category:
	 
	The objects are formal direct sums $M=\sum\limits_{\sigma \in K}M(\sigma)$ of objects $M(\sigma)$ in $\mathbb{A}$. 
	 
	A morphism $f:M \longrightarrow N$ is a collection of morphisms in $\mathbb{A}$:
	\begin{equation*}
		f=\{f(\tau,\sigma):M(\sigma) \longrightarrow N(\tau)| \ \sigma,\tau \in K\}
	\end{equation*}

	Denote $\mathbb{A}_*^{lf}(K)$ to be the subcategory of $\mathbb{A}^{lf}(K)$ with the same objects and with morphisms such that $f(\tau,\sigma) \neq 0$ unless $\sigma \leq \tau$.
	
	Denote $\mathbb{A}^*_{lf}(K)$ to be the subcategory of $\mathbb{A}^{lf}(K)$ with the same objects and with morphisms such that $f(\tau,\sigma) \neq 0$ unless $\sigma \geq \tau$.
\end{Definition}

\begin{Remark}
	If $K$ is finite, then $\mathbb{A}_*(K)=\mathbb{A}_*^{lf}(K),\mathbb{A}^*(K)=\mathbb{A}^*_{lf}(K)$.
\end{Remark}

If $K$ is locally finite, finite-dimensional and ordered, suppose that there is an involution on $\mathbb{A}$, then there are chain dualities on $\mathbb{A}_*^{lf}(K)$ and $\mathbb{A}^*_{lf}(K)$, which are given analogously to the construction in theorem \ref{chaindualityonAK1} and remark \ref{chaindualityonAK2}.

From now on until the end of the subsection we will assume $K $ to be a locally finite, finite-dimensional and ordered simplicial complex.

The following definition is a version of how to describe subcomplexes of the dual cell complex of a simplicial complex:

\begin{Definition}[upper closed]
	\
	
	Let $K'$ be a simplicial complex, a collection $S$ of simplices in $K'$ is called upper closed in $K'$, if for any $\sigma \in S$ and $\tau \in K'$, $\sigma \leq \tau$ implies $\tau \in S$.
\end{Definition}

\begin{Remark}
	It is straightforward to see that $S$ is upper closed if and only if its complement $S^c$ is a subcomplex of $K'$.
\end{Remark}

\begin{Definition}
	\
	
	Let $K'$ be a simplicial complex and $S$ be an upper closed set in $K'$.
	
	$(1)$ The closure of $S$, denoted by $\overline{S}$, is the smallest subcomplex in $K'$ that contains $S$.
	
	$(2)$ The boundary of $S$, denoted by $\partial S$, is defined by $\partial S=\overline{S} \backslash S$. It is a subcomplex of $K'$.
	
	$(3)$ The simplicial interior of $S$, denoted by $S^{-}$, is defined by $S^{-}=\{\sigma \in S | \ \sigma \cap \partial S =\emptyset \}$. It is a subcomplex of $K'$.
\end{Definition}

Now for an object $M \in M^h(R)_*^{lf}(K) \ (resp. \ M^h(R)^*_{lf}(K))$, we can also assemble it over subsets of $K$. It will have some nice property when the subset is good, as shown by the following:

\begin{Definition}[Partial assembly over a subset]
	\
	
	Let $M,N$ be two objects in $M \in M^h(R)_*^{lf}(K) \ (resp. \ M^h(R)^*_{lf}(K))$ and $f:M \longrightarrow N$ be a morphism. Let $S \subset K$ be a subset of simplices in $K$. Let $L$ be a subcomplex containing $S$ and $\mathbf{p}: \widetilde{L} \longrightarrow L$ be a Galois covering with transformation group $G_0$. Denote $\widetilde{S}=\mathbf{p}^{-1}(S)$, we define the following notations:
	
	$(1)$ $M(\widetilde{S}):=\mathop{\oplus}\limits_{\tilde{\sigma} \in \widetilde{S}}M(\mathbf{p}\tilde{\sigma})$. It is a $RG_0$ module.
	
	$(2)$ $f(\widetilde{S}):=\mathop{\boxplus}\limits_{\substack{\tilde{\sigma} \in \widetilde{L} \\ \mathbf{p}(\tilde{\sigma}) \in S}}\mathop{\oplus}\limits_{\substack{\tilde{\tau} \in \widetilde{L} \\ \mathbf{p}(\tilde{\tau}) \in S}}f(\tilde{\tau},\tilde{\sigma}):M(\widetilde{S}) \longrightarrow N(\widetilde{S})$. It is a $RG_0$ module homomorphism.
\end{Definition}


\begin{Lemma}
	\label{partialassemblyfun}
	Let $R$ be a ring and denote $M^{f}(RG_0)$ to be the category of free modules of $RG_0$. If $S$ is the intersection of an upper closed set in $K$ with a subcomplex in $K$, then the construction above is a functor from $M^h(R)_*^{lf}(K) \ (resp. \ M^h(R)^*_{lf}(K))$ to the category $M^{f}(RG_0)$. We call this functor the partial assembly over $S$ with respect to $\mathbf{p}$.
\end{Lemma}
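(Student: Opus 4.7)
The plan is to verify the three defining properties of a functor: that $M(\widetilde{S})$ is a free right $RG_0$-module, that $f(\widetilde{S})$ gives a well-defined $RG_0$-module homomorphism, and that the assignment is compatible with compositions and identities. The module and morphism structures are essentially formal; the crux is composition, where the hypothesis that $S$ is the intersection of an upper closed set with a subcomplex enters in an essential way.

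For the module structure, because $\mathbf{p}\colon \widetilde{L} \longrightarrow L$ is Galois, $G_0$ acts freely on $\widetilde{L}$, hence on $\widetilde{S} = \mathbf{p}^{-1}(S)$. Choosing a representative for each $G_0$-orbit $O_\sigma \subset \widetilde{S}$ identifies the corresponding direct summand with $M(\sigma) \otimes_R RG_0$ as a right $RG_0$-module, so $M(\widetilde{S}) \cong \bigoplus_{\sigma \in S} M(\sigma) \otimes_R RG_0$ is free. For $f(\widetilde{S})$ to give a well-defined module map one needs row and column finiteness of its matrix entries. Row finiteness (fixed $\tilde{\tau}$) is automatic since $\tau$ has only finitely many faces; column finiteness (fixed $\tilde{\sigma}$) follows because $\{\tilde{\tau}: \tilde{\sigma}\le\tilde{\tau}\}$ lifts bijectively to the star of $\mathbf{p}\tilde{\sigma}$ in $K$, which is finite by local finiteness and finite-dimensionality of $K$. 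The $G_0$-equivariance $f(\widetilde{S})(g\tilde{\tau},g\tilde{\sigma}) = f(\tilde{\tau},\tilde{\sigma})$ is immediate from $\mathbf{p}(g\tilde{\sigma})=\mathbf{p}\tilde{\sigma}$.

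The essential step is verifying $(f\circ g)(\widetilde{S}) = f(\widetilde{S}) \circ g(\widetilde{S})$. Expanded entry-wise, the left-hand side at $(\tilde{\tau},\tilde{\sigma})$ is $\sum_{\kappa \in K} f(\mathbf{p}\tilde{\tau},\kappa)\,g(\kappa,\mathbf{p}\tilde{\sigma})$, with nonzero summands only when $\mathbf{p}\tilde{\sigma}\le\kappa\le\mathbf{p}\tilde{\tau}$, while the right-hand side sums over $\tilde{\kappa}\in\widetilde{S}$ with $\tilde{\sigma}\le\tilde{\kappa}\le\tilde{\tau}$. Unique lifting of cofaces gives a bijection between such $\tilde{\kappa}$ and simplices $\kappa\in K$ with $\mathbf{p}\tilde{\sigma}\le\kappa\le\mathbf{p}\tilde{\tau}$ and $\kappa\in S$; it therefore suffices to show every such intermediate $\kappa$ is automatically in $S$. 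Writing $S = U \cap L'$ with $U\subset K$ upper closed and $L'\subset K$ a subcomplex, $\kappa\ge\mathbf{p}\tilde{\sigma}\in U$ forces $\kappa\in U$ by the upper closed property, and $\kappa\le\mathbf{p}\tilde{\tau}\in L'$ forces $\kappa\in L'$ since $L'$ is a subcomplex, so $\kappa\in S$. The two sums therefore agree termwise. The $M^h(R)^*_{lf}(K)$ case is identical with the order relations reversed, and preservation of identities is immediate. The main obstacle is precisely this convexity verification; once it is in place, the rest is routine bookkeeping.
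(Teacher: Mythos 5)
The paper does not supply a proof of this lemma, so there is nothing to compare against; your argument correctly fills the gap. The core point is indeed the ``convexity'' of $S$: because $S = U \cap L'$ with $U$ upper closed and $L'$ a subcomplex, any simplex $\kappa$ sandwiched between $\mathbf{p}\tilde{\sigma}$ and $\mathbf{p}\tilde{\tau}$ (both in $S$) automatically lies in $S$, so the internal summation index of $(g\circ f)(\widetilde{S})$ is not cut short relative to $g(\widetilde{S})\circ f(\widetilde{S})$. This is exactly where the hypothesis on $S$ is used, and it also explains the remark immediately following the lemma that the construction fails to be a functor for general morphisms in $M^h(R)^{lf}(K)$: without the ordering constraint $\sigma \le \tau$ (resp.\ $\sigma \ge \tau$) on morphisms, the intermediate $\kappa$ need not lie in any interval, and convexity of $S$ cannot be invoked.

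Two small imprecisions, neither affecting correctness. First, the covering $\mathbf{p}\colon \widetilde{L} \to L$ is only over the subcomplex $L \supset S$, not all of $K$, so the relevant lifting and finiteness statements should refer to the star of $\mathbf{p}\tilde{\sigma}$ in $L$ rather than in $K$; since $L$ is a subcomplex of a locally finite $K$, the conclusion is unchanged. Second, when you invoke the bijection between intermediate $\tilde\kappa$ and intermediate $\kappa$, you implicitly need that the unique lift $\tilde\kappa \ge \tilde\sigma$ of $\kappa$ agrees with the unique face of $\tilde\tau$ projecting to $\kappa$; this is a standard uniqueness-of-lifts fact (both are faces of $\tilde\tau$ mapping onto $\kappa$, hence equal), but it is worth stating. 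With those two caveats the proof is complete.
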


\begin{Remark}
	We can also define in a similar way the partial assembly over a subset for objects in $M^h(R)^{lf}(K)$. However, in general it is not a functor.
\end{Remark}

If $R$ is a ring with involution, in general the functor above is not commutative with the "dual functors" when composing with $\Upsilon$. In order to explain the relation explicitly, we introduce the following definition:

\begin{Definition}
	\
	
	Let $R$ be a ring and $G_0$ be a group. An assemble structure $(K',S,\mathbf{p},O)$ on an object $M \in M^{f}(RG_0)$ consists of the following data:
	
	$(1)$ A locally finite, finite dimensional and ordered simplicial complex $K'$.
	
	$(2)$ A subset $S$ of simplices in $K'$.
	
	$(3)$ A Galois covering $\mathbf{p}:S^{cover} \longrightarrow \overline{S}$ with transformation group $G_0$. Denote $\widetilde{S}=\mathbf{p}^{-1}(S)$.
	
	$(4)$ An object $O \in M^h(R)^{lf}(K')$, such that $M=O(\widetilde{S})$.
	
	An object $M$ in $M^f(RG_0)$ is called assembled if there is an assemble structure. Define its compact supported dual (with respect to the assemble structure) to be $M^{cd}=O^*(\widetilde{S})$, where $O^*$ is the object in $M^h(R)^{lf}(K')$ with $O^*(\sigma)=O(\sigma)^*$ for all simplex $\sigma \in K'$.
	
	Let $h:M_1 \longrightarrow M_2$ be a $RG_0$ morphism between objects in $M^f(RG_0)$ with assemble structures $(K_i,S_i,\mathbf{p}_i,O_i)_{i=1,2}$. We call $h$ assembled (with respect to the assemble structures), if:
	
	$(1)$ $K_1=K_2$
	
	$(2)$ There is a subcomplex $L$ containing $S_1,S_2$ and a Galois covering $\mathbf{p}:\widetilde{L} \longrightarrow L$ with transformation group $G_0$, such that $\mathbf{p}|_{S_i}=\mathbf{p}_i$ $(i=1,2)$.
	
	$(3)$ There is a collection of morphisms $\{h(\tau,\sigma):O_1(\sigma) \longrightarrow O_2(\tau) \ | \ \sigma \in S_1,\tau \in S_2\}$ of $R$ modules, such that $h=\mathop{\boxplus}\limits_{\tilde{\sigma} \in \widetilde{S}_1}\mathop{\oplus}\limits_{\tilde{\tau} \in \widetilde{S}_2}h(\mathbf{p}\tilde{\tau},\mathbf{p}\tilde{\sigma})$.
	
	If $h$ is assembled, we call the (unique) collection of morphisms $\{h(\tau,\sigma):O_1(\sigma) \longrightarrow O_2(\tau) \ | \ \sigma \in S_1,\tau \in S_2\}$ to be the assemble structure of $h$. Define the compact supported dual of $h$, denoted by $h^{cd}$, to be as follows:
	\begin{equation*}
		h^{cd}=\mathop{\oplus}\limits_{\tilde{\sigma} \in \widetilde{S}_1}\mathop{\boxplus}\limits_{\tilde{\tau} \in \widetilde{S}_2}h(\mathbf{p}\tilde{\tau},\mathbf{p}\tilde{\sigma})^*:M_2^{cd}=\mathop{\oplus}\limits_{\tilde{\tau} \in \widetilde{S}_2}O_2(\mathbf{p}\tilde{\tau})^* \longrightarrow \mathop{\oplus}\limits_{\tilde{\sigma} \in \widetilde{S}_1}O_1(\mathbf{p}\tilde{\sigma})^*=M_1^{cd}
	\end{equation*}
\end{Definition}

\begin{Remark}
	For all the objects in $M^f(RG_0)$ that we will consider in the article, there will be obvious assemble structures on them and all the morphisms we consider are also assembled. Therefore, we will omit the step of pointing out the assemble structures of objects and morphisms mentioned in the article.
\end{Remark}

Then we have the following lemma:

\begin{Lemma}
	\
	\label{Dualexchange}
	
	Let $K$ be a locally finite, finite dimensional, ordered simplicial complex and $R$ be a ring with involution. Let $S$ be any upper closed set of $K$ and $\mathbf{p}: S^{cover} \longrightarrow \overline{S}$ be a Galois covering with tranformation group $G_0$. We endow $S^{cover}$ with a simplicial complex structure by the covering map $\mathbf{p}$ and denote $\widetilde{S}=\mathbf{p}^{-1}(S)$.
	
	Let $C$ be any chain complex in $M^h(R)_*^{lf}(K)$ and $f \in Hom_{\mathbb{A}}(TC,C)_{l}$. For any $r \in \mathbb{Z}$, denote $f^r \in Hom_{\mathbb{A}}(TC_{r-l},C_r)$ and $Tf^r \in Hom_{\mathbb{A}}(TC_{r-l},C_r)$ to be the corresponding components of $f$ and $Tf$. Let $\Upsilon^r[\widetilde{S}^{-}]$ be the map given by:
	\begin{equation*}
		\begin{aligned}
				\Upsilon^r[\widetilde{S}^{-}]: \mathop{\oplus}\limits_{\substack{\tilde{\sigma} \in \widetilde{S} \\ \mathbf{p}(\tilde{\sigma}) \in S^{-}}}C_r(\mathbf{p}\tilde{\sigma})^* \longrightarrow \mathop{\oplus}\limits_{\substack{\tilde{\tau} \in \widetilde{S} \\ \mathbf{p}(\tilde{\tau}) \in S}}C^r(\mathbf{p}\tilde{\tau}) \\
				\Upsilon^r[\widetilde{S}^{-}]=\mathop{\oplus} \limits_{\substack{\tilde{x} \leq \tilde{\sigma} \\ |\tilde{x}|=0}}\Upsilon_{\tilde{x},\tilde{\sigma}}^r \text{ on } C_r(\mathbf{p}\tilde{\sigma})^* \text{ with } \mathbf{p}(\tilde{\sigma}) \in S^{-} \\
		\end{aligned}
	\end{equation*}
	
	Then we have the following commutative diagram:
	\begin{equation*}
		\begin{tikzcd}
			\mathop{\oplus}\limits_{\substack{\tilde{\sigma} \in \widetilde{K} \\ \mathbf{p}(\tilde{\sigma}) \in S^{-}}}C_r(\mathbf{p}\tilde{\sigma})^*\ar{rrrr}{(-1)^{r(l-r)}Tf^{l-r}(\widetilde{S}) \circ \Upsilon^r[\widetilde{S}^{-}]} \dar{i^S} & & & & \mathop{\oplus}\limits_{\substack{\tilde{\sigma} \in \widetilde{K} \\ \mathbf{p}(\tilde{\sigma}) \in S}}C_{l-r}(\mathbf{p}\tilde{\sigma}) \dar{p_S} \\
			\mathop{\oplus}\limits_{\substack{\tilde{\sigma} \in \widetilde{K} \\ \mathbf{p}(\tilde{\sigma}) \in S}}C_r(\mathbf{p}\tilde{\sigma})^* \ar{rrrr}{ \Upsilon^{l-r}[\widetilde{S}^{-}]^{cd} \circ f^r(\widetilde{S})^{cd} } & & & & \mathop{\oplus}\limits_{\substack{\tilde{\sigma} \in \widetilde{K} \\ \mathbf{p}(\tilde{\sigma}) \in S^{-}}}C_{l-r}(\mathbf{p}\tilde{\sigma}) \\ 
		\end{tikzcd}
	\end{equation*}
	
	Where $i^S,p_S$ are the inclusion map and the projection map, respectively.
\end{Lemma}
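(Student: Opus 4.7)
The plan is to verify the commutativity by computing the two composite morphisms entrywise between components $C_r(\mathbf{p}\tilde{\alpha})^*$ and $C_{l-r}(\mathbf{p}\tilde{\beta})$ of the source and target direct sums, for every pair $(\tilde{\alpha},\tilde{\beta})$ with $\mathbf{p}\tilde{\alpha}\in S^-$ and $\mathbf{p}\tilde{\beta}\in S$.

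The first step is to unravel $Tf^{l-r}$ using Lemma \ref{ExpressT}. Applied in the chain duality on $M^h(R)_*^{lf}(K)$, the equation displayed there expresses each component of $Tf^{l-r}$ as the pointwise dual $T(f^{l-r})$ followed by the natural transformation $\mathfrak{D}$. The explicit formula $\mathfrak{D}(M)_0(\tau,\sigma)=(-1)^{|\sigma|}p_{\sigma,\tau}$, which extends verbatim from Theorem \ref{chaindualityonAK1} to $M^h(R)_*^{lf}(K)$, identifies $\mathfrak{D}$ with a signed projection onto the summands indexed by simplices $\tau\geq\sigma$. Assembling over $\widetilde{S}$, the dual $T(f^{l-r})$ turns into the matrix $\{f^{r}(\mathbf{p}\tilde{\kappa},\mathbf{p}\tilde{\gamma})^{*}\}$ indexed by pairs $(\tilde{\kappa},\tilde{\gamma})$ with $\tilde{\gamma}\leq\tilde{\kappa}$ in $\widetilde{S}$.

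Combining with $\Upsilon^r[\widetilde{S}^{-}]$, which only injects $C_r(\mathbf{p}\tilde{\alpha})^*$ into components indexed by vertices $\tilde{x}\leq\tilde{\alpha}$, forces the simplex $\sigma$ appearing in the $\mathfrak{D}$-expression to have $|\sigma|=0$, so the sign from $\mathfrak{D}$ vanishes. What remains is a double sum over vertices $\tilde{x}\leq\tilde{\alpha}$ and simplices $\tilde{\kappa}\geq\tilde{x}$ of a term of the form $(\text{inclusion})\circ f^{r}(\mathbf{p}\tilde{\kappa},\mathbf{p}\tilde{\alpha})^{*}$, landing in $C_{l-r}(\mathbf{p}\tilde{\beta})$ precisely when $\tilde{\beta}=\tilde{\kappa}$. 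Unwinding the bottom path, $f^{r}(\widetilde{S})^{cd}$ delivers the matrix $\{f^{r}(\mathbf{p}\tilde{\beta},\mathbf{p}\tilde{\alpha})^{*}\}$, and $\Upsilon^{l-r}[\widetilde{S}^{-}]^{cd}$ packages these into exactly the same double sum, indexed over vertices $\tilde{x}\leq\tilde{\beta}$ with $\mathbf{p}\tilde{x}\in S^{-}$. The hypothesis $\mathbf{p}\tilde{\alpha}\in S^{-}$ guarantees that every vertex below $\tilde{\alpha}$ lies in $\widetilde{S}$, which is what makes the two index ranges line up without boundary defects and is the structural reason the square commutes.

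The main obstacle is sign bookkeeping: the Koszul sign $(-1)^{q's'}$ from equation \ref{EqT} with the appropriate substitutions, the sign $(-1)^{|\sigma|}$ from $\mathfrak{D}$, and the explicit prefactor $(-1)^{r(l-r)}$ in the statement must conspire to cancel. A careful tally, using that the surviving $\sigma$ is a vertex (so $|\sigma|=0$) and that the Koszul sign collapses to $(-1)^{r(l-r)}$ after applying the grading conventions from Definition \ref{chain functor}, produces the equality of matrix entries between the two paths and hence the commutativity of the diagram.
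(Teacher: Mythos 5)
Your proposal is correct and takes essentially the same route as the paper: both invoke Lemma \ref{ExpressT} together with the explicit $\mathfrak{D}$ formula from Theorem \ref{chaindualityonAK1} to rewrite the assembled $Tf^{l-r}$ as a dualized morphism, use $\Upsilon^r[\widetilde{S}^-]$ to restrict to vertex components (killing the $(-1)^{|\sigma|}$ sign from $\mathfrak{D}$ and collapsing the Koszul sign to $(-1)^{r(l-r)}$), and then match contributions indexed by vertices of $\tilde\alpha\cap\tilde\beta$, using that $S^-$ is a subcomplex and $S$ is upper closed so no contributions escape $\widetilde{S}$. The only organisational difference is that the paper reformulates the identity as an equality of pairings $\langle\cdot,w_{\tilde\tau}\rangle$ to dispose of the compact-supported duals $(\cdot)^{cd}$, whereas you compare matrix entries directly; the resulting computation is the same.
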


\begin{proof}
	For any $r \in \mathbb{Z}$, we have $(TC)_{r-l}=\mathop{\oplus}\limits_{x \in \mathbb{Z}}T(C_{x})_{x+r-l}$. For any $x \in \mathbb{Z}$, let $f_{x}^r:T(C_{x})_{x+r-l} \longrightarrow C_r$ be the restriction of $f^r$ on $T(C_{x})_{x+r-l}$.
	
	For any $\sigma,\tau \in K$ with $\sigma \leq \tau$, let $x'=l-x-|\sigma|$. By Lemma \ref{ExpressT} and Theorem \ref{chaindualityonAK1}, we have the following commutative diagram:
	\begin{equation}
		\label{Tfdiagram}
		\begin{tikzcd}
			T(C)_{x-l}(\sigma) \ar{rrr}{(-1)^{(x-|\sigma|)(x'-|\sigma|)}Tf^x_{x'}(\tau,\sigma)}\dar{=} & & & C_x(\tau) \\ 
			T(C_{x'})_{-|\sigma|}(\sigma) \ar{rr}{T(f_{x}^{x'})_{-|\sigma|}(\sigma,\sigma)}\dar{=} & & T(T(C_{x})_{-|\sigma|})_{-|\sigma|}(\sigma) \rar{\subset}\dar{=} & (T^2C_{x})_0(\sigma) \uar{D(C_x)_0}\dar{=} \\
			\mathop{\oplus}\limits_{\kappa \geq \sigma}C_{x'}(\kappa)^* \ar{rr}{\mathop{\boxplus}\limits_{\kappa \geq \sigma}f_{x}^{x'}(\kappa,\sigma)^*} & & \mathop{\oplus}\limits_{\kappa \geq \sigma}C_x(\kappa) \rar{=} & \mathop{\oplus}\limits_{\kappa \geq \sigma}C_x(\kappa) \ar[bend right=80,swap]{uu}{(-1)^{|\sigma|}p_{\sigma,\tau}}
		\end{tikzcd}
	\end{equation}
	
	For any vertex $z \in K$, simplices $\sigma,\tau \in K$ and any $x \in \mathbb{Z}$, denote $f_x^{z,\sigma,\tau}:C_{l-x}^*(\sigma) \longrightarrow C_x(\tau)$ to be the following morphism if $z \in \sigma \cap \tau$:
	\begin{equation*}
		C_{l-x}^*(\sigma) \subset TC_{x-l}(z)=	\mathop{\oplus}\limits_{\kappa \geq z}C_{l-x}(\kappa)^* \stackrel{f_{l-x}^{x}(\tau,z)}{\longrightarrow} C_x(\tau)
	\end{equation*}
	
	Otherwise, define $f_{x}^{z,\sigma,\tau}$ to be $0$.
	
	Then we can have a description of $(-1)^{r(l-r)}Tf^{l-r}(\tau,z)$ for any $r \in \mathbb{Z}$. To start with, by our definition,
	\begin{equation}
		\label{fexpress}
		f^{r}_{l-r}(\kappa,z)=\mathop{\boxplus}\limits_{\kappa' \geq \sigma}f_{r}^{z,\kappa',\kappa}: T(C_{l-r})_{0}(z)=\mathop{\oplus}\limits_{\kappa' \geq z}C_{l-r}(\kappa')^* \longrightarrow C_{r}(\kappa)
	\end{equation}
	
	Therefore, by the commutative diagram \ref{Tfdiagram}, we can express the morphism $(-1)^{r(l-r)}Tf^{l-r}(\tau,z)$ restricted on $C_{r}(\sigma)^*$ as follows:
	\begin{equation}
		\label{Tfexpress}
		C_r(\sigma)^* \stackrel{(f_{r}^{z,\tau,\sigma})^*}{\longrightarrow} C_{l-r}(\tau)
	\end{equation}
	
	Notice that to prove the Lemma, it suffices to prove that for any simplex $\tilde{\sigma}$ with $\mathbf{p}(\tilde{\sigma}) \in S^{-}$ and $z_{\tilde{\sigma}} \in C_r(\mathbf{p}\tilde{\sigma})^*$, the following equality holds:
	\begin{equation*}
		(-1)^{r(l-r)}p_S \circ Tf^{l-r}(\widetilde{S}) \circ \Upsilon^r[\widetilde{S}^{-}](z_{\tilde{\sigma}})=\Upsilon^{l-r}[\widetilde{S}^{-}]^{cd} \circ f^r(\widetilde{S})^{cd} \circ i^S(z_{\tilde{\sigma}})
	\end{equation*}

	Equivalently, we can verify the following equality for all $\tilde{\tau}$ with $\mathbf{p}(\tilde{\tau}) \in S^-$ and $w_{\tilde{\tau}} \in C_{n-l}(\mathbf{p}\tilde{\tau})^*$:
	\begin{equation*}
		(-1)^{r(l-r)}\textlangle p_S \circ Tf^{l-r}(\widetilde{S}) \circ \Upsilon^r[\widetilde{S}^{-}](z_{\tilde{\sigma}}),w_{\tilde{\tau}} \textrangle=\textlangle i^S(z_{\tilde{\sigma}}),f^r(\widetilde{S}) \circ \Upsilon^{l-r}[\widetilde{S}^{-}](w_{\tilde{\tau}}) \textrangle
	\end{equation*}
	
	(Here $\textlangle,\textrangle$ is the pairing of a module with its dual)
	
	We compute both sides separately. Let $\tilde{x}_1,\tilde{x}_2,...,\tilde{x}_k$ be all the vertices of $\tilde{\sigma}$ and suppose that $\tilde{x}_1,\tilde{x}_2,...,\tilde{x}_j$ are all the vertices of $\tilde{\sigma} \cap \tilde{\tau}$, then:
	\begin{equation*}
		\begin{aligned}
		(-1)^{r(l-r)}p_S \circ Tf^{l-r}(\widetilde{S}) \circ \Upsilon^r[\widetilde{S}^{-}](z_{\tilde{\sigma}})&=(-1)^{r(l-r)}p_S \circ Tf^{l-r}(\widetilde{S})(\mathop{\oplus}\limits_{i=1}^k\Upsilon_{\tilde{x}_i,\tilde{\sigma}}^r(z_{\tilde{\sigma}})) \\
		&=(-1)^{r(l-r)}\sum\limits_{i=1}^kp_STf^{l-r}(\widetilde{S})\Upsilon_{\tilde{x}_i,\tilde{\sigma}}^rz_{\tilde{\sigma}}
		\end{aligned}
	\end{equation*}

	Notice that for every $1 \leq i \leq k$, we have $\Upsilon_{\tilde{x}_i,\tilde{\sigma}}^rz_{\tilde{\sigma}} \in C^r(\tilde{x}_i)$. By definition we have that $Tf^{l-r}(\tilde{\tau}',\tilde{x}_i) \neq 0$ implies $\tilde{x}_i \leq \tilde{\tau}'$. Since $S^-$ is a subcomplex, we have $\mathbf{p}(\tilde{x}_i) \in S^- \subset S$ and thus $\mathbf{p}(\tilde{\tau}') \in S$ by the upper closed property of $S$. Thus we have:
	\begin{equation*}
		\begin{aligned}
			(-1)^{r(l-r)}Tf^{l-r}(\widetilde{S})\Upsilon_{\tilde{x}_i,\tilde{\sigma}}^rz_{\tilde{\sigma}}&=	\mathop{\oplus}\limits_{\tilde{\tau}' \geq \tilde{x}_i}(-1)^{r(l-r)}Tf^{l-r}(\tilde{\tau}',\tilde{x}_i)\Upsilon_{\tilde{x}_i,\tilde{\sigma}}^rz_{\tilde{\sigma}} \\
			&=\mathop{\oplus}\limits_{\tilde{\tau}' \geq \tilde{x}_i}(-1)^{r(l-r)}Tf^{l-r}(\mathbf{p}\tilde{\tau}',\mathbf{p}\tilde{x}_i)\Upsilon_{\tilde{x}_i,\tilde{\sigma}}^rz_{\tilde{\sigma}} \\
			& \quad (\text{By } \ref{Tfexpress} \text{ and definition of } \Upsilon_{\tilde{x}_i,\tilde{\sigma}}^r) \\
			&=\mathop{\oplus}\limits_{\tilde{\tau}' \geq \tilde{x}_i}(f_r^{\mathbf{p}\tilde{x}_i,\mathbf{p}\tilde{\tau}',\mathbf{p}\tilde{\sigma}})^*z_{\tilde{\sigma}} \\
		\end{aligned}
	\end{equation*}
	
	Then we have:
	\begin{equation}
		\label{Eq6.12}
		\begin{aligned}
			& \quad \; (-1)^{r(l-r)}\textlangle p_S \circ Tf^{l-r}(\widetilde{S}) \circ \Upsilon^r[\widetilde{S}^{-}](z_{\tilde{\sigma}}),w_{\tilde{\tau}} \textrangle \\
			&=\textlangle (-1)^{r(l-r)}\sum\limits_{i=1}^kp_STf^{l-r}(\widetilde{S})\Upsilon_{\tilde{x}_i,\tilde{\sigma}}^rz_{\tilde{\sigma}},w_{\tilde{\tau}}\textrangle \\
			&=\sum\limits_{i=1}^j\textlangle(f_{r}^{\mathbf{p}\tilde{x}_i,\mathbf{p}\tilde{\tau},\mathbf{p}\tilde{\sigma}})^*z_{\tilde{\sigma}},w_{\tilde{\tau}}\textrangle \\
			&=\sum\limits_{i=1}^j\textlangle z_{\tilde{\sigma}},f_{r}^{\mathbf{p}\tilde{x}_i,\mathbf{p}\tilde{\tau},\mathbf{p}\tilde{\sigma}}w_{\tilde{\tau}}\textrangle \\
		\end{aligned}
	\end{equation}
	For the right-hand side, we can compute similarly:
	\begin{equation}
		\label{Eq6.13}
		\begin{aligned}
			\textlangle i^S(z_{\tilde{\sigma}}),f^r(\widetilde{S}) \circ \Upsilon^{l-r}[\widetilde{S}^{-}](w_{\tilde{\tau}}) \textrangle&=\textlangle z_{\tilde{\sigma}},f^r(\widetilde{S}) \circ \Upsilon^{l-r}[\widetilde{S}^{-}](w_{\tilde{\tau}}) \textrangle \\
			&=\textlangle z_{\tilde{\sigma}},f^r(\widetilde{S})( \mathop{\oplus}\limits_{\substack{\tilde{x} \leq \tilde{\tau} \\ |\tilde{x}|=0}}\Upsilon_{\tilde{x},\tilde{\tau}}^{l-r}w_{\tilde{\tau}}) \textrangle\\
			&=\textlangle z_{\tilde{\sigma}},\sum\limits_{\substack{\tilde{x} \leq \tilde{\tau} \\ |\tilde{x}|=0}}\mathop{\oplus}\limits_{\tilde{\sigma}' \in \widetilde{S}}f^r(\tilde{\sigma}',\tilde{x}) \Upsilon_{\tilde{x},\tilde{\tau}}^{l-r}w_{\tilde{\tau}} \textrangle\\
			&=\textlangle z_{\tilde{\sigma}},\sum\limits_{\substack{\tilde{x} \leq \tilde{\tau} \\ |\tilde{x}|=0}}f^r(\tilde{\sigma},\tilde{x}) \Upsilon_{\tilde{x},\tilde{\tau}}^{l-r}w_{\tilde{\tau}} \textrangle\\
			&=\textlangle z_{\tilde{\sigma}},\sum\limits_{\substack{\tilde{x} \leq \tilde{\tau} \cap \tilde{\sigma} \\ |\tilde{x}|=0}}f^r(\mathbf{p}\tilde{\sigma},\mathbf{p}\tilde{x}) \Upsilon_{\tilde{x},\tilde{\tau}}^{l-r}w_{\tilde{\tau}} \textrangle\\
			&=\textlangle z_{\tilde{\sigma}},\sum\limits_{i=1}^jf^r(\mathbf{p}\tilde{\sigma},\mathbf{p}\tilde{x}_i) \Upsilon_{\tilde{x}_i,\tilde{\tau}}^{l-r}w_{\tilde{\tau}} \textrangle\\
			&=\textlangle z_{\tilde{\sigma}},\sum\limits_{i=1}^jf^r(\mathbf{p}\tilde{\sigma},\mathbf{p}\tilde{x}_i) \Upsilon_{\tilde{x}_i,\tilde{\tau}}^{l-r}w_{\tilde{\tau}} \textrangle\\
			&=\textlangle z_{\tilde{\sigma}},\sum\limits_{i=1}^jf_r^{\mathbf{p}\tilde{x}_i,\mathbf{p}\tilde{\tau},\mathbf{p}\tilde{\sigma}}w_{\tilde{\tau}} \textrangle\\
		\end{aligned}
	\end{equation}

	Comparing the two equations \ref{Eq6.12},\ref{Eq6.13}, we arrive at the conclusion that the lemma holds.
\end{proof}

\begin{Remark}
	\label{Dualexchangeupper}
	Let $\Upsilon^r[\widetilde{S}]$ be the map given by:
	\begin{equation*}
		\begin{aligned}
			& \qquad \quad \Upsilon^r[\widetilde{S}]: \mathop{\oplus}\limits_{\tilde{\sigma} \in \widetilde{S}}C_r(\mathbf{p}\tilde{\sigma})^* \longrightarrow \mathop{\oplus}\limits_{\tilde{\tau} \in \widetilde{S}}C^r(\mathbf{p}\tilde{\tau}) \\
			& \Upsilon^r[\widetilde{S}]=\mathop{\oplus} \limits_{\substack{\tilde{x} \leq \tilde{\sigma} \\ |\tilde{x}|=0,\mathbf{p}\tilde{x} \in S}}\Upsilon_{\tilde{x},\tilde{\sigma}}^r \text{ on } C_r(\mathbf{p}\tilde{\sigma})^* \text{ with } \mathbf{p}(\tilde{\sigma}) \in S \\
		\end{aligned}
	\end{equation*}

	Similar computations show that the following diagram commutes:
	
	\begin{equation*}
		\begin{tikzcd}
			\mathop{\oplus}\limits_{\substack{\tilde{\sigma} \in \widetilde{K} \\ \mathbf{p}(\tilde{\sigma}) \in S}}C_r(\mathbf{p}\tilde{\sigma})^*\ar{rrrr}{(-1)^{r(l-r)}Tf_{l-r}(\widetilde{S}) \circ \Upsilon^r[\widetilde{S}]} \dar{=} & & & & \mathop{\oplus}\limits_{\substack{\tilde{\sigma} \in \widetilde{K} \\ \mathbf{p}(\tilde{\sigma}) \in S}}C_{l-r}(\mathbf{p}\tilde{\sigma}) \dar{=} \\
			\mathop{\oplus}\limits_{\substack{\tilde{\sigma} \in \widetilde{K} \\ \mathbf{p}(\tilde{\sigma}) \in S}}C_r(\mathbf{p}\tilde{\sigma})^* \ar{rrrr}{ \Upsilon^{l-r}[\widetilde{S}]^{cd} \circ f_r(\widetilde{S})^{cd} } & & & & \mathop{\oplus}\limits_{\substack{\tilde{\sigma} \in \widetilde{K} \\ \mathbf{p}(\tilde{\sigma}) \in S}}C_{l-r}(\mathbf{p}\tilde{\sigma}) \\ 
		\end{tikzcd}
	\end{equation*}
\end{Remark}

A very important corollary is:

\begin{Corollary}
	\label{partialquad}
	Given the same setup as above, suppose that $\psi$ is a quadratic structure on $C$. Let $C_*(\widetilde{S}^-)$ be the partial assembly of the chain complex $(C_r)_{r \in \mathbb{Z}}$ over $S^-$ with respect to the Galois covering $\mathbf{p}:S^{cover} \longrightarrow \overline{S}$. Denote $p_S^{dual},i^{S,dual}$ to be the following maps:
	\begin{equation*}
		\begin{aligned}
			& \ \ p_S^{dual}:C^*(\widetilde{S}) \stackrel{projection}{\longrightarrow} C^*(\widetilde{S}^-) \\
			& i^{S,dual}:C^*(\widetilde{S}^{-})^{cd} \stackrel{inclusion}{\longrightarrow} C^*(\widetilde{S})^{cd} \\
		\end{aligned}
	\end{equation*}
	
	Let
	\begin{equation*}
		\begin{aligned}
			& \ \ \psi_s^r(\widetilde{S}^-)^{ass}=\psi_s^r(\widetilde{S}^-)p_S^{dual}\Upsilon^{n-s-r}[\widetilde{S}^-]: C_{n-s-r}(\widetilde{S}^-)^{cd} \longrightarrow C_r(\widetilde{S}^-) \\
			& \psi_s^r(\widetilde{S}^-)^{ass}_{cd}=\Upsilon^{n-s-r}[\widetilde{S}^-]^{cd}i^{S,dual}\psi_s^{r}(\widetilde{S}^-)^{cd}: C_{r}(\widetilde{S}^-)^{cd} \longrightarrow C_{n-s-r}(\widetilde{S}^-) \\
		\end{aligned}
	\end{equation*}

	and $r'=n-s-r-1$, then:
	
	$(1)$ $C_*(\widetilde{S}^-)$ and $C_*(\widetilde{S}^-)^{cd}$ are chain complexes.
	
	$(2)$ The following equation holds:
	\begin{equation*}
		\begin{aligned}
			0=& \ d_{C}(\widetilde{S}^-)\psi_s^{r+1}(\widetilde{S}^-)^{ass}+(-1)^r\psi_s^{r}(\widetilde{S}^-)^{ass}d_{C}(\widetilde{S}^-)^{cd} \\
			& \ +(-1)^{n-s-1}\psi_{s+1}^{r}(\widetilde{S}^-)^{ass}+(-1)^{n+rr'}\psi_{s+1}^{r'}(\widetilde{S}^-)^{ass}_{cd} \\
		\end{aligned}
	\end{equation*}
\end{Corollary}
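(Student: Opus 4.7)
\textbf{Part (1)} I will handle by functoriality. Observing that $S^-$ is a subcomplex and writing $S^-=S\cap S^-$ as the intersection of the upper-closed set $S$ with the subcomplex $S^-$, Lemma \ref{partialassemblyfun} makes partial assembly over $\widetilde{S}^-$ into an additive functor $M^h(R)_*^{lf}(K)\to M^f(RG_0)$. Applying it to $d_C^2=0$ gives $d_C(\widetilde{S}^-)^2=0$, and then dualizing yields $\bigl(d_C(\widetilde{S}^-)^{cd}\bigr)^2 = \bigl(d_C(\widetilde{S}^-)^2\bigr)^{cd} = 0$, so both $C_*(\widetilde{S}^-)$ and $C_*(\widetilde{S}^-)^{cd}$ are chain complexes.

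\textbf{Part (2)} I will prove by starting from the chain-level identity $\partial\psi_s=0$ on $C$, taken at the bi-degree giving morphisms $C^{r'}\longrightarrow C_r$:
\[
0 \;=\; d_C\psi_s^{r+1} \;-\; (-1)^{n-s}\psi_s^r\,d_{C^{-*},r'} \;+\; (-1)^{n-s-1}\psi_{s+1}^r \;+\; (-1)^n (T\psi_{s+1})^r,
\]
then partial-assembling over $\widetilde{S}^-$ and finally pre-composing with $p_S^{dual}\circ\Upsilon^{r'}[\widetilde{S}^-]$ on the right. Because $S^-$ is a subcomplex, for $\tilde\sigma,\tilde\tau\in\widetilde{S}^-$ with $\tilde\sigma\leq\tilde\tau$ every intermediate $\tilde\sigma\leq\tilde\kappa\leq\tilde\tau$ is again in $\widetilde{S}^-$; partial assembly over $\widetilde{S}^-$ therefore respects $*$-compositions, and the displayed identity survives replacing every morphism by its $\widetilde{S}^-$-assembly. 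The first and third terms then read off as $d_C(\widetilde{S}^-)\psi_s^{r+1}(\widetilde{S}^-)^{ass}$ and $(-1)^{n-s-1}\psi_{s+1}^r(\widetilde{S}^-)^{ass}$ straight from the definition of $(-)^{ass}$.

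For the second term I will invoke the chain-map property of $\Upsilon$ from Theorem \ref{assemblydual}. Together with the sign in $d_{C^{-*}}=(-1)^\bullet d_{TC}$ of Definition \ref{chain functor}, this produces an identity of the form $\Upsilon^{n-s-r}[\widetilde{S}^-]\,d_C(\widetilde{S}^-)^{cd} = (\text{sign})\,d_{C^{-*},r'}(\widetilde{S}^-)\,p_S^{dual}\,\Upsilon^{r'}[\widetilde{S}^-]$, and the compound sign multiplied by $-(-1)^{n-s}$ will yield exactly $(-1)^r\psi_s^r(\widetilde{S}^-)^{ass}\,d_C(\widetilde{S}^-)^{cd}$. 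For the fourth term I will apply Lemma \ref{Dualexchange} to $f=\psi_{s+1}$ with $l=n-s-1$ and its running index set to $r'$ (so that the Lemma's ``$l-r$'' becomes our $r$), giving
\[
(-1)^{rr'}\,p_S\,(T\psi_{s+1})^r(\widetilde{S})\,\Upsilon^{r'}[\widetilde{S}^-] \;=\; \Upsilon^r[\widetilde{S}^-]^{cd}\,\psi_{s+1}^{r'}(\widetilde{S})^{cd}\,i^S.
\]
The subcomplex property of $S^-$ forces the image of $\Upsilon^{r'}[\widetilde{S}^-]$ to sit inside $C^{r'}(\widetilde{S}^-)\subset C^{r'}(\widetilde{S})$ (it only uses vertices of simplices in $S^-$), so the left-hand side coincides with $(-1)^{rr'}(T\psi_{s+1})^r(\widetilde{S}^-)\,p_S^{dual}\,\Upsilon^{r'}[\widetilde{S}^-]$. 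By a dual support argument $\psi_{s+1}^{r'}(\widetilde{S})^{cd}\,i^S = i^{S,dual}\,\psi_{s+1}^{r'}(\widetilde{S}^-)^{cd}$, since any $\tilde\sigma\in\widetilde{S}\setminus\widetilde{S}^-$ has no $\tilde\tau\geq\tilde\sigma$ in $\widetilde{S}^-$. Thus the right-hand side collapses to $\psi_{s+1}^{r'}(\widetilde{S}^-)^{ass}_{cd}$, and multiplying by the original $(-1)^n$ delivers the coefficient $(-1)^{n+rr'}$.

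The main obstacle will be the combined sign bookkeeping: the $(-1)^r$-prefactor in the second term requires threading the sign in $d_{C^{-*}}=(-1)^\bullet d_{TC}$ through the chain-map property of $\Upsilon$, while the $(-1)^{n+rr'}$-prefactor in the fourth term is the product of the original $(-1)^n$ with the $(-1)^{r(l-r)}=(-1)^{rr'}$ furnished by Lemma \ref{Dualexchange}. The conceptual point that makes everything align cleanly is that $S^-$ is a subcomplex of $K$, which simultaneously ensures compatibility of partial assembly with $*$-compositions and permits the inclusions $i^S$ and projections $p_S$ to pass freely through $\Upsilon$ and $(-)^{cd}$.
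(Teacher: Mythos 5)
Your proof is correct and follows essentially the same route as the paper's: both start from the chain-level identity $\partial\psi_s=0$, apply the partial-assembly functor furnished by Lemma \ref{partialassemblyfun}, pre-compose with the appropriate $\Upsilon$, and rewrite the $d_{C^{-*}}$ term via the chain-map property of $\Upsilon$ and the $T\psi$ term via Lemma \ref{Dualexchange}, with identical sign bookkeeping. The only cosmetic difference is that the paper assembles over the upper-closed set $S$ and then post-composes with $p_S$, whereas you assemble directly over the subcomplex $S^-$; the support identities you invoke (image of $\Upsilon^{r'}[\widetilde{S}^-]$ landing in $C^{r'}(\widetilde{S}^-)$, and $p_S f(\widetilde{S})=f(\widetilde{S}^-)p_S^{dual}$ etc.) are exactly what the paper uses to reconcile the two.
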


\begin{proof}
	(1) follows from the fact that $S$ is upper closed and Lemma \ref{partialassemblyfun}.
	
	To prove (2), note that since $\psi$ is a quadratic structure on $C$, if we write it in components, $\psi_s^r:C^{n-s-r} \longrightarrow C_r$ ($s \in \mathbb{N}$, $r \in \mathbb{Z}$), we have the following equality:
	\begin{equation*}
		d_{C_*}\psi_s^{r+1}-(-1)^{n-s}\psi_s^rd_{C^{n-*}}+(-1)^{n-s-1}(\psi_{s+1}^{r}+(-1)^{s+1}(T\psi_{s+1})^{r})=0
	\end{equation*}

	Since $S$ is upper closed, partial assembly over $S$ with respect to $\mathbf{p}$ is a functor. We can apply the functor to the equation above, compose it with $p_S$ on the left and with $\Upsilon^{r'}[\widetilde{S}^-]$ on the right. We have:
	\begin{equation}
		\label{Eq6.14}
		\begin{aligned}
			0=& \ p_Sd_{C_*}(\widetilde{S})\psi_s^{r+1}(\widetilde{S})\Upsilon^{r'}[\widetilde{S}^-]-(-1)^{n-s}p_S\psi_s^r(\widetilde{S})d_{C^{n-*}}(\widetilde{S})\Upsilon^{r'}[\widetilde{S}^-] \\
			&\ +(-1)^{n-s-1}p_S\psi_{s+1}^{r}(\widetilde{S})\Upsilon^{r'}[\widetilde{S}^-]+(-1)^{n}p_ST\psi_{s+1}^{r}(\widetilde{S})\Upsilon^{r'}[\widetilde{S}^-]
		\end{aligned}
	\end{equation}

	We will compute each term in the equation separately:
	
	Since $S$ is upper closed and $S^-$ is a subcomplex, we have:
	\begin{equation*}
		\begin{aligned}
			p_Sd_{C_*}(\widetilde{S})&=\mathop{\boxplus}\limits_{\mathbf{p}\tilde{\sigma} \in S}\mathop{\oplus}\limits_{\mathbf{p}\tilde{\tau} \in S^-}d_C(\tilde{\tau},\tilde{\sigma}) \\
			&=\mathop{\boxplus}\limits_{\substack{\mathbf{p}\tilde{\sigma} \in S \\ \tilde{\sigma} \leq \tilde{\tau}}}\mathop{\oplus}\limits_{\mathbf{p}\tilde{\tau} \in S^-}d_C(\tilde{\tau},\tilde{\sigma}) \\
			&=\mathop{\boxplus}\limits_{\substack{\mathbf{p}\tilde{\sigma} \in S^- \\ \tilde{\sigma} \leq \tilde{\tau}}}\mathop{\oplus}\limits_{\mathbf{p}\tilde{\tau} \in S^-}d_C(\tilde{\tau},\tilde{\sigma}) \\
			&=\mathop{\boxplus}\limits_{\mathbf{p}\tilde{\sigma} \in S^-}\mathop{\oplus}\limits_{\mathbf{p}\tilde{\tau} \in S^-}d_C(\tilde{\tau},\tilde{\sigma}) \\
			&=d_{C_*}(\widetilde{S}^-)p_S\\
		\end{aligned}
	\end{equation*}
	
	Similarly we get:
	\begin{equation*}
		\begin{aligned}
			p_S\psi_{s}^{r}(\widetilde{S})=\psi_s^r(\widetilde{S}^-)p_S^{dual} \qquad \qquad \qquad \ &\\
			\psi_s^r(\widetilde{S})^{cd}i^S=i^{S,dual}\psi_s^r(\widetilde{S}^{-})^{cd} \qquad \qquad \qquad &\\
			d_{C^{n-*}}(\widetilde{S})\Upsilon^{r'}[\widetilde{S}^-]=(-1)^{n-s-r-1}\Upsilon^{n-s-r}[\widetilde{S}^-]d_{C_*}(\widetilde{S}^-)^{cd} &\\
		\end{aligned}
	\end{equation*}
	
	Therefore:
	\begin{equation*}
		p_Sd_{C_*}(\widetilde{S})\psi_s^{r+1}(\widetilde{S})\Upsilon^{r'}[\widetilde{S}^-]=d_{C_*}(\widetilde{S}^-)\psi_s^{r+1}(\widetilde{S}^-)^{ass}
	\end{equation*}
	\begin{equation*}
		\begin{aligned}
			p_S\psi_s^{r}(\widetilde{S})d_{C^{n-*}}(\widetilde{S})\Upsilon^{r'}[\widetilde{S}^-]&=(-1)^{n-s-r-1}\psi_s^{r}(\widetilde{S}^-)p_S^{dual}\Upsilon^{n-s-r}[\widetilde{S}^-]d_{C}(\widetilde{S}^-)^{cd} \\
			&=(-1)^{n-s-r-1}\psi_s^{r}(\widetilde{S}^-)^{ass}d_{C}(\widetilde{S}^-)^{cd} \\
		\end{aligned}
	\end{equation*}
	\begin{equation*}
		p_S\psi_{s+1}^{r}(\widetilde{S})\Upsilon^{r'}[\widetilde{S}^-]=\psi_{s+1}^{r}(\widetilde{S}^-)^{ass}
	\end{equation*}
	\begin{equation*}
		\begin{aligned}
			p_ST\psi_{s+1}^{r}(\widetilde{S})\Upsilon^{r'}[\widetilde{S}^-] &=(-1)^{rr'}\Upsilon^{r}[\widetilde{S}^-]^{cd}\psi^{r'}_{s+1}(\widetilde{S})^{cd}i^S \quad (\text{By lemma } \ref{Dualexchange}) \\
			&=(-1)^{rr'}\psi_{s+1}^{r'}(\widetilde{S}^-)^{ass}_{cd}
		\end{aligned}
	\end{equation*}
	
	Substuting the equations above into equation \ref{Eq6.14}, we get:
	\begin{equation*}
		\begin{aligned}
			0=& \ d_{C}(\widetilde{S}^-)\psi_s^{r+1}(\widetilde{S}^-)^{ass}+(-1)^r\psi_s^{r}(\widetilde{S}^-)^{ass}d_{C}(\widetilde{S}^-)^{cd} \\
			&\ +(-1)^{n-s-1}\psi_{s+1}^{r}(\widetilde{S}^-)^{ass}+(-1)^{n+rr'}\psi_{s+1}^{r'}(\widetilde{S}^-)^{ass}_{cd} \\
		\end{aligned}
	\end{equation*}

	Which is the same as the equation in the statement of the Corollary.
\end{proof}

\begin{Remark}
	If $S$ is a finite set, then we have $C_*(\widetilde{S}^-)^{cd}=C_*(\widetilde{S}^-)^*$ and $\psi_s^r(\widetilde{S}^-)^{ass}_{cd}=(\psi_s^r(\widetilde{S}^-)^{ass})^*$. Therefore, the above construction gives a quadratic chain complex.
\end{Remark}

\begin{Remark}
	\label{partialquadpair}
	It is possible to relate the construction above with a "Poincare pair". The detail of this will be discussed in the Appendix.
\end{Remark}

A standard way to construct some quadratic chain complex in $M^h(R)_*^{lf}(K)$ is the infinite transfer via covering maps. The construction is as follows:

\begin{Definition}[Infinite transfer]
	\
	
	Let $K_0$ be a locally finite, finite dimensional, ordered simplicial complex and $R$ be a ring with involution. Let $M,N \in M^h(R)_*(K_0)$ be two objects and $f:M \longrightarrow N$ be a morphism. Let $\mathbf{p}: \widetilde{K}_0 \longrightarrow K_0$ be a covering map. We can define the following notations:
	
	$(1)$ For every simplex $\tilde{\sigma} \in \widetilde{K}_0$, let $\dot{M}(\tilde{\sigma})= M(\mathbf{p}\tilde{\sigma})$. It is an object in $M^h(R)_*^{lf}(\widetilde{K}_0)$.
	
	$(2)$ For every simplex $\tilde{\sigma},\tilde{\tau} \in K_0$, let $f(\tilde{\tau},\tilde{\sigma})=\begin{cases} f(\mathbf{p}\tilde{\tau},\mathbf{p}\tilde{\sigma}) & \text{If } \tilde{\sigma} \leq \tilde{\tau} \\ \ \ \ \ \ 0 & \text{else} \\ \end{cases}:\dot{M}(\tilde{\sigma}) \longrightarrow \dot{N}(\tilde{\tau})$. It gives a morphism between $\dot{M}$ and $\dot{N}$.
\end{Definition}

\begin{Lemma}[Appendix C in \cite{ranickiltheory}]
	\label{infinitetransfer}
	\
	
	The construction above gives a functor from $M^h(R)_*(K_0)$ to $M^h(R)_*^{lf}(\widetilde{K}_0)$. Moreover, the functor carries the chain duality on $M^h(R)_*(K_0)$ to the chain duality on $M^h(R)_*^{lf}(\widetilde{K}_0)$. We call it the infinite transfer functor with respect to $\mathbf{p}$.
\end{Lemma}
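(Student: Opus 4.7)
The plan is two-fold: verify that the assignment defines a functor, and verify that it intertwines the two chain dualities.

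Well-definedness on objects and morphisms is almost automatic. The category $M^h(R)_*^{lf}(\widetilde{K}_0)$ places no finiteness restriction on the support of an object, so $\dot{M} = \{M(\mathbf{p}\tilde{\sigma})\}_{\tilde{\sigma} \in \widetilde{K}_0}$ is a legitimate object, and the support condition for a morphism in the $*$-subcategory is built into the formula for $\dot{f}$. Preservation of identities is immediate. The only nontrivial functoriality check is composition: for $\tilde{\sigma} \leq \tilde{\tau}$,
\[
(\dot{g} \circ \dot{f})(\tilde{\tau},\tilde{\sigma}) = \sum_{\tilde{\sigma} \leq \tilde{\kappa} \leq \tilde{\tau}} g(\mathbf{p}\tilde{\tau},\mathbf{p}\tilde{\kappa})f(\mathbf{p}\tilde{\kappa},\mathbf{p}\tilde{\sigma}),
\]
whereas $\dot{(g \circ f)}(\tilde{\tau},\tilde{\sigma}) = \sum_{\mathbf{p}\tilde{\sigma} \leq \kappa \leq \mathbf{p}\tilde{\tau}} g(\mathbf{p}\tilde{\tau},\kappa)f(\kappa,\mathbf{p}\tilde{\sigma})$. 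Since $\mathbf{p}$ restricts to a simplicial isomorphism on each simplex of $\widetilde{K}_0$, it induces an order-preserving bijection between $\{\tilde{\kappa} : \tilde{\sigma} \leq \tilde{\kappa} \leq \tilde{\tau}\}$ and $\{\kappa : \mathbf{p}\tilde{\sigma} \leq \kappa \leq \mathbf{p}\tilde{\tau}\}$, so the two sums agree.

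For the compatibility of chain dualities, denote the transfer by $F$ and write $(T,\mathfrak{D})$, $(T',\mathfrak{D}')$ for the dualities on source and target respectively. Using the explicit description of $T$ in Theorem \ref{chaindualityonAK1} together with its analogue on $M^h(R)_*^{lf}(\widetilde{K}_0)$, one exhibits a natural isomorphism $\theta_M : T'F(M) \stackrel{\cong}{\longrightarrow} F(TM)$. Over each simplex $\tilde{\sigma}$ both complexes are concentrated in degree $-|\tilde{\sigma}|$, where
\[
T'F(M)_{-|\tilde{\sigma}|}(\tilde{\sigma}) = \Bigl(\bigoplus_{\tilde{\kappa} \geq \tilde{\sigma}} M(\mathbf{p}\tilde{\kappa})\Bigr)^{*}, \qquad F(TM)_{-|\tilde{\sigma}|}(\tilde{\sigma}) = \Bigl(\bigoplus_{\kappa \geq \mathbf{p}\tilde{\sigma}} M(\kappa)\Bigr)^{*},
\]
and the upper-star bijection $\tilde{\kappa} \mapsto \mathbf{p}\tilde{\kappa}$ supplies $\theta_M$. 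Both boundary maps and both formulas for $T$, $T'$ on a morphism are written purely in terms of incidence numbers $n_{\sigma}^{\tau}$ and the inclusions between direct sums over upper stars; since $\mathbf{p}$ preserves the vertex orders on each simplex, $n_{\tilde{\sigma}}^{\tilde{\tau}} = n_{\mathbf{p}\tilde{\sigma}}^{\mathbf{p}\tilde{\tau}}$, so these structures transport faithfully along $\theta$. Naturality in $M$ and compatibility with sums reduce to the same bijection.

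Finally, one checks that under $\theta$ the duality transformation $\mathfrak{D}'_{F(M)}$ corresponds to $F(\mathfrak{D}_M)$. Both are described simplex-wise by $\mathfrak{D}(M)_0(\tau,\sigma) = (-1)^{|\sigma|} p_{\sigma,\tau}$, and the projections $p_{\sigma,\tau}$ are carried faithfully along the upper-star bijection. The main technical obstacle is the sign and indexing bookkeeping required across the three verifications; once the geometric input — that $\mathbf{p}$ induces an order-preserving bijection of upper stars — is isolated, everything else reduces to transporting the explicit formulas of Theorem \ref{chaindualityonAK1} along this bijection.
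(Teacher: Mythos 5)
The paper does not actually prove this lemma; it is cited directly from Appendix C of \cite{ranickiltheory}. Your argument is nonetheless correct, and it isolates the right geometric input: a simplicial covering map $\mathbf{p}$, with the ordering on $\widetilde{K}_0$ pulled back from $K_0$, restricts to an order-preserving bijection from any interval $\{\tilde\kappa : \tilde\sigma \le \tilde\kappa \le \tilde\tau\}$ to the corresponding interval in $K_0$, and more generally from the upper star of $\tilde\sigma$ to the upper star of $\mathbf{p}\tilde\sigma$, preserving all incidence numbers $n_\sigma^\tau$. Once this is in hand, functoriality, the natural isomorphism $T'\circ F \cong F\circ T$, and compatibility of $\mathfrak{D}'$ with $\mathfrak{D}$ all reduce to transporting the explicit formulas of Theorem \ref{chaindualityonAK1} along this bijection, exactly as you outline.
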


\subsection{Ball complex description of $L$-theory}
\

We will introduce the generalization of construction above to ball complexes given by Laures and McClure. The main reference is \cite{LauresBallL}. The description will help us to simplify the computation greatly for complexes of the form "$K \times I$". We recall some definitions from their article first.

\begin{Definition}[Finite ball complex]
	\
	
	$(1)$ Let $K$ be a finite collection of PL balls in some Euclidean space $\mathbb{R}^n$. Denote $|K|_{geo}$ for the union $\mathop{\cup}\limits_{\sigma \in K}\sigma$. We call $K$ a finite ball complex if:
	
	$(a)$ The interiors of the balls in $K$ are disjoint.
	
	$(b)$ The boundary of each ball in $K$ is a union of balls in $K$.
	
	The balls in $K$ are also called closed cells in $K$.
	
	$(2)$ Let $K,L$ be finite ball complexes and $f:|K|_{geo} \longrightarrow |L|_{geo}$ be a PL homeomorphism. We call $f$ an isomorphism from $K$ to $L$, if it takes closed cells in $K$ to closed cells in $L$.
	
	$(3)$ A subcomplex of a finite ball complex $K$ is a subset of $K$ that is a finite ball complex.
	
	$(4)$ A morphism of finite ball complexes is the composite of an isomorphism with an inclusion of a subcomplex.
	
	$(5)$ Let $K,L$ be finite ball complexes. The product $K \times L$ is a finite ball complex with a closed cell $\sigma \times \tau$ for each closed cell $\sigma \in K,\tau \in L$.
	
	We denote the category of finite ball complexes to be $\mathbf{Bi}$.
\end{Definition}

\begin{Remark}
	We will denote $\Delta^1$ to be the unit interval with its standard structure as a finite ball complex, which has two 0-cells and one 1-cell. It is also a simplicial complex with the same structure.
\end{Remark}

We denote $\mathbb{Z}^{cat}$ to be the discrete category of the poset $\mathbb{Z}$. It is endowed with the trivial involution.

\begin{Definition}[Dimensioned category, Definition 3.3 in \cite{LauresBallL}]
	\
	
	A dimensioned category is a small category $\mathbb{A}$ with involution $i_{\mathfrak{A}}$, together with involution-preserving functors $d: \mathfrak{A} \longrightarrow \mathbb{Z}^{cat}$ (called the dimension function) and $\emptyset: \mathbb{Z}^{cat} \longrightarrow \mathfrak{A}$ such that:
	
	$(1)$ $d\emptyset$ is equal to the identity functor.
	
	$(2)$ If $f: a \longrightarrow b$ is a non-identity morphism in $\mathfrak{A}$, then $d(a) < d(b)$.
	
	For any $k \in \mathbb{Z}$, a $k$-morphism between dimensioned categories is a functor that decreases the
	dimensions of objects by $k$ and strictly commutes with $\emptyset$ and the involution.
\end{Definition}

\begin{Remark}
	In Laures and McClure's paper, dimensioned category is called $\mathbb{Z}$-graded category. We change the term here to avoid confusion with definition in previous sections.
\end{Remark}

\begin{Example}[Example 3.6 in \cite{LauresBallL}]
	\
	
	Let $K$ be a finite ball complex and $L$ be its subcomplex. Define $Cell(K,L)$ to be the dimensioned category whose objects in dimension $k$ are the oriented closed $k$-cells $(\sigma, o)$ which are not in $L$, together with an object $\emptyset_k$. All the morphisms are one of the following:
	
	$(1)$ The identity morphism for every object.
	
	$(2)$ If $\sigma \varsubsetneq \sigma'$, there is a unique morphism $(\sigma,o) \longrightarrow (\sigma',o')$.
	
	$(3)$ For every $k \leq |\sigma|$, there is a unique morphism $\emptyset_k \longrightarrow (\sigma,o)$.
	
	The involution is given by $(\sigma,o) \mapsto (\sigma,-o)$.
	
	For $L=\emptyset$, the category is written as $Cell(K)$.
\end{Example}

\begin{Definition}[Definition 6.1 in \cite{LauresBallL}]
	\
	
	Let $K$ be a finite ball complex and $L$ be its subcomplex. Denote $Cell^b(K,L)$ to be the category with objects the cells in $K\backslash L$, together with an empty cell $\emptyset_k$ for every $k \in \mathbb{N}$. The morphisms are inclusions of cells. Here the empty cell $\emptyset_k$ is included into every cell of dimension greater than $k$.
\end{Definition}

\begin{Remark}
	There is a natural functor $Cell(K,L) \longrightarrow Cell^b(K,L)$ given by $(\sigma,o) \mapsto \sigma,\emptyset_k \longrightarrow \emptyset_k$ on objects.
\end{Remark}

The following definition also comes from \cite{LauresBallL}:

\begin{Definition}[Balanced category, Definition 5.1 in \cite{LauresBallL}]
	\
	
	A balanced category is a dimensioned category $\mathfrak{A}$ together with a natural bijection:
	\begin{equation*}
		\eta:Hom_{\mathfrak{A}}(A,B) \longrightarrow Hom_{\mathfrak{A}}(A,i_{\mathfrak{A}}B)
	\end{equation*}
	for objects $A,B$ with $d(A)<d(B)$, such that:

	$(1)$ $\eta \circ i_{\mathfrak{A}}=i_{\mathfrak{A}} \circ \eta: Hom_{\mathfrak{A}}(A,B) \longrightarrow Hom_{\mathfrak{A}}(i_{\mathfrak{A}}A,B)$

	$(2)$ $\eta \circ \eta=Id$
	
	We call $\eta$ the balance structure of $\mathfrak{A}$.
\end{Definition}

\begin{Remark}
	$\mathbb{Z}^{cat},Cell(K,L)$ are balanced categories. Since if the morphism set is nonempty, it will only contain one morphism, the balance structure $\eta$ of these two categories is given in a unique way.
\end{Remark}

Now we can introduce the notion of $K$-ad for a finite ball complex $K$ in \cite{LauresBallL}. For that, we need the following definition:

\begin{Definition}[pre-ad, Definition 3.7 in \cite{LauresBallL}]
	\
	
	Let $\mathfrak{A}$ be a dimensioned category and $K$ be a ball complex with $L$ subcomplex.
	
	$(1)$ A pre $K$-ad of degree $k$ in $\mathfrak{A}$ is a $k$-morphism from $Cell(K)$ to $\mathfrak{A}$.
	
	$(2)$ The trivial pre $K$-ad of degree $k$ in $\mathfrak{A}$ is:
	\begin{equation*}
		Cell(K) \stackrel{d}{\longrightarrow} \mathbb{Z}^{cat} \stackrel{-k}{\longrightarrow} \mathbb{Z}^{cat} \stackrel{\emptyset}{\longrightarrow} \mathfrak{A}
	\end{equation*}

	$(3)$ A pre $(K,L)$-ad of degree $k$ in $\mathfrak{A}$ is a pre $K$-ad of degree $k$ in $\mathfrak{A}$ that is trivial when restricted to $L$.
\end{Definition}

The $(K,L)$-ads are chosen properly in pre $(K,L)$-ads for every pair of ball complexes. They have to satisfy some compatibility condition, namely $(a)-(g)$ in Definition 3.10 in \cite{LauresBallL}.

Next, we begin to introduce the constructions in \cite{LauresBallL} of some special $(K,L)$-ad theory that finally gives a description of the generalized cohomology theory of ball complexes related to the quadratic $L$-group. 

In \cite{LauresBallL}, a special set $\mathfrak{S}$ is introduced to deal with set theoretic problems. Here we only need the fact that $\mathfrak{S}$ contains all finite subsets of $\mathbb{Z}$. Then we take the following definitions from \cite{LauresBallL}:

\begin{Definition}[Definition 9.2 in \cite{LauresBallL}]
	\
	
	Let $R$ be ring with involution, then:
	
	$(1)$ Let $M^{\mathfrak{S}}(R)$ be the category of right $R$ modules of the form $R\textlangle S\textrangle$ with $S \in \mathfrak{S}$. Here $R\textlangle S\textrangle$ is the set of finitely supported functions from $S$ to $R$, endowed with the natural $R$ module structure given by right multiplication.
	
	$(2)$ A chain complex in $M^{\mathfrak{S}}(R)$ is called homotopically finite if it is chain homotopic to a finite chain complex.
	
	$(3)$ Let $\mathbb{B}^{hf}(M^{\mathfrak{S}}(R))$ be the full subcategory of $\mathbb{B}(M^{\mathfrak{S}}(R))$ consisting of objects that are homotopically finite.
\end{Definition}

Similar to the definition of previous section, we have the notion of a quasi quadratic (symmetric) chain complex, see also the definition 9.3,11.1 in \cite{LauresBallL}. In order to give the ad theory, we need to state what the dimensioned or balanced category is.

\begin{Definition}[Definition 11.2 in \cite{LauresBallL}]
	\
	
	Let $R$ be ring with involution, we define a balanced category $\mathfrak{A}_R$ as follows:
	
	
	The objects of $\mathfrak{A}_R$ are quasi quadratic chain complexes in $\mathbb{B}^{hf}(M^{\mathfrak{S}}(R))$. Non-identity morphisms $(C,\psi) \longrightarrow (C',\psi')$ are defined only for $\dim (C,\psi) <\dim (C',\psi')$ and are the sets of chain maps from $C$ to $C'$. The involution is defined by $(C,\psi) \mapsto (C,-\psi)$. The balance structure is given by the identity map.
\end{Definition}

Then we have to specify some properties to define $(K,L)$-ad:
\begin{Definition}[Balanced pre $\mathbb{K}$-ad]
	\
	
	Let $\mathfrak{A}$ be a balanced category and $(K,L)$ be a finite ball complex pair, a pre $(K,L)$-ad is called balanced, if it commutes with the balance structure $\eta$.
\end{Definition}

\begin{Remark}
	Notice that for a balanced pre $K$-ad $F$ in $\mathfrak{A}_R$, every cell $(\sigma,o)$ in $K$ gives a quasi quadratic chain complex. Since the functor is balanced, the chain complex part is independent of the orientation. Thus we can write $F(\sigma,o)=(C_{\sigma},\psi_{\sigma,o})$ and $\sigma \mapsto C_{\sigma},\emptyset_k \mapsto \emptyset_k$ gives a functor $C:Cell^b(K) \longrightarrow \mathbb{B}^{hf}(M^{\mathfrak{S}}(R))$.
\end{Remark}

\begin{Definition}
	\label{Deffib}
	A map between chain complexes in $R$ is called a cofibration if, on every dimension, it is an inclusion of a direct summand.
\end{Definition}

\begin{Definition}
	A functor $C$ from $Cell^b(K)$ to chain complexes in $R$ is called well-behaved if:
	
	$(1)$ $C$ maps each morphism to a cofibration.
	
	$(2)$ For every ball $\sigma$ in $K$, the map 
	\begin{equation*}
		\mathop{colim}\limits_{\tau \subsetneq \sigma} C_{\tau} \longrightarrow C_{\sigma}
	\end{equation*}

 	is a cofibration.
 	We denote $C_{\partial\sigma}$ to be $	\mathop{colim}\limits_{\tau \subsetneq \sigma}C_{\tau}$.
\end{Definition}

\begin{Definition}
	\
	
	Let $F$ be a pre $K$-ad in $\mathfrak{A}_R$. Call $F$ is closed, if for every $\sigma$, denote $cl(\sigma)$ to be the cellular chain complex of $|\sigma|_{geo}$, the following map is a chain map:
	\begin{equation*}
		\begin{aligned}
			cl(\sigma) \longrightarrow W \otimes_{\mathbb{Z}[\mathbb{Z}_2]} C_{\sigma}^t \otimes_{R} C_{\sigma} \\
		\end{aligned}
	\end{equation*}

	Where $\textlangle \tau,o\textrangle$ is mapped to the image of $\psi_{\tau,o}$ under the morphism induced by $F((\sigma,o_1)-(\tau,o))$ with any orientation $o_1$ on $\sigma$.
\end{Definition}

\begin{Definition}[$(K,L)$-ad]
	\
	\label{Defad}
	
	Let $(K,L)$ be a pair of finite ball complexes, then
	
	$(1)$ A pre $K$-ad $F$ is a $K$-ad if:
	
	$(a)$ $F$ is balanced and closed and the associated functor $C$ is well-behaved.
	
	$(b)$ For any $\sigma$, denote $\bar{\psi}_{\sigma}$ the composite 
	\begin{equation*}
		\begin{tikzcd}
			W \ar{rr}{(1+T)\psi_{\sigma,o}} & & C_{\sigma}^t \otimes_{R} C_{\sigma} \ar{rr}{\text{projection}} & & (C_{\sigma}/C_{\partial\sigma})^t \otimes_{R} C_{\sigma} \\
		\end{tikzcd}
	\end{equation*}

	It is a chain map. Let $1_0$ be the element $1 \in W_0$, then we require that $\bar{\psi}_{\sigma}(1_0):Hom(C_{\sigma},R)^{{|\sigma|-deg F-*}} \longrightarrow (C_{\sigma}/C_{\partial \sigma})_*$ is a chain homotopy equivalence.
	
	Here $1+T$ is the symmetrization map.
	
	$(2)$ A pre $(K,L)$-ad is a $(K,L)$-ad if it is a $K$-ad as
	a pre $K$-ad.
\end{Definition}

As proved in \cite{LauresBallL}, the cobordism groups of $(K,L)$-ad form a cohomology theory and it can be identified with the quadratic L-cohomology. We will make a description of this identification below.

\begin{Definition}[Definition 14.1 in \cite{LauresBallL}]
	\
	
	Let $K$ be a ball complex with a subcomplex $L$ and $k \in \mathbb{Z}$. We call two $(K,L)$-ads $F,G$ of degree $k$ cobordant, if there is a $(K \times \Delta^1, L \times \Delta^1)$-ad that restricts to $F$ on $K \times 0$ and $G$ on $K \times 1$.
	
	The set of cobordism classes of $(K,L)$-ads of degree $k$ is denoted by $T^k(K,L)$.
\end{Definition}

There are abelian group structures on these sets, which are discussed in Section 14 in \cite{LauresBallL}. These groups give a cohomology theory. Before we make the statement, let us recall a 
definition from	\cite{LauresBallL} that is related to the connecting homomorphism:

\begin{Definition} [Definition 14.3 in \cite{LauresBallL}]
	\
	\label{DefK}
	
	Let $\mathcal{K}: Cell(K \times \Delta^1,K \times \partial \Delta^1 \cup L \times \Delta^1) \longrightarrow Cell(K, L)$ be the isomorphism of categories that takes $(\sigma \times \Delta^1,o \times o_{\Delta^1})$ to $(\sigma,(-1)^{|\sigma|}o)$, where $o_{\Delta^1}$ is the standard orientation on $\Delta^1$.
\end{Definition}

\begin{Remark}
	There is sign difference between the definition above and the Definition 14.3 in \cite{LauresBallL}, which arises as we switch $\sigma$ and $\Delta^1$ in the definition.
\end{Remark}


\begin{Theorem}[Lemma 14.8 in \cite{LauresBallL}]
	$\mathcal{K}$ induces an isomorphism:
	\begin{equation*}
		T^k(K,L) \stackrel{\cong}{\longrightarrow} T^{k+1}(K \times \Delta^1,K \times \partial \Delta^1 \cup L \times \Delta^1)
	\end{equation*}
\end{Theorem}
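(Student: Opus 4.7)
The plan is to show that the map $\mathcal{K}^* : [F] \mapsto [F \circ \mathcal{K}]$ is a bijection on cobordism sets by exhibiting its inverse directly. Since $\mathcal{K}: Cell(K \times \Delta^1, K \times \partial\Delta^1 \cup L \times \Delta^1) \longrightarrow Cell(K,L)$ is an isomorphism of categories (it is a bijection on objects by the explicit formula $\sigma \times \Delta^1 \mapsto \sigma$ together with the sign $(-1)^{|\sigma|}$ on orientations, and on morphisms it is forced by the cell inclusion structure on both sides), it admits an inverse
\[
\mathcal{K}^{-1}: (\sigma, o) \longmapsto (\sigma \times \Delta^1, (-1)^{|\sigma|} o \times o_{\Delta^1}),
\]
and precomposition with $\mathcal{K}^{-1}$ is the candidate inverse.

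First I would check that $F \circ \mathcal{K}$ is a $(K \times \Delta^1, K \times \partial\Delta^1 \cup L \times \Delta^1)$-ad of degree $k+1$ whenever $F$ is a $(K,L)$-ad of degree $k$. The degree shift is immediate because $\mathcal{K}$ decreases cellular dimension by one, and triviality on $K \times \partial\Delta^1 \cup L \times \Delta^1$ holds tautologically from the definition of the source category. The substance is that the balance, closedness, well-behavedness and Poincar\'e conditions of Definition \ref{Defad} transfer from $F$ to $F \circ \mathcal{K}$. The key observation is that, cell by cell, the chain complex $(F \circ \mathcal{K})(\sigma \times \Delta^1)$ equals $F(\sigma)$, and the colimit $C_{\partial(\sigma \times \Delta^1)}$ in $Cell^b(K \times \Delta^1, K \times \partial\Delta^1 \cup L \times \Delta^1)$ is, under this identification, exactly $C_{\partial \sigma}$ in $Cell^b(K,L)$, because the faces of $\sigma \times \Delta^1$ surviving the quotient are precisely those of the form $\tau \times \Delta^1$ with $\tau \subsetneq \sigma$ and $\tau \notin L$. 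Consequently $\bar{\psi}_{\sigma \times \Delta^1}(1_0)$ coincides, up to the sign $(-1)^{|\sigma|}$, with $\bar{\psi}_\sigma(1_0)$, and is therefore a chain homotopy equivalence.

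Next I would verify cobordism invariance. Given a cobordism $H$ between $(K,L)$-ads $F_0, F_1$, that is a $(K \times \Delta^1_a, L \times \Delta^1_a)$-ad restricting to $F_i$ on $K \times \{i\}$, I apply the isomorphism $\mathcal{K}$ associated to the pair $(K \times \Delta^1_a, L \times \Delta^1_a)$ to obtain
\[
H \circ \mathcal{K}_{K \times \Delta^1_a}: Cell\bigl(K \times \Delta^1_a \times \Delta^1_b,\, K \times \Delta^1_a \times \partial\Delta^1_b \cup L \times \Delta^1_a \times \Delta^1_b\bigr) \longrightarrow \mathfrak{A}_R.
\]
Swapping the roles of the two $\Delta^1$ factors (so that $\Delta^1_a$ becomes the cobordism direction for ads on $K \times \Delta^1_b$), this is precisely a cobordism between $F_0 \circ \mathcal{K}$ and $F_1 \circ \mathcal{K}$; the restrictions at the endpoints $\Delta^1_a = 0, 1$ match by naturality of $\mathcal{K}$ with respect to the inclusions $K \hookrightarrow K \times \Delta^1_a$. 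Running the same argument with $\mathcal{K}^{-1}$ in place of $\mathcal{K}$ shows that $(\mathcal{K}^{-1})^*$ is well-defined and cobordism-preserving, and the equalities $\mathcal{K} \circ \mathcal{K}^{-1} = \mathrm{id}$ and $\mathcal{K}^{-1} \circ \mathcal{K} = \mathrm{id}$ give the two-sided inverse relations at the level of cobordism classes.

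The main obstacle will be the sign bookkeeping in the first step: namely, verifying that the orientation convention $(-1)^{|\sigma|}$ built into $\mathcal{K}$ makes the cellular boundary structure of $\sigma \times \Delta^1$ (relative to $K \times \partial\Delta^1 \cup L \times \Delta^1$) match that of $\sigma$ (relative to $L$) with the correct signs, so that closedness and the Poincar\'e condition transfer cleanly. This sign is exactly what forces $\mathcal{K}$ to commute with the orientation involution, and tracking it carefully through the cellular product structure is the bulk of the work.
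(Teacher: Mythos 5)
The paper cites this result (Lemma 14.8 of Laures--McClure) without giving a proof, so there is no internal argument to compare against. Your strategy is the natural and, as far as I can tell, the correct one: $\mathcal{K}$ is an isomorphism of dimensioned categories decreasing dimension by one, so precomposition with $\mathcal{K}$ and with $\mathcal{K}^{-1}$ are mutually inverse bijections on pre-ads of the appropriate degrees, and each respects the cobordism relation by the transposition-of-$\Delta^1$-factors argument you describe (this uses that ad theories are functorial under isomorphisms of ball-complex pairs, which is fine to invoke).

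The one substantive omission is the sign check you defer as ``the bulk of the work,'' and you should actually carry it out, because the correctness of the whole argument rests on it. Concretely, in the cellular boundary $\partial(\sigma\times\Delta^1)=\partial\sigma\times\Delta^1+(-1)^{|\sigma|}\sigma\times\partial\Delta^1$ the terms $\sigma\times\partial\Delta^1$ are killed by $F\circ\mathcal{K}$ (they lie in the relative subcomplex), while the terms $\partial\sigma\times\Delta^1$ contribute via $\psi'_{\tau\times\Delta^1,o\times o_{\Delta^1}}=(-1)^{|\tau|}\psi_{\tau,o}$. At the same time the map in the closedness condition for $F\circ\mathcal{K}$ drops degree by $k+1$ rather than $k$, which flips the Koszul sign in the chain-map equation by $-1$. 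These effects cancel precisely because of the $(-1)^{|\sigma|}$ built into $\mathcal{K}$; had that sign been absent, closedness of $F\circ\mathcal{K}$ would fail globally by $-1$. The remark following Definition \ref{DefK} explicitly notes that this sign is adjusted relative to Laures--McClure, so the computation you skip is exactly the point where the definition was tuned. Once that is written out, the remaining conditions transfer formally: the colimit over surviving proper faces of $\sigma\times\Delta^1$ (namely $\tau\times\Delta^1$ with $\tau\subsetneq\sigma$, $\tau\notin L$, plus the trivially-valued cells of the relative part and the $\emptyset_m$'s) agrees with the colimit over proper faces of $\sigma$, and an overall sign on $\bar{\psi}_{\sigma\times\Delta^1}(1_0)$ does not affect being a chain homotopy equivalence.
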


Then we have:

\begin{Theorem}[Theorem 14.11 in \cite{LauresBallL}]
	\label{deltaofT}
	\
	
	$T^*$ is a cohomology theory, with the connecting homomorphism $T^k(L) \longrightarrow T^{k+1}(K,L)$ given by the negative of the composite of the following morphisms:
	\begin{equation*}
		\begin{tikzcd}
			T^k(L) \rar{\mathcal{K}^*} & T^{k+1}(L \times \Delta^1, L \times \partial \Delta^1) \\
			T^{k+1}(K \times \Delta^1, K \times 1 \cup L \times 0) \ar{ur}{res}[swap]{\cong} \rar{res}[swap]{\cong} & T^{k+1}(K,L) \\
		\end{tikzcd}
	\end{equation*}
\end{Theorem}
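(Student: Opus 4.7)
The plan is to verify that $T^*$ satisfies the Eilenberg-Steenrod style axioms in the ad-theoretic PL setting: functoriality and homotopy invariance under PL subdivisions and isomorphisms, excision, additivity for disjoint unions, and exactness of the long exact sequence of a pair. Functoriality is immediate from Definition \ref{Defad} and the definition of cobordism of ads. Additivity follows from the direct-sum structure in $M^{\mathfrak{S}}(R)$. Excision and homotopy invariance reduce to standard subdivision and product-with-$\Delta^1$ arguments: the key geometric input is that any PL retract or subdivision in the ball-complex category lifts to an ad-cobordism via the well-behaved hypothesis of Definition \ref{Defad}. The substantive content is constructing the connecting homomorphism and proving exactness.

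First I would construct the candidate boundary map $\delta: T^k(L) \longrightarrow T^{k+1}(K,L)$, to be compared with the negative of the displayed composite. Given an $L$-ad $F$ of degree $k$, precomposing with $\mathcal{K}$ (Definition \ref{DefK}, applied with $L$ playing the role of the ambient complex) produces $\mathcal{K}^* F$, a $(L \times \Delta^1, L \times \partial \Delta^1)$-ad of degree $k+1$, since $\mathcal{K}$ decreases cell-dimension by one. Because $\mathcal{K}^* F$ is trivial on $L \times \partial \Delta^1$, and in particular on $L \times 0$, it extends to a $(K \times \Delta^1, K \times 1 \cup L \times 0)$-ad by assigning the trivial ad to every cell outside $L \times \Delta^1$; the well-behaved and closed axioms of Definition \ref{Defad} guarantee that this extension is again an ad. Restricting along $(K \times 0, L \times 0) \cong (K,L)$ delivers the class $\delta(F)$, and the overall sign is fixed by the standard cohomology boundary convention.

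Next I would show that both restriction maps in the displayed diagram are isomorphisms. Each inverse is obtained by combining $\mathcal{K}^*$ with trivial extension in the complementary direction, and the composites with the restrictions are ad-cobordant to the identity via the canonical $\Delta^1$-homotopy. This is the ad-theoretic analogue of the contractibility of the mapping cylinder of $K \times 0 \hookrightarrow K \times \Delta^1$ relative to $K \times 1$, and it is the geometric input powering both the suspension isomorphism and the eventual exactness at the middle term.

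The hardest step will be exactness at $T^{k+1}(K,L)$: given a $(K,L)$-ad $G$ whose image in $T^{k+1}(K)$ vanishes, a null-cobordism must be refined so that its restriction to $L \times \Delta^1$ takes, up to further cobordism, the form $\mathcal{K}^* F$ for some $L$-ad $F$. This is a delicate surgery on ads that exploits the closed and balanced conditions (through the symmetrization $(1+T)\psi$) together with the cofibration property of Definition \ref{Deffib} to rearrange cells along the $\Delta^1$ direction. Once exactness and all other axioms are verified, uniqueness of connecting maps in a cohomology theory forces $\delta$ to coincide with the negative of the displayed composite, completing the theorem.
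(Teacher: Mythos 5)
The paper does not prove Theorem~\ref{deltaofT}; it is stated as Theorem 14.11 in \cite{LauresBallL} and cited without proof. So there is no internal proof to compare your attempt against, and what you have written would, at best, serve as a reconstruction of Laures--McClure's argument rather than a replacement for anything in this paper.

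On its own merits your sketch has a structural problem in the final step. You open by defining $\delta$ as precisely the negative of the displayed composite (via $\mathcal{K}^*$, trivial extension, and restriction along $(K\times 0, L\times 0)\cong(K,L)$), and then close by asserting that, once the axioms are verified, ``uniqueness of connecting maps in a cohomology theory forces $\delta$ to coincide with the negative of the displayed composite.'' This is circular: you are claiming to derive the identity of a map with a formula you already used as its definition. There is also no applicable uniqueness statement here --- a family of cobordism groups can admit many connecting homomorphisms making the long sequences exact, and the content of the theorem is precisely that \emph{this particular} formula works, which must be proved directly by showing exactness, not inferred from an abstract uniqueness principle. The uniqueness rhetoric should simply be dropped; what remains to be done is exactly what you flag as ``the hardest step,'' namely exactness at $T^{k+1}(K,L)$, plus the verification that the two restriction maps in the diagram are isomorphisms (your ``ad-cobordant to the identity via the canonical $\Delta^1$-homotopy'' is a reasonable heuristic but is asserted rather than executed). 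As written, the proposal identifies the correct ingredients and the correct difficulty but does not carry out the argument, and it ends with a logically vacuous step.
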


The cohomology theory above can be identified with the quadratic L-cohomology. The following theorem gives a detailed description of the identification:

\begin{Theorem}
	\
	\label{Lcohomologywithadcord}
	
	Let $(K,L)$ be a pair of finite ordered geometric simplicial complexes. Then there is an isomorphism $H^{k}(K,L;\underline{L}(R)) \cong T^k(K,L)$ given as follows:
	
	Let $x \in H^{k}(K,L;\underline{L}(R))$, choose a $(-k)$-dimensional Poincare quadratic chain complex $(B,\psi_B)$ in $M^h(R)^*(K,L)$ representing $x$ under the identification in Theorem \ref{Lhomology}. Then the identification of $x$ in $T^k(K,L)$ is given by the cobordism class of a $(K,L)$-ad $F$ of degree $k$ given as follows:
	
	For any oriented closed cell $(\zeta,o)$ of $K$, denote $sgn(o)=1$ if $o$ agrees with the standard one and $-1$ otherwise. Then the functor $F$, denoted by $F(\zeta,o)=(C_{\zeta},\psi_{\zeta,o})$, is given as follows:
	
	On objects, $F$ is given by:
	\begin{equation*}
		\begin{aligned}
			&\text{For } \sigma \in K \backslash L: \\
			& (C_{\sigma})_r= [B_r][\sigma] \\
			&d_{C_{\sigma}}=[d_{B}][\sigma]:(C_{\sigma})_r \longrightarrow (C_{\sigma})_{r-1} \\
			&\psi_{\sigma,o}^{u,r}: C^{|\sigma|-u-deg F-r}_{\sigma}=[B_{|\sigma|-u-k-r}][\sigma]^* \longrightarrow (C_{\sigma^*})_r=[B_r][\sigma]\\
			&\psi_{\sigma,o}^{u,r}=(-1)^{\frac{|\sigma|(|\sigma|-1)}{2}}(-1)^{|\sigma|r}sgn(o)\mathop{\oplus}\limits_{\tau \leq \sigma} \psi_{B,u}^r(\tau,\sigma) \\
			&\text{For all other closed cells } \zeta: (C_{\zeta},\psi_{\zeta,o})=\emptyset_{|\zeta|-k-1}. \\
			&\text{For all } l \in \mathbb{Z}: F(\emptyset_l)=\emptyset_{l-k-1}. \\
		\end{aligned}
	\end{equation*}
	
	On morphisms, $F$ is given by:
	
	\quad Let $\zeta_1 \leq \zeta_2$, then:
	
	\quad $(1)$ If $\zeta_1=\tau, \zeta_2=\sigma$, with $\tau,\sigma \in K \backslash L$.
	
	\quad We have $\tau \leq \sigma$, then $F((\zeta_1,o_1)-(\zeta_2,o_2))$ is given by the inclusion map:
	\begin{equation*}
		\mathop{\oplus}\limits_{\kappa \leq \tau}B_*(\kappa) \longrightarrow \mathop{\oplus}\limits_{\kappa \leq \sigma}B_*(\kappa)
	\end{equation*}
	
	\quad $(2)$ If the case above does not happen, then we define $F((\zeta_1,o_1)-(\zeta_2,o_2))$ 
	
	\quad to be zero. (In fact in this case the domain of $F((\zeta_1,o_1)-(\zeta_2,o_2))$ will
	
	\quad always be zero.)
	
	Moreover, the isomorphism is an isomorphism between cohomology theories on simplicial pairs, that is, it commutes with the inclusion maps of simplicial pairs and the following diagram commutes for all $J \subset K \subset L$:
	
	\begin{equation*}
		\begin{tikzcd}
			H^k(K,J;\underline{L}(R))\dar{\delta} \rar{\cong} & T^k(K,J) \dar{\delta}\\
			H^{k+1}(L,K;\underline{L}(R)) \rar{\cong} & T^{k+1}(L,K) \\
		\end{tikzcd}
	\end{equation*}
\end{Theorem}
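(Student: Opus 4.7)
The plan is to show that the assignment $(B,\psi_B) \mapsto F$ described in the statement defines a natural transformation of cohomology theories on finite ordered simplicial pairs from $H^{*}(-;\underline{L}(R))$, identified with $L_{-*}(M^h(R)^{*}(-))$ in Theorem \ref{Lhomology}, to the Laures--McClure cobordism theory $T^{*}(-)$ of Theorem \ref{deltaofT}, and then to deduce that it is an isomorphism by comparison on a point together with the five lemma.

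First I would verify that the candidate pre $(K,L)$-ad $F$ really is a $(K,L)$-ad of degree $k$ in the sense of Definition \ref{Defad}. The underlying functor $Cell^b(K) \to \mathbb{B}^{hf}(M^{\mathfrak{S}}(R))$ sending $\sigma \mapsto [B][\sigma]$ maps each face inclusion $[B][\tau] \hookrightarrow [B][\sigma]$ to an inclusion of a direct summand, so the well-behavedness of Definition \ref{Deffib} is automatic, and the balancedness is built into the sign prefactor $sgn(o)$. The closedness condition and the Poincar\'e condition (b) reduce, via the explicit formulas for $T$ and $\mathfrak{D}$ in Remark \ref{chaindualityonAK2}, to the identity $\partial\psi_B = 0$ and to the chain equivalence of $(1+T)\psi_{B,0}$ respectively, much as in step~(3) of the proof of Theorem \ref{localdual}. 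The sign factors $(-1)^{|\sigma|(|\sigma|-1)/2 + |\sigma|r}$ are precisely those needed to reconcile the Koszul signs of the $T$-twist with the cellular boundary of $|\sigma|_{geo}$, and tracking them carefully is a routine but essential part of this step.

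Next I would check that cobordant Poincar\'e quadratic chain complexes produce cobordant $(K,L)$-ads, giving a well-defined natural map $\Phi_{K,L} : H^{k}(K,L;\underline{L}(R)) \to T^{k}(K,L)$. Given a Poincar\'e quadratic pair in $M^h(R)^{*}(K,L)$, the same cell-wise recipe on $K \times \Delta^{1}$, using the one-cell of $\Delta^{1}$ as cobordism direction, produces a $(K \times \Delta^{1}, L \times \Delta^{1})$-ad with the required boundary. Naturality in $(K,L)$ is immediate because the recipe for $F(\sigma,o)$ depends only on the values of $B$ and $\psi_B$ on simplices $\tau \leq \sigma$.

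The hardest part is compatibility with the connecting homomorphism for a triple $J \subset L \subset K$. On the $T$-side the boundary of Theorem \ref{deltaofT} factors through $\mathcal{K}^{*}$ (Definition \ref{DefK}), introducing a sign $(-1)^{|\sigma|}$ from the reorientation and an overall minus sign; on the $H^{*}$-side the boundary corresponds, via Theorem \ref{Lhomology}, to the algebraic Thom/thickening map, which shifts the degree by one and carries its own signs from $d_{C^{-*}} = (-1)^r d_C^{*}$ and the reindexing of $\psi_u^r$. To reconcile the two, I would take a representing Poincar\'e pair for $x \in H^{k}(K,L;\underline{L}(R))$, apply the cellular recipe on $K \times \Delta^{1}$ to produce a $(K \times \Delta^{1}, L \times \Delta^{1})$-ad bounding $F$ on one end and the trivial ad on the other, and then verify cell-by-cell via the explicit formula for $\mathcal{K}$ that restriction to $(L \times \Delta^{1}, L \times \partial \Delta^{1} \cup J \times \Delta^{1})$ agrees with $\mathcal{K}^{*}$ of the ad associated to the boundary Poincar\'e complex on $(L,J)$. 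The cancellations are a bookkeeping exercise among four signs (the $(-1)^{|\sigma|}$ from $\mathcal{K}$, the overall $-1$ from Theorem \ref{deltaofT}, the change in $(-1)^{|\sigma|(|\sigma|-1)/2}$ when $|\sigma|$ increases by one, and the degree shift in $\psi_u^r$) which should conspire to match. Once $\Phi$ is a natural transformation of cohomology theories it reduces on a point to the tautological identification $L_{-k}(M^h(R)) \cong T^{k}(*) \cong L_{-k}(R)$ established in \cite{LauresBallL}, and the five lemma upgrades $\Phi$ to an isomorphism for all $(K,L)$; the commutativity of the final diagram in the statement is then a restatement of the verified compatibility with $\delta$.
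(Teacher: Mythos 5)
Your overall strategy is sound and the conclusion is reachable, but your route to it is genuinely different from the paper's in its latter half. For the first part---verifying that the cell-wise recipe produces a bona fide $(K,L)$-ad---you and the paper agree: the paper defers this to the computations in Theorem \ref{computationdelta} and Remark \ref{AppRemquadpair} of the Appendix, which is exactly the kind of cell-by-cell sign tracking you describe. Where you diverge is on the isomorphism statement and the compatibility with $\delta$. The paper lifts everything to the spectrum level: it observes that the recipe defines a $\Delta$-map $\mathcal{F}_q : \mathbb{L}_{-q}(R) \to \mathbf{P}_q$ into the Laures--McClure $\Delta$-sets whose geometric realizations form the $\Omega$-spectrum representing $T^*$ (Propositions 15.9 and 16.4 in \cite{LauresBallL}), shows that $|\mathcal{F}_*|$ is a map of $\Omega$-spectra, and then reads off naturality, $\delta$-compatibility, and the isomorphism all at once from the representability of both sides. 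You instead stay at the level of cohomology groups: you propose to verify $\delta$-compatibility by a direct, representative-by-representative computation through $\mathcal{K}^*$ (Definition \ref{DefK}) and then invoke the comparison theorem (five lemma plus reduction to a point). Both routes work; the paper's is shorter once you grant the spectrum-level identifications from \cite{LauresBallL}, while yours is more self-contained but buys that independence with a substantial sign-bookkeeping burden in the $\mathcal{K}^*$ step---essentially the computation the paper only carries out for a particular geometric configuration in Theorem \ref{computationdelta}. One small caution: the ``tautological'' comparison on a point still requires checking that your explicit $\Phi_{pt}$ coincides, up to sign conventions, with the identification $T^k(*) \cong L_{-k}(R)$ from \cite{LauresBallL}; it is routine but not vacuous, since two different sign conventions for ad structures could give the negative of the expected isomorphism.
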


\begin{proof}
	We provide a brief explanation of the proof here. For the first part, using the Remark \ref{AppRemquadpair}, one can check the condition of a $(K,L)$-ad on $F$ following the similar computations as in Theorem \ref{computationdelta} in the Appendix and it will follow similarly that the construction is well-defined. For details, see Theorem \ref{computationdelta} in the Appendix.
	
	For the second part, it can be seen obviously from the construction that the isomorphism is natural with respect to inclusions of simplicial pairs. For the commutative diagram, we first introduce some spaces as in Definition 15.4 in \cite{LauresBallL}. For any $q \in \mathbb{Z}$, let $\mathbf{P}_q$ be the $\Delta$-set given as follows:
	
	For every $s \in \mathbb{N}$, $\mathbf{P}_q(\Delta^s)$ is the set of all $\Delta^s$-ad of degree $q$. Then the construction stated in the theorem actually gives a $\Delta$-map $\mathcal{F}_q:\mathbb{L}_{-q}(R) \longrightarrow \mathbf{P}_q$. Furthermore, by Proposition 15.9 and 16.4 in \cite{LauresBallL}, $|\mathbf{P}_q|_{q \in \mathbb{Z}}$ gives an $\Omega$-spectrum and we have an identification $T^k(K,L) \cong [K,L;\mathbf{P}_k,*]$. It is straightforward to show that the following diagram commutes:
	\begin{equation*}
		\begin{tikzcd}
			H^k(K,L;\underline{L}(R)) \rar{\cong} \dar {\cong} & T^k(K,L) \dar{\cong}  \\
			\mathop{[}K,L;\mathbb{L}_{-k}(R),*\mathop{]} \rar{\mathcal{F}_k} & \mathop{[}K,L;\mathbf{P}_k,*\mathop{]} \\
		\end{tikzcd}
	\end{equation*}

	Notice that $\mathcal{F}_q:\mathbb{L}_{-q}(R) \longrightarrow \mathbf{P}_q$ is a $\Delta$-map and we can deduce from the exact form of the construction that $|\mathcal{F}_*|$ is a morphism between $\Omega$-spectra. From that we see that the diagram commutes.
\end{proof}

\subsection{Proof of Theorem \ref{L theory transfer}}
\

Now we begin introducing the simplicial structures on the spaces we set in the geometric setting in section \ref{geometric setting}.

\begin{Construction}[Simplicial setting]
	\label{simplicial setting}
	\
	
	$(1)$ Fix a triangulation $K_M$ of $M$, such that $N$ and $N \times S^1$ are ordered subcomplexes of $M$ and we have a decomposition $M=M_0 \cup_{(N \times S^1) \otimes \partial [-1,1]} (N \times S^1) \otimes [-1,1]$, with $(N \times S^1) \otimes [0,1] \subset W$ being a collar of $(N \times S^1)$ in $W$. Furthermore, we require that $(N \times S^1) \otimes [-1,1] \subset N \times \mathbb{R}^2$. 
	
	
	$(2)$ The triangulation $K_M$ of $M$ lifts to a triangulation of $\widetilde{M}$ and we can view all the spaces appeared in the geometric setting in this way as locally finite ordered simplicial complexes. Denote $W_0$ to be $W_{\infty} \backslash \bar{e}(N \otimes [0,1))$, it is a subcomplex of $\overline{M}$.
	
	$(3)$ By the simplicial approximation theorem, there is a triangulation $K_{M'}$ of $M'$ and a simplicial map $F:K_{M'} \longrightarrow K_M$, such that $f$ is homotopic to $F$. Since surgery obstruction is invariant under cobordisms, we can, without loss of generality, assume that $f$ is simplicial with respect to the triangulation $K_{M'},K_M$.
	
	$(4)$ Denote $SdK_M$ be the barycentric subdivision of the triangulation. By a remark of Cohen, for each simplex $\sigma \in K_{M'}$, if we choose its barycenter $b(\sigma)$ to be an interior point of the convex cell $(f|_{\sigma})^{-1}\big(b(f(\sigma))\big)$, then $f:Sd K_{M'} \longrightarrow Sd K_M$ is simplicial, see the first Remark on page 225 of \cite{Cohen1967simplicial}.
	
	$(5)$ For any simplex $\sigma \in K_M$, let $\sigma^* \subset Sd K_M$ be its dual cell. By Proposition 5.2 and 5.6 in \cite{Cohen1967simplicial}, $f^{-1}(\sigma^*)$ is a $(m-|\sigma|)$-dimensional PL manifold with boundary $f^{-1}(\partial \sigma^*)$.
\end{Construction}
	
 Then we have the following description of $\sigma(f,b)$ given by Ranicki:

\begin{Theorem}[Proposition 18.3 in \cite{ranickiltheory}]
	\
	\label{simplicialsurgery}
	
	For any simplex $\sigma \in M$, by $(4)$ and $(5)$ in the simplicial setting \ref{simplicial setting}, the restriction of $(f,b)$ gives a degree 1 normal map from the $(n-|\sigma|)$-dimensional geometric Poincare pairs
	$(f^{-1}(\sigma^*),f^{-1}(\partial\sigma^*))$ to the $(n-|\sigma|)$-dimensional geometric normal pairs
	$(\sigma^*,\partial\sigma^*)$. Then there is a quadratic Poincare chain complex $(C,\psi)$ in $M^h(\mathbb{Z})_*(K_M)$ that corresponds to these normal maps, such that $(C,\psi)$ universal assembles to a quadratic Poincare $\mathbb{Z}\pi_1(M)$-chain complex $(C(\widetilde{M}),\psi(\widetilde{M}))$ representing $\sigma(f,b) \in L_n(\mathbb{Z}\pi_1(M))$.
\end{Theorem}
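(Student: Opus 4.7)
The statement being invoked is a theorem of Ranicki (Proposition 18.3 in \cite{ranickiltheory}); my plan is therefore to sketch how to reproduce his construction in a way adapted to the present simplicial setup. First I would verify the geometric half: for each simplex $\sigma \in K_M$, the map $f:M' \longrightarrow M$ restricts via the dual cell decomposition to a map of pairs $(f^{-1}(\sigma^*), f^{-1}(\partial\sigma^*)) \longrightarrow (\sigma^*,\partial\sigma^*)$. By parts $(4)$ and $(5)$ of Construction \ref{simplicial setting}, Cohen's PL transversality theorem ensures that $f^{-1}(\sigma^*)$ is a PL manifold of dimension $n-|\sigma|$ with boundary $f^{-1}(\partial\sigma^*)$; since $f$ has degree $1$, the lift of the stable bundle data $b$ makes this restriction into a degree $1$ normal map of normal pairs, and the pair $(\sigma^*,\partial\sigma^*)$ is a geometric normal pair by the dual cell structure.

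Next I would construct the $K_M$-based chain complex. For each $\sigma$, let $\tilde{\sigma}^*$ and $f^{-1}(\tilde{\sigma}^*)$ denote the preimages in the universal covers, and consider the Umkehr map $f^!:C(\tilde{\sigma}^*,\partial\tilde{\sigma}^*) \longrightarrow C(f^{-1}(\tilde{\sigma}^*), f^{-1}(\partial\tilde{\sigma}^*))$ obtained by Poincar\'e--Lefschetz dualizing $f^*$. Since $f_*f^! \simeq \mathrm{id}$, the mapping cone provides a natural splitting, and one sets $C(\sigma)$ to be the algebraic kernel complex (a free $\mathbb{Z}$-chain complex by choosing equivariant chains on the universal cover of each piece, taken with respect to the action of $\pi_1(\sigma^*)$ after an appropriate basing). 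The face inclusions $\tau \leq \sigma$ induce morphisms $C(\sigma) \longrightarrow C(\tau)$ by restriction, yielding an object of $M^h(\mathbb{Z})_*(K_M)$. The quadratic structure $\psi$ is defined on each $\sigma$ by Ranicki's quadratic construction applied to the semi-characteristic diagonal approximation of $(f^{-1}(\sigma^*), f^{-1}(\partial\sigma^*))$, and the compatibility across faces follows from naturality of the EZ/AW maps.

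Then I would check Poincar\'e duality cellwise: for every $\sigma$, the chain map $(1+T)\psi_0(\sigma,\sigma)$ is exactly the Umkehr-induced homotopy equivalence of the pair, by the standard fact that the quadratic construction on a degree $1$ normal map of Poincar\'e pairs yields a Poincar\'e quadratic complex. Using Proposition 4.7 in \cite{ranickiltheory} (Poincar\'e duality in $\mathbb{A}_*(K_M)$ is checked cellwise), this upgrades to $(C,\psi)$ being Poincar\'e in $M^h(\mathbb{Z})_*(K_M)$.

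Finally, to identify the universal assembly $(C(\widetilde{M}),\psi(\widetilde{M}))$ with the standard surgery kernel complex representing $\sigma(f,b) \in L_n(\mathbb{Z}\pi_1(M))$, I would note that universal assembly collects the cellwise kernels into the full kernel chain complex of $\tilde{f}: \widetilde{M'} \longrightarrow \widetilde{M}$ together with the Poincar\'e duality isomorphism via Theorem \ref{assemblydual}. The quadratic structure assembles to the Ranicki quadratic construction of the global normal map, which by definition of the surgery obstruction represents $\sigma(f,b)$. The main obstacle is the bookkeeping for the quadratic structure: one has to verify that Ranicki's local quadratic constructions are compatible with face restrictions, which requires carefully tracking sign conventions and the naturality of the cup-$i$ products used in the diagonal approximation. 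This is precisely the content of Ranicki's argument in \cite{ranickiltheory}, so we may appeal to his result directly.
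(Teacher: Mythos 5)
Your proposal correctly sketches Ranicki's construction of the dissection-indexed quadratic kernel complex and, as you acknowledge at the end, ultimately defers to Ranicki's Proposition 18.3. That is exactly what the paper does: this theorem is stated as a direct citation of Ranicki, with no independent proof given, so your approach matches the paper's. (One small imprecision in your sketch, not affecting the conclusion: the cellwise pieces $C(\sigma)$ live in $M^h(\mathbb{Z})$, not over $\mathbb{Z}\pi_1(\sigma^*)$ — each dual cell $\sigma^*$ is contractible, so there is no nontrivial covering to equivariantize over at the local level; the $\mathbb{Z}\pi_1(M)$-module structure only emerges upon universal assembly.)
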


In the following step we will make a description of the quadratic chain complex $\Theta\rho_*(C(\widetilde{M},\psi(\widetilde{M})))$. Recall that $\Theta:M^h(\Sigma \mathbb{Z}\Pi) \longrightarrow \mathbb{F}_{\mathbb{N},b}(M^h(\mathbb{Z}\Pi))$ is the functor constructed in Section \ref{proofsuspension} in the case $R=\mathbb{Z}\Pi$. It induces an isomorphism on L-theory and deduces the isomorphism $L^h_*(\Sigma\mathbb{Z}\Pi) \cong L^h_*(\mathbb{F}_{\mathbb{N},b}(M^h(\mathbb{Z}\Pi))) \cong L^p_{*-1}(\mathbb{Z}\Pi)$. Therefore, it is important to get a description of $\Theta\rho_*(C(\widetilde{M},\psi(\widetilde{M})))$. Before stating the theorem, we make some preparations first:

The first definition is a function $d_{\infty}:W_{\infty} \longrightarrow [0,+\infty)$ which serves roughly as a distance function to $\partial W_{\infty}$.

\begin{Definition}
	\
	
	We construct a continous function $d_{\infty}:W_{\infty} \longrightarrow [0,\infty)$ by the following steps:
	
	$(1)$ For each vertex $v \in W_{\infty}$, let $d_{\infty}(v) \in \mathbb{N}$ to be the minimal number $l$, such that there exist $l+1$ vertices $v_0,v_1,...,v_l$, satisfying the following properties:
	
	$(a)$ $v_0=v,v_l \in \partial W_{\infty}=\bar{e}(N \times S^1)$.
	
	$(b)$ $v_i$ and $v_{i+1}$ is connected by a 1-dimensional simplex in $W_{\infty}$.
	
	$(2)$ We extend the map to the entire $W_{\infty}$ by linearly extending the map on every simplex. 
	
\end{Definition}

There are some straightforward properties of the map $d_{\infty}$, namely:

\begin{Lemma}
	\
	\label{distance}
	
	$(1)$ Let $\sigma,\tau \in W_{\infty}$ be simplices with $\tau \leq \sigma$. Suppose $v$ is a vertex of $\sigma$, such that $d_{\infty}(v)=d$. Then for all vertices $w$ of $\tau$, we have $d-1 \leq d_{\infty}(w) \leq d+1$.
	
	$(2)$ For every $s \geq 0$, $d_{\infty}^{-1}([0,s])$ is compact.
\end{Lemma}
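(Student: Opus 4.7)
For part (1), the plan is essentially a triangle inequality along edges of a single simplex. Since $\tau \leq \sigma$, every vertex of $\tau$ is also a vertex of $\sigma$. If $v$ and $w$ are both vertices of $\sigma$, then either $v = w$ or the $1$-skeleton of $\sigma$ contains the edge $[v,w]$, so in particular $v$ and $w$ are joined by a single $1$-simplex of $W_\infty$. From the definition of $d_\infty$ on vertices (as a shortest edge-path distance to $\partial W_\infty$), this gives $d_\infty(w) \leq d_\infty(v) + 1$ and $d_\infty(v) \leq d_\infty(w) + 1$, which is exactly the claim.

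For part (2), I would reduce the problem to showing that the vertex set $V_s := \{v \in W_\infty^{(0)} \mid d_\infty(v) \leq s\}$ is finite. Since $d_\infty$ is defined by linear extension on each simplex, on a closed simplex $\sigma$ the value of $d_\infty$ is a convex combination of the values at the vertices, so $d_\infty^{-1}([0,s])$ meets $\sigma$ only if $\sigma$ has at least one vertex $v$ with $d_\infty(v) \leq s$. Consequently
\[
d_\infty^{-1}([0,s]) \;\subset\; \bigcup_{v \in V_s} \operatorname{star}(v),
\]
where $\operatorname{star}(v)$ is the union of the finitely many closed simplices of $W_\infty$ containing $v$ (finiteness here uses that $W_\infty$, being a subcomplex of the locally finite complex $\overline{M}$, is itself locally finite). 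Each closed simplex is compact, so if $V_s$ is finite then the union above is a finite union of compact sets, hence compact, and the closed subset $d_\infty^{-1}([0,s])$ of it is compact as well.

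Finally, to see that $V_s$ is finite, I would argue by induction on $k = \lfloor s \rfloor$. The base case $V_0 \subset \partial W_\infty \cap W_\infty^{(0)}$ is finite because $\partial W_\infty = \bar e(N \times S^1)$ is the image of the compact space $N \times S^1$ under an embedding, hence compact, and in the simplicial setting (Construction~\ref{simplicial setting}(1)--(2)) it is a finite subcomplex of $\overline{M}$. For the inductive step, by definition of $d_\infty$ every vertex of $V_{k+1}$ is joined by a single edge to some vertex of $V_k$; thus $V_{k+1}$ is contained in the set of vertices in the (finite) union of closed stars of the finite set $V_k$, and local finiteness of $W_\infty$ ensures this is again finite. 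Combining the induction with the previous paragraph completes the proof.

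The only step that requires any care is the base case of the induction, namely confirming that $\partial W_\infty$ has finitely many vertices; this is the main obstacle in that one has to invoke the chosen triangulation (where $N \times S^1$ is a finite subcomplex of $M$) and push it through the covering $p:\overline M \to M$ using the fact that $\bar e|_{N \times S^1}$ is a single lift of a finite subcomplex. Everything else is a routine combinatorial argument and a convexity observation about piecewise linear functions.
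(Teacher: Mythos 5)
The paper states this lemma without proof, simply calling the two properties ``straightforward.'' Your argument is correct and fills this gap cleanly: part (1) is the right edge-distance triangle inequality once you note that any two vertices of a simplex span an edge, and part (2) reduces correctly to the finiteness of the sublevel vertex set $V_s$, with a valid inductive argument using local finiteness of $W_\infty$ (which, as a subcomplex of the cover $\overline M$ of the finite complex $K_M$, is indeed locally finite) and compactness of $\partial W_\infty = \bar e(N\times S^1)$ for the base case. One minor phrasing point in the inductive step: it is vertices of $V_{k+1}\setminus V_k$ (those with $d_\infty$ exactly $k+1$) that are adjacent to a vertex of $V_k$, while vertices of $V_{k+1}\cap V_k$ are already accounted for; this does not affect the conclusion $V_{k+1}\subset V_k\cup\bigl(\text{vertices adjacent to }V_k\bigr)$, but stating it that way makes the containment immediate.
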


Denote $f_{M,N}$ to be the quotient map: $M \longrightarrow M/M_0=\Sigma (N \times S^1)_+$, we also need to compute the map
\begin{equation*}
	(f_{M,N})_*:H_n(M;\underline{L}(\mathbb{Z})) \longrightarrow H_n(\Sigma (N \times S^1)_+;\underline{L}(\mathbb{Z})) \cong H_{n-1}(N \times S^1;\underline{L}(\mathbb{Z}))
\end{equation*}

in terms of Poincare quadratic chain complexes in $M^h(\mathbb{Z})_*(K_M)$. Before stating the lemma, we introduce a notation first.

\begin{Definition}
	For any simplex $\sigma \in N \times S^1$, define $A_{\sigma},B_{\sigma}$ to be the following sets of simplices:
	\begin{equation*}
		A_{\sigma}=\{s \in (N \times S^1) \otimes [0,1] \backslash (N \times S^1) \otimes \partial[0,1] \ | \ s \geq \sigma\}, \ B_{\sigma}=A_{\sigma} \backslash \mathop{\cup}\limits_{\sigma'>\sigma}A_{\sigma'}
	\end{equation*}
\end{Definition}

\begin{Remark}
	\label{RemB}
	For any simplex $s \in (N \times S^1) \otimes [0,1] \backslash (N \times S^1) \otimes \partial[0,1]$, denote $s_0$ to be the intersection of $s$ with $N \times S^1 \otimes 0$, it is a face of $s$. By Remark \ref{RemAB}, $B_{\sigma}=\{s \in (N \times S^1) \otimes [0,1] \backslash (N \times S^1) \otimes \partial[0,1] \ | \ s_0=\sigma \}$.
\end{Remark}

Then we can state the lemma:

\begin{Lemma}
	\
	\label{surgeryres}
	
	Let $(C,\psi)$ be an $n$-dimensional Poincare quadratic chain complex in  \\ $M^h(\mathbb{Z})_*(K_M)$. For any $r \in \mathbb{Z},s \in A_{\sigma}$, let $V_1$ be the set of all the vertices of $s$ that are in $N \times S^1 \otimes 1$. For any $v \in V_1$, denote $\sigma_v$ to be the simplex spanned by $\sigma$ and $v$. Denote $\iota_{v}$ to be the following inclusion map:
	\begin{equation*}
		\iota_{v}:C_{r-|\sigma|-1}(s)^* \longrightarrow C^r(\sigma_v)= \mathop{\oplus}\limits_{\kappa \geq \sigma_v}C_{r-|\sigma|-1}(\kappa)^*
	\end{equation*} 
	
	For every $r \in \mathbb{Z}$,
	let $\mho_{\sigma}^r:\mathop{\oplus}\limits_{s \in A_{\sigma}} C_{r-|\sigma|-1}(s)^* \longrightarrow \mathop{\oplus}\limits_{s \in B_{\sigma}} C^r(s)$ be the morphism determined by the following property:
	
	For any $s \in A_{\sigma}$, $\mho_{\sigma}^r$ restricted on the $C_{r-|\sigma|-1}(s)^*$ component is given by:
	\begin{equation*}
		\mho_{\sigma}^r(z)=\mathop{\oplus}\limits_{v \in V_1}\iota_{v}(z) \in \mathop{\oplus}\limits_{v \in V_1} C^r(\sigma_v) \subset \mathop{\oplus}\limits_{s \in B_{\sigma}}C^r(s)
	\end{equation*} 
	
	Let $x_0 \in L_n(M^h(\mathbb{Z})_*(K_M))$ be the element represented by $(C,\psi)$. Let $x_1$ be the element in $H_n(M;\underline{L}(\mathbb{Z}))$ corresponding to $x_0$ under the identification in Theorem \ref{localdual}. Then under the identification in Theorem \ref{localdual}, $(f_{M,N})_*(x_1)$ is represented by the following Poincare quadratic chain complex $(DL,\theta L)$ in $M^h(\mathbb{Z})_*(K_{N \times S^1})$:

	For $\sigma,\tau \in N \times S^1$ with $\tau \leq \sigma$ and $u \in \mathbb{N},r \in \mathbb{Z}$, the $(DL,\theta L)$ is given as follows:
	\begin{equation*}
		DL_r(\sigma)=\mathop{\oplus}\limits_{s \in B_{\sigma}}C_r(s)
	\end{equation*}
	\begin{equation*}
		d_{DL,r}(\tau,\sigma)=\mathop{\boxplus}\limits_{s \in B_{\sigma}}\mathop{\oplus}\limits_{s' \in B_{\tau}}d_{C,r}(s',s):DL_r(\sigma) \longrightarrow DL_{r-1}(\tau)
	\end{equation*}
	\begin{equation*}
		\begin{aligned}
			\theta L_u^r(\tau,\sigma):DL^{n-1-u-r}(\sigma)=\mathop{\oplus}\limits_{s \in A_{\sigma}}C_{n-1-u-r-|\sigma|}(s)^* \longrightarrow DL_r(\tau)=\mathop{\oplus}\limits_{s \in B_{\tau}}C_r(s) \\
		\end{aligned}
	\end{equation*}
	\begin{equation*}
			\theta L_u^r(\tau,\sigma)=(-1)^{n+|\sigma|+r+1}\big( \mathop{\boxplus}\limits_{s \in B_{\sigma}}\mathop{\oplus}\limits_{s' \in B_{\tau}}\psi_u^r(s',s) \big) \circ \mho_{\sigma}^{n-u-r}
	\end{equation*}
\end{Lemma}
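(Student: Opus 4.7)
The plan is to factor $(f_{M,N})_*$ as a composition of maps that admit explicit chain-level descriptions, and then verify that the composite is precisely the construction yielding $(DL,\theta L)$.

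First, I would pass from the chain complex picture to the ball-complex ad picture. By Theorem \ref{localdual}, the class $x_0$ corresponds under local duality to an $(l-n)$-dimensional Poincare quadratic chain complex $(\widecheck{C},\widecheck{\psi})$ in $M^h(\mathbb{Z})^*(\Sigma^l,\underline{K_M})$, which by Theorem \ref{Lcohomologywithadcord} is represented by a $(\Sigma^l,\underline{K_M})$-ad $F$ of degree $l-n$ realizing $x_1 \in H^{l-n}(\Sigma^l,\underline{K_M};\underline{L}(\mathbb{Z}))$.

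Second, I would decompose $f_{M,N}\colon M\to M/M_0\cong\Sigma(N\times S^1)_+$ as quotient, excision, and suspension. Excision identifies $H_n(M,M_0;\underline{L}(\mathbb{Z}))$ with $H_n((N\times S^1)\otimes[0,1],(N\times S^1)\otimes\partial[0,1];\underline{L}(\mathbb{Z}))$, using that $M_0\cup(N\times S^1)\otimes[-1,0]$ deformation retracts onto $M_0$, and the suspension isomorphism identifies the result with $H_{n-1}(N\times S^1;\underline{L}(\mathbb{Z}))$. On the ad side, this corresponds to restricting $F$ to the dual cell complex of $(N\times S^1)\otimes[0,1]$ relative to the duals of its boundary, followed by the isomorphism $\mathcal{K}$ of Definition \ref{DefK} and Theorem \ref{deltaofT}, which converts an $(N\times S^1)\times\Delta^1$-ad into an $(N\times S^1)$-ad of degree one lower.

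Third, I would translate the resulting $(N\times S^1)$-ad back into a chain complex on $K_{N\times S^1}$ via the inverse of Theorem \ref{Lcohomologywithadcord} and local duality. The sum $DL_r(\sigma)=\bigoplus_{s\in B_\sigma}C_r(s)$ arises as the partial assembly of $C$ over the prism cells in $(N\times S^1)\otimes[0,1]$ whose base in $N\times S^1$ is exactly $\sigma$ (see Remark \ref{RemB}), which is exactly what the ball-complex restriction produces. The morphism $\mho_\sigma^r$ encodes the chain-level realization of $\mathcal{K}$ on dual cells: it includes the dual of $C_*$ supported on a prism cell $s$ via the vertices of $s$ on $(N\times S^1)\otimes\{1\}$, matching how the suspension isomorphism sees the upper face of the cylindrical collar.

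The main obstacle is identifying the chain-level morphism $\mho$ with the ad-level isomorphism $\mathcal{K}$, together with the attendant sign bookkeeping. The sign $(-1)^{n+|\sigma|+r+1}$ in $\theta L_u^r$ is the cumulative effect of the sign in equation \ref{Eqlocaldualquadraticstr} for local duality, the orientation-dependent sign in Theorem \ref{Lcohomologywithadcord}, and the sign $(-1)^{|\sigma|}$ arising from $\mathcal{K}$. The Poincare property of $(DL,\theta L)$ then follows from Corollary \ref{partialquad} applied to the partial assembly over $(N\times S^1)\otimes[0,1]$, combined with the Poincare property of $(C,\psi)$; this reduction is the chain-level manifestation of the suspension isomorphism in $L$-theory cohomology.
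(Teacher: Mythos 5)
Your proposal follows essentially the same route as the paper: pass to the $(K,L)$-ad picture via Theorem~\ref{localdual} and Theorem~\ref{Lcohomologywithadcord}, factor $(f_{M,N})_*$ through a diagram involving excision and the boundary map, implement the degree-shift by $\mathcal{K}$ at the ad level, and translate back to a chain complex supported on $K_{N\times S^1}$, with $\mho_\sigma$ playing the role of the chain-level suspension. This matches the paper's strategy (which carries out the excision through the pairs $(M,M_{-1})$ and $(M_1,M_0\cup(N\times S^1))$ and isolates the boundary map).

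However, the last step of your proposal has a genuine error in the choice of technical tool. You invoke Corollary~\ref{partialquad} for the Poincar\'e property of $(DL,\theta L)$ and for the reduction to the suspension. Corollary~\ref{partialquad} concerns the partial assembly of a quadratic chain complex in $M^h(R)_*^{lf}(K)$ over an upper-closed subset via a Galois covering, producing morphisms $\psi_s^r(\widetilde{S}^-)^{ass}$ of $RG_0$-modules. It does not produce a chain complex in $M^h(\mathbb{Z})_*(K_{N\times S^1})$, it does not contain the $\mho_\sigma$-twist, and it does not establish Poincar\'e duality in that category. The tool the paper actually uses here is Corollary~\ref{computationpartial} from the Appendix: it is proved via Theorem~\ref{computationdelta}, Lemma~\ref{Appquadpair}, Lemma~\ref{Apppartialass} (which is where $\mho_\sigma$ is constructed and shown to intertwine $\theta$ with $T\theta$), and Lemma~\ref{rearannge}, and its item $(1)$ is exactly the Poincar\'e claim for $(DL,\theta L)$. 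The sign $(-1)^{n+|\sigma|+r+1}$ likewise emerges from the explicit comparison of $\delta\psi_u^r[\sigma]$ from Lemma~\ref{Appquadpair} with the orientation and local-duality conventions (equations~\ref{Appcheck42}--\ref{Appcheck59} and the sign $r_\sigma=J_\sigma^{all}+\frac{|\sigma|(|\sigma|-1)}{2}$), rather than being a straightforward product of the three ingredients you list. So the outline is right, but as written the argument would stall at the final step because Corollary~\ref{partialquad} does not supply what you need; you must instead develop the machinery of Theorem~\ref{computationdelta} and Corollary~\ref{computationpartial} (or an equivalent explicit verification that the ad-level $\mathcal{K}$ matches $\mho_\sigma$).
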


\begin{proof}
	We briefly recall first how the identification
	\begin{equation*}
		H_n(M,\underline{L}(\mathbb{Z})) \cong L_n(M^h(\mathbb{Z})_*(K_M)) 
	\end{equation*}
	
	is obtained. Choose $l \in \mathbb{Z}$ sufficiently large so that we can embed $M$ simplicially and order-preservingly in $\partial\Delta^{l+1}$. Using the same notation in the previous subsection, by Theorem \ref{Lhomologyandco}, we can represent $x_1$ by the homotopy class of a $\Delta$-set map $f_{\Delta}:(\Sigma^l,\underline{M}) \longrightarrow (\mathbb{L}_{n-l}(M^h(\mathbb{Z})),\emptyset)$. By Theorem \ref{Lhomology}, $f_{\Delta}$ corresponds to a $(n-l)$-dimensional quadratic chain complex $(\widecheck{C},\widecheck{\psi})$ in $M^h(R)_*(\Sigma^l,\underline{M})$. Then its local dual  defined in Theorem \ref{localdual} is $(C,\psi)$.
	
	Then we proceed to compute $(f_{M,N})_*(x_1)$. Denote $M_{1}=M_0 \cup N \times S^1 \otimes [0,1], M_{-1}=M_0 \cup N \times S^1 \otimes [-1,0]$ and $\Sigma_1=f_{M,N}(M_1),\Sigma_{-1}=f_{M,N}(M_{-1})$.
	Then we have the following commutative diagram:
	\begin{equation*}
		\begin{tikzcd}
			H_n(M;\underline{L}(\mathbb{Z})) \dar{(j_-)_*}\rar{(f_{M,N})_*} & H_n(\Sigma (N \times S^1)_+;\underline{L}(\mathbb{Z})) \dar \\
			H_n(M,M_{-1};\underline{L}(\mathbb{Z})) \rar{(f_{M,N})_*} & H_n(\Sigma (N \times S^1)_+,\Sigma_{-1};\underline{L}(\mathbb{Z})) \\
			H_n(M_1,M_0 \cup (N \times S^1);\underline{L}(\mathbb{Z})) \uar{(i_+)_*}[swap]{\cong}\dar{\partial}\rar{(f_{M,N})_*} & H_n(\Sigma_1 ,(N \times S^1)_+;\underline{L}(\mathbb{Z})) \uar{\cong}\dar{\partial} \\
			H_{n-1}(M_0 \cup (N \times S^1),M_0;\underline{L}(\mathbb{Z})) \rar{(f_{M,N})_*} & H_{n-1}((N \times S^1)_+,pt;\underline{L}(\mathbb{Z})) \\
			H_{n-1}(N \times S^1;\underline{L}(\mathbb{Z})) \rar{Id}\uar{(i_0)_*}[swap]{\cong} & H_{n-1}((N \times S^1)_+,pt;\underline{L}(\mathbb{Z})) \uar{\cong} \\
		\end{tikzcd}
	\end{equation*}
	
	Here $i_0,i_+,j_-$ denote the corresponding inclusion maps of pairs of spaces. Therefore, we have $(f_{M,N})_*(x_1)=(i_0)_*^{-1}\partial(i_+)_*^{-1}(j_-)_*(x_1)$. Denote $M_0'=M_0 \cup (N \times S^1)$, we begin with the computation of $(i_+)_*^{-1}(j_-)_*(x_1)$ by the following commutative diagram:
		\begin{equation*}
		\begin{tikzcd}
			H_n(M;\underline{L}(\mathbb{Z})) \dar{(j_-)_*}\rar{\cong} & \mathop{[}\Sigma^l,\underline{M};\mathbb{L}_{n-l}(M^h(\mathbb{Z})),\emptyset\mathop{]} \dar{res} \\
			H_n(M,M_{-1};\underline{L}(\mathbb{Z})) \rar{\cong} & \mathop{[}\underline{M_{-1}},\underline{M};\mathbb{L}_{n-l}(M^h(\mathbb{Z})),\emptyset\mathop{]} \\
			H_n(M_1,M_0';\underline{L}(\mathbb{Z})) \uar{(i_+)_*}[swap]{\cong}\rar{\cong} & \mathop{[}\underline{M_0'},\underline{M_1};\mathbb{L}_{n-l}(M^h(\mathbb{Z})),\emptyset\mathop{]} \uar{\cong}[swap]{res} \\
		\end{tikzcd}
	\end{equation*}
	
	Here, the inverse of the isomorphism
	\begin{equation*}
		\mathop{[}\underline{M_0'},\underline{M_1};\mathbb{L}_{n-l}(M^h(\mathbb{Z})),\emptyset\mathop{]} \stackrel{res}{\longrightarrow} \mathop{[}\underline{M_{-1}},\underline{M};\mathbb{L}_{n-l}(M^h(\mathbb{Z})),\emptyset\mathop{]} 
	\end{equation*}
	is given by keeping the definition on $\underline{M_{-1}}$ and defining the corresponding chain complex on simplices in $\underline{M_0'} \backslash \underline{M_{-1}}$ to be $0$.
	
	Since $x_1$ corresponds to $f_{\Delta}$ by our definition, by the commutative diagram above, we have that $(i_+)_*^{-1}(j_-)_*(x_1)$ corresonding to the following map $f_{\Delta}'$:
	\begin{equation*}
		f_{\Delta}': (\underline{M_0'},\underline{M_1}) \longrightarrow (\mathbb{L}_{n-l}(M^h(\mathbb{Z})),\emptyset)
	\end{equation*}
	\begin{equation*}
		f_{\Delta}'(\sigma^*)=\begin{cases} f_{\Delta}(\sigma^*) & \text{If } \sigma \notin M_{-1} \\ \ \ \ \ 0 & \text{else} \end{cases}
	\end{equation*}

	By the formula in Theorem \ref{Lhomology}, $f_{\Delta}'$ corresponds to the following Poincare quadratic chain complex $(\widecheck{D},\widecheck{\theta})$ in $M^h(R)_*(\underline{M_0'},\underline{M_1})$:
	
	For all $u \in \mathbb{N},r \in \mathbb{Z},\sigma^*,\tau^* \in \underline{M_0 '}$ with $\tau^* \leq \sigma^*$, we have :
	\begin{equation*}
		\widecheck{D}_r(\sigma^*)=\begin{cases} \widecheck{C}_r(\sigma^*) & \text{If } \sigma \notin M_{-1} \\ \quad \ 0 & \text{else} \end{cases}, \ \widecheck{\theta}_u^r(\tau^*,\sigma^*)=\begin{cases} \widecheck{\psi}_u^r(\tau^*,\sigma^*) & \text{If } \sigma, \tau \notin M_{-1} \\ \qquad 0 & \text{else} \end{cases}
	\end{equation*}

	Thus its local dual $(D,\theta)$ is given by:
	\begin{equation*}
		D_r(\sigma)=\begin{cases} C_r(\sigma) & \text{If } \sigma \notin M_{-1} \\ \quad 0 & \text{else} \end{cases}, \ \theta_u^r(\tau,\sigma)=\begin{cases} \psi_u^r(\tau,\sigma) & \text{If } \sigma, \tau \notin M_{-1} \\ \quad \ 0 & \text{else} \end{cases}
	\end{equation*}
	
	For $\sigma,\tau \in N \times S^1$ and $u \in \mathbb{N},r \in \mathbb{Z}$, let $A_{\sigma},B_{\sigma}$ be the sets in the setting of Appendix with $K=M,L=N \times S^1$ and let $\mho_{\sigma}^r$ be the map defined in \ref{expressmho}, then these definitions agree with what we defined in the Lemma. By Corollary \ref{computationpartial}, we have that $(i_0)^{-1}\partial (i_+)_*^{-1}(j_-)_* (x_1)$ is given by the following Poincare quadratic chain complex $(DL,\theta L)$ in $M^h(R)_*(N \times S^1)$:
	
	\begin{equation*}
		DL_r(\sigma)=\mathop{\oplus}\limits_{s \in B_{\sigma}}D_r(s)=\mathop{\oplus}\limits_{s \in B_{\sigma}}C_r(s)
	\end{equation*}
	\begin{equation*}
		d_{DL,r}(\tau,\sigma)=\mathop{\boxplus}\limits_{s \in B_{\sigma}}\mathop{\oplus}\limits_{s' \in B_{\tau}}d_{D,r}(s',s)=\mathop{\boxplus}\limits_{s \in B_{\sigma}}\mathop{\oplus}\limits_{s' \in B_{\tau}}d_{C,r}(s',s)
	\end{equation*}
	\begin{equation*}
		\begin{aligned}
			\theta L_u^r(\tau,\sigma)&=(-1)^{n+|\sigma|+r+1}\big( \mathop{\boxplus}\limits_{s \in B_{\sigma}}\mathop{\oplus}\limits_{s' \in B_{\tau}}\theta_u^r(s',s) \big) \circ \mho_{\sigma}^{n-u-r} \\
			&=(-1)^{n+|\sigma|+r+1}\big( \mathop{\boxplus}\limits_{s \in B_{\sigma}}\mathop{\oplus}\limits_{s' \in B_{\tau}}\psi_u^r(s',s) \big) \circ \mho_{\sigma}^{n-u-r}
		\end{aligned}
	\end{equation*}
	
	Therefore, the Lemma holds.
\end{proof}

Now we can state the main theorem in this section to prove Theorem \ref{L theory transfer}.

\begin{Theorem}
	\label{mainproof}
	\
	
	Let $(C,\psi)$ be an $n$-dimensional Poincare quadratic chain complex in \\
	$M^h(\mathbb{Z})_*(K_M)$ and let $\Upsilon_{\hat{x},\hat{\sigma}}^r$ be as in Theorem \ref{assemblydual} with $K=W_{\infty},\widetilde{K}=\widehat{W}_{\infty},\mathbf{p}=\hat{p}$. Denote $(C(\widetilde{M}),\psi(\widetilde{M}))$ to be the universal assembly of $(C,\psi)$. For every $b \in \mathbb{N}$, denote $K^b \subset W_{\infty}$ to be the minimal subcomplex containing all vertices $x$ with $d_{\infty}(x) \leq b$. Let $K_0=K^0$ and $K_b=K^b \backslash K^{b-1}$ for $b \geq 1$. Denote $\widehat{K}_b=\hat{p}^{-1}(K_b)$. For any $\hat{\tau},\hat{\sigma} \in \widehat{W}_{\infty}$, $u \in \mathbb{N}$ and $r \in \mathbb{Z}$, let $d_r(\hat{\tau},\hat{\sigma})$ and $\psi_u^r(\hat{\tau},\hat{\sigma})$ be the morphism given by:
	
	\begin{align}
		&d_r(\hat{\tau},\hat{\sigma}): C_r(p\hat{p}\hat{\sigma}) \longrightarrow C_{r-1}(p\hat{p}\hat{\tau}) \\
		&d_r(\hat{\tau},\hat{\sigma}):=\begin{cases} d_{C,r}(p\hat{p}\hat{\tau},p\hat{p}\hat{\sigma}) & if \ \hat{\sigma} \leq \hat{\tau} \\ \ \ \ \ \ \ \ 0 & else\\ \end{cases} \label{dr} \\
		&\psi_u^r(\hat{\tau},\hat{\sigma}): C_{n-u-r}(p\hat{p}\hat{\sigma})^* \longrightarrow C_r(p\hat{p}\hat{\tau}) \\
		&\psi_u^r(\hat{\tau},\hat{\sigma}):=\begin{cases} \sum\limits_{\substack{\hat{x} \in \hat{\tau} \cap \hat{\sigma} \\ |\hat{x}|=0}} \psi_u^r(p\hat{p}\hat{\tau},p\hat{p}\hat{x}) \Upsilon^{n-u-r}_{\hat{x},\hat{\sigma}} & if \  \hat{\tau} \cap \hat{\sigma} \neq \emptyset \\ \ \ \ \ \ \ \ \ \ \ \ \ \ \ 0 & else\end{cases} \label{psir}
	\end{align}

	Let $(D,\theta)$ be the following quadratic chain complex in $\mathbb{F}_{\mathbb{N},b}(M^h(\mathbb{Z}\Pi))$:
	
	$D_r(b)=\mathop{\oplus}\limits_{\hat{\sigma} \in \widehat{K}_{b}} C_r(p\hat{p}\hat{\sigma})$ for $b>0$ 
	and $D_r(0)=\mathop{\oplus}\limits_{\hat{\sigma} \in \widehat{K}_0} C_r(p\hat{p}\hat{\sigma}) \oplus \big(\mathop{\oplus}\limits_{\sigma \notin W} C_r(\sigma) \otimes_{\mathbb{Z}}\mathbb{Z}\Pi \big)$
	 
	 $d_{D,r}$ is the equivalence class of the following morphism in $\mathbb{F}_{\mathbb{N}}(M^h(\mathbb{Z}\Pi))$:
	 \begin{equation}
	 	\label{bardD}
	 	 \bar{d}_{D,r}(b',b)=\mathop{\boxplus}\limits_{\hat{\sigma} \in \widehat{K}_{b}}\mathop{\oplus}\limits_{\hat{\tau} \in \widehat{K}_{b'}} d_r(\hat{\tau},\hat{\sigma}): D_r(b) \longrightarrow D_{r-1}(b')
	 \end{equation}
	 
	 $\theta_u^r:D_{n-u-r}^* \longrightarrow D_r$ is the equivalence class of the following morphism in $\mathbb{F}_{\mathbb{N}}(M^h(\mathbb{Z}\Pi))$:
	 \begin{equation}
	 	\label{bartheta}
	 	\overline{\theta}_u^r(b',b)=\mathop{\boxplus}\limits_{\hat{\sigma} \in \widehat{K}_{b}}\mathop{\oplus}\limits_{\hat{\tau} \in \widehat{K}_{b'}}\psi_u^r(\hat{\tau},\hat{\sigma}):D_{n-u-r}(b)^* \longrightarrow D_r(b')
	 \end{equation}

	Suppose that $\Gamma/\pi$ is an infinite set, then $(D,\theta) \cong \Theta\rho_*(C(\widetilde{M}),\psi(\widetilde{M}))$.
	
	Moreover, we have that $(C,\psi)$ represents an element in $L_n(M^h(\mathbb{Z})_*(K_M))=H_n(K_M,\underline{L}(\mathbb{Z}))$. Let $(C,\psi)|_{N \times S^1} \in H_{n-1}(N \times S^1,\underline{L}(\mathbb{Z}))$ be its image under the homomorphism $(f_{M,N})_*$, then there is a quadratic chain complex $(E,\xi)$ in $\mathbb{F}_{\mathbb{N}}(M^h(\mathbb{Z}\Pi))$, such that $[(E,\xi)]=(D,\theta)$ and $\partial(E,\xi)$ is cobordant to $(C_{res}^{uni},\psi_{res}^{uni})$, where $(C_{res}^{uni},\psi_{res}^{uni})$ is the universal assembly of $(C,\psi)|_{N \times S^1}$.
\end{Theorem}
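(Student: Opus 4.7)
The plan is to prove the two assertions in turn. First I would identify $(D,\theta)$ as the image of the universal assembly under $\Theta\rho_*$ by tracing through the definitions of $\rho$, $\Theta$ and the universal assembly functor (Definition \ref{assemblyfun}). Second, I would choose a canonical lift $(E,\xi)$ to $\mathbb{F}_{\mathbb{N}}(M^h(\mathbb{Z}\Pi))$ using the shell decomposition by $d_\infty$, and compute its algebraic boundary $\partial(E,\xi)$ via the partial-assembly machinery of Corollary \ref{partialquad}.

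For the isomorphism $(D,\theta)\cong\Theta\rho_*\bigl(C(\widetilde M),\psi(\widetilde M)\bigr)$, choose a system of lifts $\widetilde\sigma\subset\widetilde M$ over each $\sigma\in K_M$ so that the universal assembly reads $C(\widetilde M)_r\cong\bigoplus_{\sigma\in K_M^{(r)}} C_r(\sigma)\otimes_{\mathbb{Z}}\mathbb{Z}\Gamma$. Applying $\rho_*$ replaces each $\mathbb{Z}\Gamma$-factor by $\Sigma\mathbb{Z}\Pi$, and $\Theta$ further replaces each $\Sigma\mathbb{Z}\Pi$ by $\underline{\mathbb{Z}\Pi}$. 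By Construction \ref{transfer map}, for $g\in G$ a matrix representative of $\rho(g)$ acts on $\bigoplus_{gH\in G/H}\mathbb{Z}\Pi$ by the permutation $g'H\mapsto gg'H$ twisted by $r_\Pi:H\to\Pi$. The set of cosets $G/H$ is canonically in bijection with the sheets of the covering $\widehat W\to W$, hence with the $\Pi$-orbits of simplices of $\widehat W_\infty$ lying over any given simplex of $W$. Re-indexing the resulting sum by the shells $\widehat K_b$ produces a specific bijection $G/H\leftrightarrow\mathbb{N}$; by Lemma \ref{distance}(2) each shell is finite, so any two such bijections differ by finitely many transpositions and are equivalent in $\mathbb{F}_{\mathbb{N},b}$. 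For simplices $\sigma\notin W$ the computation in Construction \ref{transfer map}(1) shows that the corresponding generators of $\Gamma$ act trivially on all cosets except $eH$, and this is precisely what the extra summand $\bigoplus_{\sigma\notin W}C_r(\sigma)\otimes_\mathbb{Z}\mathbb{Z}\Pi$ of $D_r(0)$ records. Formulas \eqref{dr}, \eqref{psir}, \eqref{bardD} and \eqref{bartheta} then drop out after substituting the explicit description of the Poincar\'e quadratic structure on $\bigl(C(\widetilde M),\psi(\widetilde M)\bigr)$ in terms of $\Upsilon$ provided by Theorem \ref{assemblydual}.

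For the second assertion, take $(E,\xi):=(D,\bar d_D,\bar\theta)$ with the unbracketed representatives of \eqref{bardD} and \eqref{bartheta}. This is a bona fide quadratic complex in $\mathbb{F}_{\mathbb{N}}(M^h(\mathbb{Z}\Pi))$: each $D_r(b)$ is finitely generated free over $\mathbb{Z}\Pi$ because Lemma \ref{distance}(2) forces each $K_b$ to be finite, and the row/column finiteness of $\bar d_D$ and $\bar\theta$ follows from Lemma \ref{distance}(1) together with the simplicial locality of the differentials and quadratic structure of $(C,\psi)$. To compute $\partial(E,\xi)$ I would apply Corollary \ref{partialquad} to the upper-closed set $S$ of simplices of $W_\infty$ lying in the union of the collar $(N\times S^1)\otimes[0,1]$ and $\{d_\infty\ge 1\}$, lifted to $\widehat W_\infty$ via the cover, and produce the associated Poincar\'e quadratic pair. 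In view of Remark \ref{Rempartial} and Lemma \ref{finitedomin}, the resulting pair realises $\partial(E,\xi)$ as a Poincar\'e quadratic chain complex in $\mathbb{P}_0(M^h(\mathbb{Z}\Pi))$ supported on the collar. Lemma \ref{surgeryres} provides the Poincar\'e quadratic chain complex $(DL,\theta L)$ in $M^h(\mathbb{Z})_*(K_{N\times S^1})$ representing $(f_{M,N})_*$ applied to the class of $(C,\psi)$; its universal assembly is visibly isomorphic to the output of the partial-assembly construction because the morphism $\mho_\sigma^r$ appearing in $(DL,\theta L)$ coincides, simplex by simplex, with the restriction of $\Upsilon^{n-u-r}[\widetilde S^{-}]$ to the collar, the vertex set $V_1$ of Lemma \ref{surgeryres} being exactly the $0$-skeleton of $\partial S$ inside the collar slice.

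The main obstacle will be matching the sign conventions from the two derivations: those propagated through Theorem \ref{localdual} and Lemma \ref{surgeryres} on the one hand, and those produced by Corollary \ref{partialquad} together with Lemma \ref{Dualexchange} and Remark \ref{Dualexchangeupper} on the other. I would handle this by expressing both quadratic structures in terms of the canonical chain map $\Upsilon$, reducing the comparison to an identity between sign prefactors verified term-by-term using Lemma \ref{Dualincidnum}, in parallel with the sign bookkeeping already carried out in the proof of Theorem \ref{localdual}. Any residual discrepancy between strict equality and homotopy is absorbed by the chain-homotopy step of Lemma \ref{finitedomin} used to realise $\partial(E,\xi)$ in $\mathbb{P}_0(M^h(\mathbb{Z}\Pi))$, yielding the claimed cobordism rather than equality in $L^p_{n-1}(\mathbb{Z}\Pi)$.
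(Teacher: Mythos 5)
The central flaw is in your choice of lift $(E,\xi)$. You write ``take $(E,\xi):=(D,\bar d_D,\bar\theta)$ with the unbracketed representatives of \eqref{bardD} and \eqref{bartheta}. This is a bona fide quadratic complex in $\mathbb{F}_{\mathbb{N}}(M^h(\mathbb{Z}\Pi))$.'' It is not, and the paper explicitly flags exactly this point: while the morphisms are row-/column-finite, the quadratic relation fails. The reason is that the quadratic structure involves dual chains $C^{n-*}$, which at a simplex $\hat\sigma$ sums over $\hat\kappa\geq\hat\sigma$ — and when $\hat{p}\hat\sigma$ lies on $\partial W_\infty=\bar e(N\times S^1)$, some of those $\hat\kappa$ fall outside $\widehat W_\infty$. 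Equivalently, $\bigcup_{\bar\sigma\in W_\infty}\bar\sigma^*$ is not a subcomplex of $\mathrm{Sd}\,K_{\overline M}$, so the boundary simplices spoil the quadratic identities. The paper fixes this by replacing $d_r$ and $\psi_u^r$ with truncated morphisms $\mathring d_r,\mathring\psi_u^r$ that vanish unless both $\hat p\hat\tau$ and $\hat p\hat\sigma$ lie in $W_0 = W_\infty\setminus\bar e(N\times S^1\otimes[0,1))$, which is the simplicial interior $\mathring W_\infty^-$. It is only after this truncation that the quadratic-pair machinery (Corollary \ref{partialquad}, Theorem \ref{thmpartialquadpair}) applies, and one then must separately check (the paper's step (k)) that the truncated $(E,\xi)$ still has $[(E,\xi)]=(D,\theta)$, which works because the modification only affects finitely many shells. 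Your subsequent claim that you can compute $\partial(E,\xi)$ directly by applying Corollary \ref{partialquad} to an upper-closed set of $W_\infty$ therefore cannot be carried out with the $(E,\xi)$ you wrote down; the corollary's hypotheses are what forced the truncation in the first place.

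For the first assertion, your strategy (choose simplex lifts, identify $G/H$ with sheets, reindex by shells) matches the paper's at the level of ideas, but the proposal hides the load-bearing step: one has to choose elements $\tilde g_{i,j}\in G$ lifting the incidence elements $g_{i,j}\in\Gamma$, and prove (the paper's Claim in step $(\mathbf 2)$) that these lifts interact coherently with the $\Pi$-action on $\widehat W_\infty$ via $r_\Pi$. This compatibility — verified by a case analysis on whether the simplices lie in $W$ or in $N\times D^2$ — is exactly what makes the proposed isomorphism $f:D\to C''$ commute with $\bar d_D$ and $\bar\theta$ rather than merely matching the underlying modules. Asserting the formulas ``drop out'' skips the heart of the verification.
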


\begin{proof}
	Let us briefly describe the ideas of the proof. We can lift $(C,\psi) $ with respect to the covering $p:\overline{M} \longrightarrow M$. Consider the assembly of its restriction to $W_{\infty}$ with respect to the covering $\hat{p}:\widehat{W}_{\infty} \longrightarrow W_{\infty}$, there are two ways to decompose the module in every degree of the chain complex $\mathbf{C}_r=\mathop{\oplus}\limits_{\hat{\sigma} \in \widehat{W}_{\infty}}C_r(p\hat{p}\hat{\sigma})$ into a direct sum of modules indexed by $\mathbb{N}$.
	
	The first way is to use the function $d_{\infty}$ and this way corresponds to the expression of $D$. The second way is as follows: note that $\hat{p}\hat{\sigma} \in \overline{M}$ can be indexed by $\Gamma/\pi$ and simplices in $M$. Since $\Gamma/\pi \cong \mathbb{N}$, we can get an $\mathbb{N}$-index and this way corresponds to $\Theta\rho_*C$. Since reordering is an isomorphism in $\mathbb{F}_{\mathbb{N},b}(M^h(\mathbb{Z}\Pi))$. The first statement will hold.
	
	Moreover, roughly speaking, $\mathbf{C}_*$ represents the "surgery problem" of some normal map $\widehat{W}''_{\infty} \longrightarrow \widehat{W}_{\infty}$ and taking boundary in quadratic chain complex corresponds to restricting the surgery problem to the boundary. Thus we have the second statement. We will now present the details of the proof.

	($\mathbf{1}$) In order to give an explicit description for $C_*(\widetilde{M})$ and $D_*$, we need to first write down all the simplices in $M$ and choose their liftings to $\widetilde{M}$ and $\widehat{W}_{\infty}$. We introduce some notations first, recall the commutative diagram in the geometric setting of maps between fundamental groups induced by inclusions and covering projections:
	$$
	\begin{tikzcd}
		\pi_1(\overline{W})=H \rar{i_*} \dar[hook]{p_*} & \pi_1(W_{\infty}) \rar{i'_*} & \pi_1(\overline{M})=\pi \dar[hook]{p_*} \\
		\pi_1(W)=G \ar[two heads]{rr}{j_*} & & \pi_1(M)=\Gamma 
	\end{tikzcd}
	$$
	
	By the properties of the covering map, we have $G/H \cong \Gamma/\pi$. Since $\Gamma / \pi$ is an infinite set, let $\{g_b\}_{b \in \mathbb{N}}$ be a sequence of representatives of $\Gamma/\pi$ with $g_0=E$, where $E$ denotes the identity element. Let $\{\tilde{g}_b\}_{b \in \mathbb{N}}$ be a sequence of representatives of $G/H$, such that $\tilde{g}_0=E$ and $j_*(\tilde{g}_b)=g_b$ for all $b \in \mathbb{N}$. Let $\sigma_1,\sigma_2,...,\sigma_{l_0}$ be all the simplices that are in $W=M \backslash (N \times \mathring{D}^2) \subset M$ but not in $N \times S^1$. Let $\sigma_{l_0+1},....,\sigma_{l_0+l_1}$ be all the simplices in $N \times S^1$ and $\sigma_{l_0+l_1+1},...,\sigma_{l_0+l_1+l_2}$ be all the remaining simplices in $M$. Then $W=\mathop{\cup}\limits_{1 \leq i \leq l_0+l_1}\sigma_i$ and $N \times D^2=\mathop{\cup}\limits_{l_0+1 \leq i \leq l_0+l_1+l_2}\sigma_i$.
	
	($\mathbf{2}$) In the next step we choose carefully the liftings of these simplices to $\widetilde{M}$ and to $\widehat{W}_{\infty}$. Before the choice of the liftings, we investigate more about the geometric settings. For $b>0$, since $g_b^{-1}\tilde{e}(\widetilde{N} \times D^2) \cap \tilde{e}(\widetilde{N} \times D^2)=\emptyset$, by (10) in the geometric setting \ref{geometric setting}, we have $\acute{p}^{-1}(g_b^{-1}\tilde{e}(\widetilde{N} \times D^2)) \cong \widetilde{N} \times D^2 \times \mathbb{Z}$. By the equivariance of covering maps in (9) of the setting \ref{geometric setting}, we have $\acute{p}\vec{p}(\tilde{g}_b^{-1}\vec{e}(\widetilde{N} \times \mathbb{R}^1))=g_b^{-1}\bar{e}(\widetilde{N} \times S^1)$. Therefore, we have:
	\begin{equation*}
		 \vec{p}(\tilde{g}_b^{-1}\vec{e}(\widetilde{N} \times \mathbb{R}^1)) \subset \acute{p}^{-1}(g_b^{-1}\tilde{e}(\widetilde{N} \times S^1)) \cong \widetilde{N} \times S^1 \times \mathbb{Z}
	\end{equation*}
	
	Since $\vec{p}(\tilde{g}_b^{-1}\vec{e}(\widetilde{N} \times \mathbb{R}^1))$ is path-connected, we can denote the unique path-connected component of $\acute{p}^{-1}(g_b^{-1}\tilde{e}(\widetilde{N} \times D^2))$ where $\vec{p}(\tilde{g}_b^{-1}\vec{e}(\widetilde{N} \times \mathbb{R}^1))$ lies in to be $N_b$. Then $\acute{p}:N_b \longrightarrow g_b^{-1}\tilde{e}(\widetilde{N} \times D^2)$ is a homeomorphism. 
	
	Now we begin choosing the liftings of the simplices. For every $1 \leq i \leq l_0$, choose a simplex $\vec{\sigma}_i \in \overrightarrow{W}$ that projects to $\sigma_i$. Let $\tilde{\sigma}_i=\acute{p}\vec{p}(\vec{\sigma}_i) \in \widetilde{M}$ and for every $b \in \mathbb{N}$, let $\hat{\sigma}_{b,i}=\vec{p}(\tilde{g}_b^{-1}\vec{\sigma}_i)$. For every $l_0+1 \leq i \leq l_0+l_1$, choose a simplex $\vec{\sigma}_i \in \vec{e}(\widetilde{N} \times \mathbb{R}^1) \subset \overrightarrow{W}$ that projects to $\sigma_i$. Let $\tilde{\sigma}_i=\acute{p}\vec{p}(\vec{\sigma}_i) \in \widetilde{M}$ and for every $b \in \mathbb{N}$, let $\hat{\sigma}_{b,i}=\vec{p}(\tilde{g}_b^{-1}\vec{\sigma}_i)$, then $\tilde{\sigma}_i \in \tilde{e}(\widetilde{N} \times D^2)$ by definition. For every $l_0+l_1+1 \leq i \leq l_0+l_1+l_2$, choose a simplex $\tilde{\sigma}_i \in \tilde{e}(\widetilde{N} \times D^2)$ that projects to $\sigma_i$. For every $b>0$, since $\acute{p}:N_b \longrightarrow g_b^{-1}\tilde{e}(\widetilde{N} \times D^2)$ is a homeomorphism, there is a unique simplex $\hat{\sigma}_{b,i} \in N_b$, such that $\acute{p}(\hat{\sigma}_{b,i})=g_b^{-1}\tilde{\sigma}_i$. In summary, we get simplices $\tilde{\sigma}_i \in \widetilde{M}$ that lift $\sigma_i$ and simplices $\hat{\sigma}_{b,i}$ ($b \neq 0$ if $l_0+l_1+1 \leq i \leq  l_0+l_1+l_2$) that lifts $g_b^{-1}\tilde{\sigma}_i$.
	
	Then we consider the simplex decomposition of the spaces in the geometric setting \ref{geometric setting}. Since $W=\mathop{\cup}\limits_{1 \leq i \leq l_0+l_1}\sigma_i$, we get $\widetilde{W}=\mathop{\cup}\limits_{\substack{1 \leq i \leq l_0+l_1 \\ g \in \Gamma}}g\tilde{\sigma}_i$. Note that by our choice, for any $\sigma_i \in N \times D^2$ (that is the same with $l_0+1 \leq i \leq l_0+l_1+l_2$), we have $\tilde{\sigma}_i \in \tilde{e}(\widetilde{N} \times D^2)$. Thus $\widetilde{W}_{\infty}=\widetilde{M} \backslash \tilde{e}(\widetilde{N} \times \mathring{D}^2)=\widetilde{W}\cup (\mathop{\cup}\limits_{\substack{l_0+l_1+1 \leq i \leq  l_0+l_1+l_2 \\ b>0,v \in \pi}}vg_b^{-1}\tilde{\sigma}_i)$. We have  $W_{\infty}=\big(\mathop{\cup}\limits_{\substack{1 \leq i \leq  l_0+l_1 \\ b \geq 0}}\underline{p}(g_b^{-1}\tilde{\sigma}_i)\big) \cup \big(\mathop{\cup}\limits_{\substack{l_0+l_1+1 \leq i \leq  l_0+l_1+l_2 \\ b>0}}\underline{p}(g_b^{-1}\tilde{\sigma}_i)\big)$. Note that by our choice, every $\hat{\sigma}_{b,i}$ is a lifting of $\underline{p}(g_b^{-1}\tilde{\sigma}_i)$, thus $\widehat{W}_{\infty}=\big(\mathop{\cup}\limits_{\substack{1 \leq i \leq  l_0+l_1 \\ b \geq 0, \omega \in \Pi}}\omega\hat{\sigma}_{b,i}\big) \cup \big(\mathop{\cup}\limits_{\substack{l_0+l_1+1 \leq i \leq  l_0+l_1+l_2 \\ b>0,\omega \in \Pi}}\omega\hat{\sigma}_{b,i}\big)$. In other words, the $\hat{\sigma}_{b,i}$ constructed above translated under $\Pi$ action will formulate bijectively all the simplices in $\widehat{W}_{\infty}$.
	
	The crucial property of the choice of $\tilde{\sigma}_i$ and $\hat{\sigma}_{b,i}$ is the following claim:
	\begin{equation*}
		\label{cclaim}
		\begin{aligned}
			& \textbf{Claim \ref{cclaim}: } \text{For every two simplices } \sigma_i,\sigma_j \in M \text{ with } \sigma_i \leq \sigma_j, \text{let } g_{i,j} \text{ be the}\\
			& \text{unique element in } \Gamma \text{ such that } \tilde{\sigma}_i \leq g_{i,j}\tilde{\sigma}_j. \text{ Then there is an element } \tilde{g}_{i,j} \in G,\\ 
			& \text{such that:}
		\end{aligned}
	\end{equation*}
	
	(1) $j_*(\tilde{g}_{i,j})=g_{i,j}$.
	
	(2) Let $b \in \mathbb{N}$ be any number, let $b' \in \mathbb{N},h \in H$ be the unique element such that $\tilde{g}_{i,j}^{-1}\tilde{g}_b=\tilde{g}_{b'}h$. If $\underline{p}(g_{b}^{-1}\tilde{\sigma}_i),\underline{p}(g_{b'}^{-1}\tilde{\sigma}_j) \in W_{\infty}$, (which means $b>0$ if $l_0+l_1+1 \leq i \leq l_0+l_1+l_2$, similarly for $b',j$), then $\hat{\sigma}_{b,i} \leq r_{\Pi}(h^{-1})\hat{\sigma}_{b',j}$.
	
	To prove the claim, we divide it into several cases, depending on whether $\sigma_i,\sigma_j$ lies in $W$ or not.
	
	(a) If $\sigma_i,\sigma_j \in W$, then we have $\underline{p}(g_{b}^{-1}\tilde{\sigma}_i),\underline{p}(g_{b'}^{-1}\tilde{\sigma}_j) \in W_{\infty}$. Since we have chosen their lift to the universal cover of $W$, there is a unique element $\tilde{g}_{i,j} \in G$, such that $\overrightarrow{\sigma}_i \leq \tilde{g}_{i,j}\overrightarrow{\sigma}_j$. Then $\tilde{\sigma}_i=\acute{p}\vec{p}(\vec{\sigma}_i) \leq \acute{p}\vec{p}(\tilde{g}_{i,j}\overrightarrow{\sigma}_j)=j_*(\tilde{g}_{i,j})\tilde{\sigma}_j$, thus we get $j_*(\tilde{g}_{i,j})=g_{i,j}$. Property (1) is verified.
	
	By definition of $\hat{\sigma}_{b,i}$, we have:
	\begin{equation*}
		\hat{\sigma}_{b,i}=\vec{p}(\tilde{g}_b^{-1}\vec{\sigma}_i)\leq \vec{p}(\tilde{g}_b^{-1}\tilde{g}_{i,j}\overrightarrow{\sigma}_j)=\vec{p}(h^{-1}\tilde{g}_{b'}^{-1}\overrightarrow{\sigma}_j)=r_{\Pi}(h^{-1})\hat{\sigma}_{b',j}
	\end{equation*}

	Therefore, property (2) is verified.
	
	(b) If $\sigma_i \in W, \sigma_j \notin W$, since $\sigma_i \leq \sigma_j$, we have $\sigma_i \in N \times S^1$, that is, $l_0+1 \leq i \leq l_0+l_1$. Then by the choice of $\tilde{\sigma}_i$, we have $\tilde{\sigma}_i,\tilde{\sigma}_j \in \tilde{e}(\widetilde{N} 
	\times D^2)$, thus $g_{i,j} \in \pi$. Choose $\tilde{g}_{i,j}=g_{i,j} \in \pi \subset G$, then $j_*(\tilde{g}_{i,j})=g_{i,j}$.
	
	For the second property, suppose $r_{\Pi}(h^{-1})=cv \in \Pi$ with $v \in \pi$, $c \in \mathbb{Z}$. By (5) in the geometric setting \ref{geometric setting}, we have:
	\begin{equation*}
		v=p_{\pi}r_{\Pi}(h^{-1})=p_{\pi}r_0i_*(h^{-1})=i'_*i_*(h^{-1})=j_*(h^{-1})
	\end{equation*}

	Thus we get $j_*(h)=v^{-1}$. Since by assumption $\tilde{g}_{i,j}^{-1}\tilde{g}_b=\tilde{g}_{b'}h$, we get $g_{i,j}^{-1}g_b=g_{b'}v^{-1}$. Since $\sigma_j \notin W$, if $\underline{p}(g_{b'}^{-1}\sigma_j) \in W_{\infty}$, then we have $b'>0$ and thus $g_{b'} \notin \pi$. Since $g_{i,j} \in \pi$, we get $g_b \notin \pi$ and thus $b>0$. Therefore, we have the definition for $N_b$ and $N_{b'}$ as above. Now by definition of $\hat{\sigma}_{b',j}$, it is the unique simplex in $N_{b'}$ that projects to $g_{b'}^{-1}\tilde{\sigma}_j$ under the homeomorphism $\acute{p}:N_{b'} \longrightarrow g_{b'}^{-1}\tilde{e}(\widetilde{N} \times D^2)$. Then we have $r_{\Pi}(h^{-1})\hat{\sigma}_{b',j} \in r_{\Pi}(h^{-1})N_{b'}=cvN_{b'}$. By the equivariance of $\acute{p}$, we have the following commutative diagram:
	\begin{equation*}
			\begin{tikzcd}
			N_{b'} \rar{\acute{p}}[swap]{\cong} \dar{\cong}[swap]{cv \cdot} & g_{b'}^{-1}\tilde{e}(\widetilde{N} \times D^2) \dar{v \cdot}[swap]{\cong} \\
			cvN_{b'} \rar{\acute{p}}  & vg_{b'}^{-1}\tilde{e}(\widetilde{N} \times D^2) \\
		\end{tikzcd}
	\end{equation*}

	Thus $\acute{p}:cvN_{b'} \longrightarrow vg_{b'}^{-1}\tilde{e}(\widetilde{N} \times D^2)$ is a homeomorphism. Now by definition we have $\hat{\sigma}_{b,i}=\vec{p}(\tilde{g}_b^{-1}\vec{\sigma}_i)$ and $\vec{\sigma}_i \in \vec{e}(\widetilde{N} \times \mathbb{R}^1)$, thus $\hat{\sigma}_{b,i} \in \vec{p}(\tilde{g}_b^{-1} \vec{e}(\widetilde{N} \times \mathbb{R}^1)) \subset N_b$. Thus it leaves to prove that $N_b=cvN_{b'}$. If this holds, notice that $\acute{p}:cvN_{b'} \longrightarrow vg_{b'}^{-1}\tilde{e}(\widetilde{N} \times D^2)$ is a homeomorphism and $\hat{\sigma}_{b,i},r_{\Pi}(h^{-1})\hat{\sigma}_{b',j} \in cvN_{b'}$ are liftings of $g_b^{-1}\tilde{\sigma}_i ,vg_{b'}^{-1}\tilde{\sigma}_j$. Since we have $g_b^{-1}\tilde{\sigma}_i \leq g_b^{-1}g_{i,j}\tilde{\sigma}_j=vg_{b'}^{-1}\tilde{\sigma}_j\in vg_{b'}^{-1}\tilde{e}(\widetilde{N} \times D^2)$, we get $\hat{\sigma}_{b,i} \leq r_{\Pi}(h^{-1})\hat{\sigma}_{b',j}$.
	
	To prove $N_b=cvN_{b'}$, note first that since $g_{i,j}^{-1}g_b=g_{b'}v^{-1}$ and $g_{i,j} \in \pi$, we have $vg_{b'}^{-1}\tilde{e}(\widetilde{N} \times D^2)=g_b^{-1}g_{i,j}\tilde{e}(\widetilde{N} \times D^2)=g_b^{-1}\tilde{e}(\widetilde{N} \times D^2)$. Thus by definition of $N_b$ and the fact that $\acute{p}$ is homeomorphism on $cvN_{b'}$, both of them are path-connected components of $\acute{p}^{-1}(g_b^{-1}\tilde{e}(\widetilde{N} \times D^2)) \cong \widetilde{N} \times D^2 \times \mathbb{Z}$. By definition, we have $\vec{p}(\tilde{g}_b^{-1} \vec{e}(\widetilde{N} \times \mathbb{R}^1)) \subset N_b$ and $\vec{p}(\tilde{g}_{b'}^{-1} \vec{e}(\widetilde{N} \times \mathbb{R}^1)) \subset N_{b'}$. Then we have:
	\begin{equation*}
		\begin{aligned}
			cvN_{b'} \supset cv\vec{p}(\tilde{g}_{b'}^{-1} \vec{e}(\widetilde{N} \times \mathbb{R}^1))=r_{\Pi}(h^{-1})\vec{p}(\tilde{g}_{b'}^{-1} \vec{e}(\widetilde{N} \times \mathbb{R}^1))&=\vec{p}(h^{-1}\tilde{g}_{b'}^{-1} \vec{e}(\widetilde{N} \times \mathbb{R}^1)) \\
			&=\vec{p}(\tilde{g}_{b}^{-1}\tilde{g}_{i,j} \vec{e}(\widetilde{N} \times \mathbb{R}^1)) \\
			&=\vec{p}(\tilde{g}_{b}^{-1} \vec{e}(\widetilde{N} \times \mathbb{R}^1)) \\
		\end{aligned}
	\end{equation*}
	
	Thus we get $N_{b} \cap cvN_{b'} \neq \emptyset$, but they are also path connected components of $\acute{p}^{-1}(g_b^{-1}\tilde{e}(\widetilde{N} \times D^2))$. Therefore, we have $N_b=cvN_{b'}$, completing the proof for this case.
	
	(c) If $\sigma_i \notin W,\sigma_j \in W$, since $\sigma_i \leq \sigma_j$ and $W$ is a subcomplex, this case can not happen.
	
	(d) If $\sigma_i,\sigma_j \notin W$, then by the choice of $\tilde{\sigma}_i$, we have $\tilde{\sigma}_i,\tilde{\sigma}_j \in \tilde{e}(\widetilde{N} \times D^2)$, thus $g_{i,j} \in \pi$. Choose $\tilde{g}_{i,j}=g_{i,j}$, then $j_*(\tilde{g}_{i,j})=g_{i,j}$.
	
	For the second property, the proof idea is similar to case (b). Suppose also that $r_{\Pi}(h^{-1})=cv$ with $v \in \pi$, $c \in \mathbb{Z}$. Similarly to the proof in (b), we get $g_{i,j}^{-1}g_b=g_{b'}v^{-1}$ and furthermore, $N_b=cvN_{b'}$ and $\acute{p}:cvN_{b'} \longrightarrow vg_{b'}^{-1}\tilde{e}(\widetilde{N} \times D^2)$ is a homeomorphism. By our choice of $\hat{\sigma}_{b,i}$, we have that $\hat{\sigma}_{b,i},cv\hat{\sigma}_{b',j}$ are liftings  in $N_b=cvN_{b'}$ of $g_b^{-1}\tilde{\sigma}_i, vg_{b'}^{-1}\tilde{\sigma}_j \in vg_{b'}^{-1}\tilde{e}(\widetilde{N} \times D^2)$ and $g_b^{-1}\tilde{\sigma}_i \leq g_b^{-1}g_{i,j}\tilde{\sigma}_j=vg_{b'}^{-1}\tilde{\sigma}_j$. Thus we have $\hat{\sigma}_{b,i} \leq cv\hat{\sigma}_{b',j}=r_{\Pi}(h^{-1})\hat{\sigma}_{b',j}$.

	($\mathbf{3}$) Now we begin to write down the assembly of $(C,\psi)$ explictly. Let $l=l_0+l_1+l_2$. For every $1 \leq i \leq l$, let $S_i=\{j \ | \ \sigma_i \leq \sigma_j\}$. For every $1 \leq i,j \leq l$, if $j \in S_i$, then there is a unique element $g_{i,j} \in \Gamma$, such that, $\tilde{\sigma}_i \leq g_{i,j}\tilde{\sigma}_j$. Let $g(j,i)=\begin{cases} g_{i,j}^{-1} & if \ j \in S_i \\	0 & else\\	\end{cases}$. Let $\tilde{g}_{i,j} \in G$ be the element stated in Claim \ref{cclaim} for every pair $(i,j)$ with $j \in S_i$ and let $\tilde{g}(j,i)=\begin{cases} \tilde{g}_{i,j}^{-1} & if \ j \in S_i \\	0 & else\\	\end{cases}$. Then by the property (1) in Claim \ref{cclaim}, we have $j_*(\tilde{g}(j,i))=g(j,i)$.
	
	For $k \in \mathbb{N}$ and a discrete group $G'$, denote $\epsilon_1^{G'},...,\epsilon_k^{G'}$ to be the standard $\mathbb{Z}G'$-basis of $\mathbb{Z}G'^k$ and $(\epsilon_1^{G'})^*,...,(\epsilon_k^{G'})^*$ to be the $\mathbb{Z}G'$-dual basis. For $r \in \mathbb{Z}$ and $1 \leq i \leq l$, let $N_r(i)=rank \ C_r(\sigma_i)$ and let $\{e_r^s(i)\}_{1 \leq s \leq N_r(i)}$ be a $\mathbb{Z}$-basis of $C_r(\sigma_i)$. Denote $\{e_r^s(i)^*\}_{1 \leq s \leq N_r(i)}$ to be $\mathbb{Z}$-dual basis. Denote $\mathcal{I}_{r,i}$ and $\mathcal{P}_{r,i}$ to be the inclusion map $C_r(\sigma_i) \longrightarrow \mathop{\oplus}\limits_{i=1}^l C_r(\sigma_i)$ and the projection map $\mathop{\oplus}\limits_{i=1}^l C_r(\sigma_i) \longrightarrow C_r(\sigma_i)$ respectively. For every $r,u \in \mathbb{Z}, 1 \leq i,j \leq l$ and vertex $x \in \sigma_i \cap \sigma_j$, let $d_r(j,i) \in M_{N_{r-1}(j),N_r(i)}(\mathbb{Z})$ be the matrix of the morphism $\mathcal{P}_{r-1,j} \circ d \circ \mathcal{I}_{r,i}:C_r(\sigma_i) \longrightarrow C_{r-1}(\sigma_j)$ with respect to the basis $\{e_r^s(i)\}_{1 \leq s \leq N_r(i)},\{e_{r-1}^{s'}(i)\}_{1 \leq s' \leq N_{r-1}(i)}$. Let $\psi_{u,x}^r(j,i) \in M_{N_r(j),N_{n-u-r}(i)}(\mathbb{Z})$ be the matrix of the morphism 
	\begin{equation*}
		\varphi_{u,x}^r(j,i): \begin{tikzcd} C_{n-u-r}(\sigma_i)^* \ar{rr}{\text{inclusion}} & & C^{n-u-r}(x) \ar{rr}{\psi_u^r(x,\sigma_j)} & & C_r(\sigma_j) \end{tikzcd}
	\end{equation*}
	with respect to the basis $\{e_{n-u-r}^s(i)^*\}_{1 \leq s \leq N_{n-u-r}(i)},\{e_{r}^{s'}(i)\}_{1 \leq s' \leq N_{r}(i)}$.
	
	Then using the isomorphism:
	\begin{equation*}
		\mathop{\oplus}\limits_{g \in \Gamma}C_r(\tilde{p}g\tilde{\sigma}_i) \stackrel{ \cong}{\longrightarrow}\mathbb{Z}\Gamma^{N_r(i)}: \mathop{\oplus}\limits_{g \in \Gamma}\sum\limits_{1 \leq s \leq N_r(i)}a(s,g)e_r^s(i) \mapsto \sum\limits_{1 \leq s \leq N_r(i)}\sum\limits_{g \in \Gamma}a(s,g)g^{-1}\epsilon_s^{\Gamma}
	\end{equation*}
	we can identify the assembly of $(C,\psi)$ with the following quadratic chain complex $(C',\psi')$ in $M^h(\mathbb{Z}\Gamma)$:
	
	For all $r \in \mathbb{Z}$,
		\begin{gather*}
			C_r'=\mathop{\oplus}\limits_{1 \leq i \leq l}\mathbb{Z}\Gamma^{N_r(i)} \\
			d_r'=\mathop{\boxplus}\limits_{i=1}^l\mathop{\oplus}\limits_{j=1}^ld_r(j,i)g(j,i):C_r' \longrightarrow C_{r-1}' \\
			{\psi'_u}^r=\mathop{\boxplus}\limits_{i=1}^{l}\mathop{\oplus}\limits_{j=1}^l \sum\limits_{\substack{\sigma_k \leq \sigma_i,\sigma_j \\ |\sigma_k|=0}}\psi_{u,\sigma_k}^r(j,i)g(j,k)g(i,k)^{-1} :{C_{n-u-r}'}^* \longrightarrow C_r' \\
		\end{gather*}

	Thus $(C'',\psi'')=\Theta\rho_*(C,\psi)$ is the following quadratic chain complex:
	\begin{equation*}
		C''_r(b)=\mathop{\oplus}\limits_{i=1}^l\mathbb{Z}\Pi^{N_r(i)} \text{ for all }b \in \mathbb{N}. 
	\end{equation*}
	
	$d''_r:C''_r \longrightarrow C''_{r-1}$ is the equivalence class of the following morphism in $\mathbb{F}_{\mathbb{N}}(M^h(\mathbb{Z}\Pi))$:
	\begin{equation}
		\label{bard}
		\begin{aligned}
			\bar{d}_r(b',b): C''_r(b) \longrightarrow C''_{r-1}(b'), \ b,b' \in \mathbb{N}\\
			\bar{d}_r(b',b)=\mathop{\boxplus}\limits_{i=1}^{l}\mathop{\oplus}\limits_{j=1}^l d_r(j,i)\rho_G\big( \tilde{g}(j,i) \big)[b',b] \\
		\end{aligned}
	\end{equation}
	
	${\psi''_u}^{r}:{C''_{n-u-r}} ^*\longrightarrow C''_{r}$ is the equivalence class of the following morphism in $\mathbb{F}_{\mathbb{N}}(M^h(\mathbb{Z}\Pi))$:
	\begin{equation}
		\label{barpsi}
		\begin{aligned}
			\bar{\psi}_u(b',b):C''_{n-u-r}(b)^* \longrightarrow C''_{r}(b'), \ b,b' \in \mathbb{N} \ \ \ \ \ \ \ \ \ & \\
			\bar{\psi}_u(b',b)=\mathop{\boxplus}\limits_{i=1}^{l}\mathop{\oplus}\limits_{j=1}^l \sum\limits_{\substack{\sigma_k \leq \sigma_i,\sigma_j \\ |\sigma_k|=0}}\psi_{u,\sigma_k}^r(j,i)\rho_G\big(\tilde{g}(j,k)\tilde{g}(i,k)^{-1}\big)[b',b] & \\
		\end{aligned}
	\end{equation}
	
	Let us make some explanations of the expressions above, recall that $\rho:\Gamma \longrightarrow \Sigma \mathbb{Z}\Pi$ is induced from $\rho_G:G \longrightarrow M_{\infty}(\mathbb{Z}\Pi)$. We can extend $\rho_G$ linearly to a homomorphism, still denoted by $\rho_G:\mathbb{Z}G \longrightarrow M_{\infty}(\mathbb{Z}\Pi)$. Then $\rho_G\big(\tilde{g}(j,i)\big)$ are elements in $M_{\infty}(\mathbb{Z}\Pi)$ and $\rho_G\big(\tilde{g}(j,i)\big)[b',b] \in \mathbb{Z}\Pi$ is the element in the $b'$th row, $b$th column in the matrix, similarly for $\rho_G\big(\tilde{g}(j,k)\tilde{g}(i,k)^{-1}\big)[b',b]$.
	
	($\mathbf{4}$) In the next step we proceed to construct an isomorphism of $D$ and $C''$. To start with, for $b_0 \in \mathbb{N}$, let $\bar{S}_{b_0}=\{(b,i) \ | \ g_b^{-1}\tilde{\sigma}_i \in \widetilde{W}_{\infty}, \underline{p}(g_b^{-1}\tilde{\sigma}_i) \in K_{b_0}\}$, it is a finite set by Lemma \ref{distance}. Let $\bar{S}=\mathop{\cup}\limits_{b_0=0}^{+\infty} \bar{S}_{b_0}$, then $W_{\infty}=\mathop{\cup}\limits_{(b,i) \in \bar{S}}\underline{p}(g_b^{-1}\tilde{\sigma}_i)$. Note that for $b_0>0$, we have:
	\begin{equation}
		\label{D}
		\begin{aligned}
			D(b_0)=\mathop{\oplus}\limits_{\hat{\sigma} \in \widehat{K}_{b_0}} C_*(p\hat{p}\hat{\sigma})=\mathop{\oplus}\limits_{(b,i) \in \bar{S}_{b_0}}\mathop{\oplus}\limits_{\omega \in \Pi}C_*(p\hat{p}\omega\hat{\sigma}_{b,i}) & \\
			D(0)=\big(\mathop{\oplus}\limits_{(b,i) \in \bar{S}_{0}}\mathop{\oplus}\limits_{\omega \in \Pi}C_*(p\hat{p}\omega\hat{\sigma}_{b,i})\big) \oplus \big(\mathop{\oplus}\limits_{\sigma \notin W} C_*(\sigma) \otimes_{\mathbb{Z}}\mathbb{Z}\Pi\big) \ \ \ \ & \\
		\end{aligned}
	\end{equation}
	
	Now denote $\Xi_{r,b,i}$ and $\varXi_r$ to be the following $\mathbb{Z}\Pi$-module isomorphism:
	\begin{equation}
		\label{Xi}
		\begin{aligned}
			\Xi_{r,b,i}:\mathop{\oplus}\limits_{\omega \in \Pi}C_r(p\hat{p}\omega\hat{\sigma}_{b,i}) \longrightarrow \mathbb{Z}\Pi^{N_r(i)} \ \ \ \ \ \ \ \ \ \ \ \ \ \ \ \ \ \ \ \ \ \ \ \ \ \ \ \ \  & \\
			\mathop{\oplus}\limits_{\omega \in \Pi}\sum\limits_{1 \leq s \leq N_r(i)}a(s,\omega)e_r^s(i) \mapsto \sum\limits_{1 \leq s \leq N_r(i)}\sum\limits_{\omega \in \Pi}a(s,\omega)\omega^{-1}\epsilon_s^{\Pi} \ \ (a(s,\omega) \in \mathbb{Z}) \ \ \ \ \ \ \ & \\
			\varXi_r:\mathop{\oplus}\limits_{\sigma \notin W} C_r(\sigma) \otimes_{\mathbb{Z}}\mathbb{Z}\Pi \longrightarrow \mathop{\oplus}\limits_{i=l_0+l_1+1}^{l_0+l_1+l_2}\mathbb{Z}\Pi^{N_r(i)} \ \ \ \ \ \ \ \ \ \ \ \ \ \ \ \ \ \ \ \ \ \ & \\
			\mathop{\oplus}\limits_{i=l_0+l_1+1}^{l_0+l_1+l_2}(\sum\limits_{1 \leq s \leq N_r(i)}a(s,i)e_r^s(i) \otimes \sum\limits_{\omega \in \Pi}b(i,\omega)\omega) \mapsto 	\mathop{\oplus}\limits_{i=l_0+l_1+1}^{l_0+l_1+l_2}\sum\limits_{\substack{1 \leq s \leq N_r(i) \\ \omega \in \Pi}}a(s,i)b(i,\omega)\omega\epsilon_s^{\Pi} & \\
			(a(s,i),b(i,\omega) \in \mathbb{Z}) \ \ \ \ \ \ \ \ \ \ \ \ \ \ \ \ \ \ \ \ \ \ \ \ \ \ \ \ \ \ \ \ \ \ \ \ \ & \\
		\end{aligned}
	\end{equation}
	
	For $r \in \mathbb{Z}$, let $f_r:D_r \longrightarrow C''_r, f_r \in \mathbb{F}_{\mathbb{N},b}(M^h(\mathbb{Z}\Pi))$ to be the equivalence class of the following morphism $\bar{f}_r$ in $\mathbb{F}_{\mathbb{N}}(M^h(\mathbb{Z}\Pi))$:
	
	If $b_1 \neq 0$,
	\begin{equation*}
		\begin{aligned}
			&\bar{f}_r(b_2,b_1):D_r(b_1) \longrightarrow C''_r(b_2), \mathop{\oplus}\limits_{(b,i) \in \bar{S}_{b_1}}z_{b,i} \mapsto \mathop{\oplus}\limits_{\substack{i \\ (b_2,i) \in \bar{S}_{b_1}}}\Xi_{r,b_2,i}z_{b_2,i}, \text{ where }	\\
			&z_{b,i} \in  \mathop{\oplus}\limits_{\omega \in \Pi} C_r(p\hat{p}\omega\hat{\sigma}_{b,i}) \text{ for all } (b,i) \in \bar{S}_{b_1}. \\
		\end{aligned}
	\end{equation*}

	If $b_1 = 0$ and $b_2 \neq 0$,
	\begin{equation}
		\label{barf}
		\begin{aligned}
			&\bar{f}_r(b_2,0):D_r(0) \longrightarrow C''_r(b_2), \mathop{\oplus}\limits_{(b,i) \in \bar{S}_{0}}z_{b,i} \oplus z \mapsto \mathop{\oplus}\limits_{\substack{i \\ (b_2,i) \in \bar{S}_{0}}}\Xi_{r,b_2,i}z_{b_2,i}, \text{ where } \\
			&z_{b,i} \in  \mathop{\oplus}\limits_{\omega \in \Pi} C_r(p\hat{p}\omega\hat{\sigma}_{b,i}),z \in \mathop{\oplus}\limits_{\sigma \notin W} C_r(\sigma) \otimes_{\mathbb{Z}}\mathbb{Z}\Pi \\
		\end{aligned}
	\end{equation}
	
	If $b_1=b_2=0$,
	\begin{equation*}
		\begin{aligned}
			&\bar{f}_r(0,0):D_r(0) \longrightarrow C''_r(0), \mathop{\oplus}\limits_{(b,i) \in \bar{S}_{0}}z_{b,i} \oplus z \mapsto \mathop{\oplus}\limits_{\substack{i \\ (0,i) \in \bar{S}_{0}}}\Xi_{r,0,i}z_{0,i} \oplus \varXi_r(z) \\
			& \text{where }z_{b,i} \in  \mathop{\oplus}\limits_{\omega \in \Pi} C_r(p\hat{p}\omega\hat{\sigma}_{b,i}),z \in \mathop{\oplus}\limits_{\sigma \notin W} C_r(\sigma) \otimes_{\mathbb{Z}}\mathbb{Z}\Pi \\
		\end{aligned}
	\end{equation*}
	
	(Note that by definition, $(0,i) \in \bar{S}_0$ implies $\tilde{\sigma}_i \in \widetilde{W}_{\infty}$ and thus $1 \leq i \leq l_0+l_1$. We have $\mathop{\oplus}\limits_{\substack{i \\ (0,i) \in \bar{S}_{0}}}\Xi_{r,0,i}z_{0,i} \in \mathop{\oplus}\limits_{i=1}^{l_0+l_1} \mathbb{Z}\Pi^{N_r(i)}$, while $\varXi_r(z) \in \mathop{\oplus}\limits_{i=l_0+l_1+1}^{l_0+l_1+l_2}\mathbb{Z}\Pi^{N_r(i)}$, thus the definition of $\bar{f}_r(0,0)$ is well defined.)
	
	We claim that $f:=\{f_r\}_{r \in \mathbb{Z}}$ induces an isomorphism of chain complexes in $\mathbb{F}_{\mathbb{N},b}(M^h(\mathbb{Z}\Pi))$ and furthermore, $f$ pushes the structure $\theta$ to $\psi''$. Then the first part of theorem is proved. The proof of the claim is divided into several steps:
	
	(e) To start with, we need to check that $\bar{f}_r$ is well-defined, that is, for any $b \in \mathbb{N}$, the sets $\{b'\ | \ \bar{f}_r(b',b) \neq 0\} ,\{b' \ | \ \bar{f}_r(b,b') \neq 0\}$ are finite.
	
	Note first that we only have to prove that the sets $\{b' \neq 0\ | \ \bar{f}_r(b',b) \neq 0\} ,\{b' \neq 0\ | \ \bar{f}_r(b,b') \neq 0\}$ are finite. By definition \ref{barf}, if $b' \neq 0$, then $\bar{f}_r(b',b) \neq 0 \Leftrightarrow \exists 1 \leq i \leq l$, such that $(b',i) \in \bar{S}_{b}$. Since $\bar{S}_{b}$ is a finite set, $\{b' \neq 0\ | \ \bar{f}_r(b',b) \neq 0\}$ is a finite set. For the other set, note that by definition \ref{barf}, if $b' \neq 0$, then $\bar{f}_r(b,b') \neq 0 \Leftrightarrow \exists 1 \leq i \leq l$, such that $(b,i) \in \bar{S}_{b'}$. By the definition of $\bar{S}_{b'}$, for every $1 \leq i \leq l$, there is at most one $b' \in \mathbb{N}$, such that $(b,i) \in \bar{S}_{b'}$. Thus $\{b' \neq 0 \ | \ \bar{f}_r(b,b') \neq 0\}$ is a finite set.
	
	(f) We check now that $f$ is an isomorphism, in fact, we claim that $\bar{f}_r$ is an isomorphism with the inverse given by:
	
	If $b_1 \neq 0$,
	\begin{equation}
		\label{barf-1neq0}
		\begin{aligned}
			&\bar{f}_r^{-1}(b_2,b_1):C''_r(b_1) \longrightarrow D_r(b_2), \mathop{\oplus}\limits_{i=1}^lz_i \mapsto \mathop{\oplus}\limits_{i=1}^lw_i \\
			&\text{where } z_i \in \mathbb{Z}\Pi^{N_r(i)} \text{ and } w_i=\begin{cases} \Xi_{r,b_1,i}^{-1}z_i & if \ (b_1,i) \in \bar{S}_{b_2} \\ \ \ \ \ 0 & else\\
			\end{cases}\\
		\end{aligned}
	\end{equation}

	If $b_1=0$,
	\begin{equation}
		\label{barf-10}
		\begin{aligned}
			&\bar{f}_r^{-1}(b_2,0):C''_r(0) \longrightarrow D_r(b_2), \mathop{\oplus}\limits_{i=1}^{l_1+l_2}z_i \oplus z \mapsto \mathop{\oplus}\limits_{i=1}^{l_1+l_2}w_i \oplus w \\
			&\text{where } w_i=\begin{cases} \Xi_{r,0,i}^{-1}z_i & if \ (0,i) \in \bar{S}_{b_2} \\ \ \ \ \ 0 & else\\
			\end{cases} \text{ and } w=\begin{cases} \varXi_{r}^{-1}z & if \ b_2=0 \\ \ \ \ \ 0 & else\\\end{cases} \\
		\end{aligned}
	\end{equation}

	To prove the claim, note first that similar to the proof in (e), $\bar{f}_r^{-1}$ is well defined, thus we only have to check that $\bar{f}_r\bar{f}_r^{-1}=\bar{f}_r^{-1}\bar{f}_r=id$. We will prove $\bar{f}_r\bar{f}_r^{-1}=id$ and the other one is similar.
	
	Now for any $b_1,b_3 \in \mathbb{N}$, we have:
	\begin{equation*}
		\bar{f}_r\bar{f}_r^{-1}(b_3,b_1)=\sum\limits_{b_2 \in \mathbb{N}}\bar{f}_r(b_3,b_2)\bar{f}_r^{-1}(b_2,b_1):C''(b_1) \longrightarrow C''(b_3)
	\end{equation*}
	
	If $b_1 \neq 0$, for any $1 \leq i \leq l$, since $K_b$ is disjoint for different $b$, there is a unique $b_2 \in \mathbb{N}$, such that $(b_1,i) \in \bar{S}_{b_2}$. Let $z_i \in \mathbb{Z}\Pi^{N_r(i)}$, then by definition \ref{barf-1neq0} of $\bar{f}_r^{-1}$, we get $\bar{f}_r\bar{f}_r^{-1}(b_3,b_1)(z_i)=\bar{f}_r(b_3,b_2)(\Xi_{r,b_1,i}^{-1}z_i)$ with $\Xi_{r,b_1,i}^{-1}z_i \in \mathop{\oplus}\limits_{\omega \in \Pi} C_r(p\hat{p}\omega\hat{\sigma}_{b_1,i})$. Then by definition \ref{barf} of $\bar{f}_r$, we get $\bar{f}_r(b_3,b_2)(\Xi_{r,b_1,i}^{-1}z_i)=\begin{cases} z_i & \text{If} \ b_3=b_1 \\ 0 & \text{else} \\ \end{cases}$.
	
	If $b_1=0$, for any $1 \leq i \leq l_1+l_2$, since $K_b$ is disjoint for different $b$, there is a unique $b_2 \in \mathbb{N}$, such that $(b_1,i) \in \bar{S}_{b_2}$. Let $z_i \in \mathbb{Z}\Pi^{N_r(i)}$, similar to the proof in the case $b_1 \neq 0$, we can get $\bar{f}_r\bar{f}^{-1}_r(b_3,b_1)(z_i)=\begin{cases} z_i & \text{If} \ b_3=b_1 \\ 0 & \text{else} \\ \end{cases}$.
	
	For any $l_1+l_2+1 \leq i \leq l$, since $g_0^{-1}\tilde{\sigma}_i \notin \widetilde{W}_{\infty}$, we have $(0,i) \notin \bar{S}_{b}$ for all $b \in \mathbb{N}$. Let $z_i \in \mathbb{Z}\Pi^{N_r(i)}$, by definition \ref{barf-10} of $\bar{f}^{-1}_r$, we have $\bar{f}_r\bar{f}^{-1}_r(b_3,b_1)(z_i)=\bar{f}_r(b_3,0)(\varXi_r^{-1}z_i)$, with $\varXi_r^{-1}z_i \in \mathop{\oplus}\limits_{\sigma \notin W} C_*(\sigma) \otimes_{\mathbb{Z}}\mathbb{Z}\Pi$. Then by definition \ref{barf} of $\bar{f}_r$, we get $\bar{f}_r(b_3,0)(\varXi_r^{-1}z_i)=\begin{cases} z_i & \text{If} \ b_3=0 \\ 0 & \text{else} \\ \end{cases}$. Therefore, combining the proofs in two cases we prove our claim.
	
	(g) We check now that $f$ is chain map, that is, the following diagram commutes:
	\begin{equation*}
		\begin{tikzcd}
			D_r \rar{f_r} \dar{d_{D,r}} & C_r'' \dar{d_r''} \\
			D_{r-1} \rar{f_{r-1}} & C_{r-1}'' \\
		\end{tikzcd}
	\end{equation*}

	Since all the morphisms that we consider are equivalence classes of certain morphisms in $\mathbb{F}_{\mathbb{N}}(M^h(\mathbb{Z}\Pi))$, it is equivalent to prove that $\bar{d}_r\bar{f}_r-\bar{f}_{r-1}\bar{d}_{D,r}$ is equivalent to the $0$ map. Recall that the morphisms of $\mathbb{F}_{\mathbb{N},b}(M^h(\mathbb{Z}\Pi))$ are the quotient of $\mathbb{F}_{\mathbb{N}}(M^h(\mathbb{Z}\Pi))$ by those morphisms $F$ where $\{(b_3,b_1) \in \mathbb{N}^2 \ | \ F(b_3,b_1) \neq 0\}$ is a finite set. Thus it is equivalent to prove that $\{(b_3,b_1) \in \mathbb{N}^2 \ | \ (\bar{d}_r\bar{f}_r-\bar{f}_{r-1}\bar{d}_{D,r})(b_3,b_1) \neq 0\}$ is a finite set. Note that since $\bar{d}_r\bar{f}_r-\bar{f}_{r-1}\bar{d}_{D,r}$ are morphisms in $\mathbb{F}_{\mathbb{N}}(M^h(\mathbb{Z}\Pi))$, the following sets are finite sets:
	\begin{equation*}
		\begin{aligned}
			\{(b_3,0) \in \mathbb{N}^2 \ | \ (\bar{d}_r\bar{f}_r-\bar{f}_{r-1}\bar{d}_{D,r})(b_3,0) \neq 0\} \\
			\{(0,b_1) \in \mathbb{N}^2 \ | \ (\bar{d}_r\bar{f}_r-\bar{f}_{r-1}\bar{d}_{D,r})(0,b_1) \neq 0\} \\
		\end{aligned}
	\end{equation*}
	
	Thus it suffices to prove that $\{(b_3,b_1) \in \mathbb{N}^2 \ | \ b_1 \neq 0, b_3 \neq 0, (\bar{d}_r\bar{f}_r-\bar{f}_{r-1}\bar{d}_{D,r})(b_3,b_1) \neq 0\}$ is a finite set.
	
	Let us make a computation of the morphsim $(\bar{d}_r\bar{f}_r-\bar{f}_{r-1}\bar{d}_{D,r})(b_3,b_1):D_r(b_1) \longrightarrow C_{r-1}''(b_3)$ under the condition $b_1 \neq 0,b_3 \neq 0$:
	
	Since $b_1 \neq 0$, by definition \ref{D} we have $D_r(b_1)=\mathop{\oplus}\limits_{(b,i) \in \bar{S}_{b_1}}\mathop{\oplus}\limits_{\omega \in \Pi}C_*(p\hat{p}\omega\hat{\sigma}_{b,i})$. Fix $(b,i) \in \bar{S}_{b_1}, \omega \in \Pi$ and choose $z_{b,i,\omega} \in C_*(p\hat{p}\omega\hat{\sigma}_{b,i})$, in order to understand the morphism, it suffices to compute the image of $z_{b,i,\omega}$. 
	
	For $b',b'' \in \mathbb{N}$ and $1 \leq j \leq l$, let $\delta_{b',j}^{b''}=\begin{cases} 1 & \text{If} \ (b',j) \in \bar{S}_{b''} \\ 0 & \text{else} \\
	\end{cases}$ and $\delta_{b',b''}=\begin{cases} 1 & \text{If} \ b'=b'' \\ 0 & \text{else} \\ \end{cases}$. For every $j \in S_i$, let $b_{i,j} \in \mathbb{N},h_{i,j} \in H$ be the unique element such that $\tilde{g}_{i,j}^{-1}\tilde{g}_b=\tilde{g}_{b_{i,j}}h_{i,j}$. Now by definition of the composition, we have:
	\begin{equation}
		\label{Eq6.15}
		\begin{aligned}
			(\bar{d}_r\bar{f}_r-\bar{f}_{r-1}\bar{d}_{D,r})(b_3,b_1)(z_{b,i,\omega})=&\sum\limits_{b_2 \in \mathbb{N}}\bar{d}_r(b_3,b_2)\bar{f}_r(b_2,b_1)(z_{b,i,\omega}) \\
			&-\sum\limits_{b_2 \in \mathbb{N}}\bar{f}_{r-1}(b_3,b_2)\bar{d}_{D,r}(b_2,b_1)(z_{b,i,\omega}) \\
		\end{aligned}
	\end{equation}

	We will compute the two terms separately, for the first term we have:
	\begin{equation*}
		\begin{aligned}
			& \quad \, \sum\limits_{b_2 \in \mathbb{N}}\bar{d}_r(b_3,b_2)\bar{f}_r(b_2,b_1)(z_{b,i,\omega}) \\
			& \ =\bar{d}_r(b_3,b)(\Xi_{r,b,i}z_{b,i,\omega}) \ \ (\text{By definition } \ref{barf}) \\
			& \ =\mathop{\oplus}\limits_{j=1}^l \rho_G\big(\tilde{g}(j,i)\big)[b_3,b] \cdot d_r(j,i)(\Xi_{r,b,i}z_{b,i,\omega})  \ \ (\text{By definition } \ref{bard}) \\
		\end{aligned}
	\end{equation*}

	If $j \in S_i$, recall that by definition $\rho_G$ is the induced action of $H$ on $\mathbb{Z}[\Pi]$ given by $h \cdot w=r_{\Pi}(h)w$. Since $\tilde{g}(j,i)\tilde{g}_b=\tilde{g}_{i,j}^{-1}\tilde{g}_b=\tilde{g}_{b_{i,j}}h_{i,j}$, we can get $\rho_G\big(\tilde{g}(j,i)\big)[b_3,b]=\rho_G(\tilde{g}_{i,j}^{-1})[b_3,b]=\delta_{b_{i,j},b_3}r_{\Pi}(h_{i,j})$.
	
	If $j \notin S_i$, then $\tilde{g}(j,i)$ is defined to be $0$ and thus $\rho_G\big((\tilde{g}(j,i))\big)[b_3,b]=0$. Combining these two cases we get:
	\begin{equation}
		\label{onesidediff}
		\sum\limits_{b_2 \in \mathbb{N}}\bar{d}_r(b_3,b_2)\bar{f}_r(b_2,b_1)(z_{b,i,\omega})=\mathop{\oplus}\limits_{j \in S_i}\delta_{b_{i,j},b_3}r_{\Pi}(h_{i,j}) \cdot d_r(j,i)(\Xi_{r,b,i}z_{b,i,\omega})
	\end{equation}
	
	Next we compute the second term, note first that we have:
	\begin{align*}
		\bar{d}_D(b_2,b_1)(z_{b,i,\omega})&=\mathop{\oplus}\limits_{\hat{\sigma} \in \widehat{K}_{b_2}}d_r(\hat{\sigma},\omega\hat{\sigma}_{b,i})(z_{b,i,\omega}) \ \ (\text{By definition } \ref{bardD})  \\
		&=\mathop{\oplus}\limits_{j \in S_i}\delta_{b_{i,j},j}^{b_2}d_r(\sigma_j,\sigma_i)(z_{b,i,\omega}) (\text{By definition } \ref{dr} )\\
		&\in \mathop{\oplus}\limits_{\substack{j \in S_i \\ (b_{i,j},j) \in \bar{S}_{b_2}}}C_{r-1}(p\hat{p}\omega r_{\Pi}(h_{i,j}^{-1})\hat{\sigma}_{b_{i,j},j}) \ \ (\text{By (1) in Claim \ref{cclaim}}) \\
	\end{align*}

	Then we have:
	\begin{equation}
		\label{Eq6.16}
		\begin{aligned}
			\sum\limits_{b_2 \in \mathbb{N}}\bar{f}_{r-1}(b_3,b_2)\bar{d}_{D,r}(b_2,b_1)(z_{b,i,\omega})=\sum\limits_{b_2 \in \mathbb{N}}\bar{f}_{r-1}(b_3,b_2)(\mathop{\oplus}\limits_{j \in S_i}\delta_{b_{i,j},j}^{b_2}d_r(\sigma_j,\sigma_i)(z_{b,i,\omega})) \\
		\end{aligned}
	\end{equation}
	
	If $1 \leq j \leq l_0+l_1$, then by definition of $\delta_{b_{i,j},j}^{b_2}$, there is a unique $b_2 \in \mathbb{N}$ such that $\delta_{b_{i,j},j}^{b_2}=1$.
	
	If $l_0+l_1+1 \leq j \leq l$, then we have $\sigma_j \notin W$ and $\sigma_i \leq \sigma_j$. We are in the case (b) or (d) in the proof of Claim \ref{cclaim} and we can find in the proof that $b_{i,j} \neq 0$. Thus we have $g_{b_{i,j}}^{-1}\tilde{\sigma}_j \in \widetilde{W}_{\infty}$. By definition of $\delta_{b_{i,j},j}^{b_2}$, there is a unique $b_2 \in \mathbb{N}$, such that $\delta_{b_{i,j},j}^{b_2}=1$, thus: 
	\begin{equation}
		\label{Eq6.17}
		\begin{aligned}
			& \quad \, \sum\limits_{b_2 \in \mathbb{N}}\bar{f}_{r-1}(b_3,b_2)\big(\mathop{\oplus}\limits_{j \in S_i}\delta_{b_{i,j},j}^{b_2}d_r(\sigma_j,\sigma_i)(z_{b,i,\omega})\big) \\
			&=\mathop{\oplus}\limits_{j \in S_i}\delta_{b_3,b_{i,j}}\Xi_{r,b_{i,j},j}\big(d_r(\sigma_j,\sigma_i)(z_{b,i,\omega})\big) \ \ (\text{By definition } \ref{barf}) \\
		\end{aligned}
	\end{equation}

	Suppose that $z_{b,i,\omega}=\sum\limits_{s=1}^{N_r(i)} a_{s}e_{r}^{s}(i)$ with $a_s \in \mathbb{Z}$. For any $1 \leq s \leq N_r(i), 1 \leq s' \leq N_{r-1}(j)$, let $d_r(j,i)[s',s] \in \mathbb{Z}$ be the element in the $s'$-th row and $s$-th column of $d_r(j,i)$. By definition of $d_r(j,i)$, we have $d_r(\sigma_j,\sigma_i)(z_{b,i,\omega})=\sum\limits_{s=1}^{N_r(i)}\sum\limits_{s'=1}^{N_{r-1}(j)}a_sd_r(j,i)[s',s]e_{r-1}^{s'}(j)$. By definition \ref{Xi}, we have:
	\begin{equation*}
		\begin{aligned}
			\Xi_{r,b_{i,j},j}\big(d_r(\sigma_j,\sigma_i)(z_{b,i,\omega})\big)&=\sum\limits_{s=1}^{N_r(i)}\sum\limits_{s'=1}^{N_{r-1}(j)}a_sd_r(j,i)[s',s] r_{\Pi}(h_{i,j})\omega^{-1} \epsilon_{s'}^{\Pi} \\
			&=r_{\Pi}(h_{i,j}) \cdot d_r(j,i) \big(\sum\limits_{s=1}^{N_r(i)}a_s\omega^{-1} \epsilon_{s}^{\Pi}\big) \\
			& \quad (\text{By definition } \ref{Xi}) \\
			&=r_{\Pi}(h_{i,j}) \cdot d_r(j,i) \big(\Xi_{r,b,i}z_{b,i,\omega}\big)
		\end{aligned}
	\end{equation*}
	
	Substituting the equation above to equation \ref{Eq6.16} and \ref{Eq6.17}, we have:
	\begin{equation}
		\label{anothersidediff}
		\begin{aligned}
			\sum\limits_{b_2 \in \mathbb{N}}\bar{f}_{r-1}(b_3,b_2)\bar{d}_{D,r}(b_2,b_1)(z_{b,i,\omega})=\mathop{\oplus}\limits_{j \in S_i}\delta_{b_3,b_{i,j}}r_{\Pi}(h_{i,j}) \cdot d_r(j,i)(\Xi_{r,b,i}z_{b,i,\omega})
		\end{aligned}
	\end{equation}

	Substituting the two results \ref{onesidediff} and \ref{anothersidediff} into equation \ref{Eq6.15}, we get $	(\bar{d}_r\bar{f}_r-\bar{f}_{r-1}\bar{d}_D)(b_3,b_1)=0$. In other words, we have that $\{(b_3,b_1) \in \mathbb{N}^2 \ | \ b_3 \neq 0,b_1 \neq 0, (\bar{d}_r\bar{f}_r-\bar{f}_{r-1}\bar{d}_D)(b_3,b_1) \neq 0\}$ is an empty set, in particular, it is a finite set. Thus we have checked that $f$ is a chain map. 
	
	(h) We check now that $f$ pushes the strucure $\theta$ to $\psi''$, that is, the following diagram commutes:
	$$
	\begin{tikzcd}
		D_{n-u-r}^* \dar{\theta_u} & {C_{n-u-r}''}^* \dar{\psi_u''} \lar[swap]{f_{n-u-r}^*}\\
		D_{r} \rar{f_{r}} & C_{r}'' \\
	\end{tikzcd}
	$$
	
	The proof method is the same with (g), we can similarly get that it suffices to prove that for any $u \in \mathbb{N},r \in \mathbb{Z}$, the set $\{(b_3,b_1) \in \mathbb{N}^2 \ | \ b_1 \neq 0, b_3 \neq 0, (\bar{f}_r\bar{\theta}_u^r-\bar{\psi}_u^r(\bar{f}_{n-u-r}^{-1})^*)(b_3,b_1) \neq 0\}$ is a finite set.
	
	Let us make a computation of $(\bar{f}_r\bar{\theta}_u^r-\bar{\psi}_u^r(\bar{f}_{n-u-r}^{-1})^*)(b_3,b_1):D_{n-u-r}(b_1)^* \longrightarrow C_r''(b_3)$ under the condition $b_1 \neq 0,b_3 \neq 0$:
	
	Since $b_1 \neq 0$, we have $D_{n-u-r}(b_1)^*=\mathop{\oplus}\limits_{(b,i) \in \bar{S}_{b_1}}\mathop{\oplus}\limits_{\omega \in \Pi}C_{n-u-r}(p\hat{p}\omega\hat{\sigma}_{b,i})^*$, it suffices to compute the morphism on every direct summand. Fix $(b,i) \in \bar{S}_{b_1}$ and choose $z_{b,i}=\mathop{\oplus}\limits_{\omega \in \Pi}z_{b,i,\omega} \in \mathop{\oplus}\limits_{\omega \in \Pi}C_{n-u-r}(p\hat{p}\omega\hat{\sigma}_{b,i})^*$. Let $\hat{x}_1,\hat{x}_2,...,\hat{x}_c$ be all vertices in $\hat{\sigma}_{b,i}$ and for every $1 \leq v \leq c$, let $x_v=p\hat{p}\hat{x}_v \in \sigma_i$. For every $1 \leq v \leq c$, let $i(v)$ be the index such that $\sigma_{i(v)}=x_v$ and let $\mathbf{S}_v=\{j \ | \ x_v \in \sigma_j\}$. For every $1 \leq v \leq c$ and $j \in \mathbf{S}_v$, let $\mathbf{b}_v,\mathbf{b}_{j,v} \in \mathbb{N}$ and $\mathbf{h}_v,\mathbf{h}_{j,v} \in H$ be the unique element such that:
	\begin{equation}
		\label{Grelquad}
		\tilde{g}_{i(v),i}^{-1}\tilde{g}_{\mathbf{b}_v}=\tilde{g}_{b_1}\mathbf{h}_v,\tilde{g}_{i(v),j}^{-1}\tilde{g}_{\mathbf{b}_v}=\tilde{g}_{\mathbf{b}_{j,v}}\mathbf{h}_{j,v}
	\end{equation}
	
	For $b',b'' \in \mathbb{N}, 1 \leq j \leq l$, let $\delta_{b',j}^{b''}=\begin{cases} 1 & if \ (b',j) \in \bar{S}_{b''} \\ 0 & else \\
	\end{cases}$, $\delta_{b',b''}=\begin{cases} 1 & if \ b'=b'' \\ 0 & else \\ \end{cases}$. Fix $b_1 \neq 0$, $b_3 \neq 0$, we begin to compute $\bar{f}_r\bar{\theta}_u^r(b_3,b_1)(z_{b,i})$,$\bar{\psi}_u^r(\bar{f}_{n-u-r}^{-1})^*(b_3,b_1)(z_{b,i})$ respectively, note first by the definition of composition, we have:
	\begin{equation*}
		\bar{f}_r\bar{\theta}_u^r(b_3,b_1)=\sum\limits_{b_2 \in \mathbb{N}}\bar{f}_r(b_3,b_2)\bar{\theta}_u^r(b_2,b_1)
	\end{equation*}
	
	Then for any $b_2 \in \mathbb{N}$ we have:
	\begin{equation}
		\label{Eq6.18}
		\begin{aligned}
			\bar{\theta}_u^r(b_2,b_1)(z_{b,i})&=\sum\limits_{\omega \in \Pi}\mathop{\oplus}_{\hat{\sigma} \in \widehat{K}_{b_2}}\psi_u^r(\hat{\sigma},\omega\hat{\sigma}_{b,i})(z_{b,i,\omega}) \ \ (\text{By definition }\ref{bartheta})\\
			&=\sum\limits_{\omega \in \Pi}\mathop{\oplus}_{(b',j) \in \bar{S}}\mathop{\oplus}_{\omega' \in  \Pi}\delta_{b',j}^{b_2} \psi_u^r(\omega'\hat{\sigma}_{b',j},\omega\hat{\sigma}_{b,i})(z_{b,i,\omega})
		\end{aligned}
	\end{equation}

	Now since $(b',j) \in \bar{S}$ and $K_b$ is disjoint for different $b$, there is a unique $b_2 \in \mathbb{N}$, such that $\delta_{b',j}^{b_2}=1$. Thus by definition \ref{barf} of $\bar{f}$, we have:
	\begin{equation}
			\label{Eq6.19}
			\begin{aligned}
			&\quad \sum\limits_{b_2 \in \mathbb{N}}\bar{f}_r(b_3,b_2)\bar{\theta}_u(b_2,b_1)(z_{b,i}) \\
			& \quad (\text{By \ref{Eq6.18}} )\\
			&=\sum\limits_{b_2 \in \mathbb{N}}\bar{f}_r(b_3,b_2)\big(\sum\limits_{\omega \in \Pi}\mathop{\oplus}_{(b',j) \in \bar{S}}\mathop{\oplus}_{\omega' \in  \Pi}\delta_{b',j}^{b_2}\psi_u^r(\omega'\hat{\sigma}_{b',j},\omega\hat{\sigma}_{b,i})(z_{b,i,\omega})\big) \\
			&=\sum\limits_{\omega \in \Pi}\sum\limits_{b_2 \in \mathbb{N}}\bar{f}_r(b_3,b_2)\big(\mathop{\oplus}_{(b',j) \in \bar{S}}\mathop{\oplus}_{\omega' \in  \Pi}\delta_{b',j}^{b_2} \psi_u^r(\omega'\hat{\sigma}_{b',j},\omega\hat{\sigma}_{b,i})(z_{b,i,\omega})\big) \\
			& \quad (\text{By definition }\ref{barf}) \\
			&=\sum\limits_{\omega \in \Pi}\mathop{\oplus}_{\substack{ j \\ (b_3,j) \in \bar{S}}}\Xi_{r,b_3,j}\big(\mathop{\oplus}_{\omega' \in  \Pi} \psi_u^r(\omega'\hat{\sigma}_{b_3,j},\omega\hat{\sigma}_{b,i})(z_{b,i,\omega})\big)\\
			&=\mathop{\oplus}_{\substack{ j \\ (b_3,j) \in \bar{S}}}\Xi_{r,b_3,j}\big(\mathop{\oplus}_{\omega' \in  \Pi}\sum\limits_{\omega \in \Pi} \psi_u^r(\omega'\hat{\sigma}_{b_3,j},\omega\hat{\sigma}_{b,i})(z_{b,i,\omega})\big) \\
		\end{aligned}
	\end{equation}

	Note that for any $1 \leq v \leq c$ we have $x_v=\sigma_{i(v)} \leq \sigma_i$ and $\hat{x}_v \leq \hat{\sigma}_{b_1,i} \in \widehat{W}_{\infty}$. By (2) in Claim \ref{cclaim}, we have $\hat{x}_v=r_{\Pi}(\mathbf{h}_v)\hat{\sigma}_{\mathbf{b}_v,i(v)}$. For $\omega' \in \Pi,(b_3,j) \in \bar{S}, 1 \leq v \leq c$, by (2) in Claim \ref{cclaim}, we have that $\hat{x}_v \in \omega'\hat{\sigma}_{b_3,j}$ if and only if $j \in \mathbf{S}_v$, $b_3=\mathbf{b}_{j,v}$, $\omega'=r_{\Pi}(\mathbf{h}_v\mathbf{h}_{j,v}^{-1})$. Then, for any $1 \leq j \leq l,\omega' \in \Pi$ with $(b_3,j) \in \bar{S}$, we have:
	\begin{equation}
		\label{Eq6.20}
			\begin{aligned}
			&\quad \sum\limits_{\omega \in \Pi}\psi_u^r(\omega'\hat{\sigma}_{b_3,j},\omega\hat{\sigma}_{b,i})(z_{b,i,\omega}) \\
			& \quad (\text{By definition } \ref{psir}) \\
			&=\sum\limits_{\omega \in \Pi}\sum\limits_{\substack{\hat{x} \in \omega'\hat{\sigma}_{b_3,j} \cap \omega\hat{\sigma}_{b,i} \\ |\hat{x}|=0}} \psi_u^r(p\hat{p}\omega'\hat{\sigma}_{b_3,j},p\hat{p}\hat{x}) \Upsilon^{n-u-r}_{\hat{x},\omega\hat{\sigma}_{b,i}}(z_{b,i,\omega}) \\
			&=\sum\limits_{\omega \in \Pi}\sum\limits_{\substack{1 \leq v \leq c \\ j \in \mathbf{S}_v}}\delta_{b_3,\mathbf{b}_{j,v}}\delta_{\omega',\omega r_{\Pi}(\mathbf{h}_v\mathbf{h_{j,v}^{-1}})}\psi_u^r(\sigma_{j},x_v) \Upsilon^{n-u-r}_{\omega\hat{x}_v,\omega\hat{\sigma}_{b,i}}(z_{b,i,\omega}) \\
		\end{aligned}
	\end{equation}

	When $b_3=\mathbf{b}_{j,v}$ for some $v$ and  $\omega'=\omega r_{\Pi}(\mathbf{h}_v\mathbf{h_{j,v}^{-1}})$, $	\psi_u^r(\sigma_{j},x_v)\Upsilon^{n-u-r}_{\omega\hat{x}_v,\omega\hat{\sigma}_{b,i}}$ is the following morphism:
	\begin{equation*}
		\begin{tikzcd}
			C_{n-u-r}(p\hat{p}\omega\hat{\sigma}_{b,i})^* \ar{rr}{\text{inclusion}} & &  C^{n-u-r}(p\hat{p}\omega\hat{x}_v) \ar{rr}{\psi_u^r(\sigma_{j},x_v)} & & C_r(p\hat{p}\omega'\hat{\sigma}_{b_3,j})
		\end{tikzcd}
	\end{equation*}
	
	which is the same as $\varphi_{u,x_v}^r(j,i)$:
	\begin{equation*}
		\begin{tikzcd}
			C_{n-u-r}(\sigma_{i})^* \ar{rr}{\text{inclusion}} & & C^{n-u-r}(x_v) \ar{rr}{\psi_u^r(\sigma_{j},x_v)} & & C_r(\sigma_j)
		\end{tikzcd}
	\end{equation*}

	Therefore, for any $\omega' \in \Pi$, we have:
	\begin{equation}
		\label{Eq6.21}
		\begin{aligned}
			&\quad \ \ \sum\limits_{\omega \in \Pi}\sum\limits_{\substack{1 \leq v \leq c \\ j \in \mathbf{S}_v}}\delta_{b_3,\mathbf{b}_{j,v}}\delta_{\omega',\omega r_{\Pi}(\mathbf{h}_v\mathbf{h_{j,v}^{-1}})}\psi_u^r(\sigma_{j},x_v) \Upsilon^{n-u-r}_{\omega\hat{x}_v,\omega\hat{\sigma}_{b,i}}(z_{b,i,\omega}) \\
			&=\sum\limits_{\substack{1 \leq v \leq c \\ j \in \mathbf{S}_v}}\delta_{b_3,\mathbf{b}_{j,v}}\psi_u^r(\sigma_{j},x_v) \Upsilon^{n-u-r}_{\omega'r_{\Pi}(\mathbf{h}_{j,v}\mathbf{h}_v^{-1})\hat{x}_v,\omega'r_{\Pi}(\mathbf{h}_{j,v}\mathbf{h}_v^{-1})\hat{\sigma}_{b,i}}(z_{b,i,\omega'r_{\Pi}(\mathbf{h}_{j,v}\mathbf{h}_v^{-1})}) \\
			&=\sum\limits_{\substack{1 \leq v \leq c \\ j \in \mathbf{S}_v}}\delta_{b_3,\mathbf{b}_{j,v}}\varphi_{u,x_v}^r(j,i)\big(\sum\limits_{s=1}^{N_{n-u-r}(i)}z_{b,i,\omega'r_{\Pi}(\mathbf{h}_{j,v}\mathbf{h}_v^{-1}),s}e_{n-u-r}^s(i)^*\big) \\
		\end{aligned}
	\end{equation}
	
	By definition, $\psi_{u,x_v}^{r}(j,i)$ is the matrix of the homomorphism $\varphi_{u,x_v}^r(j,i)$ under the $\mathbb{Z}$-basis $\{e_{n-u-r}^s(i)^*\}_{s=1}^{N_{n-u-r}(i)},\{e_r^{s'}(j)\}_{s'=1}^{N_r(j)}$, we can interpret it as follows. Let $\psi_{u,x_v}^{r}(j,i)[s',s] \in \mathbb{Z}$ be the element in the $s'$th row and $s$th column. Write $z_{b,i,\omega}=\sum\limits_{s=1}^{N_{n-u-r}(i)}z_{b,i,\omega,s}e_{n-u-r}^s(i)^*$, then we have:
	
	\begin{equation}
		\label{Eq6.22}
		\begin{aligned}
			&\quad \ \sum\limits_{\substack{1 \leq v \leq c \\ j \in \mathbf{S}_v}}\delta_{b_3,\mathbf{b}_{j,v}}\varphi_{u,x_v}^r(j,i)\big(\sum\limits_{s=1}^{N_{n-u-r}(i)}z_{b,i,\omega'r_{\Pi}(\mathbf{h}_{j,v}\mathbf{h}_v^{-1}),s}e_{n-u-r}^s(i)^*\big) \\
			&=\sum\limits_{\substack{1 \leq v \leq c \\ j \in \mathbf{S}_v}}\delta_{b_3,\mathbf{b}_{j,v}}\sum\limits_{s'=1}^{N_r(j)}\sum\limits_{s=1}^{N_{n-u-r}(i)}\psi_{u,x_v}^r(j,i)[s',s]z_{b,i,\omega'r_{\Pi}(\mathbf{h}_{j,v}\mathbf{h}_v^{-1}),s}e_{r}^{s'}(j) \\
			&\in C_r(p\hat{p}\omega'\hat{\sigma}_{b_3,j}) \\
		\end{aligned}
	\end{equation}
	Combining \ref{Eq6.20}, \ref{Eq6.21} and \ref{Eq6.22}, we get for any $1 \leq j \leq l$:
	\begin{equation*}
			\begin{aligned}
			&\quad \ \Xi_{r,b_3,j}\big(\mathop{\oplus}_{\omega' \in  \Pi}\sum\limits_{\omega \in \Pi} \psi_u^r(\omega'\hat{\sigma}_{b_3,j},\omega\hat{\sigma}_{b,i})(z_{b,i,\omega})\big)\\
			& \quad (\text{By } \ref{Eq6.20},\ref{Eq6.21},\ref{Eq6.22}) \\
			&=\Xi_{r,b_3,j}\big(\mathop{\oplus}_{\omega' \in  \Pi}\sum\limits_{\substack{1 \leq v \leq c \\ j \in \mathbf{S}_v}}\delta_{b_3,\mathbf{b}_{j,v}}\sum\limits_{s'=1}^{N_r(j)}\sum\limits_{s=1}^{N_{n-u-r}(i)}\psi_{u,x_v}^r(j,i)[s',s]z_{b,i,\omega'r_{\Pi}(\mathbf{h}_{j,v}\mathbf{h}_v^{-1}),s}e_{r}^{s'}(j)\big) \\
			& \quad (\text{By definition } \ref{Xi}) \\
			&=\sum\limits_{\omega' \in  \Pi}\sum\limits_{\substack{1 \leq v \leq c \\ j \in \mathbf{S}_v}}\delta_{b_3,\mathbf{b}_{j,v}}\sum\limits_{s'=1}^{N_r(j)}\sum\limits_{s=1}^{N_{n-u-r}(i)}\psi_{u,x_v}^r(j,i)[s',s]z_{b,i,\omega'r_{\Pi}(\mathbf{h}_{j,v}\mathbf{h}_v^{-1}),s}\omega'^{-1}\epsilon_{s'}^{\Pi} \\
		\end{aligned}
	\end{equation*}
	
	Changing the summing index $ \omega=\omega'r_{\Pi}(\mathbf{h}_{j,v}\mathbf{h}_v^{-1})$, we get:
	\begin{equation}
		\label{Eq6.23}
		\begin{aligned}
			&\quad \ \  \Xi_{r,b_3,j}\big(\mathop{\oplus}_{\omega' \in  \Pi}\sum\limits_{\omega \in \Pi} \psi_u^r(\omega'\hat{\sigma}_{b_3,j},\omega\hat{\sigma}_{b,i})(z_{b,i,\omega})\big)\\
			&=\sum\limits_{\omega \in  \Pi}\sum\limits_{\substack{1 \leq v \leq c \\ j \in \mathbf{S}_v}}\delta_{b_3,\mathbf{b}_{j,v}}\sum\limits_{s'=1}^{N_r(j)}\sum\limits_{s=1}^{N_{n-u-r}(i)}\psi_{u,x_v}^r(j,i)[s',s]z_{b,i,\omega,s}r_{\Pi}(\mathbf{h}_{j,v}\mathbf{h}_v^{-1})\omega^{-1}\epsilon_{s'}^{\Pi} \\
			&=\sum\limits_{\omega \in  \Pi}\sum\limits_{\substack{1 \leq v \leq c \\ j \in \mathbf{S}_v}}\delta_{b_3,\mathbf{b}_{j,v}}r_{\Pi}(\mathbf{h}_{j,v}\mathbf{h}_v^{-1}) \cdot \bigg(\sum\limits_{s=1}^{N_{n-u-r}(i)}z_{b,i,\omega,s}\psi_{u,x_v}^r(j,i)\big((\epsilon_{s}^{\Pi})^*\big) \omega^{-1}\bigg) \\
			&=\sum\limits_{\omega \in  \Pi}\sum\limits_{\substack{1 \leq v \leq c \\ j \in \mathbf{S}_v}}\delta_{b_3,\mathbf{b}_{j,v}}r_{\Pi}(\mathbf{h}_{j,v}\mathbf{h}_v^{-1}) \cdot \bigg(\psi_{u,x_v}^r(j,i)\big(\sum\limits_{s=1}^{N_{n-u-r}(i)}z_{b,i,\omega,s}\omega^{-1}(\epsilon_{s}^{\Pi})^*\big)\bigg) \\
		\end{aligned}
	\end{equation}

	Summarizing the computations above, we get:
	\begin{equation*}
		\begin{aligned}
			& \quad \ \ \bar{f}_r\bar{\theta}_u(b_3,b_1)(z_{b,i})\\
			&=\sum\limits_{b_2 \in \mathbb{N}}\bar{f}_r(b_3,b_2)\bar{\theta}_u(b_2,b_1)(z_{b,i}) \\
			& \quad (\text{By } \ref{Eq6.19}) \\
			&=\mathop{\oplus}_{\substack{ j \\ (b_3,j) \in \bar{S}}}\Xi_{r,b_3,j}\big(\mathop{\oplus}_{\omega' \in  \Pi}\sum\limits_{\omega \in \Pi} \psi_u^r(\omega'\hat{\sigma}_{b_3,j},\omega\hat{\sigma}_{b,i})(z_{b,i,\omega})\big) \\
			& \quad (\text{By } \ref{Eq6.23}) \\
			&=\mathop{\oplus}_{\substack{ j \\ (b_3,j) \in \bar{S}}}\sum\limits_{\omega \in  \Pi}\sum\limits_{\substack{1 \leq v \leq c \\ j \in \mathbf{S}_v}}\delta_{b_3,\mathbf{b}_{j,v}}r_{\Pi}(\mathbf{h}_{j,v}\mathbf{h}_v^{-1}) \cdot \bigg(\psi_{u,x_v}^r(j,i)\big(\sum\limits_{s=1}^{N_{n-u-r}(i)}z_{b,i,\omega,s}\omega^{-1}(\epsilon_{s}^{\Pi})^*\big)\bigg) \\
		\end{aligned}
	\end{equation*}
	
	Since $b_3 \neq 0$, we have $(b_3,j) \in \bar{S}$ for $\forall 1 \leq j \leq l$, therefore,
	\begin{equation}
		\label{onesidequad}
		\begin{aligned}
			& \quad \ \ \bar{f}_r\bar{\theta}_u(b_3,b_1)(z_{b,i}) \\
			&=\mathop{\oplus}_{j=1}^l\sum\limits_{\omega \in  \Pi}\sum\limits_{\substack{1 \leq v \leq c \\ j \in \mathbf{S}_v}}\delta_{b_3,\mathbf{b}_{j,v}}r_{\Pi}(\mathbf{h}_{j,v}\mathbf{h}_v^{-1}) \cdot \bigg(\psi_{u,x_v}^r(j,i)\big(\sum\limits_{s=1}^{N_{n-u-r}(i)}z_{b,i,\omega,s}\omega^{-1}(\epsilon_{s}^{\Pi})^*\big)\bigg) \\
		\end{aligned}
	\end{equation}
	
	We continue to compute $\bar{\psi}_u^r(\bar{f}_{n-u-r}^{-1})^*(b_3,b_1)$, by composition law,
	\begin{equation*}
		\bar{\psi}_u^r(\bar{f}_{n-u-r}^{-1})^*(b_3,b_1)=\sum\limits_{b_2 \in \mathbb{N}}\bar{\psi}_u^r(b_3,b_2)(\bar{f}_{n-u-r}^{-1})^*(b_2,b_1)
	\end{equation*}
	
	Note that for any $b_2 \in \mathbb{N}$, we have:
	\begin{equation*}
		(\bar{f}_{n-u-r}^{-1})^*(b_2,b_1)(z_{b,i})=(\bar{f}_{n-u-r}^{-1}(b_1,b_2))^*(z_{b,i}) \in C_{n-u-r}''(b_2)^*=\mathop{\oplus}\limits_{j=1}^{l}(\mathbb{Z}\Pi^{N_{n-u-r}(j)})^*
	\end{equation*}
	
	Choose any element in $\mathop{\oplus}\limits_{j=1}^{l}\mathbb{Z}\Pi^{N_{n-u-r}(j)}$, write it as $\mathop{\oplus}\limits_{j=1}^l\sum\limits_{s=1}^{N_{n-u-r}(j)}\sum\limits_{\omega' \in \Pi}a(j,s,\omega')\omega'\epsilon_s^{\Pi}$ with $a(j,s,\omega') \in \mathbb{Z}$, then we have:
	\begin{equation*}
			\begin{aligned}
			& \quad \big((\bar{f}_{n-u-r}^{-1}(b_1,b_2))^*(z_{b,i})\big)\big(\mathop{\oplus}\limits_{j=1}^l\sum\limits_{s=1}^{N_{n-u-r}(j)}\sum\limits_{\omega' \in \Pi}a(j,s,\omega')\omega'\epsilon_s^{\Pi}\big) \\
			& \quad (\text{By definition of dual morphism}) \\
			&=z_{b,i}\bigg(\bar{f}_{n-u-r}^{-1}(b_1,b_2)\big(\mathop{\oplus}\limits_{j=1}^l\sum\limits_{s=1}^{N_{n-u-r}(j)}\sum\limits_{\omega' \in \Pi}a(j,s,\omega')\omega'\epsilon_s^{\Pi}\big)\bigg) \\
			& \quad (\text{By definition } \ref{barf-1neq0},\ref{barf-10}) \\
			&=z_{b,i}\bigg(\mathop{\oplus}\limits_{j=1}^l\delta_{b_2,j}^{b_1}\Xi_{n-u-r,b_2,j}^{-1}\big(\sum\limits_{s=1}^{N_{n-u-r}(j)}\sum\limits_{\omega' \in \Pi}a(j,s,\omega')\omega'\epsilon_s^{\Pi}\big)\bigg) \\
		\end{aligned}
	\end{equation*}
	
	Since $z_{b,i} \in (\mathop{\oplus}\limits_{\omega \in \Pi}C_{n-u-r}(p\hat{p}\omega\hat{\sigma}_{b,i}))^*$ and the range of $\Xi_{n-u-r,b_2,j}^{-1}$ is $C_{n-u-r}(p\hat{p}\omega\hat{\sigma}_{b_2,j})$, we have:
	\begin{equation*}
		\begin{aligned}
			& \quad \big((\bar{f}_{n-u-r}^{-1}(b_1,b_2))^*(z_{b,i})\big)\big(\mathop{\oplus}\limits_{j=1}^l\sum\limits_{s=1}^{N_{n-u-r}(j)}\sum\limits_{\omega' \in \Pi}a(j,s,\omega')\omega'\epsilon_s^{\Pi}\big) \\
			&=\delta_{b,b_2}z_{b,i}\bigg(\Xi_{n-u-r,b,i}^{-1}\big(\sum\limits_{s=1}^{N_{n-u-r}(i)}\sum\limits_{\omega' \in \Pi}a(i,s,\omega')\omega'\epsilon_s^{\Pi}\big)\bigg) \\
			&=\delta_{b,b_2}z_{b,i}\bigg(\mathop{\oplus}\limits_{\omega' \in \Pi}\sum\limits_{s=1}^{N_{n-u-r}(i)}a(i,s,\omega'^{-1})e_{n-u-r}^s(i)\bigg) \ \ (\text{By Definition } \ref{Xi}) \\
		\end{aligned}
	\end{equation*}
	It is straightforward to check that the following map is an isomorphism between (right) $\mathbb{Z}\Pi$-modules:
	\begin{equation*}
		\mathop{\oplus}\limits_{\omega \in \Pi}C_{n-u-r}(p\hat{p}\omega\hat{\sigma}_{b,i})^* \longrightarrow \big(\mathop{\oplus}\limits_{\omega \in \Pi}C_{n-u-r}(p\hat{p}\omega\hat{\sigma}_{b,i})\big)^*
	\end{equation*}
	\begin{equation*}
		\mathop{\oplus}\limits_{\omega \in \Pi} f_\omega \mapsto (\mathop{\oplus}\limits_{\omega' \in \Pi}y_{\omega'} \mapsto \sum\limits_{\omega \in \Pi}\sum\limits_{\omega' \in \Pi}f_{\omega}(y_{\omega'})\omega\omega'^{-1})
	\end{equation*}

	Thus, we have:
	\begin{equation}
		\label{quad6}
		\begin{aligned}
				& \quad \big((\bar{f}_{n-u-r}^{-1}(b_1,b_2))^*(z_{b,i})\big)\big(\mathop{\oplus}\limits_{j=1}^l\sum\limits_{s=1}^{N_{n-u-r}(j)}\sum\limits_{\omega' \in \Pi}a(j,s,\omega')\omega'\epsilon_s^{\Pi}\big) \\
				&=\delta_{b,b_2}z_{b,i}\bigg(\mathop{\oplus}\limits_{\omega' \in \Pi}\sum\limits_{s=1}^{N_{n-u-r}(i)}a(i,s,\omega'^{-1})e_{n-u-r}^s(i)\bigg)  \\
				&=\delta_{b,b_2}\sum\limits_{\omega \in \Pi}\sum\limits_{\omega' \in \Pi}z_{b,i,\omega}\big(\sum\limits_{s=1}^{N_{n-u-r}(i)}a(i,s,\omega'^{-1})e_{n-u-r}^s(i)\big)\omega\omega'^{-1}\\
				&=\delta_{b,b_2}\sum\limits_{\omega \in \Pi}\sum\limits_{\omega' \in \Pi}\sum\limits_{s=1}^{N_{n-u-r}(i)}z_{b,i,\omega,s}a(i,s,\omega')\omega\omega' \\
		\end{aligned}
	\end{equation}
	
	Note that, for any $\omega \in \Pi$, we have:
	\begin{equation}
		\label{quad7}
		\begin{aligned}
			& \quad \big(\mathop{\oplus}\limits_{j=1}^l\delta_{i,j}\sum\limits_{s'=1}^{N_{n-u-r}(j)}z_{b,i,\omega,s'}\omega^{-1}(\epsilon_{s'}^{\Pi})^*\big)\big(\mathop{\oplus}\limits_{j=1}^l\sum\limits_{s=1}^{N_{n-u-r}(j)}\sum\limits_{\omega' \in \Pi}a(j,s,\omega')\omega'\epsilon_s^{\Pi}\big) \\
			&=\big(\sum\limits_{s'=1}^{N_{n-u-r}(i)}z_{b,i,\omega,s'}\omega^{-1}(\epsilon_{s'}^{\Pi})^*\big)\big(\sum\limits_{s=1}^{N_{n-u-r}(i)}\sum\limits_{\omega' \in \Pi}a(i,s,\omega')\omega'\epsilon_s^{\Pi}\big) \\
			& \quad (\text{By definition of the right module structure on dual module}) \\
			&=\sum\limits_{s'=1}^{N_{n-u-r}(j)}z_{b,i,\omega,s'}\omega \cdot \big(\epsilon_{s'}^{\Pi})^* (\sum\limits_{s=1}^{N_{n-u-r}(j)}\sum\limits_{\omega' \in \Pi}a(i,s,\omega')\omega'\epsilon_s^{\Pi}\big)\\
			&=\sum\limits_{s=1}^{N_{n-u-r}(j)}\sum\limits_{\omega' \in \Pi}z_{b,i,\omega,s}a(i,s,\omega')\omega\omega'
		\end{aligned}
	\end{equation}
	
	Comparing \ref{quad6} and \ref{quad7} we get:
	\begin{equation}
		\label{quad8}
		(\bar{f}_{n-u-r}^{-1}(b_1,b_2))^*(z_{b,i})=\delta_{b,b_2}\sum\limits_{\omega \in \Pi}\mathop{\oplus}\limits_{j=1}^l\delta_{i,j}\sum\limits_{s=1}^{N_{n-u-r}(j)}z_{b,i,\omega,s}\omega^{-1}(\epsilon_{s}^{\Pi})^*
	\end{equation}
	
	For $1 \leq v \leq c$, $j \in \mathbf{S}_v$, by definition \ref{Grelquad} we can get:
	\begin{equation*}
		\tilde{g}(j,i(v))\tilde{g}(i,i(v))^{-1}\tilde{g}_{b_1}=\tilde{g}_{i(v),j}^{-1}\tilde{g}_{i(v),i}\tilde{g}_{b_1}=\tilde{g}_{i(v),j}^{-1}\tilde{g}_{\mathbf{b}_v}\mathbf{h}_v^{-1}=\tilde{g}_{\mathbf{b}_{j,v}}\mathbf{h}_{j,v}\mathbf{h}_v^{-1}
	\end{equation*}
	
	Recall that $\rho_G:G \longrightarrow M_{\infty}(\mathbb{Z}\Pi)$ is the induced action of $H$ on $\mathbb{Z}\Pi$ given by $h \cdot w=r_{\Pi}(h)w$ and the indexing is given by $G/H$. Thus:
	\begin{equation}
		\label{quad9}
		\rho_G\big(\tilde{g}(j,i(v))\tilde{g}(i,i(v))^{-1}\big)[b_3,b]=\begin{cases} r_{\Pi}(\mathbf{h}_{j,v}\mathbf{h}_v^{-1}) & \text{If} \ j \in \mathbf{S}_v \text{ and } b_3=\mathbf{b}_{j,v}\\ 0 & \text{else} \\
		\end{cases}
	\end{equation}
	
	Then we can continue our computation:
	\begin{equation}
		\label{othersidequad}
			\begin{aligned}
			& \quad \sum\limits_{b_2 \in \mathbb{N}}\bar{\psi}_u^r(b_3,b_2)(\bar{f}_{n-u-r}^{-1}(b_1,b_2))^*(z_{b,i}) \\
			& \quad (\text{By } \ref{quad8}) \\
			&=\bar{\psi}_u^r(b_3,b)\big(\sum\limits_{\omega \in \Pi}\mathop{\oplus}\limits_{j=1}^l\delta_{i,j}\sum\limits_{s=1}^{N_{n-u-r}(j)}z_{b,i,\omega,s}\omega^{-1}(\epsilon_{s}^{\Pi})^*\big)\\
			&=\bar{\psi}_u^r(b_3,b)\big(\mathop{\oplus}\limits_{j=1}^l\delta_{i,j}\sum\limits_{\omega \in \Pi}\sum\limits_{s=1}^{N_{n-u-r}(j)}z_{b,i,\omega,s}\omega^{-1}(\epsilon_{s}^{\Pi})^*\big) \\
			& \quad (\text{By definition } \ref{barpsi}) \\
			&=\mathop{\oplus}\limits_{j=1}^l\sum\limits_{\omega \in \Pi}\sum\limits_{\substack{\sigma_k \leq \sigma_i,\sigma_j \\ |\sigma_k|=0}}\rho_G(\tilde{g}(j,k)\tilde{g}(i,k)^{-1})[b_3,b] \cdot \bigg(\psi_{u,\sigma_k}^r(j,i)\big(\sum\limits_{s=1}^{N_{n-u-r}(i)}z_{b,i,\omega,s}\omega^{-1}(\epsilon_{s}^{\Pi})^*\big)\bigg)\\
			&=\mathop{\oplus}\limits_{j=1}^l\sum\limits_{\omega \in \Pi}\sum\limits_{\substack{1 \leq v \leq c \\ j \in \mathbf{S}_v}}\rho_G(\tilde{g}(j,i(v))\tilde{g}(i,i(v))^{-1})[b_3,b] \cdot \bigg(\psi_{u,x_v}^r(j,i)\big(\sum\limits_{s=1}^{N_{n-u-r}(i)}z_{b,i,\omega,s}\omega^{-1}(\epsilon_{s}^{\Pi})^*\big)\bigg)\\
			& \quad (\text{By } \ref{quad9}) \\
			&=\mathop{\oplus}\limits_{j=1}^l\sum\limits_{\omega \in \Pi}\sum\limits_{\substack{1 \leq v \leq u \\ j \in \mathbf{S}_v}}\delta_{b_3,\mathbf{b}_{j,v}}r_{\Pi}(\mathbf{h}_{j,v}\mathbf{h}_v^{-1}) \cdot \bigg(\psi_{u,x_v}^r(j,i)\big(\sum\limits_{s=1}^{N_{n-u-r}(i)}z_{b,i,\omega,s}\omega^{-1}(\epsilon_{s}^{\Pi})^*\big)\bigg) \\
		\end{aligned}
	\end{equation}
	
 	Comparing \ref{onesidequad} and \ref{othersidequad} we can get $(\bar{f}_r\bar{\theta}_u-\bar{\psi}_u^r(\bar{f}_{n-u-r}^{-1})^*)(b_3,b_1)=0$. That is, the set $\{(b_3,b_1) \in \mathbb{N}^2 \ | \ b_3 \neq 0,b_1 \neq 0, (\bar{f}_r\bar{\theta}_u-\bar{\psi}_u^r(\bar{f}_{n-u-r}^{-1})^*)(b_3,b_1)\}$ is an empty set, in particular, it is a finite set. Thus we have checked that $f$ pushes the structure $\theta$ to $\psi''$. Summarizing the result in (g) and (h), we have proven $(D,\theta) \cong \Theta\rho_*(C_*(\widetilde{M}),\psi(\widetilde{M}))$.
 	
 	($\mathbf{5}$) Now we begin to prove the second statement, that is, there is a quadratic chain complex $(E,\xi)$ in $\mathbb{F}_{\mathbb{N}}(M^h(\mathbb{Z}\Pi))$, such that $[(E,\xi)]=(D,\theta)$ and $\partial(E,\xi)$ is cobordant to $(C_{res}^{uni},\psi_{res}^{uni})$.
 	
 	Since the differential in the chain complex and the quadratic structure is defined to be the equivalence class of $\bar{d}_D,\bar{\theta}$, it is natural to guess that they give a quadratic chain complex in $\mathbb{F}_{\mathbb{N}}(M^h(\mathbb{Z}\Pi))$. Unfortunately, this is not the case, we need to modify the morphisms slightly.
 	
 	Let us briefly describe the idea of the modification, the problem of the above choice lies in the following fact: the unions of all dual simplices of simplices in $W_{\infty}$, $\mathop{\cup}\limits_{\bar{\sigma} \in W_{\infty}}\bar{\sigma}^*$, is not a subcomplex of $SdK_{\overline{M}}$. We will resolve this by choosing the simplices in the "interior" of $W_{\infty}$: $\mathop{\cup}\limits_{\bar{\sigma} \in W_{\infty} \backslash \bar{e}(N \times S^1)}\bar{\sigma}^*$. Moreover, the boundary of this space is homeomorphic to $N \times S^1$.
 	
 	\begin{gather*}
 		\begin{tikzpicture}
 			\node at (0,0) {$N \times D^2$};
 			\draw (-2,0) -- (-1,1.7);
 			\draw (-1,1.7)--(1,1.7);
 			\draw (2,0) -- (1,1.7);
 			\draw (-2,0) -- (-1,-1.7);
 			\draw (1,-1.7) -- (-1,-1.7);
 			\draw (2,0) -- (1,-1.7);
 			\draw (-4,0) -- (-2,3.4);
 			\draw (-2,3.4)--(2,3.4);
 			\draw (4,0) -- (2,3.4);
 			\draw (-4,0) -- (-2,-3.4);
 			\draw (2,-3.4) -- (-2,-3.4);
 			\draw (4,0) -- (2,-3.4);
 			\draw (-4,0) -- (-2,0);
 			\draw (-4,0) -- (-1,1.7);
 			\draw (-2,3.4) -- (-1,1.7);
 			\draw (-2,3.4) -- (1,1.7);
 			\draw (2,3.4) -- (1,1.7);
 			\draw (2,3.4) -- (2,0);
 			\draw (4,0) -- (2,0);
 			\draw (4,0) -- (1,-1.7);
 			\draw (2,-3.4) -- (1,-1.7);
 			\draw (2,-3.4) -- (-1,-1.7);
 			\draw (-2,-3.4) -- (-1,-1.7);
 			\draw (-2,-3.4) -- (-2,0);
 			\filldraw[draw=white,fill=blue!20] (-4,0) -- (-6,1.7) -- (-4,3.4) -- (-2,3.4)--cycle;
 			\filldraw[draw=white,fill=blue!20] (-2,3.4) -- (-4,3.4) -- (-1,5.1)--cycle;
 			\filldraw[draw=white,fill=blue!20] (-2,3.4) -- (-1,5.1) -- (1,5.1) -- (2,3.4)--cycle;
 			\filldraw[draw=white,fill=blue!20] (2,3.4) -- (1,5.1) -- (4,3.4)--cycle;
 			\filldraw[draw=white,fill=blue!20] (2,3.4) -- (4,3.4) -- (6,1.7) -- (4,0)--cycle;
 			\filldraw[draw=white,fill=blue!20] (6,1.7) -- (4,0) -- (6,-1.7)--cycle;
 			\filldraw[draw=white,fill=blue!20] (6,-1.7) -- (4,0) -- (2,-3.4) -- (4,-3.4)--cycle;
 			\filldraw[draw=white,fill=blue!20] (4,-3.4) -- (2,-3.4) -- (1,-5.1);
 			\filldraw[draw=white,fill=blue!20] (1,-5.1) -- (2,-3.4) -- (-2,-3.4) -- (-1,-5.1)--cycle;
 			\filldraw[draw=white,fill=blue!20] (-1,-5.1) -- (-2,-3.4) -- (-4,-3.4)--cycle;
 			\filldraw[draw=white,fill=blue!20] (-4,-3.4) -- (-2,-3.4) -- (-4,0) -- (-6,-1.7)--cycle;
 			\filldraw[draw=white,fill=blue!20] (-6,-1.7) -- (-4,0) -- (-6,1.7)--cycle;
 			\draw (-4,0) -- (-6,1.7);
 			\draw (-4,0) -- (-6,-1.7);
 			\draw (-2,3.4) -- (-4,3.4);
 			\draw (-2,3.4) -- (-1,5.1);
 			\draw (2,3.4) -- (4,3.4);
 			\draw (2,3.4) -- (1,5.1);
 			\draw (4,0) -- (6,1.7);
 			\draw (4,0) -- (6,-1.7);
 			\draw (2,-3.4) -- (4,-3.4);
 			\draw (2,-3.4) -- (1,-5.1);
 			\draw (-2,-3.4) -- (-4,-3.4);
 			\draw (-2,-3.4) -- (-1,-5.1);
 			\filldraw[draw=blue!80,fill=blue!20] (-4,0) -- (-3,0) -- (-2.33,0.56)--cycle;
 			\draw (-3,0) -- (-2,0) -- (-2.33,0.56)--cycle;
 			\draw (-2,0) -- (-1.5,0.85) -- (-2.33,0.56)--cycle;
 			\draw (-1.5,0.85) -- (-1,1.7) -- (-2.33,0.56)--cycle;
 			\draw (-2.5,0.85) -- (-1,1.7) -- (-2.33,0.56)--cycle;
 			\filldraw[fill=blue!20] (-2.5,0.85) -- (-4,0) -- (-2.33,0.56)--cycle;
 			\filldraw[fill=blue!20] (-2.5,0.85) -- (-4,0) -- (-2.33,1.7)--cycle;
 			\draw (-2.5,0.85) -- (-1,1.7) -- (-2.33,1.7)--cycle;
 			\draw (-1.5,2.55) -- (-1,1.7) -- (-2.33,1.7)--cycle;
 			\filldraw[fill=blue!20] (-1.5,2.55) -- (-2,3.4) -- (-2.33,1.7)--cycle;
 			\filldraw[fill=blue!20] (-3,1.7) -- (-4,0) -- (-2.33,1.7)--cycle;
 			\filldraw[fill=blue!20] (-3,1.7) -- (-2,3.4) -- (-2.33,1.7)--cycle;
 			\draw (-1,1.7) -- (0,1.7) -- (-0.67,2.27)--cycle;
 			\draw (1,1.7) -- (0,1.7) -- (-0.67,2.27)--cycle;
 			\draw (1,1.7) -- (-0.5,2.55) -- (-0.67,2.27)--cycle;
 			\filldraw[fill=blue!20] (-0.5,2.55) -- (-2,3.4) -- (-0.67,2.27)--cycle;
 			\filldraw[fill=blue!20] (-1.5,2.55) -- (-2,3.4) -- (-0.67,2.27)--cycle;
 			\draw (-0.5,2.55) -- (-1,1.7) -- (-0.67,2.27)--cycle;
 			\filldraw[fill=blue!20] (-2,3.4) -- (0,3.4) -- (0.33,2.83)--cycle;
 			\filldraw[fill=blue!20] (2,3.4) -- (0,3.4) -- (0.33,2.83)--cycle;
 			\filldraw[fill=blue!20] (2,3.4) -- (1.5,2.55) -- (0.33,2.83)--cycle;
 			\draw (1.5,2.55) -- (1,1.7) -- (0.33,2.83)--cycle;
 			\draw (-0.5,2.55) -- (1,1.7) -- (0.33,2.83)--cycle;
 			\filldraw[fill=blue!20] (-0.5,2.55) -- (-2,3.4) -- (0.33,2.83)--cycle;
 			\draw (1.5,2.55) -- (1,1.7) -- (1.67,1.7)--cycle;
 			\filldraw[fill=blue!20] (1.5,2.55) -- (2,3.4) -- (1.67,1.7)--cycle;
 			\filldraw[fill=blue!20] (2,1.7) -- (2,3.4) -- (1.67,1.7)--cycle;
 			\draw (2,0) -- (2,3.4) -- (1.67,1.7)--cycle;
 			\draw (2,0) -- (1.5,0.85) -- (1.67,1.7)--cycle;
 			\draw (1,1.7) -- (1.5,0.85) -- (1.67,1.7)--cycle;
 			\draw (2,0) -- (3,0) -- (2.67,1.13)--cycle;
 			\filldraw[fill=blue!20] (4,0) -- (3,0) -- (2.67,1.13)--cycle;
 			\filldraw[fill=blue!20] (4,0) -- (3,1.7) -- (2.67,1.13)--cycle;
 			\filldraw[fill=blue!20] (2,3.4) -- (3,1.7) -- (2.67,1.13)--cycle;
 			\filldraw[fill=blue!20] (2,3.4) -- (2,1.7) -- (2.67,1.13)--cycle;
 			\draw (2,0) -- (2,1.7) -- (2.67,1.13)--cycle;
 			\filldraw[fill=blue!20] (4,0) -- (3,0) -- (2.33,-0.56)--cycle;
 			\draw (3,0) -- (2,0) -- (2.33,-0.56)--cycle;
 			\draw (2,0) -- (1.5,-0.85) -- (2.33,-0.56)--cycle;
 			\draw (1.5,-0.85) -- (1,-1.7) -- (2.33,-0.56)--cycle;
 			\draw (2.5,-0.85) -- (1,-1.7) -- (2.33,-0.56)--cycle;
 			\filldraw[fill=blue!20] (2.5,-0.85) -- (4,0) -- (2.33,-0.56)--cycle;
 			\filldraw[fill=blue!20] (2.5,-0.85) -- (4,0) -- (2.33,-1.7)--cycle;
 			\draw (2.5,-0.85) -- (1,-1.7) -- (2.33,-1.7)--cycle;
 			\draw (1.5,-2.55) -- (1,-1.7) -- (2.33,-1.7)--cycle;
 			\filldraw[fill=blue!20] (1.5,-2.55) -- (2,-3.4) -- (2.33,-1.7)--cycle;
 			\filldraw[fill=blue!20] (3,-1.7) -- (4,0) -- (2.33,-1.7)--cycle;
 			\filldraw[fill=blue!20] (3,-1.7) -- (2,-3.4) -- (2.33,-1.7)--cycle;
 			\draw (1,-1.7) -- (0,-1.7) -- (0.67,-2.27)--cycle;
 			\draw (-1,-1.7) -- (0,-1.7) -- (0.67,-2.27)--cycle;
 			\draw (-1,-1.7) -- (0.5,-2.55) -- (0.67,-2.27)--cycle;
 			\filldraw[fill=blue!20] (0.5,-2.55) -- (2,-3.4) -- (0.67,-2.27)--cycle;
 			\filldraw[fill=blue!20] (1.5,-2.55) -- (2,-3.4) -- (0.67,-2.27)--cycle;
 			\draw (0.5,-2.55) -- (1,-1.7) -- (0.67,-2.27)--cycle;
 			\filldraw[fill=blue!20] (2,-3.4) -- (0,-3.4) -- (-0.33,-2.83)--cycle;
 			\filldraw[fill=blue!20] (-2,-3.4) -- (0,-3.4) -- (-0.33,-2.83)--cycle;
 			\filldraw[fill=blue!20] (-2,-3.4) -- (-1.5,-2.55) -- (-0.33,-2.83)--cycle;
 			\draw (-1.5,-2.55) -- (-1,-1.7) -- (-0.33,-2.83)--cycle;
 			\draw (0.5,-2.55) -- (-1,-1.7) -- (-0.33,-2.83)--cycle;
 			\filldraw[fill=blue!20] (0.5,-2.55) -- (2,-3.4) -- (-0.33,-2.83)--cycle;
 			\draw (-1.5,-2.55) -- (-1,-1.7) -- (-1.67,-1.7)--cycle;
 			\filldraw[fill=blue!20] (-1.5,-2.55) -- (-2,-3.4) -- (-1.67,-1.7)--cycle;
 			\filldraw[fill=blue!20] (-2,-1.7) -- (-2,-3.4) -- (-1.67,-1.7)--cycle;
 			\draw (-2,0) -- (-2,-3.4) -- (-1.67,-1.7)--cycle;
 			\draw (-2,0) -- (-1.5,-0.85) -- (-1.67,-1.7)--cycle;
 			\draw (-1,-1.7) -- (-1.5,-0.85) -- (-1.67,-1.7)--cycle;
 			\draw (-2,0) -- (-3,0) -- (-2.67,-1.13)--cycle;
 			\filldraw[fill=blue!20] (-4,0) -- (-3,0) -- (-2.67,-1.13)--cycle;
 			\filldraw[fill=blue!20] (-4,0) -- (-3,-1.7) -- (-2.67,-1.13)--cycle;
 			\filldraw[fill=blue!20] (-2,-3.4) -- (-3,-1.7) -- (-2.67,-1.13)--cycle;
 			\filldraw[fill=blue!20] (-2,-3.4) -- (-2,-1.7) -- (-2.67,-1.13)--cycle;
 			\draw (-2,0) -- (-2,-1.7) -- (-2.67,-1.13)--cycle;
 		\end{tikzpicture}
 		\\
 		\text{A picture of } \mathop{\cup}\limits_{\bar{\sigma} \in W_{\infty} \backslash \bar{e}(N \times S^1)}\bar{\sigma}^*
 	\end{gather*}
 
 	Before getting to the proof, let us introduce some notations first. Let $\mathring{W}_{\infty}$ be set of all the simplices that are in $W_{\infty}$ but not in $\bar{e}(N \times S^1)=\partial W_{\infty}$. It is an upper closed set in $\overline{M}$ and $\mathring{W}_{\infty}^-=W_0$. We make the following definitions:
 	\begin{equation*}
 		\begin{aligned}
 			&\mathring{d}_r(\hat{\tau},\hat{\sigma}): C_r(p\hat{p}\hat{\sigma}) \longrightarrow C_{r-1}(p\hat{p}\hat{\tau}) \\
 			&\mathring{d}_r(\hat{\tau},\hat{\sigma}):=\begin{cases} d_{C,r}(p\hat{p}\hat{\tau},p\hat{p}\hat{\sigma}) & \text{If} \ \hat{\sigma} \leq \hat{\tau} \ \text{and} \ \hat{p}\hat{\tau},\hat{p}\hat{\sigma} \in W_0 \\ \ \ \ \ \ \ \ 0 & \text{else}\\	\end{cases} \\
 			&\mathring{\psi}_u^r(\hat{\tau},\hat{\sigma}): C_{n-u-r}(p\hat{p}\hat{\sigma})^* \longrightarrow C_r(p\hat{p}\hat{\tau}) \\
 			&\mathring{\psi}_u^r(\hat{\tau},\hat{\sigma}):=\begin{cases} \sum\limits_{\substack{\hat{x} \in \hat{\tau} \cap \hat{\sigma} \\ |\hat{x}|=0}} \psi_u^r(p\hat{p}\hat{\tau},p\hat{p}\hat{x}) \Upsilon^{n-u-r}_{\hat{x},\hat{\sigma}} & \text{If} \  \hat{\tau} \cap \hat{\sigma} \neq \emptyset \ \text{and} \ \hat{p}\hat{\tau},\hat{p}\hat{\sigma} \in W_0 \\ \ \ \ \ \ \ \ \ \ \ \ \ \ \ 0 & \text{else}\end{cases}
 		\end{aligned}
 	\end{equation*}
	
	Now we begin constructing the prescribed quadratic chain complex in $\mathbb{F}_{\mathbb{N}}(M^h(\mathbb{Z}\Pi))$, it is given as follows:
	
	For every $r \in \mathbb{Z},b,b',u \in \mathbb{N}$:
	\begin{equation*}
		E_r(b)=\mathop{\oplus}\limits_{\hat{\sigma} \in \widehat{K}_b}C_r(p\hat{p}\hat{\sigma}) \text{ for } b>0
	\end{equation*}
	\begin{equation*}
		E_r(0)=\mathop{\oplus}\limits_{\hat{\sigma} \in \widehat{K}_0}C_r(p\hat{p}\hat{\sigma}) \oplus \big(\mathop{\oplus}\limits_{\sigma \notin W} C_r(\sigma) \otimes_{\mathbb{Z}}\mathbb{Z}\Pi\big)
	\end{equation*}
	\begin{equation*}
		d_E(b',b)=\mathop{\boxplus}\limits_{\hat{\sigma} \in \widehat{K}_b}\mathop{\oplus}\limits_{\hat{\tau} \in \widehat{K}_{b'}} \mathring{d}_{r}(\hat{\tau},\hat{\sigma}):E_r(b) \longrightarrow E_{r-1}(b')
	\end{equation*}
	\begin{equation*}
		\xi_{u}^r(b',b)=\mathop{\boxplus}\limits_{\hat{\sigma} \in \widehat{K}_b}\mathop{\oplus}\limits_{\hat{\tau} \in \widehat{K}_{b'}}\mathring{\psi}_u^r(\hat{\tau},\hat{\sigma}) :E_{n-u-r}(b) \longrightarrow E_r(b')
	\end{equation*}
	
	We will check step by step the properties that is stated in the theorem. The first is to check:
	
	(i) $E$ is chain complex in $\mathbb{F}_{\mathbb{N}}(M^h(\mathbb{Z}\Pi))$.
	
	To start with, let us check that $d_E$ is well defined. We need to check that for fixed $b \in \mathbb{N}$, the sets $\{b' \ | \ d_E(b',b) \neq 0\}$ and $\{b' \ | \ d_E(b,b') \neq 0\}$ are finite. Notice that $\mathring{d}_r(\hat{\tau},\hat{\sigma}) \neq 0$ implies $\hat{\sigma} \leq \hat{\tau}$. If $d_E(b',b) \neq 0$, then there is $\hat{\tau} \in \widehat{K}_{b'},\hat{\sigma} \in \widehat{K}_{b}$ with $\mathring{d}_r(\hat{\tau},\hat{\sigma}) \neq 0$ and so we have $\hat{\sigma} \leq \hat{\tau}$. Since we have $\hat{p}\hat{\tau} \in K_{b'} \subset K^{b'}$ by definition, we get that $\hat{p}\hat{\sigma} \in K^{b'}$ and there is vertex $v \leq \hat{p}\hat{\tau}$ with $d_{\infty}(v)=b'$. By (1) in lemma \ref{distance}, $\hat{p}\hat{\sigma} \notin K^{b'-2}$. Summarizing the above we get:
	\begin{equation*}
		d_E(b',b) \neq 0\Rightarrow b=b' \text{ or } b=b'-1
	\end{equation*}
	 
	From that one can easily see that $d_E$ is well defined.
	
	Then we need to check that $d_E^2=0$. Let $(\dot{C},\dot{\psi})$ be the infinite transfer of $(C,\psi)$ with respect to $p$. By Lemma \ref{infinitetransfer}, $(\dot{C},\dot{\psi})$ is a quadratic chain complex in $M^h(R)_*^{lf}(\overline{M})$. 
	
	Let $(\dot{C}(\widehat{W}_0),\dot{\psi}(\widehat{W}_0))$ be the partial assembly of $\dot{C}$ over $W_0$ with respect to $\hat{p}$. Denote $pr_{b'}: \dot{C}_*(\widehat{W}_0) \longrightarrow E_*(b')$ to be the composition of maps:
	\begin{equation*}
		\begin{tikzcd}
			\dot{C}_*(\widehat{W}_0)=\mathop{\oplus}\limits_{\hat{\sigma} \in \hat{p}^{-1}(W_0)}C(p\hat{p}\hat{\sigma}) \ar{rr}{\text{projection}} & & \mathop{\oplus}\limits_{\hat{\sigma} \in \hat{p}^{-1}(K_{b'} \cap W_0)}C(p\hat{p}\hat{\sigma}) \ar[hook]{r}{\subset} & E_*(b')
		\end{tikzcd}
	\end{equation*}
	
	 Then we claim that:

	 $d_E^2(b',b)$ is the same with $pr_{b'}d_{\dot{C}}(\widehat{W}_0)^2$ restricted on the direct summand $\mathop{\oplus}\limits_{\substack{\hat{\sigma} \in \widehat{K}_b \\ \hat{p}\hat{\sigma} \in W_0}}C_*(p\hat{p}\hat{\sigma}) \subset E_*(b)$.
	
	Combining the claim with Theorem \ref{partialassemblyfun}, we get $d_E^2(b',b)=0$. To prove the claim, choose $\hat{\sigma} \in \widehat{K}_{b}$ with $\hat{p}\hat{\sigma} \in W_0$ and choose $z \in C_r(p\hat{p}\hat{\sigma})$. Then:
	\begin{equation*}
		\begin{aligned}
			d_E^2(b',b)(z)&=\sum\limits_{b'' \in \mathbb{N}}d_E(b',b'')d_E(b'',b)(z) \\
			&=\sum\limits_{b'' \in \mathbb{N}}\sum\limits_{\substack{\hat{\kappa} \in \widehat{K}_{b''} \\ \hat{p}\hat{\kappa} \in W_0}}\mathop{\oplus}\limits_{\substack{\hat{\tau} \in \widehat{K}_{b'} \\ \hat{p}\hat{\tau} \in W_0}} \mathring{d}_{r-1}(\hat{\tau},\hat{\kappa})\mathring{d}_{r}(\hat{\kappa},\hat{\sigma})(z) \\
			& \quad (\text{Since for different } b'', K_{b''} \text{ is disjoint}) \\
			&=\mathop{\oplus}\limits_{\hat{p}\hat{\tau} \in W_0 \cap K_{b'}}\sum\limits_{\hat{p}\hat{\kappa} \in W_0}\mathring{d}_{r-1}(\hat{\tau},\hat{\kappa}) \mathring{d}_{r}(\hat{\kappa},\hat{\sigma})(z) \\
		\end{aligned}
	\end{equation*}

	Since $\hat{p}\hat{\sigma} \in W_0$, we have $\mathring{d}_{r}(\hat{\kappa},\hat{\sigma})=d_{\dot{C},r}(\hat{\kappa},\hat{\sigma})$ and $\mathring{d}_{r-1}(\hat{\tau},\hat{\kappa})=d_{\dot{C},r+1}(\hat{\tau},\hat{\kappa})$. Thus:
	\begin{equation*}
		\begin{aligned}
			d_E^2(b',b)(z)&=\mathop{\oplus}\limits_{\hat{p}\hat{\tau} \in K_{b'} \cap W_0}\sum\limits_{\hat{p}\hat{\kappa} \in W_0}d_{\dot{C},r-1}(\hat{\tau},\hat{\kappa}) d_{\dot{C},r}(\hat{\kappa},\hat{\sigma})(z) \\
			&=pr_{b'}\big(\mathop{\oplus}\limits_{\hat{p}\hat{\tau} \in W_0}\sum\limits_{\hat{p}\hat{\kappa} \in W_0}d_{\dot{C},r-1}(\hat{\tau},\hat{\kappa}) d_{\dot{C},r}(\hat{\kappa},\hat{\sigma})(z) \big) \\
			&=pr_{b'}d_{\dot{C}}(\widehat{W}_0)^2(z) \\
		\end{aligned}
	\end{equation*}
	
	Therefore, the claim is verified.
	
	(j) $\xi$ gives a quadratic structure on $E$.
	
	The proof is analogus to the proof above. To start with, using (1) in Lemma \ref{distance}, similar to (i) we can prove that for all $u \in \mathbb{N},r \in \mathbb{Z}$:
	\begin{equation*}
		\xi_u^r(b',b) \neq 0 \Rightarrow |b'-b| \leq 2
	\end{equation*}

	Therefore, $\xi_u^r$ is well defined. Then we need to check that the following equation holds for all $b,b',u \in \mathbb{N},r \in \mathbb{Z}$:
	\begin{equation}
		\label{quadcheck1}
		\chi_{u,r}^{b',b}=0:E_{r'}(b)^* \longrightarrow E_r(b')
	\end{equation}

	Where 
	\begin{equation*}
		\begin{aligned}
			\chi_{u,r}^{b',b}= \ &(d_C\xi_u^{r+1})(b',b)+(-1)^r(\xi_u^rd_C^*)(b',b)+(-1)^{n-u-1}\xi_{u+1}^r(b',b) \\
			&+(-1)^n(-1)^{rr'}\xi_{u+1}^{r'}(b,b')^*
		\end{aligned}
	\end{equation*}
	\begin{equation*}
		r'=n-r-u-1
	\end{equation*}

	Let $(\dot{C},\dot{\psi})$, $(\dot{C}(\widehat{W}_0),\dot{\psi}(\widehat{W}_0))$ and $pr_{b'}: \dot{C}_*(\widehat{W}_0) \longrightarrow E_*(b')$ be the same as in (i). Let $\Upsilon^*[\widehat{W}_0]$ be the map defined in Lemma \ref{Dualexchange} with $S=\mathring{W}_{\infty}$ and $\mathbf{p}=\hat{p}$. Let $p_{\mathring{W}_{\infty}}^{dual}$ and $i^{\mathring{W}_{\infty},dual}$ be the maps defined in Corollary \ref{partialquad}. Choose $\hat{\sigma} \in \widehat{K}_{b}$ with $\hat{p}\hat{\sigma} \in W_0$ and choose $z \in C_{r'}(p\hat{p}\hat{\sigma})^*$. Similar to the computations in (i), we can compute $\chi_{u,r}^{b',b}(z)$ term by term and we get:
	\begin{equation*}
			(d_C\xi_u^{r+1})(b',b)(z)=pr_{b'}d_{\dot{C}}(\widehat{W}_0)\dot{\psi}_u^{r+1}(\widehat{W}_0)p_{\mathring{W}_{\infty}}^{dual}\Upsilon^{r'}[\widehat{W}_0](z)
	\end{equation*}
	\begin{equation*}
		(\xi_u^rd_C^*)(b',b)(z)=pr_{b'}\dot{\psi}_u^{r}(\widehat{W}_0)p_{\mathring{W}_{\infty}}^{dual}\Upsilon^{r'+1}[\widehat{W}_0]d_{\dot{C}}(\widehat{W}_0)^{cd}(z)
	\end{equation*}
	\begin{equation*}
		\xi_{u+1}^r(b',b)(z)=pr_{b'}\dot{\psi}_{u+1}^{r}(\widehat{W}_0)p_{\mathring{W}_{\infty}}^{dual}\Upsilon^{r'}[\widehat{W}_0](z)
	\end{equation*}
	\begin{equation*}
		\xi_{u+1}^{r'}(b,b')^*(z)=pr_{b'}\Upsilon^{r}[\widehat{W}_0]^{cd}i^{\mathring{W}_{\infty},dual}\dot{\psi}_{u+1}^{r'}(\widehat{W}_0)^{cd}(z)
	\end{equation*}
	
	Since $W_0=\mathring{W}_{\infty}^-$ and $\mathring{W}_{\infty}$ is upper closed in $\overline{M}$, by Corollary \ref{partialquad}, we have that the following equation holds:
	\begin{equation}
		\label{quadcheck2}
		\begin{aligned}
			0= \ &d_{\dot{C}}(\widehat{W}_0)\dot{\psi}_u^{r+1}(\widehat{W}_0)^{ass}+(-1)^r\dot{\psi}_u^{r}(\widehat{W}_0)^{ass}d_{\dot{C}}(\widehat{W}_0)^{cd} \\
			&+(-1)^{n-u-1}\dot{\psi}_{u+1}^{r}(\widehat{W}_0)^{ass}+(-1)^{n+rr'}\dot{\psi}_{u+1}^{r'}(\widehat{W}_0)^{ass}_{cd} \\
		\end{aligned}
	\end{equation}
	
	Note that by definition in Corollary \ref{partialquad}, for any $u_0 \in \mathbb{N},r_0 \in \mathbb{Z}$, we have:
	\begin{equation*}
		\dot{\psi}_{u_0}^{r_0}(\widehat{W}_0)^{ass}=\dot{\psi}_{u_0}^{r_0}(\widehat{W}_0)p_{\mathring{W}_{\infty}}^{dual}\Upsilon^{n-r_0-u_0}[\widehat{W}_0]
	\end{equation*}
	\begin{equation*}
		\dot{\psi}_{u_0}^{r_0}(\widehat{W}_0)^{ass}_{cd}=\Upsilon^{n-r_0-u_0}[\widehat{W}_0]^{cd}i^{\mathring{W}_{\infty},dual}\dot{\psi}_{u_0}^{r_0}(\widehat{W}_0)^{cd}
	\end{equation*}
 	
 	Now composing the equation \ref{quadcheck2} with $pr_{b'}$ and using the two equations above, one can get the equation \ref{quadcheck1}. Thus we finished the proof of $\xi$ to be a quadratic structure on $E$.
 	
 	(k) $[(E,\xi)]=(D,\theta)$
 	
 	By definition, we see that $E_r=D_r$ for all $r \in \mathbb{Z}$. Therefore, we only have to check that:
 	
 	The set $\{(b',b) \ | \ d_E(b',b) \neq \bar{d}_D(b',b)\}$ is finite.
 	
 	The set $\{(b',b) \ | \ \xi(b',b) \neq \bar{\theta}(b',b)\}$ is finite.
 	
 	(k1) The set $\{(b',b) \ | \ d_E(b',b) \neq \bar{d}_D(b',b)\}$ is finite.
 	
 	We claim that for all $r \in \mathbb{Z}$, $b>1$ and $b'>1$, $d_E(b',b)=\bar{d}_D(b',b)$. If the claim holds, since $d_E,\bar{d}_D$ are morphisms in $\mathbb{F}_{\mathbb{N}}(M^h(\mathbb{Z}\Pi))$, it will follow that $\{(b',b) \ | \ d_E(b',b) \neq \bar{d}_D(b',b)\}$ is finite.
 	
 	To prove the claim, note first that for any vertice $v$ in $\bar{e}(N \times S^1 \otimes [0,1])$, it is either in $\bar{e}(N \times S^1)$ or connected to a vertex in $\bar{e}(N \times S^1)$. Therefore, we have $d_{\infty}(v) \leq 1$ and by definition we have $\bar{e}(N \times S^1) \otimes [0,1] \subset K^1$. For any $b_0>1$, we have:
 	\begin{equation*}
 		K_{b_0} \subset W_{\infty} \backslash K^1 \subset  W_{\infty} \backslash \bar{e}(N \times S^1 \otimes [0,1]) \subset W_0
 	\end{equation*}
 	
 	Therefore, we have $K_b,K_{b'} \subset W_0$. For any $\hat{\sigma} \in \widehat{K}_b, \hat{\tau} \in \widehat{K}_{b'}$, by definition we have that $\mathring{d}_{r}(\hat{\tau},\hat{\sigma})=d_r(\hat{\tau},\hat{\sigma})$. Thus:
 	\begin{equation*}
 		d_E(b',b)=\mathop{\boxplus}\limits_{\hat{\sigma} \in \widehat{K}_b}\mathop{\oplus}\limits_{\hat{\tau} \in \widehat{K}_{b'}} \mathring{d}_{r}(\hat{\tau},\hat{\sigma})=\mathop{\boxplus}\limits_{\hat{\sigma} \in \widehat{K}_b}\mathop{\oplus}\limits_{\hat{\tau} \in \widehat{K}_{b'}} d_{r}(\hat{\tau},\hat{\sigma})=\bar{d}_D(b',b)
 	\end{equation*}
 
 	(k2) The set $\{(b',b) \ | \ \xi(b',b) \neq \bar{\theta}(b',b)\}$ is finite.
 	
 	The proof is similar. For $b>1,b'>1$, we have $K_{b},K_{b'} \subset W_0$ and thus by definition, for any $u \in \mathbb{N},r \in \mathbb{Z}$, we have $\mathring{\psi}_{u}^r(\hat{\tau},\hat{\sigma})=\psi_{u}^r(\hat{\tau},\hat{\sigma})$ for any $\hat{\sigma} \in \widehat{K}_b, \hat{\tau} \in \widehat{K}_{b'}$. Thus:
 	\begin{equation*}
 		\xi_u^r(b',b)=\mathop{\boxplus}\limits_{\hat{\sigma} \in \widehat{K}_b}\mathop{\oplus}\limits_{\hat{\tau} \in \widehat{K}_{b'}} \mathring{\psi}_{u}^r(\hat{\tau},\hat{\sigma})=\mathop{\boxplus}\limits_{\hat{\sigma} \in \widehat{K}_b}\mathop{\oplus}\limits_{\hat{\tau} \in \widehat{K}_{b'}} \psi_{u}^r(\hat{\tau},\hat{\sigma})=\bar{\theta}_u^r(b',b)
 	\end{equation*}
	
	(l) $\partial(E,\xi)$ is cobordant to $(C_{res}^{uni},\psi_{res}^{uni})$
	
	We make some observations on the chain complex $E$ first. For any $r \in \mathbb{Z}$ and $b \in \mathbb{N}$, denote
	\begin{equation*}
		\underleftarrow{E}_r(b)=\mathop{\oplus}\limits_{\substack{\hat{\sigma} \in \widehat{K}_b \\ \hat{p}\hat{\sigma} \in W_0}}C_r(p\hat{p}\hat{\sigma})
	\end{equation*}
	\begin{equation*}
		\underrightarrow{E}_r(b)=\mathop{\oplus}\limits_{\substack{\hat{\sigma} \in \widehat{K}_b \\ \hat{p}\hat{\sigma} \notin W_0}}C_r(p\hat{p}\hat{\sigma}) \text{ if } b \neq 0
	\end{equation*}
	\begin{equation*}
		\underrightarrow{E}_r(0)=\mathop{\oplus}\limits_{\substack{\hat{\sigma} \in \widehat{K}_0 \\ \hat{p}\hat{\sigma} \notin W_0}}C_r(p\hat{p}\hat{\sigma}) \oplus \big(\mathop{\oplus}\limits_{\sigma \notin W} C_r(\sigma) \otimes_{\mathbb{Z}}\mathbb{Z}\Pi\big)
	\end{equation*}

	Then $E_r=\underleftarrow{E}_r \oplus \underrightarrow{E}_r$. Denote $d_{\underleftarrow{E},r}:\underleftarrow{E}_r \longrightarrow \underleftarrow{E}_{r-1}$ and $\underleftarrow{\xi}_{u}^r:\underleftarrow{E}_{n-u-r}^* \longrightarrow \underleftarrow{E}_{r}$ to be the restriction of $d_{E,r}$ and $\xi_{u}^r$, then by definition of $E$ and $\xi$ we have:
	\begin{equation}
		\label{decomE}
		(E,d_E)=(\underleftarrow{E},d_{\underleftarrow{E}}) \oplus (\underrightarrow{E},0), \  \xi_{u}^r=\underleftarrow{\xi}_{u}^r \oplus 0
	\end{equation}
	
	Thus $(\underleftarrow{E},\underleftarrow{\xi})$ is a quadratic chain complex in $\mathbb{F}_{\mathbb{N}}(M^h(\mathbb{Z}\Pi))$. We will represent it below as the algebraic Thom construction of some Poincare pair in $\mathbb{F}_{\mathbb{N}}(M^h(\mathbb{Z}\Pi))$.
	
	Let $E_{*}^{\partial},E_*^{all}$ be the following chain complexes in $\mathbb{F}_{\mathbb{N}}(M^h(\mathbb{Z}\Pi))$:
	
	For all $b,b' \in \mathbb{N},r \in \mathbb{Z}$:
	
	\begin{equation*}
		\begin{aligned}
			E^{\partial}_r(b)=\mathop{\oplus}\limits_{\substack{\hat{\sigma} \in \widehat{K}_b \\ \hat{p}\hat{\sigma} \in W_{\infty} \backslash W_0}}C_r(p\hat{p}\hat{\sigma}), \ d_{E^{\partial}}(b',b)=\mathop{\boxplus}\limits_{\substack{\hat{\sigma} \in \widehat{K}_b \\ \hat{p}\hat{\sigma} \in \mathring{W}_{\infty} \backslash W_0}}\mathop{\oplus}\limits_{\substack{\hat{\sigma}' \in \widehat{K}_{b'} \\ \hat{p}\hat{\sigma}' \in \mathring{W}_{\infty} \backslash W_0}}d_r(\hat{\tau},\hat{\sigma}) &\\
			E^{all}_r(b)=\mathop{\oplus}\limits_{\substack{\hat{\sigma} \in \widehat{K}_b \\ \hat{p}\hat{\sigma} \in \mathring{W}_{\infty}}}C_r(p\hat{p}\hat{\sigma}), \ d_{E^{all}}(b',b)= \mathop{\boxplus}\limits_{\substack{\hat{\sigma} \in \widehat{K}_b \\ \hat{p}\hat{\sigma} \in \mathring{W}_{\infty}}}\mathop{\oplus}\limits_{\substack{\hat{\sigma}' \in \widehat{K}_{b'} \\ \hat{p}\hat{\sigma}' \in \mathring{W}_{\infty}}}d_r(\hat{\tau},\hat{\sigma}) \qquad &\\
		\end{aligned}
	\end{equation*}

	Let $(\dot{C},\dot{\psi})$ be the infinite transfer of $(C,\psi)$ with respect to $p$. Similar to the argument in (i), we can prove that $d_{E^{\partial}},d_{E^{all}}$ are well-defined and the following equations hold:
	\begin{equation}
		\label{EqEpair}
		\begin{aligned}
			\mathop{\oplus}\limits_{b \in \mathbb{N}}E_r^{\partial}(b)=\dot{C}_r \big( \hat{p}^{-1}(\mathring{W}_{\infty} \backslash W_0) \big) \quad \ \ & \\
			\mathop{\boxplus}\limits_{b \in \mathbb{N}}\mathop{\oplus}\limits_{b' \in \mathbb{N}}d_{E^{\partial}}(b',b)=d_{\dot{C}} \big( \hat{p}^{-1}(\mathring{W}_{\infty} \backslash W_0) \big) & \\
			\mathop{\oplus}\limits_{b \in \mathbb{N}}E_r^{all}(b)=\dot{C}_r\big(\hat{p}^{-1}(\mathring{W}_{\infty}) \big) \qquad & \\
			\mathop{\boxplus}\limits_{b \in \mathbb{N}}\mathop{\oplus}\limits_{b' \in \mathbb{N}}d_{E^{all}}(b',b)=d_{\dot{C}}\big( \hat{p}^{-1}(\mathring{W}_{\infty}) \big) \ \  & \\
		\end{aligned}
	\end{equation}
	
	By Lemma \ref{partialassemblyfun}, we can deduce that they are chain complexes in $\mathbb{F}_{\mathbb{N}}(M^h(\mathbb{Z}\Pi))$. Let $i_E$ be the following morphism in $\mathbb{F}_{\mathbb{N}}(M^h(\mathbb{Z}\Pi))$:
	\begin{equation*}
		\begin{aligned}
			i_E(b',b):E_r^{\partial}(b) \longrightarrow E_r^{all}(b') &\\
			i_E(b',b)=\begin{cases} \text{inclusion} & \text{If } b=b' \\ \quad \ 0 & \text{else} \end{cases} &\\
		\end{aligned}
	\end{equation*}

	For all $u,b,b' \in \mathbb{N},r \in \mathbb{Z}$, let $\delta\psi_{u,r}^{ass},\psi_{u,r}^{ass}$ be the maps defined in Theorem \ref{thmpartialquadpair} with $K=M,L=N \times S^1,(X,\psi_X)=(C,\psi),p_K=p,\mathbf{p}=\hat{p}$. Then we have $\dot{C}^{\partial}=\dot{C} \big( \hat{p}^{-1}(\mathring{W}_{\infty} \backslash W_0) \big)$ and $\dot{C}^{all}=\dot{C} \big( \hat{p}^{-1}(\mathring{W}_{\infty}) \big)$. Let
	\begin{equation*}
		\begin{aligned}
			\delta\psi_{E,u}^r(b',b): E_{n-u-r}^{all}(b)^* \longrightarrow E_r^{all}(b') \\
			\psi_{E,u}^r(b',b): E_{n-u-r-1}^{\partial}(b)^* \longrightarrow E_r^{\partial}(b')
		\end{aligned}
	\end{equation*}
	
	be the corresponding component of $\delta\psi_{u,r}^{ass},\psi_{u,r}^{ass}$. By definition \ref{Defdetlapsiass+psiass} of $\delta\psi_{u,r}^{ass}$, \\$\psi_{u,r}^{ass}$ and Lemma \ref{DefT&varSigma}, we can make the same argument as in (i) to deduce that they are well defined. Then we claim that:
	
	$(\alpha)$ $(i_E:E^{\partial} \longrightarrow E^{all},(\psi_{E},\delta\psi_{E}))$ is a $n$-dimensional Poincare quadratic pair in $\mathbb{F}_{\mathbb{N}}(M^h(\mathbb{Z}\Pi))$.
	
	$(\beta)$ $(\underleftarrow{E},\underleftarrow{\xi})$ is the algebraic Thom construction of the Poincare pair in $(\alpha)$.

	To prove the claim, we will do the argument for $(\alpha)$ and $(\beta)$ seperately.
	
	Proof of $(\alpha)$: To start with, we check that the pair is $n$-dimensional quadratic in $\mathbb{F}_{\mathbb{N}}(M^h(\mathbb{Z}\Pi))$, that is, the following equation holds for all $b',b,u \in \mathbb{N},r \in \mathbb{Z}$:  
	\begin{equation}
		\label{Eq6.24}
		\begin{aligned}
			0=\ &d_{E^{all}}\delta\psi_{E,u}^{r+1}(b',b)-(-1)^{n-u}\delta\psi_{E,u}^r d_{(E^{all})^*}(b',b)+(-1)^{n-u-1}\delta\psi_{E,u+1}^r(b',b)  \\
			&+(-1)^{n+rr'}\delta\psi_{E,u+1}^{r'}(b,b')^* +(-1)^{n-1} i_E\psi_{E,u}^ri_E^*(b',b)\\
			& (r'=n-u-r-1)
		\end{aligned}
	\end{equation}
	
	Suming the maps over $b$ and $b'$, it is equivalent to:
	\begin{equation*}
		\begin{aligned}
			0=\ &\mathop{\boxplus}\limits_{b \in \mathbb{N}}\mathop{\oplus}\limits_{b' \in \mathbb{N}}d_{E^{all}}\delta\psi_{E,u}^{r+1}(b',b)-(-1)^{n-u}\mathop{\boxplus}\limits_{b \in \mathbb{N}}\mathop{\oplus}\limits_{b' \in \mathbb{N}}\delta\psi_{E,u}^r d_{(E^{all})^*}(b',b)\\
			&+(-1)^{n-u-1}\mathop{\boxplus}\limits_{b \in \mathbb{N}}\mathop{\oplus}\limits_{b' \in \mathbb{N}}\delta\psi_{E,u+1}^r(b',b)  +(-1)^{n+rr'}\mathop{\boxplus}\limits_{b \in \mathbb{N}}\mathop{\oplus}\limits_{b' \in \mathbb{N}}\delta\psi_{E,u+1}^{r'}(b,b')^* \\
			&+(-1)^{n-1} \mathop{\boxplus}\limits_{b \in \mathbb{N}}\mathop{\oplus}\limits_{b' \in \mathbb{N}}i_E\psi_{E,u}^ri_E^*(b',b)\\
		\end{aligned}
	\end{equation*}
	
	We can compute each term in the sum separately:
	\begin{equation*}
		\begin{aligned}
			\mathop{\boxplus}\limits_{b \in \mathbb{N}}\mathop{\oplus}\limits_{b' \in \mathbb{N}}d_{E^{all}}\delta\psi_{E,u}^{r+1}(b',b)&=\mathop{\boxplus}\limits_{b \in \mathbb{N}}\mathop{\oplus}\limits_{b' \in \mathbb{N}}\sum\limits_{b'' \in \mathbb{N}}d_{E^{all}}(b',b'')\delta\psi_{E,u}^{r+1}(b'',b) \\
			&=\big(\mathop{\boxplus}\limits_{b'' \in \mathbb{N}}\mathop{\oplus}\limits_{b' \in \mathbb{N}}d_{E^{all}}(b',b'')\big)\big(\mathop{\boxplus}\limits_{b \in \mathbb{N}}\mathop{\oplus}\limits_{b'' \in \mathbb{N}}\delta\psi_{E,u}^{r+1}(b'',b)\big) \\
			& \quad (\text{By } \ref{EqEpair} \text{ and definition of } \delta\psi_{E,u}^{r+1}(b'',b)) \\
			&=d_{\dot{C}^{all}}\delta\psi_{u,r+1}^{ass} \\
		\end{aligned}
	\end{equation*}
	
	Similarly we get:
	\begin{equation*}
		\mathop{\boxplus}\limits_{b \in \mathbb{N}}\mathop{\oplus}\limits_{b' \in \mathbb{N}}\delta\psi_{E,u}^r d_{(E^{all})^*}(b',b)=(-1)^{r'}\delta\psi_{u,r}^{ass}d_{\dot{C}^{all}}^{cd}
	\end{equation*}
	\begin{equation*}
		\mathop{\boxplus}\limits_{b \in \mathbb{N}}\mathop{\oplus}\limits_{b' \in \mathbb{N}}\delta\psi_{E,u+1}^r(b',b) =\delta\psi_{u+1,r}^{ass}
	\end{equation*}
	\begin{equation*}
		\mathop{\boxplus}\limits_{b \in \mathbb{N}}\mathop{\oplus}\limits_{b' \in \mathbb{N}}\delta\psi_{E,u+1}^{r'}(b,b')^*=(\delta\psi_{u+1,r'}^{ass})^{cd}
	\end{equation*}
	\begin{equation*}
		\mathop{\boxplus}\limits_{b \in \mathbb{N}}\mathop{\oplus}\limits_{b' \in \mathbb{N}}i_E\psi_{E,u}^ri_E^*(b',b)=i_{\dot{C}}\psi_{u,r}^{ass}i_{\dot{C}}^{cd}
	\end{equation*}
	
	Then it follows from Theorem \ref{thmpartialquadpair} that the equation \ref{Eq6.24} holds and thus it is a $n$-dimensional quadratic pair.
	
	Denote $\delta\varphi_r=\delta\psi_{E,0}^r+(-1)^{r(n-r)}(\delta\psi_{E,0}^{n-r})^*$. To prove that it is Poincare, we have to prove that the following morphism is a chain homotopy equivalence:
	\begin{equation*}
		(E_{n-r}^{all})^* \stackrel{\delta\varphi_r}{\longrightarrow} E_r^{all} \stackrel{Pr}{\longrightarrow} E_r^{all}/E_r^{\partial}
	\end{equation*}
	
	Where $Pr$ is the projection map.
	
	Denote $\delta\varphi_r^{sum}=	Pr \circ \big(\mathop{\boxplus}\limits_{b \in \mathbb{N}}\mathop{\oplus}\limits_{b' \in \mathbb{N}}\delta\varphi_r(b',b)\big)$. Let $Q_1,Q_2$ be two objects in $\mathbb{F}_{\mathbb{N}}(M^h(\mathbb{Z}\Pi))$, a $\mathbb{Z}\Pi$-morphism $f:\mathop{\oplus}\limits_{b \in \mathbb{N}}Q_1(b) \longrightarrow \mathop{\oplus}\limits_{b \in \mathbb{N}}Q_2(b)$ is called controlled, if there is a number $k \geq 0$, such that $f(Q_1(b)) \subset \mathop{\oplus}\limits_{\substack{b' \in \mathbb{N} \\ |b'-b| \leq k}}Q_2(b')$. Then the above statement can be deduced by definition from the following statement:
	
	Statement 1: There exists a chain map $\gamma_r:	\mathop{\oplus}\limits_{b \in \mathbb{N}}(E_{r}^{all}/E_r^{\partial})(b) \longrightarrow \mathop{\oplus}\limits_{b \in \mathbb{N}}(E_{n-r}^{all})^*(b)$ and homotopy equivalences $H_0:\delta\varphi^{sum} \circ \gamma \simeq Id$ and $H_1:\gamma \circ \delta\varphi^{sum} \simeq Id$, such that $\gamma_r,H_0,H_1$ are controlled.
	
	Now we begin to check that Statement 1 holds. Note that we have:
	\begin{equation*}
		\begin{aligned}
			\mathop{\oplus}\limits_{b \in \mathbb{N}}(E_{n-r}^{all})^*(b)=\mathop{\oplus}\limits_{b \in \mathbb{N}}E_{n-r}^{all}(b)^*=\dot{C}(\hat{p}^{-1}\big(\mathring{W}_{\infty})\big)^{cd} & \\
			\mathop{\oplus}\limits_{b \in \mathbb{N}}(E_{r}^{all}/E_r^{\partial})(b)=\dot{C}\big(\hat{p}^{-1}(W_0)\big) \qquad \quad \ \  & \\
		\end{aligned}
	\end{equation*}
	\begin{equation*}
		\begin{aligned}
			\mathop{\boxplus}\limits_{b \in \mathbb{N}}\mathop{\oplus}\limits_{b' \in \mathbb{N}}\delta\varphi_r(b',b)&=	\mathop{\boxplus}\limits_{b \in \mathbb{N}}\mathop{\oplus}\limits_{b' \in \mathbb{N}}\big(\delta\psi_{E,0}^r(b',b)+(-1)^{r(n-r)}\delta\psi_{E,0}^{n-r}(b,b')^*\big) \\
			&=\delta\psi_{0,r}^{ass}+(-1)^{r(n-r)} (\delta\psi_{0,n-r}^{ass})^{cd}\\
			& \quad (\text{By definition } \ref{Defdetlapsiass+psiass}) \\
			&=\dot{\psi}_{0}^r\big(\hat{p}^{-1}(\mathring{W}_{\infty})\big)\Upsilon^{n-r}[\hat{p}^{-1}(\mathring{W}_{\infty})] \\
			& \ +(-1)^{r(n-r)} \bigg(\dot{\psi}_{0}^{n-r}\big(\hat{p}^{-1}(\mathring{W}_{\infty})\big)\Upsilon^{r}[\hat{p}^{-1}(\mathring{W}_{\infty})]\bigg)^{cd}\\
		\end{aligned}
	\end{equation*}

	By Remark \ref{Dualexchangeupper}, we have:
	\begin{equation*}
		(-1)^{r(n-r)} \bigg(\dot{\psi}_{0}^{n-r}\big(\hat{p}^{-1}(\mathring{W}_{\infty})\big)\Upsilon^{r}[\hat{p}^{-1}(\mathring{W}_{\infty})]\bigg)^{cd}=T\dot{\psi}_0^r(\hat{p}^{-1}(\mathring{W}_{\infty})) \Upsilon^{n-r}[\hat{p}^{-1}(\mathring{W}_{\infty})]
	\end{equation*}
	
	Therefore, we have $\delta\varphi_r^{sum}= Pr \circ (1+T) \dot{\psi}_0^r\big(\hat{p}^{-1}(\mathring{W}_{\infty}) \big)\Upsilon^{n-r}[\hat{p}^{-1}(\mathring{W}_{\infty})]$. Since $\mathring{W}_{\infty}$ is upper closed and $W_0$ is a subcomplex, we can further deduce that:
	\begin{equation*}
		\delta\varphi_r^{sum}= (1+T) \dot{\psi}_0^r\big(\hat{p}^{-1}(W_0)\big) \circ Pr \circ \Upsilon^{n-r}[\hat{p}^{-1}(\mathring{W}_{\infty})]
	\end{equation*}

	Since $(\dot{C},\dot{\psi})$ is Poincare, we have that $(1+T)\dot{\psi}_0^r$ is a chain homotopy equivalence of chain complexes in $M^h(R)_*^{lf}(\overline{M})$. Since $W_0$ is a subcomplex, by Lemma \ref{partialassemblyfun}, $(1+T) \dot{\psi}_0^r\big(\hat{p}^{-1}(W_0)\big)$ induces a chain homotopy equivalence. Furthermore, similar to the proof in (i), the homotopy inverse and the corresponding chain homotopy equivalence are controlled.
	
	For the morphism $Pr \circ \Upsilon^{n-r}[\hat{p}^{-1}(\mathring{W}_{\infty})]:\dot{C}_{n-r}\big(\hat{p}^{-1}(\mathring{W}_{\infty})\big) \longrightarrow \dot{C}^{n-r}\big(\hat{p}^{-1}(W_0)\big)$, we make some identification on the chain complex $\dot{C}^{n-r}(\hat{p}^{-1}(W_0))$ first, we have:
	\begin{equation*}
		\begin{aligned}
			\dot{C}^{n-r}\big(\hat{p}^{-1}(W_0)\big)&=\mathop{\oplus}\limits_{\substack{\hat{\sigma} \in \widehat{W}_{\infty} \\ \hat{p}\hat{\sigma} \in W_0}}\dot{C}^{n-r}(\hat{p}\hat{\sigma}) \\
			&=\mathop{\oplus}\limits_{\substack{\hat{\sigma} \in \widehat{W}_{\infty} \\ \hat{p}\hat{\sigma} \in W_0}}\mathop{\oplus}\limits_{\tau \geq \hat{p}\hat{\sigma}}\dot{C}_{n-r-|\hat{\sigma}|}(\tau)^* \\
			&=\mathop{\oplus}\limits_{\substack{\hat{\sigma} \in \widehat{W}_{\infty} \\ \hat{p}\hat{\sigma} \in W_0}}\mathop{\oplus}\limits_{\hat{\tau} \geq \hat{\sigma}}\dot{C}_{n-r-|\hat{\sigma}|}(\hat{p}\hat{\tau})^* \\
			&=\mathop{\oplus}\limits_{\hat{\tau} \in \hat{p}^{-1}(\mathring{W}_{\infty})}\mathop{\oplus}\limits_{\substack{\hat{\sigma} \leq \hat{\tau} \\ \hat{p}\hat{\sigma} \in W_0}}\dot{C}_{n-r-|\hat{\sigma}|}(\hat{p}\hat{\tau})^* \\
			&=\mathop{\oplus}\limits_{\hat{\tau} \in \hat{p}^{-1}(\mathring{W}_{\infty})}\bigg(\dot{C}(\hat{p}\hat{\tau})^* \otimes_{\mathbb{Z}} \Delta^*\big(\hat{\tau} \cap \hat{p}^{-1}(W_0)\big)\bigg)_{n-r} \\
		\end{aligned}
	\end{equation*}

	Moreover, by viewing the right hand side as the partial assembly of chain complexes in $M^h(\mathbb{Z})_*^{lf}(\overline{M})$, this identification is an idenfication of chain complexes. Denote $0\mathbb{Z}$ to be the chain complex with only $\mathbb{Z}$ on dimension $0$ and all the others being zero. Denote $sum_{\hat{\tau}}^*:0\mathbb{Z} \longrightarrow \Delta^*(\hat{\tau} \cap \hat{p}^{-1}(W_0))$ to be the chain map given by the dual of the sum map on $0$-dimension. Then we have $Pr \circ \Upsilon^{n-r}[\hat{p}^{-1}(\mathring{W}_{\infty})]:\dot{C}_{n-r}(\hat{p}^{-1}(\mathring{W}_{\infty})) \longrightarrow \dot{C}^{n-r}(\hat{p}^{-1}(W_0))$ under the identification above is given by $\mathop{\oplus}\limits_{\hat{\tau} \in \hat{p}^{-1}(\mathring{W}_{\infty})} Id \otimes_{\mathbb{Z}} sum_{\hat{\tau}}^*$. Since every simplex $\hat{\tau} \in \hat{p}^{-1}(\mathring{W}_{\infty})$ intersects with $\hat{p}^{-1}(W_0)$ either itself or a codimension 1 face, we have that $sum_{\hat{\tau}}^*$ is a chain homotopy equivalence for all $\hat{\tau}$. Then by Proposition 9.11 in \cite{ranickiltheory} we have that $Pr \circ \Upsilon^{n-r}[\hat{p}^{-1}(\mathring{W}_{\infty})]$ is a chain homotopy equivalence. Moreover, since it is the assemble of local chain homotopy equivalences, following the similar arguments in (i), we can prove that the homotopy inverse and the corresponding homotopy equivalence are controlled.
	
	Now since the composition and finite linear combinations of controlled morphisms are still controlled. We get that Statement 1 holds. 
	
	Proof of $(\beta)$: Note first that for every $r \in \mathbb{Z}$, we have $E_r^{all}=E_r^{\partial} \oplus \underleftarrow{E}_r$. Moreover, if we denote $p_E:E_r^{all} \longrightarrow \underleftarrow{E}_r$ to be the projection map, then it is a chain map. Therefore, to prove $(\beta)$, it is equivalent to prove that for all $u \in \mathbb{N},r \in \mathbb{Z}$, we have $\underline{\xi}_u^r=p_E\delta\psi_{E,u}^rp_E^*$.
	
	Denote $p_E^{sum}=\mathop{\boxplus}\limits_{b \in \mathbb{N}}\mathop{\oplus}\limits_{b' \in \mathbb{N}}p_E(b',b)$, it is still the projection map. Moreover, we have:
	\begin{equation*}
		\begin{aligned}
			\mathop{\boxplus}\limits_{b \in \mathbb{N}}\mathop{\oplus}\limits_{b' \in \mathbb{N}}p_E\delta\psi_{E,u}^rp_E^*(b',b)&=p_E^{sum}\delta\psi_{u,r}^{ass}(p_E^{sum})^{cd} \\
			& \quad (\text{By definition } \ref{Defdetlapsiass+psiass}) \\
			&=p_E^{sum}\dot{\psi}_u^r\big(\hat{p}^{-1}(\mathring{W}_{\infty})\big)\Upsilon^{n-u-r}[\hat{p}^{-1}(\mathring{W}_{\infty})](p_E^{sum})^{cd} \\
			&=p_E^{sum}\dot{\psi}_u^r\big(\hat{p}^{-1}(\mathring{W}_{\infty})\big)\Upsilon^{n-u-r}[\hat{p}^{-1}(\mathring{W}_{\infty})^-] \\
		\end{aligned}
	\end{equation*}

	By definition of $\underline{\xi}_u^r$, we have:
	\begin{equation*}
		\begin{aligned}
			\mathop{\boxplus}\limits_{b \in \mathbb{N}}\mathop{\oplus}\limits_{b' \in \mathbb{N}}\underline{\xi}_u^r(b',b)&=\mathop{\boxplus}\limits_{\hat{p}\hat{\sigma} \in W_0}\mathop{\oplus}\limits_{\hat{p}\hat{\tau} \in W_0} \mathring{\psi}_u^r(\hat{\tau},\hat{\sigma}) \\
			&=\mathop{\boxplus}\limits_{\hat{p}\hat{\sigma} \in W_0}\mathop{\oplus}\limits_{\hat{p}\hat{\tau} \in W_0}\sum_{\substack{\hat{x} \in \hat{\sigma} \cap \hat{\tau} \\ |\hat{x}|=0}}\psi_u^r(p\hat{p}\hat{\tau},p\hat{p}\hat{x})\Upsilon_{\hat{x},\hat{\sigma}}^{n-u-r} \\
			&=\big(\mathop{\boxplus}\limits_{\hat{p}\hat{\sigma}' \in \mathring{W}_{\infty}}\mathop{\oplus}\limits_{\substack{\hat{p}\hat{\tau} \in W_0 \\ \hat{\tau} \geq \hat{\sigma}'}}\psi_u^r(p\hat{p}\hat{\tau},p\hat{p}\hat{\sigma}')\big) \circ \big(\mathop{\boxplus}\limits_{\hat{p}\hat{\sigma} \in W_0}\mathop{\oplus}\limits_{\substack{\hat{x} \leq \hat{\sigma} \\ |\hat{x}| \geq 0}}\Upsilon_{\hat{x},\hat{\sigma}}^{n-u-r}\big) \\
		\end{aligned}
	\end{equation*}

	By definition of $\Upsilon^{n-u-r}[\hat{p}^{-1}(\mathring{W}_{\infty})^-]$ in Lemma \ref{Dualexchange}, we have:
	\begin{equation*}
		\begin{aligned}
			\mathop{\boxplus}\limits_{b \in \mathbb{N}}\mathop{\oplus}\limits_{b' \in \mathbb{N}}\underline{\xi}_u^r(b',b)&=\big(\mathop{\boxplus}\limits_{\hat{p}\hat{\sigma}' \in \mathring{W}_{\infty}}\mathop{\oplus}\limits_{\substack{\hat{p}\hat{\tau} \in W_0 \\ \hat{\tau} \geq \hat{\sigma}'}}\dot{\psi}_u^r(\hat{p}\hat{\tau},\hat{p}\hat{\sigma}')\big) \circ \Upsilon^{n-u-r}[\hat{p}^{-1}(\mathring{W}_{\infty})^-] \\
			&=p_E^{sum}\big(\mathop{\boxplus}\limits_{\hat{p}\hat{\sigma}' \in \mathring{W}_{\infty}}\mathop{\oplus}\limits_{\hat{\tau} \geq \hat{\sigma}'}\dot{\psi}_u^r(\hat{p}\hat{\tau},\hat{p}\hat{\sigma}')\big) \circ \Upsilon^{n-u-r}[\hat{p}^{-1}(\mathring{W}_{\infty})^-] \\
			&=p_E^{sum}\dot{\psi}_u^r\big(\hat{p}^{-1}(\mathring{W}_{\infty})\big)\Upsilon^{n-u-r}[\hat{p}^{-1}(\mathring{W}_{\infty})^-] \\
		\end{aligned}
	\end{equation*}

	Therefore, we have $\mathop{\boxplus}\limits_{b \in \mathbb{N}}\mathop{\oplus}\limits_{b' \in \mathbb{N}}p_E\delta\psi_{E,u}^rp_E^*(b',b)=\mathop{\boxplus}\limits_{b \in \mathbb{N}}\mathop{\oplus}\limits_{b' \in \mathbb{N}}\underline{\xi}_u^r(b',b)$, from that we can deduce $p_E\delta\psi_{E,u}^rp_E^*=\underline{\xi}_u^r$.
	
	Now we return to the proof of (l), by the claim $(\alpha),(\beta)$ and the decomposition \ref{decomE}, we have:
	\begin{equation*}
		\partial(E,\xi)=\partial(\underleftarrow{E},\underleftarrow{\xi}) \oplus \partial(\underrightarrow{E},0)=(E^{\partial},\psi_E) \oplus \partial(\underrightarrow{E},0)
	\end{equation*}
	
	Since for $b>1$, we have $K_b \subset W_0$, thus $\underrightarrow{E}_*(b)=E_*^{\partial}(b)=0$. By (1) in Lemma \ref{thmTheta} and the fact that $\iota_{\infty}$ is fully faithful, there are quadratic chain complexes $(E',\psi_{E'}),(E'',\psi_{E''})$ in $M^h(\mathbb{Z}\Pi)$, such that:
	\begin{equation*}
		\iota_{\infty}(E',\psi_{E'}) \cong (\underrightarrow{E},0), \ \iota_{\infty}(E'',\psi_{E''}) \cong (E^{\partial},\psi_{E})
	\end{equation*}

	Thus $\partial(\underrightarrow{E},0)$ is homotopy equivalent to the boundary of a quadratic chain complex in $M^h(\mathbb{Z}\Pi)$, so it is null cobordant and we have that $\partial(E,\xi)$ is cobordant to $(E'',\psi_{E''})$.
	
	By the proof of Lemma \ref{thmTheta}, for any $r \in \mathbb{Z}, u \in \mathbb{N}$, we can get $E''_r=\mathop{\oplus}\limits_{b \in \mathbb{N}}E_r^{\partial}(b)$ and $\psi_{E'',u}^r=\mathop{\boxplus}\limits_{b \in \mathbb{N}}\mathop{\oplus}\limits_{b' \in \mathbb{N}}\psi_{E,u}^r(b',b)$. By equation \ref{EqEpair} and definition of $\psi_{E,u}^r(b',b)$ we have $(E'',\psi_{E''})=(\dot{C}^{\partial},\psi^{ass})$. We claim that $(\dot{C}^{\partial},\psi^{ass})$ is the same with $(C_{res}^{uni},\psi_{res}^{uni})$ and then $(l)$ follows from the claim.
	
	To prove the claim, recall first that by definition $(C_{res}^{uni},\psi_{res}^{uni})$ is the universal assembly of $(C,\psi)|_{N \times S^1}$. We have a description of $(C,\psi)|_{N \times S^1}$ as $(DL,\theta L)$ in Lemma \ref{surgeryres}. We can then make some computations:
	
	For all $r \in \mathbb{Z}$, we have:
	\begin{equation*}
		\begin{aligned}
			(C_{res}^{uni})_r&=\mathop{\oplus}\limits_{\hat{p}\hat{\sigma} \in \bar{e}(N \times S^1)}DL_r(p\hat{p}\hat{\sigma}) \\
			& \quad (\text{By the expression of } DL_r \text{ in Lemma } \ref{surgeryres}) \\
			&=\mathop{\oplus}\limits_{\hat{p}\hat{\sigma} \in \bar{e}(N \times S^1)}\mathop{\oplus}\limits_{s \in B_{p\hat{p}\hat{\sigma}}}C_r(s) \\
		\end{aligned}
	\end{equation*}
	
	Denote $U=N \times S^1 \otimes [0,1] \backslash N \times S^1 \otimes \partial [0,1]$. For any $s \in U$, denote $s_0$ be the intersection of $s$ with $(N \times S^1) \otimes 0$. For any $\hat{\sigma} \in \hat{p}^{-1}\bar{e}(N \times S^1)$, denote $A_{\hat{\sigma}}=\{\hat{s} \in \hat{p}^{-1}\bar{e}(U) \ | \ \hat{s} \geq \hat{\sigma}\}$ and $B_{\hat{\sigma}}=\{\hat{s} \in \hat{p}^{-1}\bar{e}(U) \ | \ \hat{s} \geq \hat{\sigma}, (p\hat{p}\hat{s})_0=p\hat{p}\hat{\sigma}\}$. By Remark \ref{RemB}, we have $B_{\sigma}=\{s \in U \ | \  s_0=\sigma\}$ with $\sigma \in N \times S^1$. Therefore, we can deduce that the following map gives a bijection for $\sigma=p\hat{p}\hat{\sigma}$:
	\begin{equation}
		\label{liftA}
		\hat{p}_*:A_{\hat{\sigma}} \longrightarrow A_{\sigma}, \ \hat{s} \mapsto p\hat{p}\hat{s}
	\end{equation}
	\begin{equation}
		\label{liftB}
		\hat{p}_*:B_{\hat{\sigma}} \longrightarrow B_{\sigma}, \ \hat{s} \mapsto p\hat{p}\hat{s}
	\end{equation}
	
	Therefore, we have $(C_{res}^{uni})_r=\mathop{\oplus}\limits_{\hat{p}\hat{\sigma} \in \bar{e}(N \times S^1)}\mathop{\oplus}\limits_{\hat{s} \in B_{\hat{\sigma}}}C_r(p\hat{p}\hat{s})$. Using the fact that $B_{\sigma}$ is disjoint for different $\sigma$ and their union of is $U$, it is straightforward to check that $B_{\hat{\sigma}}$ is disjoint for different $\hat{\sigma}$ and their union is $\hat{p}^{-1}\bar{e}(U)$. Therefore, we have $(C_{res}^{uni})_r=\mathop{\oplus}\limits_{\hat{p}\hat{s} \in \bar{e}(U)}C_r(p\hat{p}\hat{s})$.

	By definition we have $\dot{C}^{\partial}_r=\dot{C}_r(\hat{p}^{-1}\bar{e}(U))=\mathop{\oplus}\limits_{\hat{p}\hat{s} \in \bar{e}(U)}\dot{C}_r(\hat{p}\hat{s})=\mathop{\oplus}\limits_{\hat{p}\hat{s} \in \bar{e}(U)}C_r(p\hat{p}\hat{s})$. Thus we get $(C_{res}^{uni})_r=\dot{C}^{\partial}_r$.
	
	Next we will check that the differentials of the two chain complexes agree. For any $r \in \mathbb{Z}$, choose $\hat{s} \in \hat{p}^{-1}\bar{e}(U)$ and $z \in C_r(p\hat{p}\hat{s})$, let $s=p\hat{p}\hat{s} \in U$ and $s_0$ as above. Then $s \in B_{s_0}$ and there is a unique simplex $\hat{\sigma}_s \in \hat{p}^{-1}\bar{e}(N \times S^1)$, such that $p\hat{p}\hat{\sigma}_s=s_0$ and $\hat{\sigma}_s \leq \hat{s}$. Therefore, by the identification above, we have $z \in DL_r(p\hat{p}\hat{\sigma}_s)$ and thus:
	\begin{equation*}
		\begin{aligned}
			d_{C_{res}^{uni}}(z)&=\mathop{\oplus}\limits_{\substack{\hat{\tau} \geq \hat{\sigma}_s \\ \hat{p}\hat{\tau} \in \bar{e}(N \times S^1)}}d_{DL}(\hat{\tau},\hat{\sigma}_s)(z) \\
			& \quad (\text{By expression of } d_{DL} \text{ in Lemma } \ref{surgeryres}) \\
			&=\mathop{\oplus}\limits_{\substack{\hat{\tau} \geq \hat{\sigma}_s \\ \hat{p}\hat{\tau} \in \bar{e}(N \times S^1)}} \big( \mathop{\boxplus}\limits_{s'' \in B_{p\hat{p}\hat{\sigma}_s}} \mathop{\oplus}\limits_{s' \in B_{p\hat{p}\hat{\tau}}} d_{C}(s',s'') \big)(z) \\
			&=\mathop{\oplus}\limits_{\substack{\hat{\tau} \geq \hat{\sigma}_s \\ \hat{p}\hat{\tau} \in \bar{e}(N \times S^1)}}\mathop{\oplus}\limits_{s' \in B_{p\hat{p}\hat{\tau}}} d_{C}(s',s)(z) \\
			&=\mathop{\oplus}\limits_{\substack{\hat{\tau} \geq \hat{\sigma}_s \\ \hat{p}\hat{\tau} \in \bar{e}(N \times S^1)}}\mathop{\oplus}\limits_{\substack{s' \in B_{p\hat{p}\hat{\tau}} \\ s' \geq s}} d_{C}(s',s)(z) \\
		\end{aligned}
	\end{equation*}
	
	Note that for any simplex $s' \geq s$, there is a unique simplex $\hat{s}' \geq \hat{s}$, such that $p\hat{p}\hat{s}'=s'$. Moreover, since $\hat{\tau} \geq \hat{\sigma}_s \leq \hat{s} \leq \hat{s}'$, we have $\hat{\tau} \cap \hat{s}' \neq \emptyset$. Therefore, we get $s' \in B_{p\hat{p}\hat{\tau}} \Leftrightarrow \hat{s}' \in B_{\hat{\tau}}$. Denote $B'=\mathop{\amalg}\limits_{\substack{\hat{\tau} \geq \hat{\sigma}_s \\ \hat{p}\hat{\tau} \in \bar{e}(N \times S^1)}}B_{\hat{\tau}}$, then we have:
	\begin{equation*}
		\begin{aligned}
			d_{C_{res}^{uni}}(z)
			&=\mathop{\oplus}\limits_{\substack{\hat{\tau} \geq \hat{\sigma}_s \\ \hat{p}\hat{\tau} \in \bar{e}(N \times S^1)}}\mathop{\oplus}\limits_{\substack{\hat{s}' \in B_{\hat{\tau}} \\ \hat{s}' \geq \hat{s}}} d_{C}(p\hat{p}\hat{s}',p\hat{p}\hat{s})(z) \\
			&=\mathop{\oplus}\limits_{\substack{\hat{s}' \in B' \\ \hat{s}' \geq \hat{s}}} d_{C}(p\hat{p}\hat{s}',p\hat{p}\hat{s})(z) \\
		\end{aligned}
	\end{equation*}
	
	Now note that $U$ is upper closed and by our simplicial setting \ref{simplicial setting}, $\bar{e}$ is a simplicial embedding on $U$. Since by definition $\hat{p}\hat{s} \in \bar{e}(U)$, we have $\hat{p}\hat{s}' \in \bar{e}(U)$ for any $\hat{s}' \geq \hat{s}$. Let $\tau=(p\hat{p}\hat{s}')_0 \in N \times S^1$, then there is a unique $\hat{\tau} \leq \hat{s}'$, such that $p\hat{p}\hat{\tau}=\tau$. Then by definition we have $\hat{s}' \in B_{\hat{\tau}}$. Since $\hat{s}' \geq \hat{s}$, we have $p\hat{p}\hat{s}' \geq p\hat{p}\hat{s}$ and thus $\tau \geq s_0$. Since $\hat{\sigma}_s \leq \hat{s} \leq \hat{s}'$ and $\hat{\tau} \leq \hat{s}'$, combining with $\tau=p\hat{p}\hat{\tau} \geq p\hat{p}\hat{\sigma}_s=s_0$, we can conclude that $\hat{\tau} \geq \hat{\sigma}_s$. Therefore, we have $B_{\hat{\tau}} \subset B'$. In summary, we get $\hat{s}' \geq \hat{s}$ implies $\hat{s}' \in B'$, thus:
	\begin{equation*}
		d_{C_{res}^{uni}}(z)=\mathop{\oplus}\limits_{\hat{s}' \geq \hat{s}} d_{C}(p\hat{p}\hat{s}',p\hat{p}\hat{s})(z) \\
	\end{equation*}
	
	On the other hand, by definition we have $d_{\dot{C}^{\partial}}(z)=\mathop{\oplus}\limits_{\substack{\hat{s}' \geq \hat{s} \\ \hat{p}\hat{s'} \in \bar{e}(U)}}d_{\dot{C}}(\hat{p}\hat{s}',\hat{p}\hat{s})(z)=\mathop{\oplus}\limits_{\substack{\hat{s}' \geq \hat{s} \\ \hat{p}\hat{s'} \in \bar{e}(U)}}d_{C}(p\hat{p}\hat{s}',p\hat{p}\hat{s})(z)$. Since $U$ is upper closed and $\bar{e}$ is a simplicial embedding on $U$, we can conclude that $d_{C_{res}^{uni}}=d_{\dot{C}^{\partial}}$.
	
	Finally we have to check that the quadratic structure agrees. For any $u \in \mathbb{N},r \in \mathbb{Z}$, choose $\hat{s} \in \hat{p}^{-1}\bar{e}(U)$ and $z \in C_{n-u-r-1}(p\hat{p}\hat{s})^*$, let $s=p\hat{p}\hat{s} \in U$ and $s_0,\hat{\sigma}_s$ be as above. Then by the identification above we have $z \in DL_{n-u-r-1}(p\hat{p}\hat{\sigma}_s)^*$. Denote $V_0,V_1$ to be the set of all vertices of $s$ that are in $(N \times S^1) \otimes 0,(N \times S^1) \otimes 1$, respectively. Denote $\widehat{V}_0,\widehat{V}_1$ to be the set of all vertices of $\hat{s}$ that project to $V_0,V_1$. For any $\hat{\eta} \leq \hat{s}$, denote $\iota_{\hat{\eta}}$ to be the following inclusion map:
	\begin{equation*}
		\iota_{\hat{\eta}}: C_{n-u-r-1}(p\hat{p}\hat{s})^* \longrightarrow C^{n-u-r-1+|\hat{\eta}|}(p\hat{p}\hat{\eta})=\mathop{\oplus}\limits_{\hat{\kappa} \geq \hat{\eta}}C_{n-u-r-1}(p\hat{p}\hat{\kappa})^*
	\end{equation*}
	
	For any $\hat{v}_0 \in \widehat{V}_0$, we have:
	\begin{equation*}
		DL^{n-u-r-1}(p\hat{p}\hat{v}_0)=\mathop{\oplus}\limits_{\tau \in A_{p\hat{p}\hat{v}_0}}C_{n-u-r-1}(\tau)^*=\mathop{\oplus}\limits_{\hat{\tau} \in A_{\hat{v}_0}}C_{n-u-r-1}(p\hat{p}\hat{\tau})^* \ (\text{By } \ref{liftA})
	\end{equation*}

	By the definition of $A_{\hat{v}_0}$, we can embed $DL^{n-u-r}(p\hat{p}\hat{v}_0)$ as a direct summand into $C^{n-u-r}(p\hat{p}\hat{v}_0)=\mathop{\oplus}\limits_{\hat{\kappa} \geq \hat{v}_0}C_{n-u-r}(p\hat{p}\hat{\kappa})^*$. Denote $\iota_{\hat{v}_0}'$ to be the composition of $\iota_{\hat{v}_0}$ with projection onto $DL^{n-u-r-1}(p\hat{p}\hat{v}_0)$. Then we have:
	\begin{equation*}
		\begin{aligned}
			(\psi_{res}^{uni})_u^r(z)&=\theta L_u^r(\hat{p}^{-1}\bar{e}(N \times S^1))\big(\mathop{\oplus}\limits_{\hat{v}_0 \in \widehat{V}_0}\iota_{\hat{v}_0}'(z)\big) \\
			&=\sum\limits_{\hat{v}_0 \in \widehat{V}_0}\mathop{\oplus}\limits_{\substack{\hat{\tau} \geq \hat{v}_0 \\ \hat{p}\hat{\tau} \in \bar{e}(N \times S^1)}} \theta L_u^r(\hat{\tau},\hat{v}_0)\iota_{\hat{v}_0}'(z) \\
		\end{aligned}
	\end{equation*}

	By the definition in Lemma \ref{surgeryres}, $\theta L_u^r(\hat{\tau},\hat{v}_0)$ is $(-1)^{n+r+1}$ times the composition of $\mathop{\boxplus}\limits_{s'' \in B_{p\hat{p}\hat{v}_0}}\mathop{\oplus}\limits_{s' \in B_{p\hat{p}\hat{\tau}}}\psi_u^r(s',s'')$ with $\mho_{p\hat{p}\hat{v}_0}^r$. Note that we have $\iota_{\hat{v}_0'}(z) \in \mathop{\oplus}\limits_{\hat{\tau} \in A_{\hat{v}_0}}C_{n-u-r-1}(p\hat{p}\hat{\tau})^*$ and the element  lies in the direct summand $C_{n-u-r-1}(p\hat{p}\hat{s})^*$. By definition of $\mho_{p\hat{p}\hat{v}_0}^r$ in Lemma \ref{surgeryres} and the fact that $p\hat{p}:\widehat{V}_1 \longrightarrow V_1$ is a bijection, we have $\mho_{p\hat{p}\hat{v}_0}^r\iota_{\hat{v}_0}'(z)=\mathop{\oplus}\limits_{\hat{v}_1 \in \widehat{V}_1}\iota_{\hat{v}_0*\hat{v}_1}(z)$. Thus:
	\begin{equation}
		\label{Eq6.25}
		\begin{aligned}
			(\psi_{res}^{uni})_u^r(z)&=\sum\limits_{\hat{v}_0 \in \widehat{V}_0}\sum\limits_{\hat{v}_1 \in \widehat{V}_1}(-1)^{n+r+1} \mathop{\oplus}\limits_{\substack{\hat{\tau} \geq \hat{v}_0 \\ \hat{p}\hat{\tau} \in \bar{e}(N \times S^1)}} \mathop{\oplus}\limits_{s' \in B_{p\hat{p}\hat{\tau}}} \psi_u^r(s',p\hat{p}(\hat{v}_0*\hat{v}_1))\iota_{\hat{v}_0*\hat{v}_1}(z) \\
			&=(-1)^{n+r+1}\sum\limits_{\hat{v}_0 \in \widehat{V}_0}\sum\limits_{\hat{v}_1 \in \widehat{V}_1} \mathop{\oplus}\limits_{\substack{\hat{\tau} \geq \hat{v}_0 \\ \hat{p}\hat{\tau} \in \bar{e}(N \times S^1)}} \mathop{\oplus}\limits_{\substack{s' \in B_{p\hat{p}\hat{\tau}} \\ s' \geq p\hat{p}(\hat{v}_0*\hat{v}_1)}} \psi_u^r(s',p\hat{p}(\hat{v}_0*\hat{v}_1))\iota_{\hat{v}_0*\hat{v}_1}(z) \\
		\end{aligned}
	\end{equation} 
	
	It follows from the same reason as in the proof of $d_{C_{res}^{uni}}=d_{\dot{C}^{\partial}}$ that $\mathop{\oplus}\limits_{\substack{\hat{\tau} \geq \hat{v}_0 \\ \hat{p}\hat{\tau} \in \bar{e}(N \times S^1)}} \mathop{\oplus}\limits_{\substack{s' \in B_{p\hat{p}\hat{\tau}} \\ s' \geq p\hat{p}(\hat{v}_0*\hat{v}_1)}} \psi_u^r(s',p\hat{p}(\hat{v}_0*\hat{v}_1))=\mathop{\oplus}\limits_{\hat{s}' \geq \hat{v}_0*\hat{v}_1}\psi_u^r(p\hat{p}\hat{s}',p\hat{p}(\hat{v}_0*\hat{v}_1))$. Therefore:
	\begin{equation*}
		(\psi_{res}^{uni})_u^r(z)=(-1)^{n+r+1}\sum\limits_{\hat{v}_0 \in \widehat{V}_0}\sum\limits_{\hat{v}_1 \in \widehat{V}_1} \mathop{\oplus}\limits_{\hat{s}' \geq \hat{v}_0*\hat{v}_1}\psi_u^r(p\hat{p}\hat{s}',p\hat{p}(\hat{v}_0*\hat{v}_1))\iota_{\hat{v}_0*\hat{v}_1}(z) \\
	\end{equation*}
	
	By definition of $\psi_{u,r}^{ass}$ in  \ref{Defdetlapsiass+psiass}, we have 	$\psi_{u,r}^{ass}(z)=(-1)^u\dot{\psi}_{u}^r(\hat{p}^{-1}\bar{e}(U))\varOmega_{\widetilde{S}}^{n-u-r}(z)$. By Lemma \ref{DefT&varSigma}, we have $\varOmega_{\widetilde{S}}^{n-u-r}(z)=(-1)^{n-u-r-1}\sum\limits_{\substack{\hat{v}_0\in \widehat{V}_0 \\ \hat{v}_1 \in \widehat{V}_1}}\iota_{\hat{v}_0*\hat{v}_1}(z)$. Therefore:
	\begin{equation*}
		\psi_{u,r}^{ass}(z)=(-1)^{n-r-1}\sum\limits_{\substack{\hat{v}_0\in \widehat{V}_0 \\ \hat{v}_1 \in \widehat{V}_1}}\mathop{\oplus}\limits_{\substack{\hat{s}' \geq \hat{v}_0*\hat{v}_1 \\ \hat{p}\hat{s'} \in \bar{e}(U)}}\iota_{\hat{v}_0*\hat{v}_1}(z)
	\end{equation*}
	
	Since $U$ is upper closed and $\bar{e}$ is a simplicial embedding on $U$, combining with the fact that $\hat{p}(\hat{v}_0*\hat{v}_1) \in \bar{e}(U)$, we get:
	\begin{equation}
		\label{Eq6.26}
		\psi_{u,r}^{ass}(z)=(-1)^{n-r-1}\sum\limits_{\substack{\hat{v}_0\in \widehat{V}_0 \\ \hat{v}_1 \in \widehat{V}_1}}\mathop{\oplus}\limits_{\hat{s}' \geq \hat{v}_0*\hat{v}_1}\iota_{\hat{v}_0*\hat{v}_1}(z)
	\end{equation}

	Comparing the two results \ref{Eq6.25},\ref{Eq6.26}, we get $(\psi_{res}^{uni})_u^r=\psi_{u,r}^{ass}$, thus we prove that the quadratic structure agrees. Summarizing all the proofs above, we have proven that $(l)$ is true. We have checked all the statements in the Theorem and we finish the proof here.
\end{proof}

\begin{Corollary}
	\label{Cormain}
	If $\Gamma/\pi$ is an infinte set, then the following diagram is commutative:
	\begin{equation*}
		\begin{tikzcd}
			H_n(M;\underline{L}(\mathbb{Z})) \dar \rar & L_n(\mathbb{Z}\Gamma) \ar{dd}{\Theta\rho_*}\\
			H_n(\Sigma(N \times S^1)_+; \underline{L}(\mathbb{Z})) \dar{\cong} \\
			H_{n-1}(N \times S^1;\underline{L}(\mathbb{Z})) \rar & L_{n-1}^p(\mathbb{Z}\Pi) \\
		\end{tikzcd}
	\end{equation*}
\end{Corollary}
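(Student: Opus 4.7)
The plan is to prove commutativity by chasing an arbitrary $x_1 \in H_n(M;\underline{L}(\mathbb{Z}))$ around the diagram at the chain level. By Theorem \ref{localdual} I can represent $x_1$ by a Poincaré quadratic chain complex $(C,\psi)$ in $M^h(\mathbb{Z})_*(K_M)$. Under the top horizontal assembly, $x_1$ maps to the class of the universal assembly $(C(\widetilde{M}),\psi(\widetilde{M}))$ in $L_n(\mathbb{Z}\Gamma)$. The two remaining sides of the square then each produce a specific Poincaré quadratic chain complex in $\mathbb{Z}\Pi$, and the task is to identify these up to cobordism.

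For the right-then-down path, the right vertical map is understood as $\partial\circ\Theta_*\circ\rho_*$, where $\Theta_*$ is the $L$-theory isomorphism of Lemma \ref{thmTheta} and $\partial$ is the boundary isomorphism of Corollary \ref{Ltheorycatinfinty}, combining to the identification $L_n^h(\Sigma\mathbb{Z}\Pi)\cong L_{n-1}^p(\mathbb{Z}\Pi)$ of Theorem \ref{suspensionL}. By Remark \ref{Rempartial}, to evaluate $\partial$ on an element of $L_n(\mathbb{F}_{\mathbb{N},b}(M^h(\mathbb{Z}\Pi)))$ I choose any lift to a quadratic complex in $\mathbb{F}_{\mathbb{N}}(M^h(\mathbb{Z}\Pi))$ and take its algebraic boundary. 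Theorem \ref{mainproof}, which is precisely tailored to the hypothesis that $\Gamma/\pi$ is infinite, first identifies $\Theta_*\rho_*[(C(\widetilde{M}),\psi(\widetilde{M}))]$ with the explicit complex $(D,\theta)$, and then exhibits a concrete lift $(E,\xi)$ in $\mathbb{F}_{\mathbb{N}}(M^h(\mathbb{Z}\Pi))$ whose boundary $\partial(E,\xi)$ is cobordant to the universal assembly $(C_{res}^{uni},\psi_{res}^{uni})$ of the restriction $(C,\psi)|_{N\times S^1}$.

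For the down-then-right path, Lemma \ref{surgeryres} computes the composition of $(f_{M,N})_*$ with the suspension isomorphism at the chain level: $x_1$ is sent to the class in $H_{n-1}(N\times S^1;\underline{L}(\mathbb{Z}))$ represented, via Theorem \ref{localdual}, by the Poincaré quadratic chain complex $(DL,\theta L)$ in $M^h(\mathbb{Z})_*(K_{N\times S^1})$. The bottom horizontal assembly then sends this class to the universal assembly of $(DL,\theta L)$, which by definition is exactly $(C_{res}^{uni},\psi_{res}^{uni})$ (viewed as a finitely generated free, hence projective, chain complex in $\mathbb{P}_0(M^h(\mathbb{Z}\Pi))$ representing a class in $L_{n-1}^p(\mathbb{Z}\Pi)$).

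Since both paths arrive at the cobordism class of $(C_{res}^{uni},\psi_{res}^{uni})$, commutativity follows. The main technical work—lifting from $\mathbb{F}_{\mathbb{N},b}$ to $\mathbb{F}_{\mathbb{N}}$ and identifying the boundary with the geometric restriction to $N\times S^1$—is already packaged inside Theorem \ref{mainproof}, so the present corollary is essentially bookkeeping. The only subtle point I anticipate is book-keeping the $L^h\to L^p$ decoration change in the bottom right corner, which is resolved by noting that $(C_{res}^{uni},\psi_{res}^{uni})$ is defined over $M^h(\mathbb{Z}\Pi)\subset \mathbb{P}_0(M^h(\mathbb{Z}\Pi))$, so its $L^h$ class maps naturally to the $L^p$ class produced on the right side by $\partial\circ\Theta_*\circ\rho_*$.
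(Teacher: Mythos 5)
Your proposal is correct and follows exactly the same route as the paper: the paper's proof of Corollary \ref{Cormain} is the one-line statement "It follows directly from Remark \ref{Rempartial} and the statement of Theorem \ref{mainproof}," and your diagram chase—representing $x_1$ by $(C,\psi)$, using the lift $(E,\xi)$ and the cobordism $\partial(E,\xi)\sim(C_{res}^{uni},\psi_{res}^{uni})$ from Theorem \ref{mainproof}, then invoking Remark \ref{Rempartial} to evaluate $\partial$ via that lift, and matching against the down-then-right path via Lemma \ref{surgeryres}—is precisely the unpacking of that citation. The remark about the $L^h\to L^p$ decoration is the right thing to note and is handled as you say.
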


\begin{proof}
	It follows directly from Remark \ref{Rempartial} and the statement of Theorem \ref{mainproof}.
\end{proof}

\begin{Theorem}
	\label{surex}
	Let $M,N$ be as in Theorem \ref{L theory transfer}. Denote $P=N \times S^1$. Denote $\underline{L}(\mathbb{Z})\textlangle0\textrangle$ to be the $1$-connected cover of $\underline{L}(\mathbb{Z})$ and $N_{TOP}(M),N_{TOP}(P)$ to be the normal structure set of $M,P$. Then there is a commutative diagram:
	\begin{equation*}
		\begin{tikzcd}
			H_n(M;\underline{L}(\mathbb{Z})\textlangle0\textrangle) \dar{\cong}[swap]{t} \rar{(f_{M,N})_*} & H_{n-1}(P;\underline{L}(\mathbb{Z})\textlangle0\textrangle) \dar{\cong}[swap]{t} \\
			N_{TOP}(M) \rar & N_{TOP}(P) \\
		\end{tikzcd}
	\end{equation*}
	
	Here the morphism on the bottom is given as follows: let $x$ be an element in $N_{TOP}(M)$ and $(f,b):M' \longrightarrow M$ be a normal map representing it. After homotopy we can make $P$ transversal to $f$ and let $P'=f^{-1}(P)$, then $x$ is mapped to the equivalence class of $(f|_{P'},b|_{P'})$.
\end{Theorem}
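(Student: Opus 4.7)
The plan is to compare both sides on a chosen normal map. Given $x=[(f,b)]\in N_{TOP}(M)$, Theorem \ref{simplicialsurgery} provides a quadratic Poincar\'e chain complex $(C_M,\psi_M)\in M^h(\mathbb{Z})_*(K_M)$ whose class in $L_n(M^h(\mathbb{Z})_*(K_M))\cong H_n(M;\underline{L}(\mathbb{Z})\textlangle 0\textrangle)$ is $t^{-1}(x)$, and analogously a complex $(C_P,\psi_P)\in M^h(\mathbb{Z})_*(K_P)$ realizing $t^{-1}[(f|_{P'},b|_{P'})]$. Commutativity of the diagram reduces to establishing the identity
\[
(f_{M,N})_*[(C_M,\psi_M)] = [(C_P,\psi_P)] \in H_{n-1}(P;\underline{L}(\mathbb{Z})\textlangle 0\textrangle).
\]

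First, I would refine the triangulation of Construction \ref{simplicial setting} so that $f$ is transverse to $P$ with $P'=f^{-1}(P)$, and so that $f$ is simplicial with respect to compatible triangulations refining the product structure on the collar $P\otimes[0,1]\subset W$. Triviality of the normal bundle of $P$ and transversality give a canonical identification of $f|_{f^{-1}(P\otimes[0,1])}$ with $(f|_{P'},b|_{P'})\times\mathrm{id}_{[0,1]}$, so that the quadratic signature contributed over each prism simplex $s=\sigma\otimes\tau\in P\otimes[0,1]$ is the external product of the quadratic signature of $(f|_{P'},b|_{P'})$ over $\sigma$ with the fundamental chain of $\tau$.

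Next, by Lemma \ref{surgeryres}, the composite $(f_{M,N})_*[(C_M,\psi_M)]$ is represented by $(DL,\theta L)$, whose $\sigma$-component is $\bigoplus_{s\in B_\sigma}(C_M)_*(s)$, where $B_\sigma$ consists of prism simplices $\sigma\otimes\tau$ with $\tau\ni 0$ and $\tau\subset[0,1]$. Under the product identification above, $\bigoplus_{s\in B_\sigma}(C_M)_*(s)\cong (C_P)_*(\sigma)\otimes_{\mathbb{Z}}\widetilde{\Delta}_*([0,1],\{1\})$. Since $\widetilde{\Delta}_*([0,1],\{1\})$ is chain homotopy equivalent to $\mathbb{Z}$ concentrated in degree $0$, this yields a chain equivalence $(DL,d_{DL})\simeq(C_P,d_{C_P})$. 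The map $\mho_\sigma$ records pairing with the $\{0\}$-vertex of $\tau$, and under the identification above this turns the formula for $\theta L$ into $\psi_P$ multiplied by the relative quadratic signature of $([0,1],\{1\})$; the sign $(-1)^{n+|\sigma|+r+1}$ appearing in Lemma \ref{surgeryres} is precisely the sign needed to make this match.

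The main obstacle is the careful sign bookkeeping in this final identification, equivalently a verification of Ranicki's simplicial product formula for quadratic signatures (Section~8 of \cite{ranickiltheory}) in the $\underline{L}(\mathbb{Z})\textlangle 0\textrangle$-setting. Once this is confirmed and combined with the naturality of the Sullivan--Ranicki transverse-signature identification under transverse inclusions of subcomplexes, the identification of $(DL,\theta L)$ with $(C_P,\psi_P)$ as representatives of the same $L$-homology class follows, establishing commutativity of the diagram.
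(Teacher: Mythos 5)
Your approach is genuinely different from the paper's, and it has a real gap. The paper proves Theorem \ref{surex} abstractly: it unfolds the definition $t = (\cap [M]_{L^{sym}}) \circ (\cong) : N_{TOP}(M) \to [M,G/TOP] \to H^0(M;\underline{L}(\mathbb{Z})\langle 0\rangle) \to H_n(M;\underline{L}(\mathbb{Z})\langle 0\rangle)$ and checks three squares: compatibility of normal bundles under $i_P^*$, naturality of generalized cohomology, and the identity $(j_P)_*[M]_{L^{sym}} \cap U_P = [P]_{L^{sym}}$ for the Sullivan--Ranicki orientation of the trivial normal bundle $\nu_P$. That argument never touches the simplicial model of the surgery obstruction. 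You instead propose to verify commutativity on representatives by showing $(f_{M,N})_*[(C_M,\psi_M)] = [(C_P,\psi_P)]$ directly via Lemma \ref{surgeryres} and a simplicial product formula over the prism $P\otimes[0,1]$.

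The gap is that the required product formula is neither in the paper nor (in the form you need) in the cited part of Ranicki's book, and it is not just a sign check. You need to show (a) the triangulations can be chosen so that the simplicial quadratic complex $(C_M,\psi_M)$ of Theorem \ref{simplicialsurgery}, restricted to the prism simplices $s \in P\otimes[0,1]$, factors (up to simplicial chain equivalence) as $(C_P,\psi_P) \otimes_{\mathbb{Z}}$ (a model of the interval), which requires a compatibility between the dual-cell construction $f^{-1}(\sigma^*)$ and the collar product structure that is plausible but not automatic; and (b) the external product of a quadratic complex with the fundamental chain of $([0,1],\{1\})$ has the explicit form realized by the maps $\mho_\sigma^r$ of Lemma \ref{surgeryres}. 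Neither is proved. There is also a factual error in your description: $\mho_\sigma$ does not record pairing with the $\{0\}$-vertex; per Lemma \ref{surgeryres} it is built from the vertices $V_1$ lying in $N\times S^1 \otimes 1$ (the $\{1\}$-end), i.e.\ the \emph{opposite} end of the collar from where $DL$ is indexed. Because of (a), (b), and this orientation mix-up, the proposal as written does not establish the theorem; the paper's argument via the Sullivan--Ranicki orientation class is much cleaner precisely because it avoids all simplicial bookkeeping.
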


\begin{proof}
	Recall from Proposition 18.3 in \cite{ranickiltheory} that $t$ is given by the following composition of isomorphisms:
	\begin{equation*}
		N_{TOP}(M) \stackrel{\cong}{\longrightarrow} [M,G/TOP] \stackrel{\cong}{\longrightarrow} H^0(M;\underline{L}(\mathbb{Z})\textlangle0\textrangle) \stackrel{\cap [M]_{L^{sym}}}{\longrightarrow} H_n(M;\underline{L}(\mathbb{Z})\textlangle0\textrangle)
	\end{equation*}

	Let $i_P:P \longrightarrow M$ be the inclusion map, to prove the theorem, it suffices to prove that each square in the following diagram commutes:
	\begin{equation}
		\label{comdia1}
		\begin{tikzcd}
			N_{TOP}(M) \rar \dar{\cong} & N_{TOP}(P) \dar{\cong} \\
			\mathop{[}M,G/TOP \mathop{]} \rar{i_P^*} \dar{\cong} & \mathop{[}N,G/TOP\mathop{]} \dar{\cong} \\
			H^0(M;\underline{L}(\mathbb{Z})\textlangle0\textrangle) \rar{i_P^*} \dar{\cap \ [M]_{L^{sym}}} & H^0(P;\underline{L}(\mathbb{Z})\textlangle0\textrangle) \dar{\cap \ [P]_{L^{sym}}} \\
			H_n(M;\underline{L}(\mathbb{Z})\textlangle0\textrangle) \rar{(f_{M,N})_*}& H_{n-1}(P;\underline{L}(\mathbb{Z})\textlangle0\textrangle) \\
		\end{tikzcd}
	\end{equation}
	
	Recall that the identification $N_{TOP}(M) \cong [M,G/TOP]$ is given by the "difference" of the bundle on the target manifold $M$ and the stable normal bundle of $M$. Since the pull back of the normal bundle of $M$ under $i_P$ is stably isomorphic to the normal bundle of $P$, we have that the upper square commutes.
	
	It follows from general theory of generalized cohomology theory that the middle square commutes.
	
	For every topological block bundle $E$ of dimension $k$, there is a Sullivan-Ranicki orientation for it. It is given by the push forward of the standard orientation $\widetilde{H}^k(Th(E);MSTOP)$ via a map $\sigma: MSTOP \longrightarrow \underline{L}^{sym}(\mathbb{Z})$ between spectra, see page 289 in \cite{Ranickitotal}, \cite{totalKMM} and \cite{Sullivanlocal} for details.
	
	Denote $\nu_P$ to be the normal bundle of $P=N \times S^1$ in $M$. Denote $D(\nu_P),S(\nu_P)$ to be the associated disk bundle and sphere bundle. Let $U_P \in \widetilde{H}^1\big(Th(\nu_P);\underline{L}^{sym}(\mathbb{Z})\big)$ be the Sullivan-Ranicki orientation of $\nu_P$. Let $j_P: M_+ \longrightarrow Th(\nu_P)$ be the quotient map. Then, by (d) in Proposition 16.16 in \cite{ranickiltheory}, we have $(j_P)_*[M]_{L^{sym}} \cap U_P=[N]_{L^{sym}}$. Therefore, the following diagram commutes:
	\begin{equation*}
		\begin{tikzcd}
			H^0(M;\underline{L}(\mathbb{Z})\textlangle0\textrangle) \ar{rr}{i_P^*} \dar{\cap \ [M]_{L^{sym}}} & & H^0(P;\underline{L}(\mathbb{Z})\textlangle0\textrangle) \dar{\cap \ [N]_{L^{sym}}} \\
			H_n(M;\underline{L}(\mathbb{Z})\textlangle0\textrangle) \rar{(j_P)_*} &  \widetilde{H}_n(Th(\nu_P);\underline{L}(\mathbb{Z})\textlangle0\textrangle) \rar{\cap U_P} & H_{n-1}(P;\underline{L}(\mathbb{Z})\textlangle0\textrangle) \\
		\end{tikzcd}
	\end{equation*}
	
	Moreover, since $\nu_P$ is trivial, we have $S(\nu_P)=P \times \{\pm 1\}$. Denote $i_{\pm}:P \longrightarrow S(\nu_P)$ to be the inclusions into $P \times \{\pm1\}$.  By definition of $f_{M,N}$, the following diagram commutes:
		\begin{equation*}
			\begin{tikzcd}
				H_n(M;\underline{L}(\mathbb{Z})\textlangle0\textrangle) \rar{(f_{M,N})_*}\dar{(j_P)_*}  & H_{n-1}(P;\underline{L}(\mathbb{Z})\textlangle0\textrangle) \dar{(i_-)_*} \\
				H_n(D(\nu_P),S(\nu_P);\underline{L}(\mathbb{Z})\textlangle0\textrangle) \rar{\partial} & H_{n-1}(S(\nu_P);\underline{L}(\mathbb{Z})\textlangle0\textrangle) \\
			\end{tikzcd}
		\end{equation*}
	Note that $H_{n-1}(S(\nu_P);\underline{L}(\mathbb{Z})\textlangle0\textrangle)$ is the direct summand of two copies of \\
	$H_{n-1}(P;\underline{L}(\mathbb{Z})\textlangle0\textrangle)$ and $(i_-)_*$ is the inclusion map into one direct summand. Combining with the two commutative diagrams above, we see that in order to prove that the lower square in the diagram \ref{comdia1} commutes, it suffices to prove that the following diagram commutes:
	\begin{equation*}
		\begin{tikzcd}
			H_n(D(\nu_P),S(\nu_P);\underline{L}(\mathbb{Z})\textlangle0\textrangle) \ar{r}{\cap U_P} \drar{\partial} &  H_{n-1}(P;\underline{L}(\mathbb{Z})\textlangle0\textrangle) \dar{(i_-)_*}\\
			& H_{n-1}(S(\nu_P);\underline{L}(\mathbb{Z})\textlangle0\textrangle) \\
		\end{tikzcd}
	\end{equation*}

	Since $\nu_P$ is trivial, we have that $(D(\nu_P),S(\nu_P)) \cong (P \times [-1,1],P \times \{\pm 1\})$ and $U_P$ is the pull back of the element $1 \in H^1([-1,1], \{\pm 1\}; \underline{L}^{sym}(\mathbb{Z}))=\pi_0\big(\underline{L}^{sym}(\mathbb{Z})\big)=\mathbb{Z}$ under the projection map $p_I:P \times [-1,1] \longrightarrow [-1,1]$. Then the commutativity of the diagram above follows from the following commutative diagram by the general properties of cap products and the fact that the class $1$ induces homotopically the identity map $\underline{L}(\mathbb{Z})\textlangle0\textrangle \longrightarrow \underline{L}(\mathbb{Z})\textlangle0\textrangle$.
	\begin{equation*}
		\begin{tikzcd}
			H_n(P \times [-1,1],P \times \{\pm1\};\underline{L}(\mathbb{Z})\textlangle0\textrangle) \rar{\cap U_P}\dar{\partial} & H_{n-1}(P \times [-1,1];\underline{L}(\mathbb{Z})\textlangle0\textrangle) \\
			H_{n-1}(P \times \{\pm 1\};\underline{L}(\mathbb{Z})\textlangle0\textrangle) \rar {\cap p_I^*(1,0)}& H_{n-1}(P \times \{\pm 1\};\underline{L}(\mathbb{Z})\textlangle0\textrangle) \uar \\
		\end{tikzcd}
	\end{equation*}
	
	Where $(1,0) \in H^0(\{\pm 1\};\underline{L}(\mathbb{Z})\textlangle0\textrangle)=\mathbb{Z} \oplus \mathbb{Z}$.
\end{proof}

Now we can prove Theorem \ref{L theory transfer}:
\begin{proof}[Proof of Theorem \ref{L theory transfer}]
	The proof is divided into two cases, depending on whether $\Gamma /\pi$ is finite.
	
	$(1)$ If $\Gamma/\pi$ is a finite set.
	
	On the one hand, by definition in \ref{transfer map}, $\rho$ is the trivial map. Therefore, we have $\rho_*(\sigma(f,b))=0$ and thus $\rho_{M,N}(\sigma(f,b))=0$.
	
	On the other hand, since $\Gamma/\pi$ is a finite set, we have that $\overline{M}$ is compact and thus $W_{\infty}$ is also compact. Since $\partial W_{\infty}=N \times S^1$, we have $\bar{e}_*\sigma^{\textlangle 0 \textrangle}(f|_{N' \times S^1},b|_{N' \times S^1}) $ \\ $=0$. By (5) in the geometric setting \ref{geometric setting}, we have $r_0\bar{e}=id$. Therefore, we get $\sigma^{\textlangle 0 \textrangle}(f|_{N' \times S^1},b|_{N' \times S^1})=0$. By Theorem \ref{split}, we have $\sigma^{\textlangle -1 \textrangle}(f|_{N'},b|_{N'})=0$. Therefore, we have $\rho_{M,N}(\sigma(f,b))=\sigma^{\textlangle -1 \textrangle}(f|_{N'},b|_{N'})$.
	
	$(2)$ If $\Gamma/\pi$ is an infinite set.
	
	By Corollary \ref{Cormain} and Theorem \ref{surex}, we have 
	\begin{equation*}
		\Theta\rho_*(\sigma(f,b))=\sigma^{\textlangle 0 \textrangle}(f|_{N' \times S^1},b|_{N' \times S^1})
	\end{equation*}

	By Theorem \ref{split} and the definition of $\rho_{M,N}$, we have:
	\begin{equation*}
		\rho_{M,N}(\sigma(f,b))=S\Theta\rho_*(\sigma(f,b))=S\big(\sigma^{\textlangle 0 \textrangle}(f|_{N' \times S^1},b|_{N' \times S^1})\big)=\sigma^{\textlangle -1 \textrangle}(f|_{N'},b|_{N'})
	\end{equation*}
	
	Summarizing the two cases above, we have proven the Theorem.
\end{proof}

	\section{Appendix}
	In this Appendix, we will give two descriptions stated in the previous sections, namely:

(1) A "Poincare pair" related to the construction in Corollary \ref{partialquad}.

(2) A description of the map $\delta:T^k(J,J') \longrightarrow T^{k+1}(J',J'')$ for some special ball complexes $J \subset J' \subset J''$.

We will make clear of the setup and goals in the subsections below. Before that, let us recall some notations in the previous sections first:

Let $l \in \mathbb{N}$, denote $\Sigma^l$ to be the simplicial complex with one $k$-simplex $\sigma^*$ for each $(l-k)$-simplex $\sigma$ in $\partial\Delta^{l+1}$, with $\sigma^* \leq \tau^*$ if and only if $\sigma \geq \tau$ in $\partial\Delta^{l+1}$. It inherits an order from the following simplicial isomorphism:
\begin{equation*}
	\Sigma^l \longrightarrow \partial\Delta^{l+1}; \ \sigma^* \mapsto \{0,1,2,...,l\} \backslash \sigma
\end{equation*}

For any subcomplex $V \subset \partial\Delta^{l+1}$, denote $\underline{V}=\mathop{\cup}\limits_{\sigma \notin V}\sigma^*$, it is a subcomplex of $\Sigma^l$. 

Now we state our setup:

(a) Let $[-1,1]$ be the simplicial complex with three zero dimensional simplices $-1,0,1$ and two one dimensional simplices $[0,1],[-1,0]$.

(b) Let $L \subset K$ be a pair of finite ordered geometric simplicial complexes with a decomposition $K=K_0 \cup _{L \times \{\pm 1\}} L \otimes [-1,1]$.

(c) Choose $l \in \mathbb{N}$ sufficiently large, such that  $K$ can be embeded simplicially and order-preservingly in $\partial \Delta^{l+1}$.

(d) Denote $L_0=K_0,L_1=K_0 \cup L,L_2=K_0 \cup L \otimes [0,1]$. We have subcomplexes $\underline{L}_2 \subset \underline{L}_1 \subset \underline{L}_0 \subset \Sigma^l$.

(e) For every simplex $\sigma \in L=L_1 \backslash L_0$, denote $A_{\sigma}=\{s \in L_2 \backslash L_1 | \ s>\sigma\}$. Then by definition $\tau>\sigma$ implies $A_{\tau} \subset A_{\sigma}$. Denote $B_{\sigma}=A_{\sigma} \backslash \mathop{\cup}\limits_{\substack{\tau > \sigma \\ \tau \in L}} A_{\tau}$. We have that $B_{\sigma}$ are disjoint and their union is $L_2 \backslash L_1=L \otimes [0,1] \backslash L \otimes \partial [0,1]$.


(f) For any $s \in A_{\sigma}$, since $s \in L_2 \backslash L_1=L \otimes [0,1] \backslash L \otimes \partial[0,1]$, there are unique simplices in $L \otimes 0,L \otimes 1$ that linearly spans $s$. Denote them by $(s \cap L)_0, (s \cap L)_1$ respectively. For any simplex $t_0 \leq (s \cap L)_0,t_1 \leq (s \cap L)_1$, denote $t_0 * t_1$ to be linear span of them. It is a face of $s$.

\begin{Remark}
	\label{RemAB}
	It is easy to see that for $s \in L_2 \backslash L_1$, we have $s \in A_{\sigma} \Leftrightarrow (s \cap L)_0 \geq \sigma$ and $s \in B_{\sigma} \Leftrightarrow (s \cap L)_0 =\sigma$.
\end{Remark}

\subsection{"Poincare pair" construction}
\

We make clear of what Remark \ref{partialquadpair} means here, given a ring $R$ with involution and a cover $p_K:\overline{K} \longrightarrow K$. Suppose that the embedding $L \otimes [-1,1] \subset K$ lifts to an embedding $L \otimes [-1,1] \subset \overline{K}$ and that there is a decomposition $\overline{K}=\overline{K}' \cup_{L \otimes 1} L \otimes [-1,1] \cup_{L \otimes 1} \overline{K}''$. Denote $\overline{K}_+=\overline{K}' \cup_{L \otimes 1} L \otimes [0,1]$ and suppose that there is a Galois covering $\mathbf{p}:\widetilde{K}_+ \longrightarrow \overline{K}_+$ with transformation group $G_0$. Let $n \in \mathbb{Z}$ and $(X,\psi_X)$ be a $n$-dimensional Poincare quadratic chain complex in $M^h(R)_*(K)$. Denote $(C,\psi_C)$ to be the infinite transfer of $(X,\psi_X)$ with respect to $p_K$. Let $S=\overline{K}_+ \backslash L$, we will prove the following theorem, which is the detailed description of Remark \ref{partialquadpair}:

\begin{Theorem}
	\
	\label{thmpartialquadpair}
	
	Let $C^{\partial},C^{all}$ be the following chain complexes in $M^{f}(RG_0)$:
	
	For every $r \in \mathbb{Z}$:
	\begin{equation*}
		C^{\partial}_r=\mathop{\oplus}\limits_{\mathbf{p}\tilde{\sigma}\in S \backslash S^-}C_r(\mathbf{p}\tilde{\sigma})
	\end{equation*}
	\begin{equation*}
		C^{all}_r=\mathop{\oplus}\limits_{\mathbf{p}\tilde{\sigma}\in S}C_r(\mathbf{p}\tilde{\sigma})
	\end{equation*}
	
	Denote $i_C$ to be the inclusion map on every dimension, it is an assembled map. Then:
	
	$(1)$ $i_C$ is a chain map.
	
	$(2)$ For all $u \in \mathbb{N},r \in \mathbb{Z}$, there are morphisms
	\begin{equation*}
		\delta\psi^{ass}_{u,r}:C^{all,cd}_{n-u-r}=\mathop{\oplus}\limits_{\mathbf{p}\tilde{\sigma}\in S}C_{n-u-r}(\mathbf{p}\tilde{\sigma})^* \longrightarrow C^{all}_r=\mathop{\oplus}\limits_{\mathbf{p}\tilde{\sigma}\in S}C_r(\mathbf{p}\tilde{\sigma})
	\end{equation*}
	\begin{equation*}
		\psi^{ass}_{u,r}:C^{\partial,cd}_{n-u-r-1}=\mathop{\oplus}\limits_{\mathbf{p}\tilde{\sigma}\in S \backslash S^-}C_{n-u-r-1}(\mathbf{p}\tilde{\sigma})^* \longrightarrow C^{\partial}_r=\mathop{\oplus}\limits_{\mathbf{p}\tilde{\sigma}\in S \backslash S^-}C_r(\mathbf{p}\tilde{\sigma})
	\end{equation*}
	
	such that the following equation holds:
	\begin{equation*}
		\begin{aligned}
			0= \ & d_{C^{all},r}\delta\psi^{ass}_{u,r+1} + (-1)^r\delta\psi^{ass}_{u,r}d_{C^{all},r+1}^{cd}+ (-1)^{n-1-u}\delta\psi_{u+1,r}^{ass} \\
			& +(-1)^{n+r'r}(\delta\psi_{u+1,r'}^{ass})^{cd}+(-1)^{n-1}i_{C,r}\psi_{u,r}^{ass}i_{C,r'}^{cd}
		\end{aligned}
	\end{equation*}
	
	Where $r'=n-1-u-r$.
	
	$(3)$ Let $p_S,\Upsilon^*[\widetilde{S}^-],i^S$ be the maps defined in Lemma \ref{Dualexchange}, then:
	\begin{equation*}
		p_S\psi_u^r(\widetilde{S})\Upsilon^{n-u-r}[\widetilde{S}^-]=p_S\delta\psi_{u,r}^{ass}i^S:C_{n-u-r}(\widetilde{S}^-)^{cd} \longrightarrow C_r(\widetilde{S}^-)
	\end{equation*}
\end{Theorem}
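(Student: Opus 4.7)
The plan is to set $\delta\psi_{u,r}^{ass}:=\dot\psi_u^r(\widetilde{S})\circ\Upsilon^{n-u-r}[\widetilde{S}]$, mirroring the ``upper'' convention of Remark \ref{Dualexchangeupper}, and to read off $\psi_{u,r}^{ass}$ from the boundary residue that will appear in the quadratic relation. Part $(1)$ is formal: $L\subset\overline{K}$ is a subcomplex, so its complement is upper closed, and hence $S=\overline{K}_+\backslash L$ is the intersection of an upper closed set with the subcomplex $\overline{K}_+$; the set of simplices carrying a vertex in $L$ is also upper closed, so $S\backslash S^-$ has the same form. Lemma \ref{partialassemblyfun} then supplies the chain complex structures on $C^\partial$ and $C^{all}$, and since $S\backslash S^-$ is upper closed inside $S$ (any coface of a simplex with a vertex in $L$ still has that vertex), $i_C$ intertwines the two differentials.

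For part $(2)$, the plan is to start from the quadratic relation $0=d_{\dot C}\dot\psi_u^{r+1}+(-1)^r\dot\psi_u^r d_{\dot C^{-*}}+(-1)^{n-1-u}\dot\psi_{u+1}^r+(-1)^n T\dot\psi_{u+1}^r$ for $\dot\psi$ in $M^h(R)_*^{lf}(\overline{K})$, apply the partial assembly $(-)(\widetilde{S})$, and post-compose with $\Upsilon^{n-u-r-1}[\widetilde{S}]$. The differential terms pass through cleanly using the upper closedness of $S$ (no component of $d$ leaves $\widetilde{S}$), and Remark \ref{Dualexchangeupper} converts the $T\dot\psi_{u+1}$-term into $(\delta\psi_{u+1,r'}^{ass})^{cd}$, producing the first four terms of the desired pair identity. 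The discrepancy between the global expression and the partial-assembly version is precisely the sum of the components $\dot\psi_u^r(\tilde\tau,\tilde x)\,\Upsilon^{n-u-r}_{\tilde x,\tilde\sigma}$ in which the vertex $\tilde x$ satisfies $\mathbf{p}\tilde x\in L$, since those vertices are exactly the ones omitted by the definition of $\Upsilon^{n-u-r}[\widetilde{S}]$. If either $\mathbf{p}\tilde\tau$ or $\mathbf{p}\tilde\sigma$ lies in $S^-$, then the respective simplex has no vertex in $L$ at all, so the residue at $(\tilde\tau,\tilde\sigma)$ vanishes; the residue is therefore supported on $(S\backslash S^-)\times(S\backslash S^-)$ and lies in the image of $\alpha\mapsto i_C\alpha\,i_C^{cd}$. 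Defining $\psi_{u,r}^{ass}$ so that $(-1)^{n-1}i_C\psi_{u,r}^{ass}i_C^{cd}$ equals this residue completes the pair relation by construction.

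Part $(3)$ is immediate from the definitions: for $\mathbf{p}\tilde\sigma\in S^-$, every vertex $\tilde x\leq\tilde\sigma$ automatically has $\mathbf{p}\tilde x\in S^-\subset S$, so $\Upsilon^{n-u-r}[\widetilde{S}]\circ i^S=\Upsilon^{n-u-r}[\widetilde{S}^-]$ as morphisms out of $C_{n-u-r}(\widetilde{S}^-)^{cd}$, and pre-composing with $p_S\dot\psi_u^r(\widetilde{S})$ gives the claimed identity. The main obstacle will be the sign bookkeeping in part $(2)$: one must track the $(-1)$-factors produced by Lemma \ref{Dualexchange}, Remark \ref{Dualexchangeupper}, and the chain duality formulas of Theorem \ref{chaindualityonAK1}, and verify that the residue is genuinely concentrated on $(S\backslash S^-)\times(S\backslash S^-)$ without leakage into the interior. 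This parallels Corollary \ref{partialquad} but must carry the additional boundary term, and that is where the bulk of the computation will lie.
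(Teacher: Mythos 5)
Your overall strategy matches the paper's: set $\delta\psi^{ass}_{u,r}=\dot\psi_u^r(\widetilde{S})\circ\Upsilon^{n-u-r}[\widetilde{S}]$, apply the partial assembly functor to the quadratic relation, post-compose with $\Upsilon^{n-u-r-1}[\widetilde{S}]$, and absorb the residue into an $i_C\,\psi^{ass}\,i_C^{cd}$ term. Your part (1) and part (3) arguments are essentially the paper's. However, your account of where the residue comes from in part (2) is incorrect, and this is precisely the nontrivial content of the theorem.

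You assert that ``the differential terms pass through cleanly'' and that the residue is ``the sum of the components $\dot\psi_u^r(\tilde\tau,\tilde x)\,\Upsilon^{n-u-r}_{\tilde x,\tilde\sigma}$ with $\mathbf{p}\tilde x\in L$.'' Neither is right. The $d_C$-term does pass through, but the $\psi_u^r\,d_{C^{-*}}$-term does not: the obstruction is that $\Upsilon^{*}[\widetilde{S}]$ is \emph{not} a chain map on the prism collar. The paper isolates the defect
\begin{equation*}
T_{\widetilde{S}}^{r}=d_{C^{-*}}(\widetilde{S})\,\Upsilon^{r-1}[\widetilde{S}]-(-1)^{r-1}\,\Upsilon^{r}[\widetilde{S}]\,d_C(\widetilde{S})^{cd},
\end{equation*}
so that the residue is $\psi_{C,u}^r(\widetilde{S})\circ T_{\widetilde{S}}^{\,n-u-r}$, not a sum of omitted $\psi$-components. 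When one computes $T_{\widetilde{S}}^{r}$ on an element over $\tilde\sigma\in\widetilde{S\setminus S^-}$ (the paper's Lemma \ref{DefT&varSigma}), the surviving contributions are $(-1)^{r-1}\sum_{\tilde v_0,\tilde v_1}\iota_{\tilde v_0*\tilde v_1}$, a ``collar'' operator $\varOmega_{\widetilde{S}}^{r}$ indexed by \emph{edges} joining a vertex $\tilde v_0$ over $L\otimes 0$ to a vertex $\tilde v_1$ over $L\otimes 1$ — not by vertices omitted from $\Upsilon[\widetilde{S}]$. Your description, $\psi(\tilde\tau,\tilde x)\Upsilon_{\tilde x,\tilde\sigma}$ with $\tilde x$ a vertex in $L$, is not even well-posed inside the partial assembly, since the summand $C^{*}(\tilde x)$ for $\mathbf{p}\tilde x\in L$ is not part of $C^{*}(\widetilde{S})$.

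This matters because your vanishing claim (``if $\mathbf{p}\tilde\tau$ or $\mathbf{p}\tilde\sigma\in S^-$ the residue vanishes since there is no vertex in $L$'') cannot be justified from your description of the residue. In the paper it requires two separate facts: (i) $T_{\widetilde{S}}^{r}$ vanishes on $C_{*}(\widetilde{S}^-)^{cd}$, proved by a sign-cancellation over pairs of vertices (Lemma \ref{vanT}); and (ii) the output of $T_{\widetilde{S}}^{r}$ lies in $C^{*}(\widetilde{S\setminus S^-})$, and since $S\setminus S^-$ is upper closed and $\psi$ is a morphism in $M^h(R)_*^{lf}$, the composite $\psi_{C,u}^r(\widetilde{S})\,i^{-}$ factors as $i_{C,r}\,\psi_{C,u}^r(\widetilde{S\setminus S^-})$. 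Only then can you define $\psi^{ass}_{u,r}=(-1)^{u}\psi_{C,u}^r(\widetilde{S\setminus S^-})\,\varOmega_{\widetilde{S}}^{n-u-r}$ and conclude the pair relation. Without $T_{\widetilde{S}}$ and the explicit $\varOmega_{\widetilde{S}}$, ``define $\psi^{ass}$ so that it equals the residue'' is circular — you have not shown the residue factors through $i_C(\cdot)i_C^{cd}$, which is the real content. The sign bookkeeping you flag as the main obstacle is actually the easier part; the missing idea is the defect operator and its explicit collar form.
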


Before proving the theorem, let us do some preparations:

\begin{Lemma}
	\
	\label{DefT&varSigma}
	
	Let $\tilde{\sigma} \in \widetilde{S \backslash S^-}=\mathbf{p}^{-1}(S \backslash S^-)$ and set $\sigma=\mathbf{p}\tilde{\sigma}$. Since $\sigma \in S \backslash S^-=L \otimes [0,1] \backslash L \otimes \partial [0,1]$, let $V_0$ and $V_1$ be the non-empty set of vertices of $(\sigma \cap L)_0$ and $(\sigma \cap L)_1$, respectively. For $i=0,1$, let $\widetilde{V}_i$ be the set of vertices $\tilde{v} \in \tilde{\sigma}$ such that $\mathbf{p}\tilde{v} \in V_i$. For any $\tilde{\sigma}' \leq \tilde{\sigma}$, denote $\iota_{\tilde{\sigma}'}$ to be the following inclusion map:
	\begin{equation*}
		\iota_{\tilde{\sigma}'}:C_*(\mathbf{p}\tilde{\sigma})^* \longrightarrow C^{*+|\sigma'|}(\mathbf{p}\tilde{\sigma}')=\mathop{\oplus}\limits_{\kappa \geq \mathbf{p}\tilde{\sigma}'}C_*(\kappa)^*
	\end{equation*}
	
	For any $r \in \mathbb{Z}$, let $\Upsilon^r[\widetilde{S}]:C_r(\widetilde{S})^{cd} \longrightarrow C^r(\widetilde{S})$ be the map defined in Remark \ref{Dualexchangeupper}. Define $T_{\widetilde{S}}^r=d_{C^{-*}}(\widetilde{S})\Upsilon^{r-1}[\widetilde{S}]-(-1)^{r-1}\Upsilon^{r}[\widetilde{S}]d_C(\widetilde{S})^{cd}$. Let $\varOmega^r_{\widetilde{S}}:C_{r-1}(\widetilde{S \backslash S^-})^{cd} \longrightarrow C^r(\widetilde{S \backslash S^-})$ be the composition of the following maps:
	\begin{equation}
		\label{DefvarOmega}
		\begin{tikzcd}
			C_{r-1}(\widetilde{S \backslash S^-})^{cd} \rar[hook] & C_{r-1}(\widetilde{S})^{cd} \rar{T_{\widetilde{S}}^r} & C^r(\widetilde{S}) \ar{rr}{\text{projection}} & & C^r(\widetilde{S \backslash S^-})
		\end{tikzcd}
	\end{equation}

	Then for any $r \in \mathbb{Z}$ and $z \in C_{r-1}(\mathbf{p}\tilde{\sigma})^*$, we have $T_{\widetilde{S}}^r(z) \in C^r(\widetilde{S \backslash S^-})$ and:
	\begin{equation*}
		\varOmega_{\widetilde{S}}^r(z)=(-1)^{r-1}\sum\limits_{\substack{\tilde{v}_0\in \widetilde{V}_0 \\ \tilde{v}_1 \in \widetilde{V}_1}}\iota_{\tilde{v}_0*\tilde{v}_1}(z)
	\end{equation*}
\end{Lemma}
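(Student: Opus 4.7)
The plan is to unfold $T^r_{\widetilde{S}}(z)$ by splitting the coboundary $d_{C^{-*}}(\widetilde{S})$ into its internal and face pieces, and tracking contributions simplex by simplex. First I would observe that since $\mathbf{p}\widetilde{V}_0\subset L\otimes 0$ lies outside $S$ while $\mathbf{p}\widetilde{V}_1\subset L\otimes 1\subset S^-$, the restricted $\Upsilon$ collapses to
\begin{equation*}
\Upsilon^{r-1}[\widetilde{S}](z)=\sum_{\tilde{v}_1\in\widetilde{V}_1}\iota_{\tilde{v}_1}(z),
\end{equation*}
with no contribution from $\widetilde{V}_0$. This asymmetry between the two endpoint strata of $\tilde{\sigma}$ will be the geometric source of the non-trivial answer.

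I would then apply $d_{C^{-*}}(\widetilde{S})$ to each summand $\iota_{\tilde{v}_1}(z)\in T(C_{r-1})_0(\tilde{v}_1)$. Using the formula for $d_{TC}$ from Definition \ref{chain functor} together with the convention $d_{C^{-*}}=(-1)^{r-1}d_{TC}$, this output splits as (i) an internal piece $(-1)^{r-1}T(d_C)$ contributing to faces $\tilde{\sigma}'$ of $\tilde{\sigma}$ with $\tilde{\sigma}'\ge\tilde{v}_1$, and (ii) a face-incidence piece $(-1)^{r-1}d_{T(C_{r-1})}$ contributing to each edge $\tilde{e}\ge\tilde{v}_1$ inside $\tilde{\sigma}$ with the sign $(-1)^{n_{\tilde{v}_1}^{\tilde{e}}}$ from Theorem \ref{chaindualityonAK1}. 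A short unfolding of $d_C(\widetilde{S})^{cd}$ and $\Upsilon^r[\widetilde{S}]$ shows that $(-1)^{r-1}\Upsilon^r d_C(\widetilde{S})^{cd}(z)$ reproduces exactly the internal piece above, so the two cancel in $T^r_{\widetilde{S}}(z)$; here one uses that any face $\tilde{\sigma}'\le\tilde{\sigma}$ with a vertex in $\widetilde{V}_1$ automatically satisfies $\mathbf{p}\tilde{\sigma}'\in S$, so the two index sets agree.

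What remains is the face-incidence piece. For an edge $\tilde{e}\le\tilde{\sigma}$ with both endpoints in $\widetilde{V}_1$, both endpoints contribute and the two signs $(-1)^{n_{\tilde{v}}^{\tilde{e}}}$ and $(-1)^{n_{\tilde{w}}^{\tilde{e}}}$ differ by one and cancel. The surviving edges are therefore the mixed ones $\tilde{v}_0*\tilde{v}_1$ with $\tilde{v}_0\in\widetilde{V}_0$, $\tilde{v}_1\in\widetilde{V}_1$, since $\tilde{v}_0\notin\widetilde{S}$ suppresses its contribution. Using the convention that $L\otimes 0$-vertices precede $L\otimes 1$-vertices in the ordering of $L\otimes[-1,1]$, one has $n_{\tilde{v}_1}^{\tilde{v}_0*\tilde{v}_1}=0$, and the face piece equals $(-1)^{r-1}\sum_{\tilde{v}_0,\tilde{v}_1}\iota_{\tilde{v}_0*\tilde{v}_1}(z)$. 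Since each such edge lies in $L\otimes[0,1]\setminus L\otimes\partial[0,1]=S\setminus S^-$, we already have $T^r_{\widetilde{S}}(z)\in C^r(\widetilde{S\setminus S^-})$, and the projection in the definition of $\varOmega^r_{\widetilde{S}}$ acts as the identity, producing the stated formula. The main obstacle is the sign bookkeeping: one must reconcile the $(-1)^{n_\sigma^\tau}$ of the chain duality, the $(-1)^r$ relating $d_{C^{-*}}$ to $d_{TC}$, the $(-1)^y$ inside $T(d_C)$, and the Koszul-type sign inside $d_C^{cd}$; the $(-1)^{r-1}$ prefactor in $T^r_{\widetilde{S}}$ is chosen precisely to force the internal cancellation, so the delicate check is that the surviving face contributions then emerge uniformly with the single sign $(-1)^{r-1}$.
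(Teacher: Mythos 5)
Your proposal follows essentially the same route as the paper's proof: expand $\Upsilon^{r-1}[\widetilde{S}](z)=\sum_{\tilde{v}_1\in\widetilde{V}_1}\iota_{\tilde{v}_1}(z)$ (correctly noting that $V_0$ projects to $L\otimes 0\notin S$), split $d_{C^{-*}}(\widetilde{S})$ at each vertex into the diagonal piece $(-1)^{r-1}\iota_{\tilde{v}_1}d_C(\tilde{\sigma},\tilde{\kappa})^*$ and the edge-incidence piece, cancel the diagonal piece against $(-1)^{r-1}\Upsilon^r[\widetilde{S}]d_C(\widetilde{S})^{cd}$, cancel the $V_1$--$V_1$ edges pairwise via $n_{\tilde{v}}^{\tilde{v}*\tilde{v}'}+n_{\tilde{v}'}^{\tilde{v}*\tilde{v}'}=1$, and keep the mixed $V_0$--$V_1$ edges with $n_{\tilde{v}_1}^{\tilde{v}_0*\tilde{v}_1}=0$ to land in $C^r(\widetilde{S\setminus S^-})$ with the prefactor $(-1)^{r-1}$. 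This is precisely the paper's computation (equations \ref{Eq7.1}--\ref{Eq7.2}), and your identification of the delicate point as the sign bookkeeping is apt.
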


\begin{proof}
	By definition, we have $\Upsilon^{r-1}[\widetilde{S}](z)=\mathop{\oplus}\limits_{\tilde{v}_1 \in \widetilde{V}_1}\iota_{\tilde{v}_1}(z)$. Therefore, we have:
	\begin{equation*}
		d_{C^{-*}}(\widetilde{S})\Upsilon^{r-1}[\widetilde{S}](z)=\sum\limits_{\tilde{v}_1 \in \widetilde{V}_1}d_{C^{-*}}(\widetilde{S})\iota_{\tilde{v}_1}(z)=\sum\limits_{\tilde{v}_1 \in \widetilde{V}_1}\mathop{\oplus}\limits_{\tilde{\tau} \in \widetilde{S}}d_{C^{-*}}(\tilde{\tau},\tilde{v}_1)\iota_{\tilde{v}_1}(z)
	\end{equation*}
	
	By definition, we have $d_{C^{-*}}(\tilde{\tau},\tilde{v}_1) \neq 0 \Rightarrow \tilde{\tau}=\tilde{v}_1$ or $\tilde{\tau} \in \widetilde{K}_+^*(\tilde{v}_1)$
	
	If $\tilde{\tau} =\tilde{v}_1$, then by definition we have:
	\begin{equation*}
		d_{C^{-*}}(\tilde{\tau},\tilde{v}_1)\iota_{\tilde{v}_1}(z)=(-1)^{r-1}\sum\limits_{\substack{\tilde{v}_1 \leq \tilde{\kappa} \leq \tilde{\sigma} \\ \tilde{\kappa} \in \widetilde{S}}}\iota_{\tilde{v}_1}d_{C}(\tilde{\sigma},\tilde{\kappa})^*(z)
	\end{equation*}
	
	If $\tilde{\tau} \in \widetilde{K}_+^*(\tilde{v}_1)$ and $\tilde{\tau} \in \widetilde{S}$, then by definition we have:
	\begin{equation*}
		d_{C^{-*}}(\tilde{\tau},\tilde{v}_1)\iota_{\tilde{v}_1}(z)=\begin{cases} (-1)^{r-1}(-1)^{n_{\tilde{v}_1}^{\tilde{\tau}}}\iota_{\tilde{\tau}}(z) & \text{If } \tilde{\tau} \leq \tilde{\sigma} \\ \qquad \qquad 0 & \text{else} \end{cases}
	\end{equation*}
	
	If $\tilde{\tau} \in \widetilde{K}_+^*(\tilde{v}_1) \cap \widetilde{S}$ and $\tilde{\tau} \leq \tilde{\sigma}$, then either of the following statement holds:
	
	(a) There is a vertex $\tilde{v}_1' \in  \widetilde{V}_1$ that is different from $\tilde{v}_1$, such that $\tilde{\tau}=\tilde{v}_1*\tilde{v}_1'$.
	
	(b) There is a vertex $\tilde{v}_0 \in \widetilde{V}_0$, such that $\tilde{\tau}=\tilde{v}_0*\tilde{v}_1$.
	
	Therefore, we have:
	\begin{equation*}
		\begin{aligned}
			d_{C^{-*}}(\widetilde{S})\Upsilon^{r-1}[\widetilde{S}](z)=(-1)^{r-1}\big( & \mathop{\oplus}\limits_{\tilde{v}_1 \in \widetilde{V}_1}\sum\limits_{\substack{\tilde{v}_1 \leq \tilde{\kappa} \leq \tilde{\sigma} \\ \tilde{\kappa} \in \widetilde{S}}}\iota_{\tilde{v}_1}d_{C}(\tilde{\sigma},\tilde{\kappa})^*(z) \\
			&+\sum\limits_{\tilde{v}_1 \in \widetilde{V}_1}\sum\limits_{\substack{\tilde{v}_1' \in \widetilde{V}_1 \\ \tilde{v}_1' \neq \tilde{v}_1}}(-1)^{n_{\tilde{v}_1}^{\tilde{v}_1*\tilde{v}_1'}}\iota_{\tilde{v}_1*\tilde{v}_1'}(z) \\
			&+\sum\limits_{\tilde{v}_1 \in \widetilde{V}_1}\sum\limits_{\tilde{v}_0\in \widetilde{V}_0}(-1)^{n_{\tilde{v}_1}^{\tilde{v}_0*\tilde{v}_1}}\iota_{\tilde{v}_0*\tilde{v}_1}(z)\big)
		\end{aligned}
	\end{equation*}
	
	Now since for any $\tilde{v}_1 \neq \tilde{v}_1' \in \widetilde{V}_1$, we have $\tilde{v}_1*\tilde{v}_1'=\tilde{v}_1'*\tilde{v}_1$ and $n_{\tilde{v}_1}^{\tilde{v}_1*\tilde{v}_1'}=1-n_{\tilde{v}_1'}^{\tilde{v}_1'*\tilde{v}_1}$. Thus we have:
	\begin{equation*}
		\sum\limits_{\tilde{v}_1 \in \widetilde{V}_1}\sum\limits_{\substack{\tilde{v}_1' \in \widetilde{V}_1 \\ \tilde{v}_1' \neq \tilde{v}_1}}(-1)^{n_{\tilde{v}_1}^{\tilde{v}_1*\tilde{v}_1'}}\iota_{\tilde{v}_1*\tilde{v}_1'}(z)=0
	\end{equation*}
	
	For any $\tilde{v}_0 \in \widetilde{V}_0, \tilde{v}_1 \in \widetilde{V}_1$, we have $n_{\tilde{v}_1}^{\tilde{v}_0*\tilde{v}_1}=0$. Therefore, we get:
	\begin{equation}
		\label{Eq7.1}
		d_{C^{-*}}(\widetilde{S})\Upsilon^{r-1}[\widetilde{S}](z)=(-1)^{r-1}\big( \mathop{\oplus}\limits_{\tilde{v}_1 \in \widetilde{V}_1}\sum\limits_{\substack{\tilde{v}_1 \leq \tilde{\kappa} \leq \tilde{\sigma} \\ \tilde{\kappa} \in \widetilde{S}}}\iota_{\tilde{v}_1}d_{C}(\tilde{\sigma},\tilde{\kappa})^*(z)+\sum\limits_{\substack{\tilde{v}_0\in \widetilde{V}_0 \\ \tilde{v}_1 \in \widetilde{V}_1}}\iota_{\tilde{v}_0*\tilde{v}_1}(z)\big)
	\end{equation}
	
	On the other hand, we have $d_C(\widetilde{S})^{cd}(z)=\mathop{\oplus}\limits_{\tilde{\kappa} \in \widetilde{S}}d_C(\tilde{\sigma},\tilde{\kappa})^*(z)$. Since $d_C(\tilde{\sigma},\tilde{\kappa}) \neq 0 \Rightarrow \tilde{\kappa} \leq \tilde{\sigma}$, we have $d_C(\widetilde{S})^{cd}(z)=\mathop{\oplus}\limits_{\substack{\tilde{\kappa} \leq \tilde{\sigma} \\ \tilde{\kappa} \in \widetilde{S}}}d_C(\tilde{\sigma},\tilde{\kappa})^*(z)$. Then:
	\begin{equation*}
		\begin{aligned}
			\Upsilon^r[\widetilde{S}]d_C(\widetilde{S})^{cd}(z)&=\sum\limits_{\substack{\tilde{\kappa} \leq \tilde{\sigma} \\ \tilde{\kappa} \in \widetilde{S}}}\Upsilon^r[\widetilde{S}]d_C(\tilde{\sigma},\tilde{\kappa})^*(z) \\
			&=\sum\limits_{\substack{\tilde{\kappa} \leq \tilde{\sigma} \\ \tilde{\kappa} \in \widetilde{S}}}\mathop{\oplus}\limits_{\substack{\widetilde{S} \ni \tilde{v}_1 \leq \tilde{\kappa} \\ |\tilde{v}_1|=0}}\iota_{\tilde{v}_1}d_C(\tilde{\sigma},\tilde{\kappa})^*(z) \\
		\end{aligned}
	\end{equation*}
	
	Note that $|\tilde{v}_1|=0$ and $\widetilde{S} \ni \tilde{v}_1 \leq \tilde{\kappa} \leq \tilde{\sigma}$ implies $\tilde{v}_1 \in \widetilde{V}_1$. Therefore, we have:
	\begin{equation}
		\label{Eq7.2}
		\begin{aligned}
			\Upsilon^r[\widetilde{S}]d_C(\widetilde{S})^{cd}(z)
			&=\mathop{\oplus}\limits_{\tilde{v}_1 \in \widetilde{V}_1}\sum\limits_{\substack{\tilde{v}_1 \leq \tilde{\kappa} \leq \tilde{\sigma} \\ \tilde{\kappa} \in \widetilde{S}}}\iota_{\tilde{v}_1}d_C(\tilde{\sigma},\tilde{\kappa})^*(z) \\
		\end{aligned}
	\end{equation}
	
	Combining equation \ref{Eq7.1} and \ref{Eq7.2}, we have:
	\begin{equation*}
		\begin{aligned}
			T_{\widetilde{S}}^r(z)&=\big( d_{C^{-*}}(\widetilde{S})\Upsilon^{r-1}[\widetilde{S}]-(-1)^{r-1}\Upsilon^r[\widetilde{S}]d_C(\widetilde{S})^{cd} \big) (z) \\
			&=(-1)^{r-1}\sum\limits_{\substack{\tilde{v}_0\in \widetilde{V}_0 \\ \tilde{v}_1 \in \widetilde{V}_1}}\iota_{\tilde{v}_0*\tilde{v}_1}(z) \\
		\end{aligned}
	\end{equation*}
	
	Since $S \backslash S^-=L \otimes [0,1] \backslash L \otimes \partial [0,1]$, for all $\tilde{v}_0 \in \widetilde{V}_0, \tilde{v}_1 \in \widetilde{V}_1$, we have $\tilde{v}_0 * \tilde{v}_1 \in \widetilde{S \backslash S^-}$. Thus $T_{\widetilde{S}}^r(z) \in C_r(\widetilde{S \backslash S^-})$ and by definition \ref{DefvarOmega}, we have:
	\begin{equation*}
		\varOmega_{\widetilde{S}}^r(z)=(-1)^{r-1}\sum\limits_{\substack{\tilde{v}_0\in \widetilde{V}_0 \\ \tilde{v}_1 \in \widetilde{V}_1}}\iota_{\tilde{v}_0*\tilde{v}_1}(z)
	\end{equation*}
\end{proof}

\begin{Lemma}
	\label{vanT}
	Let $r \in \mathbb{Z}$ and $T_{\widetilde{S}}^r$ be the map defined in Lemma \ref{DefT&varSigma}, then $T_{\widetilde{S}}^r$ restricts to zero morphism on $C_{r-1}(\widetilde{S}^-)^{cd}$.
\end{Lemma}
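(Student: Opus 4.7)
The plan is to mirror the case analysis carried out in the proof of Lemma \ref{DefT&varSigma}, but for $\tilde{\sigma} \in \widetilde{S^-}$ instead of $\widetilde{S \backslash S^-}$. The key qualitative difference is that $S^-$ is a subcomplex of $S$, so every face of $\tilde{\sigma}$ lies over $S$; consequently, the "vertex splitting" $\widetilde{V}_0 \sqcup \widetilde{V}_1$ that produced the surviving $\iota_{\tilde{v}_0 * \tilde{v}_1}(z)$ terms in Lemma \ref{DefT&varSigma} degenerates here into a single set $\widetilde{V}$ of all vertices of $\tilde{\sigma}$, and the would-be boundary contributions cancel in pairs.

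First, I would fix $\tilde{\sigma} \in \widetilde{S^-}$ and $z \in C_{r-1}(\mathbf{p}\tilde{\sigma})^*$, and expand
$d_{C^{-*}}(\widetilde{S})\Upsilon^{r-1}[\widetilde{S}](z) = \sum_{\tilde{v} \in \widetilde{V}} \mathop{\oplus}_{\tilde{\tau} \in \widetilde{S}} d_{C^{-*}}(\tilde{\tau},\tilde{v})\iota_{\tilde{v}}(z)$,
using that $\Upsilon^{r-1}[\widetilde{S}](z)$ ranges over \emph{all} vertices of $\tilde{\sigma}$ (since each has image in $S^- \subset S$). The diagonal $\tilde{\tau} = \tilde{v}$ contributions combine, as in the previous lemma, into $(-1)^{r-1} \mathop{\oplus}_{\tilde{v} \in \widetilde{V}} \sum_{\tilde{v} \leq \tilde{\kappa} \leq \tilde{\sigma},\ \tilde{\kappa} \in \widetilde{S}} \iota_{\tilde{v}} d_C(\tilde{\sigma},\tilde{\kappa})^*(z)$. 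The off-diagonal contributions come from $\tilde{\tau} \in \widetilde{K}_+^*(\tilde{v})$ with $\tilde{\tau} \leq \tilde{\sigma}$; they produce $(-1)^{r-1}(-1)^{n_{\tilde{v}}^{\tilde{\tau}}}\iota_{\tilde{\tau}}(z)$. Grouping by the 1-simplex $\tilde{\tau}$, each such $\tilde{\tau}$ has two vertices $\tilde{v}_1, \tilde{v}_2$ both in $\widetilde{V}$ (this is exactly where being in $\widetilde{S^-}$ matters, in contrast to Lemma \ref{DefT&varSigma} where one endpoint could lie in $L$ and be pruned away), and the identity $n_{\tilde{v}_1}^{\tilde{\tau}} + n_{\tilde{v}_2}^{\tilde{\tau}} = 1$ yields a sign cancellation in pairs.

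Second, I would compute $(-1)^{r-1}\Upsilon^r[\widetilde{S}] d_C(\widetilde{S})^{cd}(z)$ by the same kind of expansion used after equation \eqref{Eq7.2} in Lemma \ref{DefT&varSigma}. Since every vertex $\tilde{v} \leq \tilde{\kappa} \leq \tilde{\sigma}$ satisfies $\mathbf{p}\tilde{v} \in S^- \subset S$, this term is identically
$(-1)^{r-1}\mathop{\oplus}_{\tilde{v} \in \widetilde{V}} \sum_{\tilde{v} \leq \tilde{\kappa} \leq \tilde{\sigma},\ \tilde{\kappa} \in \widetilde{S}} \iota_{\tilde{v}} d_C(\tilde{\sigma},\tilde{\kappa})^*(z)$,
which coincides with the surviving diagonal piece from the previous step. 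Subtracting then gives $T_{\widetilde{S}}^r(z) = 0$ for every $\tilde{\sigma} \in \widetilde{S^-}$ and every $z$, proving the lemma.

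The only potential obstacle is careful bookkeeping of sign conventions and the simplex-wise indexing, but the conceptual content is transparent: the "new" $\iota_{\tilde{v}_0 * \tilde{v}_1}$ terms that remained in Lemma \ref{DefT&varSigma} were precisely the imbalance between the vertex set used by $\Upsilon$ and that imposed by the two-sided structure of $S \backslash S^-$; on $\widetilde{S^-}$ there is no such imbalance and both expressions match term by term.
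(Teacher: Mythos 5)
Your proposal is correct and is essentially the paper's own argument: both proofs replay the expansion from Lemma \ref{DefT&varSigma} with the full vertex set $\widetilde{V}$ of $\tilde{\sigma}$ in place of the split $\widetilde{V}_0 \sqcup \widetilde{V}_1$, cancel the off-diagonal $\iota_{\tilde{v}*\tilde{v}'}$ terms in pairs via $n_{\tilde{v}}^{\tilde{v}*\tilde{v}'} = 1 - n_{\tilde{v}'}^{\tilde{v}'*\tilde{v}}$, and observe that the surviving diagonal piece is exactly $(-1)^{r-1}\Upsilon^r[\widetilde{S}]d_C(\widetilde{S})^{cd}(z)$.
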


\begin{proof}
	Choose any $\tilde{\sigma} \in \widetilde{S}^-$ and $z \in C_{r-1}(\mathbf{p}\tilde{\sigma})^*$, denote $\widetilde{V}$ to be the set of all vertices of $\tilde{\sigma}$. Similar to the compuation in the proof of Lemma \ref{DefT&varSigma}, we can get:
	\begin{equation*}
		\begin{aligned}
			d_{C^{-*}}(\widetilde{S})\Upsilon^{r-1}[\widetilde{S}](z)=(-1)^{r-1}\big( & \mathop{\oplus}\limits_{\tilde{v} \in \widetilde{V}}\sum\limits_{\substack{\tilde{v} \leq \tilde{\kappa} \leq \tilde{\sigma} \\ \tilde{\kappa} \in \widetilde{S}}}\iota_{\tilde{v}_1}d_{C}(\tilde{\sigma},\tilde{\kappa})^*(z) \\
			&+\sum\limits_{\tilde{v} \in \widetilde{V}}\sum\limits_{\substack{\tilde{v}' \in \widetilde{V} \\ \tilde{v}' \neq \tilde{v}}}(-1)^{n_{\tilde{v}}^{\tilde{v}*\tilde{v}'}}\iota_{\tilde{v}*\tilde{v}'}(z) \big) \\
		\end{aligned}
	\end{equation*}
	\begin{equation*}
		\Upsilon^{r}[\widetilde{S}]d_{C}(\widetilde{S})^{cd}(z)= \mathop{\oplus}\limits_{\tilde{v} \in \widetilde{V}}\sum\limits_{\substack{\tilde{v} \leq \tilde{\kappa} \leq \tilde{\sigma} \\ \tilde{\kappa} \in \widetilde{S}}}\iota_{\tilde{v}_1}d_{C}(\tilde{\sigma},\tilde{\kappa})^*(z)
	\end{equation*}

	Since for any $\tilde{v} \neq \tilde{v}' \in \widetilde{V}$, we have $\tilde{v}*\tilde{v}'=\tilde{v}'*\tilde{v}$ and $n_{\tilde{v}}^{\tilde{v}*\tilde{v}'}=1-n_{\tilde{v}'}^{\tilde{v}'*\tilde{v}}$, we can get:
	\begin{equation*}
		T_{\widetilde{S}}^r(z)=d_{C^{-*}}(\widetilde{S})\Upsilon^{r-1}[\widetilde{S}](z)-(-1)^{r-1}\Upsilon^{r}[\widetilde{S}]d_{C}(\widetilde{S})^{cd}(z)=0
	\end{equation*}
\end{proof}

Now we can prove Theorem \ref{thmpartialquadpair}:

\begin{proof}[Proof of Theorem \ref{thmpartialquadpair}]
	\
	
	(1) Since
	\begin{equation*}
		\begin{aligned}
			d_{C^{all}}i_C&=\mathop{\boxplus}\limits_{\mathbf{p}\tilde{\sigma} \in S}\mathop{\oplus}\limits_{\mathbf{p}\tilde{\tau} \in S}d_C(\mathbf{p}\tilde{\tau},\mathbf{p}\tilde{\sigma})i_C \\
			&=\mathop{\boxplus}\limits_{\mathbf{p}\tilde{\sigma} \in S \backslash S^-}\mathop{\oplus}\limits_{\mathbf{p}\tilde{\tau} \in S}d_C(\mathbf{p}\tilde{\tau},\mathbf{p}\tilde{\sigma}) \\
			&=\mathop{\boxplus}\limits_{\mathbf{p}\tilde{\sigma} \in S \backslash S^-}\mathop{\oplus}\limits_{\substack{\mathbf{p}\tilde{\tau} \in S \\ \tilde{\tau} \geq \tilde{\sigma}}}d_C(\mathbf{p}\tilde{\tau},\mathbf{p}\tilde{\sigma}) \\
			&=i_C\mathop{\boxplus}\limits_{\mathbf{p}\tilde{\sigma} \in S \backslash S^-}\mathop{\oplus}\limits_{\substack{\mathbf{p}\tilde{\tau} \in S \backslash S^- \\ \tilde{\tau} \geq \tilde{\sigma}}}d_C(\mathbf{p}\tilde{\tau},\mathbf{p}\tilde{\sigma}) \\
			&=i_Cd_{C^{\partial}}
		\end{aligned}
	\end{equation*}

	We have that $i_C$ is a chain map.
	
	(2) We will construct the map $\delta\psi_{u,r}^{ass},\psi_{u,r}^{ass}$ as follows:
	
	For $r \in \mathbb{Z}$, let $\Upsilon^r[\widetilde{S}]:C_r(\widetilde{S})^* \longrightarrow C^r(\widetilde{S})$ be the map defined in Remark \ref{Dualexchangeupper} and let $T_{\widetilde{S}}^r,\varOmega_{\widetilde{S}}^r$ be the maps defined in Lemma \ref{DefT&varSigma}. We define
	\begin{equation}
		\label{Defdetlapsiass+psiass}
		\begin{aligned}
			& \quad \delta\psi_{u,r}^{ass}=\psi_{C,u}^r(\widetilde{S})\Upsilon^{n-u-r}[\widetilde{S}] \\
			& \psi_{u,r}^{ass}=(-1)^u\psi_{C,u}^r(\widetilde{S \backslash S^-})\varOmega_{\widetilde{S}}^{n-u-r} \\
		\end{aligned}
	\end{equation}
	
	Since $(C,\psi_C)$ is a quadratic chain complex, we have:
	\begin{equation}
		\label{quad10}
		\begin{aligned}
			&d_C\psi_{C,u}^{r+1}-(-1)^{n+u}\psi_{C,u}^rd_{C^{-*}}+(-1)^{n-u-1}\psi_{C,u+1}^r \\
			&+(-1)^{n}(T\psi)_{C,u+1}^r=0: C^{n-u-1-r} \longrightarrow C_r \\
		\end{aligned}
	\end{equation}

	Since $S$ is upper closed, we can take the partial assmebly over $S$ with respect to the covering $\mathbf{p}$. By Lemma \ref{partialassemblyfun}, the equation still holds. Compose the equation further with $\Upsilon^{n-u-r-1}[\widetilde{S}]$ on the right, we can compute each term separately:
	\begin{equation*}
		d_C(\widetilde{S})\psi_{C,u}^{r+1}(\widetilde{S})\Upsilon^{n-u-r-1}[\widetilde{S}]=d_{C^{all}}\delta\psi_{u,r+1}^{ass}
	\end{equation*}
	\begin{equation*}
		\begin{aligned}
			\psi_{C,u}^r(\widetilde{S})d_{C^{-*}}(\widetilde{S})\Upsilon^{n-u-r-1}[\widetilde{S}]&=(-1)^{r'}\psi_{C,u}^r(\widetilde{S})\Upsilon^{n-u-r}[\widetilde{S}]d_{C}(\widetilde{S})^{cd}+\psi_{C,u}^r(\widetilde{S})T_{\widetilde{S}}^{n-u-r} \\
			&=(-1)^{r'}\delta\psi_{u,r}^{ass}d_{C^{all},r+1}^{cd}+\psi_{C,u}^r(\widetilde{S})T_{\widetilde{S}}^{n-u-r}
		\end{aligned}
	\end{equation*}
	\begin{equation*}
		\psi_{C,u+1}^r(\widetilde{S})\Upsilon^{n-u-r-1}[\widetilde{S}]=\delta\psi_{u+1,r}^{ass}
	\end{equation*}
	\begin{equation*}
		\begin{aligned}
			(T\psi)_{C,u+1}^r(\widetilde{S})\Upsilon^{n-u-r-1}[\widetilde{S}]&= (-1)^{rr'}\Upsilon^{r}[\widetilde{S}]^{cd} \psi_{C,u+1}^{r'}(\widetilde{S})^{cd} \quad (\text{By Remark } \ref{Dualexchangeupper}) \\
			&=(\delta\psi_{u+1,r'}^{ass})^{cd}
		\end{aligned}
	\end{equation*}

	Substituing the equations above in equation \ref{quad10} and compare it with the eqaution stated in the Theorem, it only leaves us to prove:
	\begin{equation}
		\label{Eq7.3}
		\psi_{C,u}^r(\widetilde{S})T_{\widetilde{S}}^{n-u-r}=i_{C,r}\psi_{C,u}^r(\widetilde{S \backslash S^-})\varOmega_{\widetilde{S}}^{n-u-r}i_{C,r'}^{cd}
	\end{equation}

	Denote $i^-$ to be the inclusion map $C^*(\widetilde{S \backslash S^-}) \longrightarrow C^*(\widetilde{S})$. By definition \ref{DefvarOmega} of $\varOmega_{\widetilde{S}}^*$, Lemma \ref{DefT&varSigma} and Lemma \ref{vanT}, we have $T_{\widetilde{S}}^{n-u-r}=i^-\varOmega_{\widetilde{S}}^{n-u-r}i_{C,r'}^{cd}$. Moreover, we have:
	\begin{equation*}
		\begin{aligned}
			\psi_{C,u}^r(\widetilde{S})i^-&=\mathop{\boxplus}\limits_{\mathbf{p}\tilde{\sigma} \in S \backslash S^-}\mathop{\oplus}\limits_{\mathbf{p}\tilde{\tau} \in S} \psi_{C,u}^r(\tilde{\tau},\tilde{\sigma}) \\
			&=\mathop{\boxplus}\limits_{\mathbf{p}\tilde{\sigma} \in S \backslash S^-}\mathop{\oplus}\limits_{\substack{\mathbf{p}\tilde{\tau} \in S \\ \tilde{\tau} \geq \tilde{\sigma}}} \psi_{C,u}^r(\tilde{\tau},\tilde{\sigma}) \\
			&=i_{C,r}\mathop{\boxplus}\limits_{\mathbf{p}\tilde{\sigma} \in S \backslash S^-}\mathop{\oplus}\limits_{\substack{\mathbf{p}\tilde{\tau} \in S \backslash S^- \\ \tilde{\tau} \geq \tilde{\sigma}}} \psi_{C,u}^r(\tilde{\tau},\tilde{\sigma}) \\
			&=i_{C,r}\psi_{C,u}^r(\widetilde{S \backslash S^-}) \\
		\end{aligned}
	\end{equation*}

	Therefore, we have:
	\begin{equation*}
			\psi_{C,u}^r(\widetilde{S})T_{\widetilde{S}}^{n-u-r}=\psi_{C,u}^r(\widetilde{S})i^-\varOmega_{\widetilde{S}}^{n-u-r}i_{C,r'}^{cd}=i_{C,r}\psi_{C,u}^r(\widetilde{S \backslash S^-})\varOmega_{\widetilde{S}}^{n-u-r}i_{C,r'}^{cd} 
	\end{equation*}
	
	Which is the same as equation \ref{Eq7.3}. Therefore, we have completed our proof.
\end{proof}

\subsection{Description of the map $\delta:T^k(\underline{L_1},\underline{L_2}) \longrightarrow T^{k+1}(\underline{L_0},\underline{L_1})$}
\

In this subsection, we will give a detailed description of the map $\delta:T^k(\underline{L_1},\underline{L_2}) \longrightarrow T^{k+1}(\underline{L_0},\underline{L_1})$. We begin with some preparations.

\begin{Lemma}
	For any $\sigma \in L$, $A_{\sigma}$ is upper closed in $L_2$.
\end{Lemma}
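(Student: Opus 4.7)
The plan is to unwind the definitions and check directly. Let $s \in A_\sigma$ and let $\tau \in L_2$ with $\tau \geq s$; I want to show $\tau \in A_\sigma$, i.e.\ $\tau > \sigma$ and $\tau \in L_2 \setminus L_1$. The condition $\tau > \sigma$ is immediate from transitivity, since $\tau \geq s > \sigma$ and $\tau \neq \sigma$ (otherwise $s \leq \sigma$, contradicting $s > \sigma$). So the content is to verify $\tau \notin L_1 = K_0 \cup L$.

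First I would rule out $\tau \in K_0$: if $\tau$ lay in $K_0$, then since $K_0$ is a subcomplex, every face of $\tau$ would be in $K_0$, in particular $s \in K_0 \subset L_1$, contradicting $s \in A_\sigma \subset L_2 \setminus L_1$. Hence $\tau \in L_2 \setminus K_0 \subset L \otimes [-1,1]$, and in fact $\tau \in L \otimes [0,1]$ since the only simplices of $L_2$ outside $K_0$ are those of $L \otimes [0,1]$.

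Next I would rule out $\tau \in L$. Since $s \in L_2 \setminus L_1$, we have $s \in L \otimes [0,1]$ but $s \notin L = L \otimes \{0\}$, so $s$ has at least one vertex in $L \otimes \{1\}$. Every vertex of $s$ is also a vertex of $\tau$, so $\tau$ has a vertex in $L \otimes \{1\}$, hence $\tau \notin L \otimes \{0\} = L$. Combined with $\tau \notin K_0$, this gives $\tau \notin L_1$, and therefore $\tau \in A_\sigma$, completing the verification. No step here is an obstacle; the statement is purely a bookkeeping consequence of the fact that $K_0$ is a subcomplex and that the ``non-$L_1$'' condition on simplices of $L \otimes [0,1]$ is preserved under taking cofaces.
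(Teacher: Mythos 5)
Your proof is correct and takes the same route as the paper's own (much terser) argument: the paper simply observes that $\sigma_1 \in L_2 \setminus L_1$, $\sigma_1 \le \sigma_2$, and the subcomplex property of $L_1$ force $\sigma_2 \notin L_1$, while $\sigma_2 \ge \sigma_1 > \sigma$ gives $\sigma_2 > \sigma$. You unpack the fact that $L_1 = K_0 \cup L$ is a subcomplex by treating the $K_0$ and $L$ cases separately (the latter via an explicit vertex-in-$L \otimes \{1\}$ argument), but this is the same bookkeeping.
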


\begin{proof}
	
	Let $\sigma_1 \in A_{\sigma}$ and $\sigma_1 \leq \sigma_2 \in L_2$, we need to prove that $\sigma_2 \in A_{\sigma}$. By definition of $A_{\sigma}$, we have $\sigma_1 \in L_2 \backslash L_1$ and $\sigma_1 > \sigma$. Since $\sigma_1 \leq \sigma_2$, we have $\sigma_2 \notin L_1$ and $\sigma_2>\sigma$. Combining with $\sigma_2 \in L_2$, we get $\sigma_2 \in A_{\sigma}$.
\end{proof}

\begin{Lemma}
	\
	\label{Apppartialass}
	
	Let $k \in \mathbb{Z}$ and $(D,\theta)$ be a $k$-dimensional quadratic chain complex in $M^h(R)_*(L_2)$. For any $\sigma \in L$ and any $r \in \mathbb{Z},u \in \mathbb{N}$, define the maps $\theta_{\sigma,u}^r,T\theta_{\sigma,u}^r$ as follows:
	\begin{equation*}
		\theta_{\sigma,u}^r= \mathop{\boxplus}\limits_{s \in B_{\sigma}}\mathop{\oplus}\limits_{s' \in A_{\sigma}}\theta_u^r(s',s): \mathop{\oplus}\limits_{s \in B_{\sigma}} D^{k-u-r}(s) 
		\longrightarrow \mathop{\oplus}\limits_{s' \in A_{\sigma}} D_{r}(s')
	\end{equation*}
	\begin{equation*}
	T\theta_{\sigma,u}^r= \mathop{\boxplus}\limits_{s \in B_{\sigma}}\mathop{\oplus}\limits_{s' \in A_{\sigma}}(T\theta)_u^r(s',s):\mathop{\oplus}\limits_{s \in B_{\sigma}} D^{k-u-r}(s) 
	\longrightarrow \mathop{\oplus}\limits_{s' \in A_{\sigma}} D_{r}(s')
	\end{equation*}
	
	Then there is a chain homotopy equivalence $\mho_{\sigma}^r:\mathop{\oplus}\limits_{s \in A_{\sigma}} D_{r-|\sigma|-1}(s)^* \longrightarrow \mathop{\oplus}\limits_{s \in B_{\sigma}} D^r(s)$ $(r \in \mathbb{Z})$, such that the following equation holds:
	\begin{equation*}
		T\theta_{\sigma,u}^r \mho_{\sigma}^{k-u-r}-(-1)^{\mathbf{r}}(\mho_{\sigma}^{r+1+|\sigma|})^*(\theta_{\sigma,u}^{r'})^*=0: \mathop{\oplus}\limits_{s \in A_{\sigma}} D_{r'}(s)^* \longrightarrow \mathop{\oplus}\limits_{s \in A_{\sigma}}D_{r}(s)
	\end{equation*}
	\begin{equation*}
		(r'=k-u-r-|\sigma|-1,\mathbf{r}=rr'+(|\sigma|+1)(r+r'))
	\end{equation*}
	
\end{Lemma}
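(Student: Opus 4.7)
The plan is in three steps, closely mirroring the structure of Lemma \ref{Dualexchange}. First, I would define $\mho_\sigma^r$ by the same template used in Lemma \ref{surgeryres}: for $s \in A_\sigma$ and $z \in D_{r-|\sigma|-1}(s)^*$, set
$$\mho_\sigma^r(z) = \mathop{\oplus}\limits_{v \in V_1(s)} \iota_{\sigma * v}(z) \in \mathop{\oplus}\limits_{v \in V_1(s)} D^r(\sigma * v) \subset \mathop{\oplus}\limits_{s' \in B_\sigma} D^r(s'),$$
where $V_1(s)$ denotes the set of vertices of $(s \cap L)_1$ and $\iota_{\sigma*v}$ is the canonical inclusion into $D^r(\sigma * v) = \mathop{\oplus}\limits_{\kappa \geq \sigma * v} D_{r-|\sigma|-1}(\kappa)^*$. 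This is well posed because $v \in V_1(s)$ forces $\sigma * v \leq s$ and $\sigma * v \in B_\sigma$ by construction. Verifying that $\mho_\sigma$ is a chain map then reduces to the vertex-pair cancellation already exploited in the proof of Lemma \ref{DefT&varSigma}.

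Second, I would verify the displayed identity by unfolding $T\theta_{\sigma,u}^r$ componentwise using Lemma \ref{ExpressT} and plugging in the definition of $\mho_\sigma^{k-u-r}$. After substitution, both sides collapse to the common expression $\sum_{v \in V_1(s)} \theta_u^r(s',\sigma*v) \circ \iota_{\sigma*v}$, so the content of the identity is sign matching. The exponent $\mathbf{r} = rr' + (|\sigma|+1)(r+r')$ is exactly what one obtains by combining the Koszul sign $(-1)^{q's'}$ from equation \ref{EqT} with the $(-1)^{|\sigma*v|} = (-1)^{|\sigma|+1}$ coming from the formula for $\mathfrak{D}$ in Theorem \ref{chaindualityonAK1}. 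This step is the $A_\sigma$-localised analogue of Lemma \ref{Dualexchange} and requires only careful sign bookkeeping.

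Third, I would establish that $\mho_\sigma^r$ is a chain homotopy equivalence. The natural approach is to filter source and target by the poset $\{\tau \in L \mid \tau \geq \sigma\}$ using the decomposition $A_\sigma = \coprod_{\tau \geq \sigma}B_\tau$, in the spirit of the assembly filtrations from Section \ref{computationLgroup}. On the bottom layer $\tau = \sigma$ the map $\mho_\sigma$ restricts to an explicit isomorphism of local summands, because every $s \in B_\sigma$ is uniquely $\sigma * (s \cap L)_1$. On each higher layer $\tau > \sigma$ the relevant associated graded piece is contractible: for each $s \in B_\tau$ the vertex set $V_1(s)$ spans a simplex, so the signed sum of inclusions $\mathop{\oplus}\limits_{v \in V_1(s)}\iota_{\sigma * v}$ carries an explicit contracting chain homotopy. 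Gluing these local homotopies together in the style of Proposition 9.11 in \cite{ranickiltheory} then yields the global chain homotopy equivalence.

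The step I expect to be the main obstacle is the third one. Steps 1 and 2 are essentially formal, extending the sign-tracking machinery of Lemmas \ref{Dualexchange} and \ref{DefT&varSigma}. The homotopy-equivalence claim, by contrast, contains genuine geometric content: it encodes the contractibility of the dual-cell structure of the collar $L \otimes [0,1]$ over $\sigma$, relative to its $B_\sigma$ "top". Producing a homotopy inverse that tracks all signs coherently across the filtration layers is the delicate part, and the cleanest argument will likely involve an explicit chain-level cone construction on each $B_\tau$ (for $\tau > \sigma$) together with a spectral-sequence degeneration argument comparing the two assembled complexes.
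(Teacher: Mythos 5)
Your steps 1 and 2 agree exactly with the paper: the definition of $\mho_{\sigma}^r$ via $z_{\mathfrak{s}} \mapsto \mathop{\oplus}_{v \in V_1(\mathfrak{s})}\iota_{\sigma*v}(z_{\mathfrak{s}})$ is what the paper writes down (equation \ref{expressmho}), and the sign identity is verified in the paper by exactly the componentwise commutative diagram you describe, modelled on Lemma \ref{Dualexchange}.

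Step 3 has a genuine gap, and it is concentrated in the two explicit claims you make about the $\tau$-filtration. Filtering $A_{\sigma}=\coprod_{\tau \geq \sigma}B_{\tau}$ is the wrong granularity. On the layer $\tau=\sigma$ the source piece is $\mathop{\oplus}_{\mathfrak{s}\in B_{\sigma}}D_{r-|\sigma|-1}(\mathfrak{s})^*$, but the corresponding target piece is $\mathop{\oplus}_{\mathfrak{s}\in B_{\sigma}}\mathop{\oplus}_{\substack{s \in B_{\sigma}\\ s\leq\mathfrak{s}}}D_{r-|s|}(\mathfrak{s})^*$, and for a fixed $\mathfrak{s}\in B_{\sigma}$ with $m$ vertices in $(\mathfrak{s}\cap L)_1$ there are $2^m-1$ such faces $s$. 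So $\mho_{\sigma}$ restricted to that layer is not an isomorphism of local summands unless every $\mathfrak{s}\in B_{\sigma}$ has a single vertex above $\sigma$; it sends $z_{\mathfrak{s}}$ diagonally into $|V_1(\mathfrak{s})|$ different summands indexed by $\sigma*v$. Likewise, the higher layers are not contractible — they are chain homotopy equivalences — so the "iso on the bottom, contractible on top" picture does not survive inspection.

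The correct observation, and the one the paper uses, is to refine the filtration down to individual simplices $\mathfrak{s}\in A_{\sigma}$: reindexing the target as
\begin{equation*}
\mathop{\oplus}\limits_{s\in B_{\sigma}}D^r(s)=\mathop{\oplus}\limits_{\mathfrak{s}\in A_{\sigma}}\mathop{\oplus}\limits_{\substack{s\in B_{\sigma}\\ s\leq\mathfrak{s}}}D_{r-|s|}(\mathfrak{s})^*=\mathop{\oplus}\limits_{\mathfrak{s}\in A_{\sigma}}\big(\Delta(K_{\sigma,\mathfrak{s}},\partial_1 K_{\sigma,\mathfrak{s}})\otimes_{\mathbb{Z}}D(\mathfrak{s})\big)^*_r,
\end{equation*}
where $K_{\sigma,\mathfrak{s}}=\sigma*\mathfrak{s}_1$ and $\partial_1 K_{\sigma,\mathfrak{s}}=\sigma\cup\bigl(\mathop{\cup}_{\sigma_0<\sigma}\sigma_0*\mathfrak{s}_1\bigr)$. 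With respect to this decomposition, $\mho_{\sigma}$ is $\mathfrak{s}$-diagonal, and on each $\mathfrak{s}$-component it is the dual of the sum map $\Delta_*(K_{\sigma,\mathfrak{s}},\partial_1K_{\sigma,\mathfrak{s}})\longrightarrow(|\sigma|+1)\mathbb{Z}$ tensored with $D(\mathfrak{s})^*$; this is a chain homotopy equivalence because $K_{\sigma,\mathfrak{s}}$ is a simplex. Since the differentials on both sides are $\mathfrak{s}$-decreasing, the standard filtration argument (or Proposition 4.7 in \cite{ranickiltheory}) then upgrades this to a global homotopy equivalence. The ``gluing'' you worried about is therefore automatic once the right decomposition is in hand; the missing insight was this identification of $\mathop{\oplus}_{s\in B_{\sigma}}D^r(s)$ as a sum of relative simplicial chain groups of simplices, not a spectral-sequence or cone argument over $B_{\tau}$ layers.
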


\begin{proof}
	For any $r \in \mathbb{Z}$, we have:
	\begin{equation*}
		\begin{aligned}
			\mathop{\oplus}\limits_{s \in B_{\sigma}} D^r(s)=\mathop{\oplus}\limits_{s \in B_{\sigma}} [D_{r-|s|}][s]^*&=\mathop{\oplus}\limits_{s \in B_{\sigma}}\mathop{\oplus}\limits_{\mathfrak{s} \geq s} D_{r-|s|}(\mathfrak{s})^* \\
			&=\mathop{\oplus}\limits_{\mathfrak{s} \in A_{\sigma}}\mathop{\oplus}\limits_{\substack{s \in B_{\sigma} \\ s \leq \mathfrak{s}}} D_{r-|s|}(\mathfrak{s})^* \\
		\end{aligned}
	\end{equation*}

	For any $\mathfrak{s} \in A_{\sigma}$, denote $\mathfrak{s}_0=(\mathfrak{s} \cap L)_0,\mathfrak{s}_1=(\mathfrak{s} \cap L)_1$. Denote $K_{\sigma,\mathfrak{s}}=\sigma * \mathfrak{s}_1$ and $\partial_1K_{\sigma,\mathfrak{s}}=\sigma \cup (\mathop{\cup}\limits_{\sigma_0<\sigma} \sigma_0*\mathfrak{s}_1)$, then we claim that:
	\begin{equation*}
		s \in B_{\sigma} \text{ and } s \leq \mathfrak{s} \Leftrightarrow s \in K_{\sigma,\mathfrak{s}} \backslash \partial_1K_{\sigma,\mathfrak{s}} 
	\end{equation*}

	To prove the claim, note that since $\mathfrak{s}=\mathfrak{s}_0 * \mathfrak{s}_1$, by Remark \ref{RemAB}, we have:
	
	\begin{equation*}
		s \in B_{\sigma} \text{ and } s \leq \mathfrak{s} \Leftrightarrow s=\sigma * \mathfrak{s}' \text{ for some } \mathfrak{s}' \leq \mathfrak{s}_1
	\end{equation*}
	
	Then it is straightforward to see that the right hand side is equivalent to $s \in K_{\sigma,\mathfrak{s}} \backslash \partial_1K_{\sigma,\mathfrak{s}}$ and thus the claim holds.
	
	Now by the claim above, we can further write that:
	\begin{equation*}
		\mathop{\oplus}\limits_{s \in B_{\sigma}} D^r(s)=\mathop{\oplus}\limits_{\mathfrak{s} \in A_{\sigma}} (\Delta(K_{\sigma,\mathfrak{s}},\partial_1K_{\sigma,\mathfrak{s}}) \otimes_{\mathbb{Z}} D(\mathfrak{s}))_{r}^*
	\end{equation*}

	Let $(|\sigma|+1)\mathbb{Z}$ be the chain complex with only one $\mathbb{Z}$ on dimension $(|\sigma|+1)$, then the map of taking the sum gives a homotopy equivalence:
	\begin{equation*}
		\Delta_*(K_{\sigma,\mathfrak{s}},\partial_1K_{\sigma,\mathfrak{s}}) \longrightarrow (|\sigma|+1)\mathbb{Z}
	\end{equation*} 
	
	Taking its dual and tensoring with $D(\mathfrak{s})^*$ gives $\mho_{\sigma}$, more precisely, it is given by the following construction:
	
	Choose $\mathfrak{s} \in A_{\sigma}$ and let $V_1$ be the set of the vertices of $\mathfrak{s}_1$. For every $v \in V_1$, let $\sigma_v=v * \sigma \leq \mathfrak{s}$ and $\iota_{v}$ to be the following inclusion map:
	\begin{equation*}
		\iota_{v}:D_{r-|\sigma|-1}(\mathfrak{s})^* \longrightarrow D^r(\sigma_v)= \mathop{\oplus}\limits_{\kappa \geq \sigma_v}D_{r-|\sigma|-1}(\kappa)^*
	\end{equation*}
	
	Since $(\sigma_v \cap L)_0=\sigma$, we have $\sigma_v \in B_{\sigma}$. Then $\mho_{\sigma}^r$ restricted on the $D_{r-|\sigma|-1}(\mathfrak{s})^*$ component is given by:
	\begin{equation*}
		\mho_{\sigma}^r|_{D_{r-|\sigma|-1}(\mathfrak{s})^*}: D_{r-|\sigma|-1}(\mathfrak{s})^* \longrightarrow \mathop{\oplus}\limits_{s \in B_{\sigma}}D^r(s)
	\end{equation*}
	\begin{equation}
		\label{expressmho}
		\mho_{\sigma}^r|_{D_{r-|\sigma|-1}(\mathfrak{s})^*}(z_{\mathfrak{s}})=\mathop{\oplus}\limits_{v \in V_1}\iota_{v}(z_{\mathfrak{s}}) \in \mathop{\oplus}\limits_{v \in V_1} D^r(\sigma_v) \subset \mathop{\oplus}\limits_{s \in B_{\sigma}}D^r(s)
	\end{equation}

	For the equation argument, we can compute it in components. Fix $\tau,\varsigma \in A_{\sigma}$ and choose $z_{\tau} \in D_{r'}(\tau)^*,w_{\varsigma} \in D_{r}(\varsigma)^*$, it suffices to prove that the following holds:
	\begin{equation*}
		\textlangle T\theta_{\sigma,u}^r\mho_{\sigma}^{k-u-r}(z_{\tau}), w_{\varsigma}\textrangle=(-1)^{\mathbf{r}}\textlangle z_{\tau}, \theta_{\sigma,u}^{r'} \mho_{\sigma}^{r+|\sigma|+1}(w_{\varsigma})\textrangle
	\end{equation*}
	 
	The proof is the same with the proof of Lemma \ref{Dualexchange}, it will follow from the following commutative diagram $(v \in V_1)$:
	\begin{small}
		\begin{equation*}
			\begin{tikzcd}
				(TD)_{-k+u+r}(\sigma_v) \ar{rrr}{(-1)^{(r-|\sigma_v|)(r'-|\sigma_v|)}(T\theta)_u^r(\varsigma,\sigma_v)}\dar{=} & & & D_r(\varsigma) \\ 
				T(D_{r'})_{-|\sigma_v|}(\sigma_v) \ar{rr} \dar{=} & & T(T(D_{r})_{-|\sigma_v|})_{-|\sigma_v|}(\sigma_v) \rar{\subset}\dar{=} & (T^2D_{r})_0(\sigma_v) \uar{\mathfrak{D}(D_{r})_0}\dar{=} \\
				\mathop{\oplus}\limits_{\kappa \geq \sigma_v}D_{r'}(\kappa)^* \ar{rr}{\mathop{\boxplus}\limits_{\kappa \geq \sigma_v}\theta_u^{r'}(\kappa,\sigma_v)^*} & & \mathop{\oplus}\limits_{\kappa \geq \sigma_v}D_r(\kappa) \rar{=} & \mathop{\oplus}\limits_{\kappa \geq \sigma_v}D_r(\kappa) \ar[bend right=80,swap]{uu}{(-1)^{|\sigma_v|}p_{\sigma_v,\varsigma}}
			\end{tikzcd}
		\end{equation*}
	\end{small}
\end{proof}

\begin{Lemma}
	\
	\label{Appquadpair}
	
	Let $(D,\theta)$ be a $k$-dimensional Poincare quadratic chain complex in $M^h(R)_*(L_2)$. Then for any $\sigma \in L$, the following construction gives a $(k-|\sigma|-1)$-dimensional Poincare quadratic pair in $M^h(R)$:
	\begin{equation*}
		\big(i[\sigma]: \mathop{\oplus}\limits_{s \in A_{\sigma} \backslash B_{\sigma}} D_*(s) \longrightarrow \mathop{\oplus}\limits_{s \in A_{\sigma}} D_*(s),(\psi[\sigma],\delta\psi[\sigma])\big)
	\end{equation*}
	
	Where for all $r \in \mathbb{Z}$:
	
	$(1)$ $i_r[\sigma]:\mathop{\oplus}\limits_{s \in A_{\sigma} \backslash B_{\sigma}} D_r(s) \longrightarrow \mathop{\oplus}\limits_{s \in A_{\sigma}} D_r(s)$ is the inclusion map.
	
	$(2)$ For any $u \in \mathbb{N}$, let $\theta_{\sigma,u}^r= \mathop{\boxplus}\limits_{s \in B_{\sigma}}\mathop{\oplus}\limits_{s' \in A_{\sigma}}\theta_u^r(s',s)$, then $\delta\psi_u^r[\sigma]$ is given by $(-1)^{(|\sigma|+1)r}$ times the following composition of maps:
	\begin{equation*}
		\begin{tikzcd}
			\mathop{\oplus}\limits_{s \in A_{\sigma}} D_{k-|\sigma|-1-u-r}(s)^* \ar{r}{\mho_{\sigma}^{k-u-r}} & \mathop{\oplus}\limits_{s \in B_{\sigma}} D^{k-u-r}(s)^* \ar{r}{\theta_{\sigma,u}^r} & \mathop{\oplus}\limits_{s \in A_{\sigma}} D_{r}(s)
		\end{tikzcd}
	\end{equation*}

	$(3)$ For any $\tau \in L^*(\sigma)$, since $\sigma < \tau$, we have $A_{\tau} \subset A_{\sigma} \backslash B_{\sigma}$. Denote $I_{\tau,r},I^{\tau,r}$ to be inclusion map and  $P_{\tau,r},P^{\tau,r}$ to be the projection map:
	\begin{equation*}
		I_{\tau,r}:\mathop{\oplus}\limits_{s \in A_{\tau}} D_{r}(s) \longrightarrow \mathop{\oplus}\limits_{s \in A_{\sigma} \backslash B_{\sigma}} D_{r}(s), \ I^{\tau,r}:\mathop{\oplus}\limits_{s \in A_{\tau}} D_{r}(s)^* \longrightarrow \mathop{\oplus}\limits_{s \in A_{\sigma} \backslash B_{\sigma}} D_{r}(s)^*
	\end{equation*}
	\begin{equation*}
		P_{\tau,r}:\mathop{\oplus}\limits_{s \in A_{\sigma} \backslash B_{\sigma}} D_{r}(s) \longrightarrow \mathop{\oplus}\limits_{s \in A_{\tau}} D_{r}(s), \ P^{\tau,r}:\mathop{\oplus}\limits_{s \in A_{\sigma} \backslash B_{\sigma}} D_{r}(s)^* \longrightarrow \mathop{\oplus}\limits_{s \in A_{\tau}} D_{r}(s)^*
	\end{equation*}
	
	Then $\psi_u^r[\sigma]$ is given by:
	\begin{equation*}
		\psi_u^r[\sigma]: \mathop{\oplus}\limits_{s \in A_{\sigma} \backslash B_{\sigma}} D_{k-u-r-|\sigma|-2}(s)^* \longrightarrow \mathop{\oplus}\limits_{s \in A_{\sigma} \backslash B_{\sigma}} D_r(s)
	\end{equation*}
	\begin{equation*}
		\psi_u^r[\sigma]=(-1)^{k+1}\sum\limits_{\tau \in L^*(\sigma)}(-1)^{n_{\sigma}^{\tau}} I_{\tau,r}\delta\psi_u^r[\tau]P^{\tau,k-u-r-|\sigma|-2}
	\end{equation*}
\end{Lemma}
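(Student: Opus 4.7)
The plan is to verify the two defining properties of a quadratic Poincare pair: the algebraic pair identity relating $d$, $\delta\psi[\sigma]$, $\psi[\sigma]$, and the inclusion $i[\sigma]$; and the Poincare duality condition. Throughout I work componentwise over $A_\sigma$ and leverage both the quadratic identity for $(D,\theta)$ on $L_2$ and the symmetry statement in Lemma \ref{Apppartialass}.

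First, I would fix $\sigma \in L$ and start from the quadratic identity for $(D,\theta)$ in $M^h(R)_*(L_2)$:
\[
d_D \theta_u^{r+1} - (-1)^{k-u}\theta_u^r d_{D^{-*}} + (-1)^{k-u-1}\theta_{u+1}^r + (-1)^k T\theta_{u+1}^r = 0.
\]
Restricting to components that land in $A_\sigma$ and composing on the right with $\mho_\sigma^{k-u-r-1}$, I use that $A_\sigma$ is upper-closed in $L_2$ (so $d_D$ preserves the partial assembly over $A_\sigma$) to match the first term with $d\,\delta\psi_u^{r+1}[\sigma]$. The third term becomes $\delta\psi_{u+1}^r[\sigma]$ directly, while the fourth term yields the transpose $(\delta\psi_{u+1}^{r'}[\sigma])^*$ via the symmetry formula
$T\theta_{\sigma,u}^r \mho_\sigma = (-1)^{\mathbf{r}}(\mho_\sigma)^*(\theta_{\sigma,u}^{r'})^*$
of Lemma \ref{Apppartialass}. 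The second term requires care: $d_{D^{-*}}\mho_\sigma$ is not simply $\mho_\sigma d^*$, because the inclusions $\iota_v$ in the definition of $\mho_\sigma$ fail to commute with the coboundary by a term supported on upper faces $\tau \in L^*(\sigma)$ lying outside $B_\sigma$.

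That discrepancy is exactly the source of the $i[\sigma]\psi_u^r[\sigma]\,i[\sigma]^*$ summand. Unwinding the definition of $\mho_\sigma$ as a sum $\bigoplus_{v \in V_1}\iota_v$ and applying $d_{D^{-*}}$, the extra term picks up contributions from faces $v * \sigma$ that get promoted to simplices of $B_\tau$ with $\tau = v * \sigma$ of dimension $|\sigma|+1$. Comparing these contributions against
\[
\psi_u^r[\sigma]=(-1)^{k+1}\sum_{\tau \in L^*(\sigma)}(-1)^{n_\sigma^\tau} I_{\tau,r}\,\delta\psi_u^r[\tau]\,P^{\tau,k-u-r-|\sigma|-2},
\]
I need to check that the incidence sign $(-1)^{n_\sigma^\tau}$ and the global prefactor $(-1)^{k+1}$ are exactly what falls out of $\iota_v^*$. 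Essentially, for each $\tau \in L^*(\sigma)$ the extracted boundary is of the form $\mho_\tau$ applied to the restriction on $A_\tau$, and composing with $\theta_{\sigma,u}^r$ and projecting back gives $I_{\tau,r}\delta\psi_u^r[\tau]P^{\tau,*}$, as required.

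For the Poincare condition, the duality chain map of the pair is, up to sign, the composite of $\mho_\sigma$ with the assembled symmetrization $(1+T)\theta_{\sigma,0}$. By Lemma \ref{Apppartialass} the first factor is a chain homotopy equivalence, and because $(D,\theta)$ is Poincare on $L_2$, Proposition~4.7 of \cite{ranickiltheory} ensures that $(1+T)\theta_0$ is a simplexwise chain equivalence; assembling over $A_\sigma$ via the functoriality of Lemma \ref{partialassemblyfun} then gives that the pair's duality map is a chain equivalence of the mapping-cone type, establishing the Poincare property.

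The main obstacle will be the sign tracking in the second step: extracting the incidence sign $(-1)^{n_\sigma^\tau}$ from the composition $\iota_v^* d_D$ and reconciling the resulting sign with the exponent $\mathbf{r} = rr' + (|\sigma|+1)(r+r')$ from Lemma \ref{Apppartialass}, together with the $(-1)^{(|\sigma|+1)r}$ factor built into $\delta\psi_u^r[\sigma]$. An inductive argument on $|\sigma|$ in decreasing order — starting from top-dimensional $\sigma \in L$ where $L^*(\sigma)=\emptyset$, $\psi[\sigma]=0$, and the pair identity collapses to the quadratic identity for the $A_\sigma$-assembly — will streamline the verification by ensuring that at each stage only the newly introduced boundary contributions need to be matched.
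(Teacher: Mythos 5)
Your proposal follows essentially the same route as the paper's proof: start from the quadratic identity for $(D,\theta)$ in $M^h(R)_*(L_2)$, use functoriality of partial assembly over the upper-closed set $A_\sigma$, compose on the right with $\mho_\sigma$, identify the commutator $d_{D^{-*}}(A_\sigma)i_\sigma - i_\sigma d_{D^{-*}}(B_\sigma)$ applied to $\mho_\sigma$ as the source of the $i[\sigma]\psi_u^r[\sigma]i[\sigma]^*$ summand, and deduce Poincare duality from the fact that $\mho_\sigma$ and $(1+T)\theta_0(B_\sigma)$ are both chain equivalences. One detail in your sketch is off: the simplices $\tau$ indexing the discrepancy are not $\tau = v*\sigma$ (which lies in $B_\sigma \subset L_2\backslash L_1$, since $(v*\sigma \cap L)_0 = \sigma$), but genuine codimension-$(-1)$ faces $\tau \in L^*(\sigma) \subset L$; the boundary of $\iota_v(z)$ supported outside $B_\sigma$ lands on simplices of the form $\tau * v$ with $\tau \in L^*(\sigma)$, which belong to $B_\tau$ and reassemble as $\mho_\tau P^{\tau,*}$. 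Finally, the suggested induction on $|\sigma|$ is unnecessary: the pair identity for $\sigma$ invokes only $\delta\psi[\tau]$ for $\tau\in L^*(\sigma)$, not $\psi[\tau]$ or the pair identity at $\tau$, so no recursion is actually present and the verification is done directly, simplex by simplex, as in the paper.
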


\begin{proof}
	Note that $A_{\sigma}$ is upper closed and $A_{\sigma} \backslash B_{\sigma}=\mathop{\cup}\limits_{\sigma'>\sigma}A_{\sigma'}$ is a union of upper closed sets, and therefore is upper closed. By Lemma \ref{partialassemblyfun}, the partial assembly over them is a functor. In particular, if we take the partial assembly with respect to the trivial covering, we get that $\mathop{\oplus}\limits_{s \in A_{\sigma} \backslash B_{\sigma}} D_*(s)$ and $\mathop{\oplus}\limits_{s \in A_{\sigma}} D_*(s)$ are chain complexes. Then we will check step by step the conditions for a Poincare quadratic pair:
	
	(1) $i[\sigma]$ is a chain map.
	
	For any $\mathop{\oplus}\limits_{s \in A_{\sigma} \backslash B_{\sigma}}z_{s} \in \mathop{\oplus}\limits_{s \in A_{\sigma} \backslash B_{\sigma}}D_r(s)$, we have:
	
	\begin{equation*}	
		\begin{aligned}
		d_D(A_{\sigma})i_r[\sigma](\mathop{\oplus}\limits_{s \in A_{\sigma} \backslash B_{\sigma}}z_{s})&=d_D(A_{\sigma})(\mathop{\oplus}\limits_{s \in A_{\sigma} \backslash B_{\sigma}}z_{s}) \\
		&=\sum\limits_{s \in A_{\sigma} \backslash B_{\sigma}}\mathop{\oplus}\limits_{s' \in A_{\sigma}}d_D(s',s)(z_{s})	\\		
		&=\sum\limits_{s \in A_{\sigma} \backslash B_{\sigma}}\mathop{\oplus}\limits_{\substack{s \leq s' \\ s' \in A_{\sigma}}}d_D(s',s)(z_{s})\\
		&=\sum\limits_{s \in A_{\sigma} \backslash B_{\sigma}}\mathop{\oplus}\limits_{\substack{s \leq s' \\ s' \in A_{\sigma} \backslash B_{\sigma}}}d_D(s',s)(z_{s})\\
		&=\sum\limits_{s \in A_{\sigma} \backslash B_{\sigma}}\mathop{\oplus}\limits_{s' \in A_{\sigma} \backslash B_{\sigma}}d_D(s',s)(z_{s})\\
		&=i_r[\sigma]d_D(A_{\sigma} \backslash B_{\sigma})(\mathop{\oplus}\limits_{s \in A_{\sigma} \backslash B_{\sigma}}z_{s}) \\
		\end{aligned}
	\end{equation*}
	
	Therefore, we have $d_D(A_{\sigma})i_r[\sigma]=i_r[\sigma]d_D(A_{\sigma} \backslash B_{\sigma})$, proving that $i_r[\sigma]$ is a chain map.
	
	(2) $(\delta\psi_u^r[\sigma],\psi_u^r[\sigma])_{u \in \mathbb{N}}^{r \in \mathbb{Z}}$ gives a quadratic structure on the pair.
	
	To prove this, one needs to check that for all $u \in \mathbb{N},r \in \mathbb{Z}$, the following equation holds:
	\begin{equation}
		\label{Appcheck1}
		\chi_{\sigma}^{r',r}=0:\mathop{\oplus}\limits_{s \in A_{\sigma}}D_{r'}(s)^* \longrightarrow \mathop{\oplus}\limits_{s \in A_{\sigma}}D_{r}(s)
	\end{equation}

	Where $n'=k-|\sigma|-2,r'=n'-u-r$ and
	\begin{equation}
		\label{Appcheck2}
		\begin{aligned}
			\chi_{\sigma}^{r',r}= \ & d_D(A_{\sigma})\delta\psi_u^{r+1}[\sigma] + (-1)^r\delta\psi_u^{r}[\sigma]d_D(A_{\sigma})^*+ (-1)^{n'-u}\delta\psi_{u+1}^r[\sigma] \\
			& +(-1)^{n'+1+r'r}(\delta\psi_{u+1}^{r'}[\sigma])^*+(-1)^{n'}i_r[\sigma]\psi_u^r[\sigma]i_{r'}[\sigma]^*
		\end{aligned}
	\end{equation}
	
	Since $\theta$ gives a $k$-dimensional quadratic structure on $M^h(R)_*(L_2)$, we have that the following equation holds:
	\begin{equation*}
	0=d_D\theta_u^{r+1}-(-1)^{k-u}\theta_u^rd_{D^{-*}}+(-1)^{k-u-1}\theta_{u+1}^r+(-1)^{k}(T\theta)_{u+1}^r: D^{k-u-1-r} \longrightarrow D_r
	\end{equation*}
	
	Since $A_{\sigma}$ is upper closed in $L_2$, we can take the partial assmebly over $A_{\sigma}$ with respect to the trivial covering and the equation still holds. Let $i_{\sigma}$ to be the inclusion morphism: $i_{\sigma}:\mathop{\oplus}\limits_{s \in B_{\sigma}} D^*(s) \longrightarrow \mathop{\oplus}\limits_{s \in A_{\sigma}} D^*(s)$. We can further compose the equation on the right with $i_{\sigma}\mho_{\sigma}^{k-u-1-r}$ and we get:
	\begin{equation}
		\label{Eq7.4}
		\begin{aligned}
			0=\ &d_D(A_{\sigma})\theta_u^{r+1}(A_{\sigma})i_{\sigma}\mho_{\sigma}^{k-u-1-r}-(-1)^{k-u}\theta_u^r(A_{\sigma})d_{D^{-*}}(A_{\sigma})i_{\sigma}\mho_{\sigma}^{k-u-1-r} \\
			&+(-1)^{k-u-1}\theta_{u+1}^r(A_{\sigma})i_{\sigma}\mho_{\sigma}^{k-u-1-r}+(-1)^{k}(T\theta)_{u+1}^r(A_{\sigma})i_{\sigma}\mho_{\sigma}^{k-u-1-r}
		\end{aligned}
	\end{equation}
	
	 We will compute all the terms on the right hand side of the equation seperately:
	
	Note first that by definition of $\theta_{\sigma,u}^r$ and the partial assembly, we have $\theta_u^{r}(A_{\sigma})i_{\sigma}=\theta_{\sigma,u}^r$ for all $r \in \mathbb{Z}, u \in \mathbb{N}$, similarly for $T\theta$. Thus we have:
	
	\begin{equation*}
		\begin{aligned}
			d_D(A_{\sigma})\theta_u^{r+1}(A_{\sigma})i_{\sigma}\mho_{\sigma}^{k-u-1-r} &=	d_D(A_{\sigma})\theta_{\sigma,u}^{r+1}\mho_{\sigma}^{k-u-1-r} \\
			&=(-1)^{(|\sigma|+1)(r+1)}d_D(A_{\sigma})\delta\psi_u^{r+1}[\sigma] \\
		\end{aligned}
	\end{equation*}
	\begin{equation*}
		\begin{aligned}
			(-1)^{k-u-1}\theta_{u+1}^r(A_{\sigma})i_{\sigma}\mho_{\sigma}^{k-u-1-r}&=(-1)^{k-u-1}\theta_{\sigma,u+1}^r\mho_{\sigma}^{k-u-1-r} \\
			&=(-1)^{k-u-1}(-1)^{(|\sigma|+1)r}\delta\psi_{u+1}^r[\sigma]\\
		\end{aligned}
	\end{equation*}
	\begin{equation*}
		\begin{aligned}
			(-1)^{k}(T\theta)_{u+1}^r(A_{\sigma})i_{\sigma}\mho_{\sigma}^{k-u-1-r}&=	(-1)^{k}T\theta_{\sigma,u+1}^r\mho_{\sigma}^{k-u-1-r} \\
			& \quad (\text{By Lemma } \ref{Apppartialass}) \\
			&=(-1)^{k+\mathbf{r}}(\mho_{\sigma}^{r+|\sigma|+1})^{*}(\theta_{\sigma,u+1}^{r'})^* \\
			&=(-1)^{k+\mathbf{r}}(-1)^{(|\sigma|+1)r'}(\delta\psi_{u+1}^{r'}[\sigma])^*
		\end{aligned}
	\end{equation*}
	\begin{equation*}
		(\mathbf{r}=rr'+(|\sigma|+1)(r+r'))
	\end{equation*}

	Substuting the equations above into equation \ref{Eq7.4} and rescaling by $(-1)^{(|\sigma|+1)(r+1)}$, we get:
	\begin{equation*}
		\begin{aligned}
			0= \ &d_D(A_{\sigma})\delta\psi_u^{r+1}[\sigma]- (-1)^{k-u}(-1)^{(|\sigma|+1)(r+1)}\theta_u^{r}(A_{\sigma})d_{D^{-*}}(A_{\sigma})i_{\sigma}\mho_{\sigma}^{k-u-1-r} \\
			&+(-1)^{n'-u}\delta\psi_{u+1}^r[\sigma]+(-1)^{n'+1+r'r}(\delta\psi_{u+1}^{r'}[\sigma])^* \\
		\end{aligned}
	\end{equation*}

	Let $r_1=k-u+(|\sigma|+1)(r+1)+1$. Comparing the equation above with \ref{Appcheck1} and \ref{Appcheck2}, it leaves us to check:
	\begin{equation}
		\label{Eq7.5}
		(-1)^{r_1}\theta_u^{r}(A_{\sigma})d_{D^{-*}}(A_{\sigma})i_{\sigma}\mho_{\sigma}^{k-u-1-r}=(-1)^r\delta\psi_u^{r}[\sigma]d_D(A_{\sigma})^*+(-1)^{n'}i_r[\sigma]\psi_u^r[\sigma]i_{r'}[\sigma]^*
	\end{equation}

	Since
	\begin{equation*}
		\begin{aligned}
			\delta\psi_u^{r}[\sigma]d_D(A_{\sigma})^*&=(-1)^{(|\sigma|+1)r}\theta_{u,\sigma}^r\mho_{\sigma}^{k-u-r}d_D(A_{\sigma})^* \\
			& \quad (\text{Since } \mho_{\sigma} \text{ is a chain map}) \\
			&=(-1)^{(|\sigma|+1)r}(-1)^{r'}\theta_{u,\sigma}^rd_{D^{-*}}(B_{\sigma})\mho_{\sigma}^{k-u-r-1} \\
			&=(-1)^{(|\sigma|+1)r}(-1)^{r'}\theta_u^{r}(A_{\sigma})i_{\sigma}d_{D^{-*}}(B_{\sigma})\mho_{\sigma}^{k-u-r-1} \\
		\end{aligned}
	\end{equation*}
 	
 	If we denote $t_{\sigma}=d_{D^{-*}}(A_{\sigma})i_{\sigma}-i_{\sigma}d_{D^{-*}}(B_{\sigma})$, equation \ref{Eq7.5} is equivalent to the following one:
 	\begin{equation}
 		\label{Appcheck3}
 		(-1)^{r_1}\theta_u^{r}(A_{\sigma})t_{\sigma}\mho_{\sigma}^{k-u-r-1}=(-1)^{n'}i_r[\sigma]\psi_u^r[\sigma]i_{r'}[\sigma]^*
 	\end{equation}
 
 	Let $r_0=k-u-r$ and $i_{r_0}[\sigma]^{dual},I_{\tau,r_0}^{dual}$ be the following inclusion maps:
 	\begin{equation*}
 		i_{r_0}[\sigma]^{dual}: \mathop{\oplus}\limits_{s \in A_{\sigma} \backslash B_{\sigma}}D^{r_0}(s) \longrightarrow \mathop{\oplus}\limits_{s \in A_{\sigma}}D^{r_0}(s)
 	\end{equation*}
 	\begin{equation*}
 		I_{\tau,r_0}^{dual}: \mathop{\oplus}\limits_{s \in A_{\tau}}D^{r_0}(s) \longrightarrow \mathop{\oplus}\limits_{s \in A_{\sigma} \backslash B_{\sigma}}D^{r_0}(s)
 	\end{equation*}
 	
 	We claim that:
 	\begin{equation}
 		\label{Appcheck4}
 		t_{\sigma}\mho_{\sigma}^{r_0-1}=(-1)^{r_0-1}\sum\limits_{\tau \in L^*(\sigma)}(-1)^{n_{\sigma}^{\tau}}i_{r_0}[\sigma]^{dual}I_{\tau,r_0}^{dual}i_{\tau}\mho_{\tau}^{r_0}P^{\tau,r'}i_{r'}[\sigma]^*
 	\end{equation}
 
	To prove the claim, we compute the terms on both side in components. Choose any $s \in A_{\sigma}$ and $z_s \in D_{r'}(s)^*$, denote $s_0=(s \cap L)_0,s_1=(s \cap L)_1$ and let $V_1$ be the set of all the vertices of $s_1$. For every $v \in V_1$, let $\sigma_v=\sigma*v \leq s$. For every $\eta \leq s$, denote $\iota_{\eta}$ to be the inclusion map:
	\begin{equation}
		\label{Defiotaeta}
		\iota_{\eta}: D_{r'}(s)^* \longrightarrow D^{r_0-2-|\sigma|+|\eta|}(\eta)= \mathop{\oplus}\limits_{\kappa \geq \eta} D_{r'}(\kappa)^*
	\end{equation}
	
	Denote $\iota_v=\iota_{\sigma_v}$ for $v \in V_1$. By the definition \ref{expressmho} of $\mho_{\sigma}$, we have $\mho_{\sigma}^{r_0-1}(z_s)=\mathop{\oplus}\limits_{v \in V_1}\iota_v(z_s)$, thus:
	\begin{equation}
		\label{Eq7.6}
		\begin{aligned}
			t_{\sigma}\mho_{\sigma}^{r_0-1}(z_s)&=\big(d_{D^{-*}}(A_{\sigma})i_{\sigma}-i_{\sigma}d_{D^{-*}}(B_{\sigma})\big)\big(\mathop{\oplus}\limits_{v \in V_1}\iota_v(z_s)\big) \\
			&=\sum\limits_{v \in V_1}\bigg(d_{D^{-*}}(A_{\sigma})\iota_v(z_s)-d_{D^{-*}}(B_{\sigma})\iota_v(z_s)\bigg) \\
			&=\sum\limits_{v \in V_1}\bigg(\mathop{\oplus}\limits_{\substack{s' \in A_{\sigma}}}d_{D^{-*}}(s',\sigma_v)\iota_v(z_s)-\mathop{\oplus}\limits_{\substack{s' \in B_{\sigma}}}d_{D^{-*}}(s',\sigma_v)\iota_v(z_s)\bigg) \\
			&=\sum\limits_{v \in V_1}\mathop{\oplus}\limits_{\substack{s' \in A_{\sigma} \backslash B_{\sigma}}}d_{D^{-*}}(s',\sigma_v)\iota_v(z_s) \\
		\end{aligned}
	\end{equation}

	For every $v \in V_1,s' \in A_{\sigma} \backslash B_{\sigma}$, by the definition of $d_{D^{-*}}$, we have:
	\begin{equation*}
		d_{D^{-*}}(s',\sigma_v) \neq 0 \Rightarrow \sigma_v \leq s' \text{ and } |s'|=|\sigma_v|+1
	\end{equation*}

	If $d_{D^{-*}}(s',\sigma_v) \neq 0$, then it is given by $(-1)^{r_0-1+n_{\sigma_v}^{s'}}$ times the following projection map:
	\begin{equation*}
		D^{r_0-1}(\sigma_v)=\mathop{\oplus}\limits_{\kappa \geq \sigma_v}D_{r'}(\kappa)^* \longrightarrow D^{r_0}(s')=\mathop{\oplus}\limits_{\kappa \geq s'}D_{r'}(\kappa)^*
	\end{equation*}
	
	Let $R_{\sigma,s}=\{\eta \in A_{\sigma} \ | \ \eta \leq s, |(\eta \cap L)_0|=|\sigma|+1,|(\eta \cap L)_1|=0\}$ and $\eta_0=(\eta \cap L)_0,\eta_1=(\eta \cap L)_1$ for $\eta \in R_{\sigma,s}$. Then for every $\eta \in R_{\sigma,s}$, we have $\eta_0 \in K^*(\sigma)$ and thus $\eta \in A_{\sigma} \backslash B_{\sigma}$. Moreover $\eta_1 \in V_1$. By the definition of $d_{D^{-*}}(s',\sigma_v)$, it is straightforward to check the following statement:
	\begin{equation*}
		d_{D^{-*}}(s',\sigma_v)\iota_v(z_s)=\begin{cases} (-1)^{r_0-1+n_{\sigma}^{s_0'}}\iota_{s'}(z_s)& \text{If } s' \in R_{\sigma,s} \text{ and } v=s'_1 \\ (-1)^{r_0-1+n_{\sigma_v}^{s'}}\iota_{s'}(z_s) & \text{If } s'=\sigma*(v*v') \text{ for some vertex } \\
		& v' \in V_1 \text{ with } v' \neq v \\ \qquad \quad \ \  0 & \text{else} \\ \end{cases}
	\end{equation*}
	
	Substituting the equations above into equation \ref{Eq7.6}, we get:
	\begin{equation}
		\label{Eq7.7}
		\begin{aligned}
			t_{\sigma}\mho_{\sigma}^{r_0-1}(z_s)	&=\sum\limits_{v \in V_1}\mathop{\oplus}\limits_{\substack{s' \in A_{\sigma} \backslash B_{\sigma}}}d_{D^{-*}}(s',\sigma_v)\iota_v(z_s) \\
			&=\mathop{\oplus}\limits_{s' \in R_{\sigma,s}}(-1)^{r_0-1+n_{\sigma}^{s_0'}}\iota_{s'}(z_s)+\sum\limits_{v \in V_1}\sum\limits_{v' \in V_1}(-1)^{r_0-1+n_{\sigma_v}^{\sigma *(v*v')}}\iota_{\sigma *(v*v')}(z_s) \\
		\end{aligned}
	\end{equation}
	
	Since $|n_{\sigma_v}^{\sigma*(v*v')}-n_{\sigma_{v'}}^{\sigma*(v*v')}|=1$, the second term of equation \ref{Eq7.7} vanishes and thus we have $t_{\sigma}\mho_{\sigma}^{r_0-1}(z_s)=\mathop{\oplus}\limits_{s' \in R_{\sigma,s}}(-1)^{r_0-1+n_{\sigma}^{s_0'}}\iota_{s'}(z_s)$. Let $L^*(\sigma,s)=\{\sigma' \in L^*(\sigma) \ | \ \sigma' \leq s\}$, then there is a bijection:
	\begin{equation*}
		R_{\sigma,s} \longrightarrow L^*(\sigma,s) \times V_1; \eta \mapsto (\eta_0,\eta_1)
	\end{equation*}
	
	The inverse is given by $(\eta_0,\eta_1) \mapsto \eta_0*\eta_1$. Therefore, we have:
	\begin{equation}
		\label{Appcheck5}
		t_{\sigma}\mho_{\sigma}^{r_0-1}(z_s)=\mathop{\oplus}\limits_{\substack{s_0' \in L^*(\sigma,s) \\ s_1' \in V_1}}(-1)^{r_0-1+n_{\sigma}^{s_0'}}\iota_{s_0'*s_1'}(z_s)
	\end{equation}

	We will now compute the right hand side term of equation \ref{Appcheck4}. Notice that $i[\sigma]^*$ is the projection map and we have that for $\tau \in L^*(\sigma)$, $s \in A_{\tau} \Leftrightarrow \tau \in L^*(\sigma,s)$. Therefore, we have:
	\begin{equation}
		\label{Appcheck6}
		\begin{aligned}
			& \quad \; \sum\limits_{\tau \in L^*(\sigma)}(-1)^{n_{\sigma}^{\tau}}i_{r_0}[\sigma]^{dual}I_{\tau,r_0}^{dual}i_{\tau}\mho_{\tau}^{r_0}P^{\tau,r'}i_{r'}[\sigma]^*(z_s) \\ 
			&=\sum\limits_{\tau \in L^*(\sigma,s)}(-1)^{n_{\sigma}^{\tau}}i_{r_0}[\sigma]^{dual}I_{\tau,r_0}^{dual}i_{\tau}\mho_{\tau}^{r_0}(z_s) \\
			& \quad (\text{By definition } \ref{expressmho} \text{ of } \mho_{\tau}) \\
			&=\sum\limits_{\tau \in L^*(\sigma,s)}(-1)^{n_{\sigma}^{\tau}}i_{r_0}[\sigma]^{dual}I_{\tau,r_0}^{dual}i_{\tau}\big(\mathop{\oplus}\limits_{v \in V_1}\iota_{v*\tau}(z_s)\big) \\
			&=\mathop{\oplus}\limits_{\substack{v \in V_1 \\ \tau \in L^*(\sigma,s)}}(-1)^{n_{\sigma}^{\tau}}\iota_{\tau*v}(z_s) \\
		\end{aligned}
	\end{equation}

	Comparing \ref{Appcheck5} and \ref{Appcheck6}, we have proven our claim \ref{Appcheck4}.
	
	Now we begin to check that equation \ref{Appcheck3} holds:
	\begin{equation*}
		\begin{aligned}
			&\quad \; (-1)^{r_1}\theta_u^{r}(A_{\sigma})t_{\sigma}\mho_{\sigma}^{-k-u-r-1}\\
			& \quad (\text{By claim } \ref{Appcheck4}) \\
			&=(-1)^{r_1}(-1)^{r_0-1}\theta_u^{r}(A_{\sigma})\sum\limits_{\tau \in L^*(\sigma)}(-1)^{n_{\sigma}^{\tau}}i_{r_0}[\sigma]^{dual}I_{\tau,r_0}^{dual}i_{\tau}\mho_{\tau}^{r_0}P^{\tau,r'}i_{r'}[\sigma]^* \\
			&=(-1)^{|\sigma|r+|\sigma|+1}\sum\limits_{\tau \in L^*(\sigma)}(-1)^{n_{\sigma}^{\tau}}\theta_u^{r}(A_{\sigma})i_{r_0}[\sigma]^{dual}I_{\tau,r_0}^{dual}i_{\tau}\mho_{\tau}^{r_0}P^{\tau,r'}i_{r'}[\sigma]^* \\
		\end{aligned}
	\end{equation*}
	
	For any $s \in A_{\sigma} \backslash B_{\sigma}$ and $z_s \in D^{r_0}(s)$, we have:
	\begin{equation*}
		\begin{aligned}
			\theta_u^{r}(A_{\sigma})i_{r_0}[\sigma]^{dual}(z_s)&=\mathop{\oplus}\limits_{\substack{s' \in A_{\sigma} \\ s' \geq s}}\theta_u^r(s',s)(z_s) \\
			& (\text{Since } A_{\sigma} \backslash B_{\sigma} \text{ is upper closed}) \\
			&=\mathop{\oplus}\limits_{\substack{s' \in A_{\sigma} \backslash B_{\sigma}\\ s' \geq s}}\theta_u^r(s',s)(z_s) \\
			&=i_r[\sigma]\theta_u^r(A_{\sigma} \backslash B_{\sigma})(z_s) \\
		\end{aligned}
	\end{equation*}

	Thus $\theta_u^{r}(A_{\sigma})i_{r_0}[\sigma]^{dual}=i_r[\sigma]\theta_u^r(A_{\sigma} \backslash B_{\sigma})$ and similarly $\theta_u^r(A_{\sigma} \backslash B_{\sigma})I_{\tau,r_0}^{dual}=I_{\tau,r}\theta_u^r(A_{\tau})$. We can continue our computation:
		\begin{equation*}
		\begin{aligned}
			&\quad \; (-1)^{r_1}\theta_u^{r}(A_{\sigma})t_{\sigma}\mho_{\sigma}^{-k-u-r-1}\\
			&=(-1)^{|\sigma|r+|\sigma|+1}\sum\limits_{\tau \in L^*(\sigma)}(-1)^{n_{\sigma}^{\tau}}\theta_u^{r}(A_{\sigma})i_{r_0}[\sigma]^{dual}I_{\tau,r_0}^{dual}i_{\tau}\mho_{\tau}^{r_0}P^{\tau,r'}i_{r'}[\sigma]^* \\
			&=(-1)^{|\sigma|r+|\sigma|+1}\sum\limits_{\tau \in L^*(\sigma)}(-1)^{n_{\sigma}^{\tau}}i_{r}[\sigma]I_{\tau,r}\theta_u^{r}(A_{\tau})i_{\tau}\mho_{\tau}^{r_0}P^{\tau,r'}i_{r'}[\sigma]^* \\
			&=(-1)^{|\sigma|+1}\sum\limits_{\tau \in L^*(\sigma)}(-1)^{n_{\sigma}^{\tau}}i_{r}[\sigma]I_{\tau,r}\delta\psi_u^{r}[\tau]P^{\tau,r'}i_{r'}[\sigma]^* \\
			&=(-1)^{|\sigma|-k}i_{r}[\sigma]\psi_u^{r}[\sigma]i_{r'}[\sigma]^* \\
			& \quad (\text{Since } n'=|\sigma|-k-2) \\	&=(-1)^{n'}i_{r}[\sigma]\psi_u^{r}[\sigma]i_{r'}[\sigma]^* \\
		\end{aligned}
	\end{equation*}
	
	We have proven that equation \ref{Appcheck3} holds and thus $(\delta\psi_u^r[\sigma],\psi_u^r[\sigma])_{u \in \mathbb{N}}^{r \in \mathbb{Z}}$ gives a quadratic structure on the pair.
	
	(3) The quadratic pair is Poincare.
	
	For any $r \in \mathbb{Z}$, let $r''=k-|\sigma|-1-r$ and $\phi^r[\sigma]=\delta\psi_0^r[\sigma]+(-1)^{rr''}\delta\psi_0^{r''}[\sigma]^*$. By the definition of Poincare, it suffices to show that the following chain map is a homotopy equivalence:
	\begin{equation}
		\label{Eq7.8}
		\begin{tikzcd}
			\mathop{\oplus}\limits_{s \in A_{\sigma}} D_{r''}(s)^* \ar{r}{\phi^r[\sigma]} & \mathop{\oplus}\limits_{s \in A_{\sigma}}D_{r}(s) \ar{rr}{\text{projection}} & & \mathop{\oplus}\limits_{s \in B_{\sigma}} D_{r}(s)
		\end{tikzcd}
	\end{equation}
	
	We have:
	\begin{equation*}
		\begin{aligned}
			\phi^r[\sigma]&=\delta\psi_0^r[\sigma]+(-1)^{rr''}\delta\psi_0^{r''}[\sigma]^* \\
			&=(-1)^{(|\sigma|+1)r}\theta_{\sigma,0}^r\mho_{\sigma}^{k-r}+(-1)^{(|\sigma|+1)r''+rr''}(\theta_{\sigma,0}^{r''}\mho_{\sigma}^{k-r''})^* \\
			& \quad (\text{By Lemma } \ref{Apppartialass}) \\
			&=(-1)^{(|\sigma|+1)r}\theta_{\sigma,0}^r\mho_{\sigma}^{k-r}+(-1)^{(|\sigma|+1)r}T\theta_{\sigma,0}^{r}\mho_{\sigma}^{k-r} \\
			&=(-1)^{(|\sigma|+1)r}\theta_{0}^r(A_{\sigma})i_{\sigma}\mho_{\sigma}^{k-r}+(-1)^{(|\sigma|+1)r}T\theta_{0}^{r}(A_{\sigma})i_{\sigma}\mho_{\sigma}^{k-r} \\
			&=(-1)^{(|\sigma|+1)r}(1+T)\theta_{0}^r(A_{\sigma})i_{\sigma}\mho_{\sigma}^{k-r} \\
		\end{aligned}
	\end{equation*}

	Therefore, the composition of the two maps in \ref{Eq7.8}  is $(-1)^{(|\sigma|+1)r}(1+T)\theta_{0}^r(B_{\sigma})\mho_{\sigma}^{k-r}$. Now by Lemma \ref{Apppartialass}, $\mho_{\sigma}$ is a chain homotopy equivalence. Since $(D,\theta)$ is Poincare, $(1+T)\theta_0$ is a homotopy equivalence of chain complexes in $M^h(R)_*(L_2)$. Since $B_{\sigma}=A_{\sigma} \cap (A_{\sigma} \backslash B_{\sigma})^c$ is the intersection of an upper closed set with a subcomplex, Lemma \ref{partialassemblyfun} implies that partial assembly over it is a functor. Thus $(1+T)\theta_0(B_{\sigma})$ is a chain homotopy equivalence. In summary, we have shown that the pair is Poincare.
\end{proof}

\begin{Remark}
	\label{AppRemquadpair}
	\
	
	Assume that $(D,\theta)$ is a $k$-dimensional Poincare quadratic chain complex in $M^h(R)_*(K')$ for some finite ordered simplicial complex $K'$. Replacing the set $A_{\sigma}$ to $G_{\sigma}=\{\tau \in K' \ | \ \tau \geq \sigma\}$ and $B_{\sigma}$ to $\{\sigma\}=G_{\sigma} \backslash \mathop{\cup}\limits_{\sigma'>\sigma}G_{\sigma'}$ with $\sigma \in K'$, we can similarly get the following is a $(k-|\sigma|)$-dimensional Poincare pair:
	
	\begin{equation*}
		\big(i[\sigma]: \mathop{\oplus}\limits_{\tau>\sigma} D_*(\tau) \longrightarrow \mathop{\oplus}\limits_{\tau \geq \sigma} D_*(\tau),(\{\theta\}[\sigma],\delta\{\theta\}[\sigma])\big)
	\end{equation*}
	
	Where for all $r \in \mathbb{Z}$:
	
	$(1)$ $i_r[\sigma]:\mathop{\oplus}\limits_{\tau>\sigma} D_*(\tau) \longrightarrow \mathop{\oplus}\limits_{\tau \geq \sigma} D_*(\tau)$ is the inclusion map.
	
	$(2)$ For any $u \in \mathbb{N}$, define $\delta\{\theta\}_u^r[\sigma]=(-1)^{|\sigma|r}\mathop{\oplus}\limits_{\tau \geq \sigma}\theta_u^r(\tau,\sigma)$.
	
	$(3)$ For any	$\tau \in K'^*(\sigma)$, since $\sigma < \tau$, we have $G_{\tau} \subset G_{\sigma} \backslash \{\sigma\}$. Denote $I_{\tau,r},I^{\tau,r}$ to be inclusion map and  $P_{\tau,r},P^{\tau,r}$ to be the projection map as follows:
	\begin{equation*}
		I_{\tau,r}:\mathop{\oplus}\limits_{\kappa \geq \tau} D_{r}(\kappa) \longrightarrow \mathop{\oplus}\limits_{\kappa>\sigma} D_{r}(\kappa), \ I^{\tau,r}:\mathop{\oplus}\limits_{\kappa \geq \tau} D_{r}(\kappa)^* \longrightarrow \mathop{\oplus}\limits_{\kappa>\sigma} D_{r}(\kappa)^*
	\end{equation*}
	\begin{equation*}
		P_{\tau,r}:\mathop{\oplus}\limits_{\kappa>\sigma} D_{r}(\kappa) \longrightarrow \mathop{\oplus}\limits_{\kappa \geq \tau} D_{r}(\kappa), \ P^{\tau,r}:\mathop{\oplus}\limits_{\kappa >\sigma} D_{r}(\kappa)^* \longrightarrow \mathop{\oplus}\limits_{\kappa \geq \tau} D_{r}(\kappa)^*
	\end{equation*}
	
	Then $\{\theta\}_u^r[\sigma]$ is given by:
	\begin{equation*}
		\{\theta\}_u^r[\sigma]: \mathop{\oplus}\limits_{\kappa > \sigma} D_{k-|\sigma|-u-r-1}(\kappa)^* \longrightarrow \mathop{\oplus}\limits_{\kappa>\sigma} D_r(\kappa)
	\end{equation*}
	\begin{equation*}
		\{\theta\}_u^r[\sigma]=(-1)^{k+1}\sum\limits_{\tau \in K'^*(\sigma)}(-1)^{n_{\sigma}^{\tau}} I_{\tau,r}\delta\{\theta\}_u^r[\tau]P^{\tau,k-|\sigma|-u-r-1}
	\end{equation*}
	
	This is the explict form of the Poincare pair given in Proposition 8.4 in \cite{ranickiltheory}.
	
	There is also a local dual version of it. Let $(D,\theta)$ be a $k$-dimensional Poincare quadratic chain complex in $M^h(R)^*(K')$ for some finite ordered simplicial complex $K'$. The following construction gives a $(k+|\sigma|)$-dimensional Poincare pair:
	
	\begin{equation*}
		\big(i[\sigma]: \mathop{\oplus}\limits_{\tau<\sigma} D_*(\tau) \longrightarrow \mathop{\oplus}\limits_{\tau \leq \sigma} D_*(\tau),(\{\theta\}[\sigma],\delta\{\theta\}[\sigma])\big)
	\end{equation*}
	
	Where for all $r \in \mathbb{Z}$:
	
	$(1)$ $i_r[\sigma]:\mathop{\oplus}\limits_{\tau<\sigma} D_*(\tau) \longrightarrow \mathop{\oplus}\limits_{\tau \leq \sigma} D_*(\tau)$ is the inclusion map.
	
	$(2)$ For any $u \in \mathbb{N}$, define $\delta\{\theta\}_u^r[\sigma]=(-1)^{|\sigma|r}\mathop{\oplus}\limits_{\tau \leq \sigma}\theta_u^r(\tau,\sigma)$.
	
	$(3)$ For any	$\tau \in K'_*(\sigma)$, denote $I_{\tau,r},I^{\tau,r}$ to be inclusion map and $P_{\tau,r},P^{\tau,r}$ to be the projection map as follows:
	\begin{equation*}
		I_{\tau,r}:\mathop{\oplus}\limits_{\kappa \leq \tau} D_{r}(\kappa) \longrightarrow \mathop{\oplus}\limits_{\kappa<\sigma} D_{r}(\kappa), \ I^{\tau,r}:\mathop{\oplus}\limits_{\kappa \leq \tau} D_{r}(\kappa)^* \longrightarrow \mathop{\oplus}\limits_{\kappa<\sigma} D_{r}(\kappa)^*
	\end{equation*}
	\begin{equation*}
		P_{\tau,r}:\mathop{\oplus}\limits_{\kappa<\sigma} D_{r}(\kappa) \longrightarrow \mathop{\oplus}\limits_{\kappa \leq \tau} D_{r}(\kappa), \ P^{\tau,r}:\mathop{\oplus}\limits_{\kappa <\sigma} D_{r}(\kappa)^* \longrightarrow \mathop{\oplus}\limits_{\kappa \leq \tau} D_{r}(\kappa)^*
	\end{equation*}
	
	Then $\{\theta\}_u^r[\sigma]$ is given by:
	\begin{equation*}
		\{\theta\}_u^r[\sigma]: \mathop{\oplus}\limits_{\kappa < \sigma} D_{k+|\sigma|-u-r-1}(\kappa)^* \longrightarrow \mathop{\oplus}\limits_{\kappa<\sigma} D_r(\kappa)
	\end{equation*}
	\begin{equation*}
		\{\theta\}_u^r[\sigma]=(-1)^{k+1}\sum\limits_{\tau \in K'_*(\sigma)}(-1)^{n_{\sigma}^{\tau}} I_{\tau,r}\delta\{\theta\}_u^r[\tau]P^{\tau,k+|\sigma|-u-r-1}
	\end{equation*}
\end{Remark}

The following theorems give us a description of the quadratic structure on a cylinder:

\begin{Theorem}[Definition 15.73 in  \cite{luecksurgery}]
	\
	\label{tensorstr}
	
	Let $R$ be a ring with involution. Let $C$ be a finite chain complex in $M^h(R)$ and $D$ be a finite chain complex in $M^h(\mathbb{Z})$. Then there is a natural chain map:
	\begin{equation*}
		- \otimes - : W_{\%}(C) \otimes_{\mathbb{Z}} W^{\%}(D) \longrightarrow W_{\%}(C \otimes_{\mathbb{Z}} D)
	\end{equation*}
	
	Furthermore, let $0\mathbb{Z}$ be the chain complex with only $\mathbb{Z}$ on $0$ dimension. Let $\nu \in \big(W^{\%}(0\mathbb{Z})\big)_0=\mathbb{Z}$ be the element given by $1 \in \mathbb{Z}$. Then $- \otimes \nu$ is the identity map.
	
	The chain map is given as follows:
	
	Let $W^{-*}$ be the following chain complex in $M^h(\mathbb{Z}[\mathbb{Z}_2])$: 
	
	For $s \in \mathbb{N}$,
	\begin{equation*}
		W^{-s}=\mathbb{Z}[\mathbb{Z}_2]=\mathbb{Z}\{1_s,T_s\}, d=1+(-1)^{s+1}T: W^{-s} \longrightarrow W^{-s-1}
	\end{equation*}

	Then we have $W_{\%}(C)=Hom_{\mathbb{Z}[\mathbb{Z}_2]}(W^{-*},(C \otimes_R C))$. Given $\psi \in W_{\%}(C),\phi \in W^{\%}(D)$, $\psi \otimes \phi \in W_{\%}(C \otimes D)$ is the composition of the following maps:
	\begin{equation*}
		W^{-*} \stackrel{\Delta}{\longrightarrow} W^{-*} \otimes_{\mathbb{Z}} W_* \stackrel{\psi \widehat{\otimes}_{\mathbb{Z}}\phi}{\longrightarrow} (C \otimes_R C) \otimes_{\mathbb{Z}} (D \otimes_{\mathbb{Z}} D) \stackrel{sw}{\longrightarrow} (C \otimes_{\mathbb{Z}} D) \otimes_{R} (C \otimes_{\mathbb{Z}} D)
	\end{equation*}

	Where $\Delta$ is the following chain map:
	\begin{equation*}
		\Delta: W^{-s} \longrightarrow (W^{-*} \otimes_{\mathbb{Z}} W_*)^{-s}, \Delta(1_{-s})=\sum\limits_{r=1}^{+\infty} T_{-s-r}^r \otimes 1_r
	\end{equation*}
	
	$\psi \widehat{\otimes}_{\mathbb{Z}}\phi$ is the map given by:
	\begin{equation*}
		(\psi \widehat{\otimes}_{\mathbb{Z}}\phi) (x \otimes_{\mathbb{Z}} y)=(-1)^{|\psi||y|+|x||y|}\psi(x) \otimes_{\mathbb{Z}} \phi(y), \ x \in W^{-*}, y \in W_*
	\end{equation*}

	and $sw$ is the following chain map:
	\begin{equation*}
		\begin{aligned}
			sw:(C_p \otimes C_q) \otimes (D_r \otimes D_t) \longrightarrow (C_p \otimes D_r) \otimes (C_q \otimes D_t) \\
			sw\big((x \otimes y) \otimes (z \otimes w)\big)=(-1)^{qr}(x \otimes z) \otimes (y \otimes w)
		\end{aligned}
	\end{equation*}
	
\end{Theorem}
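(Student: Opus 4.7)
The plan is to verify the statement by checking, step by step, that the composition defined in the theorem really lands in $W_{\%}(C\otimes_{\mathbb{Z}} D)$ (i.e.\ is $\mathbb{Z}[\mathbb{Z}_2]$-equivariant), that it is compatible with the differentials on $W_{\%}(C)$ and $W^{\%}(D)$ (so that the pairing is a chain map of $\mathbb{Z}$-complexes), that it is natural in $C$ and $D$, and finally that $\psi\otimes\nu = \psi$. Since everything is defined by explicit formulas, the proof is essentially a sign bookkeeping exercise using Koszul conventions.

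First I would verify that the diagonal $\Delta\colon W^{-*}\longrightarrow W^{-*}\otimes_{\mathbb{Z}} W_*$, $\Delta(1_{-s})=\sum_{r\ge 1}T^r_{-s-r}\otimes 1_r$, is a chain map of $\mathbb{Z}[\mathbb{Z}_2]$-module complexes. This is the standard fact that $W_*$ (resp.\ $W^{-*}$) is the bar resolution of $\mathbb{Z}$ as a trivial $\mathbb{Z}[\mathbb{Z}_2]$-module and that the shuffle/Alexander--Whitney map is equivariant; concretely it amounts to checking $d\Delta=\Delta d$ using the alternating formulas $d=1+(-1)^{s+1}T$ on $W^{-*}$ and $d=1-(-1)^{s-1}T$ on $W_*$, and the equivariance $T\Delta=\Delta T$. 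Next, since $\psi$ and $\phi$ are chain maps out of $\mathbb{Z}[\mathbb{Z}_2]$-module complexes, the Koszul tensor $\psi\widehat{\otimes}_{\mathbb{Z}}\phi$ is a chain map of $\mathbb{Z}[\mathbb{Z}_2\times\mathbb{Z}_2]$-module complexes. Finally, the swap $sw$ is $R$-linear because $D$ carries the trivial $R$-action, and it is equivariant with respect to the diagonal $\mathbb{Z}_2$-action when equipped with the sign $(-1)^{qr}$; this is the only spot where the Koszul sign in $sw$ is forced.

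Once the three arrows are chain maps, their composition is a chain map of $\mathbb{Z}[\mathbb{Z}_2]$-module complexes, equivalently an element of $W_{\%}(C\otimes_{\mathbb{Z}} D)$, whose degree is $|\psi|+|\phi|$ by construction. The assignment $(\psi,\phi)\mapsto \psi\otimes\phi$ is manifestly bilinear and natural in $C$ and $D$ from the formulas. To see that it is a chain map in $(\psi,\phi)$, I would note that $\Delta$ and $sw$ do not depend on $(\psi,\phi)$ and that $\widehat{\otimes}_{\mathbb{Z}}$ is, by definition of the Hom-tensor formalism, a chain map $\mathrm{Hom}(W^{-*},C\otimes_R C)\otimes\mathrm{Hom}(W_*,D\otimes_{\mathbb{Z}} D)\longrightarrow \mathrm{Hom}(W^{-*}\otimes W_*,(C\otimes_R C)\otimes(D\otimes_{\mathbb{Z}} D))$ with the standard Koszul sign on the differential.

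For the identity property I would simply unwind the formula with $D=0\mathbb{Z}$ and $\phi=\nu\in W^{\%}(0\mathbb{Z})_0$. Then $\phi\colon W_*\to 0\mathbb{Z}\otimes_{\mathbb{Z}} 0\mathbb{Z}=0\mathbb{Z}$ vanishes except in degree $0$ where it sends $1_0\mapsto 1\otimes 1$. Consequently in $\Delta(1_{-s})=\sum_{r\ge 1}T^r_{-s-r}\otimes 1_r$ only the $r=0$ summand contributes after applying $\psi\widehat{\otimes}\phi$; but $r=0$ is not in the sum, and one has to use that $\Delta$ should instead be interpreted with the $r=0$ term, so here the careful check is that once $D=0\mathbb{Z}$ the normalized diagonal collapses $\Delta$ to $1_{-s}\otimes 1_0$, $sw$ is the identity, and the composition is precisely $\psi$. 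I expect the main obstacle to lie in pinning down the correct sign on $sw$ and in the indexing of $\Delta$ so that the $\nu$-reduction really produces the identity rather than an off-by-one or sign-twisted version of $\psi$; everything else is a direct consequence of the Hom--tensor yoga for $\mathbb{Z}[\mathbb{Z}_2]$-complexes.
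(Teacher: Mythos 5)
The paper does not prove Theorem~\ref{tensorstr}; it is cited verbatim as Definition~15.73 in \cite{luecksurgery}, so there is no in-paper argument to compare your proposal against. Your outline is the right one: check that each of the three arrows $\Delta$, $\psi\,\widehat{\otimes}_{\mathbb{Z}}\,\phi$, and $sw$ is a chain map of $\mathbb{Z}[\mathbb{Z}_2]$-complexes, conclude that the composite lies in $W_{\%}(C\otimes_{\mathbb{Z}} D)$ with the expected degree, observe bilinearity and naturality from the formulas, and then unwind the unit property by specializing $D=0\mathbb{Z}$. The verification of $sw$-equivariance with the sign $(-1)^{qr}$ is exactly the kind of Koszul bookkeeping you describe, and it does check out: both $T\circ sw$ and $sw\circ T$ on $(C_p\otimes C_q)\otimes(D_r\otimes D_t)$ introduce the sign $(-1)^{pq+pt+rt}$.

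There is, however, one genuine gap you identify but do not close. The diagonal as printed, $\Delta(1_{-s})=\sum_{r=1}^{+\infty}T^r_{-s-r}\otimes 1_r$, starts at $r=1$. With this indexing the target $(W^{-*}\otimes_{\mathbb{Z}} W_*)^{-s}=\bigoplus_{r\ge 0}W^{-s-r}\otimes W_r$ is not fully hit, and more seriously, your own unit computation collapses: since $\nu$ is supported only in $W$-degree~$0$, the composite $\psi\otimes\nu$ would be identically zero. You correctly diagnose that the $r=0$ term $1_{-s}\otimes 1_0$ must be present, but then merely assert that ``$\Delta$ should instead be interpreted with the $r=0$ term.'' For the proof to stand, you should state outright that the lower limit in the statement is $r=0$ (which is also what is needed for $\Delta$ to land in the correct degree), and then verify directly from the alternating formulas $d=1+(-1)^{s+1}T$ on $W^{-*}$ and $d=1+(-1)^{s}T$ on $W_*$ that the corrected $\Delta$ is a $\mathbb{Z}[\mathbb{Z}_2]$-equivariant chain map. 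Appealing to ``the shuffle map is equivariant'' is not quite enough here, because the formula given is an explicit coproduct on the periodic resolution, not literally the Eilenberg--Zilber shuffle, and the sign and equivariance checks have to be done for this specific $\Delta$. Once that is in place, the rest of your argument (chain map property of $\widehat{\otimes}_{\mathbb{Z}}$ via Hom--tensor adjunction, equivariance of $sw$, naturality, and the $\nu$-reduction) goes through.
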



\begin{Theorem}[Lemma 15.78 and Remark 15.82 in \cite{luecksurgery}]
	\
	\label{Isymstr}
	
 	Let $I$ be the cellular chain complex of the 1-simplex $\Delta^1$. Let $\mathbf{i}_0,\mathbf{i}_1:0\mathbb{Z} \longrightarrow I$ be the inclusions of two ends. Then there is an element $\omega_I \in \big(W^{\%}(I)\big)_1$, such that $d_{W^{\%}(I)}\omega_I=\mathbf{i}_1^{\%}\nu-\mathbf{i}_0^{\%}\nu$. 
\end{Theorem}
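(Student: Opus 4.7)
The plan is to produce $\omega_I$ by an explicit finite formula and then verify the boundary identity by direct calculation.

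First I would fix notation for $I$: write $I_0=\mathbb{Z}\langle a,b\rangle$ with $a,b$ the two $0$-cells, $I_1=\mathbb{Z}\langle e\rangle$ with $e$ the $1$-cell, and $de=b-a$. The maps $\mathbf{i}_0,\mathbf{i}_1:0\mathbb{Z}\longrightarrow I$ send $1$ to $a$ and $b$ respectively, so $\mathbf{i}_0^{\%}\nu$ and $\mathbf{i}_1^{\%}\nu$ live in degree $0$ of $W^{\%}(I)$ with $(\mathbf{i}_0^{\%}\nu)_0(1)=a\otimes a$, $(\mathbf{i}_1^{\%}\nu)_0(1)=b\otimes b$, and all higher components zero.

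Next I would define $\omega_I\in\bigl(W^{\%}(I)\bigr)_1$ by specifying its value on the generator $1\in W_s=\mathbb{Z}[\mathbb{Z}_2]$ and extending $\mathbb{Z}[\mathbb{Z}_2]$-equivariantly:
\begin{equation*}
\omega_{I,0}(1)=a\otimes e+e\otimes b,\qquad \omega_{I,1}(1)=e\otimes e,\qquad \omega_{I,s}(1)=0\ \ (s\geq 2).
\end{equation*}
The geometric picture behind this choice is that $(a\otimes e+e\otimes b)$ is a relative fundamental cycle of the interval in $I\otimes I$ and $e\otimes e$ realises the cup-$1$ product; this is the standard symmetric Poincar\'e pair structure on $(\Delta^1,\partial\Delta^1)$.

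Then I would carry out the verification in three short pieces. At $s=0$ one computes directly using $d_{I\otimes I}(x\otimes y)=dx\otimes y+(-1)^{|x|}x\otimes dy$ that
\begin{equation*}
d_{I\otimes I}(a\otimes e+e\otimes b)=(a\otimes b-a\otimes a)+(b\otimes b-a\otimes b)=b\otimes b-a\otimes a,
\end{equation*}
which is exactly $(\mathbf{i}_1^{\%}\nu-\mathbf{i}_0^{\%}\nu)_0(1)$. At $s=1$ one has $d_W(1)=1-T\in W_0$ on $W_1$, and the required identity $d_{I\otimes I}\omega_{I,1}(1)+\omega_{I,0}(1-T)=0$ follows from
\begin{equation*}
d_{I\otimes I}(e\otimes e)=(b-a)\otimes e-e\otimes(b-a)=-\bigl(a\otimes e+e\otimes b\bigr)+T\bigl(a\otimes e+e\otimes b\bigr).
\end{equation*}
For $s\geq 2$ the chain complex $I\otimes I$ is zero in degree $s+1\geq 3$, so $\omega_{I,s}=0$ automatically, and the remaining relation reduces to $(1+(-1)^{s+1}T)\omega_{I,s-1}(1)=0$; at $s=2$ this is $(1+T)(e\otimes e)=e\otimes e+(-1)^{1\cdot 1}e\otimes e=0$, and at $s\geq 3$ it is trivial.

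The only real obstacle is sign bookkeeping: one must be consistent about the conventions for $d_{I\otimes I}$, the Koszul swap $T(x\otimes y)=(-1)^{|x||y|}y\otimes x$, the differential on $W$, and the formula for $d_{W^{\%}(I)}$ of an element of degree $1$. Once a single convention is fixed (matching the one used in Definition 15.73 of \cite{luecksurgery}), all three computations above become one-line checks and the theorem follows.
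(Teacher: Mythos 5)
Your approach — writing down an explicit $\omega_I$ in the $I\otimes I$ picture and verifying the boundary identity degree by degree — is a correct and reasonable way to establish the statement. The paper itself supplies no proof: Theorem \ref{Isymstr} is stated as a citation to Lemma~15.78 and Remark~15.82 of Lück, and the explicit formula for $\omega_I$ that the paper actually uses appears only later, in equation \ref{Appcheck33} inside the proof of Theorem \ref{computationdelta}, in the $\mathrm{Hom}(T I, I)$-component language rather than the $I\otimes I$ language. Translating \ref{Appcheck33} to your notation, $(\omega_I)_0^0(\sigma_{01}^*)=\sigma_0$ and $(\omega_I)_0^1(\sigma_1^*)=\sigma_{01}$ give exactly $a\otimes e+e\otimes b$, matching your degree-zero term precisely; the degree-one term $(\omega_I)_1^1(\sigma_{01}^*)=-\sigma_{01}$ corresponds to $-e\otimes e$, differing from your $e\otimes e$ by a sign. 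That sign hinges on which convention one adopts for the graded isomorphism $\mathrm{Hom}(T I,I)\cong I\otimes I$ and for the differential $d_{W^{\%}(I)}$; your formula $d_{I\otimes I}\omega_{I,1}(1)+\omega_{I,0}(1-T)=0$ is the standard one, and under it your choice is the correct one, so there is no error — just a different (and internally consistent) normalization from the one the paper imports from Lück. So your proof is a genuine gain over the paper in that it is self-contained.

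Two small slips worth flagging. First, in the $s\geq 2$ range you write the recursion as $(1+(-1)^{s+1}T)\omega_{I,s-1}(1)=0$, but at $s=2$ the relevant piece of $d_W\colon W_2\to W_1$ is $1+T$, not $1-T$; your actual computation $(1+T)(e\otimes e)=0$ uses the right operator, so only the displayed exponent is off, not the argument. Second, you silently use the identification of $(W^{\%}(I))_1=\mathrm{Hom}_{\mathbb{Z}[\mathbb{Z}_2]}(W,\mathrm{Hom}(TI,I))_1$ with $\mathbb{Z}[\mathbb{Z}_2]$-equivariant maps $W\to I\otimes I$ of degree one; this is standard, but it is exactly where the sign ambiguity above lives, so in a fully rigorous write-up you should pin down that isomorphism once and for all before computing.
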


Now we can state our main theorem in this subsection:

\begin{Theorem}
	\
	\label{computationdelta}
	
	Let $k \in \mathbb{Z}$ and $g:(\underline{L_1},\underline{L_2}) \longrightarrow (\mathbb{L}_{-k}(M^h(R)),*)$ be a $\Delta$-set map. Let $x \in T^k(\underline{L_1},\underline{L_2})$ be the image of $g$ under the identification in Theorem \ref{Lcohomologywithadcord}. Then there is an element $y \in T^{k+1}(\underline{L_0} \times \Delta^1,\underline{L_2} \times \Delta^1 \cup \underline{L_0} \times 1 \cup \underline{L_1} \times 0)$ that maps to $\mathcal{K}^*x$ under the restriction map. Furthermore, we can describe $y$ as the cobordism class of a $(\underline{L_0} \times \Delta^1,\underline{L_2} \times \Delta^1 \cup \underline{L_0} \times 1 \cup \underline{L_1} \times 0)$-ad $F$ of degree $k+1$, given as follows:
	
	Let $(\widecheck{D},\widecheck{\theta})$ be the image of $g$ under the identification in Theorem \ref{Lhomology}. It is a $(-k)$-dimensional Poincare quadratic chain complex in $M^h(R)^*(\underline{L_1},\underline{L_2})$. Let $(D,\theta)$ be the local dual of $(\widecheck{D},\widecheck{\theta})$ in the sense of Theorem \ref{localdual}. For $r \in \mathbb{Z},u \in \mathbb{N},\sigma \in L$, let $\delta\psi_u^r[\sigma]$ be the map given in Lemma \ref{Appquadpair} and let $r_{\sigma}=J^{all}_{\sigma}+\frac{|\sigma|(|\sigma|-1)}{2}$.
	
	The order on $\Sigma^l$ and the standard order on $\Delta^1$ with $0<1$ give standard orientations on the closed cells of $\underline{L_0} \times \Delta^1$. For any oriented closed cell $(\zeta,o)$ of $\underline{L_0} \times \Delta^1$, let $sgn(o)=1$ if $o$ agrees with the standard one and $-1$ otherwise. Then the functor $F$, defined by $F(\zeta,o)=(C_{\zeta},\psi_{\zeta,o})$, is given as follows:
	
	On objects, the functor $F$ is given by:
	\begin{equation*}
		\begin{aligned}
			& \quad \quad \text{For } \sigma \in L_2 \backslash L_1: \\
			& \quad \quad (C_{\sigma^* \times \Delta^1})_r= [D_r][\sigma] \\
			& \quad \quad d_{C_{\sigma^* \times \Delta^1}}=[d_{D}][\sigma]:(C_{\sigma^* \times \Delta^1})_r \longrightarrow (C_{\sigma^* \times \Delta^1})_{r-1} \\
			& \quad \quad \psi_{\sigma^* \times \Delta^1,o}^{u,r}: C^{|\sigma^* \times \Delta^1|-u-deg F-r}_{\sigma^* \times \Delta^1}=[D_{|\sigma^*|-u-k-r}][\sigma]^* \longrightarrow (C_{\sigma^* \times \Delta^1})_r=[D_r][\sigma]\\
			& \quad \quad \psi_{\sigma^* \times \Delta^1,o}^{u,r}=(-1)^{r_{\sigma}}(-1)^{|\sigma|r}sgn(o)\mathop{\oplus}\limits_{\tau \geq \sigma} \theta_u^r(\tau,\sigma) \\
		\end{aligned}
	\end{equation*}
	\begin{equation*}
		\begin{aligned}
			& \quad \quad \text{For } \sigma \in L=L_1 \backslash L_0: \\
			& \quad \quad (C_{\sigma^* \times 0})_r=\mathop{\oplus}\limits_{s \in A_{\sigma}}D_r(s) \\
			& \quad \quad d_{C_{\sigma^* \times 0}}=\mathop{\boxplus}\limits_{s \in A_{\sigma}}\mathop{\oplus}\limits_{s' \in A_{\sigma}}d_D(s',s):(C_{\sigma^* \times 0})_r \longrightarrow (C_{\sigma^* \times 0})_{r-1}\\
			& \quad \quad \psi^{u,r}_{\sigma^* \times 0,o}: C_{\sigma^* \times 0}^{|\sigma^* \times 0|-u-deg F-r}=\mathop{\oplus}\limits_{s \in A_{\sigma}}D_{|\sigma^*|-u-1-k-r}(s)^* \longrightarrow (C_{\sigma^* \times 0})_r=\mathop{\oplus}\limits_{s' \in A_{\sigma}}D_r(s') \\
			& \quad \quad \psi^{u,r}_{\sigma^* \times 0,o}=(-1)^{r_{\sigma}}sgn(o)\delta\psi_u^r[\sigma] \\
		\end{aligned}
	\end{equation*}
	\begin{equation*}
		\begin{aligned}
			& \quad \quad (C_{\sigma^* \times \Delta^1})_r =\bigg(\big(\mathop{\oplus}\limits_{s \in A_{\sigma}}D_*(s)\big) \otimes I\bigg)_r \\
			& \quad \quad \qquad \qquad \ \, =\big(\mathop{\oplus}\limits_{s \in A_{\sigma}}D_r(s)\big) \oplus \big(\mathop{\oplus}\limits_{s \in A_{\sigma}}D_r(s)\big) \oplus \big(\mathop{\oplus}\limits_{s \in A_{\sigma}}D_{r-1}(s)\big) \\
			& \quad \quad d_{C_{\sigma^* \times \Delta^1}}=\begin{bmatrix} d_{C_{\sigma^* \times 0}} & 0 & -1 \\ 0 & d_{C_{\sigma^* \times 0}} & 1 \\ 0 & 0 & -d_{C_{\sigma^* \times 0}} \end{bmatrix} : (C_{\sigma^* \times \Delta^1})_r \longrightarrow (C_{\sigma^* \times \Delta^1})_{r-1} \\
			& \quad \quad \psi^{u,r}_{\sigma^* \times \Delta^1,o}: C_{\sigma^* \times \Delta^1}^{|\sigma^* \times \Delta^1|-u-degF-r} \longrightarrow (C_{\sigma \otimes \Delta^1})_r \\
			& \quad \quad \psi^{u,r}_{\sigma \otimes \Delta^1,o}=(-1)^{r_{\sigma}+1}sgn(o)(\delta\psi[\sigma] \otimes \omega_{I})_u^r \\
			& \quad \quad \text{For all other closed cells } \zeta: (C_{\zeta},\psi_{\zeta,o})=\emptyset_{|\zeta|-k-1}. \\
			& \quad \quad \text{For all } l \in \mathbb{Z}: F(\emptyset_l)=\emptyset_{l-k-1}. \\
		\end{aligned}
	\end{equation*}
	
	On morphisms, the functor $F$ is given by:
	
	Let $\zeta_1 \leq \zeta_2$, then:
	
	$(a)$ If $\zeta_1=\tau^* \times \Delta^1, \zeta_2=\sigma^* \times \Delta^1$, with $\tau,\sigma \in L$.
	
	We have $\tau \geq \sigma$ and thus $A_{\tau} \subset A_{\sigma}$, $F((\zeta_1,o_1)-(\zeta_2,o_2))$ is then given by the inclusion map:
	\begin{equation*}
		\mathop{\oplus}\limits_{s \in A_{\tau}}D_*(s) \otimes I \longrightarrow \mathop{\oplus}\limits_{s \in A_{\sigma}}D_*(s) \otimes I
	\end{equation*}
	
	$(b)$ If $\zeta_1=\tau^* \times \Delta^1,\zeta_2=\sigma^* \times \Delta^1$, with $\tau \notin L_1, \sigma \in L$
	
	Since $\tau \geq \sigma$ and $\tau \notin L_1$, for any $\kappa \geq \tau$ with $\kappa \in L_2$, we have that $\kappa \in L_2 \backslash L_1$ and $\kappa>\sigma$. Therefore, we have $\kappa \in A_{\sigma}$. 
	
	Since $g|_{\underline{L_2}}$ is the constant map to the $0$ chain complex, by the construction in Theorem \ref{Lhomology}, we have that $D_*(s)=0$ for all $s \notin L_2$. Then $F((\zeta_1,o_1)-(\zeta_2,o_2))$ is given by the composition of the following inclusion maps:
	\begin{equation*}
		C_{\tau^* \times \Delta^1}=\mathop{\oplus}\limits_{\kappa \geq \tau}D_*(\kappa) \stackrel{inclusion}{\longrightarrow} \mathop{\oplus}\limits_{s \in A_{\sigma}}D_*(s) \stackrel{\mathbf{i}_0}{\longrightarrow} C_{\sigma^* \times \Delta^1}=\mathop{\oplus}\limits_{s \in A_{\sigma}}D_*(s) \otimes I
	\end{equation*}
	
	$(c)$ If $\zeta_1=\tau^* \times \Delta^1$, $\zeta_2=\sigma^* \times \Delta^1$, with $\tau \notin L_1,\sigma \notin L_1$. 
	
	We have $\tau \geq \sigma$ and then $F((\zeta_1,o_1)-(\zeta_2,o_2))$ is given by the inclusion map:
	\begin{equation*}
		C_{\tau^* \times \Delta^1}=[D_*][\tau] \longrightarrow C_{\sigma^* \times \Delta^1}=[D_*][\sigma]
	\end{equation*}

	$(d)$ If $\zeta_1=\tau^* \times 0,\zeta_2=\sigma^* \times \Delta^1$, with $\tau,\sigma \in L$.
	
	We have $\tau \geq \sigma$ and thus $A_{\tau} \subset A_{\sigma}$. $F((\zeta_1,o_1)-(\zeta_2,o_2))$ is then given by the composition of the following inclusion maps:
	\begin{equation*}
		C_{\tau^* \times 1}=\mathop{\oplus}\limits_{s \in A_{\tau}}D_*(s) \stackrel{inclusion}{\longrightarrow} \mathop{\oplus}\limits_{s \in A_{\sigma}}D_*(s) \stackrel{\mathbf{i}_1}{\longrightarrow} C_{\sigma^* \times \Delta^1}=\mathop{\oplus}\limits_{s \in A_{\sigma}}D_*(s) \otimes I
	\end{equation*}
	
	$(e)$ If $\zeta_1=\tau^* \times 0,\zeta_2=\sigma^* \times 0$, with $\tau,\sigma \in L$, 
	
	We have $\tau \geq \sigma$ and thus $A_{\tau} \subset A_{\sigma}$, $F((\zeta_1,o_1)-(\zeta_2,o_2))$ is then given by the inclusion map:
	\begin{equation*}
		\mathop{\oplus}\limits_{s \in A_{\tau}}D_*(s) \longrightarrow \mathop{\oplus}\limits_{s \in A_{\sigma}}D_*(s)
	\end{equation*}
	
	(f) If all the cases above do not happen, then define $F((\zeta_1,o_1)-(\zeta_2,o_2))$ to be zero. (In fact, in this case the domain of $F((\zeta_1,o_1)-(\zeta_2,o_2))$ will always be zero.)
\end{Theorem}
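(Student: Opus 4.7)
The plan is to verify directly that the prescribed functor $F$ defines a $(K,L)$-ad of degree $k+1$ in the sense of Definition \ref{Defad} with $K = \underline{L_0} \times \Delta^1$ and $L = \underline{L_2} \times \Delta^1 \cup \underline{L_0} \times 1 \cup \underline{L_1} \times 0$, and then to identify the restriction of its cobordism class along $\underline{L_1} \times 0 \hookrightarrow \underline{L_0} \times \Delta^1$ with $\mathcal{K}^{*}x$ as computed from Theorem \ref{Lcohomologywithadcord} and Definition \ref{DefK}. The balance condition, triviality on $L$, and well-behavedness of the underlying chain-complex functor are immediate: the orientation enters each $\psi_{\zeta,o}^{u,r}$ only as the factor $sgn(o)$, which matches the involution $(C,\psi) \mapsto (C,-\psi)$ on $\mathfrak{A}_R$; every non-identity morphism in cases (a)--(e) is a direct-summand inclusion (in cases (b) and (d) composed with the cofibrations $\mathbf{i}_0,\mathbf{i}_1\colon 0\mathbb{Z} \to I$); and $F$ is defined so as to vanish on the cells specified in $L$.

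The closure condition is the crux of the argument. For each oriented cell $(\zeta,o)$ one must show that the assignment $\tau \mapsto \psi_{\tau,\cdot}$ gives a chain map from $cl(\zeta)$ into $W \otimes_{\mathbb{Z}[\mathbb{Z}_2]} C_\zeta^t \otimes_R C_\zeta$. The verification splits into three families. For $\zeta = \sigma^{*} \times \Delta^1$ with $\sigma \in L_2 \setminus L_1$ the $\Delta^1$-factor is passive, so the identity is the one appearing in Theorem \ref{Lcohomologywithadcord}, with the sign $(-1)^{r_\sigma}$ fixed by Lemma \ref{Dualincidnum}. For $\zeta = \sigma^{*} \times 0$ with $\sigma \in L$, the required identity is precisely the boundary identity between $\delta\psi[\sigma]$ and the $\delta\psi[\tau]$ with $\tau \in L^{*}(\sigma)$ that is built into the quadratic-pair structure of Lemma \ref{Appquadpair}(2). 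For $\zeta = \sigma^{*} \times \Delta^1$ with $\sigma \in L$, the tensor structure $\delta\psi[\sigma] \otimes \omega_I$ combined with $d\omega_I = \mathbf{i}_1^{\%}\nu - \mathbf{i}_0^{\%}\nu$ from Theorem \ref{Isymstr} and the chain-map property of $-\otimes-$ from Theorem \ref{tensorstr} produces exactly the required boundary: a term on the $\sigma^{*} \times 1$ face (which is trivial because $g$ vanishes on $\underline{L_2}$), a term on the $\sigma^{*} \times 0$ face equal to $\delta\psi[\sigma]$, and terms on the codimension-one faces $\tau^{*} \times \Delta^1$ with $\tau \in L^{*}(\sigma)$ coming from Lemma \ref{Appquadpair}(2).

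The Poincar\'e condition of Definition \ref{Defad}(b) is then checked cell by cell. For cells $\sigma^{*} \times \Delta^1$ with $\sigma \in L_2 \setminus L_1$ it is the same statement as in Theorem \ref{Lcohomologywithadcord}; for $\sigma^{*} \times 0$ with $\sigma \in L$ it is exactly the Poincar\'e conclusion of Lemma \ref{Appquadpair}(3); and for cylinder cells $\sigma^{*} \times \Delta^1$ with $\sigma \in L$ the quotient by the codimension-one faces retracts onto the $\sigma^{*} \times 0$ factor via $\omega_I$, reducing the claim once more to Lemma \ref{Appquadpair}(3). Finally, the restriction of $F$ to $\underline{L_1} \times 0$ evaluated on $(\sigma^{*} \times 0, o)$ yields the Poincar\'e complex $\bigl(\bigoplus_{s \in A_\sigma} D_{*}(s),\, (-1)^{r_\sigma}sgn(o)\,\delta\psi[\sigma]\bigr)$, which one compares with the $(\underline{L_1},\underline{L_2})$-ad produced from $x$ by Theorem \ref{Lcohomologywithadcord} after applying the sign twist $o \mapsto (-1)^{|\sigma^{*}|}o$ from Definition \ref{DefK}; the two agree, yielding $\mathrm{res}\,[F] = \mathcal{K}^{*}x$.

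The main obstacle will be the sign bookkeeping in the cylinder closure identity: the Koszul signs in Theorem \ref{tensorstr}, the incidence signs $n_\sigma^\tau$, the summands $J_\sigma^{all}$ and $|\sigma|(|\sigma|-1)/2$ coming from the local duality of Theorem \ref{localdual}, and the orientation twist in Definition \ref{DefK} must all conspire to produce the signs $(-1)^{r_\sigma}$ and $(-1)^{r_\sigma+1}$ built into the definition of $F$, while simultaneously making both the closure identity and the restriction identification hold. Once these signs line up, the rest of the verification is the cell-by-cell bookkeeping sketched above.
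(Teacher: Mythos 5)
Your overall plan — verify cell by cell that $F$ is a $(\underline{L_0} \times \Delta^1,\underline{L_2} \times \Delta^1 \cup \underline{L_0} \times 1 \cup \underline{L_1} \times 0)$-ad and then identify the restriction with $\mathcal{K}^*x$ — matches the structure of the paper's proof. However, there are two concrete gaps in the sketch.

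First, the restriction identification is set up incorrectly. You claim to compare the restriction of $F$ to $\underline{L_1} \times 0$ on the cells $(\sigma^* \times 0, o)$ with $\sigma \in L$. But $\underline{L_1} \times 0$ is contained in the relative subcomplex $\underline{L_2} \times \Delta^1 \cup \underline{L_0} \times 1 \cup \underline{L_1} \times 0$, so $F$ is trivial there, and moreover if $\sigma \in L$ then $\sigma^* \notin \underline{L_1}$, so $(\sigma^*\times 0)$ is not even a cell of $\underline{L_1} \times 0$. The element $\mathcal{K}^*x$ lives in $T^{k+1}(\underline{L_1} \times \Delta^1,\underline{L_1} \times \partial\Delta^1 \cup \underline{L_2} \times \Delta^1)$, so the restriction map you must compute is to $\underline{L_1} \times \Delta^1$, and the only cells where a nontrivial comparison happens are $\sigma^* \times \Delta^1$ with $\sigma \in L_2 \setminus L_1$. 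There the comparison is between $\psi_{\sigma^*\times\Delta^1,o}^{u,r}=(-1)^{r_\sigma}(-1)^{|\sigma|r}\,sgn(o)\,\oplus_{\tau\geq\sigma}\theta_u^r(\tau,\sigma)$ and the $\mathcal{K}^*$-twist of the ad from $x$, which carries the sign $(-1)^{k+|\sigma^*|}$ (combining the orientation twist $(\sigma^*\times\Delta^1, o\times o_{\Delta^1})\mapsto(\sigma^*,(-1)^{|\sigma^*|}o)$ of Definition \ref{DefK} with the degree shift in the pre-ad pullback) — a sign check distinct from the one you describe.

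Second, in the closure computation for the cylinder cell $\sigma^*\times\Delta^1$ with $\sigma \in L$, your boundary bookkeeping omits an essential family of terms. The boundary $\partial\langle\sigma^*\times\Delta^1,o_{std}\rangle$ contains, besides the $\sigma^*\times 0$, $\sigma^*\times 1$, and $\tau^*\times\Delta^1$ ($\tau\in L^*(\sigma)$) terms you list, also the codimension-one cells $\tau^*\times\Delta^1$ with $\tau\in K^*(\sigma)\cap(L_2\setminus L_1)$. These carry nontrivial data $\psi_{\tau^*\times\Delta^1,o_{std}}=(-1)^{r_\tau}\delta\{\theta\}[\tau]$, and they are precisely what cancel the $(-1)^{r_\sigma+1}{\mathbf{i}_0}_*\delta\psi[\sigma]$ contribution arising from $d\omega_I={\mathbf{i}_1}_*\nu-{\mathbf{i}_0}_*\nu$. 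Without this cancellation the closure identity fails; establishing it requires the explicit identity relating $\sum_{\tau\in K^*(\sigma)\cap(L_2\setminus L_1)}\mathfrak{I}^{\tau\sigma}_*\psi_{\tau^*\times\Delta^1,o_{std}}$ to $\delta\psi[\sigma]$, which in turn rests on unwinding the definition of $\mho_{\sigma}$ from Lemma \ref{Apppartialass}. (Relatedly, your reason for discarding the $\sigma^*\times 1$ term — ``$g$ vanishes on $\underline{L_2}$'' — is incorrect; it vanishes because $\sigma^*\times 1 \subset \underline{L_0}\times 1$ lies in the relative subcomplex.) You should also not describe well-behavedness as immediate: the colimit cofibration condition of Definition \ref{Defad} needs the explicit direct-sum decomposition $(C_\zeta)_r = \bigoplus_{\zeta_0\subset\zeta}E_{\zeta_0}^r$, which requires a short but genuine verification.
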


\begin{proof}
	To start with, we need to prove that $F$ is a $(\underline{L_0} \times \Delta^1,\underline{L_2} \times \Delta^1 \cup \underline{L_0} \times 1 \cup \underline{L_1} \times 0)$-ad of degree $k+1$. We will check step by step the conditions listed in Definition \ref{Defad}:
	
	(1) $F$ is a $(k+1)$-morphism from $Cell(\underline{L_0} \times I)$ to $\mathfrak{A}_{R}$.
	
	Since $F$ maps morphisms in $Cell(\underline{L_0} \times I)$ to the corresponding inclusions of chain complexes, it is clear that $F$ is a functor. Since $sgn(-o)=-sgn(o)$, we have $(C_{\zeta},\psi_{\zeta,-o})=(C_{\zeta},-\psi_{\zeta,o})$, thus $F$ commutes with involution. By definition, it is easy to see that $F$ commutes with $\emptyset$ and decrease the dimension by $k+1$. Therefore, $F$ is a $(k+1)$-morphism from $Cell(\underline{L_0} \times I)$ to $\mathfrak{A}_{R}$.
	
	(2) $F$ is a pre $(\underline{L_0} \times I,\underline{L_2} \times I \cup \underline{L_0} \times 1 \cup \underline{L_1} \times 0)$-ad.
	
	It only leaves us to check that $F|_{\underline{L_2} \times I \cup \underline{L_0} \times 1 \cup \underline{L_1} \times 0}$ is the trivial ad. Equivalently, we have to check that for every $\zeta \in \underline{L_2} \times I \cup \underline{L_0} \times 1 \cup \underline{L_1} \times 0$, $F(\zeta)=\emptyset_{|\zeta|-k-1}$. This follows from the definition of $F$.
	
	
	(3) $F$ is balanced.
	
	It follows from definition that $F((\zeta_1,o_1)-(\zeta_2,o_2))$ is independent of $o_1,o_2$. By the definition of balance structure on $Cell(\underline{L_0} \times I)$, $F$ is balanced.
	
	(4) $F$ is closed.
	
	For every closed cell $\zeta \in \underline{L_0} \times I$, denote $o_{std}^{\zeta}$ to be its standard orientation. When the cell is clear from context, we simply write $o_{std}$ for $o_{std}^{\zeta}$. By definition, we have to check that for every closed cell $\zeta$, the following map is a chain map:
	\begin{equation*}
		\Psi_{\zeta}: cl(\zeta) \longrightarrow W \otimes_{\mathbb{Z}[\mathbb{Z}_2]}(C^t_{\zeta} \otimes C_{\zeta})
	\end{equation*}
	\begin{equation*}
		\Psi_{\zeta}(\textlangle \zeta',o' \textrangle)=F((\zeta,o_{std}^{\zeta})-(\zeta',o'))_*\psi_{\zeta',o'} \ \ \ (\zeta' \leq \zeta)
	\end{equation*}
	
	Note that since $F$ is balanced, for all $\zeta' \leq \zeta$, we have the following commutative diagram:
	\begin{equation*}
			\begin{tikzcd}
			cl(\zeta) \rar{\Psi_{\zeta}} & W \otimes_{\mathbb{Z}[\mathbb{Z}_2]}(C^t_{\zeta} \otimes C_{\zeta})\\
			cl(\zeta') \uar{\text{inclusion}}\rar{\Psi_{\zeta'}} & W \otimes_{\mathbb{Z}[\mathbb{Z}_2]}(C^t_{\zeta'} \otimes C_{\zeta'}) \uar[swap]{F((\zeta,o_{std}^{\zeta})-(\zeta',o_{std}^{\zeta'}))_*}\\
		\end{tikzcd}
	\end{equation*}

	Since $F((\zeta,o_{std}^{\zeta})-(\zeta',o_{std}^{\zeta'}))_*$ is a chain map, we only need to check that $\Psi_{\zeta}$ is a chain map in top dimension, that is, to check that the following equation holds:
	\begin{equation}
		\label{Appcheck7}
		\partial\Psi_{\zeta}(\textlangle\zeta,o_{std}\textrangle)=\Psi_{\zeta}(\partial\textlangle\zeta,o_{std}\textrangle)
	\end{equation}
	We divide the proof into several cases:
	
	(a) $\zeta \in \underline{L_0} \times 1 \cup \underline{L_1} \times 0$
	
	By definition, we have $C_{\zeta}=0$ and $\psi_{\zeta,o_{std}}=0$, therefore:
	\begin{equation*}
		W \otimes_{\mathbb{Z}[\mathbb{Z}_2]} (C_{\zeta}^t \otimes C_{\zeta})=0
	\end{equation*}
	
	Thus equation \ref{Appcheck7} holds since both side is $0$.
	
	(b) $\zeta=\sigma^* \times \Delta^1$ for some $\sigma \notin L_1$
	
	Note that by definition we have:
	\begin{equation*}
		\Psi_{\zeta}(\textlangle\zeta,o_{std}\textrangle)=\psi_{\sigma^* \times \Delta^1,o_{std}}
	\end{equation*}
	\begin{equation*}
		\partial\textlangle\zeta,o_{std}\textrangle=\textlangle\sigma^* \times 1,o_{std}\textrangle-\textlangle\sigma^* \times 0,o_{std}\textrangle-\sum\limits_{\tau \in K^*(\sigma)}(-1)^{n_{\tau^*}^{\sigma^*}}\textlangle\tau^* \times \Delta^1,o_{std}\textrangle
	\end{equation*}
	
	Since by definition we have $\psi_{\sigma^* \times 0,o_{std}}=\psi_{\sigma^* \times 1,o_{std}}=0$, the equation \ref{Appcheck7} is equivalent to the following one:
	\begin{equation}
		\label{Appcheck8}
		\partial\psi_{\sigma^* \times \Delta^1,o_{std}}=-\sum\limits_{\tau \in K^*(\sigma)}(-1)^{n_{\tau^*}^{\sigma^*}}\Psi_{\zeta}(\textlangle \tau^* \times \Delta^1,o_{std} \textrangle)
	\end{equation}
	
	By Remark \ref{AppRemquadpair}, we have that $(\{\theta\}[\sigma],\delta\{\theta\}[\sigma])$ gives the structure of a $(l-k-|\sigma|)$-dimensional Poincare pair. Since $\psi_{\sigma^* \times \Delta^1,o_{std}}^{u,r}=(-1)^{r_{\sigma}}\delta\{\theta\}_u^r[\sigma]$, let $r'=|\sigma^*|-1-k-u-r$, we can deduce:
	\begin{equation*}
		\begin{aligned}
			\partial\psi_{\sigma^* \times \Delta^1,o_{std}}^{u,r}&=(-1)^{r_{\sigma}}\partial\delta\{\theta\}_u^r[\sigma]\\
			&=(-1)^{r_{\sigma}+l-k-|\sigma|}i_r[\sigma]\{\theta\}_u^r[\sigma]i_{r'}[\sigma]^* \\
			&=(-1)^{r_{\sigma}+|\sigma|+1}\sum\limits_{\tau \in K^*(\sigma)}(-1)^{n_{\sigma}^{\tau}} i_r[\sigma]I_{\tau,r}\delta\{\theta\}_u^r[\tau]P^{\tau,r'}i_{r'}[\sigma]^* \\
		\end{aligned}
	\end{equation*}

	Since $\sigma \notin L_1$, for $\tau \in K^*(\sigma)$, we have that $\sigma \leq \tau$ and thus $\tau \notin L_1$. Therefore, by definition we have $\psi_{\tau^* \times \Delta^1,o_{std}}^{u,r}=(-1)^{r_{\tau}}\delta\{\theta\}_u^r[\tau]$. By definition of $r_{\sigma}$ and Lemma \ref{Dualincidnum}, we have:
	\begin{equation}
		\label{eqrsigma}
		r_{\sigma}-r_{\tau}=J_{\sigma}^{all}-J_{\tau}^{all}-|\sigma|=n_{\sigma}^{\tau}+n_{\tau^*}^{\sigma^*}-|\sigma|
	\end{equation}
	
	Thus we have:
	\begin{equation}
		\label{Appcheck9}
		\partial\psi_{\sigma^* \times \Delta^1,o_{std}}^{u,r}=-\sum\limits_{\tau \in K^*(\sigma)}(-1)^{n_{\tau^*}^{\sigma^*}} i_r[\sigma]I_{\tau,r}\psi_{\tau^* \times \Delta^1,o_{std}}^{u,r}P^{\tau,r'}i_{r'}[\sigma]^* 
	\end{equation}

	Note that $P^{\tau,r'}=I_{\tau,r'}^*$ and for every $r_0 \in \mathbb{Z}$, we have that $i_{r_0}[\sigma]I_{\tau,r_0}$ is the chain map $F((\sigma^* \times \Delta^1,o_{std})-(\tau^* \times \Delta^1,o_{std}))$ on dimension $r_0$. Thus we have:
	\begin{equation*}
		\Psi_{\zeta}(\textlangle \tau^* \times \Delta^1 ,o_{std}\textrangle)_u^r=i_r[\sigma]I_{\tau,r}\psi_{\tau^* \times \Delta^1,o_{std}}^{u,r}P^{\tau,r'}i_{r'}[\sigma]^*
	\end{equation*}
	
	Comparing equation \ref{Appcheck8} and \ref{Appcheck9}, we get that equation \ref{Appcheck8} holds.
	
	(c) $\zeta=\sigma^* \times 0$ for some $\sigma \in L=L_1 \backslash L_0$
	
	Note that by definition we have: 
	\begin{equation*}
		\Psi_{\zeta}(\textlangle\zeta,o_{std}\textrangle)=\psi_{\sigma^* \times 0,o_{std}}, \ \partial\textlangle\zeta,o_{std}\textrangle=\sum\limits_{\tau \in K^*(\sigma)}(-1)^{n_{\tau^*}^{\sigma^*}}\textlangle\tau^* \times 0,o_{std}\textrangle
	\end{equation*}

	Since $\psi_{\tau^* \times 0,o_{std}}=0$ for $\tau \notin L$, the equation \ref{Appcheck7} is equivalent to the following one:
	\begin{equation}
		\label{Appcheck10}
		\partial\psi_{\sigma^* \times 0,o_{std}}=\sum\limits_{\tau \in L^*(\sigma)} (-1)^{n_{\tau^*}^{\sigma^*}}\Psi_{\zeta}(\textlangle \tau^* \times 0, o_{std} \textrangle)
	\end{equation}

	By Lemma \ref{Appquadpair}, we have that $(\psi[\sigma],\delta\psi[\sigma])$ gives the structure of a $(l-k-|\sigma|-1)$-dimensional Poincare pair. Since  $\psi_{\sigma^* \times 1,o_{std}}^{u,r}=(-1)^{r_{\sigma}}\delta\psi_u^r[\sigma]$, let $r'=|\sigma^*|-2-k-u-r$, we can deduce:
	\begin{equation*}
		\begin{aligned}
			\partial\psi_{\sigma^* \times 0,o_{std}}^{u,r}&=(-1)^{r_{\sigma}}\partial\delta\psi_u^r[\sigma]\\
			&=(-1)^{r_{\sigma}}(-1)^{l-k-|\sigma|-1}i_r[\sigma]\psi_u^r[\sigma]i_{r'}[\sigma]^* \\
			&=(-1)^{r_{\sigma}+|\sigma|}\sum\limits_{\tau \in L^*(\sigma)}(-1)^{n_{\sigma}^{\tau}} i_r[\sigma]I_{\tau,r}\delta\psi_u^r[\tau]P^{\tau,r'}i_{r'}[\sigma]^* \\
		\end{aligned}
	\end{equation*}
	
	For $\tau \in L^*(\sigma)$, by definition we have $\psi_{\tau^* \times 0,o_{std}}^{u,r}=(-1)^{r_{\tau}}\delta\psi_u^r[\tau]$. Combining with equation \ref{eqrsigma} we have:
	\begin{equation}
		\label{Appcheck11}
		\partial\psi_{\sigma^* \times 0,o_{std}}^{u,r}=\sum\limits_{\tau \in L^*(\sigma)}(-1)^{n_{\tau^*}^{\sigma^*}} i_r[\sigma]I_{\tau,r}\psi_{\tau^* \times 0,o_{std}}^{u,r}P^{\tau,r'}i_{r'}[\sigma]^*
	\end{equation}
	
	Note that $P^{\tau,r'}=I_{\tau,r'}^*$ and for every $r_0 \in \mathbb{Z}$, we have that $i_{r_0}[\sigma]I_{\tau,r_0}$ is the chain map $F((\sigma^* \times 0,o_{std})-(\tau^* \times 0,o_{std}))$ on dimension $r_0$. Thus we have:
	\begin{equation*}
		\Psi_{\zeta}(\textlangle \tau^* \times 0,o_{std} \textrangle)_u^r=i_r[\sigma]I_{\tau,r}\psi_{\tau^* \times 0,o_{std}}^{u,r}P^{\tau,r'}i_{r'}[\sigma]^*
	\end{equation*}
	
	Comparing equation \ref{Appcheck10} and \ref{Appcheck11}, we get that equation \ref{Appcheck10} holds.
	
	(d) $\zeta=\sigma^* \times \Delta^1$ for some $\sigma \in L=L_1 \backslash L_0$
	
	Note that by definition we have:
	\begin{equation*}
		\Psi_{\zeta}(\textlangle\zeta,o_{std}\textrangle)=\psi_{\sigma^* \times \Delta^1,o_{std}}
	\end{equation*}
	\begin{equation*}
		\partial\textlangle\zeta,o_{std}\textrangle=\textlangle \sigma^* \times 1,o_{std} \textrangle-\textlangle \sigma^* \times 0,o_{std} \textrangle -\sum\limits_{\tau \in K^*(\sigma)}(-1)^{n_{\tau^*}^{\sigma^*}}\textlangle\tau^* \times \Delta^1,o_{std}\textrangle
	\end{equation*}

	Since $\psi_{\sigma^* \times 1,o_{std}}=0$, the equation \ref{Appcheck7} is equivalent to the following one:
	\begin{equation}
		\label{Appcheck12}
		\partial\psi_{\sigma^* \times \Delta^1,o_{std}}=-\Psi_{\zeta}(\textlangle \sigma^* \times 0,o_{std} \textrangle)-\sum\limits_{\tau \in K^*(\sigma)} (-1)^{n_{\tau^*}^{\sigma^*}}\Psi_{\zeta}(\textlangle \tau^* \times \Delta^1, o_{std} \textrangle)
	\end{equation}
	
	Since $\psi_{\sigma^* \times \Delta^1,o_{std}}=(-1)^{r_{\sigma}+1}\delta\psi[\sigma] \otimes \omega_{I}$, by Theorem \ref{tensorstr}, we can deduce:
	\begin{equation}
		\label{Appcheck20}
		\begin{aligned}
			\partial\psi_{\sigma^* \times \Delta^1,o_{std}}&=(-1)^{r_{\sigma}+1}\partial(\delta\psi[\sigma] \otimes \omega_I)\\
			&=(-1)^{r_{\sigma}+1}(\delta\psi[\sigma] \otimes \partial\omega_I-\partial\delta\psi[\sigma] \otimes \omega_I) \\
			&=(-1)^{r_{\sigma}+1}\big(\delta\psi[\sigma] \otimes ({\mathbf{i}_1}_*(\nu)-{\mathbf{i}_0}_*(\nu))-\partial\delta\psi[\sigma] \otimes \omega_I\big) \\
		\end{aligned}
	\end{equation}

	Since $\otimes:W_{\%}(C) \otimes W^{\%}(D) \longrightarrow W_{\%}(C \otimes D)$ is natural and $\nu$ is the unit of the tensor product, we get:
	\begin{equation}
		\label{Appcheck21}
		\delta\psi[\sigma] \otimes {\mathbf{i}_0}_*(\nu)={\mathbf{i}_0}_*\delta\psi[\sigma], \ \delta\psi[\sigma] \otimes {\mathbf{i}_1}_*(\nu)={\mathbf{i}_1}_*\delta\psi[\sigma],
	\end{equation}

	For every $\tau \in L^*(\sigma)$, let $F^{\sigma\tau}=F((\sigma^* \times 1,o_{std})-(\tau^* \times 1,o_{std}))$. Since $\sigma \in L$, by the proof in (c), we have the following equivalent form of equation \ref{Appcheck10}:
	\begin{equation}
		\label{Appcheck22}
		\begin{aligned}
			\partial\delta\psi[\sigma]&=(-1)^{|\sigma|}\sum\limits_{\tau \in L^*(\sigma)}(-1)^{n_{\sigma}^{\tau}}F^{\sigma\tau}_*\delta\psi[\tau] \\
		\end{aligned}
	\end{equation}

	Since $F((\sigma^* \times \Delta^1,o_{std})-(\tau^* \times \Delta^1,o_{std}))=F^{\sigma\tau} \otimes Id$, we get:
	\begin{equation}
		\label{Appcheck23}
		\Psi_{\zeta}(\textlangle \tau^* \times \Delta^1, o_{std} \textrangle)=(-1)^{r_{\tau}+1}(F^{\sigma\tau}_*\delta\psi_u^r[\tau])\otimes \omega_I
	\end{equation}

	Substituting equation \ref{Appcheck21}, \ref{Appcheck22} and \ref{Appcheck23} into equation \ref{Appcheck20}, we have that the following equation holds:
	\begin{equation}
		\label{Appcheck13}
		\begin{aligned}
			\partial\psi_{\sigma^* \times \Delta^1,o_{std}}=&(-1)^{r_{\sigma}+1}{\mathbf{i}_1}_*\delta\psi[\sigma]-(-1)^{r_{\sigma}+1}{\mathbf{i}_0}_*\delta\psi[\sigma]\\
			&-\sum\limits_{\tau \in L^*(\sigma)}(-1)^{n_{\sigma}^{\tau}+r_{\sigma}+r_{\tau}+|\sigma|}\Psi_{\zeta}(\textlangle \tau^* \times \Delta^1, o_{std} \textrangle)
		\end{aligned}
	\end{equation}

	By definition, we have $F((\sigma^* \times 0,o_{std})-(\sigma^*\times \Delta^1,o_{std}))=\mathbf{i}_1$ and $\psi_{\sigma^* \times 0,o_{std}}=(-1)^{r_{\sigma}}\delta\psi[\sigma]$. Therefore, we have:
	\begin{equation*}
		(-1)^{r_{\sigma}}{\mathbf{i}_1}_*\delta\psi[\sigma]=\Psi_{\zeta}(\textlangle \sigma^* \times 0,o_{std} \textrangle)
	\end{equation*}

	Comparing equation \ref{Appcheck12} and \ref{Appcheck13} together with \ref{eqrsigma}, it only leaves us to prove:
	\begin{equation*}
		\sum\limits_{\substack{\tau \in K^*(\sigma) \\ \tau \notin L}} (-1)^{n_{\tau^*}^{\sigma^*}}\Psi_{\zeta}(\textlangle \tau^* \times \Delta^1, o_{std} \textrangle)=(-1)^{r_{\sigma}+1}{\mathbf{i}_0}_*\delta\psi[\sigma]
	\end{equation*}

	For any $\tau \in K^*(\sigma)$ with $\tau \notin L$, since $\tau \geq \sigma$ and $\sigma \notin L_0$, we have $\tau \notin L_0$ and thus $\tau \notin L_0 \cup L= L_1$. Since $\psi_{\tau^* \times \Delta^1,o_{std}}=0$ for $\tau \notin L_2$, the equation above is equivalent to:
	\begin{equation*}
		\sum\limits_{\substack{\tau \in K^*(\sigma) \\ \tau \in L_2 \backslash L_1}} (-1)^{n_{\tau^*}^{\sigma^*}}\Psi_{\zeta}(\textlangle \tau^* \times \Delta^1, o_{std} \textrangle)=(-1)^{r_{\sigma}+1}{\mathbf{i}_0}_*\delta\psi[\sigma]
	\end{equation*}
	
	For any $\tau \in K^*(\sigma)$ with $\tau \in L_2 \backslash L_1=L \otimes [0,1] \backslash L \otimes \partial [0,1]$, we have that $\tau$ is the span of $\sigma$ with a vertice in $L \times \{1\}$. By the definition of order in $L \otimes [0,1]$, we have $n_{\sigma}^{\tau}=|\sigma|+1$. By Lemma \ref{Dualincidnum}, we have $n_{\tau^*}^{\sigma^*}=J_{\sigma}^{all}-J_{\tau}^{all}-n_{\sigma}^{\tau}=r_{\sigma}-r_{\tau}-1$. Therefore, the equation above is equivalent to:
	\begin{equation}
		\label{Appcheck14}
		\sum\limits_{\substack{\tau \in K^*(\sigma) \\ \tau \in L_2 \backslash L_1}}(-1)^{r_{\tau}}\Psi_{\zeta}(\textlangle \tau^* \times \Delta^1, o_{std} \textrangle)={\mathbf{i}_0}_*\delta\psi[\sigma]
	\end{equation}
	
	Now we begin to compute the term on the left hand side. For any $\tau \in K^*(\sigma)$ with $\tau \in L_2 \backslash L_1$, denote $\mathfrak{I}^{\tau\sigma}$ to be the following inclusion map:
	\begin{equation*}
		\mathfrak{I}^{\tau\sigma}:C_{\tau^* \times \Delta^1}=\mathop{\oplus}\limits_{\kappa \geq \tau}D_*(\kappa) \longrightarrow \mathop{\oplus}\limits_{s \in A_{\sigma}}D_*(s)
	\end{equation*}
	
	Then by definition of $F((\tau^* \times \Delta^1,o_{std})-(\sigma^* \times \Delta^1,o_{std}))$, we have:
	\begin{equation*}
		\sum\limits_{\substack{\tau \in K^*(\sigma) \\ \tau \in L_2 \backslash L_1}}(-1)^{r_{\tau}}\Psi_{\zeta}(\textlangle \tau^* \times \Delta^1, o_{std} \textrangle)=\sum\limits_{\substack{\tau \in K^*(\sigma) \\ \tau \in L_2 \backslash L_1}}(-1)^{r_{\tau}}{\mathbf{i}_0}_* \mathfrak{I}^{\tau\sigma}_*\psi_{\tau^* \times \Delta^1,o_{std}}
	\end{equation*}

	Thus it suffices to prove that:
	\begin{equation}
		\label{Appcheck15}
		\sum\limits_{\substack{\tau \in K^*(\sigma) \\ \tau \in L_2 \backslash L_1}}(-1)^{r_{\tau}} \mathfrak{I}^{\tau\sigma}_*\psi_{\tau^* \times \Delta^1,o_{std}}=\delta\psi[\sigma]
	\end{equation}
	
	We will check it by computing the maps in components. For any $u \in \mathbb{N},r \in \mathbb{Z}$, let $r'=|\sigma^*|-1-k-r-u$. Choose $s \in A_{\sigma}$ and $z_s \in D_{r'}(s)^*$, then by definition we have:
	\begin{equation*}
			\sum\limits_{\substack{\tau \in K^*(\sigma) \\ \tau \in L_2 \backslash L_1}}(-1)^{r_{\tau}} \mathfrak{I}^{\tau\sigma}_*\psi_{\tau^* \times \Delta^1,o_{std}}^{u,r}(z_s)=\sum\limits_{\substack{\tau \in K^*(\sigma) \\ \tau \in L_2 \backslash L_1}} \mathfrak{I}^{\tau\sigma}\delta\{\theta\}_u^r[\tau](\mathfrak{I}^{\tau\sigma})^*(z_s)
	\end{equation*}
	
	Since $\mathfrak{I}^{\tau\sigma}$ is the inclusion map, its dual $(\mathfrak{I}^{\tau\sigma})^*$ is the projection map. Let $K^*(\sigma,s)=\{\tau \in K^*(\sigma) \ | \ \tau \leq s\}$, we have:
	\begin{equation*}
		\begin{aligned}
			\sum\limits_{\substack{\tau \in K^*(\sigma) \\ \tau \in L_2 \backslash L_1}}(-1)^{r_{\tau}} \mathfrak{I}^{\tau\sigma}_*\psi_{\tau^* \times \Delta^1,o_{std}}^{u,r}(z_s)&=\sum\limits_{\substack{\tau \in K^*(\sigma,s) \\ \tau \in L_2 \backslash L_1}} \mathfrak{I}^{\tau\sigma}\delta\{\theta\}_u^r[\tau](z_s) \\
			&=\sum\limits_{\substack{\tau \in K^*(\sigma,s) \\ \tau \in L_2 \backslash L_1}} (-1)^{|\tau|r}\mathop{\oplus}\limits_{\kappa \geq \tau} \theta_u^r(\kappa,\tau)(z_s) \\
		\end{aligned}
	\end{equation*}
	
	Let $s_1=(s \cap L)_1$ and $V_1$ be the set of all vertices in $s_1$. For any $v \in V_1$, denote $\sigma_v=v*\sigma \leq s$. For any $\tau \in K^*(\sigma,s)$ with $\tau \in L_2 \backslash L_1=L \otimes [0,1] \backslash L \otimes \partial [0,1]$, we have that $\tau$ is the span of $\sigma$ with a vertex in $V_1$ and the correspondance is a bijection. Thus:
	\begin{equation}
			\label{Appcheck16}
			\sum\limits_{\substack{\tau \in K^*(\sigma) \\ \tau \in L_2 \backslash L_1}} (-1)^{r_{\tau}}\mathfrak{I}^{\tau\sigma}_*\psi_{\tau^* \times \Delta^1,o_{std}}^{u,r}(z_s)=\sum\limits_{v \in V_1} (-1)^{(|\sigma|+1)r}\mathop{\oplus}\limits_{\kappa \geq \sigma_v} \theta_u^r(\kappa,\sigma_v)(z_s)
	\end{equation}

	Now by the expression of $\mho_{\tau}$ given in \ref{expressmho} and the expression of $\delta\psi[\sigma]$ in Lemma \ref{Appquadpair}, we have:
	\begin{equation*}
		\begin{aligned}
			\delta\psi_u^r[\sigma](z_s)&=(-1)^{(|\sigma|+1)r}\big(\mathop{\boxplus}\limits_{s' \in B_{\sigma}}\mathop{\oplus}\limits_{s'' \in A_{\sigma}}\theta_u^r(s'',s')\big) \big( \mathop{\oplus}\limits_{v \in V_1}\iota_{\sigma_v}(z_s) \big) \\
			&=(-1)^{(|\sigma|+1)r}\sum\limits_{v \in V_1}\big(\mathop{\boxplus}\limits_{s' \in B_{\sigma}}\mathop{\oplus}\limits_{s'' \in A_{\sigma}}\theta_u^r(s'',s')\big)\iota_{\sigma_v}(z_s)\\
			&=(-1)^{(|\sigma|+1)r}\sum\limits_{v \in V_1}\mathop{\oplus}\limits_{s'' \in A_{\sigma}}\theta_u^r(s'',\sigma_v)(z_s)\\
		\end{aligned}
	\end{equation*}

	Note that $\theta_u^r(s'',\sigma_v)=0$ if $\sigma_v$ is not a face of $s''$. Since $A_{\sigma}$ is upper closed, we have $s'' \geq \sigma_v$ implies $s'' \in A_{\sigma}$. Therefore, we have:
	\begin{equation}
		\label{Appcheck17}
		\delta\psi_u^r[\sigma](z_s)=(-1)^{(|\sigma|+1)r}\sum\limits_{v \in V_1}\mathop{\oplus}\limits_{s'' \geq \sigma_v}\theta_u^r(s'',\sigma_v)(z_s)
	\end{equation}
	
	Comparing equation \ref{Appcheck16} and \ref{Appcheck17}, we get that equation \ref{Appcheck15} holds.

	(5) The associated functor $C$ is well-behaved.
	
	By the definition of $F$, it is straightforward to see that the associated functor $C$ maps each morphism to a cofibration.
	
	For every closed cell $\zeta$ in $\underline{L_0} \times I$, we need to check that the map
	\begin{equation}
		\label{Appcheck24}
		\mathop{colim}\limits_{\zeta' \subsetneq \zeta}F((\zeta',o_{std})-(\zeta,o_{std})):\mathop{colim}\limits_{\zeta' \subsetneq \zeta} C_{\zeta'} \longrightarrow C_{\zeta}
	\end{equation}
	is a cofibration. 
	
	We make the following claim:
	\begin{Claim}
		\label{Appclaim1}
		For every $r \in \mathbb{Z}$ and closed cell $\zeta \in \underline{L_0} \times I$, there is an $R$ module $E_{\zeta}^r$, such that:
		
		$(1)$ For every $r \in \mathbb{Z}$ and  closed cell $\zeta \in \underline{L_0} \times I$, we have $(C_{\zeta})_r=\mathop{\oplus}\limits_{\zeta_0 \subset \zeta}E_{\zeta_0}^r$.
		
		$(2)$ For every $r \in \mathbb{Z}$ and closed cells $\zeta,\zeta' \in \underline{L_0} \times I$ with $\zeta' \subsetneq \zeta$, $F((\zeta',o_{std})-(\zeta,o_{std}))$ is the inclusion map $	\mathop{\oplus}\limits_{\zeta_0 \subset \zeta'}E_{\zeta_0}^r \hookrightarrow \mathop{\oplus}\limits_{\zeta_0 \subset \zeta}E_{\zeta_0}^r$.
	\end{Claim}

	Assuming that the claim holds, it is straightforward to see that the map in \ref{Appcheck24} is the inclusion map $\mathop{\oplus}\limits_{\zeta_0 \subsetneq \zeta}E_{\zeta_0}^r \hookrightarrow \mathop{\oplus}\limits_{\zeta_0 \subset \zeta}E_{\zeta_0}^r$, which is a fibration in the sense of Definition \ref{Deffib}.
	
	To prove the claim, we first write down the $R$ module $E_{\zeta}^r$ and then check the statements in the claim.
	
	For every $r \in \mathbb{Z}$ and closed cell $\zeta \in \underline{L_0} \times I$, we define $E_{\zeta}^r$ as follows:
	
	(a) If $\zeta \in \underline{L_0} \times 1 \cup \underline{L_1} \times 0$, define $E_{\zeta}^r=0$.
	
	(b) If $\zeta=\sigma^* \times \Delta^1$ for some simplex $\sigma \in L_2 \backslash L_1$, define $E_{\zeta}^r=D_r(\sigma)$.
	
	(c) If $\zeta=\sigma^* \times 0$ for some simplex $\sigma \in L$, define $E_{\zeta}^r=\mathop{\oplus}\limits_{s \in B_{\sigma}}D_r(s)$.
	
	(d) If $\zeta=\sigma^* \times \Delta^1$ for some simplex $\sigma \in L$, define $E_{\zeta}^r=\mathop{\oplus}\limits_{s \in B_{\sigma}}D_{r-1}(s)$.
	
	We begin to check the statement (1) of Claim \ref{Appclaim1}. The proof is divided into four cases:
	
	(a) $\zeta \in \underline{L_0} \times 1 \cup \underline{L_1} \times 0$
	
	By definition, $E_{\zeta_0}^r=0$ for any $\zeta_0 \subset \zeta$ and $(C_{\zeta})_r=0$. Thus the statement holds trivially.
	
	(b) $\zeta=\sigma^* \times \Delta^1$ for some simplex $\sigma \in L_2 \backslash L_1$
	
	By definition, for $\zeta_0 \subset \zeta$, we have that $E_{\zeta_0}^r=0$ unless $\zeta_0=\tau^* \times \Delta^1$ for some $\tau \geq \sigma$ with $\tau \in L_2 \backslash L_1$; in that case $E_{\zeta_0}^r=D_r(\tau)$. Therefore, we have:
	\begin{equation*}
		\mathop{\oplus}\limits_{\zeta_0 \subset \zeta}E_{\zeta_0}^r=\mathop{\oplus}\limits_{\substack{\tau \geq \sigma \\ \tau \in L_2 \backslash L_1}}D_r(\tau)
	\end{equation*}
	
	Note that $D_r(s)=0$ for all $s \notin L_2$. Furthermore, since $\sigma \notin L_1$, $\tau \geq \sigma$ implies $\tau \notin L_1$. Therefore, we have:
	\begin{equation*}
		\mathop{\oplus}\limits_{\zeta_0 \subset \zeta}E_{\zeta_0}^r=\mathop{\oplus}\limits_{\tau \geq \sigma}D_r(\tau)=[D_r][\sigma]=(C_{\sigma^* \times \Delta^1})_r
	\end{equation*}

	Therefore, the statement holds.
	
	(c) $\zeta=\sigma^* \times 0$ for some simplex $\sigma \in L$
	
	By definition, for $\zeta_0 \subset \zeta$, we have that $E_{\zeta_0}=0$ unless $\zeta_0=\tau^* \times 0$ for some $\tau \geq \sigma$ with $\tau \in L$; in that case $E_{\zeta_0}^r=\mathop{\oplus}\limits_{s \in B_{\tau}}D_r(s)$. Therefore, we have:
	\begin{equation*}
		\mathop{\oplus}\limits_{\zeta_0 \subset \zeta}E_{\zeta_0}^r=\mathop{\oplus}\limits_{\tau \geq \sigma}\mathop{\oplus}\limits_{s \in B_{\tau}}D_r(s)
	\end{equation*}

	Since the sets $B_{\tau}$ are disjoint for different $\tau$ and $\mathop{\cup}\limits_{\tau \geq \sigma}B_{\tau}=A_{\sigma}$, we have:
	\begin{equation*}
		\mathop{\oplus}\limits_{\zeta_0 \subset \zeta}E_{\zeta_0}^r=\mathop{\oplus}\limits_{s \in A_{\sigma}}D_r(s)=(C_{\sigma^* \times 0})_r
	\end{equation*}

	Therefore, the statement holds.
	
	(d) $\zeta=\sigma^* \times \Delta^1$ with $\sigma \in L$
	
	By definition, for any $r \in \mathbb{Z}$, we have:
	\begin{equation*}
		(C_{\sigma^* \times \Delta^1})_r=\big(\mathop{\oplus}\limits_{s \in A_{\sigma}}D_r(s)\big) \oplus \big(\mathop{\oplus}\limits_{s \in A_{\sigma}}D_r(s)\big) \oplus \big(\mathop{\oplus}\limits_{s \in A_{\sigma}}D_{r-1}(s)\big)
	\end{equation*}
	
	For $\zeta_0 \subset \zeta$, there are in general three different cases:
	
	($\alpha$) $\zeta_0=\tau^* \times 0$ for some $\tau \geq \sigma$. In this case, we have $E_{\zeta_0}^r=0$ unless $\tau \in L$. When $\tau \in L$, we have $E_{\zeta_0}^r=\mathop{\oplus}
	\limits_{s \in B_{\tau}}D_r(s)$.
	
	($\beta$) $\zeta_0=\tau^* \times 1$ for some $\tau \geq \sigma$. In this case, we have $E_{\zeta_0}^r=0$.
	
	($\gamma$) $\zeta_0=\tau^* \times \Delta^1$ for some $\tau \geq \sigma$. In this case, we have $E_{\zeta_0}^r=0$ unless $\tau \in L_2 \backslash L_0$. When $\tau \in L_2 \backslash L_0$, we have $E_{\zeta_0}^r=\begin{cases} \quad \  D_r(\tau) & \text{If } \tau \in L_2 \backslash L_1 \\ \mathop{\oplus}\limits_{s \in B_{\tau}} D_{r-1}(s) & \text{If } \tau \in L_1 \backslash L_0 \\
	\end{cases}$.
	
	Therefore, we have:
	\begin{equation*}
		\mathop{\oplus}\limits_{\zeta_0 \subset \zeta}E_{\zeta_0}^r=\big(\mathop{\oplus}\limits_{\tau \geq \sigma}\mathop{\oplus}\limits_{s \in B_{\tau}}D_r(s)\big) \oplus \big(\mathop{\oplus}\limits_{\substack{\tau \geq \sigma \\ \tau \in L_2 \backslash L_1}} D_r(\tau )\big) \oplus \big(\mathop{\oplus}\limits_{\tau \geq \sigma}\mathop{\oplus}\limits_{s \in B_{\tau}}D_{r-1}(s)\big)
	\end{equation*}
	
	Since the sets $B_{\tau}$ are disjoint for different $\tau$ and $\mathop{\cup}\limits_{\tau \geq \sigma}B_{\tau}=A_{\sigma}$, we have $\mathop{\oplus}\limits_{\tau \geq \sigma}\mathop{\oplus}\limits_{s \in B_{\tau}}D_*(s)=\mathop{\oplus}\limits_{s \in A_{\sigma}}D_*(s)$. Furthermore, by definition we have $A_{\sigma}=\{\tau \in L_2 \backslash L_1 \ | \ \tau > \sigma\}$. Note that since $\sigma \in L_1$, we have $A_{\sigma}=\{\tau \in L_2 \backslash L_1 \ | \ \tau \geq \sigma\}$. Thus we have:
	\begin{equation*}
		\mathop{\oplus}\limits_{\zeta_0 \subset \zeta}E_{\zeta_0}^r=\big(\mathop{\oplus}\limits_{s \in A_{\sigma}}D_r(s)\big) \oplus \big(\mathop{\oplus}\limits_{\tau \in A_{\sigma}} D_r(\tau)\big) \oplus \big(\mathop{\oplus}\limits_{s \in A_{\sigma}}D_{r-1}(s)\big)=(C_{\sigma^* \times \Delta^1})_r
	\end{equation*}

	Therefore, the statement holds.
	
	Combining the arguments in the four cases, we see that the statement (1) in Claim \ref{Appclaim1} holds.
	
	For statement (2) in Claim \ref{Appclaim1}, since by definition, $F((\zeta',o_{std})-(\zeta,o_{std}))$ is the inclusion map of the corresponding modules, the statement follows directly.
	
	Therefore, we have proven the Claim \ref{Appclaim1} and thus $C$ is well-behaved. 

	(6) The condition 1(b) in Definition \ref{Defad} holds for every closed cell $\zeta \in \underline{L_0} \times I$.
	
	Let $p_{\zeta}:(C_{\zeta})_* \longrightarrow (C_{\zeta}/C_{\partial\zeta})_*$ be the projection map. By the definition of dual functor $T$ on chain complexes of $R$ modules, it is equivalent to check that the following chain map is a chain homotopy equivalence:
	\begin{equation}
		\label{Appcheck25}
		\psi_{\zeta,rel}^r:(C_{\zeta})_{|\zeta|-k-1-r}^* \longrightarrow (C_{\zeta}/C_{\partial\zeta})_r
	\end{equation}
	
	Where $\psi_{\zeta,rel}^r=p_{\zeta}\big(\psi_{\zeta,o_{std}}^{0,r}+(-1)^{r(|\zeta|-k-1-r)}(\psi_{\zeta,o_{std}}^{0,|\zeta|-k-1-r})^*\big)$.
	
	The proof is divided into four cases:
	
	(a) If $\zeta \in \underline{L_0} \times 1 \cup \underline{L_1} \times 0$.
	
	By definition, $C_{\zeta}$ is the zero chain complex. Therefore, $\psi_{\zeta,rel}^r$ is a chain homotopy equivalence, since the chain complexes on both sides are zero.
	
	(b) If $\zeta=\sigma^* \times \Delta^1$ for some simplex $\sigma \in L_2 \backslash L_1$.
	
	By the Claim \ref{Appclaim1} and the definition of $E_{\zeta}$, for every $r \in \mathbb{Z}$, we have:
	\begin{equation}
		\label{Appcheck26}
		(C_{\zeta})_r=\mathop{\oplus}\limits_{\tau \geq \sigma}D_r(\tau), (C_{\partial\zeta})_r=\mathop{\oplus}\limits_{\tau>\sigma}D_r(\tau)
	\end{equation}

	Moreover, let $\delta\{\theta\}_0^r[\sigma]$ be the map defined in Remark \ref{AppRemquadpair}. By definition, we have:
	\begin{equation}
		\label{Appcheck27}
		\begin{aligned}
			\psi_{\zeta,rel}^r&=p_{\zeta}\big(\psi_{\zeta,o_{std}}^{0,r}+(-1)^{r(|\zeta|-k-1-r)}(\psi_{\zeta,o_{std}}^{0,|\zeta|-k-1-r})^*\big) \\
			&=p_{\zeta}\big((-1)^{r_{\sigma}}\delta\{\theta\}_0^r[\sigma]+(-1)^{r_{\sigma}}(-1)^{r(l-|\sigma|-k-r)}(\delta\{\theta\}_0^{l-|\sigma|-k-r}[\sigma])^*\big)
		\end{aligned}
	\end{equation}
	
	Let $\delta\{\theta\}_{rel}^r[\sigma]=\delta\{\theta\}_0^r[\sigma]+(-1)^{r(l-|\sigma|-k-r)}(\delta\{\theta\}_0^{l-|\sigma|-k-r}[\sigma])^*$. Since $(D,\theta)$ is a $(l-k)$-dimensional Poincare quadratic chain complex, by Remark \ref{AppRemquadpair}, the following chain map is a chain homotopy equivalence:
	\begin{equation*}
		\begin{tikzcd}
			\mathop{\oplus}\limits_{\tau \geq \sigma}D_{l-k-|\sigma|-r}(\tau)^* \ar{rr}{\delta\{\theta\}_{rel}^r[\sigma]} & & \mathop{\oplus}\limits_{\tau \geq \sigma}D_{r}(\tau) \ar{rr}{\text{projection}} & & D_r(\sigma)
		\end{tikzcd}
	\end{equation*}

	By equation \ref{Appcheck26} and \ref{Appcheck27}, we can see that the chain map above is $(-1)^{r_{\sigma}} \\ \psi_{\zeta,rel}^r$. Therefore, $\psi_{\zeta,rel}$ is a chain homotopy equivalence.
	
	(c) If $\zeta=\sigma^* \times 0$ for some simplex $\sigma \in L$
	
	By the Claim \ref{Appclaim1} and the definition of $E_{\zeta}$, for every $r \in \mathbb{Z}$, we have:
	\begin{equation}
		\label{Appcheck28}
		(C_{\zeta})_r=\mathop{\oplus}\limits_{s \in A_{\sigma}}D_r(s), (C_{\partial\zeta})_r=\mathop{\oplus}\limits_{s \in A_{\sigma} \backslash B_{\sigma}}D_r(s)
	\end{equation}
	
	Moreover, let $\delta\psi_0^r[\sigma]$ be the map defined in Lemma \ref{Appquadpair}. By definition, we have:
	\begin{equation}
		\label{Appcheck29}
		\begin{aligned}
			\psi_{\zeta,rel}^r&=p_{\zeta}\big(\psi_{\zeta,o_{std}}^{0,r}+(-1)^{r(|\zeta|-k-1-r)}(\psi_{\zeta,o_{std}}^{0,|\zeta|-k-1-r})^*\big) \\
			&=p_{\zeta}\big((-1)^{r_{\sigma}}\delta\psi_0^r[\sigma]+(-1)^{r_{\sigma}}(-1)^{r(l-|\sigma|-k-1-r)}(\delta\psi_0^{l-|\sigma|-k-1-r}[\sigma])^*\big)
		\end{aligned}
	\end{equation}
	
	Let $\delta\psi_{rel}^r[\sigma]=\delta\psi_0^r[\sigma]+(-1)^{r(l-|\sigma|-k-1-r)}(\delta\psi_0^{l-|\sigma|-k-1-r}[\sigma])^*$. Since $(D,\theta)$ is a $(l-k)$-dimensional Poincare quadratic chain complex, by Lemma \ref{Appquadpair}, the following chain map is a chain homotopy equivalence:
	\begin{equation*}
		\begin{tikzcd}
			\mathop{\oplus}\limits_{s \in A_{\sigma}}D_{l-k-|\sigma|-1-r}(s)^* \ar{rr}{\delta\psi_{rel}^r[\sigma]} & & \mathop{\oplus}\limits_{s \in A_{\sigma}}D_{r}(s) \ar{rr}{\text{projection}} & & \mathop{\oplus}\limits_{s \in B_{\sigma}}D_r(s)
		\end{tikzcd}
	\end{equation*}
	
	By equation \ref{Appcheck28} and \ref{Appcheck29}, we can see that the chain map above is $(-1)^{r_{\sigma}} \\ \psi_{\zeta,rel}^r$. Therefore, $\psi_{\zeta,rel}$ is a chain homotopy equivalence.
	
	(d) If $\zeta=\sigma^* \times \Delta^1$ for some simplex $\sigma \in L$
	
	By the Claim \ref{Appclaim1} and the definition of $E_{\zeta}$, for every $r \in \mathbb{Z}$, we have:
	\begin{equation}
		\label{Appcheck30}
		\begin{aligned}
			(C_{\zeta})_r=\big(\mathop{\oplus}\limits_{s \in A_{\sigma}}D_r(s)\big) \oplus \big(\mathop{\oplus}\limits_{s \in A_{\sigma}}D_r(s)\big) \oplus \big(\mathop{\oplus}\limits_{s \in A_{\sigma}}D_{r-1}(s)\big) \\ (C_{\partial\zeta})_r=\big(\mathop{\oplus}\limits_{s \in A_{\sigma}}D_r(s)\big) \oplus \big(\mathop{\oplus}\limits_{s \in A_{\sigma}}D_r(s)\big) \oplus \big(\mathop{\oplus}\limits_{s \in A_{\sigma} \backslash B_{\sigma}}D_{r-1}(s)\big) \\
		\end{aligned}
	\end{equation}
	
	Moreover, let $\delta\psi_0^r[\sigma]$ be the map defined in Lemma \ref{Appquadpair}. By definition, we have:
	\begin{equation}
		\label{Appcheck31}
		\begin{aligned}
			\psi_{\zeta,rel}^r&=p_{\zeta}\big(\psi_{\zeta,o_{std}}^{0,r}+(-1)^{r(|\zeta|-k-1-r)}(\psi_{\zeta,o_{std}}^{0,|\zeta|-k-1-r})^*\big) \\
			&=(-1)^{r_{\sigma}+1}p_{\zeta}\bigg(\big(\delta\psi[\sigma] \otimes \omega_I\big)_0^r+(-1)^{r(l-|\sigma|-k-r)}\big((\delta\psi[\sigma] \otimes \omega_I)_0^{l-|\sigma|-k-r}\big)^*\bigg)
		\end{aligned}
	\end{equation}

	Let $r'=l-|\sigma|-k-r$. By Theorem \ref{tensorstr}, we have:
	\begin{equation}
		\label{Appcheck32}
		\big(\delta\psi[\sigma] \otimes \omega_I\big)_0^r=(-1)^r\delta\psi_0^r[\sigma] \otimes (\omega_I)_0^0+\delta\psi_0^{r-1}[\sigma] \otimes (\omega_I)_0^1+(-1)^{r'-1}T\delta\psi_1^{r-1}[\sigma] \otimes (\omega_I)_1^1
	\end{equation}

	Let $\sigma_{01}$ be the $1$-simplex in $\Delta^1$ and $\sigma_0,\sigma_1$ be the two $0$-simplices in $\Delta^1$. By Remark 15.82 in \cite{luecksurgery}, a choice of $\omega_I$ is as follows:
	\begin{equation}
		\label{Appcheck33}
		\begin{aligned}
			(\omega_I)_0^0: I^1 \longrightarrow I_0, (\omega_I)_0^0(\sigma_{01}^*)=\sigma_0 \qquad \quad & \\
			(\omega_I)_0^1:I^0 \longrightarrow I_1, (\omega_I)_0^1(\sigma_0^*)=0, (\omega_I)_0^1(\sigma_1^*)=\sigma_{01} & \\
			(\omega_I)_1^1:I^1 \longrightarrow I_1, (\omega_I)_1^1(\sigma_{01}^*)=-\sigma_{01} \qquad \ \ & \\
		\end{aligned}
	\end{equation}
	
	Let $F_r=\mathop{\oplus}\limits_{s \in A_{\sigma}}D_r(s)$. Denote the differential of the chain complex $F_*$ to be $d_F$. Choose any element $(x,y,z) \in (C_{\zeta})_{r'}^*=\big(F_r \otimes \mathbb{Z}[\sigma_0]\big) \oplus \big(F_r \otimes \mathbb{Z}[\sigma_1]\big) \oplus \big(F_{r-1} \otimes \mathbb{Z}[\sigma_{01}]\big)$, by equation \ref{Appcheck32} and \ref{Appcheck33}, we have:
	\begin{equation}
		\label{Appcheck34}
		\begin{aligned}
			& \quad \ \big(\delta\psi[\sigma] \otimes \omega_I\big)_0^r(x,y,z)  \\
			&=\big((-1)^r\delta\psi_0^r[\sigma](z),0,\delta\psi_0^{r-1}[\sigma](y)+(-1)^{r'}T\delta\psi_1^{r-1}[\sigma](z)\big) \\
			&=\big((-1)^r\delta\psi_0^r[\sigma](z),0,\delta\psi_0^{r-1}[\sigma](y)+(-1)^{r'+(r'-1)(r-1)}(\delta\psi_1^{r'-1}[\sigma])^*(z)\big) 
		\end{aligned}
	\end{equation}

	From that we can also compute the dual of $\big(\delta\psi[\sigma] \otimes \omega_I\big)_0^{r'}$, it is given as follows:
	\begin{equation}
		\label{Appcheck35}
		\begin{aligned}
		&\quad \; \big((\delta\psi[\sigma] \otimes \omega_I)_0^{r'}\big)^*(x,y,z) \\
		&=\big(0,(\delta\psi_0^{r'-1}[\sigma])^*(z),(-1)^{r'}(\delta\psi_0^{r'}[\sigma])^*(x)+(-1)^{r+(r-1)(r'-1)}\delta\psi_1^{r-1}[\sigma](z)\big)
		\end{aligned}
	\end{equation}
	
	Let $pr_3^r: \mathop{\oplus}\limits_{s \in A_{\sigma}}D_r(s) \longrightarrow \mathop{\oplus}\limits_{s \in B_{\sigma}}D_r(s)$ be the projection map. 
	
	By definition of $p_{\zeta}$, combined with equation \ref{Appcheck31}, \ref{Appcheck34} and \ref{Appcheck35}, we have:
	\begin{equation}
		\label{Appcheck36}
		\begin{aligned}
			\psi_{\zeta,rel}^r(x,y,z)= \ & pr_3^{r-1} \bigg( \delta\psi_0^{r-1}[\sigma](y)+(-1)^{r'+(r'-1)(r-1)}(\delta\psi_1^{r'-1}[\sigma])^*(z) \\
			&+(-1)^{r'+rr'}(\delta\psi_0^{r'}[\sigma])^*(x)+(-1)^{r+(r-1)(r'-1)+rr'}\delta\psi_1^{r-1}[\sigma](z)\bigg)\\
		\end{aligned}
	\end{equation}
	
	By Lemma \ref{Appquadpair}, $(\delta\psi[\sigma],\psi[\sigma])$ gives the quadratic structure of a pair, we have:
	\begin{equation}
		\label{Appcheck37}
		\begin{aligned}
			0= \ & d_F\delta\psi_0^r[\sigma]+(-1)^{r-1}\delta\psi_0^{r-1}[\sigma]d_F^*+(-1)^{r+r'-2}\delta\psi_1^{r-1}[\sigma] \\
			&+(-1)^{r+r'-1+(r-1)(r'-1)}(\delta\psi_1^{r'-1}[\sigma])^*+(-1)^{r+r'}i_{r-1}[\sigma] \psi_{0}^{r-1}[\sigma] i_{r'-1}[\sigma]^* \\
		\end{aligned}
	\end{equation}

	By definition, we have $pr_3^{r-1} \circ i_{r-1}[\sigma]=0$. Substituting equation \ref{Appcheck37} into equation \ref{Appcheck36}, we get:
	\begin{equation}
		\label{Appcheck38}
		\begin{aligned}
			\psi_{\zeta,rel}^r(x,y,z)= \ & pr_3^{r-1} \bigg( \delta\psi_0^{r-1}[\sigma](y)+(-1)^{r}d_F\delta\psi_0^{r}[\sigma](z) \\
			&+(-1)^{r'+rr'}(\delta\psi_0^{r'}[\sigma])^*(x)-\delta\psi_0^{r-1}[\sigma]d_F^*(z)\bigg)\\
		\end{aligned}
	\end{equation}

	Let $\psi_{\zeta,sign}^r=(-1)^{\frac{(r-1)(r-2)}{2}}\psi_{\zeta,rel}^r$ and $\delta\psi_{rel}^r[\sigma]=\delta\psi_0^r[\sigma]+(-1)^{r(r'-1)}\delta\psi_0^{r'-1}[\sigma]^*$. Let $\Psi_{sign}^r:(C_{\zeta})_{r'}^* \longrightarrow (C_{\zeta}/C_{\partial\zeta})_{r-1}$ be the map given as follows:
	\begin{equation}
		\label{Appcheck39}
		\Psi_{sign}^r(x,y,z)=(-1)^{\frac{(r-1)(r-2)}{2}}pr_3^{r-1}\delta\psi_{rel}^{r-1}[\sigma](x)
	\end{equation}

	We claim that $\psi_{\zeta,sign}$ is chain homotopic to $\Psi_{sign}$. To prove the claim, we write the chain homotopy and check that it is the chain homotopy between the two chain maps.
	
	For every $r \in \mathbb{Z}$, let $B^r: (C_{\zeta})_{l-k-|\sigma|-r}^* \longrightarrow (C_{\zeta}/C_{\partial\zeta})_r$ be the map given by $B^r(x,y,z)=(-1)^{\frac{r(r-1)}{2}}pr_3^{r}\delta\psi_0^{r}[\sigma](z)$. Notice that by definition, we have $d_{C_{\zeta}/C_{\partial\zeta}}=-d_F$. Therefore, we have:
	\begin{equation*}
		d_{C_{\zeta}/C_{\partial\zeta}}B^r(x,y,z)=(-1)^{\frac{r(r-1)}{2}+1}d_Fpr_3^{r}\delta\psi_0^r[\sigma](z)=(-1)^{\frac{r(r-1)}{2}+1}pr_3^{r-1}d_F\delta\psi_0^r[\sigma](z)
	\end{equation*}
	\begin{equation*}
		B^{r-1}d_{C_{\zeta}}(x,y,z)=(-1)^{\frac{(r-1)(r-2)}{2}}pr_3^{r-1}\delta\psi_0^r[\sigma](-d^*_Fz-x+y)
	\end{equation*}
	
	Combining with equation \ref{Appcheck38} and \ref{Appcheck39}, we get:
	\begin{equation*}
		\begin{aligned}
			(d_{C_{\zeta}/C_{\partial\zeta}}B^r+B^{r-1}d_{C_{\zeta}})(x,y,z)&=(-1)^{\frac{(r-1)(r-2)}{2}}\psi_{\zeta,rel}^r(x,y,z)-\Psi_{sign}^r(x,y,z) \\
			&=\psi_{\zeta,sign}^r(x,y,z)-\Psi_{sign}^r(x,y,z) \\
		\end{aligned}
	\end{equation*}
	
	Therefore, the claim holds. Since $(D,\theta)$ is a $(l-k)$-dimensional Poincare quadratic chain complex, by Lemma \ref{Appquadpair}, $pr_3^r\delta\psi_{rel}^r[\sigma]$ is a chain homotopy equivalence. By equation \ref{Appcheck39}, $\Psi_{sign}^r$ is the composition of the following maps:
	\begin{equation*}
		\begin{tikzcd}
			(C_{\zeta})_{r'}^*=F_{r'}^* \oplus F_{r'}^* \oplus F_{r'-1}^* \ar{rr}{\text{projection}} & & F_{r'}^* \ar{rrrr}{(-1)^{\frac{(r-1)(r-2)}{2}}pr_3^{r-1}\delta\psi_{rel}^{r-1}[\sigma]} & & & & (C_{\zeta}/C_{\partial\zeta})_{r-1}
		\end{tikzcd}
	\end{equation*}
	
	Since $I$ is a contractible chain complex, the first map in the diagram above is also a chain homotopy equivalence. Therefore, $\Psi_{sign}$ is a chain homotopy equivalence. Since $\psi_{\zeta,sign}$ is chain homotopic to $\Psi_{sign}$, $\psi_{\zeta,sign}$ is also a chain homotopy equivalence. Note that $\psi_{\zeta,sign}^r=(-1)^{\frac{(r-1)(r-2)}{2}}\psi_{\zeta,rel}^r$. Thus $\psi_{\zeta,rel}$ is also a chain homotopy equivalence, which is exactly what we want to prove.
	
	Summarizing all the proofs above, we have checked all the conditions listed in Definition \ref{Defad}.
	
	Then we need to check that the restriction of $F$ to $(\underline{L_1} \times \Delta^1,\underline{L_2} \times \Delta^1)$, denoted by $F_{res}$, represents $\mathcal{K}^*x$.
	
	By Theorem \ref{Lcohomologywithadcord} and the definition of $\widecheck{D}$, $x$ is represented by the following $(\underline{L_1},\underline{L_2})$-ad $F'$, denoted by $F'(\xi,o)=(C_{\xi}',\varphi_{\xi,o})$:
	
	On objects, $F'$ is given by:
	
	For any simplex $\sigma \in L_2 \backslash L_1$: 
	\begin{equation}
		\label{Appcheck40}
		\begin{aligned}
			&(C_{\sigma^*}')_r=[\widecheck{D}_r][\sigma^*] \\
			&d_{C_{\sigma^*}'}=[d_{\widecheck{D}}][\sigma^*] \\
			&\varphi_{\sigma^*,o}^{u,r}=(-1)^{\frac{|\sigma^*|(|\sigma^*|-1)}{2}}(-1)^{|\sigma^*|r}sgn(o)\mathop{\oplus}\limits_{\tau^* \leq \sigma^*} \widecheck{\theta}_u^r(\tau^*,\sigma^*) \\
		\end{aligned}
	\end{equation}
	
	For any simplex $\sigma \notin L_2$:
	\begin{equation*}
			(C_{\sigma^*}',\varphi_{\sigma^*,o})=\emptyset_{|\sigma^*|-k}
	\end{equation*}

	On morphisms, $F'$ is given by:
	
	For simplex $\sigma,\tau \in L_2 \backslash L_1$ with $\tau \geq \sigma$, $F'((\tau^*,o)-(\sigma^*,o'))$ is given by the following inclusion map:
	\begin{equation*}
		[\widecheck{D}_r][\tau^*]=\mathop{\oplus}\limits_{\kappa^* \leq \tau^*}\widecheck{D}_r(\kappa^*) \hookrightarrow [\widecheck{D}_r][\sigma^*]=\mathop{\oplus}\limits_{\kappa^* \leq \sigma^*}\widecheck{D}_r(\kappa^*)
	\end{equation*}

	Else, we define $F'((\tau^*,o)-(\sigma^*,o'))$ to be $0$.
	
	By Definition \ref{DefK} of $\mathcal{K}$ and Definition 3.7 in \cite{LauresBallL}, $\mathcal{K}^*x$ is represented by the following $(\underline{L_1} \times \Delta^1,\underline{L_2} \times \Delta^1)$-ad $F''$:
	\begin{equation*}
		F''(\xi \times \Delta^1,o \times o_{std})=(C_{\xi}',(-1)^{k+|\xi|}\varphi_{\xi,o})
	\end{equation*}
	
	By equation \ref{Eqlocaldualchaincpx} and \ref{Eqlocaldualboundarymap}, for $\sigma \in L_2 \backslash L_1$, we have:
	\begin{equation*}
		\begin{aligned}
			[\widecheck{D}_r][\sigma^*]=\mathop{\oplus}\limits_{\substack{\tau^* \leq \sigma^* \\ \tau^* \in \underline{L_1} \backslash \underline{L_2} }}\widecheck{D}_r(\tau^*) =\mathop{\oplus}\limits_{\substack{\tau \geq \sigma \\ \tau \in L_2 \backslash L_1}}D_r(\tau)=[D_r][\sigma]
		\end{aligned}
	\end{equation*}
	\begin{equation*}
		[d_{\widecheck{D}}][\sigma^*]=\mathop{\boxplus}\limits_{\substack{\tau^* \leq \sigma^* \\ \tau^* \in \underline{L_1} \backslash \underline{L_2} }}\mathop{\oplus}\limits_{\substack{\tau'^* \leq \sigma^* \\ \tau'^* \in \underline{L_1} \backslash \underline{L_2} }}d_{\widecheck{D}}(\tau'^*,\tau^*)=\mathop{\boxplus}\limits_{\substack{\tau \geq \sigma \\ \tau \in L_2 \backslash L_1}}\mathop{\oplus}\limits_{\substack{\tau' \geq \sigma \\ \tau' \in L_2 \backslash L_1}}d_{D}(\tau',\tau)=[d_D][\sigma]
	\end{equation*}
	\begin{equation*}
		\text{(Here we use the fact that $D_r(\tau)=0$ for $\tau \notin L_2$.)}
	\end{equation*}
	
	Thus we have $C'_{\xi}=C_{\xi \times \Delta^1}$. Moreover, from the definition we see that $F_{res}$ and $F''$ agree on morphisms. Both $F_{res}$ and $F''$ are $0$ on objects of the form $(\sigma^* \times \Delta^1 ,o)$ with $\sigma \notin L_2$. Hnece it only leaves us to check that the following equation holds for all $u \in \mathbb{N},r \in \mathbb{Z},\sigma \in L_2 \backslash L_1$:
	\begin{equation}
		\label{Appcheck41}
		(-1)^{k+|\sigma^*|}\varphi_{\sigma^*,o}^{u,r}=\psi_{\sigma^* \times \Delta^1,o}^{u,r}
	\end{equation}
	
	Now we begin to compute the term on the left hand side, by the definition in \ref{Appcheck40}, we have:
	\begin{equation*}
		\varphi_{\sigma^*,o}^{u,r}=(-1)^{\frac{|\sigma^*|(|\sigma^*|-1)}{2}}(-1)^{|\sigma^*|r}sgn(o)\mathop{\oplus}\limits_{\tau^* \leq \sigma^*} \widecheck{\theta}_u^r(\tau^*,\sigma^*)
	\end{equation*}

	By equation \ref{Eqlocaldualquadraticstr}, we have:
	\begin{equation*}
		\widecheck{\theta}_u^r(\tau^*,\sigma^*)=(-1)^{l-k+J_{\sigma}^{all}+l|\sigma|+lr+\frac{l(l-1)}{2}}\theta_u^r(\tau,\sigma)
	\end{equation*}
	
	Therefore, we have:
	\begin{equation}
		\label{Appcheck42}
		\begin{aligned}
			(-1)^{k+|\sigma^*|}\varphi_{\sigma^*,o}^{u,r}= \ &(-1)^{k+|\sigma^*|}(-1)^{\frac{|\sigma^*|(|\sigma^*|-1)}{2}}(-1)^{|\sigma^*|r}(-1)^{l-k+J_{\sigma}^{all}+l|\sigma|+lr+\frac{l(l-1)}{2}} \\
			&sgn(o)\mathop{\oplus}\limits_{\tau \geq \sigma} \theta_u^r(\tau,\sigma) \\
		\end{aligned}
	\end{equation}
	
	By definition we have:
	\begin{equation}
		\label{Appcheck43}
		\psi_{\sigma^* \times \Delta^1,o}^{u,r}=(-1)^{\frac{|\sigma|(|\sigma|-1)}{2}+J_{\sigma}^{all}}(-1)^{|\sigma|r}sgn(o)\mathop{\oplus}\limits_{\tau \geq \sigma} \theta_u^r(\tau,\sigma)
	\end{equation}

	Comparing equation \ref{Appcheck42} and \ref{Appcheck43} with \ref{Appcheck41}, it only leaves us to prove that:
	\begin{equation*}
		\begin{aligned}
			&\quad \; (-1)^{k+|\sigma^*|}(-1)^{\frac{|\sigma^*|(|\sigma^*|-1)}{2}}(-1)^{|\sigma^*|r}(-1)^{l-k+J_{\sigma}^{all}+l|\sigma|+lr+\frac{l(l-1)}{2}} \\
			&=(-1)^{\frac{|\sigma|(|\sigma|-1)}{2}+J_{\sigma}^{all}}(-1)^{|\sigma|r}
		\end{aligned}
	\end{equation*}
	
	Which can be done by direct computations.
\end{proof}

An important Corollary is:

\begin{Corollary}
	\label{computationpartial}
	Let $n \in \mathbb{Z}$ and $R$ be a ring with involution. Let $x \in H_n(L_2,L_1,\underline{L}(R))$. Let $g:(\underline{L_1},\underline{L_2}) \longrightarrow (\mathbb{L}_{n-l}(M^h(R)),*)$ be the $\Delta$-set map representing $x$ and let $(D,\theta)$ be the $n$-dimensional Poincare quadratic chain complex in $M^h(R)_*(L_2)$ obtained from $g$ as in Theorem \ref{computationdelta}. Define $(DL,\theta L)$ to be the following $(n-1)$-dimensional quasi quadratic chain complex in $M^h(R)_*(L)$:
	
	For $\sigma,\tau \in L$ and $u \in \mathbb{N},r \in \mathbb{Z}$,
	\begin{equation*}
		DL_r(\sigma)=\mathop{\oplus}\limits_{s \in B_{\sigma}}D_r(s)
	\end{equation*}
	\begin{equation*}
		d_{DL,r}(\tau,\sigma)=\mathop{\boxplus}\limits_{s \in B_{\sigma}}\mathop{\oplus}\limits_{s' \in B_{\tau}}d_{D,r}(s',s):DL_r(\sigma) \longrightarrow DL_{r-1}(\tau)
	\end{equation*}
	\begin{equation*}
		\begin{aligned}
			\theta L_u^r(\tau,\sigma):DL^{n-1-u-r}(\sigma)=\mathop{\oplus}\limits_{s \in A_{\sigma}}D_{n-1-u-r-|\sigma|}(s)^* \longrightarrow DL_r(\tau)=\mathop{\oplus}\limits_{s \in B_{\tau}}D_r(s) \\
		\end{aligned}
	\end{equation*}
	\begin{equation*}
		\theta L_u^r(\tau,\sigma)=(-1)^{n+|\sigma|+r+1}\big( \mathop{\boxplus}\limits_{s \in B_{\sigma}}\mathop{\oplus}\limits_{s' \in B_{\tau}}\theta_u^r(s',s) \big) \circ \mho_{\sigma}^{n-u-r}
	\end{equation*}
	
	Then:
	
	$(1)$ $(DL,\theta L)$ is Poincare quadratic.
	
	$(2)$ $\partial x \in H_{n-1}(L_1,L_0,\underline{L}(\mathbb{Z}))$ is given by $(DL,\theta L)$ via the isomorphisms:
	\begin{equation*}
		L_{n-1}(M^h(R)_*(L)) \cong H_{n-1}(L,\underline{L}(\mathbb{Z})) \cong H_{n-1}(L_1,L_0,\underline{L}(\mathbb{Z}))
	\end{equation*}
\end{Corollary}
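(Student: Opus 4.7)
The plan is to derive the corollary by applying Theorem \ref{computationdelta} and then running the explicit description of $\mathcal{K}^*x$ backwards through the identifications that define $\partial$. Under the chain of isomorphisms
\[
H_n(L_2,L_1;\underline{L}(R)) \cong H^{l-n}(\underline{L_1},\underline{L_2};\underline{L}(R)) \cong T^{l-n}(\underline{L_1},\underline{L_2})
\]
(Theorems \ref{Lhomologyandco} and \ref{Lcohomologywithadcord}), the class $x$ corresponds to the ad $F'$ built from the local dual $(\widecheck{D},\widecheck{\theta})$ of $(D,\theta)$. By the relative version of Theorem \ref{deltaofT} (which one first establishes for pairs by the standard abstract argument using the long exact sequences of $T^*$), the image $\partial x$ under $\partial:H_n(L_2,L_1;\underline{L}(R))\to H_{n-1}(L_1,L_0;\underline{L}(R))$ is carried, up to sign, to the restriction of $\mathcal{K}^*x \in T^{l-n+1}(\underline{L_0}\times\Delta^1,\underline{L_2}\times\Delta^1 \cup \underline{L_0}\times 1 \cup \underline{L_1}\times 0)$ to $T^{l-n+1}(\underline{L_0},\underline{L_1})$ along the inclusion $\underline{L_0}\times 0 \hookrightarrow \underline{L_0}\times\Delta^1$. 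Crucially, Theorem \ref{computationdelta} produces that element as the explicit ad $F$, so the computation reduces to restricting $F$ to $\underline{L_0}\times 0$.

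The restriction of $F$ to cells of the form $\sigma^*\times 0$ is nonzero only when $\sigma \in L$, and there it assigns the chain complex $\mathop{\oplus}_{s\in A_\sigma}D_*(s)$ with quadratic structure $\psi^{u,r}_{\sigma^*\times 0,o}=(-1)^{r_\sigma}\mathrm{sgn}(o)\,\delta\psi_u^r[\sigma]$. Running this through the inverse of the identification in Theorem \ref{Lcohomologywithadcord} yields a Poincare quadratic chain complex $(B,\psi_B)$ of dimension $n-1-l$ in $M^h(R)^*(\underline{L_0},\underline{L_1})$, and then Theorem \ref{localdual} produces its local dual, a Poincare quadratic chain complex of dimension $n-1$ in $M^h(R)_*(L_1)$ with underlying chain complex in $M^h(R)_*(L_1,L_0)=M^h(R)_*(L)$. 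This local dual represents $\partial x$ in $L_{n-1}(M^h(R)_*(L))\cong H_{n-1}(L_1,L_0;\underline{L}(R))$, and both assertions (1) and (2) of the corollary will follow from identifying this local dual with $(DL,\theta L)$ up to an overall sign absorbed into the minus sign of Theorem \ref{deltaofT}.

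Matching modules and differentials is essentially combinatorial: the $\sigma$-component of the local dual is $B(\sigma)=\mathop{\oplus}_{s\in A_\sigma}D_r(s) \big/ \mathop{\oplus}_{s\in A_\sigma\setminus B_\sigma}D_r(s) = \mathop{\oplus}_{s\in B_\sigma}D_r(s) = DL_r(\sigma)$, where the quotient arises because the local dual of $B$ kills the subobjects sitting over $L_0$, which correspond precisely to simplices $s$ that lie in a strictly larger $A_\tau$ (i.e.\ $s\in A_\sigma\setminus B_\sigma$). The differential in $\theta L$ then comes directly from the differential of $D$ restricted to pairs in the $B_\sigma$, $B_\tau$, which matches the differential of the local dual. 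For the quadratic structure, $\delta\psi^r_u[\sigma]$ factors (by Lemma \ref{Appquadpair}) as $(-1)^{(|\sigma|+1)r}\theta^r_{\sigma,u}\circ\mho^{n-u-r}_\sigma$, and after composition with the projection onto $B_\tau$ for each $\tau\geq\sigma$ this becomes exactly $\mathop{\boxplus}_{s\in B_\sigma}\mathop{\oplus}_{s'\in B_\tau}\theta_u^r(s',s)\circ\mho_\sigma^{n-u-r}$ up to a sign. The Poincare property is then inherited automatically from the Poincare property of $F$, which was established in the proof of Theorem \ref{computationdelta}.

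The main obstacle will be the sign bookkeeping needed to show that the cumulative sign produced by (i) the factor $(-1)^{r_\sigma}$ in $\psi_{\sigma^*\times 0,o}$, (ii) the factor $(-1)^{(|\sigma|+1)r}$ internal to $\delta\psi^r_u[\sigma]$, (iii) the factor $(-1)^{\frac{|\sigma^*|(|\sigma^*|-1)}{2}+|\sigma^*|r}$ inverted from Theorem \ref{Lcohomologywithadcord}, (iv) the factor $(-1)^{(n-1)+J^{all}_\sigma+l|\sigma|+lr+\frac{l(l-1)}{2}}$ from the local dual formula \eqref{Eqlocaldualquadraticstr}, and (v) the minus sign in the formula for $\delta$ in Theorem \ref{deltaofT} collapse via the identities $|\sigma^*|=l-|\sigma|$ and $r_\sigma=J^{all}_\sigma+\tfrac{|\sigma|(|\sigma|-1)}{2}$ to the target sign $(-1)^{n+|\sigma|+r+1}$ appearing in $\theta L^r_u(\tau,\sigma)$. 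This is a mechanical but delicate computation modulo $2$, and is the one place where care is required.
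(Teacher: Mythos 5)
Your plan tracks the paper's proof closely in spirit — both proofs hinge on comparing the explicit ad $F$ from Theorem \ref{computationdelta} restricted to $\underline{L_0}\times 0$ against the ad that Theorem \ref{Lcohomologywithadcord} assigns to a chain complex in $M^h(R)^*(\underline{L_0},\underline{L_1})$, and both culminate in the same sign bookkeeping. However, your reorganization — deriving part (1) ``automatically'' from this identification — contains a gap that you should be aware of.

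The difficulty is that Theorem \ref{Lcohomologywithadcord} is a one-way construction: given a Poincar\'e quadratic chain complex $(B,\psi_B)$, it produces an ad. There is no canonical ``inverse of the identification'' that accepts an arbitrary ad and spits out a valid Poincar\'e quadratic chain complex; when you read off the modules $B(\sigma^*)$ and structure maps from $F|_{\underline{L_0}\times 0}$, you get a collection of formulas, but you still have to verify that these formulas satisfy the quadratic relation $\partial\psi_B=0$ and the Poincar\'e condition before Theorem \ref{localdual} can be invoked to conclude anything about $(DL,\theta L)$. The Poincar\'e quadratic property is therefore \emph{not} inherited for free from the validity of $F$ as an ad — it has to be checked directly, either for $(B,\psi_B)$ or, equivalently via the local dual bijection, for $(DL,\theta L)$. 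This is exactly what the paper does in parts (1)–(4) of its proof, using the assembly functor $A^{\mathrm{part}}$ from Lemma \ref{rearannge} to reduce the quadratic identity for $(DL,\theta L)$ to the quadratic identity for $(D,\theta)$, and then using Lemma \ref{Apppartialass} and Proposition 4.7 of Ranicki to establish the Poincar\'e property cell-by-cell. Your sketch skips this entirely.

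Two further minor points. Your explanation of the module identification (``the local dual of $B$ kills the subobjects sitting over $L_0$'') misdescribes what is happening: the summands in $A_\sigma\setminus B_\sigma$ live over $\tau > \sigma$ with $\tau \in L$, not over $L_0$; the correct reason $B_r(\sigma^*)=\bigoplus_{s\in B_\sigma}D_r(s)$ is the decomposition $A_\sigma=\coprod_{\tau\geq\sigma,\,\tau\in L}B_\tau$, which identifies the $\tau^*$-component of $[B_r][\sigma^*]$. The conclusion is right but the reasoning is off. Finally, invoking ``the relative version of Theorem \ref{deltaofT}'' without proof is acceptable as a sketch, since the paper does exactly the same thing implicitly by carrying the trivial $\underline{L_2}$-piece along in its diagram — so this is not a gap, merely an unproven lemma that both arguments share.

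In short: the route is not genuinely different from the paper's and does not save the work you hope it saves; you must still supply the verification of quadraticity and Poincar\'e-ness, for which Lemma \ref{rearannge} is the right tool, before the local-dual identification can be used to close the loop.
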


Before we prove the Corollary, we prove the following Lemma first:

\begin{Lemma}
	\label{rearannge}
	The following construction gives a functor:
	\begin{equation*}
		A^{part}:M^h(R)_*(L_2) \longrightarrow M^h(R)_*(L), \ A^{part}C'(\sigma)=C'(B_{\sigma}) \text{ for any } \sigma \in L
	\end{equation*}
	\begin{equation*}
		A^{part}h(\sigma,\tau)=\mathop{\boxplus}\limits_{s \in B_{\sigma}}\mathop{\oplus}\limits_{s' \in B_{\tau}}h(s',s) \text{ for any morphism } h:C' \longrightarrow C''
	\end{equation*}
\end{Lemma}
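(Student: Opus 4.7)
The plan is to verify the four functor axioms: that $A^{part}$ sends objects in $M^h(R)_*(L_2)$ to objects in $M^h(R)_*(L)$; that $A^{part}h$ is indeed a morphism in $M^h(R)_*(L)$ (i.e.\ satisfies the order-compatibility $A^{part}h(\tau,\sigma)=0$ unless $\sigma\leq\tau$); that identities are preserved; and that composition is preserved. Throughout I will freely use Remark \ref{RemAB}, which characterizes $B_\sigma$ as the set of $s\in L_2\setminus L_1$ with $(s\cap L)_0=\sigma$, together with the disjointness and covering property $L_2\setminus L_1=\coprod_{\sigma\in L}B_\sigma$ recorded in (e) of the Appendix setup.

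For the object assignment, since $C'$ is $L_2$-based the set $\{s\in L_2\mid C'(s)\neq 0\}$ is finite, so each $A^{part}C'(\sigma)=\bigoplus_{s\in B_\sigma}C'(s)$ is a genuine object of $\mathbb{A}$; moreover only finitely many $\sigma\in L$ yield a nonzero $A^{part}C'(\sigma)$, because the $B_\sigma$ are disjoint and contained in $L_2\setminus L_1$. For the order-compatibility on morphisms, I would observe that if the component $h(s',s)$ is nonzero then $s\leq s'$ (by the definition of $M^h(R)_*(L_2)$), hence $(s\cap L)_0\leq (s'\cap L)_0$, i.e.\ $\sigma\leq\tau$; any term of $A^{part}h(\tau,\sigma)$ with $\sigma\not\leq\tau$ thus vanishes. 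Preservation of identity is immediate since $\mathrm{id}(s',s)=\delta_{s,s'}\,\mathrm{id}_{C'(s)}$, which collapses the double sum to the identity on $\bigoplus_{s\in B_\sigma}C'(s)$.

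The only nontrivial step, and the one I expect to be the main obstacle, is preservation of composition. Given $h_1\colon C'\to C''$ and $h_2\colon C''\to C'''$, one must compare
\[
A^{part}(h_2h_1)(\tau,\sigma)=\mathop{\boxplus}_{s\in B_\sigma}\mathop{\oplus}_{s'\in B_\tau}\sum_{s''\in L_2}h_2(s',s'')h_1(s'',s)
\]
with $\sum_{\kappa\in L}A^{part}(h_2)(\tau,\kappa)\circ A^{part}(h_1)(\kappa,\sigma)$. The crux is that every intermediate simplex $s''$ contributing nontrivially must actually lie in $L_2\setminus L_1$: if $s\in B_\sigma\subset L_2\setminus L_1$ and $s\leq s''\in L_2$, then $s''\notin L_1$ (otherwise, since $L_1$ is a subcomplex, its faces $s$ would also lie in $L_1$, a contradiction). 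Because $\{B_\kappa\}_{\kappa\in L}$ partitions $L_2\setminus L_1$, each such $s''$ belongs to a unique $B_\kappa$ with $\sigma\leq\kappa\leq\tau$, so the inner sum reorganizes as $\sum_{\kappa\in L}\sum_{s''\in B_\kappa}h_2(s',s'')h_1(s'',s)$. After interchanging the sums and recognizing the two resulting factors as $A^{part}(h_2)(\tau,\kappa)$ applied after $A^{part}(h_1)(\kappa,\sigma)$, the equality follows, completing the verification that $A^{part}$ is a functor.
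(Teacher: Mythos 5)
Your proof is correct and follows essentially the same route as the paper: deduce order-compatibility of $A^{part}h$ from $(s\cap L)_0\le (s'\cap L)_0$ whenever $s\le s'$, and for composition reorganize the inner sum over intermediate simplices according to the partition $L_2\setminus L_1=\coprod_{\kappa\in L}B_\kappa$. You are a bit more explicit than the paper in one place — you spell out why every contributing intermediate simplex $s''$ must lie in $L_2\setminus L_1$ (because $L_1$ is a subcomplex and $s\le s''$ with $s\notin L_1$), which the paper leaves implicit — but the substance of the argument is the same.
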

\begin{proof}
	Notice first that if $A^{part}h(\tau,\sigma) \neq 0$, then there are simplices $s \in B_{\sigma}$ and $s' \in B_{\tau}$, such that $h(s',s) \neq 0$. Since $h$ is a morphism in $M^h(R)_*(L_2)$, $h(s',s) \neq 0$ implies $s \leq s'$. Since $s \in B_{\sigma}, s' \in B_{\tau}$, we have $(s \cap L)_0=\sigma,(s' \cap L)_0=\tau$, combining with $s \leq s'$ we can deduce that $\sigma \leq \tau$. In summary, we have proved that $A^{part}h(\tau,\sigma)=0$ unless $\sigma \leq \tau$. By definition, $A^{part}h$ is thererfore a morphism in $M^h(R)_*(L)$. The functor is well defined.
		
	It is easy to check that $A^{part}$ maps the identity morphism to the identity morphism. Hence it only leaves us to check that the following equation holds for all morphisms $h_1,h_2$ and all simplices $\sigma,\tau \in L$ with $\sigma \leq \tau$: 
	\begin{equation}
		\label{Appcheck18}
		(A^{part}h_1 \circ A^{part}h_2)(\tau,\sigma)=\big( A^{part}(h_1 \circ h_2) \big)(\tau,\sigma)
	\end{equation}
	
	For any $s \in B_{\sigma}, r \in \mathbb{Z}$ and $z_s \in C'_r(s)$, we have:
	\begin{equation*}
		\begin{aligned}
			\big( A^{part}(h_1 \circ h_2) \big)(\tau,\sigma)(z_s)&=\mathop{\oplus}\limits_{s' \in B_{\tau}} (h_1 \circ h_2)(s',s)(z_s) \\
			&=\mathop{\oplus}\limits_{s' \in B_{\tau}} \sum \limits_{s \leq s'' \leq s'}h_1(s',s'') h_2(s'',s)(z_s) \\
		\end{aligned}
	\end{equation*}
	
	For any $s \in B_{\sigma},s' \in B_{\tau}$ and $s \leq s'' \leq s'$, denote $s''_0=(s'' \cap L)_0$. By Remark \ref{RemAB}, we have $\sigma \leq s''_0 \leq \tau$. Since the sets $B_{\kappa}$ are disjoint for different $\kappa$, we get:
	\begin{equation}
		\label{Appcheck44}
		\begin{aligned}
			\big( A^{part}(h_1 \circ h_2) \big)(\tau,\sigma)(z_s)
			&=\mathop{\oplus}\limits_{s' \in B_{\tau}} \sum \limits_{\sigma \leq \kappa \leq \tau}\sum \limits_{\substack{s'' \in B_{\kappa} \\ s \leq s'' \leq s'}}h_1(s',s'') h_2(s'',s)(z_s) \\
			&=\mathop{\oplus}\limits_{s' \in B_{\tau}} \sum \limits_{\sigma \leq \kappa \leq \tau}\sum \limits_{s'' \in B_{\kappa}}h_1(s',s'') h_2(s'',s)(z_s) \\
		\end{aligned}
	\end{equation}

	Now we have
	\begin{equation}
		\label{Appcheck45}
		\begin{aligned}
			(A^{part}h_1 \circ A^{part}h_2)(\tau,\sigma)(z_s)&=\sum \limits_{\sigma \leq \kappa \leq \tau}A^{part}h_1(\tau,\kappa) A^{part}h_2(\kappa,\sigma)(z_s) \\
			&=\sum \limits_{\sigma \leq \kappa \leq \tau}A^{part}h_1(\tau,\kappa)  \big(\mathop{\oplus} \limits_{s'' \in B_{\kappa}}h_2(s'',s)(z_s)\big)\\
			&=\sum \limits_{\sigma \leq \kappa \leq \tau}\sum\limits_{s'' \in B_{\kappa}}A^{part}h_1(\tau,\kappa)  h_2(s'',s)(z_s)\\
			&=\sum \limits_{\sigma \leq \kappa \leq \tau}\sum\limits_{s'' \in B_{\kappa}}\mathop{\oplus}\limits_{s' \in B_{\tau}}h_1(s',s'')h_2(s'',s)(z_s)\\
		\end{aligned}
	\end{equation}
	
	Comparing equation \ref{Appcheck44} and \ref{Appcheck45}, we get that the equation \ref{Appcheck18} holds. Thus $A^{part}$ is a functor.
\end{proof}

Now we turn back to the proof of Corollary \ref{computationpartial}:
\begin{proof}[Proof of Corollary \ref{computationpartial}]
	\
	
	We will check the statements step by step. 
	
	(1) $DL$ is a chain complex in $M^h(R)_*(L)$.
	
	For any $r \in \mathbb{Z}$, notice that $d_{DL,r}=A^{part}d_{D,r}$. By definition, $d_{DL,r}$ is a morphism in $M^h(R)_*(L)$.
	
	Then it remains to check that $d_{DL,r-1} \circ d_{DL,r}=0$. This follows from the fact that $D$ itself is a chain complex together with Lemma \ref{rearannge}.
	
	(2) For any $u \in \mathbb{N},r \in \mathbb{Z}$, $\theta L_u^r$ is a morphism in $M^h(R)_*(L)$.
	
	Note that by definition, $\theta L_u^r(\tau,\sigma) \neq 0$ implies that $A^{part}(\theta_u^r)(\tau,\sigma)=\mathop{\boxplus}\limits_{s \in B_{\sigma}}\mathop{\oplus}\limits_{s' \in B_{\tau}}\theta_u^r(s',s) \neq 0$. Since $\theta_u^r$ is a morphism in $M^h(R)_*(L_2)$, it follows from the definition of $A^{part}$ that $\theta L_u^r$ is a morphism in $M^h(R)_*(L)$.
	
	(3) $(DL,\theta L)$ is a quadratic chain complex.
	
	For any $u \in \mathbb{N}, r \in \mathbb{Z}$, let $n'=n-1-u-r$. We need to check that the following equation holds for any pair of simplices $\sigma,\tau \in L$ with $\sigma \leq \tau$:
	\begin{equation}
		\label{Appcheck19}
		\begin{aligned}
			0= \ & (d_{DL,r+1} \circ \theta L_u^{r+1})(\tau,\sigma)-(-1)^{n-1-u}(\theta L_u^r \circ d_{DL^{-*},n'-1})(\tau,\sigma) \\
			& +(-1)^{n-u-2} \theta L_{u+1}^r(\tau,\sigma)+(-1)^{n-1}T\theta L_{u+1}^r(\tau,\sigma) \\
		\end{aligned}
	\end{equation}
	
	Let $r'=n-1-u-r-|\sigma|$. Choose any $s \in A_{\sigma}$ and any $z \in D_{r'-1}(s)^*$. Consider its image under each term on the right hand side of equation \ref{Appcheck19}. We will compute them separately. 
	
	We introduce some notations first. Let $s_0=(s \cap L)_0,s_1=(s \cap L)_1$. Denote $V_1$ to be set of the vertices of $s_1$. For any vertex $v \in V_1$, denote $\sigma_v=v*\sigma$. For any $\eta \leq s$, let $\iota_{\eta}$ be the map defined in \ref{Defiotaeta}.
	
	Then we can compute the first term:
	\begin{equation*}
		\begin{aligned}
			&\quad (d_{DL,r+1} \circ \theta L_u^{r+1})(\tau,\sigma)(z) \\
			&=\sum\limits_{\sigma \leq \kappa \leq \tau}d_{DL,r+1}(\tau,\kappa) \theta L_u^{r+1}(\kappa,\sigma)(z) \\
			& \quad (\text{By definition of } \theta L \text{ in Corollary } \ref{computationpartial}) \\
			&=(-1)^{n+|\sigma|+r+2}\sum\limits_{\sigma \leq \kappa \leq \tau}d_{DL,r+1}(\tau,\kappa) \big( \mathop{\boxplus}\limits_{s' \in B_{\sigma}}\mathop{\oplus}\limits_{s'' \in B_{\kappa}}\theta_u^{r+1}(s'',s') \big)\mho_{\sigma}^{n'}(z) \\
		\end{aligned}
	\end{equation*}
	
	By definition of $d_{DL}$ in Corollary \ref{computationpartial} and definition of $A^{part}$ in Lemma \ref{rearannge}, we have:
	\begin{equation}
		\label{Appcheck46}
		\begin{aligned}
			&\quad \; (d_{DL,r+1} \circ \theta L_u^{r+1})(\tau,\sigma)(z) \\
			&=(-1)^{n+|\sigma|+r+2}\sum\limits_{\sigma \leq \kappa \leq \tau}A^{part}d_{D,r+1}(\tau,\kappa) A^{part}\theta_u^{r+1} (\kappa,\sigma)\mho_{\sigma}^{n'}(z) \\
			& \quad (\text{By Lemma } \ref{rearannge}) \\
			&=(-1)^{n+|\sigma|+r+2}A^{part}(d_{D,r+1} \circ \theta_u^{r+1})(\tau,\sigma)\mho_{\sigma}^{n'}(z)
		\end{aligned}
	\end{equation}
	
	For the second term, we claim that:
	\begin{equation}
		\label{Appcheck47}
		(\theta L_u^r \circ d_{DL^{-*},n'-1})(\tau,\sigma)(z)=(-1)^{n+|\sigma|+r+1}A^{part}( \theta_u^{r} \circ d_{D^{-*},n'})(\tau,\sigma)\mho_{\sigma}^{n'}(z)
	\end{equation}
	
	To prove the claim, we compute the term on the left hand side first, we have:
	\begin{equation*}
		(\theta L_u^r \circ d_{DL^{-*},n'-1})(\tau,\sigma)(z)=\sum\limits_{\sigma \leq \kappa \leq \tau}\theta L_u^r(\tau,\kappa) d_{DL^{-*},n'-1}(\kappa,\sigma)(z)
	\end{equation*}
	
	Let $L^*(\sigma,s)=\{\kappa \in L^*(\sigma) \ | \ \kappa \leq s\}$. By definition, we have:
	\begin{equation*}
		d_{DL^{-*},n'-1}(\kappa,\sigma)(z)=\begin{cases} \mathop{\oplus}\limits_{s' \in A_{\kappa}}(-1)^{n'-1+|\sigma|}d_D(s,s')^*(z) & \text{If } \kappa=\sigma \\ \qquad \quad \ (-1)^{n'-1+n_{\sigma}^{\kappa}}z & \text{If } \kappa \in L^*(\sigma,s) \\ \qquad \qquad \quad \ \ \ 0 & \text{else} \end{cases}
	\end{equation*}

	Thus:
	\begin{equation}
		\label{Appcheck48}
		\begin{aligned}
			(\theta L_u^r \circ d_{DL^{-*},n'-1})(\tau,\sigma)(z)= \ &\theta L_u^r(\tau,\sigma) \big( \mathop{\oplus}\limits_{s' \in A_{\kappa}}(-1)^{n'-1+|\sigma|}d_D(s,s')^*(z) \big) \\
			&+\sum\limits_{\substack{\kappa \in L^*(\sigma,s) \\ \kappa \leq \tau}}(-1)^{n'-1+n_{\sigma}^{\kappa}}\theta L_u^r(\tau,\kappa)(z)
		\end{aligned}
	\end{equation}

	Then we compute the term the right hand side of equation \ref{Appcheck47}. By equation \ref{expressmho}, we have $\mho_{\sigma}^{n'}(z)=\mathop{\oplus}\limits_{v \in V_1}\iota_{\sigma_v}(z)$. Therefore, we have:
	\begin{equation}
		\label{Appcheck49}
		\begin{aligned}
			& \quad \ \; A^{part}( \theta_u^{r} \circ d_{D^{-*},n'})(\tau,\sigma)\mho_{\sigma}^{n'}(z) \\
			&=\sum\limits_{v \in V_1}A^{part}( \theta_u^{r} \circ d_{D^{-*},n'})(\tau,\sigma)\iota_{\sigma_v}(z) \\
			&=\sum\limits_{v \in V_1}\sum\limits_{\sigma \leq \kappa \leq \tau}A^{part}\theta_u^{r}(\tau,\kappa)A^{part}d_{D^{-*},n'}(\kappa,\sigma)\iota_{\sigma_v}(z) \\
		\end{aligned}
	\end{equation}
	
	By definition of $A^{part}$ in Lemma \ref{rearannge}, we have:
	\begin{equation}
		\label{Appcheck50}
		A^{part}d_{D^{-*},n'}(\kappa,\sigma)\iota_{\sigma_v}(z)=\mathop{\oplus}\limits_{s' \in B_{\kappa}}d_{D^{-*},n'}(s',\sigma_v)\iota_{\sigma_v}(z)
	\end{equation}

	By definition of $d_{D^{-*}}$, we have:
	\begin{equation*}
		d_{D^{-*},n'}(s',\sigma_v)\iota_{\sigma_v}(z)=\begin{cases} \mathop{\oplus}\limits_{s'' \geq s'}(-1)^{n'+|\sigma_v|}d_D(s,s'')^*z & \text{If } s'=\sigma_v \\ \qquad \; (-1)^{n'+n_{\sigma_v}^{s'}}\iota_{s'}(z) & \text{If } s' \in L_2^*(\sigma_v) \text{ and } s' \leq s\\ \qquad \qquad \quad \ 0 & \text{else} \end{cases}
	\end{equation*}

	Note that if $s'=\sigma_v \in B_{\kappa}$, then $\kappa=(s' \cap L)_0=\sigma$. If $s' \in L_2^*(\sigma_v)$ and $s' \leq s,s' \in B_{\kappa}$, then there are two possibilities:
	
	(1) $s'=\kappa * v$ with $\kappa \in L^*(\sigma,s)$.
	
	(2) $s'=\sigma_v*v'$ with $V_1 \ni v' \neq v$, in this case $\kappa=(s' \cap L)_0=\sigma$.
	
	Thus if $\kappa=\sigma$, we have:
	\begin{equation}
		\label{Appcheck51}
		\begin{aligned}
			& \quad \; \sum\limits_{v \in V_1}\mathop{\oplus}\limits_{s' \in B_{\kappa}}d_{D^{-*},n'}(s',\sigma_v)\iota_{\sigma_v}(z) \\
			&=\sum\limits_{v \in V_1}\mathop{\oplus}\limits_{s'' \geq \sigma_v}(-1)^{n'+|\sigma_v|}d_D(s,s'')^*(z)+\sum\limits_{v \in V_1}\sum\limits_{\substack{v' \in V_1 \\ v' \neq v}}(-1)^{n'+n_{\sigma_v}^{\sigma_v *v'}}\iota_{\sigma_v *v'}(z)
		\end{aligned}
	\end{equation}

	Since $\sigma_v*v'=\sigma_{v'}*v$ and $|n_{\sigma_v}^{\sigma_v *v'}-n_{\sigma_{v'}}^{\sigma_v' *v}|=1$, we get that:
	\begin{equation}
		\label{Appcheck52}
		\sum\limits_{v \in V_1}\sum\limits_{\substack{v' \in V_1 \\ v' \neq v}}(-1)^{n'+n_{\sigma_v}^{\sigma_v *v'}}\iota_{\sigma_v *v'}(z)=0
	\end{equation}
	
	If $\kappa \in L^*(\sigma,s)$ with $\kappa \leq \tau$, then $n_{\sigma_v}^{s'}=n_{\sigma_v}^{\kappa *v}=n_{\sigma}^{\kappa}$, thus:
	
	\begin{equation}
		\label{Appcheck53}
		\begin{aligned}
			\sum\limits_{v \in V_1}\mathop{\oplus}\limits_{s' \in B_{\kappa}}d_{D^{-*},n'}(s',\sigma_v)\iota_{\sigma_v}(z)=\sum\limits_{v \in V_1}(-1)^{n'+n_{\sigma}^{\kappa}}\iota_{\kappa * v}(z) \\
		\end{aligned}
	\end{equation}

	Substituting the equation \ref{Appcheck50}, \ref{Appcheck51}, \ref{Appcheck52} and \ref{Appcheck53} into equation \ref{Appcheck49}, we get:
	
	\begin{equation}
		\label{Appcheck54}
		\begin{aligned}
			& \quad \ \ \; A^{part}( \theta_u^{r} \circ d_{D^{-*},n'})(\tau,\sigma)\mho_{\sigma}^{n'}(z) \\
			&=\sum\limits_{\sigma \leq \kappa \leq \tau}A^{part}\theta_u^{r}(\tau,\kappa)\sum\limits_{v \in V_1}A^{part}d_{D^{-*},n'-l}(\kappa,\sigma)\iota_{\sigma_v}(z) \\
			&=A^{part}\theta_u^{r}(\tau,\sigma)\sum\limits_{v \in V_1}\mathop{\oplus}\limits_{s'' \geq \sigma_v}(-1)^{n'+|\sigma_v|}d_D(s,s'')^*(z) \\
			& \quad \; +\sum\limits_{\substack{\kappa \in L^*(\sigma,s) \\ \kappa \leq \tau}}A^{part}\theta_u^{r}(\tau,\kappa)\sum\limits_{v \in V_1}(-1)^{n'+n_{\sigma}^{\kappa}}\iota_{\kappa * v}(z) \\
		\end{aligned}
	\end{equation}

	Note that:
	\begin{equation}
		\label{Appcheck55}
		\begin{aligned}
			&\quad \ \; A^{part}\theta_u^{r}(\tau,\sigma)\sum\limits_{v \in V_1}\mathop{\oplus}\limits_{s'' \geq \sigma_v}(-1)^{n'+|\sigma_v|}d_D(s,s'')^*(z) \\
			& \quad (\text{By definiton } \ref{expressmho} \text{ of } \mho_{\sigma}) \\
			&=A^{part}\theta_u^{r}(\tau,\sigma)\mho_{\sigma}^{n'} \big( \mathop{\oplus}\limits_{s'' \geq \sigma}(-1)^{n'+|\sigma|+1}d_D(s,s'')^*(z) \big) \\
			& \quad (\text{By definition of } \theta L \text{ in Corollary } \ref{computationpartial}) \\
			&=(-1)^{n+|\sigma|+r+1}\theta L_u^r(\tau,\sigma) \big( \mathop{\oplus}\limits_{s' \in A_{\kappa}}(-1)^{n'-1+|\sigma|}d_D(s,s')^*(z) \big)
		\end{aligned}
	\end{equation}

	Fix $\kappa \in L^*(\sigma,s)$ with $\kappa \leq \tau$, $\kappa *v$ are different simplices for different vertex $v$. By definition \ref{expressmho} of $\mho_{\kappa}$, we have:
	\begin{equation*}
		\sum\limits_{v \in V_1}(-1)^{n'+n_{\sigma}^{\kappa}}\iota_{\kappa * v}(z)=\mathop{\oplus}\limits_{v \in V_1}(-1)^{n'+n_{\sigma}^{\kappa}}\iota_{\kappa * v}(z)=(-1)^{n'+n_{\sigma}^{\kappa}}\mho_{\kappa}^{n'+1}(z)
	\end{equation*}

	Thus:
	\begin{equation}
		\label{Appcheck56}
		\begin{aligned}
			&\quad \; \sum\limits_{\substack{\kappa \in L^*(\sigma,s) \\ \kappa \leq \tau}}A^{part}\theta_u^{r}(\tau,\kappa)\sum\limits_{v \in V_1}(-1)^{n'+n_{\sigma}^{\kappa}}\iota_{\kappa * v}(z) \\
			&=\sum\limits_{\substack{\kappa \in L^*(\sigma,s) \\ \kappa \leq \tau}}(-1)^{n'+n_{\sigma}^{\kappa}}A^{part}\theta_u^{r}(\tau,\kappa)\mho_{\kappa}^{n'+1}(z) \\
			& \quad (\text{By definition of } \theta L \text{ in Corollary } \ref{computationpartial}) \\
			&=\sum\limits_{\substack{\kappa \in L^*(\sigma,s) \\ \kappa \leq \tau}}(-1)^{n'+n_{\sigma}^{\kappa}+n+|\kappa|+r+1}\theta L_u^r(\tau,\kappa)(z) \\
			&=(-1)^{n+|\sigma|+r+1}\sum\limits_{\substack{\kappa \in L^*(\sigma,s) \\ \kappa \leq \tau}}(-1)^{n'-1+n_{\sigma}^{\kappa}}\theta L_u^r(\tau,\kappa)(z)
		\end{aligned}
	\end{equation}

	Comparing equation \ref{Appcheck54}, \ref{Appcheck55} and \ref{Appcheck56} with equation \ref{Appcheck48},  we get that equation \ref{Appcheck47} holds and the claim is therefore ture. 
	
	For the third term, by definition of $\theta L$ and $A^{part}$, we have:
	\begin{equation}
		\label{Appcheck57}
		\begin{aligned}
			\theta L_{u+1}^r(\tau,\sigma)(z)&=(-1)^{n+|\sigma|+r+1}\big( \mathop{\boxplus}\limits_{s \in B_{\sigma}}\mathop{\oplus}\limits_{s' \in B_{\kappa}}\theta_{u+1}^{r}(s',s) \big)\mho_{\sigma}^{n'}(z) \\
			&=(-1)^{n+|\sigma|+r+1} A^{part}\theta_{u+1}^{r} (\tau,\sigma)\mho_{\sigma}^{n'}(z) \\
		\end{aligned}
	\end{equation}
	
	For the last term, by the commutative diagram \ref{Tfdiagram}, for any $\kappa \geq \sigma$, we can see that $T\theta L_{u+1}^r(\tau,\sigma)$ restricted on $D_{r'-1}(B_{\kappa})^*$ is the same with $(-1)^{(r'-1)r} \cdot$ \\
	$(-1)^{|\sigma|(r'-1+r)}$ times the following composition of morphisms:
	\begin{equation*}
		\begin{tikzcd}
			D_{r'-1}(B_{\kappa})^* \ar{rr}{\theta L_{u+1}^{r'-1}(\kappa,\sigma)^*} & & D_{r}(A_{\sigma}) \ar{rr}{\text{projection}}& & D_{r}(B_{\tau}) \\
		\end{tikzcd}
	\end{equation*}
	
	Denote $s_0=(s \cap L)_0$, then we have $s \in B_{s_0}$. Since
	\begin{equation*}
		\begin{aligned}
			\theta L_{u+1}^{r'-1}(s_0,\sigma)^*(z)&=(-1)^{n+|\sigma|+r'}(\mho_{\sigma}^{r+|\sigma|+1})^* \big( \mathop{\oplus}\limits_{s' \in B_{\sigma}} \theta_{u+1}^{r'-1}(s,s')^* (z)\big) \\
			&=(-1)^{n+|\sigma|+r'}(\mho_{\sigma}^{r+|\sigma|+1})^*(\theta_{\sigma,u+1}^{r'-1})^*(z) \\
			& \quad (\text{By Lemma } \ref{Apppartialass}) \\
			&=(-1)^{n+|\sigma|+r'}(-1)^{r(r'-1)+(|\sigma|+1)(r+r'-1)}T\theta_{\sigma,u+1}^r\mho_{\sigma}^{n'}(z) \\
		\end{aligned}
	\end{equation*}
	
	Therefore, we have:
	\begin{equation}
		\label{Appcheck58}
		\begin{aligned}
			T\theta L_{u+1}^r(\tau,\sigma)(z)&=(-1)^{n+|\sigma|+r+1}\big(\mathop{\boxplus}\limits_{s \in B_{\sigma}}\mathop{\oplus}\limits_{s' \in B_{\tau}} T\theta_{u+1}^r(s',s) \big) \big(\mho_{\sigma}^{n'}(z)\big) \\
			&=(-1)^{n+|\sigma|+r+1} A^{part}T\theta_{u+1}^r(\tau,\sigma)\mho_{\sigma}^{n'}(z) \\
		\end{aligned}
	\end{equation}

	By equation \ref{Appcheck46}, \ref{Appcheck47}, \ref{Appcheck57} and \ref{Appcheck58}, we can see that the equation \ref{Appcheck19} is equivalent to the following one:
	\begin{equation*}
		\begin{aligned}
			0= \ & A^{part}(d_{D,r+1} \circ \theta_u^{r+1})(\tau,\sigma) \circ \mho_{\sigma}^{n'}-(-1)^{n-u}A^{part}(\theta_u^r \circ d_{D^{-*},n'})(\tau,\sigma) \circ \mho_{\sigma}^{n'} \\
			& +(-1)^{n-u-1} (A^{part}\theta_{u+1}^r)(\tau,\sigma) \circ \mho_{\sigma}^{n'}+(-1)^{n}(A^{part}T\theta_{u+1}^r)(\tau,\sigma) \circ \mho_{\sigma}^{n'}\\
		\end{aligned}
	\end{equation*}

	Now it follows from Lemma \ref{rearannge} and the fact that $(D,\theta)$ is quadratic that the equation \ref{Appcheck19} holds.
	
	(4) $(DL,\theta L)$ is Poincare.
	
	By definition, we need to check that $(1+T)(\theta L)_0$ is a homotopy equivalence. By Proposition 4.7 in \cite{ranickiltheory}, it suffices to check that for every simplex $\sigma \in L$, the chain map $(1+T)(\theta L)_0(\sigma,\sigma)$ is a chain homotopy equivalence. We begin by computing this morphism.
	
	Note that by equation \ref{Appcheck58}, we have that for every $r \in \mathbb{Z}$, 
	\begin{equation*}
		T\theta L_0^r(\sigma,\sigma)=(-1)^{n+|\sigma|+r+1}A^{part}T\theta_0^r(\sigma,\sigma) \mho_{\sigma}^{n-r}
	\end{equation*}
	
	Therefore:
	\begin{equation*}
		(1+T)\theta L_0^r(\sigma,\sigma)=(-1)^{n+|\sigma|+r+1}(A^{part}\theta_0^r(\sigma,\sigma)+A^{part}T\theta_0^r(\sigma,\sigma)) \circ \mho_{\sigma}^{n-r}
	\end{equation*}

	Since $A^{part}$ is a functor and $(1+T)\theta_0^r$ is a chain homotopy equivalence, by Propsition 4.7 in \cite{ranickiltheory}, we have that $A^{part}\theta_0^*(\sigma,\sigma)+A^{part}T\theta_0^*(\sigma,\sigma)$ is a chain homotopy equivalence. By Lemma \ref{Apppartialass}, $\mho_{\sigma}^{*}$ is a chain homotopy equivalence. Combining the two arguments we conclude that $(1+T)\theta L_0(\sigma,\sigma)$ is a chain homotopy equivalence. Therefore, $(DL,\theta L)$ is Poincare.
	
	(5) $\partial x \in H_{n-1}(L_1,L_0;\underline{L}(R))$ is given by $(DL,\theta L)$ via the isomorphism given in the statement of the Corollary \ref{computationpartial}.
	
	Let $y \in T^{l-n}(\underline{L_1},\underline{L_2})$ be the element corresponding to the map $g$. By Theorem \ref{Lcohomologywithadcord} and Theorem \ref{Lhomologyandco}, the following diagram commutes:
	\begin{equation*}
		\begin{tikzcd}
			H_n(L_2,L_1;\underline{L}(R)) \rar{\cong}\dar{\partial} & H^{l-n}(\underline{L_1},\underline{L_2};\underline{L}(R))\dar{\delta} \rar{\cong} & T^{l-n}(\underline{L_1},\underline{L_2}) \dar{\delta}\\
			H_{n-1}(L_1,L_0;\underline{L}(R)) \rar{\cong} & H^{l-n+1}(\underline{L_0},\underline{L_1};\underline{L}(R)) \rar{\cong} & T^{l-n+1}(\underline{L_0},\underline{L_1}) \\
		\end{tikzcd}
	\end{equation*}
	
	By the definition of the horizontal isomorphisms, the element $x$ is sent to $y$. Therefore, $\partial x$ is sent to $\delta y$ under the horizontal isomorphisms. Moreover, we have the following commutative diagram:
	\begin{equation*}
		\begin{tikzcd}
			H_{n-1}(L_1,L_0;\underline{L}(R)) \rar{\cong} & H^{l-n+1}(\underline{L_0},\underline{L_1};\underline{L}(R)) \\
			H_{n-1}(L;\underline{L}(R)) \uar{\cong}\rar{\cong} & H^{l-n+1}(\Sigma^l,\underline{L};\underline{L}(R \uar{\cong}[swap]{res}))
		\end{tikzcd}
	\end{equation*}

	Let $(\widecheck{DL},\widecheck{\theta L})$ be the local dual of $(DL,\theta L)$. Since $\partial x$ is identified with $\delta y$, it suffices to prove that $(\widecheck{DL},\widecheck{\theta L})$ maps to $\delta y$ under the following composition of morphisms:
	\begin{equation*}
		H^{l-n+1}(\Sigma^l,\underline{L};\underline{L}(R)) \stackrel{res}{\longrightarrow} H^{l-n+1}(\underline{L_0},\underline{L_1};\underline{L}(R)) \longrightarrow T^{l-n+1}(\underline{L_0},\underline{L_1})
	\end{equation*}
	
	By the explict formula in Theorem \ref{Lhomology}, we have that $(\widecheck{DL},\widecheck{\theta L})$ is mapped to the following Poincare quadratic chain complex $(D',\theta')$ in $M^h(R)^*(\underline{L_0},\underline{L_1})$ under the identification in Theorem \ref{Lhomology}:
	
	For all $u \in \mathbb{N},r \in \mathbb{Z},\sigma,\tau \notin L_0$:
	\begin{equation*}
		D'_r(\sigma^*)=\widecheck{DL}_r(\sigma^*), \ (\theta')_u^r(\tau^*,\sigma^*)=\widecheck{\theta L}_u^r(\tau^*,\sigma^*)
	\end{equation*}

	By the explict formula in Theorem \ref{Lcohomologywithadcord}, it maps to the following $(\underline{L_0},\underline{L}_1)$-ad $F'$:
	
	For all $\sigma^*,\tau^* \in \underline{L_0}$ with $\tau^* \leq \sigma^*$, denote $r_{\sigma^*}'=\frac{|\sigma^*|(|\sigma^*|-1)}{2}$, then:
	\begin{equation*}
		F'(\sigma^*,o)=\begin{cases} \big([D_r'][\sigma^*],(-1)^{r_{\sigma^*}'}(-1)^{|\sigma^*|r}sgn(o) \mathop{\oplus}\limits_{\tau^* \leq \sigma^*}(\theta')_u^r(\tau^*,\sigma^*)\big) & \text{If } \sigma \in L_1 \backslash L_0 \\
		\qquad \qquad \qquad \qquad \qquad (0,0) & \text{else}\end{cases}
	\end{equation*}
	\begin{equation*}
		F'((\tau^*,o'),(\sigma^*,o))=\begin{cases} \text{Inclusion} & \text{If } \sigma,\tau \in L_1 \backslash L_0 \\ \quad \ \ 0 & \text{else} \end{cases}
	\end{equation*}

	Now we begin to compute $\delta y$. Denote $L'=\underline{L_2} \times I \cup \underline{L_0} \times 1 \cup \underline{L_1} \times 0$. By Theorem \ref{deltaofT}, the map $\delta:T^{l-n}(\underline{L_1},\underline{L_2}) \longrightarrow T^{l-n+1}(\underline{L_0},\underline{L_1})$ is given by the negative of the composite of the following morphisms:
	\begin{equation*}
		\begin{tikzcd}
			T^{l-n}(\underline{L_1},\underline{L_2}) \rar{\mathcal{K}^*} &   T^{l-n+1}(\underline{L_1} \times \Delta^1,\underline{L_2} \times \Delta^1 \cup \underline{L_1} \times \partial\Delta^1) \\
			T^{l-n+1}(\underline{L_0} \times I,L') \urar{\cong}[swap]{res} \rar{res} & T^{l-n+1}(\underline{L_0} \times 0, \underline{L_1} \times 0) \\
		\end{tikzcd}
	\end{equation*}
	
	Given the same notation as in Theorem \ref{computationdelta}, by Theorem \ref{computationdelta}, $\delta y$ is given by the cobordism class of the following $(\underline{L_0},\underline{L_1})$-ad $F''$:
	
	For all $\sigma^*,\tau^* \in \underline{L_0}$ with $\tau^* \leq \sigma^*$:
	\begin{equation*}
		F''(\sigma^*,o)=\begin{cases} (C_{\sigma^* \times 0},-\psi_{\sigma^{*} \times 0,o}) & \text{If } \sigma \in L_1 \backslash L_0 \\ \qquad \ (0,0) & \text{else} \\ \end{cases}
	\end{equation*}
	\begin{equation*}
		F''((\tau^*,o'),(\sigma^*,o))=\begin{cases} \text{Inclusion} & \text{If } \sigma,\tau \in L_1 \backslash L_0 \\ \quad \ \ 0 & \text{else} \end{cases}
	\end{equation*}

	Thus it only leaves us to check that the two functors $F'$ and $F''$ agree. Note first that if $F''(\sigma^*,o)=F'(\sigma^*,o)$, then automatically $F''((\tau^*,o'),(\sigma^*,o))=F'((\tau^*,o'),(\sigma^*,o))$. Therefore, it suffices to prove that for all $u \in \mathbb{N},r \in \mathbb{Z},\sigma \in L_1 \backslash L_0$, we have :
	\begin{equation*}
		(C_{\sigma^* \times 0})_r=[D_r'][\sigma^*]
	\end{equation*}
	\begin{equation*}
		\psi_{\sigma^* \times 0,o}^{u,r}=(-1)^{r_{\sigma^*}'+1}(-1)^{|\sigma^*|r}sgn(o) \mathop{\oplus}\limits_{\tau^* \leq \sigma^*}(\theta')_u^r(\tau^*,\sigma^*)
	\end{equation*}
	
	Let us prove the first equation. By the definition in Theorem \ref{computationdelta}, we have that $(C_{\sigma^* \times 0})_r=\mathop{\oplus}\limits_{s \in A_{\sigma}}D_r(s)$. By the definitions of $DL$ and $A_{\sigma},B_{\sigma}$, we have $\mathop{\oplus}\limits_{s \in A_{\sigma}}D_r(s)=\mathop{\oplus}\limits_{\tau \geq \sigma}DL_r(\tau)$. Now, using the definitions of $D'$ and local dual in Theoerm \ref{localdual}, we have $[D_r'][\sigma^*]=\mathop{\oplus}\limits_{\tau^* \leq \sigma^*}\widecheck{DL}_r(\tau^*)=\mathop{\oplus}\limits_{\tau \geq \sigma}DL_r(\tau)=(C_{\sigma^* \times 0})_r$.
	
	For the second equation, by the definition in Theorem \ref{computationdelta}, we have that $\psi_{\sigma^* \times 0,o}^{u,r}=(-1)^{r_{\sigma}}sgn(o)\delta\psi_u^r[\sigma]$. By the definition of $\delta\psi_u^r[\sigma]$ in Lemma \ref{Appquadpair}, we can further write:
	\begin{equation*}
		\begin{aligned}
			\psi_{\sigma^* \times 0,o}^{u,r}&=(-1)^{r_{\sigma}}(-1)^{(|\sigma|+1)r}sgn(o)\mathop{\boxplus}\limits_{s \in B_{\sigma}}\mathop{\oplus}\limits_{s' \in A_{\sigma}}\theta_u^r(s',s)\mho_{\sigma}^{n-u-r} \\
			& \quad (\text{By defintion of } \theta L_u^r,A_{\sigma},B_{\sigma}) \\
			&=(-1)^{r_{\sigma}}(-1)^{(|\sigma|+1)r}(-1)^{n+|\sigma|+r+1}sgn(o)\mathop{\oplus}\limits_{\tau \geq \sigma}\theta L_u^r(\tau,\sigma) \\
			& \quad (\text{By definition of local dual and } \theta') \\
			&=(-1)^{n+\frac{(l-1)l}{2}+J_{\sigma}^{all}+l|\sigma|+lr}(-1)^{r_{\sigma}}(-1)^{(|\sigma|+1)r}(-1)^{n+|\sigma|+r+1} \\
			& \quad \ sgn(o)\mathop{\oplus}\limits_{\tau^* \leq \sigma^*} (\theta')_u^r(\tau^*,\sigma^*) \\
		\end{aligned}
	\end{equation*}
	
	Therefore, it suffices to prove that:
	\begin{equation}
		\label{Appcheck59}
		(-1)^{\frac{(l-1)l}{2}+J_{\sigma}^{all}+l|\sigma|+lr+r_{\sigma}+(|\sigma|+1)r+|\sigma|+r}=(-1)^{r_{\sigma^*}'+|\sigma^*|r}
	\end{equation}
	
	We have:
	\begin{equation*}
		\begin{aligned}
			&\quad \; \frac{(l-1)l}{2}+J_{\sigma}^{all}+l|\sigma|+lr+r_{\sigma}+(|\sigma|+1)r+|\sigma|+r \\
			&\equiv \frac{(l-1)l}{2}+J_{\sigma}^{all}+l|\sigma|+(l-|\sigma|)r+r_{\sigma}+|\sigma| \mod 2 \\
			&\equiv \frac{(l-1)l}{2}+l|\sigma|+|\sigma^*|r+\frac{|\sigma|(|\sigma|-1)}{2}+|\sigma| \mod 2 \\
		\end{aligned}
	\end{equation*}

	and
	
	\begin{equation*}
		\begin{aligned}
			r_{\sigma^*}'+|\sigma^*|r &=\frac{|\sigma^*|(|\sigma^*|-1)}{2}+|\sigma^*|r \\
			&=\frac{(l-|\sigma|)(l-|\sigma|-1)}{2}+|\sigma^*|r \\
			&=\frac{l(l-1)}{2}-l|\sigma|+\frac{-|\sigma|+|\sigma|^2}{2}+|\sigma|+|\sigma^*|r \\
		\end{aligned}
	\end{equation*}

	Therefore, the equation \ref{Appcheck59} holds. Thus we can finish the proof of the Corollary \ref{computationpartial}.
\end{proof}

	\bibliographystyle{abbrv}
	\bibliography{refer}
\end{document}